\numberwithin{equation}{section}
\newtheorem{theorem}{Theorem}[section]
\newtheorem{lemma}[theorem]{Lemma}
\newtheorem{corollary}[theorem]{Corollary}
\newtheorem{claim}[theorem]{Claim}
\newtheorem{fact}[theorem]{Fact}
\newtheorem{definition}[theorem]{Definition}
\theoremstyle{definition}
\theoremstyle{remark}
\newtheorem{remark}[theorem]{Remark}
\newtheorem*{remark*}{Remark}
\newcommand{\snorm}[1]{\lVert#1\rVert}
\newcommand{\norm}[1]{\bigg\lVert#1\bigg\rVert}
\newcommand{\sang}[1]{\langle #1 \rangle}
\newcommand{\imod}[1]{~\mathrm{mod}~#1}
\newcommand{\eps}{\varepsilon}
\newcommand{\mb}{\mathbb}
\newcommand{\mbm}{\mathbbm}
\newcommand{\mc}{\mathcal}
\newcommand{\mf}{\mathfrak}
\newcommand{\mr}{\mathrm}
\newcommand{\ol}{\overline}
\newcommand{\on}{\operatorname}
\newcommand{\wh}{\widehat}
\newcommand{\wt}{\widetilde}
\begin{document}

\title{Quasipolynomial bounds on the inverse theorem for the Gowers $U^{s+1}[N]$-norm}

\author[A1]{James Leng}
\address{Department of Mathematics, UCLA, Los Angeles, CA 90095, USA}
\email{jamesleng@math.ucla.edu}

\author[A2]{Ashwin Sah}
\author[A3]{Mehtaab Sawhney}
\address{Department of Mathematics, Massachusetts Institute of Technology, Cambridge, MA 02139, USA}
\email{\{asah,msawhney\}@mit.edu}
\thanks{Leng was supported by NSF Graduate Research Fellowship Grant No.~DGE-2034835. Sah and Sawhney were supported by NSF Graduate Research Fellowship Program DGE-2141064.}

\begin{abstract}
We prove quasipolynomial bounds on the inverse theorem for the Gowers $U^{s+1}[N]$-norm. The proof is modeled after work of Green, Tao, and Ziegler and uses as a crucial input recent work of the first author regarding the equidistribution of nilsequences. In a companion paper, this result will be used to improve the bounds on Szemer\'{e}di's theorem. 
\end{abstract}

\maketitle

\section{Introduction}\label{sec:introduction}
We recall the definition of the Gowers $U^s$-norm on $\mb{Z}/N\mb{Z}$ and $[N]$. Throughout we let $[N] = \{1,\ldots,N\}$.
\begin{definition}\label{def:gowers-norm}
Given $f\colon\mb{Z}/N\mb{Z}\to\mb{C}$ and $s\ge 1$, we define
\[\snorm{f}_{U^s(\mb{Z}/N\mb{Z})}^{2^s}=\mb{E}_{x,h_1,\ldots,h_s\in\mb{Z}/N\mb{Z}}\Delta_{h_1,\ldots,h_s}f(x)\]
where $\Delta_hf(x)=f(x)\ol{f(x+h)}$ is the multiplicative discrete derivative (extended to lists by composition). Given a natural number $N$ and a function $f\colon[N]\to\mb{C}$, we choose a number $\wt{N}\ge 2^{s}N$ and define $\wt{f}\colon\mb{Z}/\wt{N}\mb{Z}\to\mb{C}$ via $\wt{f}(x) = f(x)$ for $x\in[N]$ and $0$ otherwise. Then 
\[\snorm{f}_{U^s[N]} := \snorm{\wt{f}}_{U^s(\mb{Z}/\wt{N}\mb{Z})}/\snorm{\mbm{1}_{[N]}}_{U^s(\mb{Z}/\wt{N}\mb{Z})}.\]
\end{definition}
\begin{remark*}
This is known to be well-defined and independent of $\wt{N}$, and a norm if $s\ge 2$; see \cite[Lemma~B.5]{GT10}.
\end{remark*}

Our main result is quasi-polynomial bounds on the inverse theorem for the Gowers $U^{s+1}$-norm over the integers. This builds on earlier work \cite[Section~8]{Len23} of the first author which handled the case of the $U^{4}$-norm. 
\begin{theorem}\label{thm:main}
Fix $\delta\in (0,1/2)$. Suppose that $f\colon[N]\to\mb{C}$ is $1$-bounded and
\[\snorm{f}_{U^{s+1}[N]}\ge\delta.\]
Then there exists a nilmanifold $G/\Gamma$ of degree $s$, complexity at most $M$, and dimension at most $d$ as well as a function $F$ on $G/\Gamma$ which is at most $K$-Lipschitz such that 
\[|\mb{E}_{n\in[N]}[f(n)\ol{F(g(n)\Gamma)}]|\ge\eps,\]
where we may take
\[d\le\log(1/\delta)^{O_s(1)}\emph{ and }\eps^{-1},K,M\le\exp(\log(1/\delta)^{O_s(1)}).\]    
\end{theorem}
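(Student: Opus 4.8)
The proof proceeds by induction on $s$, following the strategy of Green--Tao--Ziegler but replacing their qualitative equidistribution inputs with the first author's quantitative equidistribution theory for nilsequences. For $s=1$ the statement is the $U^2$-inverse theorem: from $\snorm{f}_{U^2[N]}\ge\delta$ a Fourier-analytic argument extracts a frequency $\alpha\in\mb{R}/\mb{Z}$ with $|\mb{E}_{n\in[N]}f(n)e(-\alpha n)|\gg\delta^{O(1)}$, giving a degree-$1$ nilsequence on $\mb{R}/\mb{Z}$ with $d=O(1)$ and all other parameters polynomial in $1/\delta$.

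For the inductive step, assume the theorem for $U^{s}$. Given $\snorm{f}_{U^{s+1}[N]}\ge\delta$, unpacking the definition gives $\mb{E}_{h}\snorm{\Delta_h f}_{U^{s}[N]}^{2^{s}}\gg\delta^{2^{s+1}}$, so by popularity $\snorm{\Delta_h f}_{U^{s}[N]}\ge\delta^{O_s(1)}$ for a set $H$ of at least $\delta^{O_s(1)}N$ values of $h$. Applying the inductive hypothesis to each $\Delta_h f$ produces, for every $h\in H$, a degree-$(s-1)$ nilsequence $F_h(g_h(n)\Gamma_h)$ with $\dim\le\log(1/\delta)^{O_s(1)}$ and complexity, Lipschitz constant, and inverse correlation all bounded by $\exp(\log(1/\delta)^{O_s(1)})$. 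A pigeonhole over the (quasipolynomially many) relevant isomorphism types of nilmanifold, combined with the nilcharacter formalism of Green--Tao--Ziegler, lets us assume all the $F_h$ are realized through a single degree-$(s-1)$ nilcharacter $\Xi$ on one nilmanifold $G/\Gamma$, so that $\Delta_h f$ correlates with $\Xi(\cdot\,;\xi(h))$ where the ``top-order symbol'' $\xi(h)$ records the $h$-dependence.

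The crux is to show that $\xi(h)$ depends polynomially on $h$. A Gowers--Cauchy--Schwarz argument in the $h$ variable, applied to the near-identity $\Delta_{h_1+h_2}f=T^{h_2}(\Delta_{h_1}f)\cdot\Delta_{h_2}f$, forces for many additive quadruples in $H$ the approximate cocycle relation $\xi(h_1)+\xi(h_2)-\xi(h_3)-\xi(h_4)\approx 0$ in the symbol group. Here the quantitative equidistribution theorem is invoked twice: first as a quantitative ``degree-lowering'' statement, showing that whenever a function correlates with a degree-$k$ nilsequence whose top symbol varies too slowly it already correlates with a degree-$(k-1)$ one, which rigidifies the near-cocycle into an exact linear relation over a dense set; and second to convert the resulting bracket-linear behaviour of $\xi$ into an honest polynomial description, namely that $\xi$ agrees on a dense subset of $H$ with a polynomial of degree $\le s-1$ in $h$ whose coefficients are quasipolynomially controlled. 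Each such application costs only an $\exp(\log(1/\delta)^{O_s(1)})$ factor in complexity and correlation and an additive $\log(1/\delta)^{O_s(1)}$ in dimension, which is why the final bounds remain quasipolynomial.

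Finally one integrates: since $\Delta_h$ applied to a degree-$s$ nilsequence produces degree-$(s-1)$ nilsequences whose symbols are linear in $h$, the polynomial dependence of $\xi(h)$ means the family $\{\Xi(\cdot\,;\xi(h))\}_{h\in H}$ is precisely what the derivative of a single degree-$s$ object looks like. Assembling $G/\Gamma$ together with the polynomial extension encoding $\xi$ yields a degree-$s$ nilmanifold $G'/\Gamma'$ of dimension $\log(1/\delta)^{O_s(1)}$ and a Lipschitz function $F$ on it; a concluding Cauchy--Schwarz over $h$ together with a standard Weyl-type cleanup gives $|\mb{E}_{n\in[N]}f(n)\ol{F(g(n)\Gamma')}|\ge\eps$ with $\eps^{-1},K,M\le\exp(\log(1/\delta)^{O_s(1)})$, as required. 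The main obstacle is the rigidity step of the previous paragraph: all of the genuinely new quantitative content---quantitative Ratner-type equidistribution, the arithmetic of nilcharacter symbols under the relevant multidegree filtration, and the bookkeeping that keeps every loss at the quasipolynomial scale---is concentrated there, while the surrounding induction is a careful but routine adaptation of the Green--Tao--Ziegler scaffolding.
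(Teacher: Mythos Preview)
Your proposal captures the high-level Green--Tao--Ziegler scaffolding (induction on $s$, apply the $U^s$ case to each $\Delta_h f$, exploit additive quadruples in $h$, then ``integrate''), but there are genuine gaps in the two places where you say the real content lies.

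First, the rigidification of $\xi(h)$ is not a single step and is not achieved by equidistribution alone. The paper runs a \emph{degree-rank} iteration: starting from a degree-rank $(s-1,s-1)$ correlation structure, one repeatedly drops the rank parameter $r^\ast$ from $s-1$ down to $0$. Each drop requires two distinct arguments. A \emph{sunflower} step (a Furstenberg--Weiss commutator argument on top of the quantitative equidistribution theorem) shows that in the top degree-rank piece, at most one of the $r^\ast$ factors can genuinely depend on $h$. Then a \emph{linearization} step shows that this single $h$-dependent factor is bracket-linear in $h$; crucially this step is an approximate-homomorphism argument whose quantitative strength comes from Sanders' quasipolynomial Bogolyubov bound, not from equidistribution. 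Your sketch collapses all of this into ``equidistribution invoked twice'' and asserts that $\xi(h)$ ends up polynomial of degree $\le s-1$ in $h$; in fact the $h$-dependence that emerges is bracket-\emph{linear}, and the output is not a polynomial symbol but a multidegree $(1,s-1)$ nilcharacter $\chi(h,n)$.

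Second, your ``integration'' paragraph does not describe a valid argument. Knowing that $\Delta_h f$ correlates with $\chi(h,n)$ does not, by a single Cauchy--Schwarz, produce a degree-$s$ nilsequence correlating with $f$: the obstruction is that $\chi$ need not be symmetric in $h$ and $n$. The paper's final section is a substantial \emph{symmetrization} argument: one multilinearizes $\chi$ to a multidegree $(1,\ldots,1)$ nilcharacter symmetric in the last $s-1$ slots, runs several Cauchy--Schwarz and change-of-variable manipulations to isolate the asymmetry $\chi(y,y',n,\ldots,n)\otimes\overline{\chi(y',y,n,\ldots,n)}$, applies equidistribution theory \emph{once more} to show this asymmetry is lower order, and only then combines everything (via a trick involving $\chi(n+h,\ldots,n+h)$ and a final call to the inductive $U^s$ theorem) into a single degree-$s$ nilsequence. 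None of this is ``routine'' or a ``Weyl-type cleanup''; it is where a second essential use of equidistribution occurs and where the degree actually goes up from $s-1$ to $s$.
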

\begin{remark*}
Throughout this paper, we will abusively write $\log$ for $\max(\log(\cdot), e^e)$; this is to avoid issues with small numbers. 
\end{remark*}

We have not formally defined a nilmanifold or notions of complexity; our definition is identical to that in work of Green and Tao \cite{GT12} and will be recalled precisely in Sections~\ref{sec:nilmani} and \ref{sec:complex}. 

In the companion paper to this work \cite{LSS24c}, we will use Theorem~\ref{thm:main} in order to improve the long standing bounds of Gowers \cite{Gow98,Gow01a} on Szemer\'{e}di's theorem. 
\begin{theorem}[Theorem~1.1 in~\cite{LSS24c}]\label{thm:szem-strong}
Let $r_k(N)$ denote the size of the largest $S\subseteq[N]$ such that $S$ has no $k$-term arithmetic progressions. For $k\ge 5$, there is $c_k\in(0,1)$ such that
\[r_k(N)\ll N\exp(-(\log\log N)^{c_k}).\]
\end{theorem}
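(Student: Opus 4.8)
The plan is to deduce \Cref{thm:szem-strong} from \Cref{thm:main} by a density increment argument in the spirit of Gowers and of Green--Tao, the point being that the quantitative strength of \Cref{thm:main} --- polylogarithmic dimension and quasipolynomial complexity --- is exactly what is needed to make the iteration terminate at the double-logarithmic scale. Fix $k \ge 5$ and put $s = k-2 \ge 3$. Suppose $A \subseteq [N]$ has $|A| = \alpha N$ and contains no nontrivial $k$-term progression; we must show $\alpha \le \exp(-(\log\log N)^{c_k})$. We may assume $N$, hence $\log\log N$, is large in terms of $k$, and that $\alpha \ge N^{-c(k)}$ (otherwise $\log(1/\alpha) \gg \log N$ and the bound is trivial). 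Set $f = \mbm{1}_A - \alpha$. By the generalized von Neumann inequality, the normalized $k$-AP count in $A$ differs from $c_k\alpha^k$ by at most $C_k\snorm{f}_{U^{k-1}[N]} + O_k(1/N)$; since $A$ has no nontrivial $k$-AP this forces $\snorm{f}_{U^{s+1}[N]} = \snorm{f}_{U^{k-1}[N]} \gg_k \alpha^{O_k(1)} =: \delta$. Applying \Cref{thm:main} then yields a degree-$s$ nilmanifold $G/\Gamma$ of dimension $d \le \log(1/\alpha)^{O_k(1)}$ and complexity $M \le \exp(\log(1/\alpha)^{O_k(1)})$, a polynomial sequence $g$, and an $\exp(\log(1/\alpha)^{O_k(1)})$-Lipschitz $F$ with $\int_{G/\Gamma}F = 0$ (subtract the mean, harmless since $f$ has mean zero) and $|\mb{E}_{n\in[N]}[f(n)\ol{F(g(n)\Gamma)}]| \ge \eps$, $\eps^{-1} \le \exp(\log(1/\alpha)^{O_k(1)})$.

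The crux is to convert this nilsequence correlation into a genuine density increment for $A$ on a long subprogression. Here one invokes a quantitative form of the Green--Tao factorization theorem for polynomial orbits, with the dependence on the dimension kept polynomial (or at worst exponential); this is precisely the equidistribution input supplied by the first author's recent work. Choosing the growth function appropriately, one writes $g = \eps' g' \gamma$ on $[N]$ with $\gamma$ rational of period $q$, $\eps'$ slowly varying, and $g'$ highly equidistributed on a subnilmanifold $G'/\Gamma'$, where $q$, the complexity of $G'/\Gamma'$, and the accuracy are all bounded by an $O(d)$-fold iterate of the growth function applied to $M$ --- hence, since $d \le \log(1/\alpha)^{O_k(1)}$ and $\log M \le \log(1/\alpha)^{O_k(1)}$, at most $C_k(\alpha) := \exp(\exp(\log(1/\alpha)^{O_k(1)}))$. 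Partitioning $[N]$ into progressions of common difference $q$ and length $\ge N/C_k(\alpha)$ on which $\eps'$ and $\gamma$ are essentially constant, pigeonholing the correlation onto one such progression $P$, and using equidistribution of $g'$ to strip the structured part off the nilsequence (passing through the level sets of the limiting function on $G'/\Gamma'$), one obtains a structured subset $P' \subseteq P$ --- a progression, or in practice a Bohr set of rank $O(d)$ --- with $|P'| \ge N/C_k(\alpha)$ on which $A$ has density at least $\alpha + \exp(-\log(1/\alpha)^{O_k(1)})$, the increment depending only on $k$ and $\alpha$. \textbf{I expect this to be the main obstacle}: one must extract an \emph{honest} density increment --- not merely correlation with some lower-complexity object --- while losing in $|P'|$ only a factor $C_k(\alpha)$ that is double-exponential in $\log(1/\alpha)$ but \emph{independent of $N$}, and while keeping the cost of the slowly growing dimension under control throughout the factorization. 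It is exactly here that the polylogarithmic bound on $d$ in \Cref{thm:main} is indispensable: were $d$ merely polynomial in $1/\delta$, the $O(d)$-fold iterate of the growth function would tower out of control.

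With the increment step in hand, iterate it starting from $(A_0, N_0) = (A, N)$, $\alpha_0 = \alpha$. Each step produces $(A_{t+1}, N_{t+1})$ with $N_{t+1} \ge N_t / C_k(\alpha)$ and $\alpha_{t+1} \ge \alpha_t + \exp(-\log(1/\alpha)^{O_k(1)})$, where both $C_k(\alpha)$ and the increment depend only on $k$ and $\alpha$, not on $N_t$ (using $\alpha_t \ge \alpha$). Since densities are at most $1$, there are at most $T \le \exp(\log(1/\alpha)^{O_k(1)})$ steps. But each step can be carried out as long as $N_t \ge C_k(\alpha)$; hence if $N \ge C_k(\alpha)^{T+1}$ the process runs to completion and produces a set of density exceeding $1$, which is absurd. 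Therefore $N < C_k(\alpha)^{T+1}$, so
\[\log N \;\le\; (T+1)\log C_k(\alpha) \;\le\; \exp(\log(1/\alpha)^{O_k(1)}),\]
which gives $\log\log N \le \log(1/\alpha)^{O_k(1)}$ and hence $\alpha \le \exp(-(\log\log N)^{c_k})$ for a suitable $c_k = c_k(k) \in (0,1)$, as claimed. The leftover cases ($N$ or $\log\log N$ small, or $\alpha < N^{-c(k)}$) are absorbed into the implied constant of $r_k(N) \ll N\exp(-(\log\log N)^{c_k})$. Technically it is cleanest to run the entire argument with Bohr sets of rank $O(d)$ per step and radius shrinking by $\exp(-\log(1/\alpha)^{O_k(1)})$, converting back to a genuine progression only at the very end; the bookkeeping is identical and yields the same double-logarithmic bound.
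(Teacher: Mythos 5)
This theorem is not proven in the present paper: it is \emph{stated} from the companion paper \cite{LSS24c} (see the sentence preceding it in Section~1), so there is no proof here to compare your proposal against. On its own terms, your outline does follow the expected shape of the argument --- generalized von Neumann gives $\snorm{\mbm{1}_A - \alpha}_{U^{k-1}[N]} \gg \alpha^{O_k(1)}$, Theorem~\ref{thm:main} gives correlation with a nilsequence of polylogarithmic dimension and quasipolynomial complexity, and an iterated density increment converts this into the stated double-logarithmic saving --- and your closing bookkeeping ($\log N \lesssim (T+1)\log C_k(\alpha)$, hence $\alpha \le \exp(-(\log\log N)^{c_k})$) is arithmetically sound. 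But the step you flag as ``the main obstacle'' is genuinely the whole ball game, and your treatment of it is not a proof. Pigeonholing on the level sets of the equidistributed part $g'$ produces a set of the form $\{n \in P : g'(n)\Gamma' \in B\}$, which is neither a progression nor a Bohr set; converting this into an honest density increment on a structured object one can iterate over requires substantial additional work (in the $U^3$ case this is already the hardest part of Green and Tao's argument, and for higher $s$ the difficulty compounds). You also gloss over the accumulation of Bohr-set rank: each iteration adds $O(d) = \log(1/\alpha)^{O_k(1)}$ frequencies, so after $T = \exp(\log(1/\alpha)^{O_k(1)})$ steps the rank is itself of size $\exp(\log(1/\alpha)^{O_k(1)})$, and one must verify this does not destroy the lower bounds on Bohr-set size. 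Neither of these is a fatal flaw conceptually, but both require pages of careful analysis that your sketch replaces with a sentence.

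One smaller point: your per-step loss $C_k(\alpha) = \exp(\exp(\log(1/\alpha)^{O_k(1)}))$ is what one gets from the classical Green--Tao factorization theorem (iterating a growth function $O(d)$ times), but the whole point of the improved equidistribution theory in \cite{Len23,Len23b} --- and of Theorem~\ref{thm:equi-deg} in this paper --- is to avoid precisely that $O(d)$-fold iterate, so the loss should be single-exponential, $\exp(\log(1/\alpha)^{O_k(1)})$, not double. As you note, this does not change the final form of the bound since the number of iterations dominates; but the statement ``the dependence on the dimension kept polynomial... one writes $g = \eps' g' \gamma$ ... bounded by an $O(d)$-fold iterate of the growth function'' is internally contradictory --- if the factorization depends polynomially on the dimension, there is no $O(d)$-fold iterate.
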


\subsection{History and previous results}\label{sub:history}
A long standing conjecture of Erd\H{o}s and Tur\'{a}n \cite{ET36} stated that $r_k(N) = o(N)$. In full generality, this conjecture remained open until a combinatorial tour de force of Szemer\'{e}di \cite{Sze70,Sze75} which established the Erd\H{o}s and Tur\'{a}n conjecture. 
\begin{theorem}\label{thm:szem}
For $k\ge 3$, we have that 
\[r_k(N) = o_k(N).\]
\end{theorem}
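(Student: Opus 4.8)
While Szemer\'{e}di's original proof is combinatorial (and Furstenberg later gave an ergodic-theoretic proof, Gowers a Fourier-analytic one for all $k$), the cleanest route in the present setting is a density-increment argument driven by the inverse theorem, Theorem~\ref{thm:main}. Set $s=k-2$; we may assume $\delta\in(0,1/2)$ by monotonicity in the density. Suppose $S\subseteq[N]$ has $|S|\ge\delta N$ and contains no $k$-term arithmetic progression of nonzero common difference; the plan is to show this forces $N\le N_0(\delta)$, which is exactly the assertion $r_k(N)=o_k(N)$.

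First I would prove a generalized von Neumann inequality. Viewing $S$ inside $\mb{Z}/\wt{N}\mb{Z}$ for a suitable $\wt{N}\asymp_k N$ and setting $f=\mbm{1}_S-\delta\mbm{1}_{[N]}$ for the balanced function, a telescoping of $\mbm{1}_S$ together with repeated Cauchy--Schwarz shows that the normalized $k$-AP count $\mb{E}_{x,d}\prod_{i=0}^{k-1}\mbm{1}_S(x+id)$ differs from its expected value by at most $O_k(\snorm{f}_{U^{k-1}[N]})$. Since the only $k$-APs in $S$ are trivial, this count is $O(1/N)$, so once $N$ is large in terms of $\delta$ we obtain $\snorm{f}_{U^{k-1}[N]}\ge c(\delta)>0$.

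Now apply Theorem~\ref{thm:main} with $s+1=k-1$: the $1$-bounded function $f$ satisfies $|\mb{E}_{n\in[N]}[f(n)\ol{F(g(n)\Gamma)}]|\ge\eps(\delta)$ for some $K(\delta)$-Lipschitz $F$ on a nilmanifold of degree $k-2$ and complexity $O_\delta(1)$. The key --- and, I expect, hardest --- step is to convert this correlation into a density increment: one invokes the quantitative equidistribution theory for polynomial sequences on nilmanifolds (Green--Tao), by which $(g(n)\Gamma)_{n\in[N]}$ is either totally equidistributed or biased along a long subprogression; iterating the resulting factorization partitions $[N]$ into progressions $P_i$, each of length tending to infinity with $N$, on each of which $F\circ g$ stays within $o_\delta(1)$ of a constant. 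Correlation with $f$ then forces $S$ to have relative density at least $\delta+c'(\delta)$ on some $P_i$.

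Finally I would iterate. Starting from $[N]$ with density $\delta_0=\delta$, we produce progressions of lengths $N=N_0\ge N_1\ge\cdots$ with $N_{j+1}\to\infty$ whenever $N_j\to\infty$, on which (after an affine rescaling to an interval) $S$ has densities $\delta_{j+1}\ge\delta_j+c'(\delta_j)$; taking $c'$ bounded below on $[\delta,1)$, the density would exceed $1$ after $O(1/c'(\delta))$ steps --- a contradiction once $N\ge N_0(\delta)$ for a suitable threshold. Hence every subset of $[N]$ of density at least $\delta$ contains a genuine $k$-AP as soon as $N$ is large, i.e.\ $r_k(N)=o_k(N)$. The von Neumann step and the iteration are routine; essentially all the content sits in the equidistribution-to-density-increment passage, which is the only place the structure of nilmanifolds is actually used.
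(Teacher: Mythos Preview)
The paper does not prove Theorem~\ref{thm:szem}: it is Szemer\'edi's theorem, cited to \cite{Sze70,Sze75} purely as historical background, with no argument supplied. So there is no paper-proof to compare your sketch against.

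Your overall route---generalised von Neumann, then Theorem~\ref{thm:main}, then a density increment, then iterate---is the correct modern strategy and is essentially how the companion paper \cite{LSS24c} extracts the quantitative Theorem~\ref{thm:szem-strong} from Theorem~\ref{thm:main}. However, your account of the density-increment step contains a real error, not just an omission. Iterating the Green--Tao factorisation does \emph{not} partition $[N]$ into long sub-progressions on which $F\circ g$ is close to a constant: the factorisation outputs a piece $g'$ that is \emph{totally equidistributed} on a sub-nilmanifold, which is the opposite of being constant, and for degree $\ge 2$ this obstruction is genuine (already $n\mapsto e(\alpha n^2)$ for Diophantine-generic $\alpha$ is not near-constant on any partition of $[N]$ into progressions of length tending to infinity). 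The actual mechanism, as in \cite{GT09,GT17}, is that correlation with the nilsequence forces a density increment on a \emph{nil-Bohr set}---an approximate level set of one or several nilsequences---and one then invokes a separate recurrence argument to locate a long arithmetic progression inside that set. You were right to flag this passage as the crux, but the specific mechanism you wrote down would need to be replaced in a full proof.
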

Due to uses of the van der Waerden theorem and the regularity lemma (which was introduced in this work), Szemer\'{e}di's density saving over the trivial bound was exceedingly small. In particular, Szemer\'{e}di's result provided no improvement on known bounds for van der Waerden's theorem which was part of Erd\H{o}s and Tur\'{a}n's original motivation. 

The first result in the effort to prove reasonable bounds for $r_k(N)$, e.g.~giving a density saving of at least a finite iterated logarithmic type, came from work of Roth \cite{Rot54} which proved
\[r_3(N)\ll N(\log\log N)^{-1}.\]
Being based on Fourier analysis, the methods used in this paper did not obviously generalize to $k\ge 4$. An estimate for $r_k(N)$ which was ``reasonable'' would have to wait until pioneering work of Gowers \cite{Gow98,Gow01a}. 

The starting point of work of Gowers \cite{Gow98,Gow01a} is noting via an iterative application of the Cauchy--Schwarz inequality that if a set $A$ of density $\delta$ in $[N]$ has no $(s+2)$-term arithmetic progressions then $\snorm{f}_{U^{s+1}[N]}\ge\delta^{O_s(1)}$ where $f$ is a shifted indicator function of the set. In doing so, Gowers provided the correct notion of ``psuedorandomness'' generalizing Fourier coefficients which was suitable for understanding arithmetic patterns in subsets of the integers and therefore created ``higher order Fourier analysis''. The key technical ingredient in work of Gowers was a certain ``local inverse theorem'' for the $U^{s+1}[N]$-norm. Gowers proved that given a $1$-bounded function $f$ such that $\snorm{f}_{U^{s+1}[N]}\ge\delta$, there exists a decomposition of $[N]$ into arithmetic progressions of length roughly $N^{c_s}$ and a $1$-bounded function $g$ which is constant along these arithmetic progressions such that
\[\mb{E}_{x\in[N]} f(x) \ol{g(x)} \ge\delta^{O_s(1)};\]
i.e., $f$ correlates with $g$. This result, coupled with the density increment strategy as introduced by Roth \cite{Rot54}, provided the bound 
\[r_k(N)\ll N(\log\log N)^{-c_k}\]
for Szemer\'{e}di's theorem. These bounds have remained the best known for general $k$ until this work. For the sake of comparison, a long sequence of works have attacked the special case of $k=3$, culminating in a recent breakthrough work of Kelley and Meka \cite{KM23} which proved 
\[r_3(N)\ll N\exp(-c(\log N)^{1/12});\]
the constant $1/12$ was refined to $1/9$ in work of Bloom and Sisask \cite{BS23}. The only other improvements to the bound of Gowers were due to works of Green and Tao \cite{GT09,GT17} which ultimately established that 
\[r_4(N)\ll N(\log N)^{-c},\]
and very recent work of the authors \cite{LSS24} which handled the case $k=5$ of Theorem~\ref{thm:szem-strong}.

Notice however that the ``local inverse theorem'' of Gowers only gives correlations on arithmetic progressions of length $N^{c_s}$ and that the converse of this result is not true. In particular, a function may have small $U^{s+1}[N]$-norm and still correlate with a function which is constant on progressions of length $N^{c_s}$. To construct such an example, break $[N]$ into consecutive segments of length $\sqrt{N}$ and include each segment with probability $1/2$; while this set with high probability has large ``local correlations'' it has polynomially small Gowers norm. To obtain a full inverse result (analogous to the quality of Freiman's theorem, say), one must carefully pin down the global structure as well. Such a task is not straightforward, since the natural generalization of Fourier characters to exponentials of polynomials does not suffice.

A crucial development in the theory towards the inverse conjecture for the Gowers norm was the discovery of the role of nilpotent Lie groups. In groundbreaking work, Furstenberg \cite{Fur77} gave an alternate proof of Szemer\'{e}di based on ergodic theory; this work naturally led to seeking to understand certain nonconventional ergodic averages. In works of Conze and Lesigne \cite{CL84} and Furstenberg and Weiss \cite{FW96} regarding nonconventional ergodic averages, \emph{nilmanifolds} $G/\Gamma$ where $G$ is nilpotent and $\Gamma$ is a discrete cocompact subgroup were brought to the forefront. Host and Kra \cite{HK05} and independently Ziegler \cite{Zie07}, proved convergence of such nonconventional ergodic averages. Crucial to these works was establishing that such averages are controlled by projections on certain characteristic factors which naturally give rise to nilmanifolds. The role of nilsequences (derived from \emph{polynomial sequences} on nilmanifolds) was further highlighted in work of Bergelson, Host, and Kra \cite{BHK05}.

The statement of the inverse conjecture (without the given quantification) we will prove was first formulated in work of Green and Tao \cite{GT10}. Conditional on this inverse conjecture and that the M{\"o}bius function does not correlate with nilsequences, Green and Tao were able to prove asymptotic counts for all linear patterns in the primes of ``finite complexity'', vastly generalizing the celebrated Green--Tao theorem \cite{GT08c}. Both of these conjectures were resolved; the second being resolved in work of Green and Tao \cite{GT12c} while the first was resolved in work of Green, Tao, and Ziegler \cite{GTZ12}. We remark the cases $s=2$ and $s=3$ of the inverse conjecture were proven earlier by Green and Tao \cite{GT08b} and Green, Tao, and Ziegler \cite{GTZ11} respectively. A crucial ingredient in the cases $s\ge 3$ was work of Green and Tao \cite{GT12} on the equidistribution behavior of polynomial orbits on nilmanifolds. An alternative approach to the inverse conjecture was initiated by Szegedy \cite{Sze12}, involving the development of the theory of nilspaces by Camarena and Szegedy \cite{CS10}; a detailed treatment of these papers was given by Candela \cite{Can17, Can17a}. This nilspace approach has been further developed in works of Gutman, Manners, and Varj\'{u}\cite{GMV20,GMV19,GMV20b}. Both of these approaches to the inverse theorem, however, at least formally, gave no bounds on the complexity or dimension of the nilsequences with which the function correlates in the cases $s\ge 4$. A third approach due to Manners \cite{Man18} will be discussed later in this section.

We remark that the study of the inverse conjecture for the Gowers norm makes sense beyond the setting of functions on the interval or on the cyclic group $\mb{Z}/N\mb{Z}$. Work of Bergelson, Tao, and Ziegler \cite{BTZ10} and Tao and Ziegler \cite{TZ10b,TZ12b} resolved the analogue of the inverse conjecture for the Gowers norm over $\mb{F}_p^n$. Candela and Szegedy \cite{CS22} gave a version of the inverse conjecture for the Gowers norm over all compact abelian groups. This final work falls within the context of giving proofs which, broadly speaking, attempt to handle various abelian groups in a uniform manner. There has been substantial further work in this rough direction including works of Jamneshan and Tao \cite{JT23}, Jamneshan, Shalom, and Tao \cite{JST24}, and Candela, Gonz\'{a}lez-S\'{a}nchez, and Szegedy \cite{CGS23}.

The inverse theorem has had numerous further applications within additive combinatorics; we highlight just two. First, Tao and Ziegler \cite{TZ18} gave an asymptotic for the number of polynomial patterns $x+P_1(y),\ldots,x+P_j(y)$ in the primes where $P_1(0) = \cdots = P_j(0) = 0$ with top degree terms $P_1,\ldots,P_j$ being distinct. Second, works of Green and Tao \cite{GT10b} and Altman \cite{Alt22,Alt22b} used the inverse conjecture in combination with an arithmetic regularity lemma to establish the true complexity conjectures of Gowers and Wolf \cite{GW10}.

Due to its importance in the theory of additive patterns, establishing quantitative bounds on the inverse theorem for the Gowers norm has been seen as a central problem in additive combinatorics, with Green suggesting it as ``perhaps the biggest open question in the subject'' \cite[Problem~56]{GreOp}. For the case of $s = 2$, work of Green and Tao \cite{GT08b} gave quantitative bounds for the inverse theorem over the integers and work of Sanders \cite{San12} combined with the strategy in \cite{GT08b} proves Theorem~\ref{thm:main} for the case of $s=2$. For general $s$, until roughly five years ago no quantitative bounds were known for the inverse theorem and this was considered a major open problem. This state of affairs was substantially improved in remarkable work of Manners \cite{Man18} which proves a version of the inverse theorem where, in the notation of Theorem~\ref{thm:main},
\[d \le \delta^{-O_s(1)} \text{ and } \eps^{-1},K,M\le \exp(\exp(\delta^{-O_s(1)})).\]
This result was subsequently used as a crucial input in work of Tao and Ter{\"a}v{\"a}inen \cite{TT21} to give an effective result for the counts of linear equations in the primes. We remark that a quantitative version of the inverse conjecture over finite fields of high characteristic was proven in work of Gowers and Mili\'cevi\'c \cite{GM17,GM20}. 

At the highest level, the quantitative proofs of Manners \cite{Man18} and Gowers and Mili\'cevi\'c \cite{GM17,GM20} examine when the iterated derivatives of a function are $0$ with positive probability. Deriving useful information from this hypothesis over finite fields and the integers are very different problems but fundamentally one glues information from higher derivatives together into information regarding lower derivatives iteratively.

Our proof instead operates via induction on $s$ and attempts to glue degree $(s-1)$ nilmanifolds into a degree $s$ one exactly as in work of Green, Tao, and Ziegler \cite{GTZ12}. Our proof in fact is very closely modeled on their work and borrows large sections of their work essentially verbatim. In fact, we believe that the proof in \cite{GTZ12}, if appropriately quantified, itself yields a bound involving $O(s^2)$ many iterated exponentials. The primary improvement of our proof over theirs stems from the use of improved quantitative equidistribution results on nilmanifolds \cite{Len23, Len23b} rather than the results of \cite{GT12}. The reason we obtain quasi-polynomial bounds is that our proof, even though it inducts on $s$, gives quasi-polynomial bounds for each step of the induction. Since an iterated composition of finitely many quasi-polynomial functions is still quasi-polynomial, it follows that our bounds should remain quasi-polynomial. In contrast, we believe that the proof in \cite{GTZ12}, appropriately quantified, results in adding $O(t)$ iterated exponentials in each step $t$ of the induction, which when iterated totals $O(s^2)$ iterated exponentials. Here the results of \cite{Len23,Len23b} play a crucial role in eliminating the logarithms accumulated in the induction step. We further remark that the case $s=3$ of the main theorem (e.g. the $U^4$-inverse theorem) of the strength in Theorem~\ref{thm:main} was proven earlier by the first author in \cite[Section~8]{Len23} and may useful stepping stone for reading this paper (although this paper is logically independent).

\subsection{Organization of the paper I}\label{sub:organization-1}
We briefly discuss the next three sections of the paper. In Section~\ref{sec:nilmani}, we define a number of basic notions regarding nilmanifold and set various conventions which will be used throughout the paper. Our conventions differ in various extremely minor ways from those in the work of Green, Tao, and Ziegler \cite{GTZ12} but we record them explicitly to recall a number of definitions which will be used throughout the paper. In Section~\ref{sec:complex}, we set various complexity notions that will be given throughout the paper. In the case of nilmanifolds which are given a degree filtration (as is the case in Theorem~\ref{thm:main}), our conventions match those of Green and Tao \cite{GT12}. Given these notions in hand, we will be in position to outline the main proof in greater detail in Section~\ref{sec:outline}.

\subsection*{Acknowledgements}
The first author thanks Terence Tao for advisement. The authors thank Ben Green and Terence Tao for useful discussions regarding \cite{GTZ12,GTZ24}. The authors are grateful to Dan Altman, Ben Green, and Zach Hunter for comments. Finally the authors are especially grateful to Sarah Peluse for exceptionally detailed and useful comments on the manuscript.

\section{Conventions on nilmanifolds}\label{sec:nilmani}
We will recall a large portion of setup regarding nilsequences. In order to discuss this in a quantitative manner, various complexity notions are required which are formally defined in Section~\ref{sec:complex}. This section contains little more than bare definitions; a number of these concepts are developed and motivated in a beautiful manner in \cite[Section~6]{GTZ12}.

\subsection{Basic group theory}\label{sec:groups}
We briefly record various basic group theory notations which will be used throughout the paper; our notation is identical to that of \cite[Section~3]{GTZ12}.

Given a group $G$ and a subset $A$, we define $\sang{A}$ to be the subgroup generated by the subset $A$. Given a collection of subgroups $(H_i)_{i\in I}$ in $G$, we define $\bigvee_{i\in I} H_i$ to be the smallest subgroup containing all the $H_i$. Given $h,k\in G$, we denote the commutator of $h$ and $k$ to be
\[[h,k] = h^{-1}k^{-1}hk.\]

Given a sequence of elements $g_1,\ldots,g_r\in G$, we define the set of $(r-1)$-fold commutators inductively. The $0$-fold commutators of the set $g_i$ is simply $g_i$. For $r>1$, an $(r-1)$-fold commutator is $[w,w']$ where $w$ and $w'$ are $(s-1)$-fold and $(s'-1)$-fold commutators of $g_{i_1},\ldots,g_{i_s}$ and $g_{i_1'},\ldots,g_{i_{s'}'}$ respectively with $\{i_1,\ldots,i_s\} \cup\{i_{1}',\ldots, i_{s'}'\} = \{1,\ldots, r\}$ and $s + s' = r$. For instance, $[[g_3,g_4],[g_1,g_2]]$ and $[g_1,[g_3,[g_2,g_4]]]$ are $3$-fold commutators of $g_1$, $g_2$, $g_3$, and $g_4$.  

We let $H\leqslant G$ denote that $H$ is a subgroup of $G$. Given $H,K\leqslant G$, we denote the commutator subgroup 
\[[H,K] = \sang{[h,k]\colon h\in H, k\in K}.\]

The following pair of elementary lemmas will be used throughout the paper to verify various commutator identities; the first is \cite[Lemma~3.1]{GTZ12}. 

\begin{lemma}\label{lem:commute}
Let $H = \sang{A}$ and $K = \sang{B}$ be normal subgroups of a nilpotent group $G$. Then $[H,K]$ is also normal and is generated by the $(i+j-1)$-fold iterated commutators of $a_1,\ldots,a_i,b_1,\ldots,b_j$ over all choices of $a_1,\ldots,a_i\in A$, $b_1,\ldots,b_j\in B$ and $i,j\ge 1$.
\end{lemma}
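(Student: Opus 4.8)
I would set $L \leqslant G$ to be the subgroup generated by all the iterated commutators appearing in the statement --- those using at least one element of $A$ and at least one of $B$ --- and prove $[H,K]=L$ by two inclusions. As a preliminary, note $[H,K]\trianglelefteq G$: since $[h,k]^g=[h^g,k^g]$ and $H,K\trianglelefteq G$, conjugation by any $g$ permutes the generators $[h,k]$; in fact I only need that $[H,K]$ is normalized by $H$ and by $K$. The inclusion $L\subseteq[H,K]$ is then an easy induction on the number of factors of an iterated commutator $w=[u,v]$, where $u,v$ are iterated commutators of complementary sub-multisets. Each of $u,v$ lies in $H$ or in $K$; so if both are ``pure'' (one all-$A$, one all-$B$) then $[u,v]\in[H,K]$ directly, and otherwise one of them, say $u$, uses both an $a$ and a $b$, whence $u\in[H,K]$ by induction and $[u,v]=u^{-1}(v^{-1}uv)\in[H,K]$ since $[H,K]$ is normalized by $v\in H\cup K$.

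For the reverse inclusion, since $[H,K]=\sang{[h,k]:h\in H,\ k\in K}$ and $L$ is a subgroup, it suffices to show every such $[h,k]$ lies in $L$. The plan hinges on the observation that if $w$ is one of our iterated commutators and $a\in A$, then $[w,a]$ is again one (it merely has one more $A$-factor), so $[w,a]\in L$; likewise $[w,b]\in L$ for $b\in B$. Using this I would prove the crucial claim that $L$ is invariant under conjugation by every element of $A\cup A^{-1}\cup B\cup B^{-1}$, equivalently $L^H=L=L^K$. This is the only place nilpotency enters. Writing $c$ for the nilpotency class and inducting \emph{downward} on the number $n$ of factors of $w$, one has $[w,x]\in\gamma_{n+1}(G)$, which is trivial once $n\ge c$ (the base case, where $w^x=w\in L$); for $n<c$ and $x=a\in A$ one gets $w^x=w[w,a]\in L$ because $[w,a]$ has $n+1$ factors, and for $x=a^{-1}$ one uses $[w,a^{-1}]=([w,a]^{-1})^{a^{-1}}$ to reduce to conjugating the $(n+1)$-factor iterated commutator $[w,a]$ by $a^{-1}$, which the inductive hypothesis covers (the $B$-cases are identical).

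Granting $L^H=L=L^K$, I would conclude as follows. All four $[a^{\pm1},b^{\pm1}]$ lie in $L$: indeed $[a,b]\in L$, and $L$ is invariant under conjugation by $A^{-1},B^{-1}$ (for instance $[a^{-1},b]=([a,b]^{-1})^{a^{-1}}$). Then for $h\in H$, $k\in K$, writing them as words in $A\cup A^{-1}$ and $B\cup B^{-1}$ and repeatedly applying $[h_1h_2,k]=[h_1,k]^{h_2}[h_2,k]$ and $[h,k_1k_2]=[h,k_2][h,k_1]^{k_2}$ reduces $[h,k]$ to a product of conjugates of single-letter commutators $[a^{\pm1},b^{\pm1}]$; every conjugation involved is by an element of $H$ or $K$, so by $L^H=L=L^K$ the result lies in $L$. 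Hence $[H,K]\subseteq L$, and together with the first inclusion $[H,K]=L$.

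The main obstacle is the conjugation-invariance of $L$; everything else is routine commutator calculus. The delicate point is that conjugating an iterated commutator by an \emph{inverse} generator $a^{-1}$ is not visibly an iterated commutator, and the downward induction is precisely the mechanism that pushes this obstruction into deeper and deeper commutators until it vanishes --- which works only because $G$ is nilpotent ($\gamma_{c+1}(G)=1$), which is exactly the hypothesis of the lemma.
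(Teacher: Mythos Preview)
Your proof is correct. The paper does not supply its own proof of this lemma but simply cites it as \cite[Lemma~3.1]{GTZ12}, so there is nothing in the paper to compare against directly. Your argument---showing $L\subseteq[H,K]$ by induction on commutator length (using that $[H,K]\trianglelefteq G$), and $[H,K]\subseteq L$ by first establishing that $L$ is normalized by $A\cup A^{-1}\cup B\cup B^{-1}$ via a downward induction on depth that terminates by nilpotency, then expanding $[h,k]$ via the standard identities---is a clean and complete treatment of exactly the kind one finds for this result.
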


This implies (see \cite[p.~1242]{GTZ12}) that for families $(H_i)_{i\in I}$, $(K_j)_{j\in J}$ which are normal in a nilpotent group $G$, 
\[\Big[\bigvee_{i\in I}H_i,\bigvee_{j\in J}K_j\Big] = \bigvee_{i\in I, j\in J}[H_i,K_j].\]

We next require that normality and various filtration conditions can be checked at the level of generators. 
\begin{lemma}\label{lem:com-check}
Suppose $K\leqslant H$ with $H=\sang{A}, K=\sang{B}$ where $A=A^{-1}$ and $B=B^{-1}$. Then:
\begin{itemize}
    \item If $[a,b]\in K$ for all $a\in A$ and $b\in B$ then $K$ is normal in $H$.
    \item Suppose $L\leqslant K\cap H$ is a normal subgroup with respect to both $K$ and $H$, and suppose for $a\in A$, $b\in B$, we have $[a,b]\in L$. Then $[H,K]\leqslant L$.
\end{itemize}
\end{lemma}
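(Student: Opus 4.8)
The plan is to derive both parts from a single elementary principle: normalizers and centralizers are subgroups, so any assertion about all of $H$ (or all of $K$) can be checked on the generating sets $A$, $B$; the hypotheses $A=A^{-1}$ and $B=B^{-1}$ are precisely what makes this reduction legitimate in both directions of conjugation.

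For the first bullet, I would look at the normalizer $N=\{h\in H: hKh^{-1}=K\}$, which is a subgroup of $H$. Since $H=\sang{A}$ it suffices to show $A\subseteq N$, and since $A=A^{-1}$ it is enough to prove $aKa^{-1}\subseteq K$ for every $a\in A$ (applying this with $a^{-1}$ in place of $a$ then yields the reverse inclusion, so $aKa^{-1}=K$). As conjugation is an automorphism, $aKa^{-1}=a\sang{B}a^{-1}=\sang{aBa^{-1}}$, so it is enough to check $aba^{-1}\in K$ for all $a\in A$, $b\in B$. This follows from the identity $aba^{-1}=[a^{-1},b^{-1}]\,b$ (with the paper's convention $[x,y]=x^{-1}y^{-1}xy$): the hypothesis gives $[a^{-1},b^{-1}]\in K$ since $a^{-1}\in A$ and $b^{-1}\in B$, and $b\in B\subseteq K$, so the product lies in $K$.

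For the second bullet, I would pass to the quotient $\ol{H}=H/L$, which is defined because $L\trianglelefteq H$; write $\ol{K}=K/L\leqslant \ol{H}$ (note $L\leqslant K$ since $L\leqslant K\cap H$). The desired conclusion $[H,K]\leqslant L$ is equivalent to $[\ol{H},\ol{K}]=1$, i.e.\ to every element of $\ol{K}$ lying in the center $Z(\ol{H})$. Fix $b\in B$. The centralizer $C_{\ol{H}}(\ol{b})$ is a subgroup of $\ol{H}=\sang{\ol{A}}$, and the hypothesis $[a,b]\in L$ for all $a\in A$ says exactly that $\ol{A}\subseteq C_{\ol{H}}(\ol{b})$; hence $C_{\ol{H}}(\ol{b})=\ol{H}$, so $\ol{b}\in Z(\ol{H})$. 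Since $Z(\ol{H})$ is a subgroup and $\ol{K}=\sang{\ol{B}}$, we conclude $\ol{K}\leqslant Z(\ol{H})$, therefore $[\ol{H},\ol{K}]=1$ and $[H,K]\leqslant L$.

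The argument needs neither nilpotency nor finiteness, so \Cref{lem:commute} is not used; the only points demanding attention are the commutator bookkeeping in the identity $aba^{-1}=[a^{-1},b^{-1}]\,b$ and the observation that $L\trianglelefteq H$ is what licenses forming the quotient in the second part. I do not expect a genuine obstacle here.
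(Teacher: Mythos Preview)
Your proof is correct. For the first bullet your argument is essentially the paper's: both reduce to showing $a^{\pm 1}Ba^{\mp 1}\subseteq K$ for $a\in A$ via a one-line commutator identity, then use that $A$ generates $H$.

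For the second bullet your route differs from the paper's. The paper works directly in $H$ with the commutator expansion identities
\[[xy,z]=y^{-1}[x,z]y\cdot[y,z],\qquad [x,zy]=[x,y]\cdot y^{-1}[x,z]y,\]
and inductively decomposes $[h,k]$ into conjugates of generators' commutators $[a,b]$, each lying in $L$ by normality. You instead pass to the quotient $H/L$ (legitimate since $L\trianglelefteq H$) and argue via centralizers: each $\ol b$ is centralized by $\ol A$, hence by all of $\ol H$, so $\ol K\leqslant Z(\ol H)$. Your approach is cleaner and avoids tracking conjugates of commutators; the paper's approach has the mild advantage of staying inside $H$ and making the inductive structure of $[H,K]$ explicit, which is closer in spirit to how the lemma is later applied to verify filtration conditions. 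Both are entirely standard and neither uses nilpotency.
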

\begin{remark*}
Suppose we wish to prove that $(G_i)_{i\in I}$ forms an $I$-filtration (see Definition~\ref{def:arb-filtration}). This lemma implies that it suffices to check the commutator filtration conditions simply at the level of generators: if for each $i,j\in I$ we know $[g_i,g_j]\in G_{i+j}$ for all generators $g_i$ for $G_i$ and $g_j$ for $G_j$, then we can deduce that $G_{i+j}$ is normal in $G_i$ using the first bullet point above, and then deduce that $[G_i,G_j]\leqslant G_{i+j}$ using the second bullet point above.
\end{remark*}
\begin{proof}
For $a\in A, b\in B$ we have $[a,b]\in K$ hence $a^{-1}b^{-1}a\in K$. Since $B=B^{-1}$ generates $K$, we find $a^{-1}Ka\leqslant K$. Since $A$ generates $H$, we deduce that $K$ is normal in $H$.

For the second item, note that
\[[xy,z] = y^{-1}[x,z]y\cdot [y,z]\text{ and }[x,zy] = [x,y] \cdot y^{-1}[x,z]y.\]
Repeatedly expanding $[h,k]$ for $h\in H, k\in K$ into generators proves the result.
\end{proof}

Finally, and most importantly, we will require the following versions of the Baker--Campbell--Hausdorff formula (see \cite[(3.2)]{GTZ12}). Given $g_1,g_2$ in a nilpotent group $G$ and $n_1,n_2\in\mb{N}$, we have 
\begin{equation}\label{eq:bch-dis}
g_1^{n_1}g_2^{n_2} = g_2^{n_2}g_1^{n_1} \prod_{a}g_a^{P_{a}(n_1,n_2)}
\end{equation}
where $g_a$ ranges over all iterated commutators of $g_1$ and $g_2$ with at least $1$ copy of each and $P_a(n_1,n_2)\colon\mb{Z}\times\mb{Z}\to\mb{Z}$ is a polynomial in $n_1$ and $n_2$. Furthermore if $g_a$ involves $d_1$ copies of $g_1$ and $d_2$ copies of $g_2$ we have that $P_a$ has degree at most $d_1$ in $n_1$ and degree at most $d_2$ in $n_2$. Here the $a$ have been ordered in some arbitrary manner.

If $G$ is a connected, simply connected nilpotent Lie group, then we denote the Lie algebra of $G$ as $\log G$ and let $\exp\colon\log G\to G$ denote the exponential map while $\log\colon G\to\log G$ is the inverse (the exponential map being a homeomorphism in this situation). When we refer to nilpotent Lie groups, they will henceforth be connected and simply connected. For $g\in G$ and $t\in\mb{R}$, we define 
\[g^t = \exp(t\log g).\]
The Baker--Campbell--Hausdorff formula also implies that 
\[\exp(t_1\log g_1 + t_2\log g_2) = g_1^{t_1}g_2^{t_2}\prod_{a}g_a^{R_a(t_1,t_2)}\]
where $g_a$ ranges over all iterated commutators of $g_1$ and $g_2$ with at least $1$ copy of each and $R_a$ is a polynomial with rational coefficients satisfying identical degree constraints to $P_a$. Finally we require the following, most standard version, of the Baker--Campbell--Hausdorff formula which states that if $X,Y\in\log G$, then
\[\exp(X)\exp(Y) = \exp\Big(X + Y + \frac{1}{2}[X,Y] + \cdots\Big)\]
where the remaining terms in the expansion are iterated commutators in $X$ and $Y$ with all higher terms having at least one ``copy'' of $X$ and $Y$ within them. In particular, this implies that 
\begin{equation}\label{eq:bch-com}
\exp(-X)\exp(-Y) \exp(X)\exp(Y) = \exp\big([X,Y] + \cdots\big)
\end{equation}
where are all higher order terms have at least one copy of $X$ and $Y$ in them and are $r$-fold commutators with $r\ge 3$. In all versions of Baker--Campbell--Hausdorff, it is important for us that nilpotency means these expressions are finite.

\subsection{Filtrations}\label{sub:def-filtration}
We next require the notion of an ordering and an associated filtration (see \cite[Definition~6.7]{GTZ12}). 
\begin{definition}\label{def:arb-filtration}
An \emph{ordering} $I = (I,\preceq,+,0)$ is a set $I$ with a distinguished element $0$, binary operation $+\colon I\times I\to I$, and a partial order $\preceq$ on $I$ such that   
\begin{itemize}
    \item $+$ is associative and commutative with $0$ acting as an identity element;
    \item $\preceq$ has $0$ as the minimal element;
    \item For all $i,j,k\in I$, if $i\preceq j$ then $i+k\preceq j+k$;
    \item The initial segments $\{i\in I\colon i\preceq d\}$ are finite for all $d$.
\end{itemize}
We define the following three orderings, with addition being the standard addition:
\begin{itemize}
        \item The \emph{degree} ordering is given by the standard ordering on $\mb{N}$, denoted $I=\mb{N}$ for short;
        \item The \emph{degree-rank} ordering is given by $\{(d,r)\in\mb{N}^2\colon 0\le r\le d\}$ with the ordering that $(d',r')\preceq(d,r)$ if $d'<d$ or $d' = d$ and $r'\le r$, denoted $I=\mr{DR}$ for short;
        \item The \emph{multidegree} ordering is given by $\mb{N}^k$ with $(i_1',\ldots,i_k')\preceq(i_1,\ldots,i_k)$ when $i_j'\le i_j$ for all $1\le j\le k$, denoted $I=\mb{N}^k$ for short.
\end{itemize}
An \emph{$I$-filtration of $G$} is a collection of subgroups $G_I=(G_i)_{i\in I}$ such that $G_0=G$ and:
\begin{itemize}
    \item (Nesting) If $i,j\in I$ are such that $i\preceq j$ then $G_i\geqslant G_j$;
    \item (Commutator) For $i,j\in I$, we have $[G_i,G_j]\leqslant G_{i+j}$.
\end{itemize}
We say that a filtered group $G$ has \emph{degree $\le d$} (for $d\in I$) if $G_i$ is trivial for $i\not\preceq d$. $G$ has \emph{degree $\subseteq J$} for a downset $J$ if $G_i$ is trivial whenever $i\notin J$.
\end{definition}
Note that the commutator condition implies nested subgroups are normal within each other. We next define degree, degree-rank, and multidegree filtrations.
\begin{definition}\label{def:filt-prec}
Given $d\in\mb{N}$, we say a group $G$ is given a \emph{degree filtration of degree $d$} if:
\begin{itemize}
    \item $G$ is given a $\mb{N}$-filtration $(G_i)_{i\in\mb{N}}$ with degree $\le d$;
    \item $G_0 = G_1$.
\end{itemize}
Given $(d,r)\in\mb{N}^2$ with $0\le r\le d$, $G$ is given a \emph{degree-rank filtration of degree-rank $(d,r)$} if:
\begin{itemize}
    \item $G$ is given a $\mr{DR}$-filtration $(G_i)_{i\in\mr{DR}}$ with degree $\le(d,r)$;
    \item $G_{(0,0)}=G_{(1,0)}$ and $G_{(i,0)}=G_{(i,1)}$ for $i\ge 1$. (We also let $G_{(i,j)}=G_{(i+1,0)}$ for $j>i$.)
\end{itemize}
The associated degree filtration with respect to this degree-rank filtration is $(G_{(i,0)})_{i\ge 0}$.

Given $(d_1,\ldots,d_k)\in\mb{N}^k$, $G$ is given a \emph{multidegree filtration of multidegree $J$} (where $J\subseteq\mb{N}^k$ is a downset) if:
\begin{itemize}
    \item $G$ is given a $\mb{N}^k$-filtration $(G_i)_{i\in\mb{N}^k}$ with degree $\subseteq J$;
    \item $G_{\vec{0}} = \bigvee_{i=1}^k G_{\vec{e_i}}$.
\end{itemize}
The associated degree filtration with respect to the multidegree filtration is $(\bigvee_{|\vec{i}|=i}G_{\vec{i}})_{i\ge 0}$. Here $|\vec{i}| = i_1 + \ldots + i_k$.
\end{definition}
\begin{remark*}
This definition imposes some additional equalities of subgroups in order to say a group is given a degree-rank filtration versus a $\mr{DR}$-filtration (for example). In particular, the concept of ``degree-rank'' filtration and $\mr{DR}$-filtration are distinct. The difference is minor, but causes a number of technical checks to be required, most notably in Appendix~\ref{app:nilcharacters}. We will almost exclusively operate with these additional conditions; this is so that we can invoke equidistribution theory safely.
\end{remark*}

We now define polynomial sequences of an $I$-filtered group. The notion of a polynomial sequence for a group $G$ given a degree-rank filtration will be the same as treating this ordering as a $\mr{DR}$-filtration; the same applies for degree and multidegree filtrations. 
\begin{definition}\label{def:I-polynomial}
Given $g\colon H\to G$ a map between groups (not necessarily a homomorphism) and $h\in H$, we define the derivative $\partial_hg\colon H\to G$ via $\partial_hg(n)=g(hn)g(n)^{-1}$ for all $n\in H$. If $H,G$ are $I$-filtered, we say that this map $g$ is \emph{polynomial} if for all $m\ge 0$ and $i_1,\ldots,i_m\in I$, we have
\[\partial_{h_1}\cdots\partial_{h_m}g(n)\in G_{i_1+\cdots+i_m}\]
for all choices of $h_j\in H_{i_j}$ and $n\in H_0$. The space of all polynomial maps with respect to this data is denoted $\on{poly}(H_I\to G_I)$.
\end{definition}

We will require various general properties of polynomial sequences established in \cite[Appendix~B]{GTZ12}. We will only consider $H=\mb{Z}^k$ for $k\ge 1$ and the following $I$-filtrations on $H$.

\begin{definition}\label{def:Z-filtration}
We define the following filtrations on $H=\mb{Z}^k$:
\begin{itemize}
    \item The \emph{(domain) degree filtration} is with $I=\mb{N}$ the degree ordering and $H_0=H_1=\mb{Z}^k$, and $H_i=\{0\}$ for $i\ge 2$;
    \item The \emph{(domain) multidegree filtration} is with $I=\mb{N}^k$ the multidegree ordering, $H_{\vec{0}}=\mb{Z}^k$, $H_{\vec{e}_i}=\mb{Z}\vec{e}_i$ for $i\in[k]$, and $H_{\vec{v}}=\{0\}$ otherwise, where $\vec{e}_i$ forms the standard basis of $\mb{Z}^k$;
    \item The \emph{(domain) degree-rank filtration} is with $I=\mr{DR}$ the degree-rank ordering and $H_{(0,0)}=H_{(1,0)}=\mb{Z}^k$ and $H_{(d,r)}=\{0\}$ otherwise.
\end{itemize}
\end{definition}

We now define the notion of a nilmanifold, which is essentially a compact quotient of a filtered nilpotent Lie group.
\begin{definition}\label{def:nilmanifold}
We define an \emph{$I$-filtered nilmanifold $G/\Gamma$} to be the data of a connected, simply connected nilpotent Lie group $G$ with $I$-filtration (of Lie subgroups) and discrete cocompact subgroup $\Gamma\leqslant G$ which is rational with respect to $G_I$ (i.e., $\Gamma_i:=\Gamma\cap G_i$ is cocompact in $G_i$ for all $i\in I$). We say it has degree $\le d$ or $\subseteq J$ if $G$ has degree $\le d$ or $\subseteq J$.

If $I = \mb{N}$ and the $I$-filtration is furthermore a degree filtration with degree $\le d$, then $G/\Gamma$ is a degree $d$ nilmanifold. If $I = \mr{DR}$ and the $I$-filtration is furthermore a degree-rank filtration with degree $\le (d,r)$, then $G/\Gamma$ is a degree-rank $(d,r)$ nilmanifold. Finally if $I = \mb{N}^k$ and the $I$-filtration is furthermore a multidegree filtration with degree $\subseteq J$, then $G/\Gamma$ is a multidegree $J$ nilmanifold. 
\end{definition}
\begin{remark*}
Note that $\Gamma$ can naturally be given the structure of an $I$-filtered group $\Gamma_I$.
\end{remark*}

We finally (very occasionally) will require the lower central series of a group $G$. 
\begin{definition}\label{def:lower-central}
Given a nilpotent group $G$, define the lower central series inductively via $G_{(0)} = G_{(1)} = G$ and $G_{(i+1)} = [G,G_{(i)}]$. The \emph{step} of $G$ is the minimal $j$ such that $G_{(j+1)}=\mr{Id}_G$. 
\end{definition}

\subsection{Horizontal tori and Taylor coefficients}\label{sub:def-horizontal-taylor}
The next notion, that of a horizontal character, plays a vital role when discussing the equidistribution of nilsequences. 
\begin{definition}\label{def:horiz}
Given a connected, simply connected nilpotent group $G$ and a discrete, cocompact subgroup $\Gamma$, a \emph{horizontal character} $\eta$ is a continuous homomorphism $\eta\colon G\to\mb{R}$ such that $\eta(\Gamma)\subseteq\mb{Z}$. We say a horizontal character is nontrivial when $\eta$ is not identically zero.
\end{definition}
\begin{remark*}
Throughout the literature on nilmanifolds, horizontal characters are continuous homomorphisms $\eta\colon G\to\mb{R}/\mb{Z}$ such that $\eta$ annihilates $\Gamma$. It is straightforward to prove (via using Mal'cev bases) that these two notions are identical up to taking $\imod 1$. The reason we operate with the above definition is that the kernel of $\eta$ as defined is then a subspace of $G/[G,G]\simeq\mb{R}^{\dim(G)-\dim([G,G])}$.
\end{remark*}

We next require the notion of horizontal tori with respect to a degree-rank filtration. These tori will play a starring role in Sections~\ref{sec:sunflower}, \ref{sec:linear}, and \ref{sec:nil-first}; our definition is exactly that of \cite[Definition~9.6]{GTZ12}.
\begin{definition}\label{def:horizi}
Let $G$ be a degree-rank filtered nilpotent Lie group with filtration $G_{\mr{DR}}=(G_{(d,r)})_{(d,r)\in\mr{DR}}$. Given a subgroup $\Gamma$ of $G$, we define various \emph{horizontal tori} for $i\ge 1$ as
\begin{align*}
\on{Horiz}_i(G) &:= G_{(i,1)}/G_{(i,2)},\\
\on{Horiz}_i(\Gamma) &:= (\Gamma\cap G_{(i,1)})/(\Gamma\cap G_{(i,2)}),\\
\on{Horiz}_i(G/\Gamma) &:= \on{Horiz}_i(G)/\on{Horiz}_i(\Gamma).
\end{align*}
Given a polynomial sequence $g\in\on{poly}(\mb{Z}_{\mr{DR}}\to G_{\mr{DR}})$ we define the \emph{$i$-th horizontal Taylor coefficient} to be
\begin{align*}
\on{Taylor}_i(g) &:= \partial_1\cdots\partial_1 g(n) \imod G_{(i,2)}\in\on{Horiz}_i(G),\\
\on{Taylor}_i(g\Gamma) &:= \on{Taylor}_i(g) \imod\on{Horiz}_i(\Gamma)\in\on{Horiz}_i(G/\Gamma),
\end{align*}
where we take $i$ iterated derivatives.
\end{definition}

We also require the notion of $i$-th horizontal characters.
\begin{definition}\label{def:i-th-horiz}
Consider a nilmanifold $G/\Gamma$ with a degree-rank filtration. A continuous homomorphism $\eta\colon G_{(i,1)}\to\mb{R}$ is an \emph{$i$-th horizontal character} if $\eta(G_{(i,2)}) = 0$ and $\eta(G_{(i,1)}\cap\Gamma) \subseteq \mb{Z}$.
\end{definition}

The name Taylor coefficient is also used in the context of Taylor coefficients of polynomial factorizations. The following elementary lemma relates these two notions; we remark that a very closely related proof appears in \cite[Lemma~A.8]{GT10b}. 
\begin{lemma}\label{lem:taylor-relation}
Let $G$ be given a degree-rank filtration of degree-rank $(d,r)$ and consider a sequence $g\in\on{poly}(\mb{Z}_{\mr{DR}}\to G_{\mr{DR}})$. Then we may write $g(n)=\prod_{i=0}^d g_i^{\binom{n}{i}}$ for elements $g_i\in G_{(i,0)}$ and for $1\le i\le d$ we have
\[\on{Taylor}_i(g)=g_i\imod G_{(i,2)}.\]
\end{lemma}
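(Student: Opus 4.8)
The plan is to exploit the binomial/Taylor expansion of a polynomial sequence together with the degree-rank filtration conditions, and then extract the relevant horizontal coefficient modulo $G_{(i,2)}$. First I would invoke the general Taylor-expansion result for polynomial sequences of filtered groups from \cite[Appendix~B]{GTZ12}: since $g\in\on{poly}(\mb{Z}_{\mr{DR}}\to G_{\mr{DR}})$, there is a unique factorization $g(n)=\prod_{i=0}^d g_i^{\binom ni}$ with $g_i\in G_{(i,0)}$, where the $g_i$ are obtained as iterated discrete derivatives at the identity: concretely $g_i = \partial_1^i g(0)$ up to reordering, so that $g_i\in G_{(i,0)}$ follows from the definition of polyn#ial sequence applied with all $h_j=1\in H_{(1,0)}$ (here the domain degree-rank filtration puts $1$ in degree-rank $(1,0)$, and $i$ copies sum to $(i,0)$).

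Next I would compute $\on{Taylor}_i(g) = \partial_1\cdots\partial_1 g(n)\imod G_{(i,2)}$ where $i$ derivatives are taken. The key point is that the $i$-fold discrete derivative of $g$ is independent of $n$ modulo $G_{(i,2)}$ — indeed, taking one more derivative lands in $G_{(i+1,0)}\le G_{(i,2)}$ (using the degree-rank filtration identity $G_{(i,2)}\geqslant G_{(i+1,0)}$ and nesting), so $\partial_1^i g(n)$ and $\partial_1^i g(0)$ agree mod $G_{(i,2)}$. Hence it suffices to compute $\partial_1^i g(0)\imod G_{(i,2)}$. Substituting the factorization $g(n)=\prod_{j=0}^d g_j^{\binom nj}$ and using the distributed Baker–Campbell–Hausdorff formula \eqref{eq:bch-dis} to move factors past each other, the contribution of $g_j$ with $j>i$ to $\partial_1^i g(0)$ vanishes (the $i$-fold derivative of $\binom nj$ in the leading term, plus all correction terms, evaluate to something in a high-enough subgroup), while the contribution of $g_j$ with $j<i$ also vanishes after $i$ derivatives since $\binom nj$ has degree $j<i$. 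The surviving term is $g_i^{\Delta^i\binom ni} = g_i^1 = g_i$, where $\Delta$ is the forward difference operator, plus commutator corrections from BCH that involve at least two distinct $g_j$'s and therefore lie in $[G_{(j_1,0)},G_{(j_2,0)}]\leqslant G_{(j_1+j_2,0)}$ with $j_1+j_2\ge i+1$ once one accounts for which terms can actually survive $i$ derivatives — these all lie in $G_{(i+1,0)}\leqslant G_{(i,2)}$ and so vanish modulo $G_{(i,2)}$.

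The main obstacle is the bookkeeping in the last step: one must verify carefully that, after expanding $g(n)=\prod_j g_j^{\binom nj}$ via \eqref{eq:bch-dis} into an ordered product of powers of iterated commutators of the $g_j$, every term other than $g_i$ itself either (a) has polynomial exponent of degree $<i$ in $n$ (hence is killed by $i$ discrete derivatives) or (b) already lies in $G_{(i,2)}$. For (b) one uses that an iterated commutator involving $g_{j_1},\ldots,g_{j_m}$ lies in $G_{(j_1,0)+\cdots+(j_m,0)} = G_{(j_1+\cdots+j_m,0)}$, and tracks that whenever such a commutator survives $i$ derivatives one must have $j_1+\cdots+j_m\ge i$ with equality only in the degenerate single-factor case $m=1$, $j_1=i$; any genuine commutator ($m\ge 2$) contributing at degree exactly $i$ would need total weight $i$ but then the BCH-polynomial exponent has degree strictly less than $i$, a contradiction, so in fact surviving genuine commutators have weight $\ge i+1$ and lie in $G_{(i+1,0)}\leqslant G_{(i,2)}$. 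Assembling these observations gives $\on{Taylor}_i(g) \equiv g_i \imod G_{(i,2)}$, as claimed. One could alternatively cite \cite[Lemma~A.8]{GT10b} for the analogous computation and adapt it to the degree-rank setting, which streamlines the commutator accounting.
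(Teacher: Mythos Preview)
Your overall strategy is correct and matches the paper's: invoke the Taylor expansion from \cite[Lemma~B.9]{GTZ12}, then show that $\partial_1^i g(0)\equiv g_i\imod G_{(i,2)}$ by arguing that all BCH corrections land in $G_{(i,2)}$. However, there is a genuine gap in your commutator accounting. You claim that a genuine commutator ($m\ge 2$) of total weight exactly $i$ has BCH-polynomial exponent of degree \emph{strictly less} than $i$. This is false: in the BCH formula \eqref{eq:bch-dis}, the commutator $[g_{j_1},g_{j_2}]$ arising from swapping $g_{j_1}^{\binom{n}{j_1}}$ past $g_{j_2}^{\binom{n}{j_2}}$ carries an exponent of degree exactly $j_1+j_2$ in $n$ (the leading term is essentially $\binom{n}{j_1}\binom{n}{j_2}$). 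So a weight-$i$ commutator can survive $i$ discrete derivatives, and your dichotomy (a)/(b) does not close as written.

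The fix is to use the \emph{rank} part of the degree-rank filtration, which you never invoke: since $g_{j_k}\in G_{(j_k,0)}=G_{(j_k,1)}$, any $(m-1)$-fold commutator of $g_{j_1},\ldots,g_{j_m}$ with $m\ge 2$ lies in $G_{(j_1+\cdots+j_m,\,m)}\leqslant G_{(i,2)}$ whenever $j_1+\cdots+j_m\ge i$, regardless of the exponent degree. This is precisely what the paper exploits, phrased as the vanishing $[\wt{G}_a,\wt{G}_{j-a}]=\mr{id}$ in the quotient $\wt{G}=G/G_{(j,2)}$. The paper then organizes the computation inductively (one derivative at a time, peeling off the top group of the filtration and checking that the new top coefficient equals the old one), which avoids compounding BCH bookkeeping across $i$ iterations. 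Your direct approach can be made to work once you replace the incorrect degree claim with this rank-$\ge 2$ observation, but the inductive scheme is cleaner.
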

\begin{proof}
The representation of $g(n)$ in the specified product form follows immediately from the existence of Taylor expansion, see \cite[Lemma~B.9]{GTZ12}. 

We next prove $\on{Taylor}_j(g)=g_j\imod G_{(j,2)}$ for each $1\le j\le d$ individually. Notice that it suffices to consider $\wt{g}(n)$ which is $g(n) \imod G_{(j,2)}$, i.e., we consider the group $G/G_{(j,2)}$ with quotiented filtration. This group is easily seen to be at most $j$-step nilpotent and furthermore $[G_{(i,1)}/G_{(j,2)}, G_{(j-i,1)}/G_{(j,2)}] = \mr{Id}_{G/G_{(j,2)}}$ for $0\le i\le j$ (one should check the cases $j=1$ and $i\in\{0,j\}$ manually). Let $\wt{G}_i = G_{(i,1)}/G_{(j,2)}$ for $0\le i\le j$ and note $\wt{G}_0=\wt{G}_1$.

We see that $\wt{G}_0\geqslant\cdots\geqslant\wt{G}_j$ is an $\mb{N}$-filtration for $\wt{G}_0$ with $[\wt{G}_i,\wt{G}_{j-i}]=\mr{Id}_{\wt{G}_0}$ for all $0\le i\le j$. Note that $\wt{g}(n) = \prod_{i=0}^{j}\wt{g}_i^{\binom{n}{i}}$ where $\wt{g}_i$ is $g_i\imod G_{(j,2)}$.

It suffices to prove the claim that $\wt{g}(n+1)\wt{g}(n)^{-1} = \prod_{i=0}^{j-1}(\wt{g}_i')^{\binom{n}{i}}$ with $\wt{g}_i'\in\wt{G}_{i+1}$ and $\wt{g}_{j-1}' = \wt{g}_j$. If this is the case, then we may modify the filtration $\wt{G}_0\geqslant\wt{G}_{1}\geqslant\cdots\geqslant\wt{G}_{j}$ by stripping off the top group, which maintains the necessary inductive properties. Iterating this procedure $j$ times we obtain the desired Taylor equality. 

This claim is a consequence of the Taylor expansion for general polynomial sequences and the Baker--Campbell--Hausdorff formula and counting the depths of nested commutators. The crucial reason that $\wt{g}_{j-1}' = \wt{g}_j$ is that any ``higher order'' terms which arise in the Baker--Campbell--Hausdorff formula and could contribute are in fact annhilated due to $[\wt{G}_i,\wt{G}_{j-i}]=\mr{id}_{\wt{G}_0}$ for $0\le i\le j$. 
\end{proof}

We also have the following linearity of the $i$-th Taylor coefficients.
\begin{lemma}\label{lem:coeff-mult}
Assume the setup of Lemma~\ref{lem:taylor-relation}. We have
\[\on{Taylor}_i(gh) = \on{Taylor}_i(g) + \on{Taylor}_i(h)\]
and if 
\[g(n) = \exp\bigg(\sum_{i=0}^dg_i \binom{n}{i}\bigg)\]
for $g_i\in\log(G_i)$ we have 
\[\on{Taylor}_i(g) = \exp(g_i) \imod G_{(i,2)}.\]
\end{lemma}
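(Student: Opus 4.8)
The plan is to reduce everything to the corresponding additive identity in the Lie algebra. For the first identity, recall from Lemma~\ref{lem:taylor-relation} that $\on{Taylor}_i(g)$ is computed as $\partial_1\cdots\partial_1 g(n)\imod G_{(i,2)}$ with $i$ iterated derivatives, or equivalently as the $i$-th coefficient $g_i\imod G_{(i,2)}$ in a binomial Taylor expansion $g(n)=\prod_{j=0}^d g_j^{\binom{n}{j}}$. Work in the quotient group $\wt G:=G/G_{(i,2)}$ equipped with its quotiented degree-rank filtration, so that $\on{Horiz}_i(G)=\wt G_{(i,1)}/\wt G_{(i+1,0)}$ is an abelian Lie group (a torus once we quotient by $\Gamma$). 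First I would observe that $\on{Taylor}_i$ is the $i$-th iterated derivative at the identity followed by projection, and that the map $n\mapsto g(n)h(n)$ has iterated derivatives governed by a product rule: expanding $\partial_1^{(i)}(gh)$ via the Leibniz-type formula for discrete derivatives of products of polynomial sequences (see \cite[Appendix~B]{GTZ12}), every term other than $(\partial_1^{(i)}g)\cdot(\partial_1^{(0)}h)$ and $(\partial_1^{(0)}g)\cdot(\partial_1^{(i)}h)$ involves a commutator of a $j$-fold derivative of $g$ (lying in $G_{(j,0)}$) with a $(i-j)$-fold derivative of $h$ (lying in $G_{(i-j,0)}$) for some $1\le j\le i-1$; by the filtration axiom such a commutator lies in $[G_{(j,0)},G_{(i-j,0)}]\le G_{(i,0)}=G_{(i,1)}$, but in fact one checks it lands in $G_{(i,2)}$ because the degree-rank bracket $[G_{(j,0)},G_{(i-j,0)}]\le G_{(i,2)}$ (the rank adds, using $G_{(j,0)}=G_{(j,1)}$ for $j\ge 1$). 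Hence modulo $G_{(i,2)}$ only the two ``pure'' terms survive, and since $\on{Horiz}_i(G)$ is abelian their product is $\on{Taylor}_i(g)\on{Taylor}_i(h)$.

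For the second identity I would argue similarly but at the level of the exponential coordinates. Suppose $g(n)=\exp\!\big(\sum_{j=0}^d g_j\binom{n}{j}\big)$ with $g_j\in\log G_j$. Applying the Baker--Campbell--Hausdorff formula \eqref{eq:bch-com} repeatedly to convert this single exponential of a sum into an ordered product $\prod_{j=0}^d \exp(g_j)^{\binom{n}{j}}\cdot(\text{correction terms})$, where each correction term is an exponential of an iterated commutator of the $g_j$'s (with at least one copy each of two distinct generators, hence of the form $[g_a,g_b]+\cdots$ with $a+b\le$ the relevant total degree). The key point is again a depth/rank count: a correction term contributing to the binomial coefficient $\binom{n}{i}$ and arising from a commutator of generators of degrees summing to $i$ lies in $[G_{(a,1)},G_{(b,1)}]\le G_{(i,2)}$ whenever $a,b\ge 1$, while lower-order binomial contributions are irrelevant to $\on{Taylor}_i$. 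Therefore modulo $G_{(i,2)}$ the coefficient of $\binom{n}{i}$ in the product representation is exactly $\exp(g_i)$, and Lemma~\ref{lem:taylor-relation} identifies this with $\on{Taylor}_i(g)$.

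I expect the main obstacle to be bookkeeping the Baker--Campbell--Hausdorff corrections precisely enough to be sure they all die modulo $G_{(i,2)}$ — one must verify that every nested commutator that could in principle contribute to the $\binom{n}{i}$ term either involves two generators of positive degree (so its rank is at least $2$ at total degree $i$, placing it in $G_{(i,2)}$) or else has total degree strictly less than $i$ (so it only affects lower Taylor coefficients). This is exactly the ``counting the depths of nested commutators'' step flagged in the proof of Lemma~\ref{lem:taylor-relation}, and the cleanest way to handle it uniformly is to pass to $\wt G=G/G_{(i,2)}$ at the outset, where $[\wt G_{(a,1)},\wt G_{(b,1)}]$ is trivial for $a+b=i$ with $a,b\ge 1$, so that BCH collapses to the abelian statement and both identities become immediate. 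The remaining details — the precise form of the discrete product rule and the convergence (finiteness) of all BCH expansions by nilpotency — are standard and are recorded in \cite[Appendix~B]{GTZ12}.
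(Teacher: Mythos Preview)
Your proposal is correct and takes essentially the same approach as the paper: both arguments rest on the single commutator identity $[G_{(a,1)},G_{(b,1)}]\leqslant G_{(a+b,2)}$ (with the edge case $a=0$ handled via $G_{(0,0)}=G_{(1,1)}$) together with Baker--Campbell--Hausdorff, and your suggestion to pass to $G/G_{(i,2)}$ at the outset is exactly how the paper organizes the computation. Your Leibniz/derivative phrasing for the first claim is a cosmetic variant of the paper's route, which instead multiplies the two binomial Taylor expansions from Lemma~\ref{lem:taylor-relation} and reorders with \eqref{eq:bch-dis}; the second claim is argued identically in both.
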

\begin{remark}
Note that $G_{(i,1)}/G_{(i,2)}$ is abelian and hence additive notation may be used when considering Taylor coefficients. 
\end{remark}
\begin{proof}
The first claim follows from Lemma~\ref{lem:taylor-relation}, the Baker--Campbell--Hausdorff formula, and the commutator relationship that $[G_{(i,0)},G_{(j-i,0)}]=[G_{(i,1)},G_{(j-i,1)}]\subseteq G_{(j,2)}$. (Note that this is using $G_{(0,0)}=G_{(0,1)}=G_{(1,0)}$ in the case $i=0$.)

For the second claim, suppose that
\[g(n) = \exp\bigg(\sum_{i=0}^dg_i \binom{n}{i}\bigg) = \prod_{i=0}^{s}(g_i')^{\binom{n}{i}}.\]
Via iterated applications of the Baker--Campbell--Hausdorff formula and the commutator relationship that $[G_{(i,0)},G_{(j-i,0)}]=[G_{(i,1)},G_{(j-i,1)}]\subseteq G_{(j,2)}$, we see that $g_j' = \exp(g_j) \imod G_{(j,2)}$ and the result follows. 
\end{proof}

\subsection{Vertical tori and nilcharacters}\label{sub:def-vertical-nilcharacter}
Given a polynomial sequence $g$ on an $I$-filtered Lie group with $I=\mb{N}$, we can define a sequence of vectors by considering a smooth vector-valued function $F$ on $G/\Gamma$ and looking at $F(g(n)\Gamma)$. However, we will be particularly interested in those which ``have a Fourier coefficient'' with respect to various subgroups of the center.

\begin{definition}\label{def:vert-char}
Consider a nilmanifold $G/\Gamma$ and a function $F\colon G/\Gamma\to\mb{C}$. Given a connected, simply connected subgroup $T$ of the center $Z(G)$ which is rational (i.e., $\Gamma\cap T$ is cocompact in $T$) and a continuous homomorphism $\eta\colon T\to\mb{R}$ such that $\eta(T\cap\Gamma)\subseteq \mb{Z}$, if 
\[F(gx)=e(\eta(g))F(x)\emph{ for all }g\in T\]
we say that $F$ has a \emph{$T$-vertical character} (or \emph{$T$-vertical frequency}) $\eta$.
\end{definition}
\begin{remark*}
Note that $T/(\Gamma\cap T)$ is isomorphic to a torus and thus one can modify functions under consideration to have vertical characters via appropriate Fourier decomposition.
\end{remark*}

A particular case which will arise frequently in our applications comes from the fact that given a filtration satisfying the conditions of Definition~\ref{def:filt-prec}, we have that the ``bottom group'' is contained in the center. For example, a group $G$ given a degree filtration of degree $d$ satisfies $[G,G_d] = [G_1,G_d] = \mr{Id}_G$ hence $G_d\leqslant Z(G)$. One special class of functions with a vertical frequency which will be of particular importance is that of nilcharacters.  

\begin{definition}\label{def:nilcharacter}
A \emph{nilcharacter} of degree $d$ and output dimension $D$ is the following data. Consider an $I$-filtered nilmanifold $G/\Gamma$ of degree $d$ such that $[G,G_d] = \mr{Id}_G$ and an $I$-filtered abelian group $H$. Let $g\in\on{poly}(H_I\to G_I)$ and consider function $F\colon G/\Gamma\to\mb{C}^D$ such that:
\begin{itemize}
    \item $\snorm{F(x)}_2 = 1$ for all $x\in G/\Gamma$ pointwise;
    \item $F(g_dx) = e(\eta(g_d))F(x)$ for all $g_d\in G_d$ where $\eta$ is some continuous homomorphism $G_d\to\mb{R}$ such that $\eta(\Gamma\cap G_d) \subseteq\mb{Z}$.
\end{itemize}
The values of the nilcharacter are given by $\chi\colon H\to\mb{C}^D$ where $\chi(n)=F(g(n)\Gamma)$ for $n\in H$.
\end{definition}
\begin{remark*}
We work with vector-valued nilcharacters for precisely the same topological reason given in \cite[p.~1254]{GTZ12}.
\end{remark*}

\subsection{Additional miscellaneous conventions}\label{sub:conventions}
We end this section with a brief discussion of various miscellaneous conventions. Throughout the paper we use $\{\cdot\}$ to denote the map $\mb{R}\to(-1/2,1/2]$ (or $\mb{R}/\mb{Z}\to(-1/2,1/2]$, abusively) which takes the representative $\imod{1}$ closest to $0$. Furthermore given $x\in\mb{R}/\mb{Z}$ and $y\in\mb{R}$ we will treat $x-y\in\mb{R}/\mb{Z}$ in the obvious manner. As used above, we let $e\colon\mb{R}/\mb{Z}\to\mb{C}$ denote the exponential function $e(x)=\exp(2\pi ix)$, which is lifted to $\mb{R}$ in the obvious manner.

We use standard asymptotic notation. Given functions $f=f(n)$ and $g=g(n)$, we write $f=O(g)$, $f\ll g$, $g=\Omega(f)$, or $g\gg f$ to mean that there is a constant $C$ such that $|f(n)|\le Cg(n)$ for sufficiently large $n$. We write $f\asymp g$ or $f=\Theta(g)$ to mean that $f\ll g$ and $g\ll f$, and write $f=o(g)$ or $g=\omega(f)$ to mean $f(n)/g(n)\to0$ as $n\to\infty$. Subscripts indicate dependence on parameters.

Finally in various arguments throughout the paper it will be convenient to denote appropriately bounded functions as $b(n)$ or $b(n_1,\ldots,n_k)$, and $B(n), B(n_1,\ldots,n_k)$ when vector-valued. When using such notation, the functions $b, B$ may change from line to line and within a line may refer to different functions.

\section{Various complexity notions}\label{sec:complex}
\subsection{Rationality of bases and Lipschitz norms}\label{sub:bases}
We will now discuss the definitions chosen for complexity of nilmanifolds. We start by defining first- and second-kind coordinates given a basis $\mc{X}$ for $\log G$.

\begin{definition}\label{def:coordinate}
Consider a connected, simply connected nilpotent Lie group $G$ of dimension $d$. Given a basis $\mc{X} = \{X_1,\ldots,X_d\}$ of $\log G$ and $g\in G$, there exists $(t_1,\ldots,t_d)\in\mb{R}^d$ such that 
\[g = \exp(t_1X_1 + t_2X_2+\cdots+ t_dX_d).\]
We define \emph{Mal'cev coordinates of first-kind} $\psi_{\exp}=\psi_{\exp,\mathcal{X}}\colon G\to\mb{R}^d$ for $g$ relative to $\mc{X}$ by 
\[\psi_{\exp}(g) := (t_1,\ldots,t_d).\]
Given $g\in G$ there also exists $(u_1,\ldots,u_d)\in\mb{R}^d$ such that 
\[g = \exp(u_1X_1)\cdots\exp(u_dX_d),\]
and we define the \emph{Mal'cev coordinates of second-kind} $\psi=\psi_{\mathcal{X}}\colon G\to\mb{R}^d$ for $g$ relative to $\mc{X}$ by
\[\psi(g) := (u_1,\ldots,u_d).\]
\end{definition}

Note that the above definition does not account for the cocompact subgroup $\Gamma$. The next set of definitions account for how ``rational'' $\mc{X}$ is with respect to itself and $\Gamma$.

\begin{definition}\label{def:heigh}
The \emph{height} of a number $x$ is $\max(|a|,|b|)$ if $x = a/b$ with $\gcd(a,b) = 1$ and $\infty$ if $x$ is irrational.
\end{definition}
\begin{definition}\label{def:Mal'cev}
Given a nilmanifold $G/\Gamma$ of dimension $d$, consider a basis $\mc{X} = \{X_1,\ldots,X_d\}$ of $\mf{g}=\log G$. $\mc{X}$ is said to be a \emph{weak basis of rationality $Q$} with respect to $\Gamma$ if:
\begin{itemize}
    \item There exist rationals $c_{ijk}$ of height at most $Q$ such that
    \[[X_i,X_j] = \sum_{k}c_{ijk}X_k;\]
    \item There exists integer $1\le q\le Q$ such that 
    \[q\cdot\mb{Z}^d\subseteq\psi_{\mr{exp},\mc{X}}(\Gamma)\subseteq q^{-1}\cdot\mb{Z}^d.\]
\end{itemize}
$\mc{X}$ is a \emph{Mal'cev basis} of $\log G$ with respect to $\Gamma$ of rationality $Q$ if:
\begin{itemize}
    \item There exist rationals $c_{ijk}$ of height at most $Q$ such that
    \[[X_i,X_j] = \sum_{k}c_{ijk}X_k;\]
    \item $\psi_{\mc{X}}(\Gamma) = \mb{Z}^d$.
\end{itemize}
We say that $\mc{X}$ has the \emph{degree $k$ nesting property} if there exist $\ell_1\le\cdots\le\ell_k$ such that if $\mf{g}_t=\on{span}_{\mb{R}}(X_{\ell_t+1},\ldots,X_m)$ then $[\mf{g},\mf{g}]\subseteq\mf{g}_1$, $[\mf{g},\mf{g}_{\ell}]\subseteq\mf{g}_{\ell+  1}$ and $[\mf{g},\mf{g}_k]=0$.

Finally we say that a Mal'cev basis is \emph{adapted} to a sequence of nesting subgroups $G = G_0\geqslant G_1\geqslant G_2\geqslant\cdots\geqslant G_\ell \geqslant\mr{Id}_G$ if 
\[\on{span}_{\mb{R}}(\{X_j\colon d-\dim(G_i)<j\le d\}) = \log G_i\]
for $1\le i\le\ell$.
\end{definition}

We next state the definition of the Lipschitz property for a function on $G/\Gamma$.
\begin{definition}\label{def:Lip}
We define a metric $d = d_{G,\mc{X}}$ on $G$ by
\[d(x,y) := \inf\bigg\{\sum_{i=1}^{n}\min(\snorm{\psi(x_ix_{i+1}^{-1})},\snorm{\psi(x_{i+1}x_{i}^{-1})})\colon n\in\mb{N}, x_1,\ldots,x_{n+1}\in G, x_1 = x, x_{n+1} = y\bigg\},\]
where $\snorm{\cdot}$ denotes the $\ell^\infty$-norm on $\mb{R}^m$, and define a metric on $G/\Gamma$ by
\[d(x\Gamma,y\Gamma) = \inf_{\gamma,\gamma'\in\Gamma}d(x\gamma,y\gamma').\]
Furthermore, for any function $F\colon G/\Gamma\to\mb{C}$ we define 
\[\snorm{F}_{\mr{Lip}} := \snorm{F}_{\infty} + \sup_{x\neq y\in G/\Gamma}\frac{|F(x)-F(y)|}{d(x,y)}.\]
Given a function $F\colon G/\Gamma\to\mb{C}^D$ such that $F = (F_1,\ldots,F_D)$ we define 
\[\snorm{F}_{\mr{Lip}} := \max_{1\le i\le D}\snorm{F_i}_{\mr{Lip}}.\]
\end{definition}
\begin{remark*}
Note that the metric on $G$ is right-invariant. We may omit the subscript $\mc{X}$ for the distance function when clear from context.
\end{remark*}

\subsection{Complexity of nilmanifolds}\label{sub:nil-complexity}
We now define the complexity of a nilmanifold with respect to either a degree or a degree-rank filtration.
\begin{definition}\label{def:complex-1}
Let $s\ge 1$ be an integer and let $M\ge 1$. A nilmanifold $G/\Gamma$ of degree $s$, dimension $d$, and \emph{complexity} at most $M$ consists of a degree $s$ filtration of $G$ along with a Mal'cev basis $\mc{X} = \{X_1,\ldots,X_d\}$ of $\log G$ which satisfies the following:
\begin{itemize}
    \item $\{X_1,\ldots,X_d\}$ is a Mal'cev basis for $\log G$ with respect to $\Gamma$ of rationality at most $M$;
    \item $\mc{X}$ is adapted to the sequence of subgroups $(G_i)_{i\in\mb{N}}$.
\end{itemize}
Analogously a nilmanifold $G/\Gamma$ of degree-rank $(s,r)$, dimension $d$, and complexity at most $M$ consists of a degree-rank $(s,r)$ filtration of $G$ along with a Mal'cev basis $\mc{X} = \{X_1,\ldots,X_d\}$ of $\log G$ which satisfies the following:
\begin{itemize}
    \item $\{X_1,\ldots,X_d\}$ is a Mal'cev basis for $\log G$ with respect to $\Gamma$ of rationality at most $M$;
    \item $\mc{X}$ is adapted to the sequence of subgroups $(G_i)_{i\in\mr{DR}}$.
\end{itemize}
\end{definition}
\begin{remark*}
The only difference in complexity for a degree versus degree-rank filtration is that we require the Mal'cev basis to be adapted with respect to the appropriate filtration. This definition unfortunately \emph{does not} extend to the case of multidegree filtrations since the set of subgroups do not nest in a total order. Furthermore note that a degree-rank nilmanifold of complexity $M$ is also a degree nilmanifold of the same complexity by taking the associated degree filtration.

Finally, whenever discussing the complexity of nilmanifolds, this is always with respect to a given Mal'cev basis $\mc{X}$. We will abusively write phrases such as ``nilmanifold $G/\Gamma$ of complexity $M$'' throughout the paper; such a statement should always be understood with a corresponding implicitly provided adapted Mal'cev basis of the Lie algebra. 
\end{remark*}
\begin{remark*}
We will also in passing require the notion of a degree $0$ nilmanifold. A degree $0$ nilmanifold is simply the trivial group $\mr{Id}_G$. All scalar-valued functions on degree $0$ nilmanifolds are constants and the Lipschitz norm is defined to be the absolute value of this constant.
\end{remark*}

We will next need the notion of a rational subgroup with respect to a Mal'cev basis; this will be crucial when giving the definition of complexity with respect to a multidegree filtration.
\begin{definition}\label{def:rat-subgroup}
A closed, connected subgroup $G'\leqslant G$ is $Q$-rational with respect to a basis $\mc{X} = \{X_1,\ldots,X_m\}$ of $\log G$ if $\log G'$ has a basis $\mc{X}' = \{X_1',\ldots,X_{m'}'\}$ where $X_i' = \sum_{j=1}^m c_{ij}X_j$ for $1\le i\le m'$ with $c_{ij}\in\mb{Q}$ having heights bounded by $Q$.
\end{definition}

We will repeatedly use the following fact about rational subgroups without further comment. 
\begin{fact}\label{fact:rat-subgroup}
Suppose $G$ is a connected, simply connected nilpotent Lie group of step $s$ and dimension $d$ with a discrete cocompact subgroup $\Gamma$. Suppose that $G/\Gamma$ has a weak basis $\mc{X}$ of rationality at most $Q$. Let $H_1,\ldots,H_j$ be subgroups which are each $Q$-rational and normal in $G$. Then 
\[H=\bigvee_{i=1}^j H_i\]
is an $O_s(Q^{O_s(d^{O_s(1)})})$-rational subgroup.
\end{fact}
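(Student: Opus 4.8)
The claim is that a join of finitely many subgroups that are each $Q$-rational and normal in $G$ is again rational, with only a polynomial-in-$d$ blowup in the exponent of $Q$. The natural strategy is to work entirely at the level of Lie algebras, using the correspondence between closed connected subgroups of a simply connected nilpotent Lie group and Lie subalgebras of $\log G$, together with the fact (from Lemma~\ref{lem:commute}) that $\bigvee_{i} H_i$ corresponds at the Lie algebra level to the subalgebra generated by $\sum_i \log H_i$. Since the $H_i$ are normal, $\log H_i$ is an ideal, and the sum of ideals is an ideal; in particular $\log(\bigvee H_i)$ is just the Lie subalgebra $\mf{h} := \sum_i \log H_i$ plus whatever is forced by closure under bracket — but since each summand is already an ideal, $\mf{h}$ is itself an ideal and hence a subalgebra, so no iterated bracketing is needed and $\log(\bigvee_i H_i) = \sum_i \log H_i$. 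This is the structural simplification that makes the bound polynomial rather than exponential in $d$.

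**Key steps, in order.** First I would fix the Mal'cev basis $\mc{X} = \{X_1,\ldots,X_d\}$ and express each $\log H_i$ via a spanning set of vectors whose coordinates in $\mc{X}$ are rationals of height at most $Q$ (this is the hypothesis that $H_i$ is $Q$-rational, Definition~\ref{def:rat-subgroup}). Second, I would take the union of all these spanning sets — at most $jd$ vectors total, each with height-$Q$ rational coordinates — which spans $\mf{h} = \sum_i \log H_i$ over $\mb{R}$. Third, to produce an actual \emph{basis} of $\mf{h}$ of controlled height, I would run Gaussian elimination / row reduction on the resulting $(jd)\times d$ rational matrix: selecting a maximal linearly independent subset and clearing gives a basis whose entries are rationals of height at most $Q^{O(d)}$ (each pivot operation multiplies heights by a bounded power, and there are $O(d)$ steps; more carefully one bounds subdeterminants by Hadamard/Cramer, giving height $\le (dQ)^{O(d)} = Q^{O_s(d^{O_s(1)})}$ after absorbing $j \le O_s(1)$ or $j \le d$). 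Fourth, I would note that this basis of $\mf{h}$, together with the fact that $\mf{h}$ is an ideal (hence a subalgebra, using normality and Lemma~\ref{lem:commute}), exhibits $H = \bigvee_i H_i = \exp(\mf{h})$ as an $O_s(Q^{O_s(d^{O_s(1)})})$-rational subgroup in the sense of Definition~\ref{def:rat-subgroup}.

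**Main obstacle.** The only genuinely substantive point is controlling the heights through the linear algebra: one must verify that extracting a rational basis from a rational spanning set, and re-expressing it in the original basis $\mc{X}$, does not blow up heights worse than a fixed power of $d$ in the exponent. This is standard — it follows from Cramer's rule together with Hadamard's inequality bounding the relevant minors of a matrix with height-$Q$ entries in $d$ columns — but it is where all the quantitative content sits, and it is why the bound is stated in the loose form $Q^{O_s(d^{O_s(1)})}$ rather than something sharper. A secondary (but essentially trivial) point worth spelling out is that one does \emph{not} need to iterate: were the $H_i$ merely rational but not normal, $\bigvee_i H_i$ would require closing under arbitrarily deep commutators and one would accumulate an exponential-in-$d$ tower; normality collapses this to a single sum of ideals, which is exactly the hypothesis the fact exploits. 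Given that the statement is used "without further comment" throughout, a short paragraph recording the Lie-algebra reduction plus a one-line invocation of Cramer/Hadamard for the height bound should suffice.
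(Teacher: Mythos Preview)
Your argument is correct, but it takes a different and in one sense sharper route than the paper. The paper's proof does not use normality at all: it simply observes via Baker--Campbell--Hausdorff that $\log H$ is spanned by all $(\le s)$-fold iterated Lie brackets of elements of $\bigcup_i \mc{X}^i$, then uses the weak-basis structure constants to bound the height of each such bracket by $O_s(Q^{O_s(d^{O_s(1)})})$ and selects a basis. Your approach instead exploits normality directly: since each $\log H_i$ is an ideal, $\sum_i \log H_i$ is already an ideal and hence equals $\log H$, so no brackets are needed and a maximal linearly independent subset of $\bigcup_i \mc{X}^i$ (already height-$Q$, no Gaussian elimination required) is a witnessing basis. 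Your route thus gives the stronger conclusion that $H$ is in fact $Q$-rational, not merely $Q^{O_s(d^{O_s(1)})}$-rational. One small correction: your closing remark that dropping normality would force an ``exponential-in-$d$ tower'' is not right---the paper's commutator argument still works without normality and still gives a polynomial-in-$d$ exponent, since the commutator depth is capped by the step $s$; also, your citation of Lemma~\ref{lem:commute} for the Lie-algebra correspondence is a little off (that lemma is about commutator subgroups, not joins), though the underlying fact you need is standard.
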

\begin{proof}
Let $\mc{X}^{i}$ denote the underlying basis of $H_i$ witnessing low height. By applying Baker--Campbell--Hausdorff, we have that $\log H$ is spanned by taking all $(\le s)$-fold commutators of elements in $\mc{X}^{i}$ (possibly for different $i$). Each such element of the Lie algebra is easily seen to be a $O_s(Q^{O_s(d^{O_s(1)})})$-rational combination of $\mc{X}$ (using the weak basis property of $\mc{X}$). Taking a subset of these commutators which forms a basis of $\log H$ gives the desired result.
\end{proof}

We are now in position to define the complexity of a multidegree nilsequence. This definition is admittedly rather artificial but is designed to be the most flexible given various lemmas scattered throughout the literature.
\begin{definition}\label{def:complex-2}
Consider a downset $J$ with respect to the multidegree ordering on $\mb{N}^k$. Consider a group $G$ with a multidegree filtration of degree $\subseteq J$. Recall the associated degree filtration
\[G_i = \bigvee_{\vec{v}:|\vec{v}|=i}G_{\vec{v}}\]
and define the associated degree to be $\sup_{\vec{v}\in J}|\vec{v}|$. We say a multidegree $J$ nilmanifold $G/\Gamma$ of dimension $d$ with Mal'cev basis $\mc{X}$ has complexity at most $M$ if:
\begin{itemize}
    \item $\{X_1,\ldots,X_d\}$ is a Mal'cev basis for $\log G$ with respect to $\Gamma$ of rationality at most $M$;
    \item $\mc{X}$ is adapted to the sequence of subgroups $(G_i)_{i\in\mb{N}}$;
    \item $G_{\vec{v}}$ is an $M$-rational subgroup for all $\vec{v}\in\mb{N}^k$.
\end{itemize}
\end{definition}

We next note the trivial fact that complexity is bounded appropriately with respect to taking direct products; we implicitly invoke this when handling the complexity of direct products.
\begin{fact}\label{fct:product-rat}
Consider nilmanifolds $G/\Gamma$, $H/\Gamma'$ given degree $s$ filtrations $(G_i)_{i\ge 0}$, $(H_i)_{i\ge 0}$ and adapted Mal'cev bases $\mc{X},\mc{X}'$ each of complexity at most $M$. Then $(G\times H)/(\Gamma\times\Gamma')$ has complexity at most $M$ with respect to the Mal'cev basis 
\[\mc{X}^\ast=\{(X,0)\colon X\in\mc{X}\}\cup\{(0,X')\colon X'\in\mc{X}'\}.\]
$\mc{X}^\ast$ may be adapted to the degree $s$ filtration $G_i\times H_i$ by creating an ordering with suffixes 
\begin{align*}
&\big\{(X_j,0)\colon X_j\in\mc{X},0\le\dim(G)-j<\dim(G_i)\big\}\cup\big\{(0,X_j')\colon X_j'\in\mc{X}',0\le\dim(H)-j<\dim(H_i)\big\}.
\end{align*}
Furthermore given $F\colon G/\Gamma\to\mb{C}$ and $F'\colon H/\Gamma'\to\mb{C}$ which are $M$-Lipschitz, 
\[\wt{F}((g,h)(\Gamma\times\Gamma')):=F(g\Gamma)F'(h\Gamma')\]
is $3M^2$-Lipschitz on $(G\times H)/(\Gamma\times\Gamma')$. Analogous statements hold for degree-rank filtrations and multidegree filtrations. 
\end{fact}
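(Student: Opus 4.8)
The plan is to observe that filtration, structure constants, the lattice, rationality, and the word metric are all \emph{block diagonal} with respect to the splitting $\log(G\times H)=\log G\times\log H$, so that the product nilmanifold inherits complexity at most $M$ with no loss, and then to read off the Lipschitz bound from the product rule for $\wt F$. First, $\log(G\times H)=\log G\times\log H$ with bracket $[(X,Y),(X',Y')]=([X,X'],[Y,Y'])$, so relative to $\mc X^\ast$ the structure constants of $\log(G\times H)$ are exactly those of $\mc X$ together with those of $\mc X'$ and a block of zeros (the $\mc X$--$\mc X'$ cross brackets vanish), hence all have height $\le M$. The collection $(G_i\times H_i)_{i\ge 0}$ is an $\mb N$-filtration of degree $\le s$: nesting is immediate and $[G_i\times H_i,\,G_j\times H_j]=[G_i,G_j]\times[H_i,H_j]\leqslant G_{i+j}\times H_{i+j}$; it is a degree filtration since $G_0\times H_0=G_1\times H_1$.

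\textbf{The ordering of $\mc X^\ast$.} I would interleave the orders on $\mc X$ and $\mc X'$ so that for each $i$ the basis vectors in $\log G_i\times\log H_i$ but not in $\log G_{i+1}\times\log H_{i+1}$ form a contiguous block placed above all lower-degree blocks, and so that within the $(X_j,0)$'s the order of $\mc X$ is preserved and within the $(0,X_j')$'s the order of $\mc X'$ is preserved. One then checks: (i) every suffix of this ordering spans a subalgebra, using $[\log G_i,\log G_i]\leqslant\log G_{2i}\leqslant\log G_{i+1}$ for $i\ge1$, the analogue for $H$, and the vanishing of cross brackets, so $\psi_{\mc X^\ast}$ is well defined; (ii) the suffix of length $\dim G_i+\dim H_i$ is exactly $\{(X_j,0):d-\dim G_i<j\le d\}\cup\{(0,X_j'):d'-\dim H_i<j\le d'\}$, so $\mc X^\ast$ is adapted to $(G_i\times H_i)_i$; and (iii) since $G$- and $H$-exponentials commute past each other in a direct product, $\psi_{\mc X^\ast}(g,h)$ is simply $\psi_{\mc X}(g)$ and $\psi_{\mc X'}(h)$ placed into disjoint coordinate slots. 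Hence $\psi_{\mc X^\ast}(\Gamma\times\Gamma')$ equals $\mb Z^d\times\mb Z^{d'}=\mb Z^{d+d'}$ up to this fixed coordinate permutation, so $\mc X^\ast$ is a Mal'cev basis of rationality $\le M$ adapted to the product filtration, i.e.\ $(G\times H)/(\Gamma\times\Gamma')$ has complexity at most $M$.

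\textbf{The Lipschitz estimate.} The block-diagonal form of $\psi_{\mc X^\ast}$ shows the projections $\pi_G,\pi_H$ are $1$-Lipschitz group homomorphisms from $(G\times H,d_{\mc X^\ast})$ to $(G,d_{\mc X})$ and $(H,d_{\mc X'})$: each step $x_i\mapsto x_{i+1}$ of a path has $\snorm{\psi_{\mc X}(\pi_G(x_ix_{i+1}^{-1}))}_{\ell^\infty}\le\snorm{\psi_{\mc X^\ast}(x_ix_{i+1}^{-1})}_{\ell^\infty}$ (a sub-vector of coordinates), and likewise for the reversed products and for $\pi_H$, so projecting a path cannot increase its length. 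Passing to quotients via $\pi_G(\Gamma\times\Gamma')=\Gamma$, $\pi_H(\Gamma\times\Gamma')=\Gamma'$ yields, for $x=(g,h)(\Gamma\times\Gamma')$ and $y=(g',h')(\Gamma\times\Gamma')$,
\[
d_{(G\times H)/(\Gamma\times\Gamma')}(x,y)\ \ge\ \max\big(d_{G/\Gamma}(g\Gamma,g'\Gamma),\ d_{H/\Gamma'}(h\Gamma',h'\Gamma')\big).
\]
Since $\wt F$ is $\Gamma\times\Gamma'$-invariant it descends to the quotient, and
\[
|\wt F(x)-\wt F(y)|\le|F(g\Gamma)-F(g'\Gamma)|\,\snorm{F'}_\infty+\snorm F_\infty\,|F'(h\Gamma')-F'(h'\Gamma')|\le M^2\big(d_{G/\Gamma}(g\Gamma,g'\Gamma)+d_{H/\Gamma'}(h\Gamma',h'\Gamma')\big),
\]
which is at most $2M^2\,d_{(G\times H)/(\Gamma\times\Gamma')}(x,y)$; with $\snorm{\wt F}_\infty\le\snorm F_\infty\snorm{F'}_\infty\le M^2$ this gives $\snorm{\wt F}_{\mr{Lip}}\le 3M^2$.

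\textbf{Degree-rank and multidegree.} The same block-diagonal reasoning applies verbatim. For a degree-rank filtration, $(G_{(i,r)}\times H_{(i,r)})$ is a $\mr{DR}$-filtration and inherits the required extra equalities ($G_{(0,0)}\times H_{(0,0)}=G_{(1,0)}\times H_{(1,0)}$ and $G_{(i,0)}\times H_{(i,0)}=G_{(i,1)}\times H_{(i,1)}$) from $G$ and $H$. For a multidegree filtration, $(G_{\vec v}\times H_{\vec v})$ is an $\mb N^k$-filtration with $G_{\vec 0}\times H_{\vec 0}=\bigvee_{i=1}^k(G_{\vec e_i}\times H_{\vec e_i})$, since $\bigvee_i(A_i\times B_i)=(\bigvee_iA_i)\times(\bigvee_iB_i)$ for subgroups $A_i\leqslant G$, $B_i\leqslant H$ (each $A_i\times B_i$ contains $A_i\times\{e\}$ and $\{e\}\times B_i$); the associated degree filtration of the product is the product of the associated degree filtrations, so $\mc X^\ast$ is adapted to it; and each $G_{\vec v}\times H_{\vec v}$ is $M$-rational with respect to $\mc X^\ast$ because $G_{\vec v}$ and $H_{\vec v}$ are $M$-rational with respect to $\mc X$ and $\mc X'$. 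The only genuinely delicate point is the choice of interleaved ordering of $\mc X^\ast$, which must be simultaneously compatible with the flag-of-subalgebras requirement (so $\psi_{\mc X^\ast}$ is well defined and block diagonal) and with adaptedness to every $G_i\times H_i$ at once; everything else is routine bookkeeping.
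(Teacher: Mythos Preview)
Your proof is correct. The paper states this as a ``trivial fact'' without proof, so there is nothing to compare against; your block-diagonal analysis of the product Lie algebra, the interleaved ordering of $\mc X^\ast$, and the Lipschitz bound via $1$-Lipschitz projections are exactly the right way to spell it out. One small remark on your point (i): the justification ``$[\log G_i,\log G_i]\leqslant\log G_{i+1}$'' only handles brackets between whole levels; the reason an arbitrary suffix of $\mc X^\ast$ spans a subalgebra (indeed an ideal) is that each of $\mc X$ and $\mc X'$ already has this suffix property (implicit in being an adapted Mal'cev basis in the Green--Tao sense), and your interleaving makes every product suffix a product of a $\mc X$-suffix with a $\mc X'$-suffix.
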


We finally end by noting that quotients by normal subgroups of bounded rationality have appropriate complexity.
\begin{lemma}\label{lem:quotient-rat}
Consider a nilmanifold $G/\Gamma$ with $G$ given a degree $s$ filtration $(G_i)$ and of complexity at most $M$ with respect to an adapted Mal'cev basis $\mc{X}$. 

Suppose that $H$ is a normal subgroup of $G$ which is $M$-rational with respect to $\mc{X}$. Then the quotient nilmanifold $(G/H)/(\Gamma/(\Gamma\cap H))$ may be given an adapted Mal'cev basis $\mc{X}^\ast$, where the degree $s$ filtration is $(G_i/(G_i\cap H))$, which is an $M^{O_s(d^{O_s(1)})}$-rational combination of
\[\mc{X}'=\{X\imod\log H\colon X\in\mc{X}\}.\]
Analogous statements hold for degree-rank filtrations and multidegree filtrations. Finally if $H\leqslant Z(G)$ and $F$ is an $M$-Lipschitz function on $G/\Gamma$ which is $H$-invariant then $F$ descends to $(G/H)/(\Gamma/(\Gamma\cap H))$ and is $M^{O_s(d^{O_s(1)})}$-Lipschitz with respect to $\mc{X}^\ast$.
\end{lemma}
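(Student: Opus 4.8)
The plan is to build the Mal'cev basis $\mc{X}^\ast$ for the quotient by first choosing a basis adapted to the subgroup $H$ inside $G$, then reading off the image. Since $H$ is $M$-rational with respect to $\mc{X}$, by standard linear algebra over $\mb{Q}$ (clearing denominators and applying Siegel's lemma / Hermite normal form bounds, exactly as in \cite[Lemma~A.8,~A.9]{GT12}) we may find an alternative Mal'cev basis $\mc{Y}$ for $\log G$ whose entries are $M^{O_s(d^{O_s(1)})}$-rational combinations of $\mc{X}$, which remains adapted to the degree filtration $(G_i)$, \emph{and} which is additionally adapted to $H$ in the sense that $\log H = \on{span}_{\mb{R}}(Y_{d-\dim H+1},\ldots,Y_d)$ and moreover $\log(G_i\cap H)$ is spanned by a suffix of $\mc{Y}$ for each $i$ (this last compatibility uses that $H\cap G_i$ is itself $M^{O_s(d^{O_s(1)})}$-rational, which follows from Fact~\ref{fact:rat-subgroup} applied to the relevant intersections, or directly from rational linear algebra). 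One must also ensure $\mc{Y}$ is a genuine Mal'cev basis, i.e.\ $\psi_{\mc{Y}}(\Gamma) = \mb{Z}^d$: this is arranged by passing to the lattice $\psi_{\mc{X}}^{-1}(\mb{Z}^d)\cap(\text{rational span adjustments})$ and invoking \cite[Lemma~A.10]{GT12}-type statements, at the cost of another $M^{O_s(d^{O_s(1)})}$ factor.

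Next I would define $\mc{X}^\ast := \{Y_j \imod \log H : 1\le j \le d-\dim H\}$, the images of the non-$H$ part of $\mc{Y}$. Because $\mc{Y}$ was adapted to $H$, these images form a basis of $\log(G/H) = (\log G)/(\log H)$. The structure constants $c_{ijk}^\ast$ of $\mc{X}^\ast$ are obtained from those of $\mc{Y}$ by simply dropping the components lying in $\log H$, so they inherit height bounds $M^{O_s(d^{O_s(1)})}$. Rationality with respect to the lattice: since $\psi_{\mc{Y}}(\Gamma) = \mb{Z}^d$ and $\mc{Y}$ is adapted to $H$, the projection $\Gamma/(\Gamma\cap H)$ has Mal'cev coordinates (second kind, relative to $\mc{X}^\ast$) exactly equal to $\mb{Z}^{d-\dim H}$ — here one uses the standard fact that for a Mal'cev basis adapted to a normal subgroup the second-kind coordinates split as a product, so quotienting is transparent. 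That the associated degree filtration $(G_i/(G_i\cap H))$ is adapted to $\mc{X}^\ast$ is immediate from the corresponding suffix property of $\mc{Y}$. Finally, expressing $\mc{X}^\ast$ in terms of $\mc{X}' = \{X \imod \log H : X\in\mc{X}\}$ just composes the $M^{O_s(d^{O_s(1)})}$-rational change of basis $\mc{X}\to\mc{Y}$ with the quotient map, giving the claimed bound. The degree-rank and multidegree cases are identical, noting in the multidegree case that each $G_{\vec v}/(G_{\vec v}\cap H)$ remains $M^{O_s(d^{O_s(1)})}$-rational with respect to $\mc{X}^\ast$ by the same reasoning applied to $G_{\vec v}$.

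For the Lipschitz claim, suppose $H\leqslant Z(G)$ and $F$ is $M$-Lipschitz and $H$-invariant. Then $F$ descends to $\bar F$ on $(G/H)/(\Gamma/(\Gamma\cap H))$ as a well-defined function, and $\snorm{\bar F}_\infty = \snorm{F}_\infty \le M$. For the Lipschitz constant we must compare the metric $d_{G/H,\mc{X}^\ast}$ with the pushforward of $d_{G,\mc{X}}$. By right-invariance of both metrics and the definition via word-length in Mal'cev coordinates, a short path between $\bar x\Gamma', \bar y\Gamma'$ in $G/H$ lifts to a path in $G$ of comparable length up to the change-of-basis distortion: if $\snorm{\psi_{\mc{X}^\ast}(\bar g)}_\infty \le \rho$ then one can choose a lift $g$ with $\snorm{\psi_{\mc{Y}}(g)}_\infty \ll \rho$ (setting the $H$-coordinates to zero), and then $\snorm{\psi_{\mc{X}}(g)}_\infty \le M^{O_s(d^{O_s(1)})}\rho$ by the bound on the change of basis $\mc{X}\to\mc{Y}$ together with the comparability of first- and second-kind coordinates \cite[Lemma~A.?]{GT12}. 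Combining with $H$-invariance of $F$, we get $|\bar F(\bar x) - \bar F(\bar y)| = |F(x)-F(y')|$ for suitable lifts, bounded by $M \cdot d_{G,\mc{X}}(x,y') \le M^{O_s(d^{O_s(1)})} d_{G/H,\mc{X}^\ast}(\bar x,\bar y)$.

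The main obstacle I anticipate is the bookkeeping in the first paragraph: producing a single basis $\mc{Y}$ that is \emph{simultaneously} a Mal'cev basis (exact integrality $\psi_{\mc{Y}}(\Gamma)=\mb{Z}^d$), adapted to the full degree (or degree-rank, or multidegree) filtration, and adapted to $H$ and all $G_i\cap H$, while controlling every height by $M^{O_s(d^{O_s(1)})}$. Each ingredient is a routine invocation of the quantitative linear-algebra lemmas in \cite[Appendix~A]{GT12}, but ensuring they can be applied in the right order without the rationality blowing up super-polynomially in $d$ requires care; the key point is that there are only $O_s(1)$ many filtration subgroups and each intersection with $H$ stays rational by Fact~\ref{fact:rat-subgroup}, so finitely many such steps compose to keep the bound of the stated shape.
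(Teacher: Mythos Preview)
Your approach has a structural gap in the construction of the intermediate basis $\mc{Y}$. You ask for $\mc{Y}$ to be adapted to the filtration $(G_i)$ \emph{and} to have $\log H$ spanned by its last $\dim H$ elements. But being adapted means each $\log G_i$ is spanned by a suffix of $\mc{Y}$, and having $\log H$ as a suffix forces $H$ to be comparable (in the inclusion order) with every $G_i$. There is no reason for this: take $G=\mb{R}^3$ abelian with $G_0=G_1=\mb{R}^3$, $G_2=\{0\}^2\times\mb{R}$, $G_3=0$, and $H=\{0\}\times\mb{R}\times\{0\}$. Then the last basis element must span both $G_2$ and $H$, which is impossible. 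The issue is not rationality or height control but the non-nesting of $H$ with the filtration; this is exactly the ``main obstacle'' you flag, but it is a genuine obstruction rather than mere bookkeeping.

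The paper sidesteps this by never trying to build a compatible basis on $G$. Instead it works directly in the quotient: one selects a subset $S\subseteq\mc{X}$ so that $\{X_i\imod\log H:i\in S\}$ is a basis of $\log(G/H)$, checks (via \cite[Lemma~B.2]{Len23b}) that this is a \emph{weak} basis of controlled rationality for $(G/H)/(\Gamma/(\Gamma\cap H))$, and then invokes the black-box \cite[Lemma~B.11]{Len23b} to upgrade to a genuine Mal'cev basis adapted to the quotient filtration $(G_iH/H)$. The point is that in $G/H$ the groups $G_iH/H$ \emph{do} nest, so adaptation is available there even though it fails upstairs. Your approach can be repaired along similar lines by adapting $\mc{Y}$ to the nested chain $G=G_0H\geqslant G_1H\geqslant\cdots\geqslant G_sH\geqslant H$ rather than to $(G_i)$ and $H$ separately, but at that point you are essentially reproving \cite[Lemma~B.11]{Len23b}. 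Your Lipschitz argument via lifting paths is fine once a correct $\mc{X}^\ast$ is in hand.
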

\begin{proof}
We may find a subset $S$ such that 
\[\mc{X}'=\{X_i\imod\log H\colon X_i\in\mc{X},i\in S\}\]
is a basis for $\log(G/H)$. Since $H$ is $M$-rational with respect to $\mathcal{X}$, it follows from Cramer's rule that for $j \not\in S$, $X_j \imod \log H$ is a $M^{O_s(d^{O_s(1)})}$-combination of $X_i \imod \log H$ with $i\in S$ . Hence, $\mc{X}'$ is a weak Mal'cev basis for $(G/H)/(\Gamma/(\Gamma\cap H))$ of rationality $M^{O_s(d^{O_s(1)})}$. By \cite[Lemma~B.11]{Len23b}, we may find a Mal'cev basis adapted to $(G/H)/(\Gamma/(\Gamma\cap H))$ with complexity $M^{O_s(d^{O_s(1)})}$. Now, if $H\leqslant Z(G)$ and $F$ is $M$-Lipschitz on $G/\Gamma$ which is $H$-invariant, it follows trivially that $F$ descends to a function $\overline{F}$ on $(G/H)/(\Gamma/(\Gamma\cap H))$. The Lipschitz bounds for $\overline{F}$ follow from \cite[Lemma~B.3]{Len23b}.
\end{proof}

\subsection{Size of vertical and horizontal characters}\label{sub:char-complexity}
We now define the size of vertical and horizontal characters. We first define the size of a horizontal character.
\begin{definition}\label{def:horiz-complex}
Given a nilmanifold $G/\Gamma$ and a Mal'cev basis $\mc{X}$, note that any horizontal character $\eta\colon G\to\mb{R}$ can be expressed in the form
\[\eta(g) = k\cdot\psi(g)\]
for some $k\in\mb{Z}^{\dim(G)}$. We define the \emph{size} of the horizontal character as $\snorm{k}_\infty$.
\end{definition}

We next define the size of an $i$-th horizontal character.
\begin{definition}\label{def:horiz-i-complex}
Consider a nilmanifold $G/\Gamma$ with $G$ given a degree-rank filtration of degree-rank $(s,r)$ and a Mal'cev basis $\mc{X} = \{X_1,\ldots,X_{\dim(G)}\}$ adapted to the degree-rank filtration. Note that any $i$-th horizontal character $\eta_i\colon G_{(i,1)}\to\mb{R}$ can be expressed in the form
\[\eta_i(g) = k\cdot\psi(g)\]
with $k\in\mb{Z}^{\dim(G)}$ for some $k$ which is nonzero only on coordinates between $\dim(G)-\dim(G_{(i,1)})<j\le\dim(G)-\dim(G_{(i,2)})$. We define the \emph{size} of the $i$-th horizontal character as $\snorm{k}_\infty$.
\end{definition}

We finally define the size of a vertical character. 
\begin{definition}\label{def:vert-complex}
Consider a nilmanifold $G/\Gamma$ with $G$ given a degree filtration of degree $k$ and a Mal'cev basis $\mc{X} = \{X_1,\ldots,X_{\dim(G)}\}$ adapted to the degree filtration. Consider a continuous vertical character $\xi\colon T\to\mb{R}$ from a rational subgroup $T\leqslant Z(G)$. We define the \emph{height} of $\xi$ as
\[\sup_{x\neq y\in T/(\Gamma\cap T)}\frac{|\xi(x) - \xi(y)|}{d_G(x\Gamma,y\Gamma)};\]
this will be denoted as $|\xi|$.
\end{definition}
\begin{remark*}
We now justify the terminology ``height'' given for the complexity of a vertical character. Suppose that $G/\Gamma$ has complexity $M$ (given $\mc{X}$) with respect to a degree filtration of degree $d$ and $T$ is $Q$-rational. We have that $T$ has a Mal'cev basis which is a $(QM)^{O_k(d^{O(1)})}$-rational combination of $\mc{X}$ by \cite[Lemma~B.12]{Len23b}; denote this $\mc{X}'$. By \cite[Lemma~B.9]{Len23b}, we have that for $x,y\in T$, 
\[d_{G,\mc{X}}(x\Gamma,y\Gamma) \le (QM)^{O_k(d^{O(1)})}d_{T,\mc{X}'}(x(\Gamma\cap T),y(\Gamma\cap T)) \le (QM)^{O_k(d^{O(1)})}d_{G,\mc{X}}(x\Gamma,y\Gamma).\]
With respect to $\mc{X}'=\{X_1',\ldots,X_{\dim(T)}'\}$, we have that $\xi$ is an integer vector and the definition of height is equivalent up to a multiplicative factor of $(QM)^{O_k(d^{O(1)})}$ to the height of this vector.
\end{remark*}

\subsection{Correlation}\label{sub:correlation-def}
We will also require the notion of a sequence being biased of some order. 
\begin{definition}\label{def:corr}
A function $f\colon[N]\to\mb{C}^D$ is \emph{$s$-biased} of correlation $\eta$, complexity $M$, and dimension $d$ if there exists a nilmanifold $G/\Gamma$ with a degree $s$ filtration such that $G$ has dimension at most $d$, $G/\Gamma$ has complexity at most $M$, and there exists an $M$-Lipschitz function $F$ and a polynomial sequence $g\in\on{poly}(\mb{Z}_\mb{N}\to G_\mb{N})$ such that
\[\snorm{\mb{E}_{n\in[N]}[f(n)\ol{F(g(n)\Gamma)}]}_{\infty}\ge\eta.\]
We will denote this as $f\in\on{Corr}(s,\eta,M,d)$.
\end{definition}

\subsection{Miscellaneous complexity notions}\label{sub:misc-complexity}
We will also require the following definition regarding smoothness norms of polynomial sequences. 
\begin{definition}\label{def:smooth}
Given $\vec{v}\in\mb{N}^k$ and $\vec{n}\in\mb{N}^k$, we define 
\[\binom{\vec{n}}{\vec{v}}=\prod_{i=1}^{k}\binom{n_i}{v_i}.\] Any polynomial sequence $g\colon\mb{Z}^{k}\to\mb{R}$ can be expressed uniquely as \[g(\vec{n}) = \sum_{\vec{\ell}\in\mb{N}^k}\alpha_{\vec{\ell}}\binom{\vec{n}}{\vec{\ell}}\]
with $\alpha_{\vec{\ell}}\in\mb{R}$.
We define 
\[\snorm{g}_{C^{\infty}[N]} := \max_{\vec{\ell}\neq \vec{0}}N^{|\vec{\ell}|}\cdot\snorm{\alpha_{\vec{\ell}}}_{\mb{R}/\mb{Z}}\]
where $|\vec{\ell}| = \sum_{j=1}^{k}\ell_j$.
\end{definition}
\begin{remark*}
Note that the above definition is only sensitive to the values of $g \imod 1$. 
\end{remark*}

We now define when a polynomial sequence is rational and smooth.
\begin{definition}
Consider a nilmanifold $G/\Gamma$ given either a degree, degree-rank, or multidegree filtration with Mal'cev basis $\mc{X}$ and $g$ a domain $\mb{Z}^k$ polynomial sequence on $G$ with respect to the given filtration. We say that $g$ is \emph{$(M,N)$-smooth} if:
\begin{itemize}
    \item $d_{G,\mc{X}}(g(\vec{0}),\mr{id}_G)\le M$;
    \item $d_{G,\mc{X}}(g(\vec{v}),g(\vec{v}+\vec{e}_i))\le M \cdot N^{-1}$ for $\vec{v}\in[N]^{k}$ and $1\le i\le k$.
\end{itemize}
We say that $g$ is \emph{$M$-rational} if there is $1\le m\le M$ such that for all $\vec{n}\in\mb{N}^k$ we have that 
\[\psi_{\mc{X}}(g(\vec{n}))\in\frac{1}{m}\cdot\mb{Z}^{\dim(G)}.\]
\end{definition}

\section{Proof outline}\label{sec:outline}
We are now in position to discuss the proof of Theorem~\ref{thm:main}; as our proof is closely modeled on that of Green, Tao, and Ziegler \cite{GTZ12}, the announcement of \cite{GTZ11b} may prove a useful starting point for certain readers. For various parts of this outline we will restrict to the case of the $U^5$-inverse theorem and discuss the proof as if the analysis were performed with bracket polynomials.

\subsection{Induction on degree and additive quadruples}
Suppose that $f\colon[N]\to\mb{C}$ is $1$-bounded such that 
\[\snorm{f}_{U^5[N]}\ge\delta.\]
Via the inductive definition of the Gowers norm, we have for $\delta^{O(1)}N$ values of $h\in[N]$ that 
\[\snorm{\Delta_hf}_{U^4[N]}\ge\delta^{O(1)}.\]
Call this set of indices $H$. Applying Theorem~\ref{thm:main} inductively (when converted to bracket polynomials; see e.g.~\cite[Proposition~1.4]{LSS24}) we may choose $d_1,d_2,d_3\le\log(1/\delta)^{O(1)}$ and coefficients $a_{i,h}$ etc.~such that
\begin{align*}
\bigg|\mb{E}_{n\in[N]} \Delta_h f(n) \cdot e\bigg(&\sum_{i=1}^{d_1}a_{i,h} n[b_{i,h} n][c_{i,h}n] + \sum_{i=1}^{d_2}d_{i,h} n^2[e_{i,h} n] + \sum_{i=1}^{d_3} f_{i,h}n[g_{i,h}n] \\
&\qquad\qquad\qquad+j_hn^3 + \ell_hn^2 + m_hn\bigg)\bigg|\ge\exp(-\log(1/\delta)^{O(1)});
\end{align*}
we have padded with extra coefficients to make the dimensions $d_i$ not $h$-dependent. Set
\[\ol{G_h(n)} = e\bigg(\sum_{i=1}^{d_1}a_{i,h} n[b_{i,h} n][c_{i,h}n] + \sum_{i=1}^{d_2}d_{i,h} n^2[e_{i,h} n] + \sum_{i=1}^{d_3} f_{i,h}n[g_{i,h}n] + j_hn^3 + \ell_h n^2 + m_hn\bigg).\]

For the sake of clarity, we will let $L_h(n)$ denote terms of degree $\le 2$ which are possibly $h$-dependent. We have
\[\bigg|\mb{E}_{n\in[N]} \Delta_h f(n) \cdot e\bigg(\sum_{i=1}^{d_1}a_{i,h} n[b_{i,h} n][c_{i,h}n] + \sum_{i=1}^{d_2}d_{i,h} n^2[e_{i,h} n] + j_hn^3 + L_h(n) \bigg)\bigg|\ge\exp(-\log(1/\delta)^{O(1)}).\]
The first crucial step, via a Cauchy--Schwarz argument due to Gowers \cite{Gow98} (see \cite[Proposition~6.1]{GTZ11} or Lemma~\ref{lem:CS-basic}) is that for many additive quadruples $(h_1,h_2,h_3,h_4)$, i.e.~$h_1+h_2=h_3+h_4$, we have 
\[|\mb{E}_{n\in[N]}G_{h_1}(n) G_{h_2}(n+h_1-h_4) \ol{G_{h_3}(n)} \ol{G_{h_4}(n+h_1-h_4)}|\ge\exp(-\log(1/\delta)^{O(1)}).\]

\subsection{Sunflower and linearization for the top degree-rank}
Via bracket polynomial manipulations, we see that the ``top degree-rank'' term of the above expression is 
\[\sum_{i=1}^{d_1}(a_{i,h_1} n[b_{i,h_1} n][c_{i,h_1}n] +a_{i,h_2} n[b_{i,h_2} n][c_{i,h_2}n]  -a_{i,h_3} n[b_{i,h_3} n][c_{i,h_3}n] -a_{i,h_4} n[b_{i,h_4} n][c_{i,h_4}n]).\]
The heart of the proof is demonstrating that these ``top degree-rank terms line up'' in an appropriate sense across a dense additive tuples in $H$. Such a conclusion is at least plausible since for generic coefficients the associated bracket polynomial equidistributes$\imod{1}$, which would violate the given condition on $G_{h_1}(n) G_{h_2}(n+h_1-h_4) \ol{G_{h_3}(n)} \ol{G_{h_4}(n+h_1-h_4)}$. One possibility where the top degree-rank term is exactly zero is when we can write $a_{i,h_1} = a_i h_1$, $b_{i,h_1} = b_i^\ast$, $c_{i,h_1} = c_i^\ast$. The heart of the matter is that, up to controlled modifications, this is the only way for that to occur in a robust sense. 

The first modification is that we can replace in the above example the expression $a_{i,h_1} = a_i h_1$ with $a_{i,h_1} = \Theta_i\{\Theta_i' h_1\}$ or more generally a bracket linear form. The second modification is that we may not get a description that respects the presented structure of the sum. Instead the coordinates of the bracket linear form may only appear in these ``fixed'', ``fixed'', ``bracket linear'' triples after a linear change of variables. We prove the existence of this structure in two steps, as in \cite{GTZ12}. The first step proves that the bracket form is ``fixed'', ``fixed'', ``$h$-dependent'' and the second step then proves that the ``$h$-dependent'' part in fact has a bracket linear structure. These steps will fall under the names \emph{sunflower} and \emph{linearization} respectively.

\subsection{Degree-rank iteration}
Once we have learned this refined form for $\sum_{i=1}^{d_1}a_{i,h}n[b_{i,h} n][c_{i,h}n]$, we iterate and then learn the refined form for the next highest degree-rank term $\sum_{i=1}^{d_2}d_{i,h}n^2[e_{i,h}n]$, and then finally we learn the refined form for $j_hn^3$. Given these refined forms, Green, Tao, and Ziegler prove that the top degree terms in fact have the form of a multidegree $(1,3)$ nilsequence (in variables $h$ and $n$). Finally given such a correlation, a symmetrization argument as in \cite{GTZ12} concludes the proof. We remark here that while terms such as $a_{i,h}$ and $e_{i,h}$ correspond to Taylor coefficients on the top degree horizontal torus, terms such as $d_{i,h}$ belong to the second horizontal torus, and $j_h$ to the third horizontal torus. Furthermore to handle terms of the form $\sum_{i=1}^{d_2}d_{i,h}n^2[e_{i,h}n]$ correctly we must realize such terms via a degree-rank $(3,2)$ nilmanifold, hence the need for the finer degree-rank notion.

\subsection{Nilcharacters and horizontal tori}
We now make this description more precise in terms of nilcharacters and horizontal tori. Let $F(g_h(n)\Gamma) = G_h(n)$ be a nilcharacter of degree-rank $(s,r)$; here $e(an[bn][cn])$ should be thought of as an ``almost'' degree-rank $(3,3)$ nilcharacter and $e(an[bn^2])$ as an ``almost'' degree-rank $(3,2)$ nilcharacter. The sunflower step proves that the nilsequence $F(g_h(n)\Gamma)$ can be realized as a bracket polynomial whose top degree-rank part is a sum of terms with $(r-1)$ iterated brackets where each term consists of $(r-1)$ $h$-independent phases of $g_h$, and possibly one $h$-dependent phase of $g_h$. Here, ``phase'' will correspond to components of the Taylor coefficients of $g_h$, $\on{Taylor}_i(g_h)$. This corresponds to showing that the $i$-th horizontal torus $G_{(i,1)}/G_{(i,2)}$ contains vector spaces $V_{i,\mr{Dep}}\leqslant V_i$ such that:
\begin{itemize}
    \item $\on{Taylor}_i(g_h)-\on{Taylor}_i(g_{h'})\in V_{i,\mr{Dep}}$ and $\on{Taylor}_i(g_h) \in V_{i}$;
    \item If $i_1+\cdots+i_r=s$, then $[v_{i_1},v_{i_2},\ldots,v_{i_r}] = 0$ whenever $v_{i_j}\in V_{i_j}$ and there are at least two indices $j$ such that $v_{i_j} \in V_{i_j,\mr{Dep}}$.
\end{itemize}
Here we have implicitly descended an iterated commutator to the vector spaces $G_{(i,1)}/G_{(i,2)}$ which corresponds to a multilinear form in this case. Such a result is proven via combining quantitative equidistribution theory of nilsequences \cite{Len23, Len23b} with a ``Furstenberg--Weiss argument'' as in \cite{GTZ11, GTZ12, Len23}; see \cite{Tao21} for further examples of the Furstenberg--Weiss argument.

The linearization step then proves that the remaining $h$-dependent phases are ``bracket linear'' in $h$. In practice, we require an additional case that the $h$-dependent phase may be a \emph{petal} phase: a top degree-rank term with the petal phase can be realized as a ``lower order term'', or more precisely a bracket phase with at most $(r-2)$ iterated brackets or of total degree at most $s-1$. Thus, the statement we ultimately prove is that we may decompose a subspace of the $i$-th horizontal torus into the sum of three linearly disjoint vector spaces $W_{i,\ast}$, $W_{i,\mr{Lin}}$, and $W_{i,\mr{Pet}}$ such that:
\begin{align*}
\on{Taylor}_i(g_h) &\in W_{i,\ast} + W_{i,\mr{Lin}} + W_{i,\mr{Pet}},\\
\on{Taylor}_i(g_h) - \on{Taylor}_i(g_{h'})&\in W_{i,\mr{Lin}} + W_{i,\mr{Pet}},
\end{align*}
and the projection of $\on{Taylor}_i(g_h)$ onto $W_{i,\mr{Lin}}$ is bracket linear. In addition, we require that if $i_1+\cdots+i_r=s$, then $[v_{i_1},v_{i_2},\ldots,v_{i_r}]=0$ whenever $v_i\in W_{i,\ast}+W_{i,\mr{Lin}}+W_{i,\mr{Pet}}$ and either $v_{i_j}\in W_{i_j,\mr{Pet}}$ for at least one index $j$ or $v_{i_j}\in W_{i_j,\mr{Lin}}$
for at least two distinct indices $j$. Thus even though we have not improved our understanding of the Taylor coefficients on $W_{i,\mr{Pet}}$ we have the improved the vanishing of the top degree-rank commutator bracket on this vector space. The linearization step is proved by a combination of quantitative equidistribution theory of nilmanifolds \cite{Len23, Len23b} and inverse sumset theory. We refer the reader to \cite{Len23} for a simpler case of the argument given here.

\subsection{Quantitative bounds}
The heart of this paper is performing the sunflower and linearization steps efficiently. Green, Tao, and Ziegler \cite{GTZ12} accomplish this (when unwinding the correspondence between nilmanifolds and bracket polynomials) via iteratively learning relations between the coefficients $a_{i,h},b_{i,h},c_{i,h}$ and performing a dimension reduction argument.\footnote{This is performed in \cite[Section~10]{GTZ12} via a ``rank minimality'' argument; this requires passing to an ultralimit. When performed in finitary language this becomes a dimension reduction argument and is also present in the proof of \cite[Theorem~D.5]{GTZ12}.} Furthermore, the underlying equidistribution theorem used in the work of Green, Tao, and Ziegler \cite{GTZ12}, proven in work of Green and Tao \cite{GT12}, relies on an induction on dimension argument. The use of any induction on dimension argument essentially immediately results in $O(s)$ iterated logarithms and thus must be avoided.

The use of induction on dimension in the equidistribution theorem was avoided in work of the first author \cite{Len23,Len23b}. The key point in Sections~\ref{sec:sunflower} and \ref{sec:linear} therefore is to perform the sunflower and linearization steps without any use of induction on dimension. The precise details, while mainly utilizing elementary linear algebra, require a bit of precision. This argument, extending the case of the $U^4$-inverse theorem from \cite{Len23}, demonstrates that a dimension-independent number of applications of equidistribution theory is sufficient to derive the necessary decrease in degree-rank. (Note that the argument in \cite{GTZ12} morally uses that one can in fact assume that there are no ``short linear relations'' between various coefficients, but such a result necessitates exponential in dimension dependencies in the exponent.) Another crucial point in our work is that the length of the associated bracket linear form that is obtained not ``very long''. This is, by now, a standard consequence of the quasi-polynomial bounds of Sanders \cite{San12b} towards the polynomial Bogolyubov conjecture.

We finally remark that the quantitative equidistribution theorem we use is slightly different than the one derived in work of the first author \cite{Len23,Len23b}. The work of the first author is most naturally phrased as factoring ill-distributed polynomial sequences into a smooth part, a rational part, and a polynomial sequence which (up to taking a certain quotient) lives in a lower \emph{step} nilmanifold. For our purposes, it is critical to instead lower the \emph{degree} of the nilmanifold. This is most easily seen from the above bracket polynomial example where we are attempting to linearize a function of the form
\[e\bigg(\sum_{i=1}^{d_3} f_{i,h}n[g_{i,h}n] + j_hn^3 + \ell_h n^2 + m_hn\bigg).\]
At this step we wish to linearize $j_hn^3$ instead of handling the terms $f_{i,h}n[g_{i,h}n]$; the $j_hn^3$ term, while having the highest degree, does not correspond to the highest step part of the nilmanifold. This phenomenon only occurs when proving the $U^{s+1}$-inverse theorem for $s\ge 4$. Thus a crucial ingredient in our work is bootstrapping, as a black box, the efficient version of equidistribution with respect to step in order to obtain an efficient version of equidistribution with respect to degree; this is Theorem~\ref{thm:equi-deg}.

\subsection{Organization of the paper II}\label{sub:organization-2}
In Section~\ref{sec:equi} we prove the necessary quantitative equidistribution theorem with respect to degree. In Section~\ref{sec:setup}, we perform the setup and give various definitions which will be used to perform the sunflower and linearization steps. In Section~\ref{sec:gowers}, we derive that many additive quadruples exhibit a bias. In Section~\ref{sec:sunflower} we perform the sunflower step while in Section~\ref{sec:linear} we perform the linearization step. In Sections~\ref{sec:nil-first} and \ref{sec:nil-first-2} we then convert information regarding the Taylor coefficients into correlation with a multidegree $(1,s-1)$ nilsequence and a nilsequence of lower degree-rank. Iterating this argument we eventually obtain correlation with a mutltidegree $(1,s-1)$ nilsequence. In Section~\ref{sec:sym}, we symmetrize this nilsequence to obtain Theorem~\ref{thm:main}.

Appendix~\ref{app:approx-hom} collects certain standard results regarding approximate homomorphisms (this is ultimately where work of Sanders \cite{San12b} is invoked). In Appendix~\ref{app:defer}, we collect a number of miscellaneous propositions which are deferred throughout the paper. Finally in Appendix~\ref{app:nilcharacters} we collect a number of propositions regarding nilcharacters.

\section{Efficient equidistribution theory of nilsequences}\label{sec:equi}
In order to state the primary equidistribution input of this paper we will need the notion of when an element in $G/[G,G]$ and a horizontal character are orthogonal.
\begin{definition}\label{def:orthogonal}
Consider a nilmanifold $G/\Gamma$, a horizontal character $\eta\colon G\to\mb{R}$, and $w\in G/[G,G]$. We say that $\eta$ and $w$ are \emph{orthogonal} if $\eta(w) = 0$.
\end{definition}

The primary equidistribution input into our results will be the following result of the first author \cite[Theorem~3]{Len23b}. This result is ultimately the driving force of this paper. 
\begin{theorem}\label{thm:step-equi}
Fix an integer $\ell\ge 1$, $\delta\in(0,1/10)$, $M,d\ge 1$, and $F\colon G/\Gamma\to\mb{C}$. Suppose that $G$ is a dimension $d$, at most $s$-step connected, simply connected nilpotent Lie group with a given degree $k$ filtration, and the nilmanifold $G/\Gamma$ is complexity at most $M$ with respect to this filtration. Let $g$ be a polynomial sequence on $G$ with respect to this filtration.

Furthermore suppose that $\snorm{F}_{\mr{Lip}}\le 1$ and $F$ has $G_{(s)}$-vertical frequency $\xi$ such that the height of $\xi$ is bounded by $M/\delta$. Suppose that $N\ge(M/\delta)^{\Omega_{k,\ell}(d^{\Omega_{k,\ell}(1)})}$ and
\[\big|\mb{E}_{\vec{n}\in[N]^\ell}F(g(\vec{n})\Gamma)\big|\ge\delta.\]
There exists an integer $0\le r\le\dim(G/[G,G])$ such that:
\begin{itemize}
    \item We have horizontal characters $\eta_1,\ldots,\eta_r\colon G\to\mb{R}$ with heights bounded by $(M/\delta)^{O_{k,\ell}(d^{O_{k,\ell}(1)})}$;
    \item For all $1\le i\le r$, we have $\snorm{\eta_i\circ g}_{C^{\infty}[N]} \le(M/\delta)^{O_{k,\ell}(d^{O_{k,\ell}(1)})}$
    \item For any $w_1,\ldots,w_s\in G/[G,G]$ such that $w_i$ are orthogonal to all of $\eta_1,\ldots,\eta_r$, we have
    \[\xi([[[w_1, w_2], w_3],\ldots,w_s]) = 0.\]
\end{itemize}
\end{theorem}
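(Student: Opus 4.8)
The plan: first, if $\xi=0$ then $F$ is $G_{(s)}$-invariant and the third bullet holds vacuously with $r=0$, so we may assume $\xi\neq 0$, in which case integrating $F$ over the central torus $G_{(s)}/(\Gamma\cap G_{(s)})$ gives $\int_{G/\Gamma}F=0$. I would then run a ``Furstenberg--Weiss'' descent in the style of \cite{GTZ12}: iterate van der Corput in $\vec n$ until the sequence $g$ is pushed down to an abelian nilmanifold, read off the resonances there, and pull them back to horizontal characters on $G$. The efficiency --- polynomial-in-$d$ exponents rather than a tower --- comes from two facts: the descent has only $s-1=O_s(1)$ steps, each of bounded-in-$d$ complexity cost; and the one genuine equidistribution input is invoked in its induction-on-dimension-free form \cite{Len23,Len23b}.

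\emph{Descent.} Apply the van der Corput/Cauchy--Schwarz inequality $s-1$ times in $\vec n$, with shifts $h_1,\dots,h_{s-1}$. After the $j$-th step one works on a nilmanifold built from iterated ``$\square$''-constructions on $G/\Gamma$ (subgroups of $G\times G$ with a central diagonal quotiented out, as in \cite{GTZ12}), each of which raises the dimension by at most a bounded factor and the complexity to $M^{O_s(1)}$, lowers the step by one, and --- for a $\gg\delta^{O_{s,\ell}(1)}$-density set of shift tuples --- keeps a bias $\ge\delta^{O_{s,\ell}(1)}$. After $s-1$ steps the ambient group is abelian in the relevant quotient, and by the Baker--Campbell--Hausdorff formula \eqref{eq:bch-com} and Lemma~\ref{lem:commute} the surviving phase is, up to lower-order terms, a linear phase $e(c\cdot\vec n)$ with $c=\xi([[[w_1,w_2],w_3],\dots,w_s])$, where $w_1$ is (essentially) $\on{Taylor}_1(g)$ reduced modulo $[G,G]$ and $w_2,\dots,w_s$ encode the shift directions; one checks, again via \eqref{eq:bch-com} and Lemma~\ref{lem:commute} and using that $G$ is $s$-step, that $(w_1,\dots,w_s)\mapsto\xi([[[w_1,w_2],w_3],\dots,w_s])$ is a well-defined $\mb{R}$-multilinear form on $(G/[G,G])^{\otimes s}$.

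\emph{Resonances and pull-back.} On the abelian level, largeness of the average (over $\vec n$ and the shifts) together with $N\ge(M/\delta)^{\Omega(d^{\Omega(1)})}$ forces, by quantitative Weyl equidistribution and linear algebra over $\mb{Q}$ (an inverse theorem for the torus, with a Bogolyubov-type step to remove shift dependence), a family of at most $\dim(G/[G,G])$ integer relations of height $(M/\delta)^{O(d^{O(1)})}$ among the top coefficients; these relations exactly express that $\xi\circ[\,\cdot\,]$ vanishes on the subspace of $G/[G,G]$ that they cut out. Pulling each relation back through the $s-1$ ``$\square$''-steps --- each a bounded-complexity operation, so incurring only polynomial blow-up, repeated $O_s(1)$ times --- converts it to a horizontal character $\eta_i\colon G\to\mb{R}$ of height $(M/\delta)^{O(d^{O(1)})}$ with $\snorm{\eta_i\circ g}_{C^\infty[N]}\le(M/\delta)^{O(d^{O(1)})}$, and converts the subspace to $\bigcap_i\ker\bar\eta_i$ (with $\bar\eta_i$ the descent of $\eta_i$ to $G/[G,G]$), which yields the third bullet. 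This produces $r\le\dim(G/[G,G])$ characters as required.

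\emph{Main obstacle.} The hard part is controlling the descent: one must verify that the iterated ``$\square$''-nilmanifolds keep polynomial-in-dimension complexity, that the central frequency $\xi$ is propagated correctly through each $\square$-construction, and --- crucially --- that the torus-level resonances lift to genuinely \emph{low-complexity} horizontal characters of $G$. It is here that one must use the equidistribution machinery of \cite{Len23,Len23b} rather than that of \cite{GT12}: the latter proves equidistribution by an induction on dimension which re-exponentiates the bounds at each of its $O(d)$ stages, hence produces a tower of $O(d)$ exponentials (and, threaded through the induction on $s$ in the main theorem, ultimately $O(s^2)$ iterated exponentials), whereas \cite{Len23,Len23b} avoids this entirely and keeps the exponent polynomial in $d$.
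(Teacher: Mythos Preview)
The paper does not prove this theorem: it is quoted as \cite[Theorem~3]{Len23b} and used as a black-box input. The only content the paper adds is the pair of remarks immediately after the statement, handling the degenerate cases $\xi=0$ and $G$ of step strictly less than $s$ by taking $r=0$; your opening sentence covers the first of these.

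Your sketch is a reasonable high-level outline of the argument that \cite{Len23b} itself carries out, but as written it has a circularity and a genuine gap. The circularity is in your final paragraph: you say the crucial point is to ``use the equidistribution machinery of \cite{Len23,Len23b} rather than that of \cite{GT12}'', but the present theorem \emph{is} that machinery --- it is \cite[Theorem~3]{Len23b} --- so you cannot invoke it in its own proof. The gap is in the ``resonances and pull-back'' step. After $s-1$ van der Corput steps you are on a torus of dimension polynomial in $d$, and the shift tuple $(h_1,\dots,h_{s-1})$ ranges over a set of size $N^{O_s(\ell)}$. Your sentence ``largeness of the average\ldots forces\ldots a family of at most $\dim(G/[G,G])$ integer relations'' hides the entire difficulty: a priori one obtains, for each good shift tuple, a relation that depends on that tuple, and collapsing this family of $h$-dependent relations to a \emph{single} bounded set of horizontal characters on $G$ --- without iterating over dimension --- is precisely the content of \cite{Len23b}. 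The ``Bogolyubov-type step to remove shift dependence'' you allude to is not a minor cleanup; it is the heart of the matter, and your sketch gives no mechanism for it beyond citing the paper whose theorem this is.
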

\begin{remark}\label{rem:height}
Note that $G_{(s)}$ (and in fact any group in the lower central series) is seen to be $O_{s,k}(M^{O_{s,k}(1)})$-rational due to Lemma~\ref{lem:commute}. This guarantees that the height definition used in \cite{Len23b} and here are compatible. 
\end{remark}
\begin{remark*}
Let $W = \bigcap_{i=1}^r\on{ker}(\eta_i)$. The crucial property of the lemma output is that
\[\wt{G} := W/\ker(\xi)\]
is trivially seen to be at most $(s-1)$-step nilpotent. (Note that if $G$ is abelian then we have that $\wt{G}$ is trivial.) This is due to the fact that defining $W = W_0 = W_1$ and $W_j = [W_1,W_{j-1}]$ for $j\ge 2$ yields $W_{(s)}\leqslant G_{(s)}$ and $\xi(W_{(s)}) = 0$. Additionally, the statement in \cite[Theorem~3]{Len23b} assumes $G$ is exactly $s$-step nilpotent and $\xi$ is nonzero. In the case when $G$ is strictly less than $s$-step nilpotent, taking no horizontal characters (i.e., $W=G$) gives the desired statement. Furthermore when $\xi$ is zero we may similarly take no horizontal characters and note that the final statement is vacuous. 
\end{remark*}

The following variant of Theorem~\ref{thm:step-equi} will essentially be the primary equidistribution tool in our paper. For the sake of argumentation, we first prove the result in the case when the vertical frequency considered lives on a $1$-dimensional torus and then bootstrap to the general case. 

This theorem and its proof are motivated by \cite[Lemma~E.11]{GTZ12}. The key point is that Theorem~\ref{thm:step-equi} allows us to give a procedure that relies on an induction on step rather than an induction on dimension. The main technical issue is at each stage we pass to a quotient group given by quotienting the kernel of a certain vertical character and thus we must iteratively ``lift'' these factorizations. 

\begin{theorem}\label{thm:equi-deg}
Let $\ell\ge 1$ be an integer, $\delta\in (0,1/10)$, $M\ge 1$, and $F\colon G/\Gamma\to\mb{C}$. Suppose that $G$ is dimension $d$, is $s$-step nilpotent with a given degree $k$ filtration, and the nilmanifold $G/\Gamma$ is complexity at most $M$ with respect to this filtration.

Suppose that $T\leqslant Z(G)$ is a $1$-dimensional subgroup of the center which is $M$-rational with respect to $G$. Further suppose that $F$ has a nonzero $T$-vertical character $\xi$ with $|\xi|\le M/\delta$, $\snorm{F}_{\mr{Lip}}\le M$, $N\ge(M/\delta)^{\Omega_{k,\ell}(d^{\Omega_{k,\ell}(1)})}$, and $g$ is a polynomial sequence with respect to the degree $k$ filtration such that $g(0) = \mr{id}_G$. Then if
\[\big|\mb{E}_{\vec{n}\in[N]^{\ell}}F(g(\vec{n})\Gamma)\big|\ge\delta\]
there exists a factorization
\[g = \eps g'\gamma\]
such that:
\begin{itemize}
    \item $\eps(0)=g'(0)=\gamma(0)=\mr{id}_G$;
    \item $g'$ lives in an $(M/\delta)^{O_{k,\ell}(d^{O_{k,\ell}(1)})}$-rational subgroup $H$ such that $H\cap T = \mr{Id}_G$;
    \item $\gamma$ is an $(M/\delta)^{O_{k,\ell}(d^{O_{k,\ell}(1)})}$-rational polynomial sequence;
    \item $\eps$ is an $((M/\delta)^{O_{k,\ell}(d^{O_{k,\ell}(1)})},N)$-smooth polynomial sequence.
\end{itemize}
\end{theorem}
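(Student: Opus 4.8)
The plan is to deduce Theorem~\ref{thm:equi-deg} from Theorem~\ref{thm:step-equi} by an induction on the step $s$. The base case $s=1$ is essentially trivial: if $G$ is abelian then $F$ is a character and the equidistribution statement forces the single linear phase $\xi\circ g$ to be smooth, so one can peel it off into $\eps$ and $\gamma$ directly, taking $H$ to be a complementary rational subspace to $T$ (here one uses that $T$ is $M$-rational so a bounded-complexity complement exists). For the inductive step, apply Theorem~\ref{thm:step-equi} (with the same $\ell$, $\delta$, and with $F$ having its $G_{(s)}$-vertical frequency — note $\xi$ is a $T$-vertical frequency for a one-dimensional $T\leqslant Z(G)$, and $G_{(s)}\leqslant Z(G)$, so one first has to reconcile these: either $T\leqslant G_{(s)}$ after passing to a quotient, or one argues $\xi$ descends appropriately; I would handle this by noting the relevant vertical frequency on $G_{(s)}$ obtained from $\xi$ and applying the theorem to that). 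This produces horizontal characters $\eta_1,\dots,\eta_r\colon G\to\mb{R}$ of controlled height with $\snorm{\eta_i\circ g}_{C^\infty[N]}$ small, such that $\xi$ annihilates the relevant iterated commutators of $W:=\bigcap_i\ker(\eta_i)$.

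The next step is to use the smoothness of the $\eta_i\circ g$ to factor $g$ through $W$. This is a standard ``pigeonhole + Mal'cev coordinate" manipulation: since each $\eta_i\circ g$ is $C^\infty[N]$-close to a rational polynomial of bounded height, we can write $g = \eps_0 g_W \gamma_0$ where $g_W$ is a polynomial sequence taking values in $W$ (after correcting by a bounded-complexity rational element to land exactly in $W$ rather than a coset), $\gamma_0$ is rational of controlled denominator, and $\eps_0$ is $(\cdot,N)$-smooth; this kind of factorization is exactly what the machinery in \cite{Len23,Len23b} provides and I would cite the appropriate lemma there rather than redo it. Crucially $W$ is an $(M/\delta)^{O_{k,\ell}(d^{O_{k,\ell}(1)})}$-rational subgroup since the $\eta_i$ have controlled height (Fact~\ref{fact:rat-subgroup}-type reasoning), and by the Remark following Theorem~\ref{thm:step-equi} the quotient $\wt G := W/\ker(\xi|_{W_{(s)}})$ is at most $(s-1)$-step nilpotent. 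Moreover $F$ restricted to $W$ still has vertical frequency $\xi$, and since $\xi$ kills $W_{(s)}\cap\ker(\xi)$... more precisely $F$ descends to a function on $\wt G/\wt\Gamma$ of controlled Lipschitz norm (Lemma~\ref{lem:quotient-rat}), and $g_W$ descends to a polynomial sequence $\wt g$ on $\wt G$ with $|\mb{E}_{\vec n\in[N]^\ell}F(\wt g(\vec n)\wt\Gamma)|\ge\delta'$ for a comparable $\delta'$ (the smooth and rational parts we peeled off only change the average by a bounded factor after a further pigeonhole over residues and short subprogressions, at the cost of replacing $N$ by $N/(M/\delta)^{O(\cdots)}$, which is fine).

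At this point I apply the inductive hypothesis to $\wt G/\wt\Gamma$, $\wt g$, and the image $\wt T$ of $T$ (one must check $\wt T$ is still a one-dimensional central rational subgroup and that $\xi$ descends to a nonzero vertical character on it — this requires $T\not\leqslant \ker(\xi)$, i.e. $\xi|_T\neq 0$, which holds since $\xi$ is the $T$-vertical frequency; and it requires $T\cap W_{(s)}$ behaving well, which is where a little care is needed). The induction gives a factorization $\wt g = \wt\eps\,\wt g'\,\wt\gamma$ with $\wt g'$ in a rational subgroup $\wt H$ with $\wt H\cap\wt T=\mathrm{Id}$. The final task — and I expect this to be the main obstacle — is to \emph{lift} this factorization from $\wt G$ back to $G$: one must choose rational lifts of $\wt\eps,\wt g',\wt\gamma,\wt H$ to $G$, compose them with the factorization $g=\eps_0 g_W\gamma_0$ from the previous paragraph, and then regroup the product of all the smooth pieces, all the rational pieces, and all the ``structured'' pieces into a single $\eps$, $\gamma$, $g'$ respectively, all while controlling complexity and preserving $H\cap T=\mathrm{Id}_G$. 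Regrouping is delicate because $G$ is nonabelian, so commuting a rational sequence past a smooth sequence past a sequence in $H$ introduces Baker--Campbell--Hausdorff error terms (lower-order commutators) that must themselves be absorbed into the three buckets; keeping $H$ rational of the stated complexity and genuinely disjoint from $T$ after these manipulations is the crux. I would organize this lifting as a separate lemma, essentially following the template of \cite[Lemma~E.11]{GTZ12} but being careful that every quantitative bound stays in the quasipolynomial regime $(M/\delta)^{O_{k,\ell}(d^{O_{k,\ell}(1)})}$, which is exactly what the induction-on-step (rather than induction-on-dimension) structure buys us.
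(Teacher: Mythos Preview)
Your overall strategy matches the paper's: iterate on step via Theorem~\ref{thm:step-equi}, pass to a quotient that drops the step, and lift the resulting factorization back to $G$. But there is a genuine gap at exactly the point you flag as ``needing care,'' and it is not a detail that fills itself in.

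The problem is the reconciliation of the $T$-frequency $\xi$ with a $G_{(s)}$-frequency. Theorem~\ref{thm:step-equi} requires a $G_{(s)}$-vertical character, and $\xi$ is only defined on the one-dimensional $T\leqslant Z(G)$; there is no ``relevant vertical frequency on $G_{(s)}$ obtained from $\xi$'' because $\xi$ has no canonical extension to $G_{(s)}$. The paper resolves this by Fourier-decomposing $F$ along $G_{(s)}$ (via \cite[Lemma~A.6]{Len23b}) and pigeonholing onto a single piece with some $G_{(s)}$-frequency $\xi'$; the only constraint is that $\xi'$ agrees with $\xi$ on $T\cap G_{(s)}$ (else that piece integrates to zero). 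This forces a trichotomy --- $\xi'\neq 0$ with $T\leqslant G_{(s)}$; $\xi'\neq 0$ with $T\cap G_{(s)}=\mr{Id}_G$; or $\xi'=0$ --- and in each case one must check separately that $\ker(\xi')\cap T=\mr{Id}_G$, so that $T$ injects into the quotient $G/\ker(\xi')$. Without this verification your inductive call is ill-posed: you cannot guarantee $\wt T$ is still one-dimensional with a nonzero vertical character. Relatedly, your notation $\ker(\xi|_{W_{(s)}})$ is not meaningful as written, since $\xi$ is not defined on $W_{(s)}$; the correct quotient is by $\ker(\xi')$ for the pigeonholed $G_{(s)}$-frequency $\xi'$.

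On the lifting: the paper does \emph{not} postpone it to a single lemma at the end. Instead it maintains, through every step $t$ of the iteration, a subgroup $K^t\leqslant G$ satisfying $\pi_t\circ\cdots\circ\pi_1(K^t)\leqslant G^t$, and lifts the factorization one step at a time by pulling the horizontal characters $\eta_i'$ cutting out $G^{t+1}$ back through $\pi_{t+1}\circ\cdots\circ\pi_1$ to define $K^{t+1}$. At termination the condition $G^{t+1}\cap\pi_{t+1}\circ\cdots\circ\pi_1(T)=\mr{Id}$ pulls back to $K^{t+1}\cap T=\mr{Id}_G$ directly. This incremental bookkeeping is what keeps the Baker--Campbell--Hausdorff errors under control and avoids the ``monolithic lift'' you correctly anticipate as the crux; I would follow it rather than attempt the lift in one shot.
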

\begin{proof}
The proof proceeds by iteratively ``simplifying'' $g$ to live on successively lower-step nilmanifolds. We treat $\ell$ as constant and allow implicit constants to depend on $\ell$.

\noindent\textbf{Step 1: Iteration setup.}
We will define a sequence of parameters $M_i,\delta_i$ and $Q_i,N_i,v_i$ (where the domain of $\vec{n}$ at stage $i$ will be $v_i+Q_i\cdot[N_i]^\ell$) satisfying:
\[
M_{i+1} \le (M_i/\delta_i)^{O_{k}(d^{O_{k}(1)})},\quad\delta_{i+1}\ge (\delta_i/M_i)^{O_{k}(d^{O_{k}(1)})};\]
\[Q_{i+1} \le Q_i \cdot (M_i/\delta_i)^{O_{k}(d^{O_{k}(1)})},\quad N_{i+1} \ge N_i \cdot (\delta_i/M_i)^{O_{k}(d^{O_{k}(1)})},\quad Q_{i+1} \cdot N_{i+1} + \snorm{v_{i+1}}_{\infty} \le N.\]
During the iteration, we have a sequence of nilpotent Lie groups 
\[G^{0},G^{1},\ldots,G^{t},\ldots\]
such that $G^{t}$ is at most $(s-t)$-step nilpotent with associated lattice $\Gamma^{t}$ and is complexity at most $M_t$. This in particular will imply that there are at most $s$ stages in the iteration. We also maintain a sequence of subgroups
\[K^{0},\ldots,K^{t},\ldots\]
which are $M_t$-rational subgroups of $G$. 

We will define homomorphisms $\pi_{t+1}\colon G^{t}\to G^{t}/\on{ker}(\xi_t) =: \wt{G}^{t+1}$, where $\xi_t$ is a $G^{t}_{(s-t)}$-frequency (recall $H_{(i)}$ denotes the lower central series filtration of a group $H$). $G^{t+1}$ will be an appropriately rational subgroup of $\wt{G}^{t+1}$. We will always maintain the invariant that $\on{ker}(\xi_t)\cap(\pi_t\circ \cdots \circ \pi_1(T)) = \mr{Id}_{G^t}$. We will furthermore maintain that the function $F_t$ has a $\pi_t\circ\cdots\circ\pi_1(T)$-character given by descending $\xi$ on $G$ via $\pi_t\circ\cdots\circ\pi_1$.

We inductively maintain the following pair of relations:
\begin{itemize}
    \item $\pi_{t}\circ \cdots \circ \pi_1(K^{t}) = G^t$;
    \item $\pi_t \circ \cdots \circ \pi_1(g_t) = \wt{g}_t$;
\end{itemize}
where $g_t$ and $\wt{g}_t$ are polynomial sequences living in $K^t$ and $G^t$ respectively.

The iteration terminates when $G^{t}\cap (\pi_t\circ \cdots \circ \pi_1(T)) = \mr{Id}_{G^{t}}$. Before termination note that $G^{t} \cap (\pi_t\circ \cdots \circ \pi_1(T)) = \pi_t\circ \cdots \circ \pi_1(T)$ since $\pi_t\circ \cdots \circ \pi_1(T)$ is $1$-dimensional. Note that this in particular ensures that before the termination of the iteration, $\pi_t\circ \cdots \circ \pi_1(T)$ is well-defined even though $\pi_j$ is not fully defined on the image of $\pi_{j-1}$! Using the invariant that $\on{ker}(\xi_t)\cap(\pi_t\circ \cdots \circ \pi_1(T)) = \mr{Id}_{G^t}$ we also have that $\xi$ (defined on $T$) naturally descends to $G^t$. We define $J^{t} = \pi_t\circ \cdots \circ \pi_1(T)$.

Furthermore at each stage of the iteration we have that 
\[g_t = \eps_{t+1} \cdot g_{t+1} \cdot\gamma_{t+1}\]
where:
\begin{itemize}
    \item $\eps_{t+1}$ and $\gamma_{t+1}$ are polynomial sequences lying in $K_t$;
    \item $g_{t+1}$ is a polynomial sequence lying in $K_{t+1}$;
    \item $\gamma_{t+1}$ is $M_{t+1}$-rational;
    \item $\eps_{t+1}$ is $(M_{t+1},N)$-smooth.
\end{itemize}
Finally, in each stage of the iteration we will maintain a function $F_t\colon G^{t}/\Gamma^{t}\to\mb{C}$ such that
\[|\mb{E}_{\vec{n}\in v_t + Q_t \cdot [N_t]^{\ell}}[F_t(\wt{g_t}(\vec{n})\Gamma^{t})]| \ge\delta_t.\]
Throughout the iterations, nilmanifolds at stage $i$ will have complexity bounded by $M_i$, $F_i$ is $M_i$-Lipschitz, and various horizontal and vertical characters constructed will have size and height bounded by $M_i$. The starting conditions are $G^0=G$, $\Gamma^0=\Gamma$, $M_0=M$, $F_0=F$, $N_0=N$, $v_0=0$, $Q_0=1$, $\delta_0=\delta$, $K^0=G$ (and $J^0=T$), and $g_0=\wt{g}_0=g$.

\noindent\textbf{Step 2: Applying equidistribution.} We now run a single step of the iteration. We have 
\[|\mb{E}_{\vec{n}\in v_t + Q_t \cdot [N_t]^{\ell}}[F_t(\wt{g}_t(n)\Gamma^{t})]|\ge\delta_t.\]
By definition, we have that $F_t$ has a $J^t$-frequency (a descent of $\xi$); this is not sufficient to apply Theorem~\ref{thm:step-equi}. We perform an additional Fourier-analytic step to obtain a $G^t_{(s-t)}$-vertical frequency.  Since $F_t$ is $M_t$-Lipschitz, via \cite[Lemma~A.6]{Len23b} we may write
\[F_t(z\Gamma^t) = \sum_{|\xi'|\le(M_t/\delta_t)^{O_{k}(d^{O_k(1)})}}F_{\xi',t}(z\Gamma^t) + \tau(z\Gamma^t)\]
such that 
\begin{itemize}
    \item $F_{\xi',t}$ has $G^{t}_{(s-t)}$-vertical frequency $\xi'$;
    \item $\snorm{\tau}_{\infty}\le\delta_t/2$;
    \item $F_{\xi',t}$ is $(M_t/\delta_t)^{O_{k}(d^{O_k(1)})}$-Lipschitz on $G^t/\Gamma^t$.
\end{itemize}
Given this representation, recall that $F_t$ has $\xi$ (appropriately descended) as a $J^t$-vertical frequency. We abusively write this as $\xi$. Therefore
\begin{align*}
F_t(z\Gamma^t) &= \int_{g\in J^t/\Gamma^t}e(-\xi(g))F_t(zg\Gamma^t)dJ^t(g)\\
&= \sum_{|\xi'|\le(M_t/\delta_t)^{O_{k}(d^{O_k(1)})}}\int_{g\in J^t/\Gamma^t} e(-\xi(g))F_{\xi',t}(zg\Gamma^t)dJ^t(g) + \int_{g\in J^t/\Gamma^t}e(-\xi(g))\tau(zg\Gamma^t)dJ^t(g)\\
&= \sum_{|\xi'|\le(M_t/\delta_t)^{O_{k}(d^{O_k(1)})}}\wt{F}_{\xi',t}(z\Gamma^t) + \int_{g\in J^t/\Gamma^t}e(-\xi(g))\tau(zg\Gamma^t)dJ^t(g),
\end{align*}
where $dJ^t$ represents the Haar measure on $J^t/\Gamma^t$. Thus $F_t$ may be decomposed into a sum of functions with $G^{t}_{(s-t)}$-vertical characters up to an $L^\infty$ error of $\delta_t/2$. Furthermore, each vertical character $\xi'$ in question must agree with $\xi$ on $J^t\cap G^{t}_{(s-t)}$. If not, then the corresponding integral in the second line will average to $0$ and we may remove it.

Applying Pigeonhole, there exists $|\xi'|\le(M_t/\delta_t)^{O_k(d^{O_k(1)})}$ such that
\begin{equation}\label{eq:equi-deg-1}
|\mb{E}_{\vec{n}\in v_t + Q_t \cdot [N_t]^{\ell}}[\wt{F}_{\xi',t}(\wt{g}_t(\vec{n})\Gamma^{t})]|\ge(\delta_t/M_t)^{O_k(d^{O_k(1)})}.
\end{equation}
We have the following trichotomy:
\begin{itemize}
    \item $\xi'$ is nonzero and $J^t\cap G^t_{(s-t)}=J^t$;
    \item $\xi'$ is nonzero and $J^t\cap G^t_{(s-t)}=\mr{Id}_{G^t}$;
    \item $\xi'=0$ in $\wh{G^t_{(s-t)}}$.
\end{itemize}

We define $\pi_{t+1}\colon G^t\to G^t/\ker(\xi')=:\wt{G}^{t+1}$ (in particular we let $\xi_{t+1} = \xi'$). Let $\wt{\Gamma}^{t+1} := \Gamma^{t}/(\Gamma^{t}\cap\ker(\xi')) = \pi_{t+1}(\Gamma_t)$. We now apply Theorem~\ref{thm:step-equi} to \eqref{eq:equi-deg-1}, obtaining horizontal characters $\eta_1,\ldots,\eta_r\colon G^t\to\mb{R}$. Let their common kernel be $H^\ast$ and let $G^{t+1}=\pi_{t+1}(H^\ast)\leqslant\wt{G}^{t+1}$. Note that in the case when $G^{t}$ is abelian, we do not necessarily have that $\eta_i$ are trivial on $\on{ker}(\xi')$. However replacing $H^{\ast}$ by $H^{\ast}\on{ker}(\xi')$ (and abusively referring to this as $H^{\ast}$), we may then replace $\eta_i$ by $\eta_i'$ which instead cutout $H^{\ast}\on{ker}(\xi')$ and note that $\eta_i'$ may be taken to be $(M_t/\delta_t)^{O_k(d^{O_k(1)})}$-height integer combination of $\eta_i$. We abusively rename these characters as $\eta_i$ and then proceed with the proof in this edge abelian case.

By applying \cite[Lemma~A.1]{Len23b}, we obtain a factorization of $\wt{g}_t(Q_tn+v_t)$ into three nilsequences which are ``smooth'', supported on a rational subgroup, and ``rational''. We may change variables and then apply $\pi_{t+1}$ to obtain
\[\pi_{t+1}(\wt{g}_t)=:\eps_{t+1}^\ast g_{t+1}^\ast\gamma_{t+1}^\ast\]
where:
\begin{itemize}
    \item $g_{t+1}^\ast\in G^{t+1}$, and $G^{t+1}$ is at most $(s-t-1)$-step nilpotent. Furthermore $G^{t+1}$ is trivially seen to be $(M_t/\delta_t)^{O_k(d^{O_k(1)})}$-rational with respect to $\wt{G}_{t+1}$;
    \item $\gamma_{t+1}^\ast$ is an $(M_t/\delta_t)^{O_k(d^{O_k(1)})}$-rational polynomial sequence within $\wt{G}^{t+1}$;
    \item $\eps_{t+1}^\ast$ is $((M_t/\delta_t)^{O_k(d^{O_k(1)})},N)$-smooth.
\end{itemize}
We remark that changing variables is easily seen to not affect the smoothness and rationality in a substantial manner due to the bounds on $Q_t$. We can see that the step of $G^{t+1}$ decreases appropriately.

\noindent\textbf{Step 3: Lifting the factorization data.}
Note that $G^{t+1}$ can be defined via a set of horizontal characters $\eta_1',\ldots,\eta_{r'}'$ of $\wt{G}^{t+1}$ such that
\[G^{t+1} = \{x\in\wt{G}^{t+1}\colon\eta_i'(x) = 0\text{ for all } 1\le i\le r'\}\]
and we let $\Gamma^{t+1}=G^{t+1}\cap\wt{\Gamma}^{t+1}$. Note that the $\eta_i'$ are the natural descentions of $\eta_i$ as we have $\eta_i$ are trivial on $\on{ker}(\xi')$; this is precisely why we earlier modified the characters in the abelian case.  

We define 
\[K^{t+1} = \{x\in K^{t}\colon\eta_i'(\pi_{t+1}\circ\pi_{t}\circ\cdots\circ \pi_1(x)) = 0\text{ for all } 1\le i\le r'\}.\]
The trivial (but key) point is that $\pi_{t+1}\circ\pi_{t}\circ\cdots\circ \pi_1(K^{t+1})\leqslant G^{t+1}$. The key issue is noting that the map is well-defined; this is because $\pi_{t}\circ\cdots\circ\pi_1(K^{t})\leqslant G^{t}$ by induction so that we are allowed to apply $\pi_{t+1}$ to any such values. We further see that $\pi_{t+1}\circ\pi_{t}\circ\cdots\circ \pi_1(K^{t+1}) = G^{t+1}$ because $\pi_{t+1}\circ\pi_t\circ\cdots\circ\pi_1(K^t)=\pi_{t+1}(G^t)=\wt{G}^{t+1}$ and $K^{t+1}$ is the subgroup of $K^t$ such that the image under $\pi_{t+1}\circ\cdots\circ\pi_1$ is precisely in the intersection of kernels defining $G^{t+1}$ within $\wt{G}^{t+1}$.

Recall by induction that
\[\pi_t \circ \cdots \circ \pi_1(g_t) = \wt{g}_t\]
and thus 
\[\pi_{t+1} \circ \cdots \circ \pi_1(g_t) = \eps_{t+1}^\ast g_{t+1}^\ast \gamma_{t+1}^\ast.\]
Applying $\eta_i'$, we find that that there exists a nonzero integer $T_i\le (M_t/\delta_t)^{O_{k}(d^{O_{k}(1)})}$ such that 
\begin{equation}\label{eq:equi-deg-2}
\snorm{T_i \cdot\eta_i'(\pi_{t+1} \circ \cdots \circ \pi_1(g_t))}_{C^{\infty}[N]}\le(M_t/\delta_t)^{O_{k}(d^{O_{k}(1)})}.
\end{equation}

We now claim that $\eta_i'(\pi_{t+1}\circ \cdots \circ \pi_1(\cdot))$ is a horizontal character on $K^{t}$. It is a homomorphism since the $\pi_i$ are homomorphisms and it is well-defined by the above. In addition, we may inductively show that $\pi_{t + 1} \circ \cdots \circ \pi_1(\Gamma\cap K^t) = \wt{\Gamma}^{t + 1}$ and $\pi_{t+1}\circ\cdots\circ\pi_1(\Gamma\cap K^{t+1})=\Gamma^{t+1}$. Hence $\eta_i'(\pi_{t + 1} \circ \cdots \circ \pi_1(\Gamma\cap K^t))\leqslant\mb{Z}$, which verifies the property of being a horizontal character. That the horizontal character has appropriately bounded height is an immediate consequence of induction and the fact that $|\xi'|\le (M_t/\delta_t)^{O_k(d^{O_k(1)})}$.

Now we use this data to construct the required factorization. By applying \cite[Lemma~A.1]{Len23b} with the horizontal characters $T_i \cdot\eta_i'(\pi_{t+1} \circ \cdots \circ \pi_1)$ defined on $K^t$ with the hypotheses \eqref{eq:equi-deg-2}, we may write
\[g_{t} = \eps_{t+1}'g_{t+1}'\gamma_{t+1}'\]
where:
\begin{itemize}
    \item $g_{t+1}'$ takes values in $K_{t+1}$;
    \item $\eps_{t+1}'$ and $\gamma_{t+1}'$ take values in $K_{t}$;
    \item $\gamma_{t+1}'$ is an $(M_t/\delta_t)^{O_k(d^{O_k(1)})}$-rational polynomial sequence;
    \item $\eps_{t+1}'$ is $((M_t/\delta_t)^{O_k(d^{O_k(1)})},N)$-smooth.
\end{itemize}

Then $Q'$ denote the least common multiple of the periods of the $\ell$ different directions for $\gamma_{t+1}'\Gamma$; note that such periods exist and we have $Q'\le (M_t/\delta_t)^{O_k(d^{O_k(1)})}$ by \cite[Lemma~B.14]{Len23b}. Divide $v_t + Q_t \cdot [N_t]^{\ell}$ into boxes of common difference $Q_tQ'$. By Pigeonhole there exists $v'$ such that 
\[|\mb{E}_{\vec{n}\in v'+ Q'Q_t \cdot [N_t/Q']^{\ell}}[\wt{F}_{\xi',t}(\wt{g}_t(\vec{n})\Gamma^{t})]|\ge(\delta_t/M_t)^{O_k(d^{O_k(1)})}.\]
Note that 
\[\wt{g}_t = \pi_t \circ \cdots \circ \pi_1(g_t) = \pi_t \circ \cdots \circ \pi_1(\eps_{t+1}') \cdot\pi_t \circ \cdots \circ \pi_1(g_{t+1}') \cdot\pi_t \circ \cdots \circ \pi_1(\gamma_{t+1}').\]

Since the differences we are considering are divisible by $Q'$, there is $\gamma_{\mr{Rep}}$ such that
\[\gamma_{\mr{Rep}}^{-1}\gamma_{t+1}'(v'+ Q'Q_t\cdot\vec{n})\in\Gamma\]
for all $\vec{n}\in\mb{Z}^{\ell}$, where $\gamma_{\mr{Rep}}\in K^{t}$ and $d_G(\gamma_{\mr{Rep}},\mr{id}_G)\le(M_t/\delta_t)^{O_{k}(d^{O_{k}(1)})}$. Since $\pi_t \circ \cdots\circ\pi_1(\Gamma\cap K^{t})\leqslant\Gamma^{t}$ we have that 
\[|\mb{E}_{\vec{n}\in v'+ Q'Q_t \cdot [N_t/Q']^{\ell}}[\wt{F}_{\xi',t}(\pi_t \circ \cdots \circ \pi_1(\eps_{t+1}'\gamma_{\mr{Rep}}) \cdot\pi_t \circ \cdots \circ \pi_1(\gamma_{\mr{Rep}}^{-1}g_{t+1}'\gamma_{\mr{Rep}}) \Gamma^{t})]|\ge(\delta_t/M_t)^{O_k(d^{O_k(1)})}.\]

\noindent\textbf{Step 4: Completing the induction.}
The first key polynomial sequence we shall define is 
\[g_{t+1} = \gamma_{\mr{Rep}}^{-1}\cdot g_{t+1}'\cdot\gamma_{\mr{Rep}}.\]
Note that $K^{t+1}$ is normal within $K^{t}$ and since $\gamma_{\mr{Rep}}\in K^{t}$ we have that $g_{t+1}$ takes on values in $K^{t+1}$ as desired. Further let $\eps_{t+1} = \eps_{t+1}' \cdot\gamma_{\mr{Rep}}$ and $\gamma_{t+1} = \gamma_{\mr{Rep}}^{-1} \cdot\gamma_{t+1}'$; these are trivially seen to lie in $K_t$ and have the necessary rationality and smoothness properties due to the above analysis.

We now break $[N_t/Q']^{\ell}$ into a collection of boxes of length $N_{t+1} \ge N_t/Q' \cdot (M_t/\delta_t)^{-O_k(d^{O_k(1)})}$. There exists a box such that 
\[|\mb{E}_{\vec{n}\in v''+ Q'Q_t \cdot [N_{t+1}]^{\ell}}[\wt{F}_{\xi',t}(\pi_t \circ \cdots \circ \pi_1(\eps_{t+1}' \cdot\gamma_{\mr{Rep}}) \cdot\pi_t \circ \cdots \circ \pi_1(g_{t+1}) \Gamma^{t})]|\ge(\delta_t/M_t)^{O_k(d^{O_k(1)})}.\]
Taking $N_{t+1}$ sufficiently small, we may replace the initial ``smooth'' polynomial sequence $\eps_{t+1}^\ast$ by $\eps^\ast\in K^{t}$ where $d_G(\eps^{\ast},\mr{id}_G)\le (M_t/\delta_t)^{O_{k}(d^{O_{k}(1)})}$ such that 
\[|\mb{E}_{\vec{n}\in v''+ Q'Q_t \cdot [N_{t+1}]^{\ell}}[\wt{F}_{\xi',t}(\pi_t \circ \cdots \circ \pi_1(\eps^\ast) \cdot\pi_t \circ \cdots \circ \pi_1(g_{t+1}) \Gamma^{t})]|\ge(\delta_t/M_t)^{O_k(d^{O_k(1)})}.\]

The new function $F_{t+1}$ is given by descending $g\mapsto\wt{F}_{\xi',t}(\pi_t \circ \cdots \circ \pi_1(\eps^\ast) \cdot g\Gamma^t)$ from $G^t$ to $\wt{G}^{t+1}$ (and later we may implicitly restrict to $G^{t+1}$). Explicitly, for $g\in G^t$ we have
\[\wt{F}_{\xi',t}(\pi_t \circ \cdots \circ \pi_1(\eps^\ast) g \Gamma^{t+1}) = F_{t+1}(\pi_{t+1}(g)\wt{\Gamma}^{t+1})\]
which is possible because $\wt{F}_{\xi',t}$ has vertical frequency $\xi'$. Therefore we have
\begin{equation}\label{eq:equi-deg-3}
|\mb{E}_{\vec{n}\in v''+ Q'Q_t \cdot [N_{t+1}]^{\ell}}[F_{t+1}(\pi_{t+1}\circ\pi_t \circ \cdots \circ \pi_1(g_{t+1}(\vec{n})) \wt{\Gamma}^{t+1})]|\ge(\delta_t/M_t)^{O_k(d^{O_k(1)})}.
\end{equation}
We let
\[\wt{g}_{t+1} := \pi_{t+1}\circ\pi_t \circ \cdots \circ \pi_1(g_{t+1})\]
and we may replace $\wt{\Gamma}^{t+1}$ with $\Gamma^{t+1} = \wt{\Gamma}^{t+1} \cap G^{t+1}$ in \eqref{eq:equi-deg-3}.

We now check that $\on{ker}(\xi')\cap J^t = \mr{id}_{G^{t}}$, which is one of the invariants we are maintaining (we take $\xi_t=\xi'$). We will have to distinguish between cases:
\begin{itemize}
    \item If $\xi'$ is nonzero and $J^{t}\cap G^{t}_{(s-t)} = J^{t}$ note that $\on{ker}(\xi') \cap J^{t} = \mr{Id}_{G^{t}}$. This is due to the fact that $\xi'$ restricted to $J^{t}$ is (the descended version of) $\xi$ which is nonzero as given.
    \item If $\xi'$ is nonzero and $J^{t}\cap G^{t}_{(s-t)} = \mr{Id}_G^{t}$ then note that $\on{ker}(\xi')\cap J^t\leqslant J^t\cap G^{t}_{(s-t)} = \mr{Id}_{G^{t}}$.
    \item If $\xi' = 0$ then note that as $\xi$ (appropriately descended) was nonzero we have that $J^{t}\cap G^{t}_{(s-t)} = \mr{Id}_G^{t}$ is forced in this case. The result then follows as in the previous step.
\end{itemize}

Now, if $G^{t+1}\cap\pi_{t+1}\circ \cdots \circ\pi_1(T) = \pi_{t+1}\circ \cdots \circ\pi_1(T)$ then we continue with the iteration and do not terminate. If we have reached termination, we therefore have that $G^{t+1}\cap\pi_{t+1}\circ \cdots \pi_1(T) = \mr{Id}_{G^{t+1}}$. We claim that this implies that $K^{t+1}\cap T = \mr{Id}_G$ (and therefore we may take the output group to be $H = K^{t+1}$). For the sake of contradiction, instead suppose $T\leqslant K^{t+1}$ (since $T$ is $1$-dimensional). Applying $\pi_{t+1}\circ \cdots \circ \pi_1$ we have that 
\[\pi_{t+1}\circ \cdots \circ \pi_1(T)\leqslant \pi_{t+1}\circ \cdots \circ \pi_1(K^{t+1}) = G^{t+1}\]
which contradicts the termination condition.

Finally, note that if $G^{t+1}\cap\pi_{t+1}\circ \cdots \pi_1(T) = \pi_{t+1}\circ \cdots \pi_1(T)$ then $F_{t+1}$ when viewed as a function on $G^{t+1}/\Gamma^{t+1}$ is seen to have a nonzero $\pi_{t+1}\circ \cdots \pi_1(T)$ vertical character (which is given by descending $\xi$ on $G$ in through $\pi_{t+1}\circ \cdots \pi_1$ in the obvious manner), so one can continue in the iteration in this case.

\noindent\textbf{Step 5: Fixing the value at $0$.}
To see that this completes the proof, if the iteration terminates at some stage $t$ then note that 
\[g = \eps_1 \cdots \eps_{t} \cdot g_t \cdot\gamma_t \cdots\gamma_1.\]
Using that the product of smooth sequences are appropriately smooth and analogously for rational sequences allows us to deduce the necessary outputs. However, we have not guaranteed that the values of the factorization are the $\mr{id}_G$ at $0$. For this, let $g_t(0) = \{g_t(0)\}[g_t(0)]$ with $[g_t(0)]\in K^{t}\cap\Gamma$ and $d_{G}(\{g_t(0)\},\mr{id}_G)\le (M/\eps)^{O_{k}(d^{O_{k}(1)})}$. We then have that 
\[g = \eps_1 \cdots \eps_{t} \cdot\{g_t(0)\} \cdot (\{g_t(0)\}^{-1} g_t [g_t(0)]^{-1}) \cdot [g_t(0)] \cdot\gamma_t \cdots\gamma_1.\]
As $g(0) = 0$, we have that $\tau = [g_t(0)] \cdot\gamma_t(0) \cdot\cdots \gamma_1(0)$ satisfies $d_{G}(\tau,\mr{id}_G)\le (M/\eps)^{O_{k}(d^{O_{k}(1)})}$ and $\tau$ is $(M/\eps)^{O_{k}(d^{O_{k}(1)})}$-rational. Thus
\[g = \eps_1 \cdots \eps_{t} \cdot\{g_t(0)\} \tau \cdot (\tau^{-1}\{g_t(0)\}^{-1} g_t [g_t(0)]^{-1}\tau) \cdot\tau^{-1} [g_t(0)] \cdot\gamma_t \cdot\gamma_1\]
and note that $(\tau^{-1}\{g_t(0)\}^{-1} g_t [g_t(0)]^{-1}\tau)$ takes value in the conjugated subgroup $\tau^{-1} K^{t}\tau$ which is $(M/\eps)^{O_{k}(d^{O_{k}(1)})}$-rational by \cite[Lemma~B.15]{Len23b}. Note however that despite modifying the output group $H$ via conjugation, we have $\tau^{-1} K^{t}\tau \cap T = \tau^{-1} K^{t}\tau \cap\tau^{-1}T\tau = \mr{Id}_G$ as desired.
\end{proof}

We now remove the assumption of a $1$-dimensional vertical torus via a reduction to this case.
\begin{corollary}\label{cor:equi-deg}
Let $\ell\ge 1$ be an integer, $\delta\in (0,1/10)$, $M\ge 1$, and $F\colon G/\Gamma\to\mb{C}$. Suppose that $G$ is dimension $d$, is $s$-step nilpotent with a given degree $k$ filtration, and the nilmanifold $G/\Gamma$ is complexity at most $M$ with respect to this filtration.

Suppose that $T\leqslant Z(G)$ is a subgroup of the center which is $M$-rational. Further suppose that $F$ has a nonzero $T$-vertical character $\xi$ with $|\xi|\le M/\delta$, $\snorm{F}_{\mr{Lip}}\le M$, $N\ge(M/\delta)^{\Omega_{k,\ell}(d^{\Omega_{k,\ell}(1)})}$, and $g$ is a polynomial sequence with respect to the degree $k$ filtration. Then if
\[\big|\mb{E}_{\vec{n}\in[N]^{\ell}}F(g(\vec{n})\Gamma)\big|\ge\delta\]
there exists a factorization
\[g = \eps g'\gamma\]
such that:
\begin{itemize}
    \item $g'$ lives in an $(M/\delta)^{O_{k,\ell}(d^{O_{k,\ell}(1)})}$-rational subgroup $H$ such that $\xi(H\cap T) = 0$;
    \item $\gamma$ is an $(M/\delta)^{O_{k,\ell}(d^{O_{k,\ell}(1)})}$-rational polynomial sequence;
    \item $\eps$ is an $((M/\delta)^{O_{k,\ell}(d^{O_{k,\ell}(1)})},N)$-smooth polynomial sequence.
\end{itemize}
Furthermore if $g(0)=\mr{id}_G$ then we may take $\eps(0)=g'(0)=\gamma(0)=\mr{id}_G$.
\end{corollary}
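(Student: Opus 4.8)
The plan is to deduce this from Theorem~\ref{thm:equi-deg} by quotienting $G$ by $T_0:=\ker(\xi)$, which is a codimension‑one rational subgroup of the central torus $T$. The reason for quotienting by exactly $\ker(\xi)$ (rather than, say, by a one‑dimensional rational subtorus of $T$ on which $\xi$ is nontrivial) is that in $G/T_0$ the character $\xi$ descends to a \emph{faithful} character of the one‑dimensional torus $T/T_0$, so that the output condition ``$\ol{H}\cap(T/T_0)=\mr{id}$'' of Theorem~\ref{thm:equi-deg} pulls back to precisely the desired statement $\xi(H\cap T)=0$. Before doing this we make two routine normalizations to get into the situation $g(\vec 0)=\mr{id}_G$ required by Theorem~\ref{thm:equi-deg}. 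First, replacing $g$ by $g\gamma_0$ for a suitable $\gamma_0\in\Gamma$ — which changes neither $F(g(\vec n)\Gamma)$ nor the polynomiality or degree of $g$ — we may assume $d_{G}(g(\vec 0),\mr{id}_G)\le 1$ (vacuous if $g(\vec 0)=\mr{id}_G$ already). Next, replacing $F$ by $F^\ast(x):=F(g(\vec 0)x)$ and $g$ by $g^\ast(\vec n):=g(\vec 0)^{-1}g(\vec n)$, we have $F^\ast(g^\ast(\vec n)\Gamma)=F(g(\vec n)\Gamma)$, $g^\ast(\vec 0)=\mr{id}_G$, $g^\ast$ is still a polynomial sequence of degree $\le k$ (left multiplication by a fixed element preserves this, each $G_i$ being normal in $G$), $F^\ast$ still has $T$‑vertical character $\xi$ (as $T\leqslant Z(G)$), and $\snorm{F^\ast}_{\mr{Lip}}\le M^{O_k(d^{O_k(1)})}$ since left translation by an element at distance $\le 1$ from the identity distorts the metric by at most this factor. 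So we may relabel and assume $g(\vec 0)=\mr{id}_G$.

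Now set $T_0:=\ker(\xi)\leqslant T\leqslant Z(G)$; using the rationality of $T$ and $|\xi|\le M/\delta$ one checks $T_0$ is a normal $(M/\delta)^{O_k(d^{O_k(1)})}$‑rational subgroup. By Lemma~\ref{lem:quotient-rat}, $\ol{G}:=G/T_0$ with lattice $\ol{\Gamma}:=\Gamma/(\Gamma\cap T_0)$ and degree $k$ filtration $(G_i/(G_i\cap T_0))_i$ is a nilmanifold of dimension $\ol{d}<d$ and complexity at most $\ol{M}:=(M/\delta)^{O_k(d^{O_k(1)})}$, still at most $s$‑step; let $\pi\colon G\to\ol{G}$ be the quotient map, $\ol{g}:=\pi\circ g$, $\ol{T}:=\pi(T)$. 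Then $\ol{g}$ is polynomial of degree $\le k$ with $\ol{g}(\vec 0)=\mr{id}_{\ol{G}}$, $\ol{T}$ is a one‑dimensional $\ol{M}$‑rational subgroup of $Z(\ol{G})$, and $\xi$ (which kills $T_0$) descends to a nonzero $\ol{T}$‑vertical character $\ol{\xi}$ with $|\ol{\xi}|\le\ol{M}/\delta$. Since $F$ is $T_0$‑invariant it descends, again by Lemma~\ref{lem:quotient-rat}, to $\ol{F}$ on $\ol{G}/\ol{\Gamma}$ with $\snorm{\ol{F}}_{\mr{Lip}}\le\ol{M}$ and $\ol{T}$‑vertical character $\ol{\xi}$, and $F=\ol{F}\circ\pi$ gives $|\mb{E}_{\vec n\in[N]^\ell}\ol{F}(\ol{g}(\vec n)\ol{\Gamma})|\ge\delta$. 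Applying Theorem~\ref{thm:equi-deg} — legitimate once the implicit constant in the hypothesis $N\ge(M/\delta)^{\Omega_{k,\ell}(d^{\Omega_{k,\ell}(1)})}$ is taken large enough — yields a factorization $\ol{g}=\ol{\eps}\,\ol{h}\,\ol{\gamma}$, all three equal to $\mr{id}_{\ol{G}}$ at $\vec 0$, where $\ol{h}$ lives in an $(\ol{M}/\delta)^{O_{k,\ell}(\ol{d}^{O_{k,\ell}(1)})}$‑rational subgroup $\ol{H}\leqslant\ol{G}$ with $\ol{H}\cap\ol{T}=\mr{id}_{\ol{G}}$, with $\ol{\gamma}$ correspondingly rational and $\ol{\eps}$ correspondingly smooth.

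To finish we lift this factorization through $\pi$. Put $H:=\pi^{-1}(\ol{H})$; since $\log H$ is spanned by $\log T_0$ together with rational lifts of a basis of $\log\ol{H}$ (using Lemma~\ref{lem:quotient-rat} and Fact~\ref{fact:rat-subgroup}), $H$ is an $(M/\delta)^{O_{k,\ell}(d^{O_{k,\ell}(1)})}$‑rational subgroup of $G$, and $H\cap T=\pi^{-1}(\ol{H})\cap T=\ker(\pi|_T)=T_0$, so $\xi(H\cap T)=0$. Using Taylor expansion, lift $\ol{\gamma}$ and $\ol{\eps}$ to polynomial sequences $\gamma,\eps$ in $G$ by lifting each Taylor coefficient (rational, resp.\ bounded, elements of $\ol{G}$ lift to rational, resp.\ bounded, elements of $G$ via Mal'cev coordinates, and $\mr{id}_{\ol{G}}=\ol{\gamma}(\vec 0)=\ol{\eps}(\vec 0)$ lifts to $\mr{id}_G$); these are $(M/\delta)^{O_{k,\ell}(d^{O_{k,\ell}(1)})}$‑rational, resp.\ $((M/\delta)^{O_{k,\ell}(d^{O_{k,\ell}(1)})},N)$‑smooth, with value $\mr{id}_G$ at $\vec 0$. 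Setting $g':=\eps^{-1}g\gamma^{-1}$, this is a polynomial sequence with $\pi\circ g'=\ol{\eps}^{-1}\ol{g}\,\ol{\gamma}^{-1}=\ol{h}$, hence takes values in $H$ and so is a polynomial sequence into $H$ with its induced filtration, and $g'(\vec 0)=\mr{id}_G$; thus $g=\eps g'\gamma$ is the required factorization. Undoing the normalizations of the first paragraph only multiplies $\eps$ on the left by the bounded element $g(\vec 0)$ (preserving smoothness with the same quality of bound) and multiplies $\gamma$ on the right by $\gamma_0^{-1}\in\Gamma$ (preserving rationality, via Baker--Campbell--Hausdorff), and keeps the factorization equal to $\mr{id}_G$ at $\vec 0$ when the original $g$ was. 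The argument is essentially all bookkeeping; the one point requiring care is that $\ker(\xi)$, the preimage $H=\pi^{-1}(\ol{H})$, and the lifted sequences genuinely satisfy rationality and smoothness bounds of the stated quasipolynomial‑in‑$d$ shape, for which we lean on the rationality of $T$, Lemma~\ref{lem:quotient-rat}, and Fact~\ref{fact:rat-subgroup}.
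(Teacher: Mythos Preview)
Your proof is correct and follows essentially the same route as the paper: quotient by $T_0=\ker(\xi)$, apply Theorem~\ref{thm:equi-deg} on $G/T_0$ to the one-dimensional torus $T/T_0$, and lift the resulting factorization back to $G$, taking $H=\pi^{-1}(\ol{H})$. The only cosmetic difference is that the paper lifts $\ol{h}$ explicitly and then absorbs the central discrepancy $g\cdot(\wt{\eps}\,\wt{g}'\,\wt{\gamma})^{-1}\in\ker(\xi)$ into $g'$, whereas you set $g':=\eps^{-1}g\gamma^{-1}$ directly and observe it lands in $\pi^{-1}(\ol{H})$; these are equivalent.
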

\begin{proof}
We first reduce to the case where $g(0) = \mr{id}_G$ as is standard. We factor $g(0) = \{g(0)\}[g(0)]$ such that $[g(0)]\in\Gamma$ and $\psi_{G}(\{g(0)\})\in[0,1)^{\dim(G)}$. Replacing $F$ by $F(\{g(0)\} \cdot)$ and $g$ by $\{g(0)\}^{-1}g[g(0)]^{-1}$ we may clearly reduce to the case where $g(0) = \mr{id}_G$ at the cost of replacing $M$ by $M^{O_{k}(d^{O_k(1)})}$ which leaves the conclusion unchanged. 

Using Lemma~\ref{lem:quotient-rat} to bound the complexity of $G/\on{ker}(\xi)$ and noting that $F$ descends to an $(M/\delta)^{O_{k}(d^{O_k(1)})}$-Lipschitz function on $G/\on{ker}(\xi)$, by Theorem~\ref{thm:equi-deg} we have that 
\[(g \imod \on{ker}(\xi)) = \eps g' \gamma\]
where $\eps,g',\gamma$ satisfy:
\begin{itemize}
    \item $\eps(0) = g'(0) = \gamma(0) = \mr{id}_{G/\on{ker}(\xi)}$;
    \item $g'$ lives in an $(M/\delta)^{O_{k,\ell}(d^{O_{k,\ell}(1)})}$-rational subgroup $H$ such that $H\cap (T/\on{ker}(\xi)) = \mr{id}_{G/\on{ker}(\xi)}$;
    \item $\gamma$ is an $(M/\delta)^{O_{k,\ell}(d^{O_{k,\ell}(1)})}$-rational polynomial sequence;
    \item $\eps$ is an $((M/\delta)^{O_{k,\ell}(d^{O_{k,\ell}(1)})},N)$-smooth polynomial sequence.
\end{itemize}

We now ``lift'' this factorization. Consider the Mal'cev basis $\mc{X}'$ for $G/\on{ker}(\xi)$. For each element $X_i'\in\mc{X}'$ we may lift to $Z_i\in\log G$ such that:
\begin{itemize}
    \item $\exp(X_i') = \exp(Z_i)\imod\on{ker}(\xi)$;
    \item $d_{G}(\exp(Z_i),\mr{id}_G)\le (M/\delta)^{O_k(d^{O_k(1)})}$;
    \item $Z_i$ is an $(M/\delta)^{O_k(d^{O_k(1)})}$-rational combination of the elements of $\mc{X}$.
\end{itemize}
Writing $\eps$ as 
\[\eps(\vec{n}) = \exp\bigg(\sum_{|\vec{i}| \le k} \mf{\eps}_{\vec{i}} \binom{\vec{n}}{\vec{i}}\bigg)\]
where $\mf{\eps}_{\vec{i}}\in\log(G_{|\vec{i}|}/(\on{ker}(\xi)\cap G_{|\vec{i}|}))$, we lift via the above mapping on $\mc{X}'$ to 
\[\wt{\eps}(n) = \exp\bigg(\sum_{|\vec{i}| \le k} \wt{\mf{\eps}}_{\vec{i}} \binom{\vec{n}}{\vec{i}}\bigg)\]
where $\wt{\eps}_{\vec{i}}\in\log(G_{|\vec{i}|})$ and analogously for $g',\gamma$.

We easily see that $\wt{\eps}$ is an $((M/\delta)^{O_{k,\ell}(d^{O_{k,\ell}(1)})},N)$-smooth polynomial sequence, that $\wt{\gamma}$ is an $(M/\delta)^{O_{k,\ell}(d^{O_{k,\ell}(1)})}$-rational polynomial sequence, and that $\wt{g}'$ takes values in the subgroup $H' = \exp(\log(H) + \log(\on{ker}(\xi)))$. Furthermore $H'$ is seen to be $(M/\delta)^{O_{k,\ell}(d^{O_{k,\ell}(1)})}$-rational and $\xi(H'\cap T) = 0$. Finally note that $\wt{\eps} \imod \on{ker}(\xi) = \eps$ and analogously for $\wt{g}',\wt{\gamma}$. Therefore
\[g \cdot (\wt{\eps} \wt{g}' \wt{\gamma})^{-1} \equiv \mr{id}_G \imod \on{ker}(\xi)\]
as polynomial sequences. Thus 
\[g = g \cdot (\wt{\eps} \wt{g}' \wt{\gamma})^{-1}\cdot (\wt{\eps} \wt{g}' \cdot\wt{\gamma})=\wt{\eps} \cdot ((g \cdot (\wt{\eps} \wt{g}' \wt{\gamma})^{-1}) \cdot\wt{g}') \cdot\wt{\gamma}\]
gives the desired factorization noting that $\on{ker}(\xi)\leqslant H'$ and $\on{ker}(\xi)$ is central and therefore $g \cdot (\wt{\eps}\wt{g}'\wt{\gamma})^{-1}$ may be commuted to the right.
\end{proof}

\section{Setup for Sunflower and Linearization Iteration}\label{sec:setup}
We now set up the iteration which will take up the bulk of the following four sections. The idea is to inductively assume the statement of Theorem~\ref{thm:main} for $s-1$ (i.e., the quantitative inverse theorem for the $U^s[N]$-norm) and the remaining goal is to prove it for $s$. The key step is to show that for many $h\in[N]$, $\Delta_h f$ correlates with a multidegree $(1,s-1)$ nilcharacter; this is a quantitative version of \cite[Theorem~7.1]{GTZ12}. For the remainder of the analysis until Section~\ref{sec:sym} we will be concerned with the notion of a correlation structure, which can be thought of as refining the notion in Definition~\ref{def:corr} with intermediate bracket information.
\begin{definition}\label{def:correlation-structure}
A \emph{correlation structure} associated to the function $f\colon[N]\to\mb{C}$ with parameters $\rho$, $M$, $d$, and $D$ and degree-rank $(s-1,r^\ast)$ is the following data:
\begin{itemize}
    \item A subset $H\subseteq[N]$ such that $|H|\ge\rho N$;
    \item A multidegree $(1,s-1)$ nilcharacter $\chi(h,n)$ that lives on a nilmanifold $G^\ast/\Gamma^\ast$ where $\chi$ has a $G^\ast_{(1,s-1)}$-vertical frequency $\eta^\ast$. Furthermore $G^\ast/\Gamma^\ast$ has dimension bounded by $d$ and complexity bounded by $M$, the function $F^\ast$ underlying $\chi$ is $M$-Lipschitz, $\eta^\ast$ has height bounded by $M$, and the output dimension of $\chi$ is bounded by $D$. We let $g(h,n)$ denote the underlying polynomial sequence of $\chi$;
    \item A collection of degree-rank $(s-1,r^\ast)$ nilcharacters $\chi_h(n)$ which live on $G/\Gamma$ where every $\chi_h$ has the same $G_{(s-1,r^\ast)}$-vertical frequency $\eta$. Furthermore $G/\Gamma$ has dimension bounded by $d$ and complexity bounded by $M$ (with Mal'cev basis $\mc{X}$), the function underlying $\chi_h$ is $M$-Lipschitz, $\eta$ has height bounded by $M$, and $\chi_h$ has output dimension bounded by $D$. We let $g_h(n)$ denote the polynomial sequence underlying $\chi_h$. Finally, the function underlying $\chi_h$, which we will denote $F$, is independent of $h$;
    \item The polynomial sequences satisfy $g_h(0) = \mr{id}_G$;
    \item For all $h\in H$ we have
    \[\Delta_h f(n) \otimes \ol{\chi(h,n)} \otimes \ol{\chi_h(n)} \in\on{Corr}(s-2,\rho, M, d).\]
\end{itemize}
\end{definition}

If the input function $f$ we are considering for the proof of Theorem~\ref{thm:main} satisfies 
\[\snorm{f}_{U^{s+1}[N]}\ge\delta,\]
then our proof will always maintain bounds of the form
\[\rho^{-1}, M, D \le\exp(\log(1/\delta)^{O_s(1)})\text{ and } d\le\log(1/\delta)^{O_s(1)}\]
on intermediate correlation structures, although the precise dependence may decay over roughly $s$ stages (wherein we reduce $r^\ast$ from $s-1$ to $0$).

To get started, we first note that given a function $f$ with large $U^{s+1}$-norm we may associate to it a correlation structure of degree-rank $(s-1,s-1)$; this is little more than chasing definitions and applying induction.

\begin{lemma}\label{lem:base-case}
Fix $\delta\in (0,1/2)$ and $s\ge 2$. Assume Theorem~\ref{thm:main} for $s-1$. Let $f\colon[N]\to\mb{C}$ be a $1$-bounded function such that 
\[\snorm{f}_{U^{s+1}[N]}\ge\delta.\]
Then there exists a degree-rank $(s-1,s-1)$ correlation structure associated to $f$ with parameters $\rho$, $M$, $d$, and $D$ such that 
\[\rho^{-1}, M, D \le\exp(\log(1/\delta)^{O_s(1)})\emph{ and } d\le\log(1/\delta)^{O_s(1)}.\]
\end{lemma}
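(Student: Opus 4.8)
The plan is to unpack the definition of the $U^{s+1}[N]$-norm, apply the inductive inverse theorem (Theorem~\ref{thm:main} for $s-1$) to the derivatives $\Delta_h f$, and then massage the resulting correlation into the shape demanded by Definition~\ref{def:correlation-structure}. First I would use the standard fact (an iterated Cauchy--Schwarz / monotonicity argument for Gowers norms, e.g.\ the identity $\snorm{f}_{U^{s+1}[N]}^{2^{s+1}} = \mb{E}_{h}\snorm{\Delta_h f}_{U^s[N']}^{2^s}$ up to the normalizing factors in Definition~\ref{def:gowers-norm}) to deduce that for at least $\gg\delta^{O_s(1)}N$ values of $h\in[N]$ one has $\snorm{\Delta_h f}_{U^s[N]}\ge\delta^{O_s(1)}$. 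Call this set $H$; then $|H|\ge\rho N$ with $\rho^{-1}\le\exp(\log(1/\delta)^{O_s(1)})$, in fact polynomially small, which is comfortably within the claimed bounds.

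Next, for each $h\in H$, apply Theorem~\ref{thm:main} for $s-1$ to the $1$-bounded function $\Delta_h f$: this produces a degree $s-1$ nilmanifold $G_h/\Gamma_h$ of complexity $\le\exp(\log(1/\delta)^{O_s(1)})$ and dimension $\le\log(1/\delta)^{O_s(1)}$, together with a Lipschitz function and polynomial sequence, such that $\Delta_h f$ correlates with the associated nilsequence with correlation $\ge\exp(-\log(1/\delta)^{O_s(1)})$. The task is now purely structural: (i) pass from a degree $s-1$ nilmanifold to a degree-rank $(s-1,s-1)$ nilmanifold — any degree filtration can be refined to a degree-rank filtration with $G_{(i,r)}=G_i$ for the relevant indices, and this does not worsen the complexity (cf.\ the conventions in Section~\ref{sec:complex}); (ii) decompose the nilsequence into pieces with a fixed $G_{(s-1,s-1)}$-vertical frequency via Fourier analysis on the central torus (as in the remark after Definition~\ref{def:vert-char} and the decomposition used in the proof of Theorem~\ref{thm:equi-deg}), incurring only a loss of $\exp(\log(1/\delta)^{O_s(1)})$ in complexity and frequency height; (iii) promote the scalar nilsequence to a vector-valued \emph{nilcharacter} $\chi_h$ of bounded output dimension $D$, using the standard device (analogous to \cite[p.~1254]{GTZ12} and the constructions in Appendix~\ref{app:nilcharacters}) of multiplying by an explicit vertical nilcharacter on the one-dimensional torus so that the product has unit norm pointwise; and (iv) normalize $g_h(0)=\mr{id}_G$ by absorbing $g_h(0)$ into the Lipschitz function as in the reduction at the start of the proof of Corollary~\ref{cor:equi-deg}.

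There are two slightly delicate bookkeeping points. The first is that Definition~\ref{def:correlation-structure} requires \emph{a single} nilmanifold $G/\Gamma$, a single underlying function $F$, and a single vertical frequency $\eta$ shared across all $h\in H$, whereas the inductive theorem is applied separately for each $h$. This is handled by a pigeonholing step: since each $G_h/\Gamma_h$, each $F$, and each $\eta$ ranges over a set of complexity/height/dimension parameters bounded by $\exp(\log(1/\delta)^{O_s(1)})$ (and hence over boundedly many ``types'' up to the relevant discretization — one can further quantize the finitely many real parameters of the Mal'cev structure), we may restrict to a subset of $h$ of density $\ge\exp(-\log(1/\delta)^{O_s(1)})$ on which the data is literally identical, only the polynomial sequence $g_h$ varying with $h$; this only further shrinks $\rho$ within the allowed range. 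The second point is the trivial multidegree $(1,s-1)$ nilcharacter $\chi(h,n)$: at this base stage we simply take $\chi\equiv 1$ on the trivial (degree $0$) nilmanifold, which vacuously satisfies all the complexity constraints and the required vertical-frequency condition. Finally, the correlation condition $\Delta_h f(n)\otimes\ol{\chi(h,n)}\otimes\ol{\chi_h(n)}\in\on{Corr}(s-2,\rho,M,d)$ follows because a degree $s-1$ nilsequence, when paired against $\Delta_h f$, produces after one more Cauchy--Schwarz / the defining correlation a correlation with a degree $s-2$ object — more directly, the correlation $|\mb{E}_n \Delta_h f(n)\ol{\chi_h(n)}|\gg\exp(-\log(1/\delta)^{O_s(1)})$ is itself, reading $\mbm{1}$ as a degree $0$ (hence degree $s-2$) nilsequence, exactly a statement in $\on{Corr}(s-2,\cdot)$ since $\chi\equiv 1$; one absorbs the conjugate nilcharacter $\ol{\chi_h}$ into the test function and notes $\Delta_h f\otimes\ol{\chi_h}$ correlating with the constant $1$ is the assertion. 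The main obstacle is thus not any single hard estimate but the careful matching of the inductive output to the rigid bookkeeping of Definition~\ref{def:correlation-structure} — in particular the pigeonholing to a common $(G,\Gamma,F,\eta)$ and the nilcharacter normalization — all of which cost only quasipolynomial factors and so preserve the stated bounds.
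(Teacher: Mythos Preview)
Your proposal is correct and follows essentially the same route as the paper's proof: derive many $h$ with large $\snorm{\Delta_h f}_{U^s}$, apply the inductive theorem, normalize $g_h(0)$, pigeonhole to a common nilmanifold, refine to a degree-rank $(s-1,s-1)$ filtration, Fourier-decompose to obtain a vertical frequency, embed in a nilcharacter, and take $\chi(h,n)$ trivial. The one place where the paper is more concrete than your sketch is the step of making the underlying function $F_h$ independent of $h$: you cannot literally pigeonhole a Lipschitz function, and the paper handles this via the partition of unity Lemma~\ref{lem:nilmanifold-partition-of-unity} (writing $F_h=\sum_j\tau_j^2 F_h$, approximating $F_h$ by a constant on the support of each $\tau_j$, and then pigeonholing on the index $j$), which is the precise mechanism your ``discretization'' remark should invoke.
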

\begin{proof}
Note that $\snorm{f}_{U^{s+1}[N]}\ge\delta$ implies that 
\[\mb{E}_{h\in[N]}\snorm{\Delta_h f}_{U^s[N]}^{2^s}\ge\delta^{O_s(1)};\]
this implicitly uses that $\snorm{\Delta_h f}_{U^s[N]} = \snorm{\Delta_{-h} f}_{U^s[N]}$ and that $\Delta_h f$ is identically zero for $|h|>N$. 

Therefore there exists $H\subseteq[N]$ with $|H|\ge\delta^{O_s(1)}N$ such that 
\[\snorm{\Delta_hf}_{U^s[N]}^{2^s}\ge\delta^{O_s(1)}\]
for $h\in H$.

By induction on Theorem~\ref{thm:main}, we may assume that for all such $h\in H$ there exists $G_h/\Gamma_h$ with degree $s-1$ filtration and an associated polynomial sequence $g_h(\cdot)$ such that 
\[\mb{E}_{h\in[N]}[\Delta_h f(n) \ol{F_h(g_h(n)\Gamma)}]\ge\rho\]
where $G_h/\Gamma_h$ has complexity bounded by $M$ and dimension bounded by $d$. We may take 
\[M,\rho^{-1}\le\exp(\log(1/\delta)^{O_s(1)})\text{ and } d\le\log(1/\delta)^{O_s(1)}.\]
Note that via writing $g_h(0) = \{g_h(0)\}[g_h(0)]$ where $\psi_{G_h}(\{g_h(0)\})\in[0,1)^{\dim(G_h)}$ and $[g_h(0)]\in\Gamma_h$, we have that 
\begin{align*}
F_h(g_h(n)\Gamma) &= F_h(\{g_h(0)\}\{g_h(0)\}^{-1}g_h(n)[g_h(0)]^{-1}\cdot [g_h(0)]\Gamma) \\
&= F_h(\{g_h(0)\}\{g_h(0)\}^{-1}g_h(n)[g_h(0)]^{-1}\Gamma)
\end{align*}
Note that $g_h'(n) = \{g_h(0)\}^{-1}g_h(n)[g_h(0)]^{-1}$ has $g_h'(0) = \mr{id}_{G_h}$ and $F_h' = F_h(\{g_h(0)\}\cdot)$ is appropriately Lipschitz (as $\{g_h(0)\}$ has appropriately bounded coordinates by \cite[Lemma~B.2]{Len23b}). Therefore without loss we may assume that $g_h(0) = \mr{id}_G$ for all $h\in H$. 

Next note that there are only 
$O_s(M)^{O_s(d^{O(1)})}$ nilmanifolds of dimension at most $d$ with degree $(s-1)$ filtration of complexity bounded by $M$ (up to isomorphism). This follows from Lie's third theorem on the correspondence between Lie algebras and connected, simply connected Lie groups and counting the total possible number of different structure constants and filtration choices for the Lie algebra. Therefore by Pigeonhole we may assume, at the cost of decreasing the size of set $H$ by a multiplicative factor of $O_s(M)^{O_s(-d^{O(1)})}$, that $G_h/\Gamma_h = G/\Gamma$ (and the corresponding filtration) is independent of $h\in H$.

We next remove the dependence on $h$ for the function $F_h$. Let $\gamma$ be a parameter to be chosen later; by applying Lemma~\ref{lem:nilmanifold-partition-of-unity} we may write 
\[F_h(g\Gamma) = \sum_{j\in I}\tau_j(g\Gamma)^2 \cdot F_h(g\Gamma)\]
where $|I|\le (1/\gamma)^{O_s(d^{O_s(1)})}$, every $g\Gamma$ is supported on at most $2^{O_s(d)}$ many terms, and $\tau_j$ are $(M/\gamma)^{O_s(d^{O_s(1)})}$-Lipschitz. Furthermore each $\tau_j$ is supported on a width $2\gamma$ cube near the origin (in Mal'cev coordinates); see the third item of Lemma~\ref{lem:nilmanifold-partition-of-unity} for a precise description. Since $F$ is an $M$-Lipschitz function, and choosing $\gamma$ to be sufficiently small with respect to $(\rho/M)^{O_s(d^{O_s(1)})}$, we find that
\[\sup_{g\in G}|F_h(g\Gamma) - \sum_{j\in I}a_j\tau_j(g\Gamma)^2| \le\rho/2\]
by taking $a_j$ to be the mean of $F_h$ on the support of $\tau_j$. Note that $|a_j|\le M$. Pigeonholing over $j\in I$ and decreasing $\rho$ and the size of $H$ by appropriate factors of $O_s(M)^{O_s(-d^{O_s(1)})}$, we may assume that $F_h = F$ for all $h\in H$.

We finally want to replace $F$ by a nilcharacter with a vertical frequency and the claimed output dimension bound. We first give $G$ a degree-rank $(s-1,s-1)$ filtration induced by its degree $s-1$ filtration. This is done via \cite[Example~6.11]{GTZ12} (i.e., $G_{(d,r)}$ is generated by iterated commutators which either have filtration depths adding to greater than $d$ or adding to exactly $d$ with at least $r$ participating elements). Lemma~\ref{lem:commute} guarantees each subgroup is $M^{O_s(d^{O_s(1)})}$-rational. Via \cite[Lemma~B.11]{Len23b}, we may give $G$ a Mal'cev basis adapted to this degree-rank $(s-1,s-1)$ filtration with complexity $M^{O_s(d^{O_s(1)})}$.

Via Fourier expansion (see \cite[Lemma~A.6]{Len23b}) and the triangle inequality we may additionally assume that $F$ has a vertical $G_{(s-1,s-1)}$-frequency $\eta$ \footnote{We apply \cite[Lemma~A.6]{Len23b} to the degree filtration $G_{(0,0)} = G_{(1,0)}\geqslant G_{(2,0)}\geqslant \cdots \geqslant G_{(s-1,0)} \geqslant G_{(s-1,s-1)}\geqslant\mr{Id}_G$.} with height at most $O_s(M/\rho)^{O_s(d^{O_s(1)})} = \exp(\log(1/\delta)^{O_s(1)})$. Given $F$, there exists a nilcharacter $F_{\eta}$ by Lemma~\ref{lem:nil-exist} with vertical frequency $\eta$, output dimension bounded by $2^{O_s(d)}$, and such that each coordinate is $O_s(M)^{O_s(d^{O_s(1)})}$-Lipschitz. The function $(F/(2\snorm{F}_{\infty}), F_{\eta} \cdot\sqrt{1-|F/(2\snorm{F}_{\infty})|^2})$ demonstrates that without loss of generality, we may assume $F$ is a coordinate of a nilcharacter.

To complete the deduction, we take $G^\ast/\Gamma^\ast$ to be the trivial nilmanifold and $g(h,n)$ to be a constant sequence. 
\end{proof}

The heart of this paper is the following quantification of \cite[Theorem~7.2]{GTZ12}, the proof of which is the goal of the next few sections culminating in Section~\ref{sub:proof-of-induc}.
\begin{lemma}\label{lem:induc}
Fix $s\ge 2$ and $1\le r^\ast\le s-1$. Suppose $f\colon[N]\to\mb{C}$ is a $1$-bounded function and $N\ge\exp(\Omega_s((d\log(MD/\rho))^{\Omega_s(1)}))$.

Furthermore suppose that there exists a degree-rank $(s-1,r^\ast)$ correlation structure associated to $f$ with parameters $\rho$, $M$, $d$, and $D$. Then there exists a degree-rank $(s-1,r^\ast-1)$ correlation structure associated to $f$ with parameters $\rho'$, $M'$, $d'$, and $D'$ such that 
\[\rho'^{-1},M',D'\le\exp(O_s((d\log(MD/\rho))^{O_s(1)})) \emph{ and }d'\le O_s((d\log(MD/\rho))^{O_s(1)}).\]
\end{lemma}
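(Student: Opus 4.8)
The plan is to follow the architecture of Green--Tao--Ziegler \cite{GTZ12} --- Gowers--Cauchy--Schwarz, then a ``sunflower'' step, then a ``linearization'' step, then reassembly into a multidegree nilsequence --- but to quantify each stage so that it loses only a quasi-polynomial factor in $(d,\log(MD/\rho))$; this occupies Sections~\ref{sec:gowers}--\ref{sec:nil-first-2}, and the hypothesis $N\ge\exp(\Omega_s((d\log(MD/\rho))^{\Omega_s(1)}))$ is exactly what lets us invoke Corollary~\ref{cor:equi-deg} at each step. We begin (Section~\ref{sec:gowers}) with the hypothesis $\Delta_h f(n)\otimes\ol{\chi(h,n)}\otimes\ol{\chi_h(n)}\in\on{Corr}(s-2,\rho,M,d)$ for all $h\in H$. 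Applying the Cauchy--Schwarz argument of Gowers in the $h$-variable (Lemma~\ref{lem:CS-basic}, i.e.\ \cite[Proposition~6.1]{GTZ11}) produces $\gg\rho^{O_s(1)}N^3$ additive quadruples $(h_1,h_2,h_3,h_4)$ with $h_1+h_2=h_3+h_4$ for which, after the shift $n\mapsto n+h_1-h_4$ and absorbing the $\chi(h_i,\cdot)$ and the lower-order correlation factors, the product $\chi_{h_1}(n)\otimes\chi_{h_2}(n+h_1-h_4)\otimes\ol{\chi_{h_3}(n)}\otimes\ol{\chi_{h_4}(n+h_1-h_4)}$ correlates in $n$ with a nilsequence of degree-rank strictly below $(s-1,r^\ast)$. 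After the standard pigeonholing that makes $G/\Gamma$, $F$, and the relevant data independent of the quadruple (there are only quasi-polynomially many bounded-complexity nilmanifolds), this forces an approximate additive relation, across the quadruple, among the top horizontal Taylor coefficients $\on{Taylor}_i(g_{h_j})$.

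The sunflower step (Section~\ref{sec:sunflower}) feeds this into the efficient equidistribution theorem (Theorem~\ref{thm:equi-deg}/Corollary~\ref{cor:equi-deg}) combined with a Furstenberg--Weiss-type argument as in \cite{GTZ11,GTZ12,Len23}. The output is, inside each horizontal torus $\on{Horiz}_i(G/\Gamma)=G_{(i,1)}/G_{(i,2)}$, nested rational subspaces $V_{i,\mr{Dep}}\le V_i$ with $\on{Taylor}_i(g_h)\in V_i$ for $h\in H$, $\on{Taylor}_i(g_h)-\on{Taylor}_i(g_{h'})\in V_{i,\mr{Dep}}$, and such that the descended top degree-rank commutator $[v_{i_1},\dots,v_{i_{r^\ast}}]$ (a multilinear form on the $\on{Horiz}$'s) vanishes whenever at least two of its arguments lie in dependent subspaces. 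The essential gain over \cite{GTZ12} is that Theorem~\ref{thm:equi-deg} inducts on \emph{step} (via Theorem~\ref{thm:step-equi}) rather than on dimension, so $O_s(1)$ applications suffice and every subgroup, horizontal character, and parameter produced is quasi-polynomially bounded in $d,M,D,1/\rho$. This stage carries the bulk of the work, and it is the main obstacle: one must extract the multilinear vanishing conditions and manage the several horizontal tori $\on{Horiz}_i$ simultaneously using only elementary (but delicate) linear algebra and no dimension induction, since any such induction would inject a fresh logarithm into each of the $O(s)$ outer steps and wreck the quasi-polynomial bound.

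The linearization step (Section~\ref{sec:linear}) refines this to a decomposition (of a finite-index subtorus) of each $\on{Horiz}_i$ as $W_{i,\ast}\oplus W_{i,\mr{Lin}}\oplus W_{i,\mr{Pet}}$ with $\on{Taylor}_i(g_h)$ in the sum, $\on{Taylor}_i(g_h)-\on{Taylor}_i(g_{h'})$ avoiding $W_{i,\ast}$, the projection of $\on{Taylor}_i(g_h)$ onto $W_{i,\mr{Lin}}$ bracket-linear in $h$, and the top degree-rank commutator bracket vanishing whenever some argument is a petal or at least two are linear. The bracket-linearity is obtained by applying inverse sumset theory to the approximate additive homomorphism $h\mapsto\on{Taylor}_i(g_h)$ furnished by the dense set of additive quadruples; crucially, the quasi-polynomial Bogolyubov bound of Sanders \cite{San12} (packaged in Appendix~\ref{app:approx-hom}) keeps the length of the resulting bracket-linear form --- hence the dimension --- quasi-polynomial rather than exponential, as a ``no short linear relations'' reduction would force.

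Finally (Sections~\ref{sec:nil-first}--\ref{sec:nil-first-2}), the refined Taylor data lets us re-express the top degree-rank part of $\chi_h$: by the vanishing conditions a surviving top commutator has all arguments in $W_{i,\ast}$ except at most one in $W_{i,\mr{Lin}}$ and none in $W_{i,\mr{Pet}}$, so this part factors as an $h$-independent piece times a piece bracket-linear in $h$ --- i.e.\ a multidegree $(1,s-1)$ nilcharacter $\chi'(h,n)$ --- while the $W_{i,\mr{Pet}}$ contributions and everything else drop to degree-rank $\le(s-1,r^\ast-1)$ or into the $\on{Corr}(s-2,\cdot)$ factor, using the nilcharacter constructions of Appendix~\ref{app:nilcharacters}. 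Multiplying $\chi'$ into $\chi$ to form the new multidegree $(1,s-1)$ nilcharacter and collecting the leftovers as the new $\chi_h'(n)$ of degree-rank $(s-1,r^\ast-1)$ yields the desired correlation structure; re-Fourier-expanding and pigeonholing restore common vertical frequencies and the normalization $g_h(0)=\mr{id}_G$. Since each of the four stages costs at most a quasi-polynomial factor in $(d,\log(MD/\rho))$ and there are $O_s(1)$ of them, the composite satisfies $\rho'^{-1},M',D'\le\exp(O_s((d\log(MD/\rho))^{O_s(1)}))$ and $d'\le O_s((d\log(MD/\rho))^{O_s(1)})$, as claimed.
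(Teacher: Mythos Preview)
Your outline is correct and follows the paper's approach essentially verbatim: the proof of Lemma~\ref{lem:induc} in Section~\ref{sub:proof-of-induc} is exactly the four-stage pipeline you describe (Lemma~\ref{lem:four-fold-biased}, then Lemma~\ref{lem:sunflower}, then Lemma~\ref{lem:linear-output}, then the reassembly of Sections~\ref{sec:nil-first}--\ref{sec:nil-first-2}), with the quasi-polynomial bookkeeping tracked at each step. The one place your summary is a little thin is the reassembly: the paper does not directly ``factor'' the top degree-rank part but instead lifts $\chi_h$ to a universal nilmanifold $G_{\mr{Univ}}$, quotients by the relations subgroup $G_{\mr{Rel}}$ encoding the petal/double-linear vanishing to obtain $G_{\mr{Quot}}$, and then builds the multidegree group $G_{\mr{Multi}}=R\ltimes_\rho(G_{\mr{Quot}}\ltimes G_{\mr{Lin}})$ so that the bracket-linear $h$-dependence becomes an honest polynomial sequence in $h$ --- this construction (not Appendix~\ref{app:nilcharacters}) is where the new $(1,s-1)$ nilcharacter actually comes from, and Lemma~\ref{lem:reduct-quot} is what certifies that the residual $\chi_h\otimes\ol{F^\ast(g_h^{\mr{Quot}}\Gamma_{\mr{Quot}})}$ drops to degree-rank $(s-1,r^\ast-1)$.
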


Combining Lemma~\ref{lem:induc} along with the observation that degree-rank $(s-1,0)$ nilmanifolds induce a degree $(s-2)$ filtration (coming from the groups $G_{(i,0)}$), we immediately obtain the following. In particular, these can now be ``hidden'' inside the nilmanifolds implicit in $\on{Corr}(\cdot,\cdot,\cdot,\cdot)$.

\begin{theorem}\label{thm:iter-end}
Fix $\delta\in (0,1/2)$ and $s\ge 2$. Assume Theorem~\ref{thm:main} for $s-1$. Let $f\colon[N]\to\mb{C}$ be a $1$-bounded function such that 
\[\snorm{f}_{U^{s+1}[N]}\ge\delta.\]

Then the following data exists:
\begin{itemize}
    \item A subset $H\subseteq[N]$ of size at least $\rho N$;
    \item A multidegree $(1,s-1)$ nilcharacter $\chi(h,n)$ which lives on a nilmanifold $G^\ast/\Gamma^\ast$ where $\chi$ has a $G^\ast_{(1,s-1)}$-vertical frequency $\eta^\ast$. Furthermore $G^\ast/\Gamma^\ast$ has dimension bounded by $d$ and complexity bounded by $M$, the function underlying $\chi$ is $M$-Lipschitz, $\eta^\ast$ has height bounded by $M$, and the output dimension of $\chi$ is bounded by $D$;
    \item For all $h\in H$ we have that 
    \[\Delta_h f(n) \otimes \ol{\chi(h,n)}\in\on{Corr}(s-2,\rho, M, d).\]
\end{itemize}
Furthermore, we can find such data satisfying
\[\rho^{-1},M,D\le\exp(\log(1/\delta)^{O_s(1)})\emph{ and }d\le\log(1/\delta)^{O_s(1)}.\]
\end{theorem}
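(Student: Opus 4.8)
The plan is to bootstrap Lemma~\ref{lem:base-case} by iterating Lemma~\ref{lem:induc} and then to fold the leftover degree-rank $(s-1,0)$ nilcharacter into the $\on{Corr}(s-2,\cdot)$ term. Concretely, I would first invoke Lemma~\ref{lem:base-case} to attach to $f$ a degree-rank $(s-1,s-1)$ correlation structure with parameters $\rho_0,M_0,d_0,D_0$ obeying $\rho_0^{-1},M_0,D_0\le\exp(\log(1/\delta)^{O_s(1)})$ and $d_0\le\log(1/\delta)^{O_s(1)}$. Then I would apply Lemma~\ref{lem:induc} exactly $s-1$ times: the $i$-th application converts a degree-rank $(s-1,s-i)$ correlation structure into a degree-rank $(s-1,s-i-1)$ one, so that after $s-1$ stages one is left with a degree-rank $(s-1,0)$ correlation structure. (We may assume $N$ is large enough in terms of $s$ and $\delta$, as the intermediate lemmas require anyway; the small-$N$ regime contributes nothing new.)

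The substance of the iteration is parameter bookkeeping. If at some stage the structure has parameters with $\rho^{-1},M,D\le\exp(\log(1/\delta)^{C})$ and $d\le\log(1/\delta)^{C}$, then, using the convention $\log\ge e^e$, we get $\log(MD/\rho)\le\log(1/\delta)^{C+1}$ and hence $d\log(MD/\rho)\le\log(1/\delta)^{2C+1}$; Lemma~\ref{lem:induc} then outputs parameters with $\rho'^{-1},M',D'\le\exp(\log(1/\delta)^{O_s(C)})$ and $d'\le\log(1/\delta)^{O_s(C)}$. Since the iteration has only $s-1=O_s(1)$ stages, the exponents stay $O_s(1)$ throughout; in particular the hypothesis $N\ge\exp(\Omega_s((d\log(MD/\rho))^{\Omega_s(1)}))$ of Lemma~\ref{lem:induc} is met at every stage, and the final parameters $\rho_{s-1},M_{s-1},d_{s-1},D_{s-1}$ satisfy the asserted quasipolynomial bounds. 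This is exactly the instance of the principle from the introduction that an $O_s(1)$-fold composition of quasipolynomial functions remains quasipolynomial.

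Finally I would read off the output from the terminal degree-rank $(s-1,0)$ correlation structure. By definition it supplies $H\subseteq[N]$ with $|H|\ge\rho_{s-1}N$, a multidegree $(1,s-1)$ nilcharacter $\chi(h,n)$ on $G^\ast/\Gamma^\ast$ with the required $G^\ast_{(1,s-1)}$-vertical frequency and the required complexity, dimension, Lipschitz, height, and output-dimension bounds, and degree-rank $(s-1,0)$ nilcharacters $\chi_h(n)$ on a fixed $G/\Gamma$ (with $h$-independent underlying function) such that $\Delta_hf(n)\otimes\ol{\chi(h,n)}\otimes\ol{\chi_h(n)}\in\on{Corr}(s-2,\rho_{s-1},M_{s-1},d_{s-1})$ for all $h\in H$. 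Here I would use the observation flagged just before the statement: a degree-rank $(s-1,0)$ filtration has $G_{(s-1,0)}=G_{(s-1,1)}$ trivial, so its associated degree filtration $(G_{(i,0)})_{i\ge0}$ has degree $\le s-2$ and the Mal'cev basis adapted to the degree-rank filtration is also adapted to it; thus each $\chi_h$ is a degree $(s-2)$ nilsequence on a nilmanifold of complexity $\le M_{s-1}$ and dimension $\le d_{s-1}$. For each $h$, representing the $\on{Corr}$ membership by a degree $(s-2)$ nilsequence $\wt F(\wt g(n)\wt\Gamma)$, the tensor product $\ol{\chi_h(n)}\otimes\ol{\wt F(\wt g(n)\wt\Gamma)}$ is, after the evident identification, a degree $(s-2)$ nilsequence on the product nilmanifold $(G\times\wt G)/(\Gamma\times\wt\Gamma)$, whose complexity and Lipschitz constant are controlled by Fact~\ref{fct:product-rat} and whose dimension is at most $2d_{s-1}$. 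This gives $\Delta_hf(n)\otimes\ol{\chi(h,n)}\in\on{Corr}(s-2,\rho_{s-1},M_{s-1}^{O(1)},2d_{s-1})$ for all $h\in H$, which, after taking the maximum of these constants with those governing $\chi$ and renaming, is precisely the claimed data, with all bounds still of the form $\exp(\log(1/\delta)^{O_s(1)})$ and $\log(1/\delta)^{O_s(1)}$.

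I do not expect a genuine obstacle here: all the analytic weight sits in Lemmas~\ref{lem:base-case} and~\ref{lem:induc}. The only points demanding care are (i) verifying that the parameters stay quasipolynomial across the bounded iteration and that the $N$-size hypothesis of Lemma~\ref{lem:induc} is maintained, and (ii) checking that the terminal degree-rank $(s-1,0)$ data genuinely collapses into a degree $(s-2)$ correlation term — both routine given the cited facts.
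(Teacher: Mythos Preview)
Your proposal is correct and follows essentially the same approach as the paper: start from Lemma~\ref{lem:base-case}, iterate Lemma~\ref{lem:induc} exactly $s-1$ times with quasipolynomial parameter tracking, and then absorb the terminal degree-rank $(s-1,0)$ nilcharacters $\chi_h$ into the $\on{Corr}(s-2,\cdot)$ term via the observation that the associated degree filtration $(G_{(i,0)})_{i\ge 0}$ has degree at most $s-2$. The paper in fact only sketches this deduction in the sentence preceding the theorem, so your writeup is if anything more detailed than what appears there.
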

\begin{remark*}
The case when $N$ is small (i.e., $N\le\exp(\log(1/\delta)^{O_s(1)})$) is handled via noting that $\snorm{\Delta_h f}_{L^2[N]} \ge\exp(\log(1/\delta)^{O_s(1)})\cdot N^{-O(1)}$ for many $h$ and then applying Fourier analysis. Such an analysis always loses factors of $N$ and thus is only useful in this crude edge case. We will not comment further on such issues.
\end{remark*}

\section{On a Cauchy--Schwarz Argument of Gowers}\label{sec:gowers}
The proof of Lemma~\ref{lem:induc} is performed in a sequence of stages. We first deduce that the functions correlating with $\Delta_h f$ are not arbitrary. Indeed for many additive quadruples $(h_1,h_2,h_3,h_4)$ we have that the associated tensor product of $\chi_h(n)$ exhibits correlation with a degree $(s-2)$ nilsequence.

We first need the following elementary Fourier-analytic lemma which converts correlation on long progressions to correlation with a major-arc Fourier phase; this is essentially \cite[Lemma~3.5(ii)]{GTZ11}.
\begin{lemma}\label{lem:major-arc}
Let $\delta\in (0,1/2)$. Suppose that $g\colon[N]\to\mb{C}$ is $1$-bounded and there exists an arithmetic progression $P$ of length $\delta N$ with common difference $q$ within $[N]$ such that 
\[\big|\mb{E}_{n\in P} g(n)\big|\ge\delta.\]
Then there exists $\Theta\in\mb{R}$ such that $\snorm{q\Theta}_{\mb{R}/\mb{Z}}\le\delta^{-O(1)}N^{-1}$ and  
\[\big|\mb{E}_{n\in[N]}e(\Theta n) g(n)\big|\ge\delta^{O(1)}N.\] 
\end{lemma}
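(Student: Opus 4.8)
The plan is a standard Fourier-analytic argument on $[N]$: detect the correlation of $g$ with $\mb{1}_P$ on the Fourier side, and show that only the major arcs around rationals with denominator $q$ can contribute. Write $P=\{a+qj:0\le j<L\}$ with $L=|P|=\delta N$, so the hypothesis reads $\big|\sum_{n\in P}g(n)\big|\ge\delta L=\delta^2 N$. Extend $g$ by zero outside $[N]$ and put $\wh{g}(\theta):=\sum_{n}g(n)e(\theta n)$, so that $g(m)=\int_0^1\wh{g}(\theta)e(-\theta m)\,d\theta$. Summing this over $m\in P$, and letting $D_L(\phi):=\sum_{j=0}^{L-1}e(\phi j)$ be the Dirichlet kernel, yields the identity
\[
\sum_{n\in P}g(n)=\int_0^1\wh{g}(\theta)\,e(-\theta a)\,\ol{D_L(q\theta)}\,d\theta,
\]
and hence $\big|\int_0^1\wh{g}(\theta)e(-\theta a)\ol{D_L(q\theta)}\,d\theta\big|\ge\delta^2 N$.

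The next step is to localize this mass to the major arcs $\mf{M}:=\{\theta\in[0,1):\snorm{q\theta}_{\mb{R}/\mb{Z}}\le\kappa\}$ with $\kappa:=2\delta^{-4}N^{-1}$. On the complement of $\mf{M}$ (which is empty, with nothing to do, if $\kappa\ge 1/2$), I would apply Cauchy--Schwarz using the Parseval identity $\snorm{\wh{g}}_{L^2[0,1]}^2=\sum_n|g(n)|^2\le N$ together with the elementary kernel bound $|D_L(\phi)|\le\min\big(L,(2\snorm{\phi}_{\mb{R}/\mb{Z}})^{-1}\big)$, which gives
\[
\int_{\snorm{q\theta}_{\mb{R}/\mb{Z}}>\kappa}|D_L(q\theta)|^2\,d\theta=\int_{\snorm{\phi}_{\mb{R}/\mb{Z}}>\kappa}|D_L(\phi)|^2\,d\phi\le\frac{1}{2\kappa};
\]
this bounds the minor-arc contribution by $\sqrt{N/(2\kappa)}\le\tfrac12\delta^2 N$, by the choice of $\kappa$. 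Consequently $\big|\int_{\mf{M}}\wh{g}(\theta)e(-\theta a)\ol{D_L(q\theta)}\,d\theta\big|\ge\tfrac12\delta^2 N$, and since $|D_L|\le L=\delta N$ pointwise while $\mf{M}$ has Lebesgue measure at most $2\kappa$, this forces $\sup_{\theta\in\mf{M}}|\wh{g}(\theta)|\ge \tfrac12\delta^2 N/(\delta N\cdot 2\kappa)=\delta/(4\kappa)=\delta^5 N/8$. Choosing $\Theta\in\mf{M}$ with $|\wh{g}(\Theta)|\ge\tfrac12\sup_{\mf{M}}|\wh{g}|$ then gives $\snorm{q\Theta}_{\mb{R}/\mb{Z}}\le\kappa=2\delta^{-4}N^{-1}\le\delta^{-O(1)}N^{-1}$ and $\big|\sum_{n\in[N]}e(\Theta n)g(n)\big|=|\wh{g}(\Theta)|\ge\delta^5 N/16\ge\delta^{O(1)}N$, which is the asserted conclusion.

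The only point requiring any care is the calibration of the arc width: $\kappa$ must be at least a constant multiple of $\delta^{-4}/N$ so that the minor-arc $L^2$-estimate beats $\tfrac12\delta^2 N$, yet this value is still $\delta^{-O(1)}N^{-1}$, which is exactly the tolerance permitted in the conclusion. Everything else is routine Fourier analysis on $\mb{Z}$, and I do not anticipate any genuine obstacle; this is the standard mechanism by which correlation on a long arithmetic progression upgrades to major-arc correlation on $[N]$.
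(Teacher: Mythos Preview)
Your argument is correct. Both your proof and the paper's split $\big|\sum_{n\in P}g(n)\big|\ge\delta^2 N$ into major and minor arc contributions and conclude by taking the supremum of $|\wh g|$ over the major arcs; the only difference lies in how the minor arcs are handled. The paper introduces a second, shorter progression $P'$ of length $\delta^2 N$ and works with the smoothed weight $\mbm{1}_P\ast|P'|^{-1}\mbm{1}_{P'}$, so that on the Fourier side one has the product $\wh{\mbm{1}_P}\,\wh{\mbm{1}_{P'}}$, each factor bounded by $\min(\snorm{q\Theta}_{\mb R/\mb Z}^{-1},N)$; the resulting $\snorm{q\Theta}_{\mb R/\mb Z}^{-2}$ decay is integrable, so the minor arc is bounded using only the trivial pointwise estimate $|\wh g|\le N$. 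You instead keep a single Dirichlet factor and substitute Cauchy--Schwarz together with Parseval ($\snorm{\wh g}_{L^2}^2\le N$) to control the minor arc. Your route avoids having to verify that the $P'$-smoothing does not destroy the hypothesis, at the expense of invoking Parseval; both are standard and yield the same $\delta^{O(1)}$ quality.
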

\begin{proof}
Extend $g$ to be zero beyond the interval $[N]$. Let $P'$ be the arithmetic progression of length $\delta^{2}N$ with common difference $q$ centered at $0$. We have 
\[\bigg|\sum_{n\in\mb{Z}}(\mbm{1}_{P}\ast (|P'|^{-1} \mbm{1}_{P'}))(n) g(n)\bigg|\ge\delta^{O(1)}N.\]
Via Fourier inversion, we have
\[\bigg|\int_{\Theta\in\mb{T}}\wh{g}(\Theta)\ol{\wh{\mbm{1}_{P}}(\Theta) \wh{\mbm{1}_{P'}}(\Theta)} d\Theta\bigg| \ge\delta^{O(1)} N^2.\]
Now via standard bounds on linear exponential sums, we have 
\begin{align*}
|\wh{\mbm{1}_{P}}(\Theta)|, |\wh{\mbm{1}_{P'}}(\Theta)|&\lesssim\min(\snorm{q\Theta}_{\mb{R}/\mb{Z}}^{-1},N).
\end{align*}
Since $|\wh{g}(\Theta)|\le N$, we have that
\[\bigg|\int_{\snorm{q\Theta}_{\mb{R}/\mb{Z}}\ge T/N}\wh{g}(\Theta)\wh{\mbm{1}_{P}}(\Theta)\wh{\mbm{1}_{P'}}(\Theta) d\Theta \bigg|\lesssim N^2/T.\]
Therefore, taking $T = \delta^{-O(1)}$ sufficiently large we have that 
\[N^2\int_{\snorm{q\Theta}_{\mb{R}/\mb{Z}}\le T/N}|\wh{g}(\Theta)|d\Theta\ge\bigg|\int_{\snorm{q\Theta}_{\mb{R}/\mb{Z}}\le T/N}\wh{g}(\Theta)\wh{\mbm{1}_{P}}(\Theta)\wh{\mbm{1}_{P'}}(\Theta) d\Theta\bigg|\ge\delta^{O(1)}N^2.\]
Thus 
\[\sup_{\snorm{q\Theta}_{\mb{R}/\mb{Z}}\le T/N}|\wh{g}(\Theta)|\ge\delta^{O(1)}T^{-1}N,\]
which is exactly the desired conclusion (recalling that $T = \delta^{-O(1)}$). 
\end{proof}

The following lemma is due ultimately to Gowers but essentially appears as \cite[Proposition~6.1]{GTZ11}. We include the proof for the sake of completeness.
\begin{lemma}\label{lem:CS-basic}
Suppose $\delta\in(0,1/2)$, $f_1,f_2\colon[N]\to\mb{C}$ are $1$-bounded, and $\chi_h\colon\mb{Z}\to\mb{C}$ are all $1$-bounded. Suppose that 
\[\mb{E}_{h\in[N]}|\mb{E}_{n\in[N]}f_2(n)\Delta_hf_1(n)\ol{\chi_h(n)}|\ge\delta.\]
Then there exists $\Theta$ such that 
$\snorm{\Theta}_{\mb{R}/\mb{Z}}\le\delta^{-O(1)}/N$ and 
\[\mb{E}_{\substack{h_1 + h_2 = h_3 + h_4\\ h_i\in[N]}}\bigg|\mb{E}_{n\in[N]}\chi_{h_1}(n)\chi_{h_2}(n + h_1 - h_4)\ol{\chi_{h_3}(n)} \ol{\chi_{h_4}(n+ h_1 - h_4)} \cdot e\big(\Theta  n \big)\bigg|\ge\delta^{O(1)}.\]
\end{lemma}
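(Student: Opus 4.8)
The plan is to run the standard Gowers–Cauchy–Schwarz argument. Starting from the hypothesis
\[\mb{E}_{h\in[N]}|\mb{E}_{n\in[N]}f_2(n)\Delta_hf_1(n)\ol{\chi_h(n)}|\ge\delta,\]
first I would remove the absolute value on the inner average by choosing phases $c(h)\in\mb{C}$ with $|c(h)|=1$ so that
\[\mb{E}_{h\in[N]}\mb{E}_{n\in[N]}c(h)f_2(n)f_1(n)\ol{f_1(n+h)}\,\ol{\chi_h(n)}\ge\delta.\]
Rewriting the double average as a sum over $n$ of $f_2(n)f_1(n)$ times an average over $h$ (reindexing $m=n+h$), this becomes $\mb{E}_{n}\,f_2(n)f_1(n)\cdot G(n)$ where $G(n)=\mb{E}_m c(m-n)\ol{f_1(m)}\,\ol{\chi_{m-n}(n)}$ (with suitable truncation since $h$ ranges over $[N]$). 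Since $f_1,f_2$ are $1$-bounded, Cauchy–Schwarz in $n$ gives $\mb{E}_n |G(n)|^2\ge\delta^2$ (up to keeping track of the fact that $n,m$ range over intervals of length $\asymp N$, which only costs absolute constants). Expanding $|G(n)|^2$ and reindexing, this produces an average over $n$, and two ``copies'' $m_1,m_2$ of the shift variable, of $c(m_1-n)\ol{c(m_2-n)}\ol{f_1(m_1)}f_1(m_2)\ol{\chi_{m_1-n}(n)}\chi_{m_2-n}(n)$. The dependence on $f_1$ has now been confined to the two factors $\ol{f_1(m_1)}f_1(m_2)$.

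The next step is a second Cauchy–Schwarz, this time designed to eliminate $f_1$ entirely. I would change variables so that the argument of $f_1$ in the two surviving factors is a single new variable: set $m_1 = a$, $m_2 = b$, $n = a - h_1 = b - h_2$ so that $h_1 - h_2 = a - b =: h_3 - h_4$ is forced, i.e.\ we are summing over additive quadruples $h_1+h_4 = h_2 + h_3$ (or $h_1+h_2=h_3+h_4$ after relabeling to match the statement). Concretely, after the first CS we have $f_1$ evaluated at two points whose difference is controlled; introducing a fresh average and applying CS in the variable that $f_1$ depends on, weighted by $|f_1|^2 \le 1$, removes $f_1$ at the cost of doubling the $\chi$ and $c$ factors. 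Carefully bookkeeping the linear changes of variable, the four $\chi$-factors become $\chi_{h_1}(n)$, $\chi_{h_2}(n+h_1-h_4)$, $\ol{\chi_{h_3}(n)}$, $\ol{\chi_{h_4}(n+h_1-h_4)}$ exactly as in the target, and the phases $c$ collapse into a single function $\psi(h_1,h_2,h_3,h_4)$ of the $h_i$'s of modulus $1$ (plus possibly a factor depending on $n$ alone — I'll have to check that the $n$-dependence is at worst a pure phase, which it is, being a product of $c$'s whose $n$-dependence has been arranged to cancel or to be affine).

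At this point we have
\[\mb{E}_{\substack{h_1+h_2=h_3+h_4\\ h_i\in[N]}}\psi(\vec h)\,\mb{E}_{n}\bigg[\chi_{h_1}(n)\chi_{h_2}(n+h_1-h_4)\ol{\chi_{h_3}(n)}\ol{\chi_{h_4}(n+h_1-h_4)}\cdot e(\theta_n)\bigg]\ge\delta^{O(1)},\]
where $e(\theta_n)$ is the residual phase in $n$ and, crucially, the inner $n$-average is over an interval (an arithmetic progression with common difference $1$) of length $\asymp N$, after absorbing the boundary terms into an $O_\delta(1)$-loss via a standard popularity/pigeonhole argument over the quadruples. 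Pulling $\psi$ out by the triangle inequality gives that the inner average has modulus $\ge\delta^{O(1)}$ for a $\delta^{O(1)}$-density set of quadruples; but here I instead keep everything and apply Lemma~\ref{lem:major-arc} to the $n$-average (the residual phase $e(\theta_n)$ is itself a major-arc phase of bounded modulus contributing only another $e(\Theta' n)$ factor, or is trivial if the changes of variable were arranged to leave no $n$-phase — whichever is cleaner). Lemma~\ref{lem:major-arc} converts ``correlation with a function constant on — or rather, supported on — a long progression'' into correlation with a linear phase $e(\Theta n)$ with $\snorm{\Theta}_{\mb{R}/\mb{Z}}\le\delta^{-O(1)}/N$; since the common difference of the progression here is $1$, the bound on $q\Theta$ in that lemma is just $\snorm{\Theta}_{\mb{R}/\mb{Z}}\le\delta^{-O(1)}/N$, giving the conclusion.

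I expect the main obstacle to be purely bookkeeping: tracking the linear changes of variable through the two Cauchy–Schwarz steps so that (i) the four $\chi$-arguments come out in exactly the claimed form $\chi_{h_1}(n),\chi_{h_2}(n+h_1-h_4),\ol{\chi_{h_3}(n)},\ol{\chi_{h_4}(n+h_1-h_4)}$, and (ii) the only leftover $n$-dependent factor is a single linear phase (so that Lemma~\ref{lem:major-arc} applies cleanly) rather than something worse. Managing the fact that $h$ ranges over $[N]$ rather than all of $\mb{Z}$ — so various averages are over sub-intervals of length $\Theta(N)$ and one must insert cutoffs and pigeonhole to recover a genuine $\mb{E}_{n\in[N]}$ — is the other routine-but-fiddly point; both are handled exactly as in \cite[Proposition~6.1]{GTZ11}, and I would follow that argument essentially verbatim.
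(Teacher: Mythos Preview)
Your approach is correct and is essentially the same Gowers Cauchy--Schwarz argument as the paper's proof; the paper organizes it by first embedding in $\mb{Z}/\wt{N}\mb{Z}$ and then applying power-mean in $h$ followed by the box-norm inequality in $(m,n)$, whereas you introduce explicit unimodular phases $c(h)$ and do two direct Cauchy--Schwarz applications, but the content is identical (and indeed after both steps your $c$'s depend only on $\vec{h}$, not on $n$, so the only residual $n$-cutoff comes from the interval restrictions). One small point you omit: after Lemma~\ref{lem:major-arc} you obtain a phase $\Theta_{\vec{h}}$ depending on the quadruple, and a final rounding to a $\delta^{O(1)}/N$-net plus pigeonhole is needed to produce the single $\Theta$ in the statement.
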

\begin{proof}
Note that we assume that $\chi_h(n) = 0$ for $h\notin[N]$ and that $\chi_h(n) = 0$ for $n\notin[N]$ via replacing $\chi_h(n)$ with $\chi_h(n) \cdot\mbm{1}_{n\in[N]}$; we will remove this truncation at the end of the argument. We extend these functions by $0$ to $\mb{Z}/\wt{N}\mb{Z}$ where $\wt{N}$ is a prime between $4N$ and $8N$.

By Cauchy--Schwarz, we have 
\[\mb{E}_{h\in\mb{Z}/\wt{N}\mb{Z}}|\mb{E}_{n\in\mb{Z}/\wt{N}\mb{Z}}f_2(n)\Delta_h f_1(n) \ol{\chi_h(n)}|^2\gg\delta^2.\]
Expanding, this is equivalent to 
\[\mb{E}_{h\in\mb{Z}/\wt{N}\mb{Z}}\mb{E}_{n_1,n_2\in\mb{Z}/\wt{N}\mb{Z}}f_2(n_1)f_1(n_1)\ol{f_1(n_1+h)}\ol{f_2(n_2)f_1(n_2)}f_1(n_2+h)\ol{\chi_h(n_1)}\chi_h(n_2)\gg\delta^2.\]
We set $n = n_1$, $k = n_2 - n_1$, and $m = n_1 + h$ and find that 
\[\mb{E}_{m,n\in\mb{Z}/\wt{N}\mb{Z},k\in\mb{Z}/\wt{N}\mb{Z}}\Delta_k(f_2f_1)(n) \Delta_k \ol{f_1(m)} \Delta_k \ol{\chi_{m-n}(n)}\gtrsim \delta^2.\]
This implies that 
\[\mb{E}_{k\in\mb{Z}/\wt{N}\mb{Z}}|\mb{E}_{m,n\in\mb{Z}/\wt{N}\mb{Z}}\Delta_k(f_2f_1)(n) \Delta_k \ol{f_1(m)} \Delta_k \ol{\chi_{m-n}(n)}|^{4}\gtrsim \delta^8.\]
Recall the box-norm inequality that for $a,b,\Phi$ which are $1$-bounded, we have 
\begin{align}
|\mb{E}_{n,m\in\mb{Z}/\wt{N}\mb{Z}}a(n)b(m)\Phi(n,m)|^{4}&\le\big(\mb{E}_{n\in\mb{Z}/\wt{N}\mb{Z}}|\mb{E}_{m\in\mb{Z}/\wt{N}\mb{Z}}b(m)\Phi(n,m)|\big)^{4}\notag\\
&\le\big(\mb{E}_{n\in\mb{Z}/\wt{N}\mb{Z}}|\mb{E}_{m\in\mb{Z}/\wt{N}\mb{Z}}b(m)\Phi(n,m)|^2\big)^{2}\notag\\
& = \big(\mb{E}_{n\in\mb{Z}/\wt{N}\mb{Z}}\mb{E}_{m,m'\in\mb{Z}/\wt{N}\mb{Z}}b(m)\ol{b(m')}\Phi(n,m)\ol{\Phi(n,m')}|\big)^{2}\notag\\
& = \big(\mb{E}_{m,m'\in\mb{Z}/\wt{N}\mb{Z}}|\mb{E}_{n\in\mb{Z}/\wt{N}\mb{Z}}\Phi(n,m)\ol{\Phi(n,m')}|\big)^{2}\notag\\
& \le\mb{E}_{m,m'\in\mb{Z}/\wt{N}\mb{Z}}|\mb{E}_{n\in\mb{Z}/\wt{N}\mb{Z}}\Phi(n,m)\ol{\Phi(n,m')}|^2\notag\\
& = \mb{E}_{n,n',m,m'\in\mb{Z}/\wt{N}\mb{Z}}\Phi(n,m)\ol{\Phi(n,m')}\ol{\Phi(n',m)}\Phi(n',m')\big).\label{eq:box-norm-inequality}
\end{align}
Applying this for each fixed $k$, we have that 
\[\mb{E}_{k\in \mb{Z}/\wt{N}\mb{Z}}\mb{E}_{n,n',m,m'\in\mb{Z}/\wt{N}\mb{Z}}\Delta_k \ol{\chi_{m-n}(n)} \Delta_k \ol{\chi_{m'-n'}(n')} \Delta_k \chi_{m'-n}(n) \Delta_k \chi_{m-n'}(n')\gtrsim \delta^8.\]
Take $m'-n = h_1$, $m-n' = h_2$, $m-n = h_3$, $m'-n' = h_4$. Note that $n'-n = h_1 - h_4$ and $h_1 + h_2 = h_3 + h_4$ and noting that $n$ and $n+k$ range over the whole cyclic group, this is exactly
\[\mb{E}_{\substack{h_1 + h_2 = h_3 + h_4\\ h_i\in \mb{Z}/\wt{N}\mb{Z}}}\big|\mb{E}_{n\in\mb{Z}/\wt{N}\mb{Z}}\chi_{h_1}(n)\chi_{h_2}(n + h_1 - h_4)\ol{\chi_{h_3}(n)} \ol{\chi_{h_4}(n+ h_1 - h_4)}\big|^2\gtrsim \delta^8.\]

Since $\chi_h(n) = 0$ identically for $h\notin [N]$, we in fact have 
\[\mb{E}_{\substack{h_1 + h_2 = h_3 + h_4\\ h_i\in[N]}}\big|\mb{E}_{n\in\mb{Z}/\wt{N}\mb{Z}}\chi_{h_1}(n)\chi_{h_2}(n + h_1 - h_4)\ol{\chi_{h_3}(n)} \ol{\chi_{h_4}(n+ h_1 - h_4)}\big|^2\gtrsim \delta^8.\]

For the inner sum, recall that we ``truncated'' $\chi_h(n)$ with $\mbm{1}_{n\in[N]}$. In particular, extracting the truncation term we have that 
\[\mb{E}_{\substack{h_1 + h_2 = h_3 + h_4\\ h_i\in[N]}}\bigg|\mb{E}_{n\in[N]}\mbm{1}_{1\le n + h_1-h_4\le N}\chi_{h_1}(n)\chi_{h_2}(n + h_1 - h_4)\ol{\chi_{h_3}(n)} \ol{\chi_{h_4}(n+ h_1 - h_4)}\bigg|^2\gtrsim \delta^8.\]
Via an application of Lemma~\ref{lem:major-arc}, there exist choices of $\Theta_{\vec{h}}$ with $\snorm{\Theta_{\vec{h}}}_{\mb{R}/\mb{Z}}\le\delta^{-O(1)}/N$ such that 
\[\mb{E}_{\substack{h_1 + h_2 = h_3 + h_4\\ h_i\in[N]}}\big|\mb{E}_{n\in[N]}\chi_{h_1}(n)\chi_{h_2}(n + h_1 - h_4)\ol{\chi_{h_3}(n)} \ol{\chi_{h_4}(n+ h_1 - h_4)} e(\Theta_{\vec{h}}n)\big|^2\gtrsim \delta^{O(1)}.\]
Rounding $\Theta_{\vec{h}}$ to a lattice of spacing $\delta^{O(1)}/N$ and Pigeonholing then gives the desired result. 
\end{proof}

The next proof will require defining the notion when two nilcharacters are ``equivalent'' (i.e., have the same symbol in a quantified sense of \cite[Appendix~E]{GTZ12}).
\begin{definition}\label{def:equiv}
We say nilcharacters $\chi,\chi'$ are \emph{$(M,D,d)$-equivalent for multidegree $J$} if $\chi,\chi'$ have output dimensions bounded by $D$ and all coordinates of 
\[\chi\otimes \ol{\chi'}\]
can be represented as sums of at most $M$ nilsequences of multidegree $J$ such that the underlying functions of each nilsequence are $M$-Lipschitz and the underlying nilmanifolds have complexity bounded by $M$ and dimension bounded by $d$.
\end{definition}

The key reason for the definition of equivalence is the following proposition, which states that given equivalent nilcharacters $\chi$ and $\chi'$, correlations with them are equivalent modulo introducing a term of multidegree $J$. This is a finitary quantification of \cite[Lemma~E.7]{GTZ12}.
\begin{lemma}\label{lem:equiv}
Given a function $f\colon\Omega\to\mb{C}^{L}$ and nilcharacters $\chi,\chi'$ which are $(M,D,d)$-equivalent for multidegree $J$, if
\[\snorm{\mb{E}_{\vec{n}\in\Omega} f(\vec{n}) \otimes \chi(\vec{n})}_{\infty}\ge\rho\]
then 
\[\snorm{\mb{E}_{\vec{n}\in\Omega} f(\vec{n}) \otimes \chi'(\vec{n}) \cdot\psi(\vec{n})}_{\infty}\ge (\rho/(MD))^{O(1)},\]
where $\psi$ can be taken to be one of the nilsequences used as part of a represention of one of the coordinates in $\chi\otimes\ol{\chi'}$. In particular, $\psi$ is a nilsequence of multidegree $J$ such that underlying nilmanifold has complexity bounded by $M$ and dimension bounded by $d$ and the underlying function has Lipschitz constant bounded by $M$.
\end{lemma}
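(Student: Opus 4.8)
The plan is to directly unwind the definition of equivalence and absorb the nilsequences appearing in the representation of $\chi \otimes \overline{\chi'}$. First I would write $\chi(\vec{n}) = \chi(\vec{n}) \otimes \chi'(\vec{n}) \otimes \overline{\chi'(\vec{n})}$, using that $\snorm{\chi'(\vec{n})}_2 = 1$ pointwise (so that $\chi'(\vec{n}) \otimes \overline{\chi'(\vec{n})}$ has a coordinate, namely the ``diagonal'' one, equal to $1$, or more carefully the sum of the diagonal coordinates equals $1$). Concretely, if $\chi'$ has output dimension $D'$, then $\sum_{j=1}^{D'} \chi'_j(\vec{n}) \overline{\chi'_j(\vec{n})} = 1$, so
\[
f(\vec{n}) \otimes \chi(\vec{n}) = \sum_{j=1}^{D'} f(\vec{n}) \otimes \chi(\vec{n}) \otimes \overline{\chi'(\vec{n})} \cdot \chi'_j(\vec{n}).
\]
Taking the $\vec{n}$-average and using the triangle inequality together with the hypothesis $\snorm{\mb{E}_{\vec{n}} f \otimes \chi}_\infty \ge \rho$, there is a choice of coordinate (of the tensor on the left) and a value of $j$ for which the corresponding scalar average of $f$ against a coordinate of $\chi \otimes \overline{\chi'}$ times $\chi'_j$ has absolute value $\ge \rho/D'$.

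Next I would invoke the equivalence hypothesis: the relevant coordinate of $\chi \otimes \overline{\chi'}$ is, by Definition~\ref{def:equiv}, a sum of at most $M$ nilsequences of multidegree $J$, each with complexity at most $M$, dimension at most $d$, and underlying function $M$-Lipschitz. Substituting this sum in and applying the triangle inequality once more (losing a further factor of $M$), there is a single such nilsequence $\psi$ of multidegree $J$, with the stated complexity and Lipschitz bounds, so that
\[
\Big| \mb{E}_{\vec{n} \in \Omega} \big( f(\vec{n}) \otimes \chi'(\vec{n}) \big)_{(\text{some coordinate})} \cdot \psi(\vec{n}) \Big| \ge \frac{\rho}{M D'} \gtrsim \Big(\frac{\rho}{MD}\Big)^{O(1)},
\]
which after reorganizing the tensor indices gives $\snorm{\mb{E}_{\vec{n} \in \Omega} f(\vec{n}) \otimes \chi'(\vec{n}) \cdot \psi(\vec{n})}_\infty \ge (\rho/(MD))^{O(1)}$ as desired. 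Here $\psi$ has been absorbed into the correlation exactly as one of the nilsequences from the representation of a coordinate of $\chi \otimes \overline{\chi'}$, matching the conclusion.

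I do not expect a serious obstacle here; the argument is a bookkeeping exercise in the style of \cite[Lemma~E.7]{GTZ12}. The one point requiring minor care is the very first step — guaranteeing that tensoring with $\chi' \otimes \overline{\chi'}$ genuinely recovers $\chi$ on the nose — which uses the pointwise normalization $\snorm{\chi'(\vec{n})}_2 = 1$ built into the definition of a nilcharacter (Definition~\ref{def:nilcharacter}); without this one would only get an inequality up to the sup-norm of $\chi'$, but since that is $1$ this is harmless. A second small point is tracking that all the dimension and complexity parameters of $\psi$ are inherited verbatim from Definition~\ref{def:equiv} and do not blow up, which is immediate since we only ever select one summand rather than combining them.
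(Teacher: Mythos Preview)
Your proposal is correct and follows essentially the same approach as the paper: both arguments insert $\chi'\otimes\ol{\chi'}$ using the pointwise normalization $\snorm{\chi'}_2=1$ (the paper phrases this as ``the trace of $\chi'\otimes\ol{\chi'}$ is $1$''), lose a factor of $D$ by pigeonholing onto a single coordinate, and then lose a further factor of $M$ by selecting one summand from the equivalence representation of the relevant coordinate of $\chi\otimes\ol{\chi'}$. The only imprecision in your write-up is the displayed identity $f\otimes\chi=\sum_j f\otimes\chi\otimes\ol{\chi'}\cdot\chi'_j$, whose two sides have different tensor dimensions; what you mean (and what the paper uses) is that each coordinate of $f\otimes\chi$ is a sum of $D'$ coordinates of $f\otimes\chi\otimes\ol{\chi'}\otimes\chi'$, which is exactly the trace statement.
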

\begin{remark*}
The additional condition that $\psi$ can be taken to be an explicit nilsequence occurring in a witness for the equivalence of $\chi,\chi'$ is used primarily to allow us to Pigeonhole the choice of $\psi$ in cases where we may need to apply this statement ``on average''.
\end{remark*}
\begin{proof}
Notice that since $\chi'$ is a nilcharacter, we have that the trace of 
\[\chi' \otimes \ol{\chi'} \]
is the constant function $1$. Furthermore note that the trace is the sum of at most $D$ coordinates of $\chi' \otimes \ol{\chi'}$ and therefore 
\[\snorm{\mb{E}_{\vec{n}\in\Omega} f(\vec{n}) \otimes \chi(\vec{n}) \otimes \ol{\chi'(\vec{n})} \otimes \chi'(\vec{n})}_{\infty}\ge\rho/D.\]
Consider the coordinate of $f(\vec{n}) \otimes \chi(\vec{n}) \otimes \ol{\chi'(\vec{n})} \otimes \chi'(\vec{n})$ which achieves the $L^\infty$ above, and in particular the associated coordinate of $\chi(\vec{n}) \otimes \ol{\chi'(\vec{n})}$ that contributes. Applying the definition of equivalence and the triangle inequality, there exists $\psi(\vec{n})$ of the desired form such that
\[\snorm{\mb{E}_{\vec{n}\in\Omega} f(\vec{n}) \otimes \chi'(\vec{n})\cdot\psi(\vec{n})}_{\infty}\ge(\rho/(MD))^{O(1)}.\qedhere\]
\end{proof}

We are now in position to prove the quantification of \cite[Proposition~7.3]{GTZ12}. We remark that there was an error in the published version of \cite[Proposition~8.3]{GTZ12} which affected the proof of \cite[Proposition~7.3]{GTZ12}. We quantify a closely related approach to that given in the erratum \cite{GTZ24}. For our proof we require various quantifications of \cite[Appendix~E]{GTZ12}; all of these are completely mechanical.

\begin{lemma}\label{lem:four-fold-biased}
Fix $s \ge 3$ and $1\le r^\ast\le s-1$. Let $f\colon[N]\to\mb{C}$ is a $1$-bounded function. Suppose that $f$ has a correlation structure with parameters $\rho$, $M$, $d$, and $D$ and associated nilcharacters $\chi(h,n)$ and $\chi_h(n)$. Then for at least $(MD/\rho)^{-O_s(d^{O_s(1)})} N^3$ quadruples $h_1,h_2,h_3,h_4\in H$ with $h_1 + h_2 = h_3 + h_4$ we have
\[\chi_{h_1}(n)\otimes \chi_{h_2}(n + h_1-h_4)\otimes \ol{\chi_{h_3}(n)}\otimes \ol{\chi_{h_4}(n + h_1-h_4)} \in\on{Corr}(s-2,\rho',M',d')\]
with 
\[\rho'^{-1}, M'\le(MD/\rho)^{O_s(d^{O_s(1)})}\emph{ and }d' \le O_s(d^{O_s(1)}).\]
\end{lemma}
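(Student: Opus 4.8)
The plan is to apply the Gowers Cauchy--Schwarz argument of Lemma~\ref{lem:CS-basic} to a suitably ``packaged'' family of $1$-bounded functions, and then to unpack the resulting four-fold correlation, exploiting the multidegree $(1,s-1)$ structure of $\chi$ to absorb the $\chi$-contribution into a degree $\le s-2$ nilsequence.

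First I would unpack the hypothesis. For each $h\in H$, the membership $\Delta_h f(n)\otimes\ol{\chi(h,n)}\otimes\ol{\chi_h(n)}\in\on{Corr}(s-2,\rho,M,d)$ provides coordinates $i(h),j(h)$ and a degree $s-2$ nilsequence $\psi_h$ of complexity $\le M$ and dimension $\le d$ with
\[\big|\mb{E}_{n\in[N]}[\Delta_h f(n)\,\ol{\chi(h,n)_{i(h)}}\,\ol{\chi_h(n)_{j(h)}}\,\ol{\psi_h(n)}]\big|\ge\rho.\]
Pigeonholing over the $\le D^2$ choices of $(i(h),j(h))$ yields $H'\subseteq H$ with $|H'|\ge\rho N/D^2$ on which the coordinates take a fixed value $(i^\ast,j^\ast)$. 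Set $\Xi_h(n):=\chi(h,n)_{i^\ast}\,\chi_h(n)_{j^\ast}\,\psi_h(n)$ for $h\in H'$ and $\Xi_h\equiv 0$ otherwise; these are $1$-bounded and $\mb{E}_{h\in[N]}|\mb{E}_{n\in[N]}f(n)\ol{f(n+h)}\,\ol{\Xi_h(n)}|\ge\rho^2/D^2$. Applying Lemma~\ref{lem:CS-basic} with $f_1=f$, $f_2\equiv 1$, and the role of $\chi_h$ played by $\Xi_h$, I obtain $\Theta\in\mb{R}$ and
\[\mb{E}_{\substack{h_1+h_2=h_3+h_4\\ h_i\in[N]}}\Big|\mb{E}_{n\in[N]}\Xi_{h_1}(n)\,\Xi_{h_2}(n+h_1-h_4)\,\ol{\Xi_{h_3}(n)}\,\ol{\Xi_{h_4}(n+h_1-h_4)}\,e(\Theta n)\Big|\ge(\rho/D)^{O(1)},\]
where only quadruples with all four $h_i\in H'$ contribute to the average.

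The next step is, for a fixed quadruple and writing $c:=h_1-h_4$, to factor $\Xi_{h_1}(n)\Xi_{h_2}(n+c)\ol{\Xi_{h_3}(n)}\ol{\Xi_{h_4}(n+c)}$ into three pieces. The ``$\chi_h$-piece'' is exactly the $(j^\ast,j^\ast,j^\ast,j^\ast)$-coordinate of
\[\Phi_{\vec h}(n):=\chi_{h_1}(n)\otimes\chi_{h_2}(n+h_1-h_4)\otimes\ol{\chi_{h_3}(n)}\otimes\ol{\chi_{h_4}(n+h_1-h_4)}.\]
The ``$\psi_h$-piece'' $B_{\vec h}(n):=\psi_{h_1}(n)\psi_{h_2}(n+c)\ol{\psi_{h_3}(n)}\ol{\psi_{h_4}(n+c)}$ is, by Fact~\ref{fct:product-rat} together with the standard closure of polynomial sequences under affine reparametrization of the domain, a degree $s-2$ nilsequence of complexity $\le M^{O(1)}$, dimension $\le 4d$, and underlying function $M^{O(1)}$-Lipschitz. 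The ``$\chi$-piece'' is $A_{\vec h}(n):=\chi(h_1,n)_{i^\ast}\chi(h_2,n+c)_{i^\ast}\ol{\chi(h_3,n)_{i^\ast}}\ol{\chi(h_4,n+c)_{i^\ast}}$, and the heart of the matter --- the step I expect to be the main obstacle --- is to show that $A_{\vec h}(n)$ is a degree $\le s-2$ nilsequence in $n$, uniformly in $\vec h$, of complexity $\le(MD/\rho)^{O_s(d^{O_s(1)})}$ and dimension $\le O_s(d^{O_s(1)})$. Morally this holds because multidegree $(1,s-1)$ makes $\chi$ ``degree $1$ in $h$'', so across an additive quadruple the top-degree-in-$n$ behavior telescopes: at the bracket-polynomial level a phase $\phi(h,n)=\phi_0(n)+h\,\phi_1(n)$ satisfies $\phi(h_1,n)+\phi(h_2,n+c)-\phi(h_3,n)-\phi(h_4,n+c)=(h_1-h_3)\big(\phi_1(n)-\phi_1(n+c)\big)$, and $\phi_1(n)-\phi_1(n+c)$ has degree $\le s-2$ in $n$. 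Carrying this out rigorously at the level of nilmanifolds --- via Baker--Campbell--Hausdorff and careful tracking of the multidegree filtration, together with the complexity accounting --- is a mechanical quantification of the manipulations in \cite[Appendix~E]{GTZ12}, and is where the bulk of the bookkeeping lies.

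To finish, I would fold $A_{\vec h}(n)$, $B_{\vec h}(n)$, and $e(\Theta n)$ (a degree $1\le s-2$ nilsequence, using $s\ge 3$) into a single degree $s-2$ nilsequence $\theta_{\vec h}(n)$ of complexity $\le(MD/\rho)^{O_s(d^{O_s(1)})}$ and dimension $\le O_s(d^{O_s(1)})$. The displayed bound then reads $\mb{E}_{\vec h}|\mb{E}_{n\in[N]}[\Phi_{\vec h}(n)]_{(j^\ast,j^\ast,j^\ast,j^\ast)}\,\theta_{\vec h}(n)|\ge(\rho/D)^{O(1)}$, so by Markov's inequality at least a $(\rho/D)^{O(1)}$-fraction of the $\asymp N^3$ additive quadruples --- hence at least $(MD/\rho)^{-O_s(d^{O_s(1)})}N^3$ of them, all with $h_i\in H'\subseteq H$ --- satisfy $|\mb{E}_{n\in[N]}[\Phi_{\vec h}(n)]_{(j^\ast,j^\ast,j^\ast,j^\ast)}\,\theta_{\vec h}(n)|\ge(\rho/D)^{O(1)}$. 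Since $\ol{\theta_{\vec h}}$ is a degree $s-2$ nilsequence of the same complexity and dimension, for each such quadruple this exhibits $\Phi_{\vec h}\in\on{Corr}(s-2,\rho',M',d')$ with $\rho'^{-1},M'\le(MD/\rho)^{O_s(d^{O_s(1)})}$ and $d'\le O_s(d^{O_s(1)})$, which is the claim.
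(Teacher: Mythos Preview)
Your overall architecture is right --- unpack the correlation structure, apply Lemma~\ref{lem:CS-basic}, then split the four-fold product into the $\chi_h$-piece $\Phi_{\vec h}$, a $\psi_h$-piece $B_{\vec h}$ (genuinely degree $s-2$), and the $\chi$-piece $A_{\vec h}$ --- but the step you flag as ``mechanical quantification of \cite[Appendix~E]{GTZ12}'' is exactly where the proof breaks down. The available equivalence lemmas (Lemmas~\ref{lem:nil-basic-prop}, \ref{lem:multi}, \ref{lem:multilinear}) only produce equivalences \emph{for degree $s-1$}, not $s-2$: after multilinearizing $\chi(h,n)$ to $\wt\chi(h,n,\ldots,n)$ and using linearity in the first slot across the additive quadruple, one is left with $\wt\chi(0,n,\ldots,n)$ together with accumulated degree $(s-1)$ correction terms. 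By Lemma~\ref{lem:specialization} the specialized object $\wt\chi(0,n,\ldots,n)$ is a degree $(s-1)$ nilsequence in $n$, not degree $(s-2)$. Your bracket heuristic $\phi(h,n)=\phi_0(n)+h\,\phi_1(n)$ makes the $\phi_0$ contribution cancel exactly, but at the nilmanifold level $F$ carries only a $G^\ast_{(1,s-1)}$-vertical frequency, not a $G^\ast_{(0,s-1)}$-frequency, so there is no mechanism by which the ``$h$-independent top piece'' is cancelled by the diagonal.

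The paper resolves this as follows. After the first application of Lemma~\ref{lem:CS-basic} and the equivalences, one has
\[\Phi_{\vec h}(n)\cdot\psi^\ast_{\vec h}(n)\cdot b(n)\]
where $\psi^\ast_{\vec h}$ is degree $(s-2)$ but $b(n)$ is only known to be a bounded (degree $s-1$) function independent of $\vec h$; this comes from Lemma~\ref{lem:split} applied with multidegree $(s-1,0,\ldots,0)\cup(s-2,s-1,\ldots,s-1)$. The paper then reparametrizes $h_1=m-n$, $h_2=m'-n'$, $h_3=m'-n$, $h_4=m-n'$ and applies a \emph{second} Cauchy--Schwarz, duplicating the variable $m$, which eliminates every factor not involving $m$ --- in particular $b(n)$ --- and leaves only $\Phi_{\vec h}$ times a genuine degree $(s-2)$ nilsequence and an interval indicator (removed via Lemma~\ref{lem:major-arc}). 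This second Cauchy--Schwarz is the missing ingredient in your outline; it is not bookkeeping but an essential additional idea. Indeed the paper remarks that this step corresponds to a point where the published \cite{GTZ12} contained an error, corrected in the erratum \cite{GTZ24}.
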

\begin{remark}\label{rem:quadruple-2}
For $s=2$, the same statement holds modulo a correction term of $e(\Theta n)$ where $\Theta$ is such that $\snorm{\Theta}_{\mb{R}/\mb{Z}}\le (MD/\rho)^{O_s(d^{O_s(1)})}/N$.
\end{remark}
\begin{proof}
By definition of correlation structures we have for $h\in H$ that
\[\snorm{\mb{E}_{n\in[N]}(\Delta_h f)(n) \otimes \ol{\chi(h,n)} \otimes \ol{\chi_h(n)} \cdot\ol{\psi_h(n)}}_{\infty} \ge\rho\]
where $\psi_h$ is a nilsequence of degree $(s-2)$ whose underlying function is at most $M$-Lipschitz on a nilmanifold of complexity at most $M$ and dimension at most $d$. Setting $\chi_h(n)$ to be zero for $h\notin H$ we have
\[\mb{E}_{h\in[N]}\snorm{\mb{E}_{n\in[N]}f(n) \ol{f(n+h)} \otimes \ol{\chi(h,n)} \otimes \ol{\chi_h(n)} \cdot\ol{\psi_h(n)}}_{\infty} \ge\rho^2.\]
Twisting $\psi_h$ by an appropriate $h$-dependent constant complex phase so as to make the $L^\infty$ values be realized as positive real numbers, we may assume that 
\[\snorm{\mb{E}_{h\in[N]}\mb{E}_{n\in[N]}f(n) \ol{f(n+h)} \otimes \ol{\chi(h,n)} \otimes \ol{\chi_h(n)} \cdot\ol{\psi_h(n)}}_{\infty} \ge\rho^2/D^2.\]

By Lemma~\ref{lem:multilinear}, we have that $\chi(h,n)$ is $((MD)^{O_s(d^{O_s(1)})}, (MD)^{O_s(d^{O_s(1)})}, d^{O_s(1)})$-equivalent for degree $(s-1)$ to some $\wt{\chi}(h,n,\ldots,n)$ which is a multidegree $(1,\ldots,1)$ nilcharacter with output dimension, complexity of underlying nilmanifold, Lipschitz constant of underlying function for each coordinate, and vertical frequency height all bounded by $(MD)^{O_s(d^{O_s(1)})}$. ($\wt{\chi}$ has $s$ total arguments.) Thus, applying Lemma~\ref{lem:equiv}, we have that
\begin{align*}
\snorm{\mb{E}_{n,h\in[N]}f(n) \ol{f(n+h)} \otimes \ol{\wt{\chi}(h,n,\ldots,n)} \otimes \ol{\chi_h(n)} \cdot\ol{\psi_h(n)} \cdot \wt{\psi}(h,n)}_{\infty} \ge (MD/\rho)^{-O_s(d^{O_s(1)})},
\end{align*}
where $\wt{\psi}(h,n)$ is a degree $(s-1)$ nilsequence where the underlying function has Lipschitz norm and complexity of underlying nilmanifold bounded $(MD)^{O_s(d^{O_s(1)})}$ while the dimension of the underlying nilmanifold is bounded by $O_s(d^{O_s(1)})$. The nilsequence $\wt{\psi}(h,n)$ can also be viewed as a multidegree $(0,s-1) \cup (s-1,s-2)$ nilsequence. (I.e., we take the union of the down-sets generated by these elements.) Furthermore, the underlying function has Lipschitz norm and complexity of underlying nilmanifold bounded $(MD)^{O_s(d^{O_s(1)})}$ while the dimension of the underlying nilmanifold is bounded by $O_s(d^{O_s(1)})$.

Thus, applying Lemma~\ref{lem:split} (splitting) we have
\begin{align*}
\snorm{\mb{E}_{n,h\in[N]}f(n) \ol{f(n+h)} \otimes \ol{\wt{\chi}(h,n,\ldots,n)} \otimes \ol{\chi_h(n)} \cdot\ol{\wt{\psi_h}(n)} \cdot b(n)}_{\infty} \ge (MD/\rho)^{-O_s(d^{O_s(1)})}
\end{align*}
where $\wt{\psi_h}$ are degree $(s-2)$ nilsequences in $n$ where complexity and Lipschitz constant are bounded by $(MD)^{O_s(d^{O_s(1)})}$ and the dimension of the underlying nilmanifold is bounded by $O_s(d^{O_s(1)})$ while $b(n)$ is $(MD)^{O_s(d^{O_s(1)})}$-bounded. Therefore, applying Lemma~\ref{lem:CS-basic}, we have
\begin{align*}
&\mb{E}_{\substack{h_1 + h_2 = h_3 + h_4\\ h_i\in[N]}}\snorm{\mb{E}_{n\in[N]}\wt{\chi}(h_1,n,\ldots,n) \otimes \wt{\chi}(h_2,n + h_1 -h_4,\ldots,n + h_1 -h_4) \otimes \ol{\wt{\chi}(h_3,n,\ldots,n)} \\
&\otimes \ol{\wt{\chi}(h_4,n + h_1 -h_4,\ldots,n + h_1 -h_4)} \otimes \chi_{h_1}(n) \otimes \chi_{h_2}(n+h_1-h_4) \otimes \ol{\chi_{h_3}(n)} \otimes \ol{\chi_{h_4}(n+h_1-h_4)}\\
&\cdot \wt{\psi_{h_1}}(n)\wt{\psi_{h_2}}(n)\ol{\wt{\psi_{h_3}}(n)}\ol{\wt{\psi_{h_4}}(n)}e(\Theta n)}_{\infty} \ge (MD/\rho)^{-O_s(d^{O_s(1)})}.
\end{align*}
We may combine $\wt{\psi_{h_1}}(n)\wt{\psi_{h_2}}(n)\ol{\wt{\psi_{h_3}}(n)}\ol{\wt{\psi_{h_4}}(n)}e(\Theta n)$ to form $\psi_{h_1,h_2,h_3,h_4}^\ast(n)$ which is degree $(s-2)$ in $n$ and with identical complexity bounds to $\wt{\psi_{h_1}}$ modulo changing implicit constant. Additionally, we may twist $\psi_{h_1,h_2,h_3,h_4}^\ast$ by an $(h_1,h_2,h_3,h_4)$-dependent complex phase to bring the outer expectation inside the norm. Thus we have
\begin{align*}
&\snorm{\mb{E}_{\substack{h_1 + h_2 = h_3 + h_4\\ h_i\in[N]}}\mb{E}_{n\in[N]}\wt{\chi}(h_1,n,\ldots,n) \otimes \wt{\chi}(h_2,n + h_1 -h_4,\ldots,n + h_1 -h_4) \otimes \ol{\wt{\chi}(h_3,n,\ldots,n)} \\
&\qquad\otimes \ol{\wt{\chi}(h_4,n + h_1 -h_4,\ldots,n + h_1 -h_4)}\otimes \chi_{h_1}(n) \otimes \chi_{h_2}(n+h_1-h_4) \otimes \ol{\chi_{h_3}(n)}\\
&\qquad\otimes \ol{\chi_{h_4}(n+h_1-h_4)} \cdot \psi^{\ast}_{h_1,h_2,h_3,h_4}(n)}_{\infty} \ge (MD/\rho)^{-O_s(d^{O_s(1)})}.
\end{align*}

By Lemma~\ref{lem:multilinear}, $\chi(h_2,n+h_1-h_4,\ldots,n+h_1-h_4)$ is $((MD)^{O_s(d^{O_s(1)})}, (MD)^{O_s(d^{O_s(1)})}, d^{O_s(1)})$-equivalent for degree $(s-1)$ to 
\[\bigotimes_{k=0}^{s-1}\chi(h_2,n,\ldots,n,h_1-h_4,\ldots,h_1-h_4)\]
where there are $s-k-1$ copies of $n$ and $k$ copies of $h_1-h_4$ and we have a similar expansion for $\chi(h_4,n+h_1-h_4,\ldots,n+h_1-h_4)$. Note that all terms in this expansion except for $k = 0$ may be absorbed into $\psi^{\ast}$. Therefore applying Lemma~\ref{lem:equiv}, we have that 
\begin{align*}
&\snorm{\mb{E}_{\substack{h_1 + h_2 = h_3 + h_4\\ h_i\in[N]}}\mb{E}_{n\in[N]}\wt{\chi}(h_1,n,\ldots,n) \otimes \wt{\chi}(h_2,n,\ldots,n) \otimes \ol{\wt{\chi}(h_3,n,\ldots,n)} \\
&\qquad\otimes \ol{\wt{\chi}(h_4,n,\ldots,n)} \otimes \chi_{h_1}(n) \otimes \chi_{h_2}(n+h_1-h_4) \otimes \ol{\chi_{h_3}(n)}\\
&\qquad\otimes \ol{\chi_{h_4}(n+h_1-h_4)} \cdot \psi^{\ast}_{h_1,h_2,h_3,h_4}(n) \cdot \tau(n,h_1,h_2,h_3,h_4)}_{\infty} \ge (MD/\rho)^{-O_s(d^{O_s(1)})};
\end{align*}
here $\tau(n,h_1,h_2,h_3,h_4)$ is a degree $(s-1)$ nilsequence where the underlying function has Lipschitz norm and complexity of underlying nilmanifold bounded by $(MD)^{O_s(d^{O_s(1)})}$ while the dimension of the underlying nilmanifold is bounded by $O_s(d^{O_s(1)})$ and we have folded certain terms into $\psi^{\ast}$ while guaranteeing it is a degree $(s-2)$ nilsequence (and the complexity bounds have not changed modulo implicit constants). Finally via Lemma~\ref{lem:multilinear}, we have that
\[\wt{\chi}(h_1,n,\ldots,n) \otimes \wt{\chi}(h_2,n,\ldots,n) \otimes \ol{\wt{\chi}(h_3,n,\ldots,n)}\otimes \ol{\wt{\chi}(h_4,n,\ldots,n)}\]
and $\wt{\chi}(h_1+h_2-h_3-h_4,n,\ldots,n)$ are $((MD)^{O_s(d^{O_s(1)})}, (MD)^{O_s(d^{O_s(1)})}, d^{O_s(1)})$-equivalent for degree $(s-1)$. Thus applying Lemma~\ref{lem:equiv}, we have
\begin{align*}
&\mb{E}_{\substack{h_1 + h_2 = h_3 + h_4\\ h_i\in[N]}}\snorm{\mb{E}_{n\in[N]}\chi_{h_1}(n) \otimes \chi_{h_2}(n+h_1-h_4) \otimes \ol{\chi_{h_3}(n)} \otimes \ol{\chi_{h_4}(n+h_1-h_4)}\\
&\qquad\qquad\cdot\wt{\chi}(0,n,\ldots,n) \psi^{\ast}_{h_1,h_2,h_3,h_4}(n) \tau(n,h_1,h_2,h_3,h_4)}_{\infty} \ge (MD/\rho)^{-O_s(d^{O_s(1)})};
\end{align*}
here we have folded in various terms into $\tau(n,h_1,\ldots,h_4)$ and the complexity bounds have not changed modulo implicit constants. Note that by Lemma~\ref{lem:specialization}, $\wt{\chi}(0,n,\ldots,n)$ is a degree $(s-1)$ nilsequence in $n$ and thus may abusively also be absorbed into $\tau$. Finally noting that a degree $(s-1)$ nilsequence may also be viewed as a multidegree $(s-1,0,\ldots,0) \cup (s-2,s-1,\ldots,s-1)$ nilsequence and thus applying Lemma~\ref{lem:split} we have 
\begin{align*}
&\snorm{\mb{E}_{\substack{h_1 + h_2 = h_3 + h_4\\ h_i\in[N]}}\mb{E}_{n\in[N]} \chi_{h_1}(n) \otimes \chi_{h_2}(n+h_1-h_4) \otimes \ol{\chi_{h_3}(n)} \otimes \ol{\chi_{h_4}(n+h_1-h_4)}\cdot \psi^{\ast}_{h_1,h_2,h_3,h_4}(n)b(n)}_{\infty}\\
&\qquad\ge (MD/\rho)^{-O_s(d^{O_s(1)})},
\end{align*}
where $b(n)$ is an $(MD)^{O_s(d^{O_s(1)})}$-bounded function and $\psi^{\ast}$ has been modified but the underlying complexity bounds have not changed modulo implicit constants. Note $\psi^\ast$ is degree $(s-2)$.

We now reparameterize with
\[h_1 = m-n, h_2 = m'-n', h_3 = m' - n, h_4 = m-n'.\]
By approximating with regions where we take $m,m',n,n'$ to live in short intervals, there exist intervals $I_1,\ldots,I_4$ each of density $(MD/\rho)^{-O_s(d^{O_s(1)})}$ in $[\pm 2N]$ such that 
\begin{align*}
\norm{\mb{E}_{m\in I_1,m'\in I_2, n\in I_3,n'\in I_4}&\chi_{m-n}(n)\otimes\chi_{m'-n'}(n')\otimes\ol{\chi_{m'-n}(n)}\otimes\ol{\chi_{m-n'}(n')}\\
&\qquad\cdot\psi^{\ast}_{m-n,m'-n',m'-n,m-n'}(n)b(n)}_\infty\ge (MD/\rho)^{-O_s(d^{O_s(1)})}
\end{align*}
where $\psi_{m-n,m'-n',m'-n,m-n'}$ is a degree $(s-2)$ nilsequence. Now by Cauchy--Schwarz, duplicating the variable $m$ and denoting the copies by $m,m''$, we obtain
\begin{align*}
\norm{\mb{E}_{m,m''\in I_1,m'\in I_2,n\in I_3,n'\in I_4}&\chi_{m-n}(n)\otimes\ol{\chi_{m''-n}(n)}\otimes\ol{\chi_{m-n'}(n')}\otimes\chi_{m''-n'}(n')\\
&\cdot\psi^{\ast}_{m-n,m'-n',m'-n,m-n',m''-n,m''-n'}(n)}_\infty\ge(MD/\rho)^{-O_s(d^{O_s(1)})}.
\end{align*}
Note that every term not involving $m$ was removed using appropriate boundedness. Now we may Pigeonhole on $m'-n = t$ and deduce
\begin{align*}
\norm{\mb{E}_{m,m''\in I_1,n\in I_3,n'\in I_4}&\chi_{m-n}(n)\otimes\ol{\chi_{m''-n}(n)}\otimes\ol{\chi_{m-n'}(n')}\otimes\chi_{m''-n'}(n')\\
&\qquad\cdot\psi^{\ast}_{m-n,m-n',m''-n,m''-n'}(n) \cdot \mbm{1}[n+t\in I_2]}_\infty\ge(MD/\rho)^{-O_s(d^{O_s(1)})}.
\end{align*}
Let $m''-n = h_1$, $m-n' = h_2$, $m-n = h_3$, and $m''-n' = h_4$ (abusively). We have
\begin{align*}
\norm{&\mb{E}_{n\in[N]}\mb{E}_{\substack{h_1 + h_2 = h_3 + h_4\\h_i\in[\pm N]}} \chi_{h_1}(n) \otimes \chi_{h_2}(n + h_1 - h_4) \otimes \ol{\chi_{h_3}(n)} \otimes \ol{\chi_{h_4}(n + h_1 - h_4)}\\
&\quad\cdot\chi_{h_1,h_2,h_3,h_4}(n) \cdot\mbm{1}[n + h_3,n+h_1\in I_1, n\in I_3,n+h_1-h_4\in I_4, n+t\in I_2]}
_{\infty} \ge (MD/\rho)^{-O_s(d^{O_s(1)})}
\end{align*}
where $\chi_{h_1,h_2,h_3,h_4}(n)$ is a degree $(s-2)$ nilsequence (for each fixed $h_1,h_2,h_3,h_4$) where the underlying nilmanifold and Lipschitz constant of underlying function are bounded by $(MD/\rho)^{O_s(d^{O_s(1)})}$ and the dimension is bounded by $O_s(d^{O_s(1)})$.

Therefore, by the triangle inequality we have that 
\begin{align*}
\mb{E}&_{\substack{h_1 + h_2 = h_3 + h_4\\h_i\in[\pm N]}}\norm{\mb{E}_{n\in[N]} \chi_{h_1}(n) \otimes \chi_{h_2}(n + h_1 - h_4) \otimes \ol{\chi_{h_3}(n)} \otimes \ol{\chi_{h_4}(n + h_1 - h_4)}\\
&\qquad\chi_{h_1,h_2,h_3,h_4}(n) \cdot\mbm{1}[n + h_3,n+h_1\in I_1, n\in I_3,n+h_1-h_4\in I_4, n+t\in I_2]}
_{\infty} \gtrsim (MD/\rho)^{-O_s(d^{O_s(1)})}.
\end{align*}
Finally, the last term is the indicator of an $\vec{h}$-dependent interval. Applying Lemma~\ref{lem:major-arc} (and noting that $s\ge 3$ allows us to fold in the major arc Fourier term) completes the proof.
\end{proof}

\section{Sunflower Step}\label{sec:sunflower}
For the next stage of our proof, as outlined in Section~\ref{sec:outline}, we wish to provide more structure on $h$-dependent nilcharacters $\chi_h$ given information about additive quadruples as established in Section~\ref{sec:gowers}. As setup we will require the notion of a rational subspace with respect to a specified basis, and establish some basic control over Taylor coefficients of bounded polynomial sequences.
\begin{definition}\label{def:rat-subspace}
A vector subspace $V'\leqslant V$ is \emph{$Q$-rational with respect to $V$} given the basis $\mc{B}=\{B_1,\ldots,B_{\dim(V)}\}$ (of $V$) if there exists a basis $\mc{B}' = \{B_1',\ldots,B_{\dim(V')}'\}$ of $V'$ such that each $B_j'$ is a linear combination of elements of $\mc{B}$ with coefficients of height at most $Q$.
\end{definition}
\begin{lemma}\label{lem:small-Taylor}
Consider a nilmanifold $G/\Gamma$ given a degree-rank filtration of degree rank $(s,r)$, dimension $d$, and complexity at most $M$. Let $\mc{X}$ denote the underlying adapted Mal'cev basis and assign the basis 
\[\mc{X}_i = (\mc{X}\cap\log(G_{(i,1)}))/\log(G_{(i,2)})\]
for $G_{(i,1)}/G_{(i,2)}$. Suppose $\eps$ is a polynomial sequence such that
\[d_{G,\mc{X}}(\mr{id}_G,\eps(n))\le M\]
for $n\in[N]$. Then for $1\le i\le s$, we have 
\[d_{G_{(i,1)}/G_{(i,2)}, \mc{X}_i}(\on{Taylor}_i(\eps),\mr{id}_{G_{(i,1)}/G_{(i,2)}}) \le M^{O_s(d^{O_s(1)})} N^{-i}.\] 
\end{lemma}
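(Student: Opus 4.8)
The plan is to extract the Taylor coefficients of $\eps$ by finite differencing and then track how the metric bound $d_{G,\mc{X}}(\mr{id}_G,\eps(n))\le M$ propagates through iterated multiplication and commutation. First I would invoke the Taylor expansion (as in \cite[Lemma~B.9]{GTZ12}) to write $\eps(n)=\prod_{j=0}^{s}\eps_j^{\binom{n}{j}}$ with $\eps_j\in G_{(j,0)}$, and recall from Lemma~\ref{lem:taylor-relation} that $\on{Taylor}_i(\eps)=\eps_i\imod G_{(i,2)}$. The coefficients $\eps_j$ are recovered from the values $\eps(0),\eps(1),\ldots,\eps(s)$ by an explicit formula: $\eps_i$ is a word of bounded length (depending only on $s$) in the group elements $\eps(0)^{\pm1},\ldots,\eps(s)^{\pm1}$, obtained by the standard inversion of the binomial-coefficient matrix. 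Since each $\eps(n)$ for $0\le n\le s$ lies within $\ell^\infty$-distance $M$ of the identity in Mal'cev coordinates (using $N\ge s$, which holds since $N\ge(M/\delta)^{\Omega(\cdots)}$), and since multiplication and inversion in the Mal'cev coordinates are polynomial maps with coefficients of height $M^{O_s(d^{O_s(1)})}$ (a consequence of Baker--Campbell--Hausdorff and the rationality of $\mc{X}$; cf.~\cite[Lemma~B.2]{Len23b}), we get $d_{G,\mc{X}}(\mr{id}_G,\eps_i)\le M^{O_s(d^{O_s(1)})}$ for each $i$. This gives a bound on the Taylor coefficients, but \emph{without} the crucial decay factor $N^{-i}$.

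To capture the $N^{-i}$ gain, I would instead exploit the \emph{second} hypothesis implicit in the statement: the bound $d_{G,\mc{X}}(\mr{id}_G,\eps(n))\le M$ is meant to hold for all $n\in[N]$, which (combined with the polynomial nature of $\eps$) should really be read together with the Lipschitz-type control $d_{G,\mc{X}}(\eps(n),\eps(n+1))\le M N^{-1}$ that one obtains for a polynomial sequence confined to an $O(M)$-ball over a range of length $N$. More precisely: since $\eps$ is polynomial of degree $\le s$ and stays in an $O(M)$-neighborhood of the identity on $[N]$, a standard interpolation estimate (differencing the Mal'cev coordinates, which are themselves polynomials of bounded degree and whose sup-norm on $[N]$ is $O(M)$) shows that the $j$-th finite difference $\partial_1^j\eps$ has coordinates of size $O_s(M)\cdot N^{-j}$; by the first part of Definition~\ref{def:smooth}-type reasoning this translates into $d_{G,\mc{X}}(\mr{id}_G,\partial_1^i\eps(n))\le M^{O_s(d^{O_s(1)})}N^{-i}$. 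The point is that a degree-$\le s$ polynomial bounded by $O(M)$ on $\{0,1,\ldots,N\}$ has leading (binomial) coefficients of size $O_s(M)\cdot N^{-i}$ after writing it in the $\binom{n}{i}$ basis — this is the elementary Lagrange/Markov-brothers input.

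The final step is to pass from coordinates in $G$ to coordinates in the quotient $G_{(i,1)}/G_{(i,2)}$ with the induced basis $\mc{X}_i$. Here I would use that $\on{Taylor}_i(\eps)=\partial_1^i\eps(n)\imod G_{(i,2)}$, that $G_{(i,1)}$ and $G_{(i,2)}$ are $M^{O_s(d^{O_s(1)})}$-rational subgroups adapted to the Mal'cev basis (Definition~\ref{def:complex-1} and Lemma~\ref{lem:commute}), and that the quotient metric $d_{G_{(i,1)}/G_{(i,2)},\mc{X}_i}$ is comparable to the restriction of $d_{G,\mc{X}}$ up to a factor $M^{O_s(d^{O_s(1)})}$ (by \cite[Lemma~B.9]{Len23b} or the explicit projection estimate in \cite[Lemma~B.11]{Len23b}). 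Combining these, $d_{G_{(i,1)}/G_{(i,2)},\mc{X}_i}(\on{Taylor}_i(\eps),\mr{id})\le M^{O_s(d^{O_s(1)})}\cdot d_{G,\mc{X}}(\mr{id}_G,\partial_1^i\eps(n))\le M^{O_s(d^{O_s(1)})}N^{-i}$, as desired. The main obstacle I anticipate is the bookkeeping in the middle step: correctly quantifying that a bounded polynomial sequence (valued in a nilpotent group, not just $\mb{R}$) has small high-order finite differences, since one must commute BCH error terms past each other and verify that the rationality constants only blow up polynomially in $d$ rather than exponentially — but this is exactly the kind of estimate already packaged in \cite[Appendix~B]{Len23b} and \cite[Appendix~B]{GTZ12}, so it should reduce to citing those with the appropriate bookkeeping.
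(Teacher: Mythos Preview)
Your approach is on the right track and would work, but the paper's proof achieves the same end more cleanly by working directly in \emph{first-kind} (exponential) coordinates rather than the multiplicative Taylor expansion. Writing $\eps(n)=\exp\big(\sum_{j=0}^s \eps_j\binom{n}{j}\big)$ with $\eps_j\in\log(G_{(j,0)})$, one has $\on{Taylor}_i(\eps)=\exp(\eps_i)\imod G_{(i,2)}$ (Lemma~\ref{lem:taylor-relation}), and $\psi_{\exp}(\eps(n))=\sum_j\eps_j\binom{n}{j}$ is a genuine vector-valued polynomial in $n$, bounded by $M^{O_s(d^{O_s(1)})}$ on $[N]$. The paper then takes the $j$-th finite difference with step $h=\lfloor N/(2j)\rfloor$ (not step $1$): this annihilates all terms with $i<j$, while terms with $i>j$ lie in $\log(G_{(j+1,0)})\subseteq\log(G_{(j,2)})$ and hence vanish modulo $\log(G_{(j,2)})$; what remains is $\eps_j\cdot h^j$ modulo $\log(G_{(j,2)})$, bounded by $M^{O_s(d^{O_s(1)})}$, giving the $N^{-j}$ decay immediately. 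The point is that in first-kind coordinates there is no BCH bookkeeping whatsoever---the ``main obstacle'' you anticipate simply disappears. Your route through the group-theoretic derivative $\partial_1^i\eps$ and the Markov-brothers inequality is valid but unnecessary: a single large-step finite difference in the Lie algebra, together with the observation that higher Taylor coefficients automatically land in $\log(G_{(j,2)})$, replaces both.
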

\begin{proof}
We may write 
\[\eps(n) = \exp\bigg(\sum_{j=0}^{s}\eps_j\binom{n}{j}\bigg)\]
where $\eps_j\in\log(G_{(j,0)})$. By Lemma~\ref{lem:taylor-relation}, we have 
\[\on{Taylor}_i(\eps) = \exp(\eps_i) \imod G_{(i,2)}.\]
We have that 
\[\snorm{\psi_{\mr{exp}}(\eps(n))}_{\infty}\le M^{O_s(d^{O_s(1)})}\]
for all $n\in[N]$ by \cite[Lemmas~B.1,~B.3]{Len23b}. This implies that
\[\norm{\sum_{t=0}^{j}(-1)^{t}\binom{j}{t}\psi_{\mr{exp}}(\eps(t\cdot\lfloor N/(2j)\rfloor +1 ))}_{\infty}\le M^{O_s(d^{O_s(1)})}.\]
This is exactly the $j$-th discrete derivative and thus terms coming from $\eps_i$ with $i<j$ vanish. This implies that 
\[\snorm{\eps_j N^{j} \imod \log(G_{(j,2)})}_{\infty}\le M^{O_s(d^{O_s(1)})},\]
where the basis we assign to $\log(G_{(j,1)}/G_{(j,2)})$ is $\mc{X}_j$. The result follows by dividing by $N^{-j}$ and noting, by say \cite[Lemma~2.6]{LSS24}, that the distance in first- and second-kind coordinates is comparable. 
\end{proof}

We now come to the first of two crucial arguments in this paper where we ``improve'' the correlation structure. At the cost of restricting the set $H$, we force the Taylor coefficients of $g_h$, the polynomial sequences underlying the $\chi_h$, to live in certain restricted subspaces and their differences to lie in an even finer restriction.

This step is closely related to the ``sunflower'' arguments of \cite[Step~1]{GTZ11} and \cite[Lemma~11.3]{GTZ12}; a quantitative version for the $U^4$-inverse theorem due to the first author can be found in \cite{Len23}. The precise statement of the lemma should also be compared with \cite[Theorem~11.1(i)]{GTZ12}. We note however that unlike \cite{GTZ11,GTZ12}, our proof is completely free of any iteration (or equivalently passing to a subgroup where polynomial sequences are ``totally equidistributed'', which necessitates too much loss in the relevant parameters).

Thus, the crucial point of the following technical statement is the final condition, which essentially captures that two $h$-dependent frequencies in the improved correlation structure cannot ``simultaneouly'' affect the bottom degree-rank portion.

\begin{lemma}\label{lem:sunflower}
Fix $s\ge 2$ and $1\le r^\ast\le s-1$. Let $f\colon[N]\to\mb{C}$ be a $1$-bounded function. Suppose that $f$ has a degree-rank $(s-1,r^\ast)$ correlation structure with parameters $\rho$, $M$, $d$, and $D$ and that $N\ge (MD/\rho)^{O_s(d^{O_s(1)})}$ and data labeled as in Definition~\ref{def:correlation-structure}. Furthermore let $\mc{X}_i = (\mc{X}\cap\log(G_{(i,1)}))/\log(G_{(i,2)})$.

We output a new degree-rank $(s-1,r^\ast)$ correlation structure for $f$ with parameters
\begin{align*}
\rho'^{-1}&\le(MD/\rho)^{O_s(d^{O_s(1)})},\quad M'\le O(M),\quad D'= D,\quad d'\le O(d),
\end{align*}
with set $H'\subseteq H$, with multidegree $(1,s-1)$ nilcharacter $\chi'(h,n)={F^\ast}'(g'(h,n){\Gamma^\ast}')$ on $(G^\ast)'=G^\ast\times\mb{R}$, with $h$-dependent nilcharacters $\chi_h'$ having underlying polynomial sequences $g_h'(n)=F'(g_h'(n)\Gamma)$ on $G'=G$. This correlation structure satisfies:
\begin{itemize}
    \item $(G^\ast)'$ is given the multidegree filtration
    \[(G^\ast)'_{(i,j)} = (G^\ast)_{(i,j)} \times\{0\}\]
    if $(i,j)\neq (0,0)$ or $(0,1)$. For $(i,j)\in\{(0,0),(0,1)\}$ we set
    \[(G^\ast)'_{(i,j)} = (G^\ast)_{(i,j)} \times\mb{R}.\]
    We have ${F^\ast}'((x,z) (\Gamma^\ast\times\mb{Z})) = F^\ast(x \Gamma^\ast) \cdot e(z)$. We have $g'(h,n) = (g(h,n), \Theta n)$ for some appropriate value of $\Theta$;
    \item There exists a collection of $\mb{R}$-vector spaces $V_{i,\mr{Dep}}\leqslant V_i\leqslant G_{(i,1)}/G_{(i,2)}$ which are all $(MD/\rho)^{O_s(d^{O_s(1)})}$-rational with respect to $\exp(\mc{X}_i)$ for each $i$;
    \item For $1\le i\le s-1$ and $h,h_1,h_2\in H'$ we have 
    \[\on{Taylor}_i(g_h')\in V_i,\qquad\on{Taylor}_i(g_{h_1}') - \on{Taylor}_i(g_{h_2}')\in V_{i,\mr{Dep}};\]
    \item $F'$ is $M'$-Lipschitz and has the same vertical frequency $\eta$ as $F$;
    \item For integers $i_1 + \cdots + i_{r^\ast} = s-1$, suppose that $v_{i_\ell}\in V_{i_\ell}$ and for at least two distinct indices $\ell_1,\ell_2$ we have $v_{i_{\ell_1}}\in V_{i_{\ell_1},\mr{Dep}}$ and $v_{i_{\ell_2}}\in V_{i_{\ell_2},\mr{Dep}}$. Then for $w$ which is any $(r^\ast-1)$-fold commutator of $v_{i_1},\ldots,v_{i_{r^\ast}}$, we have
    \[\eta(w) = 0.\]
\end{itemize}
\end{lemma}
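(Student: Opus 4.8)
The plan is to combine the ``four-fold biased'' output of Section~\ref{sec:gowers} with the efficient equidistribution of Section~\ref{sec:equi} and a round of additive combinatorics. First, by Lemma~\ref{lem:four-fold-biased}, for at least $(MD/\rho)^{-O_s(d^{O_s(1)})}N^3$ additive quadruples $(h_1,h_2,h_3,h_4)\in H^4$ the four-fold product
\[\chi_{h_1}(n)\otimes\chi_{h_2}(n+h_1-h_4)\otimes\ol{\chi_{h_3}(n)}\otimes\ol{\chi_{h_4}(n+h_1-h_4)}\]
lies in $\on{Corr}(s-2,\rho',M',d')$ (for $s=2$, up to a global phase $e(\Theta n)$ with $\snorm{\Theta}_{\mb{R}/\mb{Z}}\le(MD/\rho)^{O_s(d^{O_s(1)})}/N$, which is precisely what the appended $\mb{R}$-factor of $(G^\ast)'$ absorbs; below I suppress it). I would regard this product as a single nilsequence $F^{(4)}(g^{(4)}_{\vec h}(n)\Gamma^{(4)})$ on $G^{(4)}=G\times G\times G\times G$ with the product degree-rank filtration, $g^{(4)}_{\vec h}(n)=(g_{h_1}(n),g_{h_2}(n+h_1-h_4),g_{h_3}(n),g_{h_4}(n+h_1-h_4))$, whose underlying function $F^{(4)}=F\otimes F\otimes\ol F\otimes\ol F$ carries the $T^{(4)}:=(G_{(s-1,r^\ast)})^4\leqslant Z(G^{(4)})$-vertical frequency $\eta^{(4)}:=(\eta,\eta,-\eta,-\eta)$. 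If $\eta=0$ the last bullet is vacuous --- take $H'=H$, $V_i=G_{(i,1)}/G_{(i,2)}$, $V_{i,\mr{Dep}}=0$ --- so assume $\eta\neq0$, whence $\eta^{(4)}\neq0$ on $T^{(4)}$. Folding the degree-$(s-2)$ nilsequence witnessing membership in $\on{Corr}(s-2,\ldots)$ into a product nilmanifold of degree $\le s-1$, passing to a coordinate achieving the $\ell^\infty$ bound, and running a Furstenberg--Weiss/van der Corput manoeuvre as in \cite{GTZ12,Len23} to keep complexity polynomial in the input, yields for each good quadruple a degree $\le s-1$ nilsequence $\Xi_{\vec h}$ with $T^{(4)}$-vertical frequency $\eta^{(4)}$ and $|\mb{E}_{n\in[N]}\Xi_{\vec h}(n)|\ge(MD/\rho)^{-O_s(d^{O_s(1)})}$.

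Second, I would apply Corollary~\ref{cor:equi-deg} to each $\Xi_{\vec h}$ (with the degree $\le s-1$ filtration, the central subgroup $T^{(4)}$ and the frequency $\eta^{(4)}$), obtaining a factorization $g^{(4)}_{\vec h}=\eps_{\vec h}g'_{\vec h}\gamma_{\vec h}$ with all three trivial at $0$, $\eps_{\vec h}$ $(Q_0,N)$-smooth, $\gamma_{\vec h}$ $Q_0$-rational, and $g'_{\vec h}$ supported on a $Q_0$-rational subgroup $H^{(4)}_{\vec h}\leqslant G^{(4)}$ with $\eta^{(4)}(H^{(4)}_{\vec h}\cap T^{(4)})=0$, where $Q_0=(MD/\rho)^{O_s(d^{O_s(1)})}$. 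Expanding Taylor coefficients multiplicatively (Lemma~\ref{lem:coeff-mult}), noting via Lemma~\ref{lem:small-Taylor} that the $\eps$-part contributes only $O(Q_0 N^{-i})$ and is therefore negligible while the $\gamma$-part is $Q_0$-rational, and reducing each $g_h$ by a $\Gamma$-valued polynomial sequence vanishing at $0$ (which alters neither $\chi_h$ nor $g_h(0)=\mr{id}_G$) so as to make these rational shifts bounded, I get $\on{Taylor}_i(g^{(4)}_{\vec h})\in\on{Horiz}_i(H^{(4)}_{\vec h})+r_{\vec h,i}$ for each $1\le i\le s-1$, with $r_{\vec h,i}$ a bounded-rational coset representative. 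Since there are only $(MD/\rho)^{O_s(d^{O_s(1)})}$ choices of a $Q_0$-rational subgroup of $G^{(4)}$ of dimension $\le 4d$ together with rounded bounded-rational cosets, and since the problem is invariant under the permutations $h_1\!\leftrightarrow\! h_2$, $h_3\!\leftrightarrow\! h_4$ and (via complex conjugation, using $h_1-h_4=h_3-h_2$) $(h_1,h_2)\!\leftrightarrow\!(h_3,h_4)$, a pigeonholing step retains a $(MD/\rho)^{-O_s(d^{O_s(1)})}$-dense set of good quadruples for which $H^{(4)}_{\vec h}$ and the $r_{\vec h,i}$ are a single symmetry-invariant subgroup $H^\dagger$ and cosets $r^\dagger_i$, still with $\eta^{(4)}(H^\dagger\cap T^{(4)})=0$.

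Third, the ``sunflower'' extraction. Let $H'$ consist of the $h\in H$ lying in at least $(MD/\rho)^{-O_s(d^{O_s(1)})}N^2$ retained quadruples (dense by averaging). For each $i$ let $V_i$ be the sum of the coordinate projections $\pi_a(\on{Horiz}_i(H^\dagger))$ ($a=1,\dots,4$; these coincide by symmetry-invariance) and the bounded-rational span of $r^\dagger_i$, and let $V_{i,\mr{Dep}}:=\{\pi_1(u)-\pi_3(u):u\in\on{Horiz}_i(H^\dagger)\}\leqslant V_i$; both are $Q_0$-rational with respect to $\exp(\mc X_i)$. Then $\on{Taylor}_i(g_h)\in V_i$ for $h\in H'$, and $\on{Taylor}_i(g_h)-\on{Taylor}_i(g_{h'})\in V_{i,\mr{Dep}}$ whenever $h,h'\in H'$ lie in a common retained quadruple $(h,h_2,h',h_4)$ --- so to get this for every pair in a (still-dense) subset one runs a Balog--Szemer\'edi--Gowers argument upgrading ``many good quadruples'' to ``many good quadruples through a fixed pair''. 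The rational cosets cancel in differences, so no genuine change to $\chi_h$ is needed: one takes $\chi_h'=\chi_h$ on $G'=G$ with $F'=F$, and the only honest changes to the correlation structure are the restriction to $H'$ and the $\mb{R}$-factor carrying $e(\Theta n)$. Finally, for the commutator condition, given $i_1+\cdots+i_{r^\ast}=s-1$ and $v_{i_j}\in V_{i_j}$ with $v_{i_{\ell_1}}\in V_{i_{\ell_1},\mr{Dep}}$, $v_{i_{\ell_2}}\in V_{i_{\ell_2},\mr{Dep}}$, lift each $v_{i_j}$ to $\wh v_{i_j}\in H^\dagger\cap G^{(4)}_{(i_j,1)}$, choosing for $j\in\{\ell_1,\ell_2\}$ a lift with first coordinate $v_{i_j}$ built from $u-\sigma(u)$ (with $\sigma$ the swap $1\!\leftrightarrow\!3,\ 2\!\leftrightarrow\!4$, which preserves $H^\dagger$) and for the remaining indices a lift arranged so that in the coordinatewise expansion only the first-coordinate commutator survives. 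Since $r^\ast$ elements of filtration degrees summing to $s-1$ are used, the $(r^\ast-1)$-fold commutator $w$ of the $\wh v_{i_j}$ lies in $G^{(4)}_{(s-1,r^\ast)}=T^{(4)}$, hence in $H^\dagger\cap T^{(4)}$, so $\eta^{(4)}(w)=0$; expanding $\eta^{(4)}(w)=\eta(c_1)+\eta(c_2)-\eta(c_3)-\eta(c_4)$ with $c_a$ the commutator of the $a$-th coordinates, the lift choices leave exactly one surviving copy of $\eta$ of the commutator of $v_{i_1},\dots,v_{i_{r^\ast}}$, which therefore vanishes.

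The step I expect to be the main obstacle is the coherent, quantitative execution of the last two ingredients: one must define $V_i$ and $V_{i,\mr{Dep}}$ so that they are simultaneously bounded-rational, absorb all relevant Taylor coefficients and their differences, and make the coordinatewise-cancellation bookkeeping (including the disposal of the bounded-rational cosets $r^\dagger$) actually produce $\eta(\text{commutator})=0$ rather than merely a bounded-rational quantity; this forces the pigeonholed subgroup $H^\dagger$ to be genuinely symmetry-invariant and requires the Balog--Szemer\'edi--Gowers linking step, all carried out with no ``pass to a totally equidistributed subgroup'' reduction and no iteration --- exactly the point where the efficient equidistribution of Section~\ref{sec:equi} and the quasi-polynomial Bogolyubov bounds of Sanders replace the dimension inductions of \cite{GTZ12}. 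It is the natural extension of the $U^4$-argument of \cite{Len23}, now complicated by commutators of mixed filtration degrees $i_1+\cdots+i_{r^\ast}=s-1$ in place of a single antisymmetric bilinear form.
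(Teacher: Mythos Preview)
Your overall plan---apply Lemma~\ref{lem:four-fold-biased}, run Corollary~\ref{cor:equi-deg} on the product group, pigeonhole the rational subgroup, then read off $V_i$ and $V_{i,\mr{Dep}}$ from its horizontal tori---matches the paper. The execution of the Furstenberg--Weiss step, however, has a genuine gap.

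You define $V_{i,\mr{Dep}}=\{\pi_1(u)-\pi_3(u):u\in\on{Horiz}_i(H^\dagger)\}$ and propose lifting a Dep element $v$ to $u-\sigma(u)=(v,w,-v,-w)$ with $\sigma$ the swap $1\!\leftrightarrow\!3$, $2\!\leftrightarrow\!4$. But with two such lifts and arbitrary lifts $(v_{i_j},b_j,c_j,d_j)$ for the remaining indices, the coordinatewise commutators $C_2,C_3,C_4$ do \emph{not} vanish: $C_3$ has the same Dep entries as $C_1$ (the two sign flips cancel by bilinearity) but different non-Dep entries $c_j$, so $\eta(C_1)-\eta(C_3)$ does not isolate $\eta([v_{i_1},\ldots,v_{i_{r^\ast}}])$. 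Your phrase ``a lift arranged so that only the first-coordinate commutator survives'' is exactly the missing idea, and there is no such lift for a generic $v\in\pi_1(\on{Horiz}_i(H^\dagger))$ unless $H^\dagger$ contains the diagonal, which you have no reason to expect. The paper's device is different and essential: one uses \emph{two} projections $\pi_{123}(J_i)^\ast=\pi_{123}(J_i)\cap\{(v,0,0)\}$ and $\pi_{124}(J_i)^\ast=\pi_{124}(J_i)\cap\{(v,0,0)\}$, so that a Dep element lifts \emph{both} to $(v,0,0,\ast)$ and to $(v,0,\ast,0)$. Taking one lift of each type for the two Dep indices forces $C_2=C_3=C_4=0$ outright (each contains a zero factor), hence $\eta^{(4)}=0$ collapses to $\eta(C_1)=0$. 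The output is $V_{i,\mr{Dep}}=\pi_{123}(J_i)^\ast\cap\pi_{124}(J_i)^\ast$ and $V_i=\pi_1(J_i)$, not your symmetrized spaces.

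Two smaller points. First, no Balog--Szemer\'edi--Gowers or Sanders is used here (that enters only at the linearization step, Lemma~\ref{lem:linear-output}); the paper instead pigeonholes directly to find fixed $(h_2,h_3)$ and $(h_2',h_4')$ so that many $h_1$ extend to quadruples of \emph{both} shapes $(h_1,h_2,h_3,\cdot)$ and $(h_1,h_2',\cdot,h_4')$, which is precisely what feeds the two projections above. Second, you cannot take $\chi_h'=\chi_h$: the Taylor coefficients only lie in $V_i$ modulo a rational lattice and a smooth error, and the paper spends Steps~6--7 factoring these out (via Lemma~\ref{lem:factor}, then pigeonholing on sub-progressions and a net for the smooth part, then conjugating by a fixed rational element) to produce a genuinely modified $g_h'$, $F'$, and the extra $e(\Theta n)$. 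Absorbing the rational coset into $V_i$ by ``adding its span'', as you suggest, would destroy the commutator vanishing, since the coset representative need not lie in $\pi_1(J_i)$.
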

\begin{remark*}
Consider $g_{i_j} = \exp(X_{i_j})$ with $g_{i_j}\in G_{i_j,0}$ for $1\le j\le r^\ast$ and $i_1 + \cdots + i_{r^\ast} = s-1$. Fixing any $(r^\ast-1)$-fold commutator $w$ of $g_{i_1},\ldots,g_{i_{r^\ast}}$, repeated application of the commutator version of Baker--Campbell--Hausdorff (e.g.~\eqref{eq:bch-com}) implies that
\[w = \exp([X_{i_1},\ldots,X_{i_{r^\ast}}])\]
where the associated commutator has the same ``form'' as that defining $w$. (All higher terms are annihilated since $G$ has degree-rank $(s-1,r^\ast)$.) Note that this implies that one can define the associated commutator given inputs in $G_{(i_1,1)}/G_{(i_1,2)},\ldots,G_{(i_{r^\ast},1)}/G_{(i_{r^\ast},2)}$ and furthermore we see that the associated commutator form on the Lie algebra is a multilinear form of the vector arguments (since $G_{(i,1)}/G_{(i,2)}$ and $G_{(s-1,r^\ast)}$ are real vector spaces and the commutator bracket on the Lie algebra is multilinear).
\end{remark*}
\begin{proof}
We first note that the statement of the lemma is trivial for $r^\ast = 1$ since we may take $V_i = V_{i,\mr{Dep}} = G_{(i,1)}'/G_{(i,2)}'$; it is impossible to have two distinct indices in the final bullet point. Taking $g_h'(n) = g_h(n)$ and $g'(h,n) = (g(h,n), 0)$ completes the proof in this case. For $s = 2$, the only possible case is $r^\ast = 1$ and therefore for the remainder of the proof we will consider $s\ge 3$. Similarly, if $\eta$ is trivial, the result is once again immediate. Thus throughout the remainder of the proof we will assume that $s-1\ge r^\ast\ge 2$ and $\eta$ is nontrivial.

\noindent\textbf{Step 1: Setup for invoking equidistribution theory.}
By Lemma~\ref{lem:four-fold-biased}, we have
\begin{align*}
\norm{\mb{E}\bigg[\chi_{h_1}(n)\otimes \chi_{h_2}(n + h_1-h_4)\otimes \ol{\chi_{h_3}(n)}\otimes \ol{\chi_{h_4}(n + h_1-h_4)}\cdot\psi_{\vec{h}}(g_{\vec{h}}(n)\Gamma')\bigg]}_{\infty} \ge (MD/\rho)^{-O_s(d^{O_s(1)})}
\end{align*}
for at least $(MD/\rho)^{-O_s(d^{O_s(1)})}$ fraction of additive quadruples $h_1+h_2=h_3+h_4$. Furthermore $g_{\vec{h}}(n)$ is a polynomial sequence on a group $G_{\mr{Error}}$ which has a degree $(s-2)$ filtration, dimension bounded by $O_s(d^{O_s(1)})$, and the complexity of $G_{\mr{Error}}/\Gamma_{\mr{Error}}$ and the Lipschitz constant of the function for $\psi_{\vec{h}}$ are bounded by $(MD/\rho)^{O_s(d^{O_s(1)})}$. Note that a priori $G_{\mr{Error}}/\Gamma_{\mr{Error}}$ and the associated Mal'cev basis depend on $\vec{h}$. However, applying Pigeonhole on the choice of the associated structure constants allows us to assume, at the cost of passing to a density $(MD/\rho)^{-O_s(d^{O_s(1)})}$ subset of the additive quadruples, that $G_{\mr{Error}}/\Gamma_{\mr{Error}}$ is independent of $\vec{h}$. Finally, we may assume as usual that $g_{\vec{h}}(0) = \mr{id}_{G_{\mr{Error}}}$ via by-now standard manipulations. 

We now consider the group $\wt{G} = G\times G \times G \times G \times G_{\mr{Error}}$. $\wt{G}$ may naturally be given a degree-rank $(s-1,r^\ast)$ product filtration (where we use \cite[Example~6.11]{GTZ12} to assign $G_{\mr{Error}}$ a degree-rank $(s-2,s-2)$ structure) and Mal'cev basis. Furthermore if $\chi_{h_i}(n) = F(g_{h_i}(n))$ we have that the five-fold function $F(x_1\Gamma)\otimes F(x_2\Gamma)\otimes\ol{F(x_3\Gamma)}\otimes\ol{F(x_4\Gamma)}\cdot\psi_{\vec{h}}(x_5\Gamma_{\mr{Error}})$ has a vertical frequency $\eta_{\mr{Prod}}=(\eta,\eta,-\eta,-\eta,0)$. (Note that $(G_{\mr{Error}})_{(s-1,i)} = \mr{Id}_{G_{\mr{Error}}}$ for all $i\ge 0$.)

For the sake of convenience, we set \[g_{\vec{h}}^\ast(n) = (g_{h_1}(n),g_{h_2}(n + h_1-h_4),g_{h_3}(n),g_{h_4}(n + h_1-h_4),g_{\vec{h}}(n))\] and note that the function $F\otimes F\otimes \ol{F}\otimes \ol{F} \cdot\psi_{\vec{h}}$ is seen to be $M^{O_s(d^{O_s(1)})}$-Lipschitz on $\wt{G}$. Note that by the second item of Lemma~\ref{lem:coeff-mult}, we immediately have that 
\[\on{Taylor}_i(g_{h_2}(n + h_1-h_4)) = \on{Taylor}_i(g_{h_2}(n ))\]
for $1\le i\le s-1$ and analogously for $g_{h_4}(n+h_1-h_4)$. 

\noindent\textbf{Step 2: Invoking equidistribution theory.}
By applying Corollary~\ref{cor:equi-deg} (since $\eta$ is nonzero), there exists a $(MD/\rho)^{O_s(d^{O_s(1)})}$-rational subgroup $J=J_{\vec{h}}$ of $\wt{G}$ such that $\eta_{\mr{Prod}}(J\cap\wt{G}_{(s-1,r^\ast)}) = 0$ and such that 
\[g_{\vec{h}}^\ast = \eps_{\vec{h}}\cdot\wt{g_{\vec{h}}} \cdot\gamma_{\vec{h}}\]
where:
\begin{itemize}
    \item $\eps_{\vec{h}}(0) = \wt{g_{\vec{h}}}(0) = \gamma_{\vec{h}}(0) = \mr{id}_{\wt{G}}$;
    \item $\wt{g_{\vec{h}}}$ takes values in $J$;
    \item $\gamma_{\vec{h}}$ is $(MD/\rho)^{O_s(d^{O_s(1)})}$-rational (with respect to the lattice $\Gamma\times\Gamma\times\Gamma\times\Gamma\times\Gamma_{\mr{Error}}$);
    \item $d(\eps(n),\eps(n-1))\le(MD/\rho)^{O_s(d^{O_s(1)})}N^{-1}$ for $n\in[N]$.
\end{itemize}
By passing to a subset of additive quadruples of density $(MD/\rho)^{-O_s(d^{O_s(1)})}$ we may in fact assume that the group $J$ is independent of $\vec{h}$ under consideration.

We define
\[J_i' := (J\cap\wt{G}_{(i,1)})/(J\cap\wt{G}_{(i,2)}),\qquad J_i := \tau_i(J_i')\]
where $\tau_i\colon\on{Horiz}_i(G)^{\otimes 4}\times\on{Horiz}_i(G_{\mr{Error}})\to\on{Horiz}_i(G)^{\otimes 4}$ is the natural projection map to the four-fold product. Since $\eta_{\mr{Prod}}(J\cap\wt{G}_{(s-1,r)}) = 0$ (due to the output of Corollary~\ref{cor:equi-deg}), we have 
\[\eta_{\mr{Prod}}([J_{i_1}',\ldots,J_{i_{r^\ast}}']) = 0\]
for $i_1' + \ldots + i_{r^{\ast}}' = s-1$ where the commutator bracket is taken with respect to $\wt{G}$ and $[\cdot,\ldots,\cdot]$ denotes any possible $(r^\ast-1)$-fold commutator bracket. 

Since $G_{\mr{Error}}$ has been given a degree-rank $<(s-1,r^\ast)$ filtration, we have that in fact 
\[\eta_{\mr{Prod}}([J_{i_1},\ldots,J_{i_{r^\ast}}]) = 0\]
where we abusively descend $\eta_{\mr{Prod}}$ to $G^{\otimes 4}$. Less formally, we are noting that the final coordinate of elements in $\wt{G}$ play no role in commutators of the depth being considered.

\noindent\textbf{Step 3: Furstenberg--Weiss commutator argument.}
We now perform the crucial Furstenberg--Weiss commutator argument. Given $T\subseteq[4]$, we define $\pi_T((v_1,\ldots,v_4)) = (v_i)_{i\in T}$ with the coordinates represented in increasing order of index.

We define
\begin{align*}
\pi_{123}(J_i)^\ast &= \pi_{123}(J_i) \cap\{(v,0,0)\colon v\in\on{Horiz}_i(G)\},\\
\pi_{124}(J_i)^\ast &= \pi_{124}(J_i) \cap\{(v,0,0)\colon v\in\on{Horiz}_i(G)\}.
\end{align*}
Note that $\pi_{123}(J_i)^\ast$ and $\pi_{124}(J_i)^\ast$ may (abusively) be viewed as subspaces of $\on{Horiz}_i(G)$. The crucial claim is that 
\[\eta([v_{i_1},\ldots, v_{i_{r^\ast}}]) = 0\]
if $i_1+\cdots+i_{r^\ast} = s-1$, each $v_{i_\ell}\in\pi_1(J_{i_\ell})$, and for two distinct indices $\ell_1,\ell_2$ we have that $v_{i_{\ell_1}}\in\pi_{123}(J_{i_{\ell_1}})^\ast$ and $v_{i_{\ell_2}}\in\pi_{124}(J_{i_{\ell_2}})^\ast$. Note that $\eta$ lives on $G$ and the commutator brackets are taken with respect to $G$, not $G^{\otimes 4}$. The Furstenberg--Weiss commutator argument is required to capture precisely this difference. 

Note that an element $v_i\in\pi_1(J_{i_\ell})$ lifts to an element $\wt{v_{i_\ell}}$ of the form $(v_{i_\ell},\cdot,\cdot,\cdot)\in\on{Horiz}_{i_\ell}(G)^{\otimes 4}$. Furthermore note that $v_{i_\ell}\in\pi_{123}(J_{i_\ell})$ ``lifts'' to an element $\wt{v_{i_\ell}}$ of the form $(v_{i_\ell},0,0,\cdot)\in\on{Horiz}_{i_\ell}(G)^{\otimes 4}$ while $v_{i_\ell}\in\pi_{124}(J_{i_\ell})$ lifts to an element $\wt{v_{i_\ell}}$ of the form $(v_{i_\ell},0,\cdot,0)\in\on{Horiz}_{i_\ell}(G)^{\otimes 4}$.

Given the above setup, we have 
\[[\wt{v_{i_{1}}},\ldots,\wt{v_{i_{{r^\ast}}}}] = ([v_{i_{1}},\ldots,v_{i_{{r^\ast}}}],\mr{id}_G,\mr{id}_G,\mr{id}_G).\]
To see this note that the iterated commutator of elements in $G\times\mr{Id}_G\times\mr{Id}_G \times G$ (with any elements in $G^{\otimes 4}$) remains in the subgroup $G\times\mr{Id}_G\times\mr{Id}_G \times G$; an analogous fact holds true for $G\times\mr{Id}_G\times G \times\mr{Id}_G$. Since we assumed that our commutator contains elements in both $G\times\mr{Id}_G\times\mr{Id}_G \times G$ and $G\times\mr{Id}_G\times G \times\mr{Id}_G$, the commutator must in fact live in $G\times\mr{Id}_G\times\mr{Id}_G \times\mr{Id}_G$, and the first coordinates of the desired commutators is trivially seen to match. 

Recalling that we have
\[\eta_{\mr{Prod}}([J_{i_1},\ldots,J_{i_{r^\ast}}]) = 0,\]
and noting that $\eta_{\mr{Prod}}$ descends to $\eta$ on the subgroup $G_{(s-1,r^\ast)}\times\mr{Id}_{G}^{\otimes 3}$, we have 
\[\eta([v_{i_{1}},\ldots,v_{i_{{r^\ast}}}]) = 0\]
as claimed.

\noindent\textbf{Step 4: Finding $(h_2,h_3)$ and $(h_2',h_4')$ which extend to many ``good'' $h_1$.}
Recall that we are looking at the at least $(MD/\rho)^{-O_s(d^{O_s(1)})}$ fraction of additive quadruples $(h_1,h_2,h_3,h_4)\in H^4\subseteq[N]^4$ which are such that $\wt{g_{\vec{h}}}$ lives on a specified subgroup $J$. Call this set of quadruples $\mc{S}$.

So by Markov, there are at least $(MD/\rho)^{-O_s(d^{O_s(1)})}N$ many $h_1\in[N]$ which extend to at least $(MD/\rho)^{-O_s(d^{O_s(1)})}N^2$ quadruples in $\mc{S}$. Thus there are at least $(MD/\rho)^{-O_s(d^{O_s(1)})}N^5$ pairs of additive tuples of the form
\[(h_1,h_2,h_3,h_1+h_2-h_3),~(h_1,h_2',h_1 + h_2'-h_4',h_4')\in\mc{S}.\] 

By averaging, there exists a pair of pairs $(h_2,h_3)$
and $(h_2',h_4')$ such that there are at least $(MD/\rho)^{-O_s(d^{O_s(1)})}N$ many $h_1\in[N]$ which live in such additive tuples. We fix such a pair of pairs and define $\mc{T}$ to denote the set of $h_1\in[N]$ such that $(h_1,h_2,h_3,h_1+h_2-h_3)\in\mc{S}$ and $(h_1,h_2',h_1+h_2'-h_4',h_4')\in\mc{S}$.

\noindent\textbf{Step 5: Extracting coefficient data.}
Consider $h_1\in\mc{T}$ and define
\[h^{123} = (h_1,h_2,h_3,h_1+h_2-h_3),\quad h^{124} = (h_1,h_2',h_1+h_2'-h_4',h_4').\]
Recall $\mc{X}_i = (\mc{X}\cap\log(G_{(i,1)}))/\log(G_{(i,2)})$ and assign the basis $\exp(\mc{X}_i)$ to $G_{(i,1)}/G_{(i,2)}$ (viewed as a vector space). 
Finally we assign the basis $\mc{Z}_i=\bigcup_{Y_i\in\exp(\mc{X}_i)}\{(Y_i,0,0), (0,Y_i,0),(0,0,Y_i)\}$ to $(G_{(i,1)}/G_{(i,2)})^{\otimes 3}$.

By Lemma~\ref{lem:coeff-mult}, we have
\begin{align*}
\on{Taylor}_i(g_{h^{123}}^\ast) &= \on{Taylor}_i(\eps_{h^{123}}) + \on{Taylor}_i(\wt{g_{h^{123}}})  + \on{Taylor}_i(\gamma_{h^{123}}) \\
\on{Taylor}_i(g_{h^{124}}^\ast) &= \on{Taylor}_i(\eps_{h^{124}}) + \on{Taylor}_i(\wt{g_{h^{124}}})  + \on{Taylor}_i(\gamma_{h^{124}}) 
\end{align*}
Therefore, by Lemma~\ref{lem:small-Taylor}, for all $h_1\in\mc{T}$ we have 
\begin{align*}
\on{dist}(\on{Taylor}_i((g_{h_1},g_{h_2},g_{h_3})), \pi_{123}(J_i) + T_{h_1}^{-1}\on{Horiz}_i(\Gamma^{\otimes 3})) \le (MD/\rho)^{O_s(d^{O_s(1)})} N^{-i},\\
\on{dist}(\on{Taylor}_i((g_{h_1},g_{h_2'},g_{h_4'})), \pi_{124}(J_i) + T_{h_1}'^{-1}\on{Horiz}_i(\Gamma^{\otimes 3})) \le (MD/\rho)^{O_s(d^{O_s(1)})} N^{-i},
\end{align*}
where $T_{h_1}$ and $T_{h_1}'$ are positive integers bounded by $(MD/\rho)^{O_s(d^{O_s(1)})}$. Here we have identified the basis $\mc{Z}_i$ (for $(G_{(i,1)}/G_{(i,2)})^{\otimes 3}$) with the standard basis vectors in $\mb{R}^{3\dim(\on{Horiz}_i(G))}$ and taken the $L^{\infty}$ metric on the latter (for the notion of $\on{dist}$). At the cost of shrinking the set $\mc{T}$ by a multiplicative factor of $(MD/\rho)^{-O_s(d^{O_s(1)})}$ we may assume that $T_{h_1} = T$ and $T_{h_1}' = T'$ for all $h_1\in\mc{T}$.

We now consider a basis $\mc{B}_i$ for $\pi_{123}(J_i)$ which is in row-echelon form where one orders the coordinates corresponding to second copy of $G$ (in the four-fold $G^{\otimes 4}$) at the front, then the third copy, and then the first copy. In particular, the ``final block'' of basis vectors span $\pi_{123}(J_i)^\ast$. Note that one can take such $\mc{B}_i$ such that the coordinates are integers bounded by $(MD/\rho)^{O_s(d^{O_s(1)})}$ due to the rationality of $\pi_{123}(J_i)$.

For $h_1,h_1'\in\mc{T}$, we have
\begin{align}\label{eq:sunflower-1}
\on{Taylor}_i((g_{h_1},g_{h_2},g_{h_3})) &= \sum_{R_j\in\mc{B}_i}a_jR_j + T^{-1}\mb{Z}^{3\dim(\on{Horiz}_i(G))} + v_{h_1}\\
\on{Taylor}_i((g_{h_1'},g_{h_2},g_{h_3})) &= \sum_{R_j\in\mc{B}_i}a_j'R_j + T^{-1}\mb{Z}^{3\dim(\on{Horiz}_i(G))} + v_{h_1'}\label{eq:sunflower-2}
\end{align}
where $\snorm{v_{h_1}}_{\infty}, \snorm{v_{h_1'}}_{\infty}\le (MD/\rho)^{O_s(d^{O_s(1)})}N^{-i}$. For each basis vector $R_j\in\mc{B}_i$ where the first nonzero element is either in coordinates corresponding to second or third copy of $G$, there exists a dual vector which is zero on the coordinates corresponding to the first copy of $G$ and whose inner product with all of $\mc{B}_i$ but $R_j$ is zero.

Call this vector $v_j$ and note one may take $v_j$ to have integral coordinates bounded by $(MD/\rho)^{O_s(d^{O_s(1)})}$ and divisible by $T$. Then from \eqref{eq:sunflower-1} and \eqref{eq:sunflower-2},
\[0 =  v_j\cdot(\on{Taylor}_i((g_{h_1},g_{h_2},g_{h_3}))-\on{Taylor}_i((g_{h_1'},g_{h_2},g_{h_3})) = M_j (a_j-a_j') + \mb{Z} \pm (MD/\rho)^{O_s(d^{O_s(1)})}N^{-i},\]
where $M_j$ is an nonzero integer bounded by $(MD/\rho)^{O_s(d^{O_s(1)})}$. (That is, $M_j(a_j-a_j')$ is within $(MD/\rho)^{O_s(d^{O_s(1)})}N^{-i}$ of an integer.)

We may now use this information about such indices $j$ in conjunction with \eqref{eq:sunflower-1} and \eqref{eq:sunflower-2}. We deduce that for all $h_1,h_1'\in\mc{T}$,
\[\on{dist}(\on{Taylor}_i(g_{h_1}) - \on{Taylor}_i(g_{h_1'}), \pi_{123}(J_i)^\ast + {T_1}^{-1}\mb{Z}^{\dim(\on{Horiz}_i(G))}) \le (MD/\rho)^{O_s(d^{O_s(1)})}N^{-i},\]
where $T_1$ is an integer of size bounded by $(MD/\rho)^{O_s(d^{O_s(1)})}$. Analogously,
\[\on{dist}(\on{Taylor}_i(g_{h_1}) - \on{Taylor}_i(g_{h_1'}), \pi_{124}(J_i)^\ast + {T_1'}^{-1}\mb{Z}^{\dim(\on{Horiz}_i(G))}) \le (MD/\rho)^{O_s(d^{O_s(1)})}N^{-i}\]
where $T_1'$ is an integer bounded by $(MD/\rho)^{O_s(d^{O_s(1)})}$. Putting it together, we may deduce that
\[\on{dist}(\on{Taylor}_i(g_{h_1}) - \on{Taylor}_i(g_{h_1'}), \pi_{123}(J_i)^\ast\cap\pi_{124}(J_i)^\ast + T_2^{-1}\mb{Z}^{\dim(\on{Horiz}_i(G))})\le (MD/\rho)^{O_s(d^{O_s(1)})}N^{-i}\]
with $T_2$ a nonzero integer bounded by $(MD/\rho)^{O_s(d^{O_s(1)})}$. To see this, simply construct a bounded integral basis of the orthogonal complement of $\pi_{123}(J_i)^\ast\cap\pi_{124}(J_i)^\ast$ (treated as a subspace of the dual space to $G_{(i,1)}/G_{(i,2)}\simeq\mb{R}^{\dim(\on{Horiz}_i(G))}$). Then the two input inequalities imply that any basis vector for the intersection space dual will map $T_1T_1'(\on{Taylor}_i(g_{h_1})-\on{Taylor}_i(g_{h_1'}))$ to a near-integral scalar, which gives the claim.

Now by Lemma~\ref{lem:coeff-mult} we therefore have 
\begin{equation}\label{eq:sunflower-3}
\on{dist}(\on{Taylor}_i(g_{h_1}g_{h_1'}^{-1}), \pi_{123}(J_i)^\ast\cap\pi_{124}(J_i)^\ast + T_2^{-1}\mb{Z}^{\dim(\on{Horiz}_i(G))})\le (MD/\rho)^{O_s(d^{O_s(1)})}N^{-i}.
\end{equation}
It is also trivial by restricting the factorization to the first coordinate that
\begin{equation}\label{eq:sunflower-4}
\on{dist}(\on{Taylor}_i(g_{h_1}), \pi_1(J_i) + T_3^{-1}\mb{Z}^{\dim(\on{Horiz}_i(G))}) \le (MD/\rho)^{O_s(d^{O_s(1)})}N^{-i}
\end{equation}
with $T_3$ a nonzero integer bounded by $(MD/\rho)^{O_s(d^{O_s(1)})}$.

\noindent\textbf{Step 6: Extracting initial factorizations.}
For the remainder of the proof fix $h_1^\ast\in\mc{T}$. Given $h_1'\in\mc{T}$ we have
\[g_{h_1'} = g_{h_1'}g_{h_1^\ast}^{-1} \cdot g_{h_1^\ast}.\]

Note that $\pi_{1}(J_i)$ and $\pi_{123}(J_i)^\ast\cap\pi_{124}(J_i)^\ast$ may each be defined as the kernel of a set of $i$-th horizontal characters (on $G$) of height at most $(MD/\rho)^{O_s(d^{O_s(1)})}$. Recall \eqref{eq:sunflower-3} and \eqref{eq:sunflower-4}. Scaling the horizontal characters by at most $(MD/\rho)^{O_s(d^{O_s(1)})}$ and applying Lemma~\ref{lem:factor}, we may write 
\begin{align*}
g_{h_1'}g_{h_1^\ast}^{-1} &= \eps_{h_1'}\cdot\wt{g_{h_1'}} \cdot\gamma_{h_1'},\\
g_{h_1^\ast} &= \eps \cdot \wt{g'} \cdot\gamma',
\end{align*}
where:
\begin{itemize}
    \item $\eps_{h_1'}(0) = \wt{g_{h_1'}}(0) = \gamma_{h_1'}(0) = \eps(0) = g'(0) = \gamma'(0) = \mr{id}_G$;
    \item $\on{Taylor}_i(\wt{g_{h_1'}})\in\pi_{123}(J_i)^\ast\cap\pi_{124}(J_i)^\ast$ and $\on{Taylor}_i(\wt{g'})\in\pi_{1}(J_i)$;
    \item $\gamma_{h_1'},\gamma'$ are $(MD/\rho)^{O_s(d^{O_s(1)})}$-rational;
    \item $d_{G}(\eps_{h_1'}(n),\eps_{h_1'}(n-1)) + d_{G}(\eps(n),\eps(n-1))\le (MD/\rho)^{O_s(d^{O_s(1)})}N^{-1}$ for $n\in[N]$.
\end{itemize}
Therefore 
\begin{align*}
g_{h_1'} &= g_{h_1'}g_{h_1^\ast}^{-1} \cdot g_{h_1^\ast} =\eps_{h_1'}\eps \cdot (\eps^{-1} \wt{g_{h_1'}} \eps) \cdot (\eps^{-1}\gamma_{h_1'}\eps \gamma_{h_1'}^{-1}) \cdot (\gamma_{h_1'} \cdot \wt{g'} \gamma_{h_1'}^{-1}) \cdot\gamma_{h_1'}\gamma'.
\end{align*}

By Lemma~\ref{lem:coeff-mult}, we have 
\begin{align*}
\on{Taylor}_i(\gamma_{h_1'} \wt{g'} \gamma_{h_1'}^{-1}) = \on{Taylor}_i(\wt{g'}) &\in\pi_{1}(J_i),\\    
\on{Taylor}_i((\eps^{-1} \wt{g_{h_1'}} \eps) \cdot (\eps^{-1}\gamma_{h_1'}\eps \gamma_{h_1'}^{-1})) =  \on{Taylor}_i(\wt{g_{h_1'}})&\in\pi_{123}(J_i)^\ast\cap  \pi_{124}(J_i)^\ast.
\end{align*}

We say that $h_1',h_1''\in\mc{T}$ have matching rational parts if
\[(\gamma_{h_1'}\gamma')^{-1} \cdot (\gamma_{h_1''}\gamma')\]
is a polynomial sequence valued in $\Gamma$. By restricting $\mc{T}$ to an appropriate subset of density $(MD/\rho)^{-O_s(d^{O_s(1)})}$, we may assume that all $h_1'\in\mc{T}$ have matching rational parts. (This is most easily seen in first-kind coordinates: if $\gamma_{h_1'}\gamma'$ and $\gamma_{h_1''}\gamma'$ have all coefficients differing by $T_4 \cdot\on{span}(\mc{X},\mb{Z})$ where $T_4$ is an appropriate integer of size bounded by $(MD/\rho)^{O_s(d^{O_s(1)})}$ then two sequences match up to a polynomial sequence in $\Gamma$.)

So, ultimately we may assume that for all $h_1'\in\mc{T}$ we have 
\begin{align*}
g_{h_1'} = \eps_{h_1'}^\ast \cdot g_{h_1'}^\ast \cdot\gamma^{\ast} \cdot \wt{\gamma_{h_1'}}
\end{align*}
where:
\begin{itemize}
    \item $\eps_{h_1'}^\ast (0) = g_{h_1'}^\ast(0) = \gamma^\ast(0) = \wt{\gamma_{h_1'}}(0) = \mr{id}_G$;
    \item $\on{Taylor}_i(g_{h_1'}^\ast\cdot (g_{h_1''}^\ast)^{-1})\in\pi_{123}(J_i)^\ast\cap\pi_{124}(J_i)^\ast$ and $\on{Taylor}_i(g_{h_1'}^\ast)\in\pi_{1}(J_i)$ for all $h_1''\in\mc{T}$;
    \item $\gamma^\ast$ is $(MD/\rho)^{O_s(d^{O_s(1)})}$-rational
    \item $\wt{\gamma_{h_1'}}$ takes values in $\Gamma$;
    \item $d_{G}(\eps_{h_1'}^\ast(n),\eps_{h_1'}^\ast(n-1))\le (MD/\rho)^{O_s(d^{O_s(1)})}N^{-1}$ for $n\in[N]$.
\end{itemize}

\noindent\textbf{Step 7: Removing periodic and smooth pieces of factorization.}
Let $Q$ be the period of $\gamma^\ast\Gamma$ and define $\delta = (MD/\rho)^{-O_s(d^{O_s(1)})}$ where $\delta$ is to be chosen later. We break $[N]$ into a collection of arithmetic progressions with difference $Q$ and length between $\delta N$ and $2\delta N$; there are at most $\delta^{-1}$ such progressions. Call these progressions $P_1,\ldots,P_\ell$ and note that 
\[\norm{\mb{E}_{n\in[N]}(\Delta_h f)(n) \sum_{i=1}^\ell\mbm{1}_{n\in P_i} \cdot\chi(h,n)\otimes \chi_h(n) \cdot\psi_h(n)}_{\infty}\ge\rho\]
where $\psi_h(n)$ is the degree $(s-2)$ nilsequence coming from the condition $\Delta_hf(n)\otimes\ol{\chi(h,n)}\otimes\ol{\chi_h(n)}\in\on{Corr}(s-1,\rho,M,d)$ from the original correlation structure. For $h\in\mc{T}$ we may write
\[\chi_h(n) = F(g_h(n)\Gamma) = F(\eps_h^\ast g_h^\ast \gamma^\ast \Gamma);\]
here we are using that $\wt{\gamma_{h_1'}}$ takes values in $\Gamma$ so may be dropped for the remainder of the analysis. 

Since $Q$ is the period of $\gamma^\ast$, we may replace $\gamma^\ast$ by a value $\gamma_{P_i}$ for each progression where $\gamma_{P_i}\gamma^\ast(n)^{-1}\in\Gamma$ for $n\in P_i$ and $\snorm{\psi(\gamma_{P_i})}_{\infty}\le 1$. Then
\[\norm{\mb{E}_{n\in[N]}(\Delta_h f)(n) \cdot\chi(h,n)\otimes \bigg(\sum_{i=1}^\ell\mbm{1}_{n\in P_i}F(\eps_h^\ast g_h^\ast \gamma_{P_i}\Gamma)\bigg) \cdot\psi_h(n)}_{\infty}\ge\rho.\]
Furthermore as $\eps_h^\ast$ is sufficiently smooth we may replace $\eps_h^\ast$ with the constant $\eps_{h,P_i} = \eps_h^\ast(\min(P_i))$ and have 
\[\norm{\mb{E}_{n\in[N]}(\Delta_h f)(n) \cdot\chi(h,n)\otimes \bigg(\sum_{i=1}^\ell\mbm{1}_{n\in P_i}F(\eps_{h,P_i}g_h^\ast \gamma_{P_i}\Gamma)\bigg) \cdot\psi_h(n)}_{\infty}\ge\rho/2,\]
as long as $\delta$ was chosen sufficiently small.

By the triangle inequality there exists some $P_i$ which is distance at least $\delta^{1/2}N$ from the ends of the interval $[N]$ such that 
\[\snorm{\mb{E}_{n\in[N]}(\Delta_h f)(n) \cdot\chi(h,n)\otimes \mbm{1}_{n\in P_i}F(\eps_{h,P_i}g_h^\ast \gamma_{P_i}\Gamma) \cdot\psi_h(n)}_{\infty}\ge\delta^{2}.\]
By paying a $\delta^{O(1)}$-fraction in the size of $\mc{T}$ we may assume that the choice of index $i$ is independent of $h$, hence writing $P_i = P$. Furthermore note that there is a $\delta^{O(1)}$-net of size $\delta^{-O_s(d)}$ for the set of $g$ satisfying $d_{G}(g,\mr{id}_G)\le (MD/\rho)^{O_s(d^{O_s(1)})}$. If the net size is chosen small enough, we may shift $\eps_{h,P_i}$ to a nearby value in the net without much loss. Then we can pay a $\delta^{O_s(d)}$-fraction in the size of $\mc{T}$ to Pigeonhole onto a single point in the net, writing $\eps_{h,P_i} = \eps_P$.

Overall, for all $h\in\mc{T}$ we have 
\[\snorm{\mb{E}_{n\in[N]}(\Delta_h f)(n) \cdot\chi(h,n)\otimes \mbm{1}_{P}(n)F(\eps_P g_h^\ast \gamma_P\Gamma) \cdot\psi_h(n)}_{\infty}\ge\delta^{3}\]
for some $P$ at least $\delta^{1/2}N$ from the endpoints of the interval. Thus by Lemma~\ref{lem:major-arc}, for each $h\in\mc{T}$ there exists $\Theta_h$ with $\snorm{Q\cdot\Theta_h}_{\mb{R}/\mb{Z}}\le\delta^{-O(1)}N^{-1}$ and 
\begin{equation}\label{eq:sunflower-5}
\snorm{\mb{E}_{n\in[N]}(\Delta_h f)(n) \cdot\chi(h,n)\otimes e(\Theta_h n)F(\eps^\ast g_h^\ast \gamma_{P}\Gamma) \cdot\psi_h(n)}_{\infty}\ge\delta^{O(1)}.
\end{equation}
Rounding $\Theta_h$ to a net of distance $\delta^{O(1)}N^{-1}$ and paying a $Q^{-1}\delta^{O(1)}$-fraction in the size of $\mc{T}$ to Pigeonhole the resulting point, we may write $\Theta_h = \Theta$ for all $h\in\mc{T}$. We are now finally in position to define the output data. Define
\begin{align*}
g_h' &= \gamma_P^{-1}g_h^\ast \gamma_P,\\
F' &= F(\eps^\ast \gamma_P \cdot),\\
\chi_h'(n) &= F'(g_h'(n)),\\
\chi'(h,n) &= \chi(h,n) \cdot e(\Theta n).
\end{align*}
Note that $g'(h,n) = (g(h,n),\Theta n)$ is the polynomial sequence underlying $\chi'$, and $\chi'(h,n)={F^\ast}'(g'(h,n){\Gamma^\ast}')$. It is easy to check the relevant properties of Definition~\ref{def:correlation-structure} to see that we obtain a degree-rank $(s-1,r^\ast)$ correlation structure with appropriately modified underlying parameters (we set $H'$ to be the final refined version of $\mc{T}$); in particular, \eqref{eq:sunflower-5} demonstrates the necessary correlation fact.

Finally, taking 
\[V_{i,\mr{Dep}} = \pi_{123}(J_i)^\ast\cap\pi_{124}(J_i)^\ast\text{ and }V_{i} = \pi_1(J_i)\]
we finish the proof: in particular, the result from Step 3 demonstrates the final item of the conclusion, and the result from Step 6 demonstrates the third item.
\end{proof}

\section{Linearization Step}\label{sec:linear}
We now come to the second crucial argument of this paper. Prior this stage we have modified the degree-rank $(s-1,r^\ast)$ to one in which various Taylor coefficients of $g_h$ for $h\in H$ (upon factoring) differ only on certain special subspaces. In this next stage, we deduce that either these Taylor coefficients differ on a further refined subspace which is seen to be essentially ``annhilated'' by $\eta$ or $g_h$ has a certain ``bracket linear'' form. This step is ultimately where we invoke the results of Sanders \cite{San12b} on quasi-polynomial bounds for the Bogolyubov lemma. 

This step is closely modeled after \cite[Step~2]{GTZ11} and the closely related proof of \cite[Lemma~11.5]{GTZ12}; a quantitative version for the $U^4$-inverse theorem due to the first author can be found in \cite{Len23}. The precise statement of the lemma should also be compared with \cite[Theorem~11.1(ii)]{GTZ12}.
\begin{lemma}\label{lem:linear-output}
Fix $s\ge 2$ and $1\le r^\ast\le s-1$. Let $f\colon[N]\to\mb{C}$ be a $1$-bounded function. Suppose that $f$ has a degree rank $(s-1,r^\ast)$ correlation structure with parameters $\rho$, $M$, $d$, and $D$ and that $N\ge (MD/\rho)^{O_s(d^{O_s(1)})}$. Furthermore let $\mc{X}_i = (\mc{X}\cap\log(G_{(i,1)}))/\log(G_{(i,2)})$.

We output a new degree-rank $(s-1,r^\ast)$ correlation structure for $f$ with parameters
\begin{align*}
\rho'^{-1}&\le\exp(O_s((d\log(MD/\rho))^{O_s(1)})),\quad M' \le O(M),\quad D' = D,\quad d'\le O(d),
\end{align*}
with set $H'\subseteq H$, with multidegree $(1,s-1)$ nilcharacter $\chi'(h,n)={F^\ast}'(g'(h,n){\Gamma^\ast}')$ on $(G^\ast)'=G^\ast\times\mb{R}$, with $h$-dependent nilcharacters $\chi_h'$ having underlying polynomial sequences $g_h'(n)=F'(g_h'(n)\Gamma)$ on $G'=G$. This correlation structure satisfies:
\begin{itemize}
    \item $(G^\ast)'$ is given the multidegree filtration 
    \[(G^\ast)'_{(i,j)} = (G^\ast)_{(i,j)} \times\{0\}\]
    if $(i,j)\neq (0,0)$ or $(0,1)$. For $(i,j)\in\{(0,0),(0,1)\}$, we set
    \[(G^\ast)'_{(i,j)} = (G^\ast)_{(i,j)} \times\mb{R}.\]
    We have ${F^\ast}'((x,z) (\Gamma^\ast\times\mb{Z})) = F^\ast(x \Gamma^\ast) \cdot e(z)$. We have $g'(h,n) = (g(h,n), \Theta n)$ for some appropriate value of $\Theta$;
    \item There is a collection of $\mb{R}$-vector spaces $W_{i,\ast}, W_{i,\mr{Lin}}, W_{i,\mr{Pet}}\leqslant G_{(i,1)}/G_{(i,2)}$ for each $i$;
    \item If $W_i := W_{i,\ast} +  W_{i,\mr{Lin}} + W_{i,\mr{Pet}}$ then $\dim(W_i) = \dim(W_{i,\ast}) +  \dim(W_{i,\mr{Lin}}) + \dim(W_{i,\mr{Pet}})$, i.e., the three spaces are linearly disjoint;
    \item There exist bases $\mc{X}_{i,\ast}$, $\mc{X}_{i,\mr{Lin}}$, and $\mc{X}_{i,\mr{Pet}}$ of the corresponding spaces which are composed of $(MD/\rho)^{O_s(d^{O_s(1)})}$-rational combinations of elements of $(\mc{X}\cap G_{(i,1)})/G_{(i,2)}$;
    \item For $1\le i\le s-1$ and $h,h_1,h_2\in H'$ we have
    \begin{align*}
    \on{Taylor}_i(g_h') &\in W_{i,\ast} + W_{i,\mr{Lin}} + W_{i,\mr{Pet}} = W_i,\\
    \on{Taylor}_i(g_{h_1}') - \on{Taylor}_i(g_{h_2}')&\in W_{i,\mr{Lin}} + W_{i,\mr{Pet}},\\
    \on{Proj}_{W_{i,\mr{Lin}}}(\on{Taylor}_i(g_h')) &= \sum_{Z_{i,j}\in\mc{X}_{i,\mr{Lin}}}\bigg(\gamma_{i,j} + \sum_{k=1}^{d^\ast}\alpha_{i,j,k}\{\beta_kh\}\bigg)Z_{i,j},
    \end{align*}
    with $d^\ast\le(d\log(MD/\rho))^{O_s(1)}$ and $\beta_k\in(1/N')\mb{Z}$ where $N'$ is a prime in $[100N,200N]$;
    \item $F'$ is $M'$-Lipschitz and has the same vertical frequency $\eta$ as $F$;
    \item For any integers $i_1 + \cdots + i_{r^\ast} = s-1$, suppose that $v_{i_j}\in V_{i_j}$ for all $j$. If for at least one index $\ell$ we have $v_{i_\ell}\in W_{i_\ell,\mr{Pet}}$, then if $w$ is any $(r^\ast-1)$-fold commutator of $v_{i_1},\ldots,v_{i_{r^\ast}}$ we have 
    \[\eta(w) = 0.\]
    Furthermore, if instead for at least two indices $\ell_1,\ell_2$ we have $v_{i_{\ell_1}}\in W_{i_\ell,\mr{Lin}}$ and $v_{i_{\ell_2}}\in W_{i_{\ell_2},\mr{Lin}}$, then if $w$ is any $(r^\ast-1)$-fold commutator of $v_{i_1},\ldots,v_{i_{r^\ast}}$ we have
    \[\eta(w) = 0.\]
\end{itemize}
\end{lemma}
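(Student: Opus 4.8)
\textbf{Proof strategy for Lemma~\ref{lem:linear-output}.}

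The plan is to start from the output of Lemma~\ref{lem:sunflower}, so we may assume we already have the vector spaces $V_{i,\mr{Dep}}\leqslant V_i\leqslant G_{(i,1)}/G_{(i,2)}$, a refined set $H$, and the multilinear vanishing conclusion: any $(r^\ast-1)$-fold commutator bracket $\eta([v_{i_1},\ldots,v_{i_{r^\ast}}])$ vanishes as soon as at least two of the $v_{i_j}$ lie in the corresponding $V_{\cdot,\mr{Dep}}$-spaces. The goal is now to further decompose, for each $i$, a fixed complement of $V_{i,\mr{Dep}}$-irrelevant directions into the linearly disjoint pieces $W_{i,\ast}$, $W_{i,\mr{Lin}}$, $W_{i,\mr{Pet}}$, where the ``petal'' space $W_{i,\mr{Pet}}$ captures the directions on which the \emph{single}-index commutator bracket already vanishes, $W_{i,\mr{Lin}}$ captures directions on which the map $h\mapsto\on{Taylor}_i(g_h')$ is genuinely bracket-linear in $h$, and $W_{i,\ast}$ captures the fixed ($h$-independent up to $W_{i,\mr{Lin}}+W_{i,\mr{Pet}}$) directions.

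First I would set $W_{i,\mr{Pet}}$ to be the subspace of $V_{i,\mr{Dep}}$ consisting of those $v_i$ for which $\eta([v_i,w_{i_2},\ldots,w_{i_{r^\ast}}])=0$ for all choices of $w_{i_j}\in V_{i_j}$ with $i+i_2+\cdots+i_{r^\ast}=s-1$ — this is the common kernel of a family of linear functionals, hence a rational subspace of controlled height, and the single-index vanishing conclusion is automatic by construction. Next, on a rational complement of $W_{i,\mr{Pet}}$ inside $V_{i,\mr{Dep}}$, I consider the map $\Phi_i\colon H\to (V_{i,\mr{Dep}}/W_{i,\mr{Pet}})/(\text{a lattice})$ sending $h\mapsto\on{Taylor}_i(g_h')-\on{Taylor}_i(g_{h_1^\ast}')$, which by Lemma~\ref{lem:sunflower} lands in $V_{i,\mr{Dep}}$ and, crucially, is a \emph{Freiman-type approximate homomorphism} on the dense set $H$: the additive-quadruple information already extracted in Section~\ref{sec:gowers} (via the Cauchy--Schwarz argument of Gowers and the equidistribution factorizations) forces $\Phi_i(h_1)+\Phi_i(h_2)\equiv\Phi_i(h_3)+\Phi_i(h_4)$ modulo a controlled lattice for many quadruples with $h_1+h_2=h_3+h_4$. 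Applying the quasipolynomial Bogolyubov-type result of Sanders \cite{San12b} (packaged via Appendix~\ref{app:approx-hom}) to each coordinate of $\Phi_i$, I extract a genuine affine-bracket-linear form $h\mapsto\gamma_{i,j}+\sum_{k}\alpha_{i,j,k}\{\beta_k h\}$ agreeing with $\Phi_i$ on a large subset $H'\subseteq H$, where the number of frequencies $d^\ast$ is quasipolynomial in the complexity parameters and the $\beta_k$ can be taken in $(1/N')\mb{Z}$. The span of the directions where $\Phi_i$ is ``truly $h$-varying'' becomes $W_{i,\mr{Lin}}$, and $W_{i,\ast}$ is a rational complement of $W_{i,\mr{Lin}}+W_{i,\mr{Pet}}$ inside $V_i$ chosen so that $\on{Taylor}_i(g_h')\in W_i$; after Pigeonholing to make all the structural data ($H'$, the net-point for constant parts, the choice of progression as in Step~7 of the sunflower proof) independent of $h$, and after a $C^\infty$-smoothing/rationality factorization exactly as in that proof, we conjugate $g_h^\ast$ by the rational part $\gamma_P$ and twist $\chi$ by a major-arc phase $e(\Theta n)$ to produce the output correlation structure.

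The final commutator-vanishing bullet splits into two cases. For the petal case (some $v_{i_\ell}\in W_{i_\ell,\mr{Pet}}$), vanishing is immediate from the definition of $W_{i,\mr{Pet}}$ as the kernel of the single-index bracket functionals — although one must check that \emph{any} $(r^\ast-1)$-fold commutator pattern, not just the left-nested one, reduces to this, which follows from the Baker--Campbell--Hausdorff observation in the remark after Lemma~\ref{lem:sunflower} that all such patterns descend to multilinear forms on the horizontal vector spaces. For the two-linear case ($v_{i_{\ell_1}},v_{i_{\ell_2}}\in W_{\cdot,\mr{Lin}}$), one uses that $W_{i,\mr{Lin}}\leqslant V_{i,\mr{Dep}}$ by construction, so the required vanishing is \emph{inherited} directly from the last bullet of Lemma~\ref{lem:sunflower} (two indices in $V_{\cdot,\mr{Dep}}$ already kill the bracket); this is why the linearization step does not need to reprove multilinear vanishing, only to carve out $W_{i,\mr{Lin}}$ inside the already-controlled $V_{i,\mr{Dep}}$.

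\textbf{Main obstacle.} I expect the hardest part to be verifying that $\Phi_i$ is an approximate homomorphism with the \emph{right} target lattice and controlled ``additive energy'' — i.e., correctly propagating the additive-quadruple correlation from Lemma~\ref{lem:four-fold-biased} through the equidistribution factorizations (Corollary~\ref{cor:equi-deg}) down to a clean statement about Taylor coefficients modulo a bounded-index lattice, uniformly across the $\le s$ horizontal levels $i$ simultaneously, and then feeding this into Sanders' theorem with quasipolynomial (rather than exponential) bookkeeping of the rationality heights. A secondary technical nuisance, as flagged in the text, is that we must do all of this without any induction-on-dimension step: the rational subspaces $W_{i,\cdot}$ and the bases $\mc{X}_{i,\cdot}$ must be produced by a dimension-independent number of appeals to equidistribution, so the linear-algebraic extraction of kernels and complements has to be done with explicit height bounds at each stage rather than by an iterative ``pass to a totally equidistributed subgroup'' argument.
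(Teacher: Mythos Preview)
Your high-level strategy is correct and matches the paper's: start from the sunflower output, re-apply the four-fold-biased lemma plus Corollary~\ref{cor:equi-deg} to get a subgroup $K\leqslant\wt{G}$, extract approximate-homomorphism relations on the Taylor coefficients, and feed these into Sanders' theorem. You also correctly identify the main obstacle and the reason the two-linear vanishing is inherited for free from Lemma~\ref{lem:sunflower}.

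There is, however, one genuine difference worth flagging. You define $W_{i,\mr{Pet}}$ \emph{a priori} as the full bracket-kernel inside $V_{i,\mr{Dep}}$ and then hope the approximate homomorphism holds on the quotient. The paper runs this in the opposite order: it first applies equidistribution to obtain $K$, forms $K_{i,1}=K_i\cap S_i$ (using the sunflower output that Taylor coefficients already lie in $S_i$), and then sets $\wt{K_i}:=K_{i,1}+R_i$ where $R_i=\{(v_1,\ldots,v_4)\in V_{i,\mr{Dep}}^{\,4}:v_1+v_2-v_3-v_4=0\}$. The inclusion $R_i\leqslant\wt{K_i}$ is exactly what forces every annihilator of $\wt{K_i}$ to have the shape $(\xi^{P_i},\xi^{Q_i},\xi^{Q_i},-\xi^{Q_i},-\xi^{Q_i})$, which is the mechanism producing the relation $\xi_j^{Q_i}(\on{Taylor}_i g_{h_1}+\on{Taylor}_i g_{h_2}-\on{Taylor}_i g_{h_3}-\on{Taylor}_i g_{h_4})\approx -\xi_j^{P_i}(\text{const})$. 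The petal space is then \emph{defined} as the common kernel $L_i^\ast$ of these $\xi_j^{Q_i}$, and its single-index bracket-vanishing is \emph{proved} (Step~4) by a short Furstenberg--Weiss-style lifting: one lifts $v\in L_i^\ast$ to $(v,0,0,0)\in\wt{K_i}$ and the other $P_{i_\ell}v_{i_\ell}$ to elements of $\wt{K_{i_\ell}}$ of the form $(P v+w,Pv,Pv,Pv)$, then uses $\eta_{\mr{Prod}}([\wt{K_{i_1}},\ldots,\wt{K_{i_{r^\ast}}}])=0$.

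Your a priori kernel contains the paper's $L_i^\ast$, so your approach can be made to work by restricting to the sub-family of the $\xi_j^{Q_i}$ that vanish on your larger $W_{i,\mr{Pet}}$; you simply trade some linear directions for petal directions. But note that you cannot bypass the construction of $\wt{K_i}$: the approximate-homomorphism relations do not fall out of the sunflower lemma or of ``Section~\ref{sec:gowers}'' alone --- they require the second equidistribution pass \emph{and} the refinement of the factorization so that $\tau_i(\on{Taylor}_i(\wt{g}_{\vec h}))\in\wt{K_i}$ (not merely $K_i$). Your outline elides this intermediate space, and that is where the real work sits.
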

\begin{remark*}
The projection map $\on{Proj}_{W_{i,\mr{Lin}}}\colon W_i\to W_{i,\mr{Lin}}$ is well-defined due to the linear disjointness condition. Furthermore we have written Taylor coefficients with additive notation, since $G_{(i,1)}/G_{(i,2)}$ can be identified with $\mb{R}^{\dim(\on{Horiz}_i(G))}$.
\end{remark*}
\begin{proof}
For the majority of the proof we will assume $s\ge 3$; we indicate the minor changes required for $s = 2$ for the end of the proof (and the case $s = 2$ is not used in the proof of Theorem~\ref{thm:main}). Note that the case when $\eta$ is trivial follows via taking $W_{i,\mr{Pet}} = G_{(i,1)}/G_{(i,2)}$, $W_{i,\mr{Lin}}$ and $W_{i,\ast}$ to be trivial, $g_h' = g_h$, and $g'(h,n) = (g(h,n), 0)$; therefore we may assume that $\eta$ is nontrivial for the remainder of the proof.

\noindent\textbf{Step 1: Applying Lemma~\ref{lem:sunflower} and linear-algebraic setup.}
We apply Lemma~\ref{lem:sunflower} and treat the resulting correlation structure as the input to the lemma. Up to changing implicit constants in the output this leaves the lemma unchanged except for noting that 
\[\chi(h,n) = e(\Theta n) \cdot F^\ast(g(h,n) \Gamma^\ast)\]
which is defined on the group $(G^\ast)'=G^\ast\times\mb{R}$. In particular, we will abusively overwrite notation and relabel the resulting $H'$ from the application of Lemma~\ref{lem:sunflower} as $H$, $g_h'$ as $g_h$, and $\chi'_h(n)$ as $\chi_h(n)$ and thus assume the output properties without further comment.

It will also be crucial to define certain linear-algebraic operators of $V_i$. Consider a basis for $V_{i,\mr{Dep}}$, an extension to a basis of $V_{i}$, and then to $G_{(i,1)}/G_{(i,2)}$ such that all basis elements are $(MD/\rho)^{O_s(d^{O_s(1)})}$-rational combinations of the basis $\exp(\mc{X}_i)\imod G_{(i,2)}$. In particular, write 
\begin{align*}
V_{i,\mr{Dep}} &= \on{span}_{\mb{R}}(w_{i,1},\ldots,w_{i,\dim(V_{i,\mr{Dep}})})\\
V_i &= \on{span}_{\mb{R}}(w_{i,1},\ldots,w_{i,\dim(V_{i,\mr{Dep}})},w_{i,\dim(V_{i,\mr{Dep}})+1},\ldots,w_{i,\dim(V_i)}),\\
 G_{(i,1)}/G_{(i,2)}&= \on{span}_{\mb{R}}(w_{i,1},\ldots,w_{i,\dim(\on{Horiz}_i(G))}).
\end{align*}
Given $v\in V_i$, there is a unique linear combination 
\[v = \sum_{j=1}^{\dim(V_{i})}\alpha_jw_{i,j}.\]
We define 
\[P_i v = \sum_{j=\dim(V_{i,\mr{Dep}})+1}^{\dim(V_{i})}\alpha_jw_{i,j},\qquad Q_i v = \sum_{j=1}^{\dim(V_{i,\mr{Dep}})}\alpha_jw_{i,j}.\]
By construction $P_i^2 = P_i$, $Q_i^2 = Q_i$, $Q_i(V_i) \cap P_i(V_i) = 0$, and $P_i v + Q_i v = v$ for $v\in V_i$. We also (abusively) extend the operator $P_i$ to $V_i^{\otimes \ell}$ and $(G_{(i,1)}/G_{(i,2)})^{\otimes 4}$ in the obvious manners by acting on each copy of $V_i$ separately (and zeroing out basis elements $w_{i,\dim(V_i)+1},\ldots,w_{i,\dim(\on{Horiz}_i(G))}$).

\noindent\textbf{Step 2: Invoking equidistribution theory.}
Applying Lemma~\ref{lem:four-fold-biased} when $s\ge 3$, we have 
\begin{align*}
\snorm{\mb{E}[\chi_{h_1}(n)\otimes \chi_{h_2}(n + h_1-h_4)\otimes \ol{\chi_{h_3}(n)}\otimes \ol{\chi_{h_4}(n + h_1 - h_4)}\cdot\psi_{\vec{h}}(g_{\vec{h}}(n)\Gamma')]}_{\infty} \ge (MD/\rho)^{-O_s(d^{O_s(1)})}
\end{align*}
for a $(MD/\rho)^{-O_s(d^{O_s(1)})}$ density of additive tuples. We define $G_{\mr{Error}}$, $\wt{G}$, and $\eta_{\mr{Prod}}$ as in the proof of Lemma~\ref{lem:sunflower} and as before we may assume that $g_{\vec{h}}(0) = \mr{id}_{G_{\mr{Error}}}$. Define 
\[g_{\vec{h}}^\ast(n) = (g_{h_1}(n),g_{h_2}(n + h_1-h_4),g_{h_3}(n),g_{h_4}(n + h_1-h_4),g_{\vec{h}}(n)).\]
By applying Corollary~\ref{cor:equi-deg}, we have
\[g_{\vec{h}}^\ast =\eps_{\vec{h}} \cdot\wt{g_{\vec{h}}}\cdot\gamma_{\vec{h}}\]
with
\begin{itemize}
    \item $\eps_{\vec{h}}(0) = \wt{g_{\vec{h}}}(0) = \gamma_{\vec{h}}(0) = \mr{id}_{\wt{G}}$;
    \item $\wt{g_{\vec{h}}}$ takes values in $K$;
    \item $\gamma_{\vec{h}}$ is $(MD/\rho)^{O_s(d^{O_s(1)})}$-rational;
    \item $d(\eps(n),\eps(n-1))\le(MD/\rho)^{O_s(d^{O_s(1)})}N^{-1}$ for $n\in[N]$.
\end{itemize}
where $\eta_{\mr{Prod}}(K\cap\wt{G}_{(s-1,r^\ast)}) = 0$ and $K$ is a $(MD/\rho)^{O_s(d^{O_s(1)})}$-rational subgroup of $\wt{G}$. By passing to a subset of additive quadruples of density $(MD/\rho)^{-O_s(d^{O_s(1)})}$ we may in fact assume that the group $K$ is independent of $\vec{h}$ under consideration.

\noindent\textbf{Step 3: Linear algebra deductions from equidistribution theory.}
Note that at present the subgroup $K$ does not account for the deductions given in Lemma~\ref{lem:sunflower}; these initial deductions are designed essentially to account for this. Let $\tau_i\colon\on{Horiz}_i(G)^{\otimes 4}\times\on{Horiz}_i(G_{\mr{Error}})\to\on{Horiz}_i(G)^{\otimes 4}$ be the natural projection to the four-fold product. We define the following set of vector spaces:
\begin{align*}
R_i &:= \{(Q_iv_1,Q_iv_2,Q_iv_3,Q_iv_4)\in V_i^{\otimes 4}\colon Q_iv_1 + Q_iv_2 - Q_iv_3 -Q_iv_4 = 0\},\\
K_i &:= \tau_i(K\cap\wt{G}_{(i,1)}\imod\wt{G}_{(i,2)}),\\
S_i &:= \{(v_1,v_2,v_3,v_4)\in V_{i}^{\otimes 4}\colon P_{i} v_1 =  P_{i} v_2 = P_{i} v_3 = P_{i} v_4\},\\
K_{i,1} &:= K_i \cap S_i,\\
\wt{K_i} &:= K_{i,1} + R_i,\\
L_i &:= \pi_1(\wt{K_i} \cap\{(v_1,v_2,v_3,v_4)\in V_i^{\otimes 4}\colon Q_i v_2 = Q_iv_3 = Q_i v_4 = 0\}) + Q_i(V_i).
\end{align*}
By inspection, we have $R_i\leqslant S_i$ hence $\wt{K_i}\leqslant S_i$. Note that 
\[\eta_{\mr{Prod}}([K_{i_1,1},\ldots,K_{i_r^{\ast},1}]) = 0\]
whenever one has that $i_1 + \cdots + i_r^\ast = s-1$ and $[\cdot,\ldots,\cdot]$ denotes any possible $(r^\ast -1)$-fold commutator bracket. This is a consequence of the fact that $\eta_{\mr{Prod}}(K\cap\wt{G}_{(s-1,r^\ast)}) = 0$ and noting that $K_{i,1}\leqslant K_i$. Note that we are implicitly using that $\eta_{\mr{Prod}}$ is trivial on $G_{\mr{Error}}$ as well, and we abusively descend $\eta_{\mr{Prod}}$ to $G^{\otimes 4}$.

We now claim that 
\[\eta_{\mr{Prod}}([v_{i_1},\ldots,v_{i_{r^\ast}}]) = 0\]
if $v_{i_\ell}\in S_{i_\ell}$ for all $\ell$ and there is at least one index $j$ such that $v_{i_j}\in R_{i_j}$.

To prove this, note by the final bullet point of Lemma~\ref{lem:sunflower} and multilinearity that 
\begin{align*}
\eta_{\mr{Prod}}([v_{i_1},\ldots,v_{i_{r^\ast}}]) &= \eta_{\mr{Prod}}([P_{i_1}v_{i_1},\ldots,P_{i_{r^\ast}}v_{i_{r^\ast}}]) + \sum_{k=1}^{r}\eta_{\mr{Prod}}([P_{i_1}v_{i_1},\ldots, Q_{i_k}v_{i_k},\ldots,P_{i_{r^\ast}}v_{i_{r^\ast}}])\\
&= \eta_{\mr{Prod}}([P_{i_1}v_{i_1},\ldots, Q_{i_j}v_{i_j},\ldots,P_{i_{r^\ast}}v_{i_{r^\ast}}])=0.
\end{align*}
The first equality uses that every bracket with at least two $Q_{i_k}v_{i_k}$ has two $V_{i,\mr{Dep}}$ terms so is $0$, the second equality uses $P_{i_j}v_{i_j} = 0$, and the third equality follows by noting that 
\[P_{i}v_{i} \in\{(v_1,v_2,v_3,v_4)\in V_i^{\otimes 4}\colon P_i v_1 =  P_i v_2 = P_i v_3 = P_i v_4, Q_i v_1 =  Q_i v_2 = Q_i v_3 = Q_i v_4 = 0\}\]
and $\eta_{\mr{Prod}} = (\eta,\eta,-\eta,-\eta)$.
Now, we may ultimately deduce
\[\eta_{\mr{Prod}}([\wt{K_{i_1}},\ldots,\wt{K_{i_{r^\ast}}}])=0\]
because $\wt{K_i} = K_{i,1} + R_i$ and $R_i,K_{i,1}\leqslant S_i$.

Finally, let $\pi_T$ for $T\subseteq [4]$ is as in the proof of Lemma~\ref{lem:sunflower} (namely, an appropriate projection map). We have
\[\pi_1(\wt{K_i})\leqslant L_i.\]
This follows because if
\[((Qv_1,Pv_1),(Qv_2,Pv_2),(Qv_3,Pv_3),(Qv_4,Pv_4))\in\wt{K_i} \]
then
\[((Q(v_1+v_2-v_3-v_4),Pv_1),(0,Pv_2),(0,Pv_3),(0,Pv_4))\in\wt{K_i}.\]

\noindent\textbf{Step 4: Constructing a decomposition of $Q_i(V_i)$.}
We will now decompose $Q_i(V_i) = V_{i,\mr{Dep}}$ into a pair of subspaces. On one of these subspaces we will deduce an improved vanishing for the commutator while on the other subspace we will deduce an approximate linearity for $\on{Taylor}_i(g_h)$. Let 
\[L_i^\ast = \{(v_1,v_2,v_3,v_4)\in S_i\colon Pv_1 = 0, v_2 = v_3 = v_4 = 0\} \cap\wt{K_i}.\]
Note that $L_i^\ast$ may abusively be viewed as a subspace of $V_i$ (instead of $V_i^{\otimes 4}$) and under this identification $L_i^\ast\leqslant Q_i(V_i)=V_{i,\mr{Dep}}\leqslant L_i$.

The key claim in our analysis is if $i_1 + \cdots + i_{r^\ast} = s-1$, $v_{i_\ell}\in L_{i_\ell}$ for all indices $\ell$, and $v_{i_j}\in L_{i_{j}}^\ast$ for at least one index $j$ we have 
\[\eta([v_{i_1},\ldots,v_{i_\ell}]) = 0.\]

To prove this, note that $Q_{i_j}v_{i_j} = v_{i_j}$ and $P_{i_j}v_{i_j} = 0$ and using the last bullet point of Lemma~\ref{lem:sunflower}, we have 
\[\eta([v_{i_1},\ldots,v_{i_\ell}]) = \eta([P_{i_1}v_{i_1},\ldots,Q_{i_j}v_{i_j},\ldots,P_{i_\ell}v_{i_\ell}]),\]
similar to the argument in Step 3.

Next note that $P_iQ_iv = 0$ for all $v\in V_i$ and therefore 
\[P_i(L_i) \leqslant P_i(\pi_1(\wt{K_i}\cap\{(v_1,v_2,v_3,v_4)\in V_i^{\otimes 4}\colon Q_iv_2 = Q_iv_3 = Q_iv_4 = 0\})).\]
Therefore we may lift $P_{i_\ell}v_{i_\ell}$ for $\ell\neq j$ to $\wt{v_{i_\ell}} = (P_{i_\ell}v_{i_\ell} + w_{i_\ell},P_{i_\ell}v_{i_\ell},P_{i_\ell}v_{i_\ell},P_{i_\ell}v_{i_\ell})\in\wt{K_{i_\ell}}$ where $w_{i_\ell}\in Q_{i_\ell}(V_{i_\ell})$. We lift $v_{i_j}$ to $\wt{v_{i_{j}}}$ which has the form $(Q_{i_j}v_{i_j},0,0,0)\in\wt{K_{i_j}}$.

Note that we have 
\begin{align*}
 0 &= \eta_{\mr{Prod}}([\wt{v_{i_{1}}},\ldots,\wt{v_{i_{r^\ast}}}]) = \eta([P_{i_{1}}v_{i_{1}} + w_{i_{1}},\ldots, Q_{i_j}v_{i_{j}},\ldots, P_{i_{r^\ast}}v_{i_{r^\ast}} + w_{i_{r^\ast}}])\\
&= \eta([P_{i_{1}}v_{i_{1}},\ldots, Q_{i_j}v_{i_{j}},\ldots, P_{i_{r^\ast}}v_{i_{r^\ast}}])
\end{align*}
where in the first equality we have used for all $\ell$ that $\wt{v_{i_\ell}}\in\wt{K_{i_\ell}}$ and the result from Step 3, in the second equality that $\wt{v_{i_{j}}}$ has the final three coordinates identically zero, and in the final equality that $w_{i_\ell}\in Q_{i_\ell}(V_{i_\ell})=V_{i_\ell,\mr{Dep}}$ and the final item of Lemma~\ref{lem:sunflower}. 

The desired decomposition of spaces for the lemma will have 
\begin{align*}
W_{i,\mr{Pet}} &:= L_i^\ast,\quad W_{i,\ast} := P_i(L_i)\leqslant L_i\cap P_i(V_i).
\end{align*}
The fact $P_i(L_i)\leqslant L_i\cap P_i(V_i)$ is deduced from $Q_i(V_i)\leqslant L_i$. $W_{i,\mr{Lin}}$ will be constructed explicitly in the next step but is chosen so that
\[W_{i,\mr{Lin}}\leqslant Q_i(L_i) = Q_i(V_i)=V_{i,\mr{Dep}}\]
and $W_{i,\mr{Lin}} + W_{i,\mr{Pet}} = V_{i,\mr{Dep}}\leqslant L_i$. Given these properties of $W_{i,\mr{Lin}}$, note that the above analysis, along with Lemma~\ref{lem:sunflower}, establishes the final bullet point for our output.

\noindent\textbf{Step 5: Controlling approximate homomorphisms.}
Recall $\wt{K_i}\leqslant S_i$ and there is a natural isomorphism of groups
\[S_i \simeq \{(v,v_1,v_2,v_3,v_4)\colon v\in P_i(V_i), v_1,\ldots,v_4\in Q_i(V_i)\}.\]
Using this as an identification, we may write 
\[\wt{K_i} = \bigcap_{j=1}^{\dim(S_i) - \dim(\wt{K_i})}\on{ker}((\xi_j^{P_i}, \xi_j^{Q_i},\xi_j^{Q_i},-\xi_j^{Q_i},-\xi_j^{Q_i}))\]
where $\xi_j^{P_i}\in P_i(V_i)^\vee$ and $\xi_j^{Q_i}\in Q_i(V_i)^\vee$ (i.e., corresponding dual vector spaces). Note that the annihilators all have the special form of $(\cdot, \xi_j^{Q_i},\xi_j^{Q_i},-\xi_j^{Q_i},-\xi_j^{Q_i})$ since
\[\{(Q_iv_1,Q_iv_2,Q_iv_3,Q_iv_4)\in V_i^{\otimes 4}\colon Q_iv_1 + Q_iv_2 - Q_iv_3 -Q_iv_4 = 0\}=R_i\leqslant\wt{K_i}.\]
Note that
\[L_i^\ast = \{v\in V_i\colon P_i v = 0\text{ and }\xi_j^{Q_i}(Q_iv) = 0\text{ for all }j\}\]
since $v\in L_i^\ast$ is equivalent under this identification to $(0,Q_iv,0,0,0)\in\wt{K_i}$. Without loss of generality we may assume that for $1\le j\le\dim(V_{i,\mr{Dep}}) - \dim(L_i^\ast)$, vectors $\xi_j^{Q_i}$ are independent in $Q_i(V_i)^\vee$ (and they must span the orthogonal space to $L_i^\ast$ within $Q_i(V_i)^\vee$).

By appropriate scaling, we may assume $\xi_j^{Q_i}(w_{i,j})$ is an integer bounded by $(MD/\rho)^{O_s(d^{O_s(1)})}$ for $1\le j\le\dim(V_{i,\mr{Dep}})$. We extend each $\xi_j^{Q_i}$ to an operator on $(G_{(i,1)}/G_{(i,2)})^\vee$ by setting $\xi_j^{Q_i}(w_{i,j}) = 0$ for $j>\dim(V_{i,\mr{Dep}})$. Possibly at the cost of another $(MD/\rho)^{O_s(d^{O_s(1)})}$ scaling, we may assume that $\xi_j^{Q_i}(\Gamma\cap G_{(i,1)}\imod G_{(i,2)})\in\mb{Z}$. We extend $\xi_j^{P_i}(\cdot)$ in an analogous manner to $(G_{(i,1)}/G_{(i,2)})^\vee$ by setting $\xi_j^{P_i}(w_{i,j}) = 0$ for $1\le j\le\dim(V_i,\mr{Dep})$ and $j>\dim(V_i)$. Again, we may scale such that $\xi_j^{P_i}(\Gamma\cap G_{(i,1)}\imod G_{(i,2)})\in\mb{Z}$. The crucial point here is that now $\xi_j^{P_i}$ and $\xi_j^{Q_i}$ are $i$-th horizontal characters of height at most $(MD/\rho)^{O_s(d^{O_s(1)})}$.

We have
\[\tau_i(\on{Taylor}_i(\wt{g}_{\vec{h}})) \in K_i\]
and thus
\[\on{dist}(\tau_i(\on{Taylor}_i(g_{\vec{h}}^\ast)), S_i + T^{-1}\on{Horiz}_i(\Gamma^{\otimes 4}))\le (MD/\rho)^{O_s(d^{O_s(1)})} N^{-i}\]
after Pigeonholing $\vec{h}$ appropriately. Here distance is in $L^{\infty}$ after expressing both of these expressions in the basis $\exp(\mc{X}_i)^{\otimes 4}$ and $T$ is an integer bounded by $(MD/\rho)^{O_s(d^{O_s(1)})}$. We have used Lemma~\ref{lem:coeff-mult} and the properties of the original factorization; a very similar argument appears in Step 5 of the proof of Lemma~\ref{lem:sunflower}.

Furthermore note that 
\[\tau_i(\on{Taylor}_i(g_{\vec{h}}^\ast))\in S_i\]
by Lemma~\ref{lem:sunflower}. So if we choose a set of horizontal characters of height $(MD/\rho)^{O_s(d^{O_s(1)})}$ relative to $\on{Horiz}_i(\Gamma^{\otimes 4})$ which cut out $\wt{K_i}$ as their common kernel, then noting that $K_i\cap S_i \leqslant \wt{K_i}$ and applying Lemma~\ref{lem:factor} we may assume that 
\begin{equation}\label{eq:linear-output-1}
\tau_i(\on{Taylor}_i(\wt{g}_{\vec{h}}))\in\wt{K_i}
\end{equation}
and $\eps_{\vec{h}},\gamma_{\vec{h}}$ have identical properties up to changing implicit constants. We will assume this refined property of the factorization for the remainder of our analysis. 

Given the factorization of $g_{\vec{h}}^\ast$, we thus deduce (taking an appropriate least common multiple)
\begin{align}
\norm{T_1\cdot\bigg(\xi_j^{P_i}&(\on{Taylor}_i(g_{h_2})) + \xi_j^{Q_i}(\on{Taylor}_i(g_{h_1})) + \xi_j^{Q_i}(\on{Taylor}_i(g_{h_2}))\notag\\
&\quad- \xi_j^{Q_i}(\on{Taylor}_i(g_{h_3})) - \xi_j^{Q_i}(\on{Taylor}_i(g_{h_4}))\bigg)}_{\mb{R}/\mb{Z}}\le (MD/\rho)^{O_s(d^{O_s(1)})} N^{-i}\label{eq:linear-output-2}
\end{align}
for all $1\le j\le\dim(V_{i,\mr{Dep}})-\dim(L_i^\ast)$ where $T_1$ is an integer bounded by $(MD/\rho)^{O_s(d^{O_s(1)})}$. Here we have used that $\xi_j^{P_i}(\on{Taylor}_i(g_h))$ is equal for all $h\in H$ by Lemma~\ref{lem:sunflower}.

We define functions $f,g\colon H\to\mb{R}^{\sum_{i=1}^{s-1}\dim(V_{i,\mr{Dep}})-\dim(L_i^\ast)}$ via
\begin{align*}
f(h) &= (T_1\xi_j^{Q_i}(\on{Taylor}_i(g_h)))_{1\le i\le s-1,~1\le j\le\dim(V_{i,\mr{Dep}})-\dim(L_i^\ast)},\\
g(h) &= (T_1\xi_j^{P_i}(\on{Taylor}_i(g_h)) + T_1\xi_j^{Q_i}(\on{Taylor}_i(g_h)))_{1\le i\le s-1,~1\le j\le\dim(V_{i,\mr{Dep}})-\dim(L_i^\ast)}.
\end{align*}
Note that for the additive quadruples on which we have \eqref{eq:linear-output-2}, we are exactly in the situation necessary to apply results on approximate homomorphisms. 

In particular, we may apply Lemma~\ref{lem:approximate}. We see that there exists $H'\subseteq H$ having density at least $\exp(-O_s(d\log(MD/\rho))^{O_s(1)})$ such that for all $i,j$ and $h\in H'$, we have
\begin{equation}\label{eq:linear-output-3}
\norm{T_1\xi_j^{Q_i}(\on{Taylor}_i(g_h)) - \bigg(\gamma_{i,j} + \sum_{k=1}^{d^\ast}\alpha_{i,j,k} \{\beta_k h\}\bigg)}_{\mb{R}/\mb{Z}}\le(MD/\rho)^{O_s(d^{O_s(1)})} N^{-i},
\end{equation}
where:
\begin{itemize}
    \item $d^\ast\le(d\log(MD/\rho))^{O_s(1)}$;
    \item $\beta_k\in(1/N')\mb{Z}$ where $N'$ is a prime between $100N$ and $200N$.
\end{itemize} 

At this point, for each $i$ we find elements $Z_{i,j}$ for $1\le j\le\dim(V_{i,\mr{Dep}})-\dim(L_i^\ast)$ which are $(MD/\rho)^{O_s(d^{O_s(1)})}$-rational combinations of $\{w_{i,j}\colon 1\le j\le\dim(V_{i,\mr{Dep}})\}$ such that 
\begin{equation}\label{eq:basis-def}
T_1\xi_j^{Q_i}(Z_{i,j}) = 1\text{ and }\xi_{j'}^{Q_i}(Z_{i,j}) = 0
\end{equation}
for $j'\neq j$ such that $1\le j,j'\le\dim(V_{i,\mr{Dep}})-\dim(L_i^\ast)$. We define
\[W_{i,\mr{Lin}} = \on{span}_{\mb{R}}((Z_{i,j})_{1\le j\le\dim(V_{i,\mr{Dep}})-\dim(L_i^\ast)}).\]

We see that there are no nontrivial linear relations between $W_{i,\ast}$ and $W_{i,\mr{Pet}}+W_{i,\mr{Lin}}$ since $W_{i,\ast}\leqslant P_i(V_i)$ and $W_{i,\mr{Pet}}+W_{i,\mr{Lin}}\leqslant Q_i(V_i)$. There are no linear relations between $W_{i,\mr{Pet}}$ and $W_{i,\mr{Lin}}$ as $W_{i,\mr{Pet}}$ lies in the joint kernel of the $\xi_j^{Q_i}$ and therefore using \eqref{eq:basis-def} one can prove any such relation is trivial. Furthermore, by construction we have $V_{i,\mr{Dep}} = Q_i(V_i) = W_{i,\mr{Lin}} + W_{i,\mr{Pet}}$. Finally $L_i = P_i(L_i) + Q_i(L_i)  = W_{i,\ast} + W_{i,\mr{Lin}} + W_{i,\mr{Pet}}$; this implicitly uses $Q(L_i) = Q_i(V_i) \leqslant L_i$.

\noindent\textbf{Step 6: Constructing the desired factorizations and completing the proof.}
Using the refined factorization \eqref{eq:linear-output-1} implies that
\[\pi_1(\tau(\on{Taylor}_i(\wt{g}_{\vec{h}}))) \in L_i\]
since $\pi_1(\wt{K_i}) \leqslant L_i$. Applying $g_{\vec{h}}^\ast = \eps_{\vec{h}}\cdot\wt{g_{\vec{h}}}\cdot\gamma_{\vec{h}}$ in the first coordinate then implies that
\begin{equation}\label{eq:linear-output-5}
\on{dist}(\on{Taylor}_i(g_{h_1}), L_i + T_2^{-1}\on{Horiz}_i(\Gamma))\le (MD/\rho)^{O_s(d^{O_s(1)})} N^{-i}
\end{equation}
for $h_1\in H'$ where distance is in $L^\infty$ after expressing values in terms of $\exp(\mc{X}_i)$. Here $T_2$ is an integer bounded by $(MD/\rho)^{O_s(d^{O_s(1)})}$. Furthermore recall from Lemma~\ref{lem:sunflower} that
\begin{equation}\label{eq:linear-output-6}
\on{Taylor}_i(g_h)-\on{Taylor}_i(g_{h'})\in V_{i,\mr{Dep}}=Q_i(V_i)
\end{equation}
for $h,h'\in H'\subseteq H$.

Let $Y_{i,j}\in\on{span}_{\mb{R}}(\mc{X}\cap\log(G_{(i,1)})\setminus \mc{X}\cap\log(G_{(i,2)}))$ be such that $\exp(Y_{i,j}) \imod G_{(i,2)}= Z_{i,j}$. Then for $h\in H'$, we define
\begin{equation}\label{eq:linear-output-7}
\wt{g}_h(n) = \prod_{i=1}^{s}\prod_{j=1}^{\dim(W_{i,\mr{Lin}})}\exp(Y_{i,j})^{T_1^{-1}\binom{n}{i}\cdot (\gamma_{i,j}+\sum_{k=1}^{d^\ast}\alpha_{i,j,k}\{\beta_k h\})}.
\end{equation}
By construction and Lemma~\ref{lem:coeff-mult}, for $h,h'\in H'$ we have 
\begin{align*}
\on{Taylor}_i(\wt{g}_h^{-1}g_h) - \on{Taylor}_i(\wt{g}_{h'}^{-1}g_{h'}) & \in Q_i(V_i),\\
\on{dist}(\on{Taylor}_i(\wt{g}_h^{-1}g_h), L_i + T_2^{-1} \on{Horiz}_i(\Gamma))&\le (MD/\rho)^{O_s(d^{O_s(1)})} \cdot N^{-i},\\
\snorm{T_1\xi_j^{Q_i}(\wt{g}_h^{-1}g_h)}_{\mb{R}/\mb{Z}}&\le (MD/\rho)^{O_s(d^{O_s(1)})} \cdot N^{-i},
\end{align*}
where $1\le i\le s-1$ and $1\le j\le\dim(V_{i,\mr{Dep}})-\dim(L_i^\ast)$. The first line comes from \eqref{eq:linear-output-6}, the second line from \eqref{eq:linear-output-5}, and the third from \eqref{eq:linear-output-3} and \eqref{eq:linear-output-7}, in conjunction with \eqref{eq:basis-def}.

We now fix an element $h_2\in H'$. For each $h_1\in H'$ we write 
\begin{align*}
g_{h_1}' &= \wt{g}_{h_1} \cdot (\wt{g}_{h_1}^{-1} g_{h_1}')= \wt{g}_{h_1} \cdot (\wt{g}_{h_1}^{-1} g_{h_1}') \cdot (\wt{g}_{h_2}^{-1} g_{h_2}')^{-1} \cdot (\wt{g}_{h_2}^{-1} g_{h_2}').
\end{align*}
By applying Lemma~\ref{lem:factor}, we may write 
\[(\wt{g}_{h_1}^{-1} g_{h_1}') \cdot (\wt{g}_{h_2}^{-1} g_{h_2}')^{-1} = \eps_{h_1}^\ast g_{h_1}^\ast \gamma_{h_1}^\ast,\qquad(\wt{g}_{h_2}^{-1} g_{h_2}') = \eps^\ast g^\ast \gamma^\ast\]
where $\gamma^{\ast},\gamma_{h_1}^\ast$ are $(MD/\rho)^{O_s(d^{O_s(1)})}$-rational, $\eps^{\ast},\eps_{h_1}^\ast$ are $((MD/\rho)^{O_s(d^{O_s(1)})},N)$-smooth, and we have $\on{Taylor}_i(g_{h_1}^\ast) \in L_i^\ast = W_{i,\mr{Pet}}$ using the first and third lines above and $\on{Taylor}_i(g^\ast) \in L_i^\ast+P_i(L_i)=W_{i,\ast} + W_{i,\mr{Pet}}$ using the second and third lines above. (Recall that $L_i^\ast\leqslant Q_i(V_i)$ is cut out by the $\xi_j^{Q_i}$.) Additionally, these sequences are the identity at $0$.

Therefore, for $h_1\in H'$ we have
\begin{align*}
g_{h_1}' &= \eps_{h_1}^\ast \eps^\ast ((\eps_{h_1}^\ast \eps^\ast)^{-1}\wt{g}_{h_1}(\eps_{h_1}^\ast \eps^\ast))((\eps^\ast)^{-1}g_{h_1}^\ast\eps^\ast)((\eps^\ast)^{-1}\gamma_{h_1}^\ast\eps^\ast(\gamma_{h_1}^\ast)^{-1})(\gamma_{h_1}^{\ast}g^\ast(\gamma_{h_1}^\ast)^{-1})(\gamma_{h_1}^\ast\gamma^\ast)\\
&=:(\eps_{h_1}^\ast\eps^\ast) \cdot g_{h_1}^{\triangle} \cdot (\gamma_{h_1}^\ast\gamma^\ast).
\end{align*}
So, for $h_3,h_4\in H$ we deduce using Lemma~\ref{lem:coeff-mult} and the above analysis that
\begin{align*}
\on{Taylor}_i(\wt{g}_{h_3}) &\in W_{i,\mr{Lin}},\\
\on{Taylor}_i(g_{h_3}^{\triangle}) &\in L_i =  W_{i,\ast} + W_{i,\mr{Lin}}  + W_{i,\mr{Pet}},\\
\on{Proj}_{W_i,\mr{Lin}}(\on{Taylor}_i(g_{h_3}^{\triangle})) &= \on{Proj}_{W_i,\mr{Lin}}(\on{Taylor}_i(\wt{g}_{h_3})),\\
\on{Taylor}_i(\wt{g}_{h_3}^{-1}g_{h_3}^{\triangle}) - \on{Taylor}_i(\wt{g}_{h_4}^{-1}g_{h_4}^{\triangle}) &\in W_{i,\mr{Pet}}.
\end{align*}
Furthermore note that $\eps_{h_1}^\ast \eps^\ast$ is sufficiently smooth and $\gamma_{h_1}^\ast\gamma^\ast$ is appropriately rational. This nearly gives the desired result except we need to remove the rational and smooth parts exactly as in Step 7 of Lemma~\ref{lem:sunflower}; we omit the details, although note that the only difference between $g_h^{\triangle}$ and the output is a conjugation by a fixed element which leaves all properties unchanged and the Fourier phase on the $\mb{R}$ part of $(G^\ast)'$ may be modified. Additionally, the set $H'$ will be made smaller by acceptable factors due to Pigeonhole.

\noindent\textbf{Step 7: Handling the exceptional case $s = 2$.}
In this exceptional case, we have $r^\ast = 1$ and $s=2$, and $\eta$ is nontrivial. The difference here versus the prior analysis is that the error term $\psi_h(g_{\vec{h}}(n)\Gamma')$ is replaced by $e(\Theta_{\vec{h}}n)$ with $\snorm{\Theta_{\vec{h}}}_{\mb{R}/\mb{Z}}\le(MD/\rho)^{O_s(d^{O_s(1)})}N^{-1}$ by using the Remark~\ref{rem:quadruple-2} regarding Lemma~\ref{lem:four-fold-biased} for $s=2$.

We take $G^{\mr{Error}} = \mb{R}$, $\Gamma^{\mr{Error}} = \mb{Z}$,  $g_{\vec{h}}(n) = \Theta_{\vec{h}}n$, and $\psi_{\vec{h}}(z) = e(z)$. $\wt{G}$ is defined as before. Taking $\eta^\ast = (\eta,\eta,-\eta,-\eta,1)$, by Corollary~\ref{cor:equi-deg} we may factor 
\[g_{\vec{h}}^\ast = \eps_{\vec{h}} \cdot\wt{g}_{\vec{h}} \cdot\gamma_{\vec{h}}\]
where 
$\eps_{\vec{h}}$ is $((MD/\rho)^{O_s(d^{O_s(1)})},N)$-smooth, $\gamma_{\vec{h}}$ is $(MD/\rho)^{O_s(d^{O_s(1)})}$-rational, and $\wt{g}_{\vec{h}}$ lies in a $(MD/\rho)^{O_s(d^{O_s(1)})}$-rational subgroup $K$ such that $\eta^\ast(K\cap\wt{G}_{(1,1)}) = 0$. Note however that 
\[g_{\vec{h}}^\ast = (\mr{id}_G,\mr{id}_G,\mr{id}_G,\mr{id}_G,\Theta_{\vec{h}} n)\cdot (\tau(g_{\vec{h}}^\ast),0)\]
where $\tau\colon\wt{G}\to G^{\otimes 4}$ is the natural projection. Let $K^\ast = K \cap (G^{\otimes 4}\times\{0\})$ and note that $K^\ast$ can be defined as the joint kernel of certain horizontal characters of height $(MD/\rho)^{O_s(d^{O_s(1)})}$ (namely, ones defining $K$ along with one of the form $(0,0,0,0,1)$). Since $\Theta_{\vec{h}}$ is small and is the only part in the fifth coordinate, arguments similar to before allow us to refine the first factorization (up to changing implicit constants) and instead assume that $\wt{g}_{\vec{h}}$ lies in $K^\ast$.

Furthermore note that if $\eta_{\mr{Prod}} = (\eta,\eta,-\eta,-\eta,0)$ we have that $\eta_{\mr{Prod}}(K^{\ast}) = 0$ as $\eta_{\mr{Prod}}$ and $\eta^\ast$ agree on the initial four groups. At this point we are exactly in the situation of the earlier analysis and we may complete the proof.\footnote{Various simplifications are possible in the case since the underlying groups are all abelian here; in particular, invoking Corollary~\ref{cor:equi-deg} reduces to summing a geometric series.}
\end{proof}

We remark that modulo minor annoyances, the strategy of using Lemma~\ref{lem:four-fold-biased}, deducing an approximate homomorphism, and then applying results coming from the Bogolyubov lemma was introduced by Gowers \cite{Gow98} in his seminal work on four-term arithmetic progressions. It was similarly applied in work of Green and Tao \cite{GT08b} on the $U^3$-inverse theorem. In a certain sense, the previous two sections can be thought of as showing that, given an appropriate equidistribution theorem and defining a number of notions for nilmanifolds, this analysis can be modified to make sense in the greater generality of nilmanifolds where the group is not abelian.

\section{Setup for extracting a \texorpdfstring{$(1,s-1)$}{(1,s-1)}-nilsequence}\label{sec:nil-first}
Before diving into the formal proof, we motivate how we extract the ``top degree-rank'' part and why lifting to the universal nilmanifold plays a role in our argument at this stage. We remark that Green, Tao, and Ziegler \cite{GTZ12} work with the universal nilmanifold throughout their argument (in the form of a representation of a degree-rank nilcharacter; see \cite[Definition~9.11]{GTZ12}).

Recall the bracket polynomial $U^5$-inverse sketch discussed in Section~\ref{sec:outline}; we started with functions
\[e\bigg(\sum_{i=1}^{d_1}a_{i,h} n[b_{i,h} n][c_{i,h}n] + \sum_{i=1}^{d_2}d_{i,h} n^2[e_{i,h} n] + \sum_{i=1}^{d_3} f_{i,h}n[g_{i,h}n] + j_hn^3 + \ell_h n^2 + m_hn\bigg)\]
which correlate with $\Delta_{h}f$. At this point, we have proven that 
\[\sum_{i=1}^{d_1}a_{i,h} n[b_{i,h} n][c_{i,h}n]\]
is equivalent to a bracket polynomial up to lower order terms of degree-rank of the form
\[\sum_{i = 1}^{d_1'} \delta_i \{\eps_i h\}n[\beta_{i, \ast}n][\gamma_{i, \ast}n].\]
Our goal at this stage is to isolate 
\[e\bigg(\sum_{i = 1}^{d_1'} \delta_i \{\eps_i h\}n[\beta_{i, \ast}n][\gamma_{i, \ast}n]\bigg);\]
in the next section we will then convert this ``top degree--rank'' bracket phase into a $(1,s-1)$--nilsequence. 

The reason lifting to a universal nilmanifold proves so technically useful is that it enables us to isolate various components of the horizontal tori as ``separate subgroups''. For the sake of simplicity, consider a $2$-step group $G$ in the $U^4$-inverse case given the degree-rank filtration $G_{(0,0)} = G_{(1,0)}=G_{(1,1)} = G$, $G_{(2,0)} = G_{(2,1)} = G_{(2,2)}= [G,G]$, where the remaining groups are trivial. In this case, the output of Lemma~\ref{lem:linear-output} gives the linearly disjoint subspaces $W_{\ast}$, $W_{\mr{Lin}}$, $W_{\mr{Pet}}$ of $V = G/[G,G]$ such that the commutator of any two elements in $W_{\mr{Lin}} + W_{\mr{Pet}}$ vanishes and the commutator of any element of $W_{\ast}$ and $W_{\mr{Pet}}$ vanishes.

Let $\mc{Z}_{\ast}$ denote the rational basis of $\log(W_{\ast}) \imod [G,G]$ and $\mc{Z}_{\mr{Lin}}$ and $\mc{Z}_{\mr{Pet}}$ be analogous. We also have a decomposition of our polynomial
\[g_h = g_{h, \ast} + g_{h, \mr{Lin}} + g_{h, \mr{Pet}} \imod [G, G]\]
where 
\begin{align*}
g_{h, \mr{Lin}}(n) &= \prod_{i = 1}^{\dim(W_{\mr{Lin}})} \exp(\delta_in \{\eps_i h\} Z_{i}^{\mr{Lin}}),\qquad g_{h, \ast}(n) = \prod_{i = 1}^{\dim(W_{\ast})} \exp(\beta_in Z_{i}^{\ast}),\\
g_{h, \mr{Pet}}(n) &= \prod_{i = 1}^{\dim(W_{\mr{Pet}})} \exp(\gamma_{i}^{h}n Z_{i}^{\mr{Pet}}).
\end{align*}
Therefore we may write 
\[g_h(n) = g_{h, \ast}(n)g_{h, \mr{Lin}}(n)g_{h, \mr{Pet}}(n) g_{h, \mr{Rem}}(n)\]
with $g_{h,\mr{Rem}}(n)\in[G,G]$ pointwise. The top order term which we seek to isolate is heuristically similar to
\[e\bigg(\sum_{i=1}^{\dim(W_{\mr{Lin}})}\sum_{j=1}^{\dim(W_{\ast})}\delta_in \{\eps_i h\}[\beta_j n] [Z_{i}^{\mr{Lin}},Z_{j}^{\ast}]\bigg).\]

Note that given the factorization of $g_h$, we have established no control over $g_{h, \mr{Pet}}$ and $g_{h, \mr{Rem}}$. This may suggest that we wish to quotient out by the subgroup $W_{\mr{Pet}}[G,G]$ in order to kill these terms; note however that $G/(W_{\mr{Pet}}[G, G])$ now abelian and such a projection ``kills'' the higher order degree-rank term calculated above. This suggest that the group $W_{\mr{Pet}}[G,G]$ is ``too large'' a quotient. The solution is to ``enlarge" the group $G$ so that the subgroup $[W_{\mr{Lin}},W_{\ast}]$ and the subgroup $G'$ that corresponds to the remaining phases $\gamma_h n^2 + \delta_h n$ are disjoint. We can then quotient by $W_{\mr{Pet}}G'$. This disjointness is accomplished by lifting to the universal nilmanifold of degree-rank $(2, 2)$.

\subsection{Unwinding the output of Lemma~\ref{lem:linear-output}}
We first require the following elementary lemma regarding lattice elements when presented in first-kind coordinates.
\begin{lemma}\label{lem:divis-basis}
Fix an integer $k\ge 1$. Consider a nilmanifold $G/\Gamma$ of dimension $d$ with a Mal'cev basis $\mc{X} = \{X_1,\ldots,X_d\}$ of $\log G$ which is $Q$-rational and such that $\mc{X}$ has the degree $k$ nesting property. Then there exists a positive integer $Q'\le O_k(Q^{O_k(d^{O_k(1)})})$ such that if $z_j\in Q'\cdot\mb{Z}$ then 
\[\exp\bigg(\sum_{j=1}^dz_jX_j\bigg)\in\Gamma.\]
\end{lemma}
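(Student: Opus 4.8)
The statement is purely about the lattice $\Gamma$ in Mal'cev coordinates, so the key tool is the relation between first-kind coordinates $\psi_{\exp}$ and second-kind coordinates $\psi$, together with the Baker--Campbell--Hausdorff formula \eqref{eq:bch-dis} applied in the rational basis $\mc{X}$. First I would recall that since $\mc{X}$ is a Mal'cev basis of rationality $Q$ (in the sense of Definition~\ref{def:Mal'cev}, or a weak basis of rationality $Q$), the structure constants $c_{ijk}$ in $[X_i,X_j]=\sum_k c_{ijk}X_k$ have height at most $Q$, and $\psi(\Gamma)=\mb{Z}^d$ (respectively $\psi_{\exp}(\Gamma)\supseteq q\mb{Z}^d$ for some $q\le Q$). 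The goal reduces to showing: there is $Q'\le O_k(Q^{O_k(d^{O_k(1)})})$ such that $Q'\mb{Z}^d\subseteq\psi_{\exp}(\Gamma)$, i.e. any first-kind coordinate vector with all entries in $Q'\mb{Z}$ lies in the lattice.

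The heart of the argument is a change of coordinates from first-kind to second-kind. Given $g=\exp(\sum_j z_j X_j)$ with $z_j\in Q'\mb{Z}$, I would compute $\psi(g)=(u_1,\ldots,u_d)$, the second-kind coordinates, by peeling off factors $\exp(u_1 X_1)$, then $\exp(u_2 X_2)$, and so on. Each such step uses the BCH formula \eqref{eq:bch-dis} (in its Lie-algebra form \eqref{eq:bch-com} and the general expansion $\exp(X)\exp(Y)=\exp(X+Y+\tfrac12[X,Y]+\cdots)$), whose higher-order terms are iterated commutators of the $X_j$'s, each of which is a $\mb{Q}$-linear combination of $\mc{X}$ with coefficients of height bounded by $Q^{O_k(d^{O_k(1)})}$ — this is exactly the computation underlying Fact~\ref{fact:rat-subgroup}, using that nilpotency of step $\le k$ (guaranteed by the degree $k$ nesting property, which forces $(\ge k{+}1)$-fold commutators to vanish) means only $O_k(1)$-fold commutators occur and there are at most $d^{O_k(1)}$ of them. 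Consequently, if all $z_j$ are divisible by a sufficiently large integer $Q'$ (of the stated size), the resulting $u_j$ are integers: the "diagonal" contribution is $u_j\approx z_j$ up to lower-triangular corrections by rationals of bounded height whose denominators divide into $Q'$. Then $\psi(g)\in\mb{Z}^d=\psi(\Gamma)$, so $g\in\Gamma$.

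One technical point to get right: one must do the change of coordinates in the correct (upper- or lower-triangular) order dictated by the degree $k$ nesting property, so that at the $j$-th stage the correction terms to $u_j$ only involve $z_{j'}$ and previously computed $u_{j'}$ for indices $j'$ in the already-processed block — the nesting property $[\mf{g},\mf{g}]\subseteq\mf{g}_1$, $[\mf{g},\mf{g}_\ell]\subseteq\mf{g}_{\ell+1}$, $[\mf{g},\mf{g}_k]=0$ is precisely what makes this triangular structure and the finiteness of BCH work simultaneously with bounded heights. I would then take $Q'$ to be the least common multiple of $q$ (from the weak rationality of $\mc{X}$) and all the denominators appearing in these finitely many bounded-height rationals; a crude bound gives $Q'\le O_k(Q^{O_k(d^{O_k(1)})})$ as claimed.

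\textbf{Main obstacle.} The only real subtlety is bookkeeping the heights through the iterated BCH expansion — ensuring that after $O_k(1)$ rounds of substituting commutator identities and collecting terms, the accumulated denominators are still bounded by $Q^{O_k(d^{O_k(1)})}$ rather than, say, $Q^{\exp(d)}$. This is handled exactly as in the proof of Fact~\ref{fact:rat-subgroup}: each commutator bracket multiplies heights by at most $Q^{O(d)}$, and there are $O_k(1)$ levels and $d^{O_k(1)}$ terms, so the total blow-up is polynomial in the exponent. Everything else is routine linear algebra over $\mb{Q}$.
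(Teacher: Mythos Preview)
Your proposal is correct and takes essentially the same approach as the paper: both argue that the change-of-coordinates map $\psi_{\mc{X}}\circ\psi_{\exp,\mc{X}}^{-1}$ from first-kind to second-kind coordinates is a polynomial of degree $O_k(1)$ with rational coefficients of height $Q^{O_k(d^{O_k(1)})}$ and no constant term, then take $Q'$ to be the least common multiple of all denominators. The paper simply black-boxes this polynomial bound by citing \cite[Lemma~B.1]{Len23b}, whereas you unpack it via the BCH peeling argument; your bookkeeping of the heights is exactly what that cited lemma encapsulates.
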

\begin{proof}
Note that $\Gamma = \psi_{\mc{X}}(\mb{Z}^d)$. By \cite[Lemma~B.1]{Len23b}, $\psi_{\mc{X}} \circ \psi_{\mr{exp},\mc{X}}^{-1}$ is a degree $O_k(1)$ polynomial with coefficients of height at most $Q^{O_k(d^{O(1)})}$. The desired result then follows by taking $Q'$ to the least common multiple of all denominators of all coefficients present in this polynomial (since there are only $O_k(d^{O_k(1)})$ total coefficients). Note that the polynomial corresponding to $\psi_{\mc{X}} \circ \psi_{\mr{exp},\mc{X}}^{-1}$ has no constant term by observing the image of $\mr{id}_G$. 
\end{proof}

We next require the following additional elementary lemma which gives a Taylor series expansion which is ``graded by the Mal'cev basis''.
\begin{lemma}\label{lem:Taylor-mod}
Consider a nilmanifold $G/\Gamma$ of degree $k$ with an adapted Mal'cev basis $\mc{X} = \{X_1,\ldots,X_{\dim(G)}\}$ and a polynomial sequence $g(n)$. There exists a representation 
\[g(n) = \prod_{i=0}^{k}\prod_{j=\dim(G)-\dim(G_{k})+1}^{\dim(G)} \exp(X_{j})^{\alpha_{i,j}\cdot\frac{n^{i}}{i!}}\]
where $\alpha_{i,j}\in\mb{R}$.
\end{lemma}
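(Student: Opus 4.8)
The statement is a structural fact about polynomial sequences on a degree $k$ filtered nilmanifold with an adapted Mal'cev basis: we want to show $g$ can be written as an ordered product where the $j$-th ``slot'' only uses the basis vector $X_j$ (running over those adapted to $G_k$, i.e.\ the indices $j$ with $\dim(G)-\dim(G_k)+1\le j\le\dim(G)$), each raised to a power that is a polynomial in $n$ of degree $\le k$ in the monomial form $\alpha_{i,j}n^i/i!$. Wait --- re-reading, the outer product ranges only over the top $\dim(G_k)$ indices. This is only possible because $g$ is being considered modulo lower-index information in some sense; the cleanest reading is that this is the expansion \emph{one step at a time}: the claim as literally stated describes a single ``graded layer''. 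The plan is to prove it by strong induction on the step (equivalently, on $\dim G$), peeling off one adapted subgroup at a time exactly as in the proof of Lemma~\ref{lem:taylor-relation}.

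First I would invoke the standard Taylor expansion for polynomial sequences on filtered groups (\cite[Lemma~B.9]{GTZ12}), writing $g(n)=\prod_{i=0}^{k} g_i^{\binom{n}{i}}$ with $g_i\in G_i$. Since $\binom{n}{i}$ is a degree-$i$ polynomial in $n$ with rational coefficients, and since $G_k\leqslant Z(G)$ (as $[G,G_k]=[G_1,G_k]=\mr{Id}_G$ from the degree filtration condition $G_0=G_1$), I can use the Baker--Campbell--Hausdorff formula \eqref{eq:bch-com} together with the commutativity of $G_k$ to freely rearrange the central factors. The key point is that in an adapted Mal'cev basis, $\log G_k=\on{span}_{\mb{R}}(X_{\dim(G)-\dim(G_k)+1},\ldots,X_{\dim(G)})$, so any element of $G_k$ written via $\psi_{\exp}$ is exactly a product $\prod_j \exp(t_j X_j)$ over precisely those top indices, and since these $X_j$ commute with each other (they all lie in the centre), each $\exp(u_j X_j)$ commutes and first-kind and second-kind coordinates agree on $G_k$.

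The main work is then: writing $g(n)=g'(n)\cdot c(n)$ where $g'$ is a polynomial sequence on $G/G_k$ (with quotiented adapted basis) and $c(n)\in G_k$ is the ``$G_k$-part'', and observing that $c(n)$, being a polynomial sequence into the abelian group $G_k\simeq\mb{R}^{\dim G_k}$, has the form $c(n)=\exp(\sum_{i=0}^k\sum_j \alpha_{i,j}\binom{n}{i}X_j)=\prod_j\prod_i\exp(X_j)^{\alpha_{i,j}\binom{n}{i}}$. Converting the binomial coefficients $\binom{n}{i}$ into the monomial basis $\{n^i/i!\}$ is a triangular change of basis over the rationals, which reshuffles the $\alpha_{i,j}$ but keeps the degree bound; this is just the identity $\binom{n}{i}=\sum_{m\le i} c_{i,m} n^m$ absorbed into redefined coefficients (still real, still indexed by $i\le k$). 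This gives exactly the claimed product form for the $G_k$-layer. If the intended statement is the full iterated expansion (peeling $G_k$, then the next adapted subgroup in $G/G_k$, etc.), one simply iterates this peeling $\le k$ times, at each stage using that the bottom adapted subgroup of the current quotient is central in that quotient (which follows from the degree filtration axioms applied to the quotient filtration).

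The main obstacle is bookkeeping around the \emph{ordering} of factors and ensuring no cross terms are lost. Specifically, when I split $g(n)=g'(n)c(n)$ and want $c(n)$ to be literally a clean product of powers of the top $X_j$'s with \emph{no} contribution from BCH corrections, I need that all the commutator corrections that would arise when multiplying $g_i^{\binom{n}{i}}$ together either vanish (because the relevant commutators land in $G_k$ and then are further commutators, hence trivial, since $[G,G_k]=\mr{Id}_G$ forces anything of the form $[\cdot,G_k]$ to die) or get correctly attributed to the $G/G_k$ part. The cleanest way to handle this, which I would adopt, is: first project to $G/G_k$, apply the inductive hypothesis there to get $g'(n)$ in the desired graded-product form lifted arbitrarily, then note $g(n)\cdot(\text{lift of }g'(n))^{-1}$ is a polynomial sequence valued in the central subgroup $G_k$, hence abelian-polynomial, hence of the stated form --- so the existence of \emph{some} valid $\alpha_{i,j}$ is immediate and one never has to track BCH corrections explicitly. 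I expect essentially all the remaining content to be the routine verification that first- and second-kind coordinates coincide on the abelian group $G_k$ and that a polynomial map $\mb{Z}\to\mb{R}^m$ of degree $\le k$ expands in the $\{n^i/i!\}$ basis, both of which are standard.
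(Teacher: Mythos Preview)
Your approach is correct but genuinely different from the paper's. You peel from the \emph{top} of the filtration: quotient by the central subgroup $G_k$, apply induction on the degree to $G/G_k$, lift, and observe the residual factor lies in the abelian central group $G_k$ so has the desired polynomial form. The paper instead peels from the \emph{bottom} by degree in $n$: it starts from the first-kind expansion $g(n)=\exp(\sum_{i} g_i\,n^i/i!)$ with $g_i\in\log(G_i)$, and for $\ell=0,1,\ldots,k$ strips off the degree-$\ell$ layer by expressing $g_\ell$ in the adapted basis for $\log(G_\ell)$ and left-multiplying by the inverse product $\prod_j\exp(X_j)^{-\alpha_{\ell,j}n^\ell/\ell!}$; BCH then shows the remainder still has a first-kind expansion supported in degrees $\ge\ell+1$. (The paper's remark after the proof explains why the monomial basis $n^i/i!$ is essential for this direction of peeling and the binomial basis would fail.)

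Both arguments work. The paper's is a single direct loop with no induction; yours is a cleaner structural induction on degree but requires one extra bookkeeping step you only gesture at: after writing $g=\wt g'\cdot c$ with $\wt g'$ the lifted product (indices $j\le\dim(G)-\dim(G_k)$) and $c\in G_k$, the stated ordering is $\prod_i\prod_j$, so you must use centrality of $G_k$ to redistribute the degree-$i$ pieces of $c$ next to the degree-$i$ block of $\wt g'$ for each $i$. That rearrangement is legitimate precisely because $G_k$ is central, so your plan goes through. (You correctly flagged that the inner index bound should be $\dim(G)-\dim(G_i)+1$, varying with $i$; the paper's proof confirms this reading.)
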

\begin{proof}
Note via Baker--Campbell--Hausdorff and existence of Taylor expansions, we may write 
\[g(n) = \exp\bigg(\sum_{i=0}^{s}g_i\cdot\frac{n^{i}}{i!}\bigg)\]
with $g_i\in\log(G_i)$. Let $g_0(n)=g(n)$ and $g_{0,i}=g_i$. Then iteratively define $g_{\ell+1}(n)$ by the following process: write $\sum_{j=\dim(G)-\dim(G_{(\ell,0)})+1}^{\dim(G)}\alpha_{\ell,j}X_j=g_{\ell,\ell}$. Then let
\[g_{\ell + 1}(n) := \Bigg(\prod_{j= \dim(G)-\dim(G_\ell)+1}^{\dim(G)} \exp(X_{j})^{\alpha_{\ell,j}\cdot\frac{n^\ell}{\ell!}}\Bigg)^{-1} g_\ell(n)\]
and write
\[g_{\ell+1}(n) = \exp\bigg(\sum_{i=\ell+1}^{s}g_{\ell+1,i}\cdot\frac{n^i}{i!}\bigg)\]
in order to define $g_{\ell+1,i}$. There exists a valid choice of $\alpha_{\ell,j}$ at each step since $\mc{X}$ is a filtered Mal'cev basis and there exists a valid choice of $g_{\ell+1,i}$ for $i\ge\ell+1$ by Baker--Campbell--Hausdorff. This process terminates with the identity sequence, and unraveling gives the desired.
\end{proof}
\begin{remark*}
Note that in the above proof, the reason we do not use the basis $\binom{n}{i}$ is that $\binom{n}{i}\binom{n}{j}$ is \emph{not} a linear combination of polynomials of the form $\binom{n}{t}$ for $t\ge\max(i,j)+1$ and hence the Baker--Campbell--Hausdorff to construct $g_{\ell+1,i}$ fails (one needs lower-degree terms with $i\le\ell$).
\end{remark*}

We now explicitly unwind, for the sake of clarity, the conclusion of Lemma~\ref{lem:linear-output}. We will use the notation and conclusions here throughout the Sections~\ref{sec:nil-first} and \ref{sec:nil-first-2}. Suppose we have a $1$-bounded function $f\colon[N]\to\mb{C}$ with a degree-rank $(s-1,r^\ast)$ correlation structure with parameters $\rho, M,D,d$.\footnote{We apologize to the reader; there is a rather incredible amount of data which is floating around at this point. The crucial details to track are data regarding Taylor coefficient and the associated decompositions of the vector spaces corresponding to horizontal tori.} Then by Lemma~\ref{lem:linear-output} and some relabeling there exists a degree-rank $(s-1,r^\ast)$ correlation structure with parameters
\begin{align*}
\rho'^{-1}&\le\exp(O_s((d\log(MD/\rho))^{O_s(1)})),\quad M' \le O(M),\quad D' = D,\quad d' \le O(d)
\end{align*}
and
\begin{itemize}
    \item A subset $H\subseteq[N]$ with $|H|\ge\rho' N$;
    \item A multidegree $(1,s-1)$ nilcharacter $\chi(h,n)$ with a frequency $\eta^\ast$ with height at most $M$. Furthermore $\chi$ lives on a nilmanifold $(G^\ast\times\mb{R})/(\Gamma^\ast\times\mb{Z})$ with dimension bounded by $d'$, output dimension bounded by $D'$, complexity bounded by by $M'$, and the function underlying $\chi$ is $M'$-Lipschitz. We let $g(h,n)$ denote the underlying polynomial sequence;
    \item A collection of degree-rank $(s-1,r^\ast)$ nilcharacters $\chi_h(n)$ with a frequency $\eta$ with of height at most $M$. Furthermore $\chi_h$ lives on a nilmanifold $G/\Gamma$ with dimension bounded by $d$, output dimension bounded by $D$, $G/\Gamma$ has complexity bounded by $M$ and the function underlying $\chi_h$ (which is independent of $h$) is $M'$-Lipschitz. We let $g_h$ denote the underlying polynomial sequence and we have $g_h(0) = \mr{id}_G$;
    \item For all $h\in H$, we have 
    \[\Delta_h f(n) \otimes \chi(h,n) \otimes \chi_h(n) \in\on{Corr}(s-2,\rho', M', d');\]
    \item Then there exists a collection of subspaces $W_{i,\ast}, W_{i,\mr{Lin}}, W_{i,\mr{Pet}}\leqslant G_{(i,1)}/G_{(i,2)}$ for $1\le i\le s-1$ which are $(MD/\rho)^{O_s(d^{O_s(1)})}$-rational with respect to $\exp(\mc{X})\cap G_{(i,1)}\imod G_{(i,2)}$;
    \item If $W_i = W_{i,\ast} +  W_{i,\mr{Lin}} + W_{i,\mr{Pet}}$ then $\dim(W_i) = \dim(W_{i,\ast}) +  \dim(W_{i,\mr{Lin}}) + \dim(W_{i,\mr{Pet}})$;
    \item Let $Z_{i,1}^{\ast},\ldots,Z_{i,\dim(W_{i,\ast})}^\ast$ a sequence of integral linear combinations of $\mc{X}\cap G_{(i,1)}\setminus \mc{X}\cap G_{(i,2)}$ such that $\on{span}_\mb{R}(\exp(Z_{i,1}^\ast\imod G_{(i,2)},\ldots,\exp(Z_{i,\dim(W_{i,\ast})}^\ast)\imod G_{(i,2)}) = W_{i,\ast}$. We may let the coefficients of $Z_{i,j}^\ast$ be $(MD/\rho)^{O_s(d^{O_s(1)})}$-bounded and $\exp(Z_{i,j}^\ast)\in\Gamma$.
    \item Let $Z_{i,1}^{\mr{Lin}},\ldots,Z_{i,\dim(W_{i,\mr{Lin}})}^{\mr{Lin}}$ be a sequence of integral linear combinations of $\mc{X}\cap G_{(i,1)}\setminus\mc{X}\cap G_{(i,2)}$ such that $\on{span}_\mb{R}(\exp(Z_{i,1}^{\mr{Lin}})\imod G_{(i,2)},\ldots,\exp(Z_{i,\dim(W_{i,\mr{Lin}})}^{\mr{Lin}})\imod G_{(i,2)}) = W_{i,\mr{Lin}}$. We may let the coefficients of $Z_{i,j}^{\mr{Lin}}$ are $(MD/\rho)^{O_s(d^{O_s(1)})}$-bounded and $\exp(Z_{i,j}^{\mr{Lin}})\in\Gamma$.
    \item Let $Z_{i,1}^{\mr{Pet}},\ldots,Z_{i,\dim(W_{i,\mr{Pet}})}^{\mr{Pet}}$ be a sequence of integral linear combinations of $\mc{X}\cap G_{(i,1)}\setminus \mc{X}\cap G_{(i,2)}$ such that $\on{span}_\mb{R}(\exp(Z_{i,1}^{\mr{Pet}})\imod G_{(i,2)},\ldots,\exp(Z_{i,\dim(W_{i,\mr{Pet}})}^{\mr{Pet}})\imod G_{(i,2)}) = W_{i,\mr{Pet}}$. We may let the coefficients of $Z_{i,j}^{\mr{Pet}}$ be $(MD/\rho)^{O_s(d^{O_s(1)})}$-bounded and let $\exp(Z_{i,j}^{\mr{Pet}})\in\Gamma$.
    \item For $1\le i\le s-1$ and $h\in H$, we have 
    \begin{align*}
    \on{Taylor}_i(g_h) &= \prod_{j=1}^{\dim(W_{i,\ast})}\exp(Z_{i,j}^\ast)^{z_{i,j}^\ast}\cdot\prod_{j=1}^{\dim(W_{i,\mr{Pet}})} \exp(Z_{i,j}^{\mr{Pet}})^{z_{i,j}^{h,\mr{Pet}}}\\
    &\qquad \qquad\cdot\prod_{j=1}^{\dim(W_{i,\mr{Lin}})} \exp(Z_{i,j}^{\mr{Lin}})^{z_{i,j}^{h,\mr{Lin}}} \imod G_{(i,2)}
    \end{align*}
    where 
    \[z_{i,j}^{h,\mr{Lin}} = \gamma_{i,j} + \sum_{k=1}^{d^\ast}\alpha_{i,j,k}\{\beta_k h\}\]
    where $d^\ast\le(d\log(MD/\rho))^{O_s(1)}$ and $\beta_k\in(1/N')\mb{Z}$ where $N'$ is a prime in $[100N,200N]$.
    \item For any integers $i_1 + \cdots + i_{r^\ast} = s-1$, suppose that $v_{i_j}\in W_{i_j}$ for all $j$. If for at least one index $\ell$ we have $v_{i_\ell}\in W_{i_\ell,\mr{Pet}}$, then if $w$ is any $(r^\ast-1)$-fold commutator of $v_{i_1},\ldots,v_{i_{r^\ast}}$, we have
    \[\eta(w) = 0.\]
    Furthermore, if instead for at least two indices $\ell_1,\ell_2$ we have $v_{i_{\ell_1}}\in W_{i_{\ell_1},\mr{Lin}}$ and $v_{i_{\ell_2}}\in W_{i_{\ell_2},\mr{Lin}}$, then if $w$ which is any $(r^\ast-1)$-fold commutator of $v_{i_1},\ldots,v_{i_{r^\ast}}$ we have
    \[\eta(w) = 0.\]
\end{itemize}
We have relabeled as $H'$ by $H$, $g_h'$ by $g_h$, $g'(h,n)$ by $g(h,n)$, $\chi'$ by $\chi$, and $\chi_h'$ by $\chi_h$. We have applied Lemma~\ref{lem:divis-basis} and scaling to guarantee that $\exp(Z_{i,j}^\ast),\exp(Z_{i,j}^{\mr{Lin}}),\exp(Z_{i,j}^{\mr{Pet}})\in\Gamma$.

Let $\mc{X} = \{X_1,\ldots, X_{\dim(G)}\}$ denote the filtered Mal'cev basis given for $G/\Gamma$. Via Lemma~\ref{lem:Taylor-mod}, for $h\in H$ we may define 
\begin{align*}
g_h^\ast &= \prod_{i=1}^{s-1}\prod_{j=1}^{\dim(W_{i,\ast})}\exp(Z_{i,j}^\ast)^{z_{i,j}^\ast\cdot\frac{n^i}{i!}} \cdot\prod_{i=1}^{s-1}\prod_{j=1}^{\dim(W_{i,\mr{Lin}})}\exp(Z_{i,j}^{\mr{Lin}})^{\gamma_{i,j}\cdot\frac{n^i}{i!}},\\
g_h^{\mr{Lin}} &= \prod_{i=1}^{s-1}\prod_{j=1}^{\dim(W_{i,\mr{Lin}})}\exp(Z_{i,j}^{\mr{Lin}})^{(z_{i,j}^{h,\mr{Lin}}-\gamma_{i,j})\cdot\frac{n^i}{i!}},\\
g_h^{\mr{Pet}} &= \prod_{i=1}^{s-1}\prod_{j=1}^{\dim(W_{i,\mr{Pet}})}\exp(Z_{i,j}^{\mr{Pet}})^{z_{i,j}^{h,\mr{Pet}}\cdot\frac{n^i}{i!}},
\end{align*}
and define $g_h^{\mr{Rem}}$ via
\[g_h = g_h^\ast \cdot g_h^{\mr{Lin}} \cdot g_h^{\mr{Pet}} \cdot g_h^{\mr{Rem}}.\]
Using Lemma~\ref{lem:Taylor-mod} again, we may write 
\[g_h^{\mr{Rem}} = \prod_{i=1}^{s-1}\prod_{j=\dim(G)-\dim(G_{i,1})+1}^{\dim(G)}\exp(X_j)^{\kappa_{i,j}^{h}\cdot\frac{n^i}{i!}}.\]

The fact that when applying Lemma~\ref{lem:Taylor-mod} for $g_h^{\mr{Rem}}$ we observe no coefficients for $\frac{n^{i}}{i!}$ corresponding to basis elements in $\mc{X}\cap\log(G_{(i,1)})\setminus\mc{X}\cap\log(G_{(i,2)})$ follows from the fact that $g_h$ and $g_h^\ast \cdot g_h^{\mr{Lin}} \cdot g_h^{\mr{Pet}}$ have Taylor coefficients which match exactly for $1\le i\le s-1$.

We now reach the first stage of ``rewriting'' where we realize the nilsequence $\chi_h(n) = F(g_h(n)\Gamma)$ on a universal nilmanifold.

\subsection{Rewriting degree-rank nilsequences on the universal nilmanifold}
We recall the universal nilmanifold of a given degree-rank (see \cite[Definition~9.1]{GTZ12}). 
\begin{definition}\label{def:universal-1}
The \emph{universal nilmanifold of degree-rank $(s-1,r^\ast)$} and the associated discrete cocompact subgroup are defined as follows. We write $G_{\mr{Univ}} = G_{\mr{Univ}}^{\vec{D}}$ where $\vec{D} = \vec{D}^{\ast}+\vec{D}^{\mr{Lin}}+\vec{D}^{\mr{Pet}}$ with $\vec{D}^{\ast},\vec{D}^{\mr{Lin}},\vec{D}^{\mr{Pet}}\in (\mb{Z}_{\ge 0})^{s-1}$. We specify $G_{\mr{Univ}}^{\vec{D}}$ by formal generators of the Lie algebra
$e_{i,j}$ for $1\le i\le s-1$ and $1\le j\le D_i$ where $D_i = D_i^\ast + D_i^{\mr{Lin}} + D_i^{\mr{Pet}}$ with the relations:
\begin{itemize}
    \item Any $(r-1)$-fold commutator of $e_{i_1,j_1},\ldots, e_{i_r,j_r}$ with $i_1 + \cdots + i_r > (s-1)$ vanishes;
    \item Any $(r-1)$-fold commutator of $e_{i_1,j_1},\ldots, e_{i_r,j_r}$ with $i_1 + \cdots + i_r = (s-1)$ and $r>r^\ast$ vanishes.
\end{itemize}
The associated discrete group which we will be concerned with is $\Gamma_{\mr{Univ}}$ which is the discrete group generated by $\exp(e_{i,j})$ for $1\le i\le s-1$ and $1\le j\le D_i$.
\end{definition}
\begin{remark*}
Note that in this definition, $G_{\mr{Univ}}^{\vec{D}}$ depends only on $\vec{D}$; however, the quotient we will consider later depends on $\vec{D}^{\ast},\vec{D}^{\mr{Lin}},\vec{D}^{\mr{Pet}}$. Furthermore, we have presented $G_{\mr{Univ}}$ as a Lie algebra and not as a Lie group; via the general theory of nilpotent Lie algebras this is sufficient. Note that the Lie algebra defined is trivially seen to be nilpotent. By the Birkhoff Embedding Theorem (see remark following \cite[Theorem~1.1.11]{CG90}), we may realize any real nilpotent Lie algebra $\mf{g}$ as a Lie subalgebra of the $n\times n$ real strictly upper triangular matrices. The proof of \cite[Theorem~1.2.1]{CG90} then realizes the $n\times n$ real strictly upper triangular matrices as a logarithm of a connected, simply connected Lie group $N_n$ where the exponential map is bijective. The Baker--Campbell--Hausdorff formula then demonstrates $\mf{g}$ is the logarithm of a connected, simply connected subgroup $G\leqslant N_n$ (and by construction the logarithm is a bijection between $G$ and $\mf{g}$). The group $G$ constructed is unique up to isomorphism by Lie's third theorem.
\end{remark*}

We first prove the fact that $G_{\mr{Univ}}$ may be given a degree-rank filtration and that $G_{\mr{Univ}}/\Gamma_{\mr{Univ}}$ has reasonable complexity.
\begin{lemma}\label{lem:universal-complexity}
Let $G_{\mr{Univ}} = G_{\mr{Univ}}^{\vec{D}}$ and define $(G_{\mr{Univ}})_{(d,r)}$ by taking the group generated by all $(r'-1)$-fold iterated commutators of $\exp(t_{i_1,j_1}e_{i_1,j_1}),\ldots,\exp(t_{i_{r'},j_{r'}}e_{i_{r'},j_{r'}})$ with $t_{i_k,j_k}\in\mb{R}$, and either $i_1+\cdots+i_{r'}>d$ or $i_1+\cdots+i_{r'}=d$ and $r'\ge r$.

Then $(G_{\mr{Univ}})_{(d,r)}$ forms a valid degree-rank $(s-1,r^\ast)$ filtration of $G_{\mr{Univ}}$. Furthermore the dimension of $G_{\mr{Univ}}$ is bounded by $O_s(\snorm{D}_{\infty}^{O_s(1)})$ and one may find an adapted Mal'cev basis $\mc{X}_{\mr{Univ}}$ such that the complexity of $G_{\mr{Univ}}/\Gamma_{\mr{Univ}}$ is at most $\exp(\snorm{D}_{\infty}^{O_s(1)})$.
\end{lemma}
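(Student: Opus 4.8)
The plan is to verify the two filtration axioms directly from the definitions and then construct the adapted Mal'cev basis by the standard ``first-kind coordinates indexed by iterated commutators'' recipe, controlling heights via the defining relations.

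First I would check that $(G_{\mr{Univ}})_{(d,r)}$ as defined is a $\mr{DR}$-filtration. Nesting is immediate: if $(d',r') \preceq (d,r)$ then any iterated commutator qualifying for $(d,r)$ also qualifies for $(d',r')$ (either the degree sum strictly exceeds $d'$, or it equals $d' = d$ and the fold count is $\geq r \geq r'$), so $(G_{\mr{Univ}})_{(d',r')} \geqslant (G_{\mr{Univ}})_{(d,r)}$. For the commutator condition $[(G_{\mr{Univ}})_{(d,r)}, (G_{\mr{Univ}})_{(d',r')}] \leqslant (G_{\mr{Univ}})_{(d+d',r+r')}$, I would invoke the second bullet of Lemma~\ref{lem:com-check} (as flagged in the remark after it): it suffices to check this on generators, i.e.\ commutators of a $(a-1)$-fold commutator $w$ (of generators with degree sum $\geq d$, using $\geq r$ generators if equality) with a $(b-1)$-fold commutator $w'$ (degree sum $\geq d'$, $\geq r'$ generators if equality), landing in the group generated by an $(a+b-1)$-fold commutator of $a+b$ generators with degree sum $\geq d+d'$; this qualifies for $(d+d',r+r')$ since if the degree sum equals $d+d'$ then both constituent sums must be exact, forcing $a \geq r$ and $b \geq r'$, hence $a+b \geq r+r'$. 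I must also verify the degree-rank \emph{filtration} normalizations of Definition~\ref{def:filt-prec}: $(G_{\mr{Univ}})_{(0,0)} = (G_{\mr{Univ}})_{(1,0)}$ (both equal $G_{\mr{Univ}}$ since every generator $e_{i,j}$ has $i \geq 1$) and $(G_{\mr{Univ}})_{(i,0)} = (G_{\mr{Univ}})_{(i,1)}$ for $i \geq 1$ (the fold-count condition $r' \geq 0$ versus $r' \geq 1$ is vacuous as every commutator uses $\geq 1$ generator). Finally, degree $\leq (s-1,r^\ast)$ follows because the two defining relations of Definition~\ref{def:universal-1} are exactly the statement that $(G_{\mr{Univ}})_{(d,r)}$ is trivial for $(d,r) \not\preceq (s-1,r^\ast)$.

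Next I would bound the dimension: a spanning set for $\log G_{\mr{Univ}}$ is obtained by taking all iterated commutators of the $e_{i,j}$, which by nilpotency and the relations is supported on commutators of at most $s-1$ generators; the number of such formal commutator expressions is $O_s(\snorm{D}_\infty^{O_s(1)})$, giving $\dim G_{\mr{Univ}} \leq O_s(\snorm{D}_\infty^{O_s(1)})$ after extracting a basis. To build an adapted Mal'cev basis, I would order a chosen basis of iterated commutators so that higher-degree-rank elements come last (consistent with the filtration $(G_{\mr{Univ}})_{(d,r)}$), and use the associated first-kind coordinates. That $\Gamma_{\mr{Univ}}$, generated by $\exp(e_{i,j})$, has integer coordinates of bounded height with respect to this basis follows from Baker--Campbell--Hausdorff: products and commutators of the $\exp(e_{i,j})$ expand into the chosen commutator basis with rational coefficients whose heights are controlled by the (bounded, in fact integer) structure constants arising from rewriting arbitrary iterated commutators in terms of the basis — and there are only $O_s(\dim^{O_s(1)})$ such coefficients, each of height $\exp(\snorm{D}_\infty^{O_s(1)})$. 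One then passes from this ``weak basis'' to a genuine Mal'cev basis with $\psi_{\mc X}(\Gamma_{\mr{Univ}}) = \mb{Z}^{\dim}$ at a further $\exp(\snorm{D}_\infty^{O_s(1)})$ cost, e.g.\ via \cite[Lemma~B.11]{Len23b} as is done elsewhere in the paper.

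The main obstacle I anticipate is the bookkeeping in the last step: making precise that the structure constants of $G_{\mr{Univ}}$ in the commutator basis are bounded (indeed, the free nilpotent Lie algebra on these generators has integer structure constants in the Hall/Lyndon basis, and the relations only \emph{kill} basis elements, so this is clean but must be stated carefully), and then tracking how height inflates through the weak-basis-to-Mal'cev-basis conversion and through verifying the basis is adapted to every $(G_{\mr{Univ}})_{(d,r)}$ simultaneously. This is entirely routine given the machinery already assembled (Lemmas~\ref{lem:commute}, \ref{lem:com-check}, Fact~\ref{fact:rat-subgroup}, and the cited lemmas from \cite{Len23b}), but the number of filtration levels and the need for one basis adapted to all of them is where care is required; I would handle it by choosing the commutator basis to be a refinement compatible with the full flag $(G_{\mr{Univ}})_{(d,r)}$ from the outset.
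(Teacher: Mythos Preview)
Your proposal is correct and follows essentially the same approach as the paper's proof: both verify the filtration axioms via Lemma~\ref{lem:com-check} and the normalizations $(G_{\mr{Univ}})_{(0,0)}=(G_{\mr{Univ}})_{(1,0)}$, $(G_{\mr{Univ}})_{(i,0)}=(G_{\mr{Univ}})_{(i,1)}$, bound the dimension by counting iterated commutators, build a basis from a selection of such commutators, verify the weak-basis rationality for $\Gamma_{\mr{Univ}}$ via Baker--Campbell--Hausdorff, and finish with \cite[Lemma~B.11]{Len23b}. The only cosmetic difference is that where you invoke the Hall/Lyndon basis of the free nilpotent Lie algebra (noting the defining relations merely kill basis elements), the paper instead observes directly that all linear relations among iterated commutators are generated by antisymmetry and Jacobi within each fixed ``type'' (multiset of participating $e_{i_\ell,j_\ell}$), so the problem decomposes into $O_s(1)$-sized components with $O_s(1)$-height relations---this is the same content in slightly different language.
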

\begin{proof}
We will be brief with details; that the associated filtration is valid follows via a straightforward computation with Lemma~\ref{lem:com-check}. Note $(G_{\mr{Univ}})_{(i,0)} = (G_{\mr{Univ}})_{(i,1)}$ as $r'\ge 1$ in the set of generators always. Also, $(G_{\mr{Univ}})_{(0,0)} = (G_{\mr{Univ}})_{(0,1)}$ since for all generators $e_{i,j}$ we have $i\ge 1$.

To establish the complexity bounds, the key point is noting that taking all $(r'-1)$-fold iterated commutators of $e_{i_1,j_1},\ldots,e_{i_{r'},j_{r'}}$ with $i_1 + \cdots + i_{r'}\le s-2$ or $i_1 + \cdots + i_{r'}=s-1$ and $r'\le r^\ast$ gives a spanning set for $\log(G_{\mr{Univ}})$. This immediately gives the specified dimension bound. These generators are not linearly independent; however, all relations are generated by either antisymmetry ($[x,y] + [y,x] = 0$) or the Jacobi identity ($[x,[y,z]] + [y,[z,x]] + [z,[x,y]] = 0$) applied to the set of generators specified.

To simplify matters, note that all linear relations can be reduced to those between these generators with the ``same type'' (i.e., relations between the set of $(r'-1)$-fold commutators of a given set of generators $e_{i_1,j_1},\ldots,e_{i_{r'},j_{r'}}$). These can be collected into disconnected non-interacting ``components'' which are $O_s(1)$ in size. We may take a linearly spanning set within each group; each generator not in the spanning set may be written as a linear combination of height $O_s(1)$. Define $\mc{X}$ to be the union of all these spanning elements in $\log(G_{\mr{Univ}})$. This gives us a basis. Note the subspaces $\log((G_{\mr{Univ}})_{(d,r)})$ are clearly compatible with natural subsets of these ``components'' and their associated spanning sets, demonstrating that the basis is appropriate adapted to these vector spaces $\log((G_{\mr{Univ}})_{(d,r)})$.

The last matter to check is that there exists $C_s\ge 1$ such that $C_{s}\mb{Z}^{\dim(G_{\mr{Univ}})} \subseteq \psi_{\mr{exp},\mc{X}}(\Gamma_{\mr{Univ}})\subseteq C_{s}^{-1}\mb{Z}^{\dim(G_{\mr{Univ}})}$. This follows by noting that each element $\gamma\in\Gamma$ may be written as 
\[\gamma = \prod_{k=1}^t\exp(e_{i_k,j_k})^{s_k}\]
with $s_k\in\mathbb{Z}$. We prove the first implication first; we prove that $\log(\gamma)$ may be written as a linear combination of iterated commutators where $(r'-1)$-fold commutators have denominator bounded by $C_s^{r'}$. This is trivial to prove inductively via Baker--Campbell--Hausdorff and noting that all $s$-fold commutators vanish.

For the reverse direction, consider expressions of the form
\[\gamma' = \exp\bigg(\sum_{\alpha} c_{\alpha} e_{\alpha}\bigg)\]
where $e_{\alpha}$ ranges over all possible iterated commutators (here e.g.~$e_{[(1,2),(1,3)]} := [e_{(1,2)},e_{(1,3)}]$) where $c_{\alpha}$ are sufficiently divisible integers. Let $f_{\alpha}$ be defined as the commutator of the exponential of associated elements; e.g.~$f_{[(1,2),(1,3)]} = [\exp(e_{(1,2)}),\exp(e_{(1,3)})]$. Choose a generator $\alpha'$ with the fewest number of commutators in $\gamma'$ such that $c_{\alpha'}\neq 0$. It is straightforward to see via Baker--Campbell--Hausdorff that there is an integer $M_s$ such that if $c_{\alpha}$ are all divisible by $M_s$ then  
\[f_{\alpha'}^{-c_{\alpha'}}\gamma' = \exp\bigg(\sum_{\alpha} c_{\alpha}^\ast e_{\alpha}\bigg)\]
has each $c_{\alpha}^\ast$ still divisible by $M_s$ and $c_{\alpha'}^\ast =0$ (without introducing backwards corrections).

The desired result then follows from \cite[Lemma~B.11]{Len23b}, noting that $(G_{\mr{Univ}})_{(d,r)}$ is the degree-rank ordering forming a nested sequence of subgroups.
\end{proof}

We now represent the nilsequences $\chi_h(n) = F(g_h(n))$ on the universal nilmanifold. We define
\begin{align*}
D_i^\ast &=\dim(W_{i,\ast}) + \dim(W_{i,\mr{Lin}}),\quad D_i^{\mr{Pet}} = \dim(W_{i,\mr{Pet}}) + \dim(G_{(i,2)}),\quad D_i^{\mr{Lin}} = d^\ast\dim(W_{i,\mr{Lin}}).
\end{align*}
Note that $D_i^{\mr{Lin}}\le(d\log(MD/\rho))^{O_s(1)}$ and trivially $D_i^{\ast}, D_i^{\mr{Lin}}\le d$.

Recall that $\mc{X} = \{X_1,\ldots,X_{\dim(G)}\}$ is the filtered Mal'cev basis and $Z_{i,j}^\ast$, $Z_{i,j}^{\mr{Pet}}$, $Z_{i,j}^{\mr{Lin}}$ are representative of $\log(W_{i,\ast})$, $\log(W_{i,\mr{Lin}})$, and $\log(W_{i,\mr{Pet}})$ respectively.  

We define a homomorphism $\phi\colon G_{\mr{Univ}}\to G$ by defining the map on generators. Define
\begin{align*}
\phi(\exp(e_{i,j})) = 
\begin{cases}
\exp(Z_{i,j}^\ast) &\text{ if } 1\le j\le\dim(W_{i,\ast}),\\
\exp(Z_{i,j - \dim(W_{i,\ast})}^{\mr{Lin}}) &\text{ if } \dim(W_{i,\ast}) + 1\le j\le\dim(W_{i,\ast}) + \dim(W_{i,\mr{Lin}})=D_i^\ast,\\
\exp(Z_{i,\ell}^{\mr{Lin}}) &\text{ if } 1 + (\ell-1)d^\ast\le j - D_i^\ast\le\ell d^\ast\text{ for }1\le\ell\le\dim(W_{i,\mr{Lin}}),\\
\exp(Z_{i,j - D_i^\ast-D_i^{\mr{Lin}}}^{\mr{Pet}}) &\text{ if } D_i^{\ast}+D_i^{\mr{Lin}} + 1\le j\le D_i^{\ast}+D_i^{\mr{Lin}} + \dim(W_{i,\mr{Pet}}),\\
\exp(X_{j - D_i + \dim(G)}) &\text{ if } D_i - \dim(G_{(i,2)}) + 1\le j\le D_i.
\end{cases}
\end{align*}
That this is a homomorphism is an immediate consequence of the fact that the only relations on the universal nilmanifold are forced on the group $G$ since it has degree-rank $(s-1,r^\ast)$.

The function with which will be concerned is
\[\wt{F}(g\Gamma_{\mr{Univ}}) := F(\phi(g)\Gamma).\]
This is well-defined since $\phi(\Gamma_{\mr{Univ}})\leqslant\Gamma$; it suffices to check that the generators $\exp(e_{i,j})$ map to within $\Gamma$ but this is trivial by construction. (This is precisely why we scaled $Z_{i,\cdot}^\ast$, $Z_{i,\cdot}^{\mr{Lin}}$, and $Z_{i,\cdot}^{\mr{Pet}}$ so that when exponentiated they live within $\Gamma$.)

We now note a series of basic properties of $\wt{F}$ and the homomorphism $\phi$. 

\begin{lemma}\label{lem:lift-basic}
Given the above setup we have:
\begin{itemize}
    \item $\snorm{\wt{F}}_2 = 1$ for all $g\in G_{\mr{Univ}}$;
    \item $F$ has a vertical frequency $\eta_{\mr{Univ}}$ with height at most $(MD/\rho)^{O_s(\dim(G_{\mr{Univ}})^{O_s(1)})}$;
    \item $\wt{F}$ is $(MD/\rho)^{O_s(\dim(G_{\mr{Univ}})^{O_s(1)})}$-Lipschitz
    \item Consider $e_{i_1,j_1},\ldots,e_{i_{r^\ast},j_{r^\ast}}$ with $j_1 + \cdots + j_{r^\ast} = s-1$. If for at least one index $\ell$ we have $j_\ell>D_{i_\ell}^\ast + D_{i_\ell}^{\mr{Lin}}$, then 
    \[\eta_{\mr{Univ}}([\exp(e_{i_1,j_1}),\ldots,\exp(e_{i_{r^\ast},j_{r^\ast})}])= 0.\]
    Furthermore, if instead for two indices $\ell_1,\ell_2$ we have $j_{\ell_1}>D_{i_{\ell_1}}^\ast$ and $j_{\ell_2}>D_{i_{\ell_2}}^\ast$ then 
    \[\eta_{\mr{Univ}}([\exp(e_{i_1,j_1}),\ldots,\exp(e_{i_{r^\ast},j_{r^\ast}})]) = 0.\]
\end{itemize}
\end{lemma}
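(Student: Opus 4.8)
\textbf{Proof plan for Lemma~\ref{lem:lift-basic}.} The four assertions are essentially bookkeeping consequences of the construction of $\phi$, $\wt F$, and the universal nilmanifold, so the plan is to verify each in turn, keeping careful track of complexity.

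First I would dispose of the norm claim: since $\snorm{F}_2 = 1$ pointwise on $G/\Gamma$ (because $F$ underlies a nilcharacter, as recorded in the unwinding of Lemma~\ref{lem:linear-output}), and $\wt F(g\Gamma_{\mr{Univ}}) = F(\phi(g)\Gamma)$ is just a pullback, we get $\snorm{\wt F(g\Gamma_{\mr{Univ}})}_2 = \snorm{F(\phi(g)\Gamma)}_2 = 1$ for all $g$. For the vertical frequency claim, I would use that $F$ has $G_{(s-1,r^\ast)}$-vertical frequency $\eta$, and that $\phi$ maps $(G_{\mr{Univ}})_{(s-1,r^\ast)}$ into $G_{(s-1,r^\ast)}$ (this is immediate since $\phi$ is a filtered homomorphism: it sends each generator $e_{i,j}$ to an element of $G_{(i,0)}$, so an $(r'-1)$-fold commutator of generators with degrees summing to $d$ and $r'\ge r$ lands in $G_{(d,r)}$). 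Hence $\eta_{\mr{Univ}} := \eta\circ\phi$ restricted to $(G_{\mr{Univ}})_{(s-1,r^\ast)}$ is the desired vertical frequency: $\wt F(g_{s-1,r^\ast}\cdot g) = F(\phi(g_{s-1,r^\ast})\phi(g)) = e(\eta(\phi(g_{s-1,r^\ast})))\wt F(g) = e(\eta_{\mr{Univ}}(g_{s-1,r^\ast}))\wt F(g)$, and one checks $\eta_{\mr{Univ}}(\Gamma_{\mr{Univ}}\cap(G_{\mr{Univ}})_{(s-1,r^\ast)})\subseteq\mb{Z}$ since $\phi(\Gamma_{\mr{Univ}})\leqslant\Gamma$. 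The height bound follows by expressing $\eta_{\mr{Univ}}$ in Mal'cev coordinates of $G_{\mr{Univ}}$ (via $\mc{X}_{\mr{Univ}}$ from Lemma~\ref{lem:universal-complexity}), using that $\phi$ has entries of height $(MD/\rho)^{O_s(d^{O_s(1)})}$ (the $Z_{i,j}^{\bullet}$ are such combinations of $\mc{X}$) and that compositions of polynomial coordinate maps on groups of complexity $M'$ and dimension $d'$ have height blown up by $(M')^{O_s((d')^{O_s(1)})}$; here $d' = O_s(\dim(G_{\mr{Univ}})^{O_s(1)})$.

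The Lipschitz bound is proved the same way: $\wt F = F\circ\phi$ where $F$ is $M'$-Lipschitz and $\phi$, when written in Mal'cev coordinates, is a polynomial map whose coefficients have height $(MD/\rho)^{O_s(d^{O_s(1)})}$, so $\phi$ is Lipschitz with constant $(MD/\rho)^{O_s(\dim(G_{\mr{Univ}})^{O_s(1)})}$ on the relevant compact piece (one uses \cite[Lemma~B.3]{Len23b}-type estimates bounding the effect of a bounded-height linear/polynomial map on the metric $d_{G,\mc{X}}$, together with the fact that the metric on $G_{\mr{Univ}}/\Gamma_{\mr{Univ}}$ is controlled by $\mc{X}_{\mr{Univ}}$). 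Composing Lipschitz maps multiplies the constants, giving the claim.

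The last bullet is the only part with genuine content, and I expect it to be the main obstacle—though even here the work was essentially done in Lemma~\ref{lem:linear-output}. The idea: $\eta_{\mr{Univ}}([\exp(e_{i_1,j_1}),\ldots,\exp(e_{i_{r^\ast},j_{r^\ast}})]) = \eta([\phi(\exp(e_{i_1,j_1})),\ldots,\phi(\exp(e_{i_{r^\ast},j_{r^\ast}}))])$ since $\phi$ is a homomorphism. Now $\phi(\exp(e_{i,j}))$ is, modulo $G_{(i,2)}$, a vector lying in $W_{i,\ast}$, $W_{i,\mr{Lin}}$, or $W_{i,\mr{Pet}}$ according to the range of $j$: the ranges $1\le j\le\dim(W_{i,\ast})$ give $W_{i,\ast}$, the ranges $\dim(W_{i,\ast})<j\le D_i^\ast$ and $D_i^\ast<j\le D_i^\ast+D_i^{\mr{Lin}}$ both give $W_{i,\mr{Lin}}$, the range $D_i^\ast+D_i^{\mr{Lin}}<j\le D_i^\ast+D_i^{\mr{Lin}}+\dim(W_{i,\mr{Pet}})$ gives $W_{i,\mr{Pet}}$, and the remaining range (images of $X_{j-D_i+\dim(G)}$ with the relevant index) lands in $G_{(i,2)}$, on which $\eta$ of an $(r^\ast-1)$-fold commutator with the others vanishes trivially since such a commutator then lies in $[G_{(i,2)},\ldots]\leqslant G_{(s-1,r^\ast+1)}$ — wait, more carefully: if one input is in $G_{(i,2)}$ then the iterated commutator lies in a subgroup of degree-rank exceeding $(s-1,r^\ast)$ so it is trivial (here I would invoke that the filtration has degree-rank $(s-1,r^\ast)$ and the remark after the statement of Lemma~\ref{lem:linear-output}). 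Since the iterated commutator bracket descends to a multilinear form on the $G_{(i,1)}/G_{(i,2)}$ (again the remark after Lemma~\ref{lem:sunflower}/Lemma~\ref{lem:linear-output}), we reduce to the case where each $\phi(\exp(e_{i_\ell,j_\ell}))\bmod G_{(i_\ell,2)}$ lies in one of $W_{i_\ell,\ast},W_{i_\ell,\mr{Lin}},W_{i_\ell,\mr{Pet}}$. If some $j_\ell > D_{i_\ell}^\ast+D_{i_\ell}^{\mr{Lin}}$ then the corresponding vector is in $W_{i_\ell,\mr{Pet}}$ (or $G_{(i_\ell,2)}$, already handled), and the first vanishing clause of the last bullet of Lemma~\ref{lem:linear-output} applies. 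If two indices have $j_{\ell_1}>D_{i_{\ell_1}}^\ast$ and $j_{\ell_2}>D_{i_{\ell_2}}^\ast$, then each corresponding vector lies in $W_{i_\ell,\mr{Lin}}+W_{i_\ell,\mr{Pet}}$; splitting multilinearly, any term with a $W_{\mr{Pet}}$ component is killed by the first clause, and the remaining term has two $W_{\mr{Lin}}$ components, killed by the second clause of that bullet. This completes the proof.
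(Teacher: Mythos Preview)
Your proposal is correct and follows essentially the same approach as the paper's proof: both set $\eta_{\mr{Univ}}=\eta\circ\phi$ via the fact that $\phi$ is a filtered homomorphism, both trace the height and Lipschitz bounds to the bounded-height coefficients of the $Z_{i,j}^{\bullet}$, and both derive the final vanishing from the clauses at the end of Lemma~\ref{lem:linear-output}. Your write-up of the last bullet is actually more explicit than the paper's (which dispatches it in two sentences); your handling of the $G_{(i,2)}$ range via $G_{(s-1,r^\ast+1)}=\mr{Id}_G$ and the multilinear splitting is exactly what the paper's terse remark ``the additional generators which are lifted to the `petal' position on the $i$-th level come from $G_{(i,2)}$'' is gesturing at.
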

\begin{proof}
The first property is trivial. For the second property, note that $\phi$ is an $I$-filtered homomorphism (e.g.~$\phi((G_{\mr{Univ}})_{(s,r^\ast-1)}) \leqslant G_{(s,r^\ast-1)}$). Thus given $g\in G_{\mr{Univ}}$, $g'\in(G_{\mr{Univ}})_{(s,r^\ast-1)}$ we have 
\[\wt{F}(gg'\Gamma_{\mr{Univ}}) = F(\phi(g)\phi(g')\Gamma) = e(\eta(\phi(g')))F(\phi(g)\Gamma)\]
and therefore we may set $\eta_{\mr{Univ}} = \eta\circ\phi$. To check the complexity of $\eta_{\mr{Univ}}$ it suffices to check the magnitude of $\eta_{\mr{Univ}}$ on $[\exp(e_{i_1,j_1}),\ldots,\exp(e_{i_{r^\ast},j_{r^\ast}})]$ where we use Remark~\ref{rem:height} to convert between this notion and the notion of height defined. The resulting magnitude is bounded because $Z_{i,j}^\ast$, $Z_{i,j}^{\mr{Lin}}$, $Z_{i,j}^{\mr{Pet}}$ are appropriately bounded integral combinations of elements in $\mc{X}$ which itself has bounded complexity.

We omit a careful justification that $\wt{F}$ has an appropriately bounded Lipchitz constant. The crucial point is that the Mal'cev basis constructed in Lemma~\ref{lem:universal-complexity} is made up of appropriately bounded linear combinations of commutators of $e_{i_1,j_1},\ldots, e_{i_r,j_r}$ and each such commutator is seen to map to a bounded element of $G$ since $\phi$ maps each generator to a bounded element.
 
The final property is an immediate consequence of the properties of $W_{i,\mr{Lin}}$, $W_{i,\mr{Pet}}$, and $W_{i,\ast}$ established in Lemma~\ref{lem:linear-output} and recorded above. The additional generators which are lifted to the ``petal'' position on the $i$-th level come from $G_{(i,2)}$ and otherwise we have only artificially placed certain elements in the ``linear'' class upward to the ``$\ast$'' class. (These will correspond to the constant terms in the linear part of the nilsequences.)
\end{proof}

We now lift the polynomial sequences in question to the universal nilmanifold. We define:
\begin{align*}
g_h^{\ast,\mr{Univ}}(n) &= \prod_{i=1}^{s-1} \prod_{j=1}^{\dim(W_{i,\ast})} \exp(e_{i,j})^{z_{i,j}^\ast\cdot\frac{n^i}{i!}}  \prod_{i=1}^{s-1} \prod_{j=1}^{\dim(W_{i,\mr{Lin}})} \exp(e_{i,j +  \dim(W_{i,\ast})})^{\gamma_{i,j}\cdot\frac{n^{i}}{i!}},\\
g_h^{\mr{Lin},\mr{Univ}}(n)  &= \prod_{i=1}^{s-1} \prod_{j=1}^{\dim(W_{i,\mr{Lin}})} \prod_{k=1}^{d^\ast}\exp(e_{i,D_i^\ast + (j-1)d^\ast+k})^{\alpha_{i,j,k}\{\beta_k h\}\cdot\frac{n^i}{i!}},\\
g_h^{\mr{Pet},\mr{Univ}}(n) &=\prod_{i=1}^{s-1} \prod_{j=1}^{\dim(W_{i,\mr{Pet}})} \exp(e_{i,j + D_i^\ast + D_i^{\mr{Lin}}})^{z_{i,j}^{h,\mr{Pet}}\cdot\frac{n^i}{i!}},\\
g_h^{\mr{Rem},\mr{Univ}}(n)  &= \prod_{i=1}^{s-1} \prod_{j=1}^{\dim(G_{(i,2)})}\exp(e_{i,j + D_i - \dim(G_{(i,2)})})^{\kappa_{i,j}^h\cdot\frac{n^i}{i!}}.
\end{align*}
We define 
\[g_h^{\mr{Univ}} := g_h^{\ast,\mr{Univ}} \cdot g_h^{\mr{Lin},\mr{Univ}} \cdot g_h^{\mr{Pet},\mr{Univ}} \cdot g_h^{\mr{Rem},\mr{Univ}}.\]

The key claim, which is trivial by construction, is the following equality.
\begin{claim}\label{clm:univ-lift}
Given the above setup, we have 
\[\wt{F}(g_h^{\mr{Univ}}(n)\Gamma_{\mr{Univ}}) = F(g_h(n)\Gamma) = \chi_h(n).\]
\end{claim}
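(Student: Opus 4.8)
\textbf{Proof proposal for Claim~\ref{clm:univ-lift}.}

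The plan is to verify the identity $\wt F(g_h^{\mr{Univ}}(n)\Gamma_{\mr{Univ}}) = F(g_h(n)\Gamma)$ simply by tracking the definition of $\wt F$ and the homomorphism $\phi$ through the factorized form of $g_h^{\mr{Univ}}$. Recall $\wt F(g\Gamma_{\mr{Univ}}) = F(\phi(g)\Gamma)$, so it suffices to prove the polynomial-sequence identity $\phi(g_h^{\mr{Univ}}(n)) = g_h(n)$ in $G$. Since $\phi$ is a homomorphism and $g_h^{\mr{Univ}} = g_h^{\ast,\mr{Univ}} \cdot g_h^{\mr{Lin},\mr{Univ}} \cdot g_h^{\mr{Pet},\mr{Univ}} \cdot g_h^{\mr{Rem},\mr{Univ}}$ while $g_h = g_h^\ast \cdot g_h^{\mr{Lin}} \cdot g_h^{\mr{Pet}} \cdot g_h^{\mr{Rem}}$, it is enough to check the four factor-by-factor identities
\[\phi(g_h^{\ast,\mr{Univ}}(n)) = g_h^\ast(n),\quad \phi(g_h^{\mr{Lin},\mr{Univ}}(n)) = g_h^{\mr{Lin}}(n),\quad \phi(g_h^{\mr{Pet},\mr{Univ}}(n)) = g_h^{\mr{Pet}}(n),\quad \phi(g_h^{\mr{Rem},\mr{Univ}}(n)) = g_h^{\mr{Rem}}(n).\]

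Each of these is a direct unwinding. For the $\ast$-part: $g_h^{\ast,\mr{Univ}}(n)$ is a product over $1\le i\le s-1$, $1\le j\le\dim(W_{i,\ast})$ of $\exp(e_{i,j})^{z_{i,j}^\ast n^i/i!}$ followed by a product over $1\le j\le\dim(W_{i,\mr{Lin}})$ of $\exp(e_{i,j+\dim(W_{i,\ast})})^{\gamma_{i,j}n^i/i!}$. Since $\phi$ is a homomorphism it commutes with products and with taking real powers ($\phi(\exp(tX)) = \exp(t\log\phi(\exp(X)))$ for $t\in\mb{R}$), and by the definition of $\phi$ on generators we have $\phi(\exp(e_{i,j})) = \exp(Z_{i,j}^\ast)$ for $1\le j\le\dim(W_{i,\ast})$ and $\phi(\exp(e_{i,j+\dim(W_{i,\ast})})) = \exp(Z_{i,j}^{\mr{Lin}})$ for $1\le j\le\dim(W_{i,\mr{Lin}})$ (this is the second case in the definition of $\phi$, since $\dim(W_{i,\ast})+1\le j+\dim(W_{i,\ast})\le D_i^\ast$). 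This yields exactly the defining product for $g_h^\ast(n)$. The $\mr{Lin}$-part is identical, using the third case of the definition of $\phi$: the index $j = D_i^\ast + (j'-1)d^\ast + k$ (with $1\le j'\le\dim(W_{i,\mr{Lin}})$, $1\le k\le d^\ast$) satisfies $1+(j'-1)d^\ast \le j - D_i^\ast \le j'd^\ast$, so $\phi(\exp(e_{i,D_i^\ast+(j'-1)d^\ast+k})) = \exp(Z_{i,j'}^{\mr{Lin}})$; collecting the exponents $\alpha_{i,j',k}\{\beta_k h\}n^i/i!$ over $k$ reproduces the exponent $(z_{i,j'}^{h,\mr{Lin}} - \gamma_{i,j'})n^i/i!$ in the definition of $g_h^{\mr{Lin}}$ since $z_{i,j'}^{h,\mr{Lin}} - \gamma_{i,j'} = \sum_{k=1}^{d^\ast}\alpha_{i,j',k}\{\beta_k h\}$. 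For the $\mr{Pet}$-part, the fourth case of $\phi$ gives $\phi(\exp(e_{i,j+D_i^\ast+D_i^{\mr{Lin}}})) = \exp(Z_{i,j}^{\mr{Pet}})$ for $1\le j\le\dim(W_{i,\mr{Pet}})$, matching $g_h^{\mr{Pet}}(n)$ term by term. For the $\mr{Rem}$-part, the fifth case gives $\phi(\exp(e_{i,j+D_i-\dim(G_{(i,2)})})) = \exp(X_{j-\dim(G_{(i,2)})+\dim(G)})$... one checks the index bookkeeping: in $g_h^{\mr{Rem},\mr{Univ}}$ the generator index is $j + D_i - \dim(G_{(i,2)})$ with $1\le j\le\dim(G_{(i,2)})$, and under the fifth case of $\phi$ (valid since $D_i - \dim(G_{(i,2)})+1 \le j+D_i-\dim(G_{(i,2)})\le D_i$) this maps to $\exp(X_{(j+D_i-\dim(G_{(i,2)}))-D_i+\dim(G)}) = \exp(X_{j+\dim(G)-\dim(G_{(i,2)})})$, with the same exponent $\kappa_{i,j}^h n^i/i!$ appearing in the definition of $g_h^{\mr{Rem}}$ via Lemma~\ref{lem:Taylor-mod}. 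Multiplying the four identities together gives $\phi(g_h^{\mr{Univ}}(n)) = g_h(n)$, hence $\wt F(g_h^{\mr{Univ}}(n)\Gamma_{\mr{Univ}}) = F(\phi(g_h^{\mr{Univ}}(n))\Gamma) = F(g_h(n)\Gamma) = \chi_h(n)$.

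There is essentially no obstacle here — the claim is, as stated in the excerpt, ``trivial by construction,'' and the only thing to be careful about is the index bookkeeping in the definition of $\phi$ (which $j$-range lands in which case) and the fact that $\phi$ being a homomorphism intertwines real powers and products. The one substantive input already recorded is that $\phi$ is well-defined as a homomorphism, which holds because the only relations in $G_{\mr{Univ}}$ (vanishing of commutators of total weight exceeding $s-1$, or of weight exactly $s-1$ with rank exceeding $r^\ast$) are automatically satisfied by the images in $G$ since $G$ has degree-rank $(s-1,r^\ast)$. I would present the proof as the single displayed chain of equalities $\wt F(g_h^{\mr{Univ}}(n)\Gamma_{\mr{Univ}}) = F(\phi(g_h^{\mr{Univ}}(n))\Gamma) = F(g_h(n)\Gamma) = \chi_h(n)$, with the middle equality justified by the four factor identities above.
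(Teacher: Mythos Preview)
Your proposal is correct and follows exactly the paper's approach: reduce to $\phi(g_h^{\mr{Univ}}) = g_h$ and verify this factor by factor for the $\ast$, $\mr{Lin}$, $\mr{Pet}$, and $\mr{Rem}$ pieces using the definition of $\phi$ on generators. The paper's proof is essentially the one-line version of what you wrote, asserting the four identities ``by construction'' and then invoking that $\phi$ is a homomorphism; your added index bookkeeping is a faithful unwinding of this.
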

\begin{proof}
The final equality is by definition of $\chi_h(n)$. The first equality follows by checking that $\phi(g_h^{\ast,\mr{Univ}}) = g_h^{\ast},~\phi(g_h^{\mr{Lin},\mr{Univ}}) = g_h^{\mr{Lin}},~\phi(g_h^{\mr{Pet},\mr{Univ}}) = g_h^{\mr{Pet}},~\text{ and }\phi(g_h^{\mr{Rem},\mr{Univ}}) = g_h^{\mr{Rem}}$ by construction.
Therefore since $\phi$ is a homomorphism we conclude that $\phi(g_h^{\mr{Univ}}) = g_h$.
\end{proof}

Note that at this stage we have simply replace the group $G$ in our correlation structure with $G^{\mr{Univ}}$ as the cost of replacing $d$ by $\dim(G_{\mr{Univ}}) = d^{O_s(1)}\log(MD\rho^{-1})^{O_s(1)}$ and $M$ by $\exp(d^{O_s(1)}\log(MD\rho^{-1})^{O_s(1)})$. 

This may seem as if we have gone backwards, the key point is that in Lemma~\ref{lem:lift-basic} we have encoded various ``vanishing conditions'' on the commutator brackets at the level of the generators of the group. This will allow us to translate the ``vanishing conditions'' obtained in Lemma~\ref{lem:linear-output} into realizing we can, up to a degree-rank $(s-1,r^\ast-1)$-error term.

\subsection{Passing to a quotient nilmanifold}\label{sub:quot-def}
We now construct two additional nilmanifolds; there are essentially $G^\ast$ and $\wt{G}$ certain quotients constructed in \cite[Section~12]{GTZ12}.\footnote{There is a minor issue in \cite[p.~1309]{GTZ12} when defining $G^\ast$; we follow the definitions given in the erratum \cite{GTZ24}.}

\begin{definition}\label{def:univer-2}
We define $G_{\mr{Rel}} = G_{\mr{Rel}}^{\vec{D}^{\ast},\vec{D}^{\mr{Lin}},\vec{D}^{\mr{Pet}}}$ as the Lie subgroup of $G_{\mr{Univ}}$ where $\log(G_{\mr{Rel}})$ is spanned by:
\begin{itemize}
    \item Any $(r-1)$-fold commutator of $e_{i_1,j_1},\ldots, e_{i_r,j_r}$ with at least one index $\ell$ such that $j_\ell>D^\ast_{i_\ell} + D^{\mr{Lin}}_{i_\ell}$;
    \item Any $(r-1)$-fold commutator of $e_{i_1,j_1},\ldots, e_{i_r,j_r}$ with $j_\ell>D^\ast_{i_\ell}$ for at least two distinct indices $\ell$.
\end{itemize}
We then define $G_{\mr{Quot}}$ as $G_{\mr{Quot}}:= G_{\mr{Univ}}/G_{\mr{Rel}}$ and $\Gamma_{\mr{Quot}} = \Gamma_{\mr{Univ}}/(\Gamma_{\mr{Univ}}\cap G_{\mr{Rel}})$.
\end{definition}
\begin{remark}\label{rem:univ-2}
Note that we may set $r=1$ in the definition of $G_{\mr{Rel}}$; in particular $\exp(e_{i,j})\in G_{\mr{Rel}}$ for $j>D^\ast_{i} + D^{\mr{Lin}}_i$. Additionally, $\log(G_{\mr{Quot}})$ may be realized as the following. Consider formal generators of a Lie algebra, $\wt{e}_{i,j}$ for $1\le j\le D_i^\ast + D_{i}^{\mr{Lin}}$, with the property that:
\begin{itemize}
    \item Any $(r-1)$-fold commutator of $\wt{e}_{i_1,j_1},\ldots, \wt{e}_{i_r,j_r}$ with either $i_1 + \cdots + i_r > (s-1)$ or $i_1 + \cdots + i_r = (s-1)$ and $r>r^\ast$ vanishes;
    \item Any $(r-1)$-fold commutator of $\wt{e}_{i_1,j_1},\ldots, \wt{e}_{i_r^{\ast},j_r^\ast}$ with $j_\ell>D^\ast_{i_\ell}$ for at least two distinct indices $\ell$ vanishes.
\end{itemize}
This realization is given by taking $\wt{e}_{i,j} := \log(\exp(e_{i,j})\imod G_{\mr{Rel}})$.
\end{remark}

We first check that $G_{\mr{Quot}}$ is well-defined.
\begin{claim}\label{clm:normal}
For $\vec{D}^{\ast},\vec{D}^{\mr{Lin}},\vec{D}^{\mr{Pet}}\in (\mb{Z}_{\ge 0})^{s-1}$, $G_{\mr{Rel}}$ is a well-defined normal subgroup of $G_{\mr{Univ}}$. 
\end{claim}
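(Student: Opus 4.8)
The plan is to verify that $\log(G_{\mr{Rel}})$ is a Lie ideal of $\log(G_{\mr{Univ}})$; since $G_{\mr{Univ}}$ is a connected, simply connected nilpotent Lie group, this immediately yields that $G_{\mr{Rel}} = \exp(\log(G_{\mr{Rel}}))$ is a closed normal (indeed connected) subgroup, so that the quotient $G_{\mr{Quot}} = G_{\mr{Univ}}/G_{\mr{Rel}}$ is a well-defined connected, simply connected nilpotent Lie group. Both conditions defining the spanning set of $\log(G_{\mr{Rel}})$ are phrased at the level of iterated commutators of the generators $e_{i,j}$, so the natural route is to invoke Lemma~\ref{lem:com-check} (the commutator-check-at-generators lemma): it suffices to show that $[e_{i,j}, w] \in \log(G_{\mr{Rel}})$ for every generator $e_{i,j}$ and every spanning commutator $w$ of $\log(G_{\mr{Rel}})$, and that $\log(G_{\mr{Rel}})$ is closed under the bracket (which is automatic since $G_{\mr{Univ}}$ is nilpotent and the set of iterated commutators is closed under taking one more bracket with a generator).

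The key step is the following bookkeeping observation. Write $V := \log(G_{\mr{Rel}})$, spanned by (a) all iterated commutators $[e_{i_1,j_1},\dots,e_{i_r,j_r}]$ (in any bracketing) with at least one index $\ell$ having $j_\ell > D^\ast_{i_\ell} + D^{\mr{Lin}}_{i_\ell}$, and (b) all such commutators with at least two distinct indices $\ell$ having $j_\ell > D^\ast_{i_\ell}$. Now bracket any such generator-commutator $w$ with another generator $e_{i_0,j_0}$: by the Jacobi identity and antisymmetry, $[e_{i_0,j_0}, w]$ is a linear combination of iterated commutators of the multiset $\{e_{i_0,j_0}, e_{i_1,j_1},\dots,e_{i_r,j_r}\}$, and each such commutator still contains all of the original generators $e_{i_1,j_1},\dots,e_{i_r,j_r}$. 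Hence if $w$ already satisfied condition (a) — some $j_\ell > D^\ast_{i_\ell}+D^{\mr{Lin}}_{i_\ell}$ — then every term in $[e_{i_0,j_0},w]$ inherits that same index $\ell$ and so lies in $V$; and likewise condition (b) — two indices $\ell_1 \neq \ell_2$ with $j_{\ell_\bullet} > D^\ast_{i_{\ell_\bullet}}$ — is preserved. One must also check the boundary cases where the new bracket vanishes by the degree-rank relations of $G_{\mr{Univ}}$ (Definition~\ref{def:universal-1}): those terms are simply $0$, which lies in $V$ trivially. This shows $[\log(G_{\mr{Univ}}), V] \subseteq V$, i.e.\ $V$ is an ideal, and Lemma~\ref{lem:com-check} packages this into the statement that $G_{\mr{Rel}}$ is normal in $G_{\mr{Univ}}$ (one checks the hypotheses with $A$ the symmetrized generating set of $G_{\mr{Univ}}$ and $B$ the symmetrized spanning-commutator set of $G_{\mr{Rel}}$, taking $L = G_{\mr{Rel}}$ itself).

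I would then remark that $\Gamma_{\mr{Rel}} := \Gamma_{\mr{Univ}} \cap G_{\mr{Rel}}$ is cocompact in $G_{\mr{Rel}}$ — this follows because $G_{\mr{Rel}}$ is spanned by integer-height commutators of the $e_{i,j}$, hence is a rational subgroup with respect to the Mal'cev basis $\mc{X}_{\mr{Univ}}$ from Lemma~\ref{lem:universal-complexity}, so $\Gamma_{\mr{Quot}} = \Gamma_{\mr{Univ}}/\Gamma_{\mr{Rel}}$ is a genuine discrete cocompact subgroup of $G_{\mr{Quot}}$ and $G_{\mr{Quot}}/\Gamma_{\mr{Quot}}$ is a bona fide nilmanifold. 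The main (and only mild) obstacle is purely notational: one must set up the Jacobi-identity expansion carefully enough to see that the ``surviving index'' conditions (a) and (b) are genuinely monotone under taking one more commutator, and handle the degenerate cases where coincident indices or the degree-rank relations collapse a term to zero. Everything else is a direct application of Lemma~\ref{lem:com-check} together with the explicit generator description in Remark~\ref{rem:univ-2}.
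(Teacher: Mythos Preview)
Your proposal is correct and follows essentially the same route as the paper: verify that $\log(G_{\mr{Rel}})$ is a Lie ideal by observing that bracketing an iterated commutator $w$ with any generator $e_{i_0,j_0}$ yields another iterated commutator that still contains every generator appearing in $w$, so the ``petal'' and ``linear'' index conditions are preserved. Two minor simplifications: you do not need the Jacobi identity here, since $[e_{i_0,j_0},w]$ is already literally an $r$-fold commutator of $e_{i_0,j_0},e_{i_1,j_1},\ldots,e_{i_r,j_r}$; and Lemma~\ref{lem:com-check} is a group-level statement which is superfluous once you have the Lie-ideal property (for connected simply connected nilpotent groups, ideals correspond exactly to normal closed subgroups), which is how the paper phrases it.
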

\begin{proof}
It is clear from definition that $\log(G_{\mr{Rel}})$ is closed under brackets, so forms a Lie subalgebra within $\log(G_{\mr{Univ}})$. Thus $G_{\mr{Rel}}$ is indeed a Lie subgroup. To prove that $G_{\mr{Rel}}$ is normal it suffices to prove that it is furthermore a Lie algebra ideal, i.e., $[\log(G_{\mr{Univ}}),\log(G_{\mr{Rel}})]\leqslant \log(G_{\mr{Rel}})$.

Recall that $\log(G_{\mr{Univ}})$ is spanned by all the $(r-1)$-fold commutators $e_{i_1,j_1},\ldots, e_{i_r,j_r}$ (although as discussed in Lemma~\ref{lem:universal-complexity} this is not a basis). It suffices to check the containment at the level of generators of the respective Lie algebras. The result then follows since taking a commutator does not decrease the number of ``petal'' or ``linear'' generators.
\end{proof}

We also have the following complexity bound on $G_{\mr{Quot}}$. This may be done via the Lie algebra presentation given in Remark~\ref{rem:univ-2} and repeating the proof in Lemma~\ref{lem:universal-complexity}, or via noting that $G_{\mr{Rel}}$ is a sufficient rational subgroup of $G_{\mr{Univ}}$ and applying Lemma~\ref{lem:quotient-rat}. We omit the details.

\begin{lemma}\label{lem:universal-complexity-2}
Given the above setup, let $G_{\mr{Quot}} = G_{\mr{Quot}}^{\vec{D}}$ and note that $G_{\mr{Quot}}$ has a degree-rank $(s-1,r^\ast)$ filtration given by 
\[(G_{\mr{Quot}})_{(d,r)} = (G_{\mr{Univ}})_{(d,r)}/((G_{\mr{Univ}})_{(d,r)}\cap G_{\mr{Rel}}).\]
Furthermore the dimension of $G_{\mr{Univ}}$ is bounded by $O_s(\snorm{D}_{\infty}^{O_s(1)})$ and one may find an adapted Mal'cev basis $\mc{X}_{\mr{Quot}}$ such that the complexity of $G_{\mr{Quot}}/\Gamma_{\mr{Quot}}$ is $\exp(\snorm{D}_{\infty}^{O_s(1)})$.
\end{lemma}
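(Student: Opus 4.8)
The plan is to follow the second route indicated in the discussion preceding the statement: realize $G_{\mr{Quot}}$ as a quotient of $G_{\mr{Univ}}$ by a bounded-rationality normal subgroup and invoke Lemma~\ref{lem:quotient-rat}. First I would verify that the proposed collection $(G_{\mr{Quot}})_{(d,r)} = (G_{\mr{Univ}})_{(d,r)}/((G_{\mr{Univ}})_{(d,r)}\cap G_{\mr{Rel}})$ is a valid degree-rank $(s-1,r^\ast)$ filtration of $G_{\mr{Quot}}$. The subgroups are well-defined because $G_{\mr{Rel}}$ is normal in $G_{\mr{Univ}}$ by Claim~\ref{clm:normal}; nesting is immediate from nesting of the $(G_{\mr{Univ}})_{(d,r)}$; and the commutator condition $[(G_{\mr{Quot}})_{(d_1,r_1)},(G_{\mr{Quot}})_{(d_2,r_2)}]\leqslant(G_{\mr{Quot}})_{(d_1+d_2,r_1+r_2)}$ follows by applying the surjective quotient homomorphism $q\colon G_{\mr{Univ}}\to G_{\mr{Quot}}$ to the corresponding containment in $G_{\mr{Univ}}$ established in Lemma~\ref{lem:universal-complexity}. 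The degree $\le(s-1,r^\ast)$ property and the normalizations $(G_{\mr{Quot}})_{(i,0)}=(G_{\mr{Quot}})_{(i,1)}$ and $(G_{\mr{Quot}})_{(0,0)}=(G_{\mr{Quot}})_{(0,1)}$ all descend directly from the corresponding facts on $G_{\mr{Univ}}$ since $q$ is surjective.

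Next I would record the dimension bound. Since $G_{\mr{Quot}}$ is a quotient of $G_{\mr{Univ}}$, we get $\dim(G_{\mr{Quot}})\le\dim(G_{\mr{Univ}}) = O_s(\snorm{D}_{\infty}^{O_s(1)})$ from Lemma~\ref{lem:universal-complexity}; alternatively the presentation of $\log(G_{\mr{Quot}})$ by generators $\wt{e}_{i,j}$ in Remark~\ref{rem:univ-2} yields the same bound directly, by counting the $(\le s-1)$-fold iterated commutators of the $\wt{e}_{i,j}$.

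The heart of the argument is showing that $G_{\mr{Rel}}$ is a rational subgroup of $G_{\mr{Univ}}$ with respect to the Mal'cev basis $\mc{X}_{\mr{Univ}}$ constructed in the proof of Lemma~\ref{lem:universal-complexity}, of rationality $\exp(\snorm{D}_{\infty}^{O_s(1)})$. By definition, $\log(G_{\mr{Rel}})$ is spanned by a distinguished subset of the iterated commutators of the $e_{i_1,j_1},\ldots,e_{i_r,j_r}$ (those with at least one ``petal''-position generator, or with at least two generators in positions $>D^\ast_{i_\ell}$). The construction of $\mc{X}_{\mr{Univ}}$ in Lemma~\ref{lem:universal-complexity} organizes all such commutators into non-interacting ``components'' of size $O_s(1)$, chooses a spanning set inside each, and writes every remaining commutator as a combination of height $O_s(1)$; since $\log(G_{\mr{Rel}})$ is a union of full such components together with their spanning sets, selecting a spanning subset of commutators inside $\log(G_{\mr{Rel}})$ exhibits $G_{\mr{Rel}}$ as a $\exp(\snorm{D}_{\infty}^{O_s(1)})$-rational subgroup. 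With this in hand, the degree-rank version of Lemma~\ref{lem:quotient-rat} applied with $H = G_{\mr{Rel}}$ produces an adapted Mal'cev basis $\mc{X}_{\mr{Quot}}$ for $G_{\mr{Quot}}/\Gamma_{\mr{Quot}}$ of complexity $M^{O_s(d^{O_s(1)})}$ with $M = \exp(\snorm{D}_{\infty}^{O_s(1)})$ and $d = \dim(G_{\mr{Univ}}) = O_s(\snorm{D}_{\infty}^{O_s(1)})$, which simplifies to $\exp(\snorm{D}_{\infty}^{O_s(1)})$, as required.

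The main obstacle I anticipate is the bookkeeping in the rationality step: one must ensure the spanning set of iterated commutators chosen for $\log(G_{\mr{Rel}})$ is compatible with the block decomposition used to build $\mc{X}_{\mr{Univ}}$, so that $\log(G_{\mr{Rel}})$ is genuinely spanned by bounded-height combinations of $\mc{X}_{\mr{Univ}}$. One can sidestep Lemma~\ref{lem:quotient-rat} altogether by instead repeating the proof of Lemma~\ref{lem:universal-complexity} verbatim for the presentation of $\log(G_{\mr{Quot}})$ in Remark~\ref{rem:univ-2}: that Lie algebra is again generated by formal brackets of $\wt{e}_{i,j}$ subject only to antisymmetry, Jacobi, and the stated vanishing relations, so the construction of an adapted basis and the lattice sandwich $C_s\mb{Z}^{\dim}\subseteq\psi_{\mr{exp}}(\Gamma_{\mr{Quot}})\subseteq C_s^{-1}\mb{Z}^{\dim}$ go through unchanged, and I would include whichever of the two routes reads more cleanly.
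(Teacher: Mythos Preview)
Your proposal is correct and follows exactly the two routes the paper indicates before omitting the details: either show $G_{\mr{Rel}}$ is $\exp(\snorm{D}_{\infty}^{O_s(1)})$-rational with respect to $\mc{X}_{\mr{Univ}}$ and apply Lemma~\ref{lem:quotient-rat}, or repeat the proof of Lemma~\ref{lem:universal-complexity} verbatim using the presentation in Remark~\ref{rem:univ-2}. Your handling of the filtration check, dimension bound, and rationality bookkeeping is accurate.
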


A key point in this analysis is that this quotient is compatible with $\eta_{\mr{Univ}}$.
\begin{lemma}\label{lem:is-vert}
Given the above setup, define $\eta_{\mr{Quot}}\colon(G_{\mr{Quot}})_{(s-1,r^\ast)}\to\mb{R}$ via
\[\eta_{\mr{Quot}}(g\imod G_{\mr{Rel}}) := \eta_{\mr{Univ}}(g)\]
for all $g\in (G_{\mr{Univ}})_{(s-1,r^\ast)}$. The map 
$\eta_{\mr{Quot}}$ is well-defined and in fact is a vertical character of $G_{\mr{Quot}}$ of height at most $(MD/\rho)^{O_s(\dim(G_{\mr{Univ}})^{O_s(1)})}$.
\end{lemma}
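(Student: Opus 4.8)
The plan is to verify two things: first, that $\eta_{\mr{Quot}}$ is well-defined as a function on $(G_{\mr{Quot}})_{(s-1,r^\ast)}$, i.e.\ that $\eta_{\mr{Univ}}$ vanishes on $(G_{\mr{Univ}})_{(s-1,r^\ast)}\cap G_{\mr{Rel}}$; and second, that the resulting homomorphism has the properties of an $(s-1,r^\ast)$-vertical character with controlled height. The height bound is immediate from Lemma~\ref{lem:lift-basic} once well-definedness is established, since $\eta_{\mr{Quot}}$ is just $\eta_{\mr{Univ}}$ descended and the Mal'cev basis of $G_{\mr{Quot}}$ from Lemma~\ref{lem:universal-complexity-2} is a bounded-rational combination of (images of) commutators of the $e_{i,j}$, on which $\eta_{\mr{Univ}}=\eta\circ\phi$ is bounded; one converts between the ``magnitude on iterated commutators'' notion and the formal height notion via Remark~\ref{rem:height} exactly as in the proof of Lemma~\ref{lem:lift-basic}. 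So the heart of the matter is well-definedness.

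For that, I would argue as follows. The group $(G_{\mr{Univ}})_{(s-1,r^\ast)}$ is, by its definition in Lemma~\ref{lem:universal-complexity}, generated by $(r'-1)$-fold iterated commutators of the $\exp(t_{i_k,j_k}e_{i_k,j_k})$ with either $i_1+\cdots+i_{r'}>s-1$, or $i_1+\cdots+i_{r'}=s-1$ and $r'\ge r^\ast$. Because $G_{\mr{Univ}}$ has degree-rank $(s-1,r^\ast)$, all commutators with $i_1+\cdots+i_{r'}>s-1$, and all with $i_1+\cdots+i_{r'}=s-1$ and $r'>r^\ast$, vanish identically; hence $(G_{\mr{Univ}})_{(s-1,r^\ast)}$ is spanned (at the Lie-algebra level) by the $(r^\ast-1)$-fold commutators $[e_{i_1,j_1},\ldots,e_{i_{r^\ast},j_{r^\ast}}]$ with $i_1+\cdots+i_{r^\ast}=s-1$ exactly, and these are central. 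Now $(G_{\mr{Univ}})_{(s-1,r^\ast)}\cap\log(G_{\mr{Rel}})$ is spanned by those such commutators which, per Definition~\ref{def:univer-2}, either have some index $\ell$ with $j_\ell>D^\ast_{i_\ell}+D^{\mr{Lin}}_{i_\ell}$, or have two distinct indices $\ell_1,\ell_2$ with $j_{\ell_{1}}>D^\ast_{i_{\ell_1}}$ and $j_{\ell_2}>D^\ast_{i_{\ell_2}}$. But the last bullet of Lemma~\ref{lem:lift-basic} states precisely that $\eta_{\mr{Univ}}$ annihilates each such commutator. Since $\eta_{\mr{Univ}}$ is linear (a homomorphism on the abelian group $(G_{\mr{Univ}})_{(s-1,r^\ast)}$, equivalently $\mb{R}$-linear on $\log$ of it), it therefore annihilates the whole span, so it factors through the quotient by $G_{\mr{Rel}}\cap(G_{\mr{Univ}})_{(s-1,r^\ast)}$; this gives a well-defined homomorphism $\eta_{\mr{Quot}}\colon(G_{\mr{Quot}})_{(s-1,r^\ast)}\to\mb{R}$.

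It remains to check the two defining conditions of an $(s-1,r^\ast)$-vertical character in the sense needed: continuity (automatic, being a descended continuous homomorphism on a Lie group) and integrality on the lattice, $\eta_{\mr{Quot}}(\Gamma_{\mr{Quot}}\cap(G_{\mr{Quot}})_{(s-1,r^\ast)})\subseteq\mb{Z}$. For the latter, note $\Gamma_{\mr{Quot}}=\Gamma_{\mr{Univ}}/(\Gamma_{\mr{Univ}}\cap G_{\mr{Rel}})$, and any element of $\Gamma_{\mr{Quot}}\cap(G_{\mr{Quot}})_{(s-1,r^\ast)}$ lifts to $\Gamma_{\mr{Univ}}\cap(G_{\mr{Univ}})_{(s-1,r^\ast)}$ (possibly after adjusting by an element of $\Gamma_{\mr{Univ}}\cap G_{\mr{Rel}}$, which is harmless since $\eta_{\mr{Univ}}$ kills $G_{\mr{Rel}}\cap(G_{\mr{Univ}})_{(s-1,r^\ast)}$); then $\eta_{\mr{Quot}}$ of it equals $\eta_{\mr{Univ}}=\eta\circ\phi$ of the lift, and $\phi(\Gamma_{\mr{Univ}})\leqslant\Gamma$ together with $\eta(\Gamma\cap G_{(s-1,r^\ast)})\subseteq\mb{Z}$ (from $\eta$ being a vertical frequency of $F$) gives the claim. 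Also, by construction of $\phi$ being $I$-filtered, $\eta_{\mr{Quot}}$ is supported on the appropriate bottom group, so it genuinely is a vertical character. The one point requiring a little care — and the main (minor) obstacle — is making sure the span of $(G_{\mr{Univ}})_{(s-1,r^\ast)}\cap G_{\mr{Rel}}$ really is exhausted by the commutators listed in Lemma~\ref{lem:lift-basic}, i.e.\ that no ``new'' relation forces an extra element of this intersection outside that list; this follows because $G_{\mr{Rel}}$ is defined by a spanning set of iterated commutators (Definition~\ref{def:univer-2}) and intersecting with the central degree-$(s-1)$ rank-$r^\ast$ piece just selects the $(r^\ast-1)$-fold ones with $i_1+\cdots+i_{r^\ast}=s-1$, which are exactly the commutators appearing in both bullets of the last item of Lemma~\ref{lem:lift-basic}. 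I would then record the height bound by the remark-style argument above and conclude.
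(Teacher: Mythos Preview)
Your proposal is correct and follows essentially the same approach as the paper: well-definedness via the final item of Lemma~\ref{lem:lift-basic}, the vertical-character property via $\Gamma_{\mr{Quot}}=\Gamma_{\mr{Univ}}/(\Gamma_{\mr{Univ}}\cap G_{\mr{Rel}})$, and the height bound by evaluating $\eta_{\mr{Univ}}$ on iterated commutators of generators and converting via Remark~\ref{rem:height}. You are somewhat more explicit than the paper on the lattice integrality (tracing through $\eta_{\mr{Univ}}=\eta\circ\phi$ and $\phi(\Gamma_{\mr{Univ}})\leqslant\Gamma$) and on why the intersection $(G_{\mr{Univ}})_{(s-1,r^\ast)}\cap G_{\mr{Rel}}$ is spanned by exactly the commutators appearing in Lemma~\ref{lem:lift-basic}, but the argument is the same.
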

\begin{proof}
To be well-defined as a map, it suffices to show that $G_{\mr{Rel}}\cap(G_{\mr{Univ}})_{(s-1,r^\ast)}\leqslant\on{ker}(\eta_{\mr{Univ}})$. This comes exactly from the final item of Lemma~\ref{lem:lift-basic}. That $\eta$ is a vertical character then follows as $\Gamma_{\mr{Quot}} = \Gamma_{\mr{Univ}}/(\Gamma_{\mr{Univ}} \cap G_{\mr{Rel}})$.

To bound the height of $\eta_{\mr{Quot}}$ note that taking a quotient by $G_{\mr{Rel}}$ maps $\exp(e_{i,j})$ to $\exp(\wt{e}_{i,j})$ in the sense of Remark~\ref{rem:univ-2}. Furthermore the construction of $\mc{X}_{\mr{Quot}}$ has the property that $\mc{X}_{\mr{Quot}}\cap\log((G_{\mr{Quot}})_{(s-1,r^\ast)})$ are sufficiently rational combinations of $(r^\ast-1)$-fold commutators of $\wt{e}_{i_1,j_1},\ldots,\wt{e}_{i_{r^\ast},j_{r^\ast}}$ with $i_1 + \cdots + i_{r^\ast} = s-1$. By Baker--Campbell--Hausdorff, we have that the $(r^\ast-1)$-fold commutator of $\exp(\wt{e}_{i_1,j_1}),\ldots,\exp(\wt{e}_{i_{r^\ast},j_{r^\ast}})$ is the same $\imod G_{\mr{Rel}}$ as the corresponding one for $\exp(e_{i_1,j_1}),\ldots,\exp(e_{i_{r^\ast},j_{r^\ast}})$. However, $\eta_{\mr{Univ}}$ maps the latter commutator to a sufficiently bounded integer by the complexity bound on $\eta_{\mr{Univ}}$ and the result follows.
\end{proof}

We will require 
\begin{align}
\begin{split}\label{eq:quot-nilsequences}
g_h^{\ast,\mr{Quot}}(n) &= \prod_{i=1}^{s-1} \prod_{j=1}^{\dim(W_{i,\ast})} \exp(\wt{e}_{i,j})^{z_{i,j}^\ast\cdot\frac{n^i}{i!}}  \prod_{i=1}^{s-1} \prod_{j=1}^{\dim(W_{i,\mr{Lin}})} \exp(\wt{e}_{i,j +  \dim(W_{i,\ast})})^{\gamma_{i,j}\cdot\frac{n^i}{i!}}\\
g_h^{\mr{Lin},\mr{Quot}}(n) &= \prod_{i=1}^{s-1} \prod_{j=1}^{\dim(W_{i,\mr{Lin}})} \prod_{k=1}^{d^\ast}\exp(\wt{e}_{i,D_i^\ast + (j-1)d^\ast+k})^{\alpha_{i,j,k}\{\beta_k h\}\cdot\frac{n^i}{i!}};
\end{split}
\end{align}
note that 
\[g_h^{\ast,\mr{Quot}} = g_h^{\ast,\mr{Univ}}\imod G_{\mr{Rel}},\qquad g_h^{\mr{Lin},\mr{Quot}} = g_h^{\mr{Lin},\mr{Univ}}\imod G_{\mr{Rel}}.\]
Furthermore we have 
\[g_h^{\mr{Pet},\mr{Univ}}\imod G_{\mr{Rel}} = g_h^{\mr{Rem},\mr{Univ}}\imod G_{\mr{Rel}} = \mr{id}_{G^{\mr{Quot}}}\]
pointwise. Finally we define 
\[g_h^{\mr{Quot}} := g_h^{\ast,\mr{Quot}} \cdot g_h^{\mr{Lin},\mr{Quot}}.\]

For the remainder of this section and Section~\ref{sec:nil-first-2}, fix a nilcharacter $F^\ast$ on $G_{\mr{Quot}}$ with a $G_{(s-1,r^\ast)}$-vertical frequency $\eta_{\mr{Quot}}$. Furthermore by Lemma~\ref{lem:nil-exist}\footnote{The lemma is stated for degree filtrations. However, one can give $G_{\mr{Quot}}$ the degree filtration 
\[(G_{\mr{Quot}})_{(0,0)} = (G_{\mr{Quot}})_{(1,0)}\geqslant (G_{\mr{Quot}})_{(2,0)}\geqslant \cdots \geqslant(G_{\mr{Quot}})_{(s-1,0)}\geqslant(G_{\mr{Quot}})_{(s-1,r^\ast)}\geqslant\mr{Id}_{G_{\mr{Quot}}};\]
a vertical nilcharacter with respect to this filtration is a vertical nilcharacter with respect to the original degree-rank filtration. $\mc{X}_{{\mr{Quot}}}$ is adapted to this degree-filtration (as it is adapted to the original degree-rank filtration).} we may take $F^\ast$ which is $(MD/\rho)^{O_s(\dim(G_{\mr{Univ}})^{O_s(1)})}$--Lipschitz with output dimension bounded by $2^{O_s(\dim(G_{\mr{Univ}}))}$.

The reason it will be sufficient to study $F^\ast(g_h^{\mr{Quot}} \Gamma^{\mr{Quot}})$ will be the following lemma which proves that it is equal to $\wt{F}(g_h^{\mr{Univ}} \Gamma^{\mr{Univ}})$ up a term which is lower-order in degree-rank.

\begin{lemma}\label{lem:reduct-quot}
Given the above setup, let 
\[G_{\mr{Univ}}^{\triangle} :=\{(g,g \imod G_{\mr{Rel}})\in G_{\mr{Univ}}\times G_{\mr{Quot}}\colon g\in G_{\mr{Univ}}\}\]
which is given the degree-rank filtration
\[(G_{\mr{Univ}}^{\triangle})_{(d,r)} :=\{(g,g \imod G_{\mr{Rel}})\in (G_{\mr{Univ}})_{(d,r)}\times (G_{\mr{Quot}})_{(d,r)}\colon g\in (G_{\mr{Univ}})_{(d,r)}\}.\]
Define 
$\Gamma_{\mr{Univ}}^{\triangle} = G_{\mr{Univ}}^{\triangle} \cap (\Gamma_{\mr{Univ}}\times\Gamma_{\mr{Quot}})$. We have:
\begin{itemize}
    \item $(g_h^{\mr{Univ}},g_h^{\mr{Quot}})$ is a polynomial sequence on $G_{\mr{Univ}}^{\triangle}$ with respect to the given degree-rank filtration;
    \item The function
    \[(g,g')\mapsto\wt{F}(g\Gamma_{\mr{Univ}}) \otimes \ol{F^\ast}(g'\Gamma_{\mr{Quot}})\]
    for $(g,g')\in G_{\mr{Univ}}^{\triangle}$ is $(G_{\mr{Univ}}^{\triangle})_{(s-1,r^\ast)}$-invariant;
    \item $G_{\mr{Univ}}^{\triangle}$ has complexity bounded by $(MD/\rho)^{O_s(\dim(G_{\mr{Univ}})^{O_s(1)})}$;
    \item Each coordinate of $ \wt{F}(g\Gamma_{\mr{Univ}}) \otimes \ol{F^\ast}(g'\Gamma_{\mr{Quot}})$ is $(MD/\rho)^{O_s(\dim(G_{\mr{Univ}})^{O_s(1)})}$-Lipschitz.
\end{itemize}
\end{lemma}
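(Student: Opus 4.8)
\textbf{Proof plan for Lemma~\ref{lem:reduct-quot}.}
The four bullet points are largely bookkeeping consequences of the constructions in the previous subsection, so the plan is to verify them one at a time, in order, using the explicit descriptions of $g_h^{\mr{Univ}}$, $g_h^{\mr{Quot}}$, $\wt F$, $F^\ast$, and $\eta_{\mr{Univ}}$, $\eta_{\mr{Quot}}$. First I would address the polynomiality claim. Since $g_h^{\mr{Quot}} = g_h^{\ast,\mr{Univ}}\cdot g_h^{\mr{Lin},\mr{Univ}}\imod G_{\mr{Rel}}$, and the full $g_h^{\mr{Univ}}$ reduces mod $G_{\mr{Rel}}$ to exactly $g_h^{\mr{Quot}}$ (the petal and remainder pieces are pointwise trivial in the quotient, as noted), the pair $(g_h^{\mr{Univ}}, g_h^{\mr{Quot}})$ is literally $(g, g\imod G_{\mr{Rel}})$ evaluated along the sequence $g = g_h^{\mr{Univ}}$, so it lands in $G_{\mr{Univ}}^{\triangle}$. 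To check it is polynomial with respect to the stated filtration, note that $g_h^{\mr{Univ}}$ is a product of sequences of the form $\exp(e_{i,j})^{c\cdot n^i/i!}$ with $e_{i,j}\in\log((G_{\mr{Univ}})_{(i,0)})$, hence a polynomial sequence on $G_{\mr{Univ}}$ with respect to the degree-rank filtration by the standard closure properties of $\on{poly}$ (Definition~\ref{def:I-polynomial} and \cite[Appendix~B]{GTZ12}); the diagonal embedding $g\mapsto (g, g\imod G_{\mr{Rel}})$ is a filtered homomorphism from $G_{\mr{Univ}}$ onto $G_{\mr{Univ}}^\triangle$ (this is essentially the definition of the filtration on $G_{\mr{Univ}}^\triangle$), and composing a polynomial sequence with a filtered homomorphism yields a polynomial sequence.

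Second, for the vertical invariance: an element of $(G_{\mr{Univ}}^{\triangle})_{(s-1,r^\ast)}$ has the form $(g, g\imod G_{\mr{Rel}})$ with $g\in (G_{\mr{Univ}})_{(s-1,r^\ast)}$. Acting on the right, $\wt F(zg\,\Gamma_{\mr{Univ}}) = e(\eta_{\mr{Univ}}(g))\wt F(z\Gamma_{\mr{Univ}})$ and $\ol{F^\ast}((z'\cdot g\imod G_{\mr{Rel}})\Gamma_{\mr{Quot}}) = e(-\eta_{\mr{Quot}}(g\imod G_{\mr{Rel}}))\ol{F^\ast}(z'\Gamma_{\mr{Quot}})$, and by the definition of $\eta_{\mr{Quot}}$ in Lemma~\ref{lem:is-vert} we have $\eta_{\mr{Quot}}(g\imod G_{\mr{Rel}}) = \eta_{\mr{Univ}}(g)$, so the two phases cancel. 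This shows the tensor product function is $(G_{\mr{Univ}}^{\triangle})_{(s-1,r^\ast)}$-invariant. (One should note $(G_{\mr{Univ}}^\triangle)_{(s-1,r^\ast)}$ is central in $G_{\mr{Univ}}^\triangle$, which holds since $(G_{\mr{Univ}})_{(s-1,r^\ast)}$ and $(G_{\mr{Quot}})_{(s-1,r^\ast)}$ are central in their respective groups by the degree-rank filtration condition, so the left/right distinction is immaterial.)

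Third, the complexity bound on $G_{\mr{Univ}}^\triangle$: by Lemma~\ref{lem:universal-complexity} and Lemma~\ref{lem:universal-complexity-2}, $G_{\mr{Univ}}$ and $G_{\mr{Quot}}$ each have complexity $\exp(\snorm{D}_\infty^{O_s(1)})$, hence $G_{\mr{Univ}}\times G_{\mr{Quot}}$ has complexity $\exp(\snorm{D}_\infty^{O_s(1)})$ by Fact~\ref{fct:product-rat}. Now $G_{\mr{Univ}}^\triangle$ is a subgroup of $G_{\mr{Univ}}\times G_{\mr{Quot}}$ cut out (at the Lie algebra level) by the graph condition $X' = X\imod\log G_{\mr{Rel}}$; its Lie algebra is the image of $\log G_{\mr{Univ}}$ under $X\mapsto (X, X\imod\log G_{\mr{Rel}})$, which is spanned by $(X_i, X_i\imod\log G_{\mr{Rel}})$ over the Mal'cev basis $\mc{X}_{\mr{Univ}}$, and this is manifestly a $\snorm{D}_\infty^{O_s(1)}$-rational subgroup with respect to the product basis $\mc{X}_{\mr{Univ}}^\ast$ (combining the rationality of $\mc{X}_{\mr{Quot}}$ relative to $\mc{X}_{\mr{Univ}}$ from Lemma~\ref{lem:universal-complexity-2}). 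Applying (the subgroup analogue implicit in) Lemma~\ref{lem:quotient-rat} / \cite[Lemma~B.11,~B.12]{Len23b} — or directly constructing an adapted Mal'cev basis via the diagonal embedding being an isomorphism onto $G_{\mr{Univ}}^\triangle$ — gives a Mal'cev basis for $G_{\mr{Univ}}^\triangle$ adapted to the stated filtration of complexity $\exp(\snorm{D}_\infty^{O_s(1)}) = (MD/\rho)^{O_s(\dim(G_{\mr{Univ}})^{O_s(1)})}$, recalling $\snorm{D}_\infty\le\dim(G_{\mr{Univ}})$. Finally, the Lipschitz bound in the fourth bullet follows from the Lipschitz bounds on $\wt F$ (Lemma~\ref{lem:lift-basic}) and on $F^\ast$ (the chosen nilcharacter, with constant $(MD/\rho)^{O_s(\dim(G_{\mr{Univ}})^{O_s(1)})}$), the product bound of Fact~\ref{fct:product-rat} on $G_{\mr{Univ}}\times G_{\mr{Quot}}$, and \cite[Lemma~B.9]{Len23b} comparing the metric on $G_{\mr{Univ}}^\triangle$ to that on the ambient product via the bounded-complexity adapted basis; each coordinate of the tensor product is then a product of two bounded Lipschitz functions restricted to a subnilmanifold, so is Lipschitz with the claimed bound.

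The main obstacle I anticipate is purely the bookkeeping of complexity: ensuring that the passage from $\mc{X}_{\mr{Univ}}$ and $\mc{X}_{\mr{Quot}}$ to an adapted Mal'cev basis for $G_{\mr{Univ}}^\triangle$ does not blow up the rationality beyond $\exp(\snorm D_\infty^{O_s(1)})$, and that the metrics are comparable with that loss — this is where one must carefully invoke the appropriate lemmas from \cite{Len23b} (change of basis, subgroup Mal'cev bases, metric comparison). The conceptual content (polynomiality and vertical invariance) is immediate from the constructions. I would therefore keep the proof short, stating the polynomiality and invariance explicitly and deferring the complexity/Lipschitz estimates to citations of \cite{GTZ12, Len23b} with a one-line indication of which lemma does the work, exactly in the style the excerpt uses for analogous claims ("We omit the details").
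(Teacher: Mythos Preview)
Your proposal is correct and follows essentially the same approach as the paper: the paper also uses the diagonal Mal'cev basis $\{(X_i, X_i\imod\log G_{\mr{Rel}})\colon X_i\in\mc{X}_{\mr{Univ}}\}$ for the complexity bound, the cancellation $\eta_{\mr{Univ}}(h)=\eta_{\mr{Quot}}(h\imod G_{\mr{Rel}})$ for invariance, and the identity $g_h^{\mr{Univ}}\imod G_{\mr{Rel}}=g_h^{\mr{Quot}}$ together with Taylor expansion (equivalently, your filtered-homomorphism argument) for polynomiality. The only cosmetic difference is that the paper handles the items in a different order and is terser about the complexity and Lipschitz checks.
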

\begin{remark}\label{rem:quotient-pass}
The second item implies that $\wt{F}(g\Gamma_{\mr{Univ}}) \otimes \ol{F^\ast}(g'\Gamma_{\mr{Quot}})$ is $(G_{\mr{Univ}}^{\triangle})_{(s-1,r^\ast)}$-invariant and thus can be realized on a degree-rank $(s-1,r^\ast-1)$ nilmanifold $G_{\mr{Univ}}^{\triangle}/(G_{\mr{Univ}}^{\triangle})_{(s-1,r^\ast)}$ with $\Gamma_{\mr{Univ}}^{\triangle}/(\Gamma_{\mr{Univ}}^{\triangle}\cap(G_{\mr{Univ}}^{\triangle})_{(s-1,r^\ast)})$ being the lattice.
\end{remark}
\begin{proof}
It is trivial to verify that the degree-rank filtration on $G_{\mr{Univ}}^{\triangle}$ is valid. Noting that
\[\{(X_i, X_i \imod \log(G_{\mr{Rel}}))\colon X_i\in\mc{X}_{\mr{Univ}}\}\]
is a valid Mal'cev basis for $G_{\mr{Univ}}^{\triangle}$ bounds the complexity of $G_{\mr{Univ}}^{\triangle}$. The complexity bounds on $ \wt{F}(g\Gamma_{\mr{Univ}}) \otimes \ol{F^\ast}(g'\Gamma_{\mr{Quot}})$ follow by noting that $\wt{F}$ is appropriately Lipschitz on $G_{\mr{Univ}}/\Gamma_{\mr{Univ}}$
and similar for $\ol{F^\ast}$. For $F^\ast$, we note that each coordinate of $\{X_i \imod \log(G_{\mr{Rel}})\}$ is appropriately rational with respect to the Mal'cev basis for $\mc{X}_{\mr{Quot}}$, by construction. 

Furthermore for $(h,h \imod G_{\mr{Rel}})\in (G_{\mr{Univ}}^{\triangle})_{(s-1,r^\ast)}$ we have
\begin{align*}
&\wt{F}(g h\Gamma_{\mr{Univ}}) \otimes \ol{F^\ast}(g' (h \imod G_{\mr{Rel}})\Gamma_{\mr{Quot}}) \\
&\qquad= \wt{F}(g\Gamma_{\mr{Univ}}) \otimes \ol{F^\ast}(g'\Gamma_{\mr{Quot}}) \cdot e(\eta_{\mr{Univ}}(h))\ol{e(\eta_{\mr{Quot}}(h \imod G_{\mr{Rel}}))}\\
&\qquad=\wt{F}(g\Gamma_{\mr{Univ}}) \otimes \ol{F^\ast}(g'\Gamma_{\mr{Quot}})
\end{align*}
where in the final line we have used the definition of $\eta_{\mr{Quot}}$.

Finally to verify that $(g_h^{\mr{Univ}},g_h^{\mr{Quot}})$ is a polynomial sequence with respect to this degree-rank filtration, note via Taylor expansion (e.g. \cite[Lemma~B.9]{GTZ12}) that all polynomial sequences $h$ with respect to $G_{\mr{Univ}}^{\triangle}$ of the form $(h',h' \imod G_{\mr{Rel}})$ where $h'$ is a polynomial sequence with respect to $G_{\mr{Univ}}$ (and its specified degree-rank filtration). The result then follows due to the property 
\[g_h^{\mr{Univ}} \imod G_{\mr{Rel}} = g_h^{\mr{Quot}}\]
noted above, which was by construction.
\end{proof}

\section{Extracting a \texorpdfstring{$(1,s-1)$}{(1,s-1)}-nilsequence}\label{sec:nil-first-2}
The goal of this section is to realize
\[F^\ast(g_h^{\mr{Quot}}(n)\Gamma_{\mr{Quot}})\]
as a multidegree $(1,s-1)$ nilsequence in $(h,n)$. We accomplish this via a construction of Green, Tao, and Ziegler \cite[Section~12]{GTZ12} and then use this construction in order to complete the proof of Lemma~\ref{lem:induc}. After this, the main business of the paper is essentially done and all that remains to prove Theorem~\ref{thm:main} is the symmetrization argument which will be carried out in the next section.

\subsection{Constructing the \texorpdfstring{$(1,s-1)$}{(1,s-1)}-nilsequence}\label{sub:construct}
Our analysis at this point is essentially verbatim that of \cite[pp.~1313-1315]{GTZ12}. We reproduce the details here (and discuss various complexity issues which are completely routine in the appendix). For the sake of simplicity, we may clean up notation from \eqref{eq:quot-nilsequences} and write 
\begin{align*}
g_h^{\mr{Quot}}(n) &= \prod_{i=1}^{s-1}\prod_{j=1}^{D_i^\ast} \exp(\wt{e}_{i,j})^{\gamma_{i,j}\cdot\frac{n^i}{i!}}\cdot\prod_{i=1}^{s-1}\prod_{j=D_i^\ast + 1}^{D_i^\ast + D_i^{\mr{Lin}}}\exp(\wt{e}_{i,j})^{\alpha_{i,j}\{\beta_{i,j}h\}\cdot\frac{n^{i}}{i!}},
\end{align*}
where we have abusively reindexed various coefficients $\gamma,\alpha,\beta$ but nothing else.

We now define $G_{\mr{Lin}}$ to be the Lie subgroup of $G_{\mr{Quot}}$ such that $\log(G_{\mr{Lin}})$ is the subspace generated by all $(r-1)$-fold iterated commutators (with $r\ge 1$) of $\wt{e}_{i_1,j_1},\ldots,\wt{e}_{i_r,j_r}$ with $j_\ell>D_{i_\ell}^\ast$ for exactly one index $\ell$. We have the following pair of basic observations.

\begin{claim}\label{clm:abel}
We have that $G_{\mr{Lin}}$ is well-defined, abelian, and normal with respect to $G_{\mr{Quot}}$.  
\end{claim}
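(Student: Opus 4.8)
The plan is to argue entirely on Lie algebras, as in the proof of Claim~\ref{clm:normal}, using the presentation of $\log(G_{\mr{Quot}})$ from Remark~\ref{rem:univ-2}: it is generated by the $\wt{e}_{i,j}$ ($1\le i\le s-1$, $1\le j\le D_i^\ast+D_i^{\mr{Lin}}$), hence is spanned as a vector space by iterated commutators of these generators, subject to the degree-rank relations and — as is immediate from $G_{\mr{Quot}}=G_{\mr{Univ}}/G_{\mr{Rel}}$ and the second bullet of Definition~\ref{def:univer-2} — the relation that any iterated commutator of $\wt{e}_{i_1,j_1},\ldots,\wt{e}_{i_r,j_r}$ (with $r\ge 1$ arbitrary) having $j_\ell>D_{i_\ell}^\ast$ for at least two distinct indices $\ell$ vanishes. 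For an iterated commutator $w$ of generators $\wt{e}_{i_1,j_1},\ldots,\wt{e}_{i_r,j_r}$ write $\lambda(w)$ for the number of indices $t$ with $j_t>D_{i_t}^\ast$; the definition of $G_{\mr{Lin}}$ says precisely that $\log(G_{\mr{Lin}})$ is spanned by those iterated commutators of generators with $\lambda=1$. The two facts driving the proof are: (i) if $v$ and $w$ are iterated commutators of generators then (after relabelling the formal index sets to be disjoint, as the definition of iterated commutator in Section~\ref{sec:groups} requires) $[v,w]$ is again an iterated commutator of generators with $\lambda([v,w])=\lambda(v)+\lambda(w)$; and (ii) in $\log(G_{\mr{Quot}})$ any iterated commutator of generators with $\lambda\ge 2$ vanishes.

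First I would deduce that $G_{\mr{Lin}}$ is abelian and well-defined. By bilinearity of the bracket it suffices to compute $[w,w']$ for $w,w'$ iterated commutators of generators with $\lambda(w)=\lambda(w')=1$; by (i) we get $\lambda([w,w'])=2$, so $[w,w']=0$ in $\log(G_{\mr{Quot}})$ by (ii). Hence $[\log(G_{\mr{Lin}}),\log(G_{\mr{Lin}})]=0$, which shows at once that $\log(G_{\mr{Lin}})$ is abelian and that it is closed under the bracket, so is a Lie subalgebra of $\log(G_{\mr{Quot}})$; since $G_{\mr{Quot}}$ is connected and simply connected, $G_{\mr{Lin}}=\exp(\log(G_{\mr{Lin}}))$ is a well-defined connected abelian Lie subgroup.

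Next I would establish normality by showing $\log(G_{\mr{Lin}})$ is a Lie algebra ideal, i.e.\ $[\log(G_{\mr{Quot}}),\log(G_{\mr{Lin}})]\leqslant\log(G_{\mr{Lin}})$. By bilinearity and the spanning descriptions above it suffices to check $[v,w]\in\log(G_{\mr{Lin}})$ for $v$ an iterated commutator of generators and $w$ an iterated commutator of generators with $\lambda(w)=1$. By (i), $[v,w]$ is an iterated commutator of generators with $\lambda([v,w])=\lambda(v)+1\ge 1$. If $\lambda(v)=0$ then $\lambda([v,w])=1$, so $[v,w]$ is one of the commutators spanning $\log(G_{\mr{Lin}})$ and lies in it; if $\lambda(v)\ge 1$ then $\lambda([v,w])\ge 2$, so $[v,w]=0$ in $\log(G_{\mr{Quot}})$ by (ii), again lying in $\log(G_{\mr{Lin}})$. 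Thus $\log(G_{\mr{Lin}})$ is an ideal and $G_{\mr{Lin}}$ is normal in $G_{\mr{Quot}}$.

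I do not anticipate a genuine obstacle: the whole argument is formal once the generators-and-relations presentation of $\log(G_{\mr{Quot}})$ is fixed. The only step needing a little care is the bookkeeping for fact (i) — that forming a nested commutator simply adds the counts $\lambda$ of the two pieces, and that one may freely relabel the formal indices to be disjoint before invoking the definition of iterated commutator — together with the routine remark that the degree-rank relations defining $G_{\mr{Quot}}$ can only force extra vanishing and so never take one outside $\log(G_{\mr{Lin}})$.
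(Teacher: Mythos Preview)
Your proposal is correct and follows essentially the same approach as the paper: both argue at the Lie algebra level by tracking the number of ``linear'' generators (your $\lambda$) in iterated commutators and invoking the vanishing relation in $G_{\mr{Quot}}$ whenever this count is at least two. Your write-up is somewhat more explicit than the paper's, which simply refers back to the proof of Claim~\ref{clm:normal} and notes the one necessary modification.
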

\begin{proof}
Similar to the proof of Claim~\ref{clm:normal}, $G_{\mr{Lin}}$ is well-defined and normal. The only modification to the proof is noting that a commutator of $\wt{e}_{i_k,j_k}$ with at least two indices $\ell$ with $j_\ell>D_{i_\ell}^\ast$ vanishes by the definition of $G_{\mr{Quot}}$.

To see that $G_{\mr{Lin}}$ is abelian, it suffices to prove that the commutator of any pair of generators is the identity. This immediately follows from the fact that commutators with at least two generators of the form $\wt{e}_{i_\ell,j_\ell}$ with $j_\ell>D_{i_\ell}^\ast$ vanish. 
\end{proof}

Due to normality, $G_{\mr{Quot}}$ acts on $G_{\mr{Lin}}$ via conjugation. In particular, we define $G_{\mr{Quot}}\ltimes G_{\mr{Lin}}$ with the group law given by 
\[(g,g_1) (g',g_1') := (gg',g_1^{g'}g_1') = (gg',((g')^{-1} g_1 g')g_1').\]
We now introduce a manner in which the additive group $R=\mb{R}^{\sum_{i=1}^{s-1}D_i^{\mr{Lin}}}$, with elements denoted
\[t = (t_{i,j})_{1\le i\le s-1,~D_{i,\ast}<j\le D_i + D_i^{\mr{Lin}}},\]
acts on $G_{\mr{Quot}}\ltimes G_{\mr{Lin}}$. Specifically, we will define an action $\rho(t)$ on this group for all $t\in R$ and use this to construct
\[G_{\mr{Multi}}=R\ltimes_\rho(G_{\mr{Quot}}\ltimes G_{\mr{Lin}}).\]
This action will allow us to simultaneously ``raise'' parts of $G_{\mr{Lin}}$ to various different fractional powers of $h$, allowing us to incorporate our ``$h$-linear'' family of nilsequences into a multidegree $(1,s-1)$ nilsequence (in variables $(h,n)$).

For each $t\in R$, we define the homomorphism $g\mapsto g^{t}$ from $G_{\mr{Quot}}$ to itself on generators. We map $\exp(\wt{e}_{i,j})\to\exp(\wt{e}_{i,j})^{t_{i,j}}$ for $1\le i\le s-1$ and $D_i^\ast<j\le D_i^\ast + D_i^{\mr{Lin}}$ while $\exp(\wt{e}_{i,j})$ is fixed
for $1\le i\le s-1$ and $1\le j\le D_i^\ast$. The defining relations of $G_{\mr{Quot}}$ are preserved by this transformation, so this is easily seen to be a well-defined homomorphism. At the Lie algebra this transformation is essentially replacing appropriate $\wt{e}_{i,j}$ by $t_{i,j} \wt{e}_{i,j}$.

For $g\in G_{\mr{Quot}}$ and $t,t'\in R$ we have 
\[(g^{t})^{t'} = g^{tt'},\]
and for $g,g'\in G_{\mr{Lin}}$ we have
\[g^{t}g^{t'} = g^{t+t'}\text{ and } g^{t}g'^{t} = (gg')^{t}.\]
This are trivial since $G_{\mr{Lin}}$ is abelian.

We next claim that if $g\in G_{\mr{Quot}}$ and $g'\in G_{\mr{Lin}}$ then
\begin{equation}\label{eq:rho-conjugation}
(gg'g^{-1})^{t} = g g'^{t} g^{-1}.
\end{equation}
To prove this note that it suffices to prove the claim for powers of generators of the groups $G_{\mr{Quot}}$ and $G_{\mr{Lin}}$ (since conjugation and $g\mapsto g^t$ are homomorphisms). If $g\in G_{\mr{Lin}}$ the result is trivial due to the abelian property, and if $g\notin G_{\mr{Lin}}$ (and is the power of a generator) then $g^t=g$ by definition so $(gg'g^{-1})^t=g^tg'^t(g^{-1})^t=gg'^tg^{-1}$ as desired.

We now define $\rho\colon R\to\on{Aut}(G_{\mr{Quot}}\ltimes G_{\mr{Lin}})$ by
\[\rho(t)(g,g_1) := (g\cdot g_1^{t},g_1).\]
The map $\rho(t)$ is clearly bijective and we have
\[\rho(s)(\rho(t)(g,g_1)) = \rho(s)((g\cdot g_1^{t},g_1)) = (g\cdot g_1^{t + s}, g_1)=\rho(t+s)(g,g_1),\]
so to check this is a group action it suffices to show $\rho(t)$ gives a valid homomorphism of $G_{\mr{Quot}}\ltimes G_{\mr{Lin}}$. This follows because 
\begin{align*}
\rho(t)((g,g_1)\cdot(g',g_1')) &= \rho(t)(gg', (g')^{-1}g_1g' g_1') \\
&= (gg' (g')^{-1}g_1^{t}g' (g_1')^{t}, (g')^{-1}g_1g' g_1') = (gg_1^{t}g' (g_1')^{t}, (g')^{-1}g_1g' g_1'),
\end{align*}
by \eqref{eq:rho-conjugation}, while
\begin{align*}
\rho(t)(g,g_1) \rho(t)(g',g_1') &= (gg_1^{t},g_1) \cdot (g'(g_1')^{t},g_1') = (gg_1^{t}g'(g_1')^{t},(g'(g_1')^{t})^{-1} g_1 g'(g_1')^{t} g_1')\\
&= (gg_1^{t}g'(g_1')^{t},(g_1')^{-t}((g')^{-1} g_1 g')(g_1')^{t} g_1') = (gg_1^{t}g'(g_1')^{t},(g_1')^{-t}(g_1')^{t}((g')^{-1} g_1 g') g_1')\\
&=(gg_1^{t}g' (g_1')^{t}, (g')^{-1}g_1g' g_1'),
\end{align*}
where we have used that $G_{\mr{Lin}}$ is abelian and normal.

We are now in position to define the group of interest which will support the multidegree $(1,s-1)$ nilsequence. Let 
\[G_{\mr{Multi}} = R \ltimes_\rho (G_{\mr{Quot}}\ltimes G_{\mr{Lin}})\]
where multiplication is given by 
\[(t,(g,g_1))(t',(g',g_1')) = (t+t', (\rho(t')(g,g_1)) \cdot (g',g_1')).\]
This is seen to be a connected, simply connected Lie group. We give it a multidegree filtration $(G_{\mr{Multi}})_{(d_1,d_2)}$ defined by:
\begin{itemize}
    \item If $d_1>1$ then $(G_{\mr{Multi}})_{(d_1,d_2)}= \mr{Id}_{G_{\mr{Multi}}}$;
    \item If $d_2>0$ then $(G_{\mr{Multi}})_{(1,d_2)} = \{(0,(g,\mr{id}_{G_{\mr{Lin}}}))\colon g\in (G_{\mr{Quot}})_{(d_2,0)}\cap G_{\mr{Lin}}\}$;
    \item $(G_{\mr{Multi}})_{(1,0)} = \{(t,(g,\mr{id}_{G_{\mr{Lin}}}))\colon t\in R, g\in (G_{\mr{Quot}})_{(0,0)}\cap G_{\mr{Lin}}\}$ or equivalently just $\{(t,(g,\mr{id}_{G_{\mr{Lin}}}))\colon t\in R, g\in G_{\mr{Lin}}\}$;
    \item If $d_2 > 0$ then $(G_{\mr{Multi}})_{(0,d_2)} = \{(0,(g,g_1))\colon g\in (G_{\mr{Quot}})_{(d_2,0)}, g_1\in (G_{\mr{Quot}})_{(d_2,0)}\cap G_{\mr{Lin}}\}$;
    \item $(G_{\mr{Multi}})_{(0,0)} = G_{\mr{Multi}}$.
\end{itemize}

\begin{claim}\label{clm:filtration}
$(G_{\mr{Multi}})_{(d_1,d_2)}$ is a valid multidegree filtration on $G_{\mr{Multi}}$.
\end{claim}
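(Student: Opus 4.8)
The plan is to verify directly the four conditions that make $(G_{\mr{Multi}})_{(d_1,d_2)}$ a multidegree filtration, as spelled out in Definitions~\ref{def:arb-filtration} and~\ref{def:filt-prec}: that $(G_{\mr{Multi}})_{(0,0)}=G_{\mr{Multi}}$, that the family is nested with respect to the product order on $\mb{N}^2$, that $(G_{\mr{Multi}})_{(0,0)}=(G_{\mr{Multi}})_{(1,0)}\vee(G_{\mr{Multi}})_{(0,1)}$, and that $[(G_{\mr{Multi}})_{\vec a},(G_{\mr{Multi}})_{\vec b}]\leqslant(G_{\mr{Multi}})_{\vec a+\vec b}$. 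The first is immediate from the definition. Along the way I would record the routine fact that every displayed subset is genuinely a Lie subgroup, which uses only that each $(G_{\mr{Quot}})_{(d,0)}$ is a subgroup (and normal in $G_{\mr{Quot}}$), that $G_{\mr{Lin}}$ is abelian and normal in $G_{\mr{Quot}}$ (Claim~\ref{clm:abel}), and that each $\rho(t)$ fixes the $G_{\mr{Lin}}$-factor pointwise.

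For the normalization condition, since $G_{\mr{Quot}}$ carries a degree-rank filtration we have $(G_{\mr{Quot}})_{(0,0)}=(G_{\mr{Quot}})_{(1,0)}=G_{\mr{Quot}}$, so $(G_{\mr{Multi}})_{(0,1)}=\{0\}\times(G_{\mr{Quot}}\ltimes G_{\mr{Lin}})$, while $(G_{\mr{Multi}})_{(1,0)}\supseteq R\times\{(\mr{id},\mr{id})\}$; using $(t,(\mr{id},\mr{id}))(0,(g,g_1))=(t,(g,g_1))$, these two subgroups generate all of $G_{\mr{Multi}}$. For nesting, the associated degree filtration $((G_{\mr{Quot}})_{(d,0)})_{d\ge0}$ is itself nested, hence so is $(G_{\mr{Quot}})_{(d,0)}\cap G_{\mr{Lin}}$; running through the five cases of the definition one reads off $(G_{\mr{Multi}})_{(0,d_2)}\geqslant(G_{\mr{Multi}})_{(0,d_2')}$ and $(G_{\mr{Multi}})_{(1,d_2)}\geqslant(G_{\mr{Multi}})_{(1,d_2')}$ for $d_2\le d_2'$, as well as $(G_{\mr{Multi}})_{(1,0)}\geqslant(G_{\mr{Multi}})_{(1,d_2)}$ and $(G_{\mr{Multi}})_{(0,d_2)}\geqslant(G_{\mr{Multi}})_{(1,d_2)}$; since multidegrees $(d_1,\cdot)$ with $d_1\ge2$ carry the trivial group, this yields nesting over all of $\mb{N}^2$.

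The substantive point is the commutator condition. By Lemma~\ref{lem:com-check} (see the remark following it) it suffices to check $[g_{\vec a},g_{\vec b}]\in(G_{\mr{Multi}})_{\vec a+\vec b}$ for generators of the various $(G_{\mr{Multi}})_{\vec v}$, which one can take to be the one-parameter subgroups along the directions in $R$, along the $\exp(\wt{e}_{i,j})$, and along the commutator generators of $G_{\mr{Lin}}$. Since $(G_{\mr{Multi}})_{(d_1,d_2)}$ is trivial for $d_1\ge2$, there are three regimes, organized by the type of the two chosen generators: both generators lying in a $(0,\cdot)$-piece and carrying no $R$-component; at least one generator carrying an $R$-component or lying in $G_{\mr{Lin}}$ while the other lies in a $(0,\cdot)$-piece; and both generators carrying an $R$-component or lying in $G_{\mr{Lin}}$, where the target is the trivial group. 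The computations will rest on three facts: the two maps $(g,g_1)\mapsto g$ and $(g,g_1)\mapsto gg_1$ are homomorphisms $G_{\mr{Quot}}\ltimes G_{\mr{Lin}}\to G_{\mr{Quot}}$, so the $G_{\mr{Quot}}$-data of a commutator formed inside the inner group lies in $[(G_{\mr{Quot}})_{(a_2,0)},(G_{\mr{Quot}})_{(b_2,0)}]\leqslant(G_{\mr{Quot}})_{(a_2+b_2,0)}$; $G_{\mr{Lin}}$ is abelian and normal in $G_{\mr{Quot}}$; and each $\rho(t)$ fixes the $G_{\mr{Lin}}$-factor pointwise, while the homomorphism $g\mapsto g^t$ on $G_{\mr{Quot}}$ merely rescales the generators $\exp(\wt{e}_{i,j})$ with $j>D_i^\ast$ by scalars and hence preserves every $(G_{\mr{Quot}})_{(d,0)}$ and every $(G_{\mr{Quot}})_{(d,0)}\cap G_{\mr{Lin}}$. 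With these in hand: in the first regime the commutator is computed inside $G_{\mr{Quot}}\ltimes G_{\mr{Lin}}$ and, by the homomorphism fact, both its $G_{\mr{Quot}}$- and $G_{\mr{Lin}}$-coordinates land in $(G_{\mr{Quot}})_{(a_2+b_2,0)}$, placing it in $(G_{\mr{Multi}})_{(0,a_2+b_2)}$; in the second regime the $G_{\mr{Lin}}$-coordinate of the commutator cancels (the only new contribution being a conjugate of a rescaled power, still landing in the relevant $(G_{\mr{Quot}})_{(\cdot,0)}\cap G_{\mr{Lin}}$), so the commutator has the form $(0,(g'',\mr{id}))$ with $g''\in(G_{\mr{Quot}})_{(a_2+b_2,0)}\cap G_{\mr{Lin}}$, i.e.\ lies in $(G_{\mr{Multi}})_{(1,a_2+b_2)}$; and in the third regime the two generators lie in $R$ and/or $G_{\mr{Lin}}$, both abelian and fixed by every $\rho(t)$, so they commute.

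I expect the only genuine obstacle to be the bookkeeping: faithfully computing commutators in the iterated semidirect product $R\ltimes_\rho(G_{\mr{Quot}}\ltimes G_{\mr{Lin}})$ and tracking each of the $R$-, $G_{\mr{Quot}}$- and $G_{\mr{Lin}}$-coordinates into exactly the prescribed filtration piece, with particular care at the $d_2=0$ boundary (where the $R$-factor appears in $(G_{\mr{Multi}})_{(1,0)}$ and in $(G_{\mr{Multi}})_{(0,0)}$) and at the interface between the two coordinates of $G_{\mr{Quot}}\ltimes G_{\mr{Lin}}$, since $G_{\mr{Lin}}$ sits both inside $G_{\mr{Quot}}$ and as a separate factor. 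Apart from this there is no real difficulty, and the verification is essentially that of \cite[Section~12]{GTZ12}.
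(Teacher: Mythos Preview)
Your proposal is correct and follows essentially the same route as the paper: both verify the join condition $(G_{\mr{Multi}})_{(0,0)}=(G_{\mr{Multi}})_{(1,0)}\vee(G_{\mr{Multi}})_{(0,1)}$ via the factorization $(t,(g,g_1))=(t,(\mr{id},\mr{id}))\cdot(0,(g,g_1))$, and both organize the commutator check into the cases $[(0,a),(0,b)]$, $[(1,a),(0,b)]$, and $[(1,\cdot),(1,\cdot)]$, with the last case being trivial because $G_{\mr{Lin}}$ is abelian and fixed by $\rho$. One small but pleasant difference: in the $[(0,a),(0,b)]$ case the paper checks normality of $(G_{\mr{Multi}})_{(0,a+b)}$ by an explicit conjugation computation and then works generator-by-generator using $(g,g_1)=(g,\mr{id})(\mr{id},g_1)$, whereas you instead invoke the pair of homomorphisms $(g,g_1)\mapsto g$ and $(g,g_1)\mapsto gg_1$ to read off both coordinates of the commutator at once, which is a cleaner shortcut to the same conclusion.
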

\begin{proof}
Note that 
\[(t,(g,g_1)) = (t,(\mr{id}_{G_{\mr{Quot}}},\mr{id}_{G_{\mr{Lin}}})) \cdot (0, (g,g_1))\]
and therefore $(G_{\mr{Multi}})_{(0,0)} = (G_{\mr{Multi}})_{(1,0)}\vee(G_{\mr{Multi}})_{(0,1)}$. We next check various commutator relations. First note that 
\[[(G_{\mr{Multi}})_{(1,0)},(G_{\mr{Multi}})_{(1,0)})] = \mr{Id}_{G_{\mr{Multi}}}.\]
This follows because if $g,h\in G_{\mr{Lin}}$ we have $gh=hg$ hence
\[(t,(g,\mr{id}_{G_{\mr{Lin}}})) \cdot (t',(h,\mr{id}_{G_{\mr{Lin}}}))) = (t + t',(gh,\mr{id}_{G_{\mr{Lin}}})) = (t',(h,\mr{id}_{G_{\mr{Lin}}}))) \cdot (t,(g,\mr{id}_{G_{\mr{Lin}}})).\]
Therefore it suffices to verify that
\begin{align*}
[(G_{\mr{Multi}})_{(0,a)},(G_{\mr{Multi}})_{(0,b)}]&\leqslant(G_{\mr{Multi}})_{(0,a + b)},\\
[(G_{\mr{Multi}})_{(1,a)},(G_{\mr{Multi}})_{(0,b)}]&\leqslant(G_{\mr{Multi}})_{(1,a + b)}.
\end{align*}

We first tackle the first claim, in which we may reduce to the case $a,b>0$. We wish to show
\[[(g,g_1),(g',g_1')] \in \{(h,h_1)\colon h\in (G_{\mr{Quot}})_{(a+b,0)}, h_1\in (G_{\mr{Quot}})_{(a+b,0)}\cap G_{\mr{Lin}}\}\]
if $g,g_1\in (G_{\mr{Quot}})_{(a,0)}$, $g',g_1'\in (G_{\mr{Quot}})_{(b,0)}$, and $g_1,g_1'\in G_{\mr{Lin}}$. Via Lemma~\ref{lem:com-check}, it suffices to prove $(G_{\mr{Quot}})_{(a+b,0)}$ is normal in $(G_{\mr{Quot}})_{(a,0)}$ and $(G_{\mr{Quot}})_{(b,0)}$ and then check at the level of generators. 

To check normality, we have
\begin{align*}
(g,g_1)(g',g_1')(g,g_1)^{-1} &= (g,g_1)(g',g_1')(g^{-1},gg_1^{-1}g^{-1})\\
&=(gg',(g')^{-1}g_1g' \cdot g_1')(g^{-1},gg_1^{-1}g^{-1})\\
& = (gg'g^{-1}, (g(g')^{-1})g_1 (g'g^{-1}) \cdot gg_1'g^{-1} \cdot gg_1^{-1}g^{-1})
\end{align*}
and the result follows noting that $G_{\mr{Lin}},(G_{\mr{Quot}})_{(j,0)}$ are normal in $G_{\mr{Quot}}$ for all $j\ge 0$. 

Since
\[(g,g_1) = (g,\mr{id}_{G_{\mr{Lin}}}) \cdot (\mr{id}_{\mr{Quot}}, g_1)\]
and it suffices to check the claim on generators, we may reduce to the case where exactly one of $g,g_1$ and exactly one of $g_1,g_1'$ are the identity. The result is clear when $g,g'$ are trivial, and the case when $g_1,g_1'$ are trivial follows from the fact that we have a valid filtration on $G_{\mr{Quot}}$. In the remaining cases we may assume by symmetry that $g_1 = \mr{id}_{G_{\mr{Lin}}}$ and $g' = \mr{id}_{G_{\mr{Quot}}}$. We have 
\[(g^{-1},\mr{id}_{G_{\mr{Lin}}})(\mr{id}_{G_{\mr{Quot}}},(g_1')^{-1})(g,\mr{id}_{G_{\mr{Lin}}})(\mr{id}_{G_{\mr{Quot}}},g_1') = (\mr{id}_{G_{\mr{Quot}}},g^{-1}(g_1')^{-1}gg_1')\]
and we see that the final coordinate satisfies $[g,g_1']\in (G_{\mr{Quot}})_{(a+b,0)}\cap G_{\mr{Lin}}$. We have finished verifying the first claim.

Now note that $\{(h,\mr{id}_{G_{\mr{Lin}}})\colon h\in G_{\mr{Lin}}\}$ is a normal subgroup of $G_{\mr{Quot}}\ltimes G_{\mr{Lin}}$, since $G_{\mr{Lin}}$ is abelian. Thus combining with the first claim gives the second claim, namely
\[[(G_{\mr{Multi}})_{(1,a)},(G_{\mr{Multi}})_{(0,b)})]\leqslant(G_{\mr{Multi}})_{(1,a + b)},\]
for $a>0$.

The only nontrivial case left is $a=0$ and $b>0$ for the second claim. Furthermore, combining what we know it suffices to check the case when $(t,(\mr{id}_{G_{\mr{Quot}}},\mr{id}_{G_{\mr{Lin}}}))$ is the element from $(G_{\mr{Multi}})_{(1,0)}$. Note however that
\begin{align*}
(t,&(\mr{id}_{G_{\mr{Quot}}},\mr{id}_{G_{\mr{Lin}}}))\cdot(0,(g,g_1))\cdot(-t,(\mr{id}_{G_{\mr{Quot}}},\mr{id}_{G_{\mr{Lin}}}))\cdot(0,(g,g_1))^{-1}\\
&= (t,(g,g_1))\cdot(-t,(\mr{id}_{G_{\mr{Quot}}},\mr{id}_{G_{\mr{Lin}}}))\cdot(0,(g^{-1},gg_1^{-1}g^{-1})) = (0,(gg_1^{-t},g_1))\cdot(0,(g^{-1},gg_1^{-1}g^{-1}))\\
&= (0,(gg_1^{-t}g^{-1},\mr{id}_{G_{\mr{Lin}}})).
\end{align*}
and the fact that if $g,g_1\in(G_{\mr{Quot}})_{(b,0)}$ and $g_1\in G_{\mr{Lin}}$ then $gg_1^{-t}g^{-1}\in(G_{\mr{Quot}})_{(b,0)}$. This follows because if $g_1\in(G_{\mr{Quot}})_{(b,0)}\cap G_{\mr{Lin}}$ then $g_1^t$ is in the same group.
\end{proof}

Writing $t=(t_{i,j})_{1\le i\le s-1,~D_i^\ast<j\le D_i + D_i^{\mr{Lin}}}$, we define
\[\Gamma_{\mr{Multi}} = \{(t,(g,g_1))\colon t_{i,j}\in\mb{Z}, g\in\Gamma_{\mr{Quot}},g_1\in\Gamma_{\mr{Quot}}\cap G_{\mr{Lin}}\}.\] 
To see this is a group, observe that for $g_1 \in\Gamma_{\mr{Quot}}$ we have $g^{t}\in\Gamma_{\mr{Quot}}$ if all coordinates of $t$ are integral. This is clear for the generators of $\Gamma_{\mr{Quot}}$ and the rest follows from recalling that ``taking $t$-th powers'' is a homomorphism on $G_{\mr{Quot}}$.

We now define the relevant functions which will be used to represent $F^\ast(g_h^{\mr{Quot}}(n)\Gamma^{\mr{Quot}})$. Let $\delta = \exp(-O_s((d\log(MD/\rho))^{O_s(1)}))$, where the implicit constants are chosen sufficiently large.

Let $\phi\colon\mb{R}\to\mb{R}$ be a $1$-bounded, $1$-periodic function such that:
\begin{itemize}
    \item $\phi(x)=1$ if $|\{x\}|\le 1/2-2\delta$;
    \item $\phi(x)=0$ if $|\{x\}|\ge 1/2-\delta$;
    \item $\phi$ is $O(1/\delta)$-Lipschitz.
\end{itemize}
Define $H^\ast\subseteq H$ such that for all $1\le i\le s-1$ and $D_i^\ast<j\le D_i^\ast + D_i^{\mr{Lin}}$ we have $|\{\beta_{i,j}h\}|\ge 1/2-\delta$. Using that $\beta_{i,j}\in(1/N')\mb{Z}$ where $N'$ is a prime between $100N$ and $200N$, we see that there are at most $O(\delta \cdot N\cdot\sum_{i=1}^{s-1} D_i^{\mr{Lin}})$ indices which do not satisfy the criterion and choosing $\delta$ sufficiently small, we may assume that $H^\ast$ is at least half the size of $H$.

Given $(t,(g,g_1))\in G_{\mr{Multi}}$, we may find $(t',(g',g_1'))\in (t,(g,g_1))\Gamma_{\mr{Multi}}$ such that $(t')_{i,j}\in(-1/2,1/2]$ for all $i,j$. Define
\[F_{\mr{Multi}}((t,(g,g_1)) \Gamma_{\mr{Multi}}) = F^\ast(g'\Gamma_{\mr{Quot}})\cdot\prod_{\substack{1\le i\le s-1\\ D_i^{\ast}<j\le D_i^\ast + D_i^{\mr{Lin}}}} \phi(t_{i,j}') ;\]
we check that this in fact gives a well-defined function on $G_{\mr{Multi}}/\Gamma_{\mr{Multi}}$. Note that if $(t',(g',g_1'))\in (t,(g,g_1))\Gamma_{\mr{Multi}}$ and $t_{i,j}'\in(-1/2,1/2]$ then $t_{i,j}' = \{t_{i,j}\}$ and hence $t'$ is unique. Furthermore note that 
\[(t',(g',g_1')) \cdot (0, (\gamma',\gamma_1')) = (t', (g',g_1')\cdot (\gamma',\gamma_1')) = (t', (g' \gamma', (\gamma')^{-1} g_1' \gamma' \gamma_1'))\]
and trivially 
\[F^\ast(g'\Gamma_{\mr{Quot}}) = F^\ast(g'\gamma'\Gamma_{\mr{Quot}})\]
if $\gamma'\in\Gamma_{\mr{Quot}}$. Now recall that 
\begin{align*}
g_h^{\mr{Quot}}(n) &= \prod_{i=1}^{s-1} \prod_{j=1}^{D_i^\ast} \exp(\wt{e}_{i,j})^{\gamma_{i,j}\cdot\frac{n^i}{i!}}\cdot\prod_{i=1}^{s-1} \prod_{j=D_i^\ast+1}^{D_i^\ast+D_i^{\mr{Lin}}}\exp(\wt{e}_{i,j})^{\alpha_{i,j}\{\beta_{i,j}h\}\cdot\frac{n^i}{i!}}.
\end{align*}
We set 
\begin{align*}
g_0(n) &= \prod_{i=1}^{s-1}\prod_{j=1}^{D_i^\ast} \exp(\wt{e}_{i,j})^{\gamma_{i,j}\cdot\frac{n^i}{i!}},\quad g_1(n) = \prod_{i=1}^{s-1}\prod_{j=D_i^\ast+1}^{D_i^\ast+D_i^{\mr{Lin}}} \exp(\wt{e}_{i,j})^{\alpha_{i,j}\cdot\frac{n^i}{i!}}
\end{align*}
and define
\begin{align*}
g_{\mr{Final}}(h,n) &= (0,(g_0(n),g_1(n)))\cdot ((\beta_{i,j}h)_{\substack{1\le i\le s-1\\D_i^{\ast}<j\le D_i^\ast + D_i^{\mr{Lin}}}},(\mr{id}_{G_{\mr{Quot}}},\mr{id}_{G_{\mr{Lin}}})) \\
&= (0,(g_0(n),\mr{id}_{G_{\mr{Lin}}}))\cdot(0,(\mr{id}_{G_{\mr{Quot}}},g_1(n)))\cdot((\beta_{i,j}h)_{\substack{1\le i\le s-1\\D_i^{\ast}<j\le D_i^\ast + D_i^{\mr{Lin}}}},(\mr{id}_{G_{\mr{Quot}}},\mr{id}_{G_{\mr{Lin}}})). 
\end{align*}
$g_{\mr{Final}}(h,n)$ is seen to be a polynomial sequence with respect to the filtration given to $G_{\mr{Multi}}$ as each piece is trivially a polynomial sequence and the polynomial sequences form a group under pointwise multiplication (see \cite[Corollary~B.4]{GTZ12}). 

Note that for all $h\in H$ we have
\begin{align*}
g_{\mr{Final}}(h,n)\Gamma_{\mr{Multi}} &= (0,(g_0(n),g_1(n)))\cdot ((\{\beta_{i,j}h\})_{\substack{1\le i\le s-1\\D_i^{\ast}<j\le D_i^\ast + D_i^{\mr{Lin}}}},(\mr{id}_{G_{\mr{Quot}}},\mr{id}_{G_{\mr{Lin}}}))\Gamma_{\mr{Multi}}\\
&= ((\{\beta_{i,j}h\})_{\substack{1\le i\le s-1\\D_i^{\ast}<j\le D_i^\ast + D_i^{\mr{Lin}}}},(\mr{id}_{G_{\mr{Quot}}},\mr{id}_{G_{\mr{Lin}}}))\cdot(0,(g_{0,h}^\ast(n),g_1(n)))\Gamma_{\mr{Multi}},
\end{align*}
writing
\[g_{0,h}^\ast(n)=g_0(n)(g_1(n))^{t(h)}\]
where $t(h)=(\{\beta_{i,j}h\})_{1\le i\le s-1,~D_i^{\ast}<j\le D_i^\ast + D_i^{\mr{Lin}}}\in R$. This is precisely the desired sense, discussed earlier, in which we have used the group action to ``raise'' parts of $G_{\mr{Lin}}$ to $h$-fractional powers.

Therefore, for all $h\in H^\ast$ we have 
\begin{equation}\label{eq:multi-rep}
F_{\mr{Multi}}(g_{\mr{Final}}(h,n)\Gamma_{\mr{Multi}}) = F^\ast(g_h^{\mr{Quot}}(n)\Gamma_{\mr{Quot}}).
\end{equation}

We now state various complexity claims regarding $G_{\mr{Multi}}/\Gamma_{\mr{Multi}}$ and the Lipschitz nature of the function $F_{\mr{Multi}}$. We defer the rather uninspiring task of checking these bounds to the end of Appendix~\ref{app:defer}.

\begin{lemma}\label{lem:multi-complex}
Given the above setup, we have that $G_{\mr{Multi}}/\Gamma_{\mr{Multi}}$ has the structure of a multidegree $(1,s-1)$ nilmanifold and it may be given a basis $\mc{X}_{\mr{Multi}}$ of complexity bounded by $\exp(O_s((d\log(MD/\rho))^{O_s(1)}))$. Furthermore $F_{\mr{Multi}}$ is $\exp(O_s((d\log(MD/\rho))^{O_s(1)}))$-Lipschitz under this metric.
\end{lemma}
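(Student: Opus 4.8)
\textbf{Proof plan for Lemma~\ref{lem:multi-complex}.}

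The plan is to verify, step by step, that the explicitly constructed group $G_{\mr{Multi}} = R\ltimes_\rho(G_{\mr{Quot}}\ltimes G_{\mr{Lin}})$ together with the lattice $\Gamma_{\mr{Multi}}$ and the function $F_{\mr{Multi}}$ has the claimed complexity bounds, starting from the already-established complexity bound $\exp(O_s((d\log(MD/\rho))^{O_s(1)}))$ on $G_{\mr{Quot}}/\Gamma_{\mr{Quot}}$ (Lemma~\ref{lem:universal-complexity-2}) and the dimension bound $D_i^{\mr{Lin}}\le(d\log(MD/\rho))^{O_s(1)}$ recorded before the construction. First I would record that $\dim(G_{\mr{Multi}}) = \dim(R) + \dim(G_{\mr{Quot}}) + \dim(G_{\mr{Lin}})$; since $\dim(R) = \sum_i D_i^{\mr{Lin}}$, $\dim(G_{\mr{Lin}})\le\dim(G_{\mr{Quot}})$, and $\dim(G_{\mr{Quot}})\le O_s(\snorm{D}_\infty^{O_s(1)})$ with $\snorm{D}_\infty\le(d\log(MD/\rho))^{O_s(1)}$, we get $\dim(G_{\mr{Multi}})\le O_s((d\log(MD/\rho))^{O_s(1)})$, which is the kind of bound needed for the ``$d'$'' of a correlation structure.

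Next I would construct the Mal'cev basis $\mc{X}_{\mr{Multi}}$. The natural choice is $\{(0,(X,\mr{id}))\colon X\in\mc{X}_{\mr{Quot}}\}$ (lifting the adapted Mal'cev basis of $G_{\mr{Quot}}$ from Lemma~\ref{lem:universal-complexity-2}), together with a basis of $\log R$ realized through the coordinates $t_{i,j}$, together with a basis of $\log(G_{\mr{Lin}})$; one orders these so that the basis is adapted to the associated degree filtration $(G_{\mr{Multi}})_i = \bigvee_{|\vec v|=i}(G_{\mr{Multi}})_{\vec v}$, and one checks $(G_{\mr{Multi}})_{\vec v}$ is a rational subgroup for each $\vec v$ (this is immediate from the explicit description of the filtration in terms of $(G_{\mr{Quot}})_{(d_2,0)}$ and $G_{\mr{Lin}}$, both of which are rational in $\mc{X}_{\mr{Quot}}$ with the stated complexity, using Fact~\ref{fact:rat-subgroup} for the joins). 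The structure constants of $\log(G_{\mr{Multi}})$ are computed from the semidirect product law: brackets among the $G_{\mr{Quot}}$-directions are just those of $\log(G_{\mr{Quot}})$, brackets of an $R$-direction $t_{i,j}$ with a $G_{\mr{Lin}}$-direction $e_{i,j}$ produce the infinitesimal version of $g\mapsto g^t$ (which at the Lie algebra level replaces $\wt e_{i,j}$ by a scalar multiple), and all other brackets are controlled by the bracket structure of $G_{\mr{Quot}}$; all of these have height at most $\exp(O_s((d\log(MD/\rho))^{O_s(1)}))$. Rationality of $\Gamma_{\mr{Multi}}$ relative to $\mc{X}_{\mr{Multi}}$ follows from rationality of $\Gamma_{\mr{Quot}}$, the integrality of the $t_{i,j}$-coordinates, and the observation (already noted in the text) that ``$t$-th powers'' preserve $\Gamma_{\mr{Quot}}$ for integral $t$; one invokes \cite[Lemma~B.11]{Len23b} to promote a weak basis to a genuine adapted Mal'cev basis of the required complexity, exactly as in Lemma~\ref{lem:universal-complexity}. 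Finally, the filtration $(G_{\mr{Multi}})_{(d_1,d_2)}$ is a valid multidegree $(1,s-1)$ filtration by Claim~\ref{clm:filtration}, and since all relevant subgroups are rational this gives $G_{\mr{Multi}}/\Gamma_{\mr{Multi}}$ the structure of a multidegree $(1,s-1)$ nilmanifold of the stated complexity.

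For the Lipschitz bound on $F_{\mr{Multi}}$, I would argue as follows. The function is a product of $F^\ast(g'\Gamma_{\mr{Quot}})$ — where $g'$ is the $G_{\mr{Quot}}$-component of a fundamental-domain representative — and the factors $\phi(t_{i,j}')$. The function $F^\ast$ is $\exp(O_s((d\log(MD/\rho))^{O_s(1)}))$-Lipschitz on $G_{\mr{Quot}}/\Gamma_{\mr{Quot}}$ by its construction via Lemma~\ref{lem:nil-exist}; one must check that the map $(t,(g,g_1))\Gamma_{\mr{Multi}}\mapsto g'\Gamma_{\mr{Quot}}$ is Lipschitz with the stated constant, which follows from the explicit form of the fundamental domain (the $t$-coordinates are reduced mod $1$, and $g'$ differs from $g$ by a bounded power of $g_1$ under the action $\rho$, with exponents the reduced $t_{i,j}$; the action $g\mapsto g^{t}$ is Lipschitz in $(g,t)$ on bounded sets with constant controlled by the structure constants) and the right-invariance of the metric. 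Each $\phi$ is $O(1/\delta)$-Lipschitz with $\delta^{-1}=\exp(O_s((d\log(MD/\rho))^{O_s(1)}))$, and there are at most $\sum_i D_i^{\mr{Lin}}\le O_s((d\log(MD/\rho))^{O_s(1)})$ such factors; since all factors are $1$-bounded, the product rule for Lipschitz functions gives the claimed bound. I expect the main obstacle to be the bookkeeping in the Lipschitz estimate — specifically, bounding the Lipschitz constant of the ``fundamental-domain representative'' map $g'$, since this requires controlling how the semidirect-product action $\rho$ distorts the metric near the boundary of the fundamental domain; this is routine but fiddly, and the paper indeed defers it (``We defer the rather uninspiring task of checking these bounds to the end of Appendix~\ref{app:defer}'').
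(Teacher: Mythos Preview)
Your proposal is correct and follows essentially the same approach as the paper: build a weak basis from copies of bases for $\log(G_{\mr{Quot}})$, $\log(G_{\mr{Lin}})$, and $\log R$, compute the structure constants from the semidirect product law (the paper does this explicitly, obtaining $[(X,0),(0,X')]=(0,[X,X'])$ and that the $R$-direction bracket with $(0,(0,X'))$ is either $(0,(X',0))$ or $0$ depending on whether the type of $X'$ contains $\wt e_{i,j}$), invoke \cite[Lemma~B.11]{Len23b} to promote to an adapted Mal'cev basis, and then split the Lipschitz estimate into the $\phi$-factors and the $F^\ast$-factor. One small simplification relative to your sketch: the paper's Lipschitz argument for the $F^\ast$-part avoids tracking how $\rho$ distorts the fundamental-domain map by first restricting to representatives $x,y$ with $\psi_{\mc X_{\mr{Multi}}}(x)\in[-1/2,1/2)^{\dim G_{\mr{Multi}}}$ (using the support condition on $\phi$ to stay $\delta$-away from the $t$-boundary), so that the $G_{\mr{Quot}}$-component $g_1$ is read off directly from the second block of Mal'cev coordinates via $(t,(\mr{id},\mr{id}))\cdot(0,(g,g'))=(t,(g,g'))$; then $d_{G_{\mr{Quot}}}(g_1,g_2)$ is bounded by telescoping products of bounded exponentials, with no explicit appearance of $\rho$.
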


\subsection{Extracting correlation}\label{sub:proof-of-induc}
We now complete the proof of Lemma~\ref{lem:induc}. The proof is little more than stitching results proven in this and the previous section and noting that if two nilcharacters ``differ by a lower degree-rank term'' then one may pass from to the other at the cost of introducing a lower order term. (This is essentially \cite[Lemma~E.7]{GTZ12}.)

\begin{proof}[Proof of Lemma~\ref{lem:induc}]
We return to the correlation structure discussed in Section~\ref{sec:nil-first} (that is output by Lemma~\ref{lem:linear-output}). Again, we will abuse notation slightly as discussed. So, for all $h\in H$ (where $|H|\ge\rho'N$) we have 
\[\snorm{\mb{E}_{n\in[N]}(\Delta_h f)(n) \otimes \ol{\chi(h,n)} \otimes \ol{\chi_h(n)} \cdot\ol{\psi_h(n)}}_{\infty}\ge\exp(-O_s((d\log(MD/\rho))^{O_s(1)}))\]
where $\psi_h$ is a complexity $M'$ nilsequence of degree $(s-2)$ and dimension at most $d'$. We adopt the notation developed in Sections~\ref{sec:nil-first} and \ref{sec:nil-first-2}. Applying Claim~\ref{clm:univ-lift}, we have 
\[\snorm{\mb{E}_{n\in[N]}(\Delta_h f)(n) \otimes \ol{\chi(h,n)} \otimes \ol{\wt{F}(g_h^{\mr{Univ}}(n)\Gamma_{\mr{Univ}})} \cdot\ol{\psi_h(n)}}_{\infty} \ge\exp(-O_s((d\log(MD/\rho))^{O_s(1)})).\]
Next note that
\[F^\ast(g'\Gamma_{\mr{Quot}})\otimes \ol{F^\ast(g'\Gamma_{\mr{Quot}})}\]
has trace equal to $1$ as $F^\ast$ is a nilcharacter. Since the output dimension of $F^\ast$ is bounded by $\exp((d\log(MD/\rho))^{O_s(1)})$, we have for all $h\in H$ that
\begin{align*}
\snorm{\mb{E}_{n\in[N]}(\Delta_h f)(n) \otimes \ol{\chi(h,n)} &\otimes \ol{\wt{F}(g_h^{\mr{Univ}}(n)\Gamma_{\mr{Univ}})} \otimes F^\ast(g_h^{\mr{Quot}}(n)\Gamma_{\mr{Quot}})\\
&\otimes \ol{F^\ast(g_h^{\mr{Quot}}(n)\Gamma_{\mr{Quot}})} \cdot\ol{\psi_h(n)}}_{\infty}\ge\exp(-O_s((d\log(MD/\rho))^{O_s(1)})).
\end{align*}
Using \eqref{eq:multi-rep}, we in fact may write for $h\in H^\ast$ that 
\begin{align*}
\snorm{\mb{E}_{n\in[N]}(\Delta_h f)(n) &\otimes \ol{\chi(h,n)} \otimes \ol{\wt{F}(g_h^{\mr{Univ}}(n)\Gamma_{\mr{Univ}})} \otimes F^\ast(g_h^{\mr{Quot}}(n)\Gamma_{\mr{Quot}})\otimes \ol{F_{\mr{Multi}}(g_{\mr{Final}}(h,n))} \cdot\ol{\psi_h(n)}}_{\infty} \\
&\ge\exp(-d^{O_s(1)}\log(MD\rho^{-1})^{O_s(1)}).
\end{align*}

Now we may pay a cost of $\exp(-(d\log(MD/\rho))^{O_s(1)})$ in the size of $H^\ast$ by Pigeonhole to choose a single coordinate function of $\ol{\wt{F}(g_h^{\mr{Univ}}(n)\Gamma_{\mr{Univ}})} \otimes F^\ast(g'\Gamma_{\mr{Quot}})$, call it $\psi_h^\ast(n)$, such that
\begin{align*}
\snorm{\mb{E}_{n\in[N]}(\Delta_h f)(n) &\otimes \ol{\chi(h,n)} \otimes \psi_h^\ast(n)\otimes \ol{F_{\mr{Multi}}(g_{\mr{Final}}(h,n))} \cdot\ol{\psi_h(n)}}_{\infty} \\
&\ge\exp(-O_s((d\log(MD/\rho))^{O_s(1)})).
\end{align*}
By Lemma~\ref{lem:reduct-quot} and using Remark~\ref{rem:quotient-pass}, $\psi_h^\ast(n)$ can be realized on a nilmanifold with a degree-rank $(s-1,r^\ast-1)$ filtration. Furthermore the function underlying $\psi_h^\ast(n)$ is has Lipschitz constant bounded by $\exp((d\log(MD/\rho))^{O_s(1)})$ and the nilmanifold it lives on has dimension at most $(d\log(MD/\rho))^{O_s(1)}$ and complexity bounded by $\exp((d\log(MD/\rho))^{O_s(1)})$ due to Lemma~\ref{lem:reduct-quot}.

By applying \cite[Lemma~A.6]{Len23b} with subgroup corresponding to the $(s-1,r^\ast-1)$ degree-rank and Pigeonholing in the associated vertical frequency, we may assume that $\psi_h^\ast(n)$ has a vertical frequency with height bounded by $\exp((d\log(MD/\rho))^{O_s(1)})$; this may reduce the subset of $H^\ast$ under consideration by a further admissible fraction. We then extend $\psi_h^\ast(n)$ to a nilcharacter by using Lemma~\ref{lem:nil-exist}\footnote{We have that $\psi_h^\ast$ lives on the group $G_{\mr{Univ}}^{\triangle}/(G_{\mr{Univ}}^{\triangle})_{(s-1,r^\ast)}$. We may give it the degree filtration 
\begin{align*}
G_{\mr{Univ}}^{\triangle}/(&G_{\mr{Univ}}^{\triangle})_{(s-1,r^\ast)} = (G_{\mr{Univ}}^{\triangle})_{(1,0)}/(G_{\mr{Univ}}^{\triangle})_{(s-1,r^\ast)}\geqslant (G_{\mr{Univ}}^{\triangle})_{(2,0)}/(G_{\mr{Univ}}^{\triangle})_{(s-1,r^\ast)}\\
&\geqslant \cdots \geqslant (G_{\mr{Univ}}^{\triangle})_{(s-1,0)}/(G_{\mr{Univ}}^{\triangle})_{(s-1,r^\ast)} \geqslant (G_{\mr{Univ}}^{\triangle})_{(s-1,r^\ast-1)}/(G_{\mr{Univ}}^{\triangle})_{(s-1,r^\ast)} \geqslant \mr{Id}_{G_{\mr{Univ}}^{\triangle}/(G_{\mr{Univ}}^{\triangle})_{(s-1,r^\ast)}}
\end{align*} 
and we apply Lemma~\ref{lem:nil-exist} to this filtration to get a nilcharacter $H$. We then embed $\psi_h^\ast$ by taking the underlying function, call it $Q$, and taking the nilcharacter $(Q/(2\cdot\snorm{Q}_{\infty}),\sqrt{1 - |Q/(2\cdot\snorm{Q}_{\infty})|^2} \cdot H)$.}; we refer to this nilcharacter as $\psi_h^{\mr{Output}}$ and note it is a degree-rank $(s-1,r^\ast-1)$ nilcharacter with appropriate complexity. We thus have 
\begin{align*}
\snorm{\mb{E}_{n\in[N]}(\Delta_h f)(n) &\otimes \ol{\chi(h,n)} \otimes \psi_h^{\mr{Output}}(n)\otimes \ol{F_{\mr{Multi}}(g_{\mr{Final}}(h,n))} \cdot\ol{\psi_h(n)}}_{\infty} \\
&\ge\exp(-O_s((d\log(MD/\rho))^{O_s(1)})).
\end{align*}
By Pigeonholing in $h$ once again we may pass to $F_{\mr{Multi}}^\ast$, which is a fixed coordinate of $F_{\mr{Multi}}$,
\begin{align*}
\snorm{\mb{E}_{n\in[N]}(\Delta_h f)(n) &\otimes \ol{\chi(h,n)} \otimes \psi_h^{\mr{Output}}(n)\cdot\ol{F_{\mr{Multi}}^\ast(g_{\mr{Final}}(h,n))} \cdot\ol{\psi_h(n)}}_{\infty} \\
&\ge\exp(-d^{O_s(1)}\log(MD\rho^{-1})^{O_s(1)}).
\end{align*}
on a $\exp(-(d\log(MD/\rho))^{O_s(1)})$ fraction of indices. $F_{\mr{Multi}}^\ast$ lives on the group $G_{\mr{Multi}}$ and via \cite[Lemma~A.6]{Len23b}, Pigeonholing in $h$ so that we have the same frequency, and embedding in a nilcharacter via Lemma~\ref{lem:nil-exist} similar to the above argument, we have for all $h\in H^\ast$ that
\begin{align*}
\snorm{\mb{E}_{n\in[N]}(\Delta_h f)(n) &\otimes \ol{\chi(h,n)} \otimes \psi_h^{\mr{Output}}(n)\otimes \ol{F_{\mr{Multi}}^{\mr{Output}}(g_{\mr{Final}}(h,n))} \cdot\ol{\psi_h(n)}}_{\infty} \\
&\ge\exp(-O_s((d\log(MD/\rho))^{O_s(1)}))
\end{align*}
where $F_{\mr{Multi}}^{\mr{Output}}$ is a multidegree $(1,s-1)$ nilcharacter on $G_{\mr{Multi}}$ with vertical frequency height, output dimension, and Lipschitz constant of each coordinate bounded by $\exp((d\log(MD/\rho))^{O_s(1)})$ while the dimension of the underlying nilmanifold is bounded by $(d\log(MD/\rho))^{O_s(1)}$.

This completes the proof with $\chi(h,n)\otimes F_{\mr{Multi}}^{\mr{Output}}(g_{\mr{Final}}(h,n))$ being the new multidegree $(1,s-1)$ nilcharacter, $\ol{\psi_h(n)^{\mr{Output}}}$ being the degree-rank $(s-1,r^\ast-1)$ nilcharacter and noting that the density of indices $h$ which remain is at least $\exp(-O_s((d\log(MD/\rho))^{O_s(1)}))$.
\end{proof}

\section{Symmetrization argument}\label{sec:sym}
We now perform the necessary symmetrization argument. In particular, at this stage in the argument due to Theorem~\ref{thm:iter-end} we have shown that for many $h$, $\Delta_h f$ correlates with $\chi(h,n)$ which is a multidegree $(1,s-1)$ nilcharacter. We now demonstrate that $\chi(h,n)$ is ``symmetric up to lower order terms'' in $h$ and $n$ (after multilinearizing the $n$ variable) via an argument of Green, Tao, and Ziegler \cite{GTZ12}, which in turn is closely related to an earlier argument of Green and Tao \cite{GT08b} which proved such a result for the $U^3$-norm. Our treatment is slightly simpler than in \cite{GTZ12}. Importantly, this argument is fundamentally based on a finite number of applications of Cauchy--Schwarz and a single call to equidistribution theory and therefore naturally comes with good bounds.

All references to Appendix~\ref{app:nilcharacters} are simply quantified versions of lemmas which appear in the work of Green, Tao, and Ziegler \cite[Appendix~E]{GTZ12} and a discussion of the correspondence is given more carefully in Appendix~\ref{app:nilcharacters}. The reader may benefit from glancing at the statements in Appendix~\ref{app:nilcharacters} or those in \cite[Appendix~E]{GTZ12}.

For the remainder of this section and Appendix~\ref{app:nilcharacters}, to lighten statements, we say a nilsequence $\chi$ has complexity $(M, d)$ if the underlying nilmanifold $G/\Gamma$ has complexity $M$, the underlying function is $M$-Lipschitz, and the dimension of $G$ is bounded by $d$. We will say a nilcharacter $\chi$ has complexity $(M,d)$ if the underlying nilmanifold $G/\Gamma$ has complexity $M$, the output dimension of $\chi$ is bounded by $M$, the underlying function has all coordinates being $M$-Lipschitz, the vertical character underlying $\chi$ has height bounded by $M$, and the dimension of $G$ is bounded by $d$. In this section, $M$ will always be of the form $M(\delta):=\exp(\log(1/\delta)^{O_s(1)})$ while the underlying $d$ will be of the form $d(\delta):=\log(1/\delta)^{O_s(1)}$ in our analysis, where the implicit constants may, by abuse of notation, vary from line to line.

We now recall the output of Theorem~\ref{thm:iter-end}. We have 
\[\mb{E}_{h\in[N]}\snorm{\mb{E}_{n\in[N]} \Delta_h f(n) \otimes \chi(h,n) \psi_h(n)}_{\infty} \ge M(\delta)^{-1}.\]
Here $\psi_h(n)$ is a degree $(s-2)$ nilsequence and $\chi(h,n) = F(g(h,n)\Gamma)$ is a multidegree $(1,s-1)$ nilcharacter. Furthermore $\chi$ has complexity $(M(\delta),d(\delta))$ while $\psi_h(n)$ has complexity $(M(\delta),d(\delta))$.

Our first step is to multilinearize $\chi$ in the $n$ variable, replacing it by a multidegree $(1,1,\ldots,1)$ nilcharacter which is symmetric in the final $(s-1)$ variables. 

\begin{lemma}\label{lem:periodic-start}
Fix $s\ge 2$. Suppose that 
\[\mb{E}_{h\in[N]}\snorm{\mb{E}_{n\in[N]} \Delta_h f(n) \otimes \chi(h,n) \cdot\psi_h(n)}_{\infty}\ge 1/M(\delta)\]
with $\chi(h,n)$ being a periodic multidegree $(1,s-1)$-nilcharacter and $\psi_h(n)$ are degree $(s-2)$ nilsequences each of complexity $(M(\delta),d(\delta))$. 

There exists $\wt{\chi}$ a multidegree $(1,\ldots,1)$ nilcharacter (with $s$ ones), $\wt{\psi}$ a degree $(s-1)$ nilsequence, and there exist $\wt{\psi_h}(n)$ which are degree $(s-2)$ nilcharacters all having complexity  complexity $(M(\delta),d(\delta))$ such that
\[\mb{E}_{h\in[N]}\snorm{\mb{E}_{n\in[N]} \Delta_h f(n) \otimes \wt{\chi}(h,n,\ldots,n) \cdot\wt{\psi}(n) \otimes \wt{\psi_h}(n)}_{\infty}\ge 1/M(\delta).\]
Furthermore $\wt{\chi}$ is symmetric in the final $(s-1)$ coordinates, i.e., for any $\sigma\in\mf{S}_{s-1}$ we have
\[\wt{\chi}(h,n_1,\ldots,n_{s-1}) = \wt{\chi}(h,n_{\sigma(1)},\ldots,n_{\sigma(s-1)}).\]
\end{lemma}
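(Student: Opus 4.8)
\textbf{Proof proposal for Lemma~\ref{lem:periodic-start}.}
The plan is to realize the multidegree $(1,s-1)$ nilcharacter $\chi(h,n)$ as a specialization of a multidegree $(1,1,\dots,1)$ nilcharacter in the variables $(h,n_1,\dots,n_{s-1})$ and then to symmetrize over the last $s-1$ coordinates, absorbing all the resulting ``error terms'' into lower-degree nilsequences via Lemma~\ref{lem:equiv}. First I would invoke the multilinearization machinery of Appendix~\ref{app:nilcharacters} (the quantified analogue of the $\chi(\cdot,n,\dots,n)\leftrightarrow$ multidegree $(1,\dots,1)$ correspondence used already inside Lemma~\ref{lem:four-fold-biased}, cf.\ Lemma~\ref{lem:multilinear}): there is a multidegree $(1,1,\dots,1)$ nilcharacter $\chi^{\mr{mult}}(h,n_1,\dots,n_{s-1})$, of complexity $(M(\delta),d(\delta))$, which is $(M(\delta),M(\delta),d(\delta))$-equivalent for multidegree $(1,s-1)$ to $\chi(h,n,\dots,n)$. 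Applying Lemma~\ref{lem:equiv} to the correlation hypothesis produces
\[
\mb{E}_{h\in[N]}\snorm{\mb{E}_{n\in[N]}\Delta_h f(n)\otimes\chi^{\mr{mult}}(h,n,\dots,n)\cdot\psi_h(n)\cdot\psi^\sharp(h,n)}_\infty\ge 1/M(\delta),
\]
where $\psi^\sharp(h,n)$ is a multidegree $(1,s-1)$ nilsequence of complexity $(M(\delta),d(\delta))$; this can be viewed as a degree-$(s-1)$ nilsequence in $(h,n)$ and its $n$-part, after a further split via Lemma~\ref{lem:split}, is a degree $(s-2)$ nilsequence in $n$ which we fold into $\psi_h$, while the genuinely degree-$(s-1)$-in-$n$ remainder is named $\wt{\psi}(n)$ (here we use that $\Delta_h f$ absorbs $h$-only phases trivially).

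Next I would symmetrize. For a transposition $\sigma=(j\ j+1)$ the two nilcharacters $\chi^{\mr{mult}}(h,n_1,\dots,n_{s-1})$ and $\chi^{\mr{mult}}(h,n_{\sigma(1)},\dots,n_{\sigma(s-1)})$ have the same ``symbol'': on each specialization $n_1=\dots=n_{s-1}=n$ they agree, and more precisely the difference of their top multidegree-$(1,\dots,1)$ data vanishes because both descend from the symmetric object $\chi(h,n)$. Quantitatively this is a statement that $\chi^{\mr{mult}}$ and $\chi^{\mr{mult}}\circ\sigma$ are $(M(\delta),M(\delta),d(\delta))$-equivalent for the down-set generated by the vectors obtained from $(1,1,\dots,1)$ by decreasing one of the last $s-1$ coordinates to $0$ — i.e.\ the error is of strictly lower multidegree in the $n$-block. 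Iterating over the generators of $\mf{S}_{s-1}$ and using transitivity of equivalence (which costs only a bounded power in the complexity parameters, bounded because $s$ is fixed), we obtain a genuinely symmetric multidegree $(1,1,\dots,1)$ nilcharacter $\wt\chi$, of complexity $(M(\delta),d(\delta))$, equivalent to $\chi^{\mr{mult}}$ for this down-set. The symmetrization itself is concrete: one builds $\wt\chi$ as a suitable coordinate of the tensor product $\bigotimes_{\sigma\in\mf{S}_{s-1}}\chi^{\mr{mult}}\circ\sigma$ living on the product nilmanifold, which is automatically $\mf{S}_{s-1}$-invariant after restricting to the diagonal action on the $n$-coordinates; Fact~\ref{fct:product-rat} controls the complexity and output dimension (the $(s-1)!$ factor is $O_s(1)$).

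Finally I would apply Lemma~\ref{lem:equiv} one last time to replace $\chi^{\mr{mult}}(h,n,\dots,n)$ by $\wt\chi(h,n,\dots,n)$ in the correlation; the cost is an additional nilsequence factor of multidegree in the above down-set, whose $n$-part is degree $\le s-2$ and is absorbed into $\psi_h(n)$, yielding the $\wt\psi_h(n)$ of the statement (which one promotes from a nilsequence to a nilcharacter by the now-standard trick of pairing its underlying function $Q$ with $\sqrt{1-|Q/(2\|Q\|_\infty)|^2}$ times an auxiliary nilcharacter from Lemma~\ref{lem:nil-exist}, exactly as in the proof of Lemma~\ref{lem:induc}). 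Collecting terms gives
\[
\mb{E}_{h\in[N]}\snorm{\mb{E}_{n\in[N]}\Delta_h f(n)\otimes\wt\chi(h,n,\dots,n)\cdot\wt\psi(n)\otimes\wt\psi_h(n)}_\infty\ge 1/M(\delta),
\]
as required. The main obstacle is the second step: one must verify carefully that the difference between $\chi^{\mr{mult}}$ and $\chi^{\mr{mult}}\circ\sigma$ is equivalent, for the appropriate down-set of multidegrees, to something of strictly lower multidegree in the $n$-block — this is where the symmetry of the original $\chi(h,n)$ in $(h,n)$ after the diagonal specialization gets used, and it requires the quantified symbol-calculus of Appendix~\ref{app:nilcharacters} (the analogue of \cite[Lemma~E.10]{GTZ12} or its cousins) rather than just formal manipulation; all the complexity bookkeeping is routine but the identification of the ``leading symbol'' and its invariance is the substantive point.
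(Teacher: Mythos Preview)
Your first step (invoke Lemma~\ref{lem:multilinear}, then Lemma~\ref{lem:equiv}, then split the resulting degree-$(s-1)$ error via Lemma~\ref{lem:split} into a pure-$n$ part $\wt{\psi}(n)$ and an $h$-dependent degree-$(s-2)$ part) is exactly what the paper does, and your final promotion of $\wt{\psi_h}$ to a nilcharacter via Lemma~\ref{lem:nil-exist} is also the paper's move. One small correction: the equivalence produced by Lemma~\ref{lem:multilinear} is for \emph{degree} $|\vec{s}|-1=s-1$, not for multidegree $(1,s-1)$; this is why the paper views the error $\psi^\ast(h,n)$ as a degree-$(s-1)$ nilsequence and then reinterprets it as multidegree $(0,s-1)\cup(s-1,s-2)$ before splitting.

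Your entire second step, however, is unnecessary. Read the statement of Lemma~\ref{lem:multilinear} again: it already guarantees that the output nilcharacter $\chi'$ is symmetric in each block of variables $h_{i,1},\ldots,h_{i,s_i}$. Applied here with $k=2$ and $(s_1,s_2)=(1,s-1)$, the multilinearized $\wt{\chi}$ is automatically symmetric in the last $s-1$ coordinates, so there is nothing to symmetrize. The paper's proof is correspondingly short: multilinearize (getting symmetry for free), apply Lemma~\ref{lem:equiv}, split the error, and upgrade $\psi_h$ to a nilcharacter. Your proposed symmetrization via $\bigotimes_{\sigma\in\mf{S}_{s-1}}\chi^{\mr{mult}}\circ\sigma$ would also run into a genuine problem if it were needed: this tensor product has vertical frequency $(s-1)!$ times that of $\chi^{\mr{mult}}$, so it is not equivalent to $\chi^{\mr{mult}}$ but to $(\chi^{\mr{mult}})^{\otimes(s-1)!}$, and you would then need an extra $q$-th root step (Lemma~\ref{lem:nil-basic-prop}, last item) that you do not mention. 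Your justification that $\chi^{\mr{mult}}$ and $\chi^{\mr{mult}}\circ\sigma$ are equivalent ``because both descend from the symmetric object $\chi(h,n)$'' is also not right as stated: $\chi(h,n)$ has no symmetry between $h$ and $n$, and agreement on the diagonal $n_1=\cdots=n_{s-1}$ does not by itself give equivalence as multidegree-$(1,\ldots,1)$ objects.
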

\begin{proof}
This is essentially an immediate consequence of multilinearization (see e.g. \cite[Theorem~E.10]{GTZ12}). By applying Lemma~\ref{lem:multilinear}, there is multidegree $(1,\ldots,1)$ nilcharacter $\wt{\chi}$ of complexity $(M(\delta),d(\delta))$ such that $\chi(h,n)$ and $\wt{\chi}(h,n,\ldots,n)$ are $(M(\delta),M(\delta),d(\delta))$-equivalent for degree $(s-1)$. Furthermore $\wt{\chi}$ is symmetric in the final $(s-1)$ coordinates. 

Thus applying Lemma~\ref{lem:equiv} (and the remark following), there exists a nilsequence $\psi^\ast(h,n)$ of degree $\le (s-1)$ and complexity $(M(\delta),d(\delta))$ such that 
\[\mb{E}_{h\in[N]}\snorm{\mb{E}_{n\in[N]} \Delta_h f(n) \otimes \wt{\chi}(h,n,\ldots,n) \otimes \psi^\ast(h,n) \cdot\psi_h(n)}_{\infty} \ge 1/M(\delta).\]

Note that a degree $(s-1)$ nilsequence of complexity $(M(\delta),d(\delta))$ in two variables $(h,n)$ is also a multidegree $(0,s-1) \cup (s-1,s-2)$ nilsequence of complexity $(M(\delta),d(\delta))$
via taking the filtration $G_{\vec{i}} := G_{|\vec{i}|}$. Therefore by Lemma~\ref{lem:split} and the first item of Lemma~\ref{lem:specialization}, there exist nilsequences $\wt{\psi}(n)$ and $\psi_h^\ast(n)$ of degree $(s-1)$ and $(s-2)$ respectively and complexity $(M(\delta),d(\delta))$ such that 
\[\mb{E}_{h\in[N]}\snorm{\mb{E}_{n\in[N]} \Delta_h f(n) \otimes \wt{\chi}(h,n,\ldots,n) \otimes \wt{\psi}(n) \cdot\psi_{h}^\ast(n) \cdot\psi_h(n)}_{\infty} \ge1/M(\delta).\]

Now, $\psi_h^\ast(n)\cdot\psi_h(n)$ is a degree $(s-2)$ nilsequence of complexity $(M(\delta),d(\delta))$. Applying \cite[Lemma~A.6]{Len23b}, we may replace this product by  $\psi_h'(n)$ which is a degree $(s-2)$ nilsequence of complexity $(M(\delta),d(\delta))$ with a vertical frequency of height $\exp(\log(1/\delta)^{O_s(1)})$. Finally apply Lemma~\ref{lem:nil-exist} and embed $\psi_h'(n)$ as a coordinate of a nilcharacter $\wt{\psi_h}(n)$ of complexity $(M(\delta),d(\delta))$, similar to in the proof of Lemma~\ref{lem:induc}. We thus have
\[\mb{E}_{h\in[N]}\snorm{\mb{E}_{n\in[N]} \Delta_h f(n) \otimes \wt{\chi}(h,n,\ldots,n) \cdot\wt{\psi}(n) \otimes \wt{\psi_h}(n)}_{\infty} \ge1/M(\delta)\]
where $\wt{\psi}$ and $\wt{\psi_h}$ have the appropriate properties.
\end{proof}

We are now in position to complete the proof of Theorem~\ref{thm:main} via a symmetrization argument. Our argument is analogous to that of Green, Tao, and Ziegler \cite[Section~13]{GTZ12} modulo certain minor simplifications to the underlying Cauchy--Schwarz arguments. 

\begin{proof}[Proof of Theorem~\ref{thm:main}]
We may assume that $s\ge 3$. The case $s=0$ is trivial, $s=1$ is standard Fourier analysis, and the case $s=2$ follows from work of Sanders \cite{San12b} (see \cite[Theorem~8]{Len22b}). Furthermore, throughout the analysis we will assume implicitly that $N\ge\exp(\log(1/\delta)^{\Omega_s(1)})$; in the case when $N$ is small one may deduce the statement via Fourier analysis. We proceed by induction, assuming that the inverse theorem is known for smaller $s$.

By Theorem~\ref{thm:iter-end} and then Lemma~\ref{lem:periodic-start} we may assume that 
\begin{equation}\label{eq:main-1}
\mb{E}_{h\in[N]}\snorm{\mb{E}_{n\in[N]} \Delta_h f(n) \otimes \chi(h,n,\ldots,n)\cdot\psi(n) \otimes \psi_h(n)}_{\infty}\ge1/M(\delta).
\end{equation}
Here $\chi$ is a multidegree $(1,\ldots,1)$ nilcharacter which is symmetric in the final $(s-1)$ variables, $\psi$ is a degree $(s-1)$ nilsequence, and $\psi_h$ are degree $(s-2)$ nilcharacters with complexities bounded by complexity $(M(\delta),d(\delta))$. For $h\notin [N]$, we take $\psi_h(n)$ to be the constant function $1$ (which is a degree $0$ nilcharacter) throughout the argument. Additionally, we may use differently indexed versions of functions $\psi$ that are defined at intermediate stages of the argument; although an abuse of notation, it will always be clear from context.

\noindent\textbf{Step 1: Initial setup for Cauchy--Schwarz argument.}
For the sake of shorthand, we will denote $\wt{\chi}(h,n) = \chi(h,n,\ldots,n)$ where there are $(s-1)$ copies of the variable $n$. By Lemma~\ref{lem:CS-basic} (taking $f_1 = f$ and $f_2 = \psi(n))$, we have 
\begin{align*}
&\mb{E}_{\substack{h_1 + h_2 = h_3 + h_4 \\ h_i\in[N]}}\snorm{\mb{E}_{n\in[N]} \wt{\chi}(h_1,n)\otimes \wt{\chi}(h_2,n + h_1-h_4) \otimes \ol{\wt{\chi}(h_3,n)}\otimes \ol{\wt{\chi}(h_4,n + h_1-h_4)} \\
& \qquad\qquad\qquad\otimes \psi_{h_1}(n)\otimes \psi_{h_2}(n + h_1 - h_4)\otimes \ol{\psi_{h_3}(n)}\otimes \ol{\psi_{h_4}(n + h_1 - h_4)}\cdot e(\Theta n)}_{\infty}\ge 1/M(\delta)
\end{align*}
for some $\snorm{\Theta}_{\mb{R}/\mb{Z}}\le M(\delta)/N$. Note that Lemma~\ref{lem:CS-basic} is stated for scalar function; here we are using that we may Pigeonhole on coordinates of the vector $\chi(h,n,\ldots,n)\cdot\psi(n) \otimes \psi_h(n)$ before using Lemma~\ref{lem:CS-basic}.

We next change variables with $h_1 = h + x$, $h_2 = h + y$, $h_3 = h + x + y$, and $h_4 = h$. The above then implies that
\begin{align*}
&\mb{E}_{h\in[N],x,y\in[\pm N]}\snorm{\mb{E}_{n\in[N]} \wt{\chi}(h + x,n)\otimes \wt{\chi}(h + y,n + x) \otimes \ol{\wt{\chi}(h + x + y,n)}\otimes \ol{\wt{\chi}(h, n + x)} \\
& \qquad\qquad\qquad\otimes \psi_{h + x}(n)\otimes \psi_{h + y}(n + x)\otimes \ol{\psi_{h + x + y}(n)}\otimes \ol{\psi_{h}(n + x)}e(\Theta n)}_{\infty}\ge1/M(\delta).
\end{align*}
By the first item of Lemma~\ref{lem:nil-basic-prop}, $\psi_{h+y}(n+x)$ and $\psi_{h+y}(n)$ are $(M(\delta),M(\delta),d(\delta))$-equivalent for degree $(s-3)$. We use that $s\ge 3$ precisely here so that this is a well-defined term.

Therefore by Lemma~\ref{lem:equiv}, there exists a collection $\psi_{h,x,y}(n)$ of degree $(s-3)$ nilsequences each of complexity $(M(\delta),d(\delta))$ such that
\begin{align*}
&\mb{E}_{h\in[N],x,y\in[\pm N]}\snorm{\mb{E}_{n\in[N]} \wt{\chi}(h + x,n)\otimes \wt{\chi}(h + y,n + x) \otimes \ol{\wt{\chi}(h + x + y,n)}\otimes \ol{\wt{\chi}(h, n + x)} \\
& \qquad\qquad\qquad\otimes \psi_{h + x}(n)\otimes \psi_{h+y}(n)\otimes \ol{\psi_{h + x + y}(n)}\otimes \ol{\psi_{h}(n + x)}\psi_{h,x,y}(n) \cdot e(\Theta n)}_{\infty}\ge1/M(\delta).
\end{align*}

We will use $B$ to denote vector-valued functions (which may vary term to term) with coordinates which are $1$-bounded such that the dimension is bounded by $M(\delta)$. The key point is that nearly all terms may be folded into $1$-bounded terms. In particular, we have
\begin{align*}
\mb{E}_{h\in[N],x,y\in[\pm N]}\snorm{\mb{E}_{n\in[N]} &\wt{\chi}(h + y,n + x) \cdot \psi_{h,x,y}(n)\otimes B(h,x,n) \otimes B(h,y,n) \otimes B(h,x+y, n)}_{\infty}\ge1/M(\delta).
\end{align*}
Noting that $\psi_{h,x,y}(n)$ may be twisted by an appropriate complex phase depending on $h$, we may in fact assume that
\begin{align*}
&\snorm{\mb{E}_{h,n\in[N],x,y\in[\pm N]} \wt{\chi}(h + y,n + x) \cdot\psi_{h,x,y}(n) \otimes B(h,x,n) \otimes B(h,y,n) \otimes B(h,x+y, n)}_{\infty}\ge1/M(\delta).
\end{align*}
By applying Pigeonhole in $h$, we may fix $h^\ast$ such that  
\begin{align*}
&\snorm{\mb{E}_{n\in[N],x,y\in[\pm N]} \wt{\chi}(h^\ast + y,n + x) \cdot\psi_{h^{\ast},x,y}(n)\otimes B(x,n) \otimes B(y,n) \otimes B(x+y, n)}_{\infty}\ge1/M(\delta).
\end{align*}
Taking the coordinate which achieves the infinity norm, we may assume that $B(\cdot,\cdot)$ are in fact all scalar and thus
\begin{align*}
&\snorm{\mb{E}_{x,y\in[\pm N]}\mb{E}_{n\in[N]} \wt{\chi}(h^\ast + y,n + x) \cdot\psi_{x,y}(n)
\cdot b(x,n) \cdot b(y,n) \cdot b(x+y, n)}_{\infty}\ge1/M(\delta);
\end{align*}
we have dropped $h^\ast$ in one subscript here.

By applying the second item of Lemma~\ref{lem:nil-basic-prop} and the second item of Lemma~\ref{lem:specialization}, we have that
$\wt{\chi}(h^\ast + y,n + x)$ and $\wt{\chi}(y,n + x)$ are $(M(\delta),M(\delta),d(\delta))$-equivalent for degree $(s-1)$. Thus by Lemma~\ref{lem:equiv} there exists a nilsequence $\psi^\ast(x,y,n)$ of degree $(s-1)$ and complexity $(M(\delta),d(\delta))$ such that 
\begin{align*}
&\snorm{\mb{E}_{x,y\in[\pm N]}\mb{E}_{n\in[N]} \wt{\chi}(y,n + x) \cdot\psi^\ast(x,y,n) \cdot\psi_{x,y}(n)\cdot
 b(x,n)  b(y,n)  b(x+y, n)}_{\infty}\ge1/M(\delta).
\end{align*}

Note that a degree $(s-1)$ nilsequence in variables $x,y,n$ is a multidegree $(s-1,s-1,s-3) \cup (1,0,s-2) \cup (0,1,s-2) \cup (0,0,s-1)$-nilsequence. Therefore applying Lemma~\ref{lem:split} and applying Pigeonhole, we may adjust $\psi_{x,y}$ and the $1$-bounded functions and remove $\psi^\ast$ and thus we may assume that 
\begin{align*}
&\snorm{\mb{E}_{x,y\in[\pm N]}\mb{E}_{n\in[N]} \wt{\chi}(y,n + x) \cdot\psi_{x,y}(n)
\cdot b(x,n) b(y,n) b(x+y, n)}_{\infty}\ge1/M(\delta);
\end{align*}
note that $\psi_{x,y}$ and $B$ have all been modified but we have abusively maintained the same notation. In particular, $\psi_{x,y}(n)$ is degree $(s-3)$.

By Lemma~\ref{lem:multi}, the second item of Lemma~\ref{lem:specialization}, and Lemma~\ref{lem:trans} (and symmetry of $\chi$ in the final $(s-1)$ coordinates), we have that $\wt{\chi}(y,n + x)$ and \[\bigotimes_{k=0}^{s-1} \chi(y,n,\ldots, n, x,\ldots, x)^{\otimes \binom{s-1}{k}}\]
are $(M(\delta),M(\delta),d(\delta))$-equivalent for degree $(s-1)$. In this notation there are $k$ copies of $n$ and $s-1-k$ copies of $x$. Now by Lemma~\ref{lem:equiv}, we have 
\begin{align*}
&\norm{\mb{E}_{x,y\in[\pm N]}\mb{E}_{n\in[N]} \psi^\ast(x,y,n)\cdot\bigotimes_{k=0}^{s-1} \chi(y,n,\ldots, n, x,\ldots, x)^{\otimes \binom{s-1}{k}}\cdot\psi_{x,y}(n)\\
&\qquad\qquad\qquad\qquad\cdot b(x,n) b(y,n) b(x+y, n) }_{\infty}\ge1/M(\delta)
\end{align*}
where $\psi^\ast(x,y,n)$ is a new degree $(s-1)$ nilsequence of complexity $(M(\delta),d(\delta))$. Applying Lemma~\ref{lem:split} as before, we may adjust $\psi_{x,y}(n)$ and the $1$-bounded functions and remove this term to have that
\begin{align*}
&\norm{\mb{E}_{x,y\in[\pm N]}\mb{E}_{n\in[N]} \bigotimes_{k=0}^{s-1} \chi(y,n,\ldots, n, x,\ldots, x)^{\otimes \binom{s-1}{k}}\cdot\psi_{x,y}(n)b(x,n) b(y,n) b(x+y, n)}_{\infty}\ge1/M(\delta).
\end{align*}
Note that the only terms of $\bigotimes_{k=0}^{s-1} \chi(y,n,\ldots, n, x,\ldots, x)^{\otimes \binom{s-1}{k}}$
which involve all of $x,y,n$ with $n$ appearing at least $s-2$ times have exactly one copy of $x$, one copy of $y$ and $n$ exactly $(s-2)$ times. Therefore taking the coordinate of 
\[\bigotimes_{\substack{0\le k\le s-1\\k\neq s-2}} \chi(y,n,\ldots, n, x,\ldots, x)^{\otimes \binom{s-1}{k}}\]
which achieves the infinity norm and adjusting $\psi_{x,y}$, $b$, and adding a term $b(x,y)$ we have 
\begin{align*}
&\norm{\mb{E}_{x,y\in[\pm N]}\mb{E}_{n\in[N]} \chi(y,x,n,\ldots, n)^{\otimes (s-1)}\cdot\psi_{x,y}(n) b(x,n) b(y,n) b(x+y, n) b(x,y)}_{\infty}\ge1/M(\delta).
\end{align*}

\noindent\textbf{Step 2: Cauchy--Schwarz to remove $1$-bounded functions.}
Applying Cauchy--Schwarz to each coordinate of the associated vector, duplicating the variable $y$, and using that $b(x,n)$ is $1$-bounded, we find that 
\begin{align*}
&\snorm{\mb{E}_{n\in[N],x\in[\pm N]} \mb{E}_{y,y'\in[\pm N]}\chi(y, x, n, \ldots, n)^{\otimes (s-1)}\otimes \ol{\chi(y', x, n, \ldots, n)^{\otimes (s-1)}} \cdot\psi_{x,y}(n)\ol{\psi_{x,y'}(n)}\\
&\qquad\qquad\qquad\qquad\cdot b(y,n) \ol{b(y',n)}\cdot b(x+y, n) \ol{b(x+y', n)}\cdot b(x,y)\ol{b(x,y')}}_{\infty}\ge1/M(\delta).
\end{align*}

By Lemma~\ref{lem:multi}, Lemma~\ref{lem:specialization}, and Lemma~\ref{lem:trans}, we have that
\[\chi(y, x, n, \ldots, n)^{\otimes (s-1)}\otimes \ol{\chi(y', x, n, \ldots, n)^{\otimes (s-1)}}\text{ and }\chi(y-y', x, n, \ldots, n)^{\otimes (s-1)}\]
are $(M(\delta),M(\delta),d(\delta))$-equivalent for
degree $(s-1)$. Therefore by Lemma~\ref{lem:equiv}, there exists $\psi^\ast(x,y,y',n)$ a degree $(s-1)$ nilsequence of complexity $(M(\delta),d(\delta))$ such that 
\begin{align*}
&\snorm{\mb{E}_{n\in[N],x,y,y'\in[\pm N]}\chi(y - y', x, n, \ldots, n)^{\otimes (s-1)}\cdot\psi^\ast(x,y,y',n)\cdot\psi_{x,y}(n)\ol{\psi_{x,y'}(n)} \\
&\qquad\qquad\cdot b(y,n) \ol{b(y',n)}\cdot b(x+y, n) \ol{b(x+y', n)}\cdot b(x,y)\ol{b(x,y')}}_{\infty}\ge1/M(\delta).
\end{align*}
Note that $z = x+y+y'$ ranges in the set $[-3N,3N]$. Take $\rho = \exp(-\log(1/\delta)^{O_s(1)})$ sufficiently small. Then there exists $z^\ast$ such that $z^\ast\in[-(3-\rho)N,(3-\rho)N]$ such that
\begin{align*}
&\snorm{\mb{E}_{n\in[N]}\mb{E}_{\substack{x,y,y'\in[\pm N]\\ x + y+y' = z^\ast}}\chi(y - y', x, n, \ldots, n)^{\otimes (s-1)}\cdot\psi^\ast(x,y,y',n)\cdot\psi_{x,y}(n)\ol{\psi_{x,y'}(n)} \\
&\qquad\qquad\cdot b(y,n) \ol{b(y',n)}\cdot b(x+y,n) \ol{b(x+y', n)}\cdot b(x,y)\ol{b(x,y')}}_{\infty}\ge1/M(\delta).
\end{align*}
This implies that 
\begin{align*}
&\snorm{\mb{E}_{n\in[N]}\mb{E}_{\substack{x,y,y'\in[\pm N]\\ x + y+y' = z^\ast}}\chi(y - y', z^\ast - y - y', n, \ldots, n)^{\otimes (s-1)}\cdot\psi^\ast(z^\ast - y - y',y,y',n)\cdot\psi_{x,y}(n)\ol{\psi_{x,y'}(n)} \\
&\cdot b(y,n) \ol{b(y',n)}\cdot b(z^\ast-y', n) \ol{b(z^\ast-y, n)}\cdot b(z^\ast-y-y',y)\ol{b(z^\ast-y-y',y')}}_{\infty}\ge1/M(\delta).
\end{align*}

By applying the first item of Lemma~\ref{lem:nil-basic-prop}, Lemma~\ref{lem:multi}, Lemma~\ref{lem:specialization}, and Lemma~\ref{lem:trans} we have that 
\[\chi(y - y', z^\ast - y - y', n, \ldots, n)^{\otimes (s-1)}\]
and 
\[\chi(y', y', n, \ldots, n)^{\otimes (s-1)}\chi(y', y, n, \ldots, n)^{\otimes (s-1)}\ol{\chi(y, y, n, \ldots, n)}^{\otimes (s-1)}\ol{\chi(y, y', n, \ldots, n)}^{\otimes (s-1)}\]
are $(M(\delta),M(\delta),d(\delta))$ equivalent for degree $(s-1)$. Thus by Lemma~\ref{lem:equiv} and letting $\wt{\psi}$ denote a degree $(s-1)$ nilsequence in $y,y',n$ of complexity $(M(\delta),d(\delta))$ we have that 
\begin{align*}
&\snorm{\mb{E}_{n\in[N]}\mb{E}_{\substack{y,y'\in[\pm N]\\ |z^\ast-y-y'|\le N}}\chi(y', y', n, \ldots, n)^{\otimes (s-1)}\chi(y', y, n, \ldots, n)^{\otimes (s-1)}\\
&\ol{\chi(y, y, n, \ldots, n)}^{\otimes (s-1)}\ol{\chi(y, y', n, \ldots, n)}^{\otimes (s-1)}\cdot\wt{\psi}(y,y',n)\cdot\psi_{z^\ast-y-y',y}(n)\ol{\psi_{z^\ast-y-y',y'}(n)}\\
&\cdot b(y,n) \ol{b(y',n)}\cdot b(z^\ast-y', n) \ol{b(z^\ast-y, n)}\cdot b(z^\ast-y-y',y) \cdot\ol{b(z^\ast-y-y',y')}}_{\infty}\ge1/M(\delta).
\end{align*}
Here we have ``folded'' in $\psi^\ast(z^\ast-y-y',y,y',n)$ via Lemma~\ref{lem:specialization} in $\wt{\psi}$. We may collapse various $1$-bounded functions (and pass to the coordinates of $\chi(y', y', n, \ldots, n)^{\otimes (s-1)}$ and $\ol{\chi(y, y, n, \ldots, n)}^{\otimes (s-1)}$ which achieve the $L^{\infty}$ norm) and obtain 
\begin{align*}
&\snorm{\mb{E}_{n\in[N]}\mb{E}_{\substack{y,y'\in[\pm N]\\ |z^\ast-y-y'|\le N}}\chi(y', y, n, \ldots, n)^{\otimes (s-1)}\ol{\chi(y, y', n, \ldots, n)}^{\otimes (s-1)}\\ 
&\qquad\qquad\cdot\wt{\psi}(y,y',n)\cdot\psi_{y,y'}(n) \cdot b(y,n)b(y',n)b(y,y')}_{\infty}\ge1/M(\delta);
\end{align*}
here the $\wt{\psi}_{y,y'}(n)$ are degree $(s-3)$ nilsequences of complexity $(M(\delta),d(\delta))$. Furthermore as $\wt{\psi}(y,y',n)$ is a degree $(s-1)$ nilsequence, we have that it is a multidegree $(s-1,s-1,s-3)\cup (1,0,s-1) \cup (0,1,s-1)$ nilsequence. Therefore by Lemma~\ref{lem:split} and Lemma~\ref{lem:specialization}, we may remove $\wt{\psi}$ at the cost of adjusting $b$ and $\psi_{y,y'}$ to obtain 
\begin{align*}
&\snorm{\mb{E}_{n\in[N]}\mb{E}_{\substack{y,y'\in[\pm N]\\ |z^\ast-y-y'|\le N}}\chi(y', y, n, \ldots, n)^{\otimes (s-1)}\ol{\chi(y, y', n, \ldots, n)}^{\otimes (s-1)}\\ 
&\qquad\qquad\cdot\psi_{y,y'}(n) \cdot b(y,n)b(y',n)b(y,y')}_{\infty}\ge1/M(\delta).
\end{align*}
This implies that
\begin{align*}
&\snorm{\mb{E}_{n\in[N]}\mb{E}_{y,y'\in[\pm N]}\chi(y', y, n, \ldots, n)^{\otimes (s-1)}\ol{\chi(y, y', n, \ldots, n)}^{\otimes (s-1)}\\ 
&\qquad\qquad\cdot\psi_{y,y'}(n) \cdot b(y,n) b(y',n) b(y,y') \cdot\mbm{1}_{|z^\ast-y-y'|\le N}}_{\infty}\ge1/M(\delta).
\end{align*}
as $\mb{P}[\mbm{1}_{|z^\ast-y-y'|\le N}]\gtrsim \rho^{2}$. Note that the final indicator may be absorbed into $b(y,y')$ to obtain
\begin{align*}
&\snorm{\mb{E}_{n\in[N]}\mb{E}_{y,y'\in[\pm N]}\chi(y', y, n, \ldots, n)^{\otimes (s-1)}\ol{\chi(y, y', n, \ldots, n)}^{\otimes (s-1)}\\ 
&\qquad\qquad\cdot\psi_{y,y'}(n) \cdot b(y,n) b(y',n) b(y,y')}_{\infty}\ge1/M(\delta).
\end{align*}
Define $G(y,y',n) = \chi(y, y', n, \ldots, n)^{\otimes (s-1)}\otimes \ol{\chi(y', y, n, \ldots, n)}^{\otimes (s-1)}$ and we have 
\begin{align*}
&\snorm{\mb{E}_{n\in[N],y,y'\in[\pm N]}G(y,y',n)\cdot\ol{\psi_{y,y'}(n)}\cdot b(y,n)b(y',n)b(y,y')}_{\infty}\ge1/M(\delta).
\end{align*}

Applying Cauchy--Schwarz in $n$, then $y$, and then $y'$ (analogously to as in Lemma~\ref{lem:CS-basic}) we may remove the bounded functions $b$ and we have 
\begin{align*}
&\snorm{\mb{E}_{n_1,n_2\in[N],y_1,y_2,y_1',y_2'\in[\pm N]}\bigotimes_{\eps\in\{1,2\}^{3}}\mc{C}^{|\eps|-1}G(y_{\eps_1},y_{\eps_2}',n_{\eps_3})\cdot\psi_{y_1,y_2,y_1',y_2'}(n_1) \cdot\ol{\psi_{y_1,y_2,y_1',y_2'}(n_2)}}_{\infty}\\
&\qquad\qquad\qquad\qquad\qquad\ge1/M(\delta),
\end{align*}
where $\mc{C}$ denotes conjugation and the $\psi_{y_1,y_2,y_1',y_2'}(n_i)$ are degree at most $(s-3)$ nilsequences of complexity $(M(\delta),d(\delta))$. Applying Pigeonhole in $n_2,y_2,y_2'$ and applying Lemma~\ref{lem:specialization} to specialize variables, reindexing $n_1, y_1,y_1'$ to $n,y,y'$, and taking the maximal coordinate we have 
\begin{align*}
&\snorm{\mb{E}_{n\in[N],y,y'\in[\pm N]}G(y,y',n)\cdot\psi_1(y,n) \cdot\psi_2(y',n) \cdot\psi_{y,y'}(n)}_{\infty}\ge1/M(\delta).
\end{align*}
Here $\psi_{y,y'}$ is degree at most $(s-3)$ in $n$ while $\psi_1(y,n)$ and $ \psi_2(y,n)$ are multidegree $(1,s-2)$ and all have complexity $(M(\delta),d(\delta))$. Finally by the triangle inequality we have 
\begin{align*}
&\mb{E}_{y,y'\in[\pm N]}\snorm{\mb{E}_{n\in[N]}G(y,y',n)\cdot\psi_1(y,n) \cdot\psi_2(y',n) \cdot\psi_{y,y'}(n)}_{\infty}\ge1/M(\delta).
\end{align*}

\noindent\textbf{Step 3: Converse of the inverse theorem and polarization.}
By the converse of the inverse theorem, see Lemma~\ref{lem:converse}, we have that 
\begin{align*}
&\mb{E}_{y,y'\in[N]}\snorm{G(y,y',\cdot)\psi_1(y,\cdot)\psi_2(y',\cdot)}_{U^{s-2}[N]}^{2^{s-2}}\ge1/M(\delta).
\end{align*}
Expanding out the definition of the $U^{s-2}$-norm, we find that 
\begin{align*}
&\norm{\mb{E}_{y,y'\in[\pm N]}\mb{E}_{n\in[N],h_1,\ldots,h_{s-2}\in[\pm N]}\bigotimes_{\eps\in\{0,1\}^{s-2}}\Big(\mc{C}^{|\eps|+s}(G(y,y',n + \eps \cdot\vec{h})\cdot\psi_1(y,n + \eps \cdot\vec{h}) \cdot\psi_2(y',n + \eps \cdot\vec{h})) \\
&\qquad\qquad\cdot\mbm{1}_{n + \eps \cdot\vec{h} \in[N]}\Big)}_{\infty} \ge1/M(\delta).
\end{align*}

The crucial point is that by repeatedly applying Lemma~\ref{lem:multi}, Lemma~\ref{lem:specialization}, and Lemma~\ref{lem:trans} we have that
\[\bigotimes_{\eps\in\{0,1\}^{s-2}}\mc{C}^{|\eps|+s}(G(y,y',n + \eps \cdot\vec{h}))\]
and 
\[\chi(y,y',h_1,\ldots,h_{s-2})^{\otimes (s-1)!}\cdot\ol{\chi(y',y,h_1,\ldots,h_{s-2})}^{\otimes(s-1)!}\]
are $(M(\delta),M(\delta),d(\delta))$-equivalent for degree $(s-1)$. Therefore by Lemma~\ref{lem:equiv}, there exists a nilsequence $\wt{\psi}$ of degree $(s-1)$ and complexity $(M(\delta),d(\delta))$ such that 
\begin{align*}
&\snorm{\mb{E}_{y,y'\in[\pm N]}\mb{E}_{n\in[N],h_1,\ldots,h_{s-2}\in[\pm N]}\chi(y,y',h_1,\ldots,h_{s-2})^{\otimes (s-1)!}\cdot\ol{\chi(y',y,h_1,\ldots,h_{s-2})}^{\otimes (s-1)!} \\
&\qquad\cdot\wt{\psi}(n,y,y',h_1,\ldots,h_{s-2}) \cdot\mbm{1}_{n + \eps \cdot\vec{h} \in[N]}}_{\infty}\ge1/M(\delta).
\end{align*}
Via Fourier expansion (a multidimensional version of the argument in Lemma~\ref{lem:major-arc}), we may fold in $\mbm{1}_{n + \eps \cdot\vec{h} \in[N]}$ into $\wt{\psi}(n,y,y',h_1,\ldots,h_{s-2})$.\footnote{To be precise, we convolve $\mbm{1}_{n + \eps \cdot\vec{h} \in[N]}$ with $\mbm{1}_{|n|\le\rho N} \cdot\prod_{i=1}^{s-2}\mbm{1}_{|h_i|\le\rho N}$ where $\rho = 1/M(\delta)$ is sufficiently small. This function has the necessary Fourier decay to apply the analysis in Lemma~\ref{lem:major-arc}} We reduce to 
\begin{align*}
&\snorm{\mb{E}_{y,y'\in[\pm N]}\mb{E}_{n\in[N],h_1,\ldots,h_{s-2}\in[\pm N]}\chi(y,y',h_1,\ldots,h_{s-2})^{\otimes (s-1)!}\cdot\ol{\chi(y',y,h_1,\ldots,h_{s-2})}^{\otimes (s-1)!} \\
&\qquad\cdot\wt{\psi}(n,y,y',h_1,\ldots,h_{s-2})}_{\infty}\ge1/M(\delta).
\end{align*}
Applying Pigeonhole in $n$ and applying the first item of Lemma~\ref{lem:specialization}, we reduce to
\begin{align}
\begin{split}\label{eq:main-2}
&\snorm{\mb{E}_{y,y'\in[\pm N]}\mb{E}_{h_1,\ldots,h_{s-2}\in[\pm N]}\chi(y,y',h_1,\ldots,h_{s-2})^{\otimes (s-1)!}\cdot\ol{\chi(y',y,h_1,\ldots,h_{s-2})}^{\otimes (s-1)!}\\
&\qquad\qquad\qquad\qquad\otimes  \wt{\psi}(y,y',h_1,\ldots,h_{s-2}) }_{\infty}\ge1/M(\delta);
\end{split}
\end{align}
once again we have abusively updated $\wt{\psi}$, which has degree $(s-1)$.

\noindent\textbf{Step 4: Invoking equidistribution theory.}
This is the unique moment we have the ability to apply equidistribution theory; up to this point we have been applying ``elementary'' facts regarding nilsequences. Let 
\[\chi(y,y',h_1,\ldots,h_{s-2}) = F(g(y,y',h_1,\ldots,h_{s-2})\Gamma)\]
and let $\xi$ denote the vertical $G_{(1,\ldots,1)}$ frequency of $F$ on the multidegree $(1,\ldots,1)$ nilmanifold $G/\Gamma$. We write 
\[\wt{\psi}(y,y',h_1,\ldots,h_{s-2}) = \wt{F}(g^\ast(y,y',h_1,\ldots,h_{s-2})\Gamma')\]
on the multidegree $(s-1)$ nilmanifold $G'/\Gamma'$. Note that 
\[(g(y,y',h_1,\ldots,h_{s-2}),g(y',y,h_1,\ldots,h_{s-2}),g^\ast(y,y_1,h_1,\ldots,h_{s-2}))\]
may be viewed as a polynomial sequence on $G\times G \times G'$ where $G'$ is given a degree $(s-1)$ filtration. $G\times G\times G'$ is given a degree $s$ filtration where the $t$-th group is
\[(G\times G\times G')_t=\bigvee_{|\vec{i}| = t}G_{\vec{i}}\times\bigvee_{|\vec{i}| = t}G_{\vec{i}} \times (G')_t.\]
Note that $F \otimes \ol{F} \otimes \wt{F}$ has $(G\times G \times G')_s$-vertical frequency $\xi' = (\xi,-\xi, 0)$, noting that $(G')_s=\mr{Id}_{G'}$. By applying Corollary~\ref{cor:equi-deg} with \eqref{eq:main-2} to 
\[F(g(y,y',h_1,\ldots,h_{s-2})\Gamma)^{\otimes (s-1)!} \otimes \ol{F(g(y',y,h_1,\ldots,h_{s-2})\Gamma)}^{\otimes (s-1)!} \cdot\wt{F}(g^\ast(y,y',h_1,\ldots,h_{s-2})\Gamma'),\]
and restricting the factorization to $G\times G$, we have 
\begin{align*}
&(g(y,y',h_1,\ldots,h_{s-2}),g(y',y,h_1,\ldots,h_{s-2})) \\
&\qquad= \eps(y,y_1,h_1,\ldots,h_{s-2}) \cdot g^{\mr{Output}}(y,y_1,h_1,\ldots,h_{s-2})\cdot\gamma(y,y_1,h_1,\ldots,h_{s-2}),
\end{align*}
where 
\begin{itemize}
    \item $g^{\mr{Output}}$ lives in an $M(\delta)$-rational subgroup $H$ such that $\xi'(H\cap (G\times G)_s) = 0$;
    \item $\gamma$ is an $M(\delta)$-rational polynomial sequence;
    \item $\eps$ is $(M(\delta),N)$-smooth.
\end{itemize}
Note that when apply Corollary~\ref{cor:equi-deg} the vertical frequency of the function we have is $(s-1)!\cdot\xi'$ and we obtain $(s-1)!\xi'(H\cap(G\times G)_s)=0$; we may divide by $(s-1)!$ to obtain the above. Additionally, we have implicitly used that $\xi'$ is trivial in the $G'$ part and abuse notation to descend $\xi'$ to $G\times G$.

Let $F^\ast = F\otimes\ol{F}$ and note that therefore
\begin{align*}
&\chi(h,n,\ldots,n) \otimes \ol{\chi(n,h,n,\ldots,n)} = F^\ast(\eps(h,n,\ldots,n) g^{\mr{Output}}(h,n,\ldots,n)\cdot\gamma(h,n,\ldots,n) (\Gamma\times\Gamma)).
\end{align*}

\noindent\textbf{Step 5: The finishing touch.}
We now recall from \eqref{eq:main-1} that
\[\mb{E}_{h\in[N]}\snorm{\mb{E}_{n\in[N]} \Delta_{h} f(n) \otimes \chi(h, n,\ldots,n)\cdot\psi(n) \cdot\psi_{h}(n)}_{\infty}\ge1/M(\delta);\]
here we have restricted to a coordinate of $\psi_h$ and we treat it as a degree $(s-2)$ nilsequence (rather than using the nilcharacter). By applying Pigeonhole there exist $q,q'\in[s]$ such that 
\[\mb{E}_{h\in[N/s]}\snorm{\mb{E}_{n\in[N/s]} \Delta_{sh + q'} f(sn + q) \otimes \chi(sh + q', sn + q,\ldots, sn + q)\cdot\psi(sn + q) \cdot\psi_{h}(sn + q)}_{\infty}\ge1/M(\delta).\]

By Lemma~\ref{lem:nil-basic-prop}, we have that
\[\chi(sh + q', sn + q,\ldots, sn+q) \text{ and } \chi(sh, sn,\ldots, sn)\]
are $(M(\delta),M(\delta),d(\delta))$-equivalent for degree $(s-1)$. Applying Lemma~\ref{lem:split} (splitting) and adjusting $\psi,\psi_h$, we may instead assume that
\begin{align*}
&\mb{E}_{h\in[N/s]}\snorm{\mb{E}_{n\in[N/s]} \Delta_{sh + q'} f(sn + q) \otimes \chi(sh, sn,\ldots, sn)\cdot\psi(n) \cdot\psi_{h}(n)}_{\infty}\ge1/M(\delta)
\end{align*}
for $\psi$ of degree $(s-1)$ and $\psi_h$ of degree $(s-2)$. Now define
\[T(h,n) := \chi(n+h,\ldots,n+h) \otimes \ol{\chi(h,n,\ldots,n)} \otimes \ol{\chi(n,h,\ldots,n)}^{\otimes (s-1)} \otimes \ol{\chi(n,n,\ldots,n)}.\]
Since this is a nilcharacter, we automatically know
\begin{align*}
&\mb{E}_{h\in[N/s]}\snorm{\mb{E}_{n\in[N/s]} \Delta_{sh + q'} f(sn + q) \otimes \chi(sh, sn,\ldots, sn)\cdot\psi(n) \cdot\psi_{h}(n)\\
&\qquad\qquad\qquad\qquad\qquad\qquad\qquad\qquad\otimes T(h,n)^{\otimes s^{s-1}} \otimes \ol{T(h,n)}^{\otimes s^{s-1}}}_{\infty}\ge1/M(\delta).
\end{align*}
We define
\begin{align*}
\wt{f}_1(n) &= f(sn + q) \cdot\ol{\chi(n,\ldots, n)}^{\otimes s^{s-1}},\\
\wt{f}_2(n + h) &= \ol{f(s(n+h) + q+q')} \cdot\chi(n+h,\ldots,n+h)^{\otimes s^{s-1}},
\end{align*}
which yields
\begin{align*}
&\mb{E}_{h\in[N/s]}\snorm{\mb{E}_{n\in[N/s]} \wt{f}_1(n)\otimes\wt{f}_2(n+h) \otimes \chi(sh, sn,\ldots, sn)\cdot\psi(n) \cdot\psi_{h}(n)\\
&\qquad\qquad \otimes (\ol{\chi(h,n,\ldots,n)} \otimes \ol{\chi(n,h,n,\ldots,n)}^{\otimes (s-1)})^{\otimes s^{s-1}} \otimes \ol{T(h,n)}^{\otimes s^{s-1}}}_{\infty}\ge1/M(\delta).
\end{align*}

By applications of Lemma~\ref{lem:multi}, Lemma~\ref{lem:specialization}, and Lemma~\ref{lem:trans} we have that $T(h,n)$ and 
\[\bigotimes_{k=1}^{s-1}\chi(h,h,\ldots,h,n,\ldots,n)^{\binom{s-1}{k}}\otimes\bigotimes_{k=2}^{s-1}\chi(n,h,\ldots,h,n,\ldots,n)^{\binom{s-1}{k}}\]
are $(M(\delta),M(\delta),d(\delta))$-equivalent for degree $(s-1)$. (There are $k+1$ many $h$'s in the first term and $k$ many $h$'s in the second term.) Applying Lemma~\ref{lem:split}, we may approximate each coordinate as a sum of products of multidegree $(s-1,s-2)$ and $(0,s-1)$ nilsequences in variables $(h,n)$. Furthermore, by the second item of Lemma~\ref{lem:specialization} this new nilsequence is of similar type. So, folding everything into $\psi(n)$ of degree $(s-1)$ and the $\psi_h(n)$ of degree $(s-2)$, we find
\begin{align*}
&\mb{E}_{h\in[N/s]}\norm{\mb{E}_{n\in[N/s]} \wt{f}_1(n)\otimes\wt{f}_2(n+h) \otimes \chi(sh, sn,\ldots, sn)\cdot\psi(n) \cdot\psi_{h}(n)\\
&\qquad \qquad\qquad\qquad\otimes (\ol{\chi(h,n,\ldots,n)} \otimes \ol{\chi(n,h,\ldots,n)}^{\otimes (s-1)})^{\otimes s^{s-1}}}_{\infty}\ge1/M(\delta).
\end{align*}

Furthermore note by Lemma~\ref{lem:nil-basic-prop} that
\[\chi(sh, sn,\ldots, sn) \text{ and } \chi(h, n,\ldots, n)^{\otimes s^{s}}\]
are $(M(\delta),M(\delta),d(\delta))$-equivalent for degree $(s-1)$. Applying Lemma~\ref{lem:equiv} and Lemma~\ref{lem:split} and adjusting $\psi$ and $\psi_h$ yet again we have
\begin{align*}
&\mb{E}_{h\in[N/s]}\norm{\mb{E}_{n\in[N/s]} \wt{f}_1(n)\otimes\wt{f}_2(n+h) \otimes \chi(h, n,\ldots, n)^{\otimes s^{s}}\cdot\psi(n) \cdot\psi_{h}(n)\\
&\qquad\qquad\qquad\qquad \otimes (\ol{\chi(h,n,\ldots,n)} \otimes \ol{\chi(n,h,\ldots,n)}^{\otimes (s-1)})^{\otimes s^{s-1}}}_{\infty}\ge1/M(\delta).
\end{align*}
Now by Lemma~\ref{lem:nil-basic-prop} we have that 
\[\chi(h, n,\ldots, n)^{\otimes s^s}\otimes\ol{\chi(h,n,\ldots,n)}^{\otimes s^{s-1}} \text{ and } \chi(h,n,\ldots,n)^{\otimes (s-1) \cdot s^{s-1}}\]
are $(M(\delta),M(\delta),d(\delta))$-equivalent for degree $(s-1)$. Thus applying Lemma~\ref{lem:equiv} and Lemma~\ref{lem:split} and adjusting $\psi$ and $\psi_h$ once again we have
\begin{align*}
&\mb{E}_{h\in[N/s]}\norm{\mb{E}_{n\in[N/s]} \wt{f}_1(n)\otimes\wt{f}_2(n+h) \cdot\psi(n) \cdot\psi_{h}(n) \otimes (\chi(h,n,\ldots,n) \otimes \ol{\chi(n,h,\ldots,n)})^{\otimes (s-1)s^{s-1}}}_{\infty}\\
&\qquad\qquad\qquad\qquad\qquad\ge1/M(\delta).
\end{align*}

This is finally where we may apply our earlier factorization for $\chi(h,n,\ldots,n) \otimes \ol{\chi(n,h,\ldots,n)}$. Recall that
\begin{align*}
&\chi(h,n,\ldots,n) \otimes \ol{\chi(n,h,n,\ldots,n)} = F^\ast(\eps(h,n,\ldots,n) g^{\mr{Output}}(h,n,\ldots,n)\cdot\gamma(h,n,\ldots,n) (\Gamma\times\Gamma))
\end{align*}
where $\gamma$ is $M(\delta)$-periodic and $\eps$ is $(M(\delta),N)$-smooth. Let $Q$ denote the period of $\gamma$ (i.e., changing any argument by a multiple of $Q$ keeps its $\Gamma\times\Gamma$ coset the same) and take $\rho = \exp(-\log(1/\delta)^{O_s(1)})$ where the implicit constant is sufficiently large. Break $[N/s]$ into arithmetic progressions of length roughly $\rho N$ and common difference $Q$; call these $\mc{P}_1,\ldots,\mc{P}_\ell$. There exist $\eps_{\mc{P}_{i,h}}$ and $\gamma_{\mc{P}_{i,h}}$ such that 
\begin{align*}
&\mb{E}_{h\in[N/s]}\norm{\mb{E}_{n\in[N/s]} \sum_{i=1}^\ell\mbm{1}_{n\in\mc{P}_i}\wt{f}_1(n)\otimes\wt{f}_2(n+h) \cdot\psi(n) \cdot\psi_{h}(n) \\
&\qquad\qquad\otimes (F^\ast(\eps_{\mc{P}_{i,h}} \gamma_{\mc{P}_{i,h}}(\gamma_{\mc{P}_{i,h}}^{-1} g^{\mr{Output}}(h,n,\ldots,n)\gamma_{\mc{P}_{i,h}}) (\Gamma\times\Gamma)))^{\otimes (s-1)s^{s-1}}}_{\infty}\ge1/M(\delta)
\end{align*}
where $d_{G\times G}(\eps_{\mc{P}_{i,h}},\mr{id}_{G\times G})+d_{G\times G}(\gamma_{\mc{P}_{i,h}},\mr{id}_{G\times G})\le\exp(\log(1/\delta)^{O_s(1)})$ and $\gamma_{\mc{P}_{i,h}}$ is $\exp(\log(1/\delta)^{O_s(1)})$-rational.

By Pigeonhole, there exists an index $i$ such that 
\begin{align*}
&\mb{E}_{h\in[N/s]}\norm{\mb{E}_{n\in[N/s]} \mbm{1}_{n\in\mc{P}_i}\cdot\wt{f}_1(n)\otimes\wt{f}_2(n+h) \cdot\psi(n) \cdot\psi_{h}(n) \\
&\qquad\qquad\otimes (F^\ast(\eps_{\mc{P}_{i,h}}\gamma_{\mc{P}_{i,h}} (\gamma_{\mc{P}_{i,h}}^{-1}g^{\mr{Output}}(h,n,\ldots,n)\cdot\gamma_{\mc{P}_{i,h}}) (\Gamma\times\Gamma)))^{\otimes (s-1)s^{s-1}}}_{\infty}\ge1/M(\delta).
\end{align*}
As $\gamma_{\mc{P}_{i,h}}$ is $\exp(\log(1/\delta)^{O_s(1)})$-rational and bounded, it takes on only $\exp(\log(1/\delta)^{O_s(1)})$ possible values. Thus by Pigeonhole, there is $\gamma\in\Gamma\times\Gamma$ such that
$d_{G\times G}(\gamma,\mr{id}_{G\times G})\le\exp(\log(1/\delta)^{O_s(1)})$ and $\gamma$ is $\exp(\log(1/\delta)^{O_s(1)})$-rational such that
\begin{align*}
&\mb{E}_{h\in[N/s]}\norm{\mb{E}_{n\in[N/s]}\mbm{1}_{n\in\mc{P}_i} \wt{f}_1(n)\otimes\wt{f}_2(n+h) \cdot\psi(n) \cdot\psi_{h}(n) \\
&\qquad\qquad\otimes (F^\ast(\eps_{\mc{P}_{i,h}}\gamma g^{\mr{Conj}}(h,n,\ldots,n)(\Gamma\times\Gamma)))^{\otimes(s-1)s^{s-1}}}_{\infty}\ge1/M(\delta),
\end{align*}
where $g^{\mr{Conj}}=\gamma^{-1}g^{\mr{Output}}\gamma$. Finally, rounding $\eps_{\mc{P}_{i,h}}\gamma$ to a $\exp(-\log(1/\delta)^{O_s(1)})$-net and noting it is $\exp(\log(1/\delta)^{O_s(1)})$-bounded, there exists $\eps$ such that 
\begin{align*}
&\mb{E}_{h\in[N/s]}\norm{\mb{E}_{n\in[N/s]} \mbm{1}_{n\in\mc{P}_i}\cdot\wt{f}_1(n)\otimes\wt{f}_2(n+h) \cdot\psi(n) \cdot\psi_{h}(n) \\
&\qquad\qquad\otimes (F^\ast(\eps g^{\mr{Conj}}(h,n,\ldots,n)(\Gamma\times\Gamma)))^{\otimes (s-1)s^{s-1}}}_{\infty}\ge1/M(\delta)
\end{align*}
and $d_{G\times G}(\eps,\mr{id}_{G\times G})\le\exp(\log(1/\delta)^{O_s(1)})$, as long as $\rho$ was chosen small enough.

By Lemma~\ref{lem:major-arc}, there exists $\Theta_h$ such that 
\begin{align*}
&\mb{E}_{h\in[N/s]}\norm{\mb{E}_{n\in[N/s]} e(\Theta_h n)\cdot\wt{f}_1(n)\otimes\wt{f}_2(n+h) \cdot\psi(n) \cdot\psi_{h}(n) \\
&\qquad\qquad\otimes (F^\ast(\eps g^{\mr{Conj}}(h,n,\ldots,n) (\Gamma\times\Gamma)))^{\otimes (s-1)s^{s-1}}}_{\infty}\ge1/M(\delta).
\end{align*}
As $(s-2)\ge 1$, we may absorb $e(\Theta_h n)$ into $\psi_h(n)$ and obtain 
\begin{align*}
&\mb{E}_{h\in[N/s]}\norm{\mb{E}_{n\in[N/s]} \wt{f}_1(n)\otimes\wt{f}_2(n+h) \cdot\psi(n) \cdot\psi_{h}(n) \\
&\qquad\qquad\otimes (F^\ast(\eps g^{\mr{Conj}}(h,n,\ldots,n) (\Gamma\times\Gamma)))^{\otimes (s-1)s^{s-1}}}_{\infty}\ge1/M(\delta).
\end{align*}
Replacing $F^\ast$ with $F^{\mr{Final}}(g) = F^\ast(\eps g\Gamma)$ and writing $g^{\mr{Final}}(h,n)=g^{\mr{Conj}}(h,n,\ldots,n)$, we have
\begin{align*}
&\mb{E}_{h\in[N/s]}\norm{\mb{E}_{n\in[N/s]} \wt{f}_1(n)\otimes \wt{f}_2(n+h) \cdot\psi(n) \cdot\psi_{h}(n) \\
&\qquad\qquad\otimes (F^{\mr{Final}}( g^{\mr{Final}}(h,n)(\Gamma\times\Gamma)))^{\otimes (s-1)s^{s-1}}}_{\infty}\ge1/M(\delta).
\end{align*}

Now $g^{\mr{Final}}(h,n)$ takes values in $\gamma^{-1}H\gamma$ such that $\xi'(\gamma^{-1}H\gamma\cap(G\times G)_s) = 0$. The key point is to note that $F^{\mr{Final}}$ is right-invariant under $(\gamma^{-1}H\gamma)\cap(G\times G)_s$ since it has $(G\times G)_s$-vertical frequency $\xi'$. Note that $\gamma^{-1}H\gamma$ has complexity bounded by $M(\delta)$ due to \cite[Lemma~B.15]{Len23b}. Furthermore $F^{\mr{Final}}$ is $M(\delta)$-Lipschitz on $\gamma^{-1}H\gamma$ by \cite[Lemma~B.9,~B.15]{Len23b}. Taking the quotient by $(\gamma^{-1}H\gamma)\cap(G\times G)_s$ gives that each coordinate of $(F^{\mr{Final}}(g^{\mr{Final}}(h,n)(\Gamma\times\Gamma)))^{\otimes s^{s-1}}$ may be realized a complexity $(M(\delta),d(\delta))$ nilsequence of degree $(s-1)$. 

Applying Pigeonhole in the coordinates of $(F^{\mr{Final}})^{\otimes s^{s-1}}$ and then Lemma~\ref{lem:split} to approximate as a sum of products of multidegree $(s-1,s-2)$ and $(0,s-1)$ nilsequences in variables $(h,n)$. So again folding everything into $\psi(n)$ of degree $(s-1)$ and the $\psi_h(n)$ of degree $(s-2)$, we find
\begin{align*}
&\mb{E}_{h\in[N/s]}\snorm{\mb{E}_{n\in[N/s]} \wt{f}_{1}(n)\otimes\wt{f}_{2}(n+h) \cdot\psi(n) \cdot\psi_{h}(n)}_{\infty}\ge1/M(\delta)
\end{align*}
The functions $\wt{f}_1$ and $\wt{f}_2$ are vector-valued, but by Pigeonhole there exist coordinates $j_1,j_2$ are coordinates of the vectors $\wt{f}_1$ and $\wt{f}_2$ such that 
\begin{align*}
&\mb{E}_{h\in[N/s]}|\mb{E}_{n\in[N/s]} \wt{f}_{1,j_1}(n) \wt{f}_{2,j_2}(n+h) \cdot\psi(n) \cdot\psi_{h}(n)|\ge1/M(\delta).
\end{align*}
Since $\psi_h(n)$ is a nilsequence of degree $(s-2)$ and complexity $(M(\delta),d(\delta))$, by the converse of the inverse theorem (see Lemma~\ref{lem:converse}) we have that 
\begin{align*}
&\mb{E}_{h\in[N/s]}\snorm{\wt{f}_{1,j_1}(\cdot)\wt{f}_{2,j_2}(\cdot+h) \psi(\cdot)}_{U^{s-1}[N/s]}^{2^{s-1}}\ge1/M(\delta).
\end{align*}

By the Gowers--Cauchy--Schwarz inequality (e.g.~\cite[Lemma~3.8]{Gow01}), we have that 
\begin{align*}
&\snorm{\mb{E}_{n\in[N/s]}\wt{f}_{1,j_1}(n) \psi(n)}_{U^{s}[N/s]}\ge1/M(\delta).
\end{align*}
By induction, there is a nilsequence $\Theta(n)$ of degree $(s-1)$ and complexity $(M(\delta),d(\delta))$ such that
\begin{align*}
|\mb{E}_{n\in[N/s]} \wt{f}_{1,j_1}(n) \psi(n) \Theta(n)|\ge1/M(\delta).
\end{align*}
Now recall that 
\[\wt{f}_1(n) = f(sn + q) \cdot\ol{\chi(n,\ldots, n)}^{\otimes s^{s-1}}.\]
Each coordinate of $\ol{\chi(n,\ldots, n)}^{\otimes s^{s-1}}$ is a degree $s$ nilsequence of complexity $(M(\delta),d(\delta))$; say $j_1$-th coordinate is $\Theta'(n)$ and thus we have 
\begin{align*}
|\mb{E}_{n\in[N/s]} f(sn + q) \Theta'(n) \psi(n) \Theta(n)|\ge1/M(\delta).
\end{align*}
This is equivalent to 
\begin{align*}
|\mb{E}_{n\in[N]} \mbm{1}[n\equiv q\imod s]f(n) \Theta'((n-q)/s) \psi((n-q)/s) \Theta((n-q)/s)|\ge1/M(\delta).
\end{align*}
Note the condition 
\[\mbm{1}[n\equiv q\imod s] = s^{-1}\sum_{j=0}^{s-1}e\bigg(\frac{j \cdot (n-q)}{s}\bigg)\]
and thus there $j$ such that 
\begin{align*}
|\mb{E}_{n\in[N]} f(n) \Theta'((n-q)/s) \psi((n-q)/s) \Theta((n-q)/s) e(jn/s)|\ge1/M(\delta).
\end{align*}
The desired nilsequence is then
\[\ol{\Theta'((n-q)/s) \psi((n-q)/s) \Theta((n-q)/s) e(jn/s)}\]
which is seen to have degree $s$ and complexity $(M(\delta),d(\delta))$. We have finally won.
\end{proof}

\appendix

\section{On approximate homomorphisms}\label{app:approx-hom}
In this section, we give a number of basic results regarding approximate homomorphisms. The results in this section are, by now, well known consequences of work of Sanders \cite{San12b}. The proof we give is essentially that in \cite{Len23}, modulo being forced to deal with slight error terms and operating over $\mb{Z}$. We dispose of these error terms via a rounding trick of Green, Tao, and Ziegler \cite[Appendix~C]{GTZ11}. 

\begin{lemma}\label{lem:approximate}
Fix $\delta\in (0,1/2)$, let $H_1,H_2,H_3,H_4\subseteq[N]$ and let functions $f_i\colon H_i\to\mb{R}^d$ be such that there are at least $\delta N^3$ additive tuples $h_1 + h_2 = h_3 + h_4$ with
\[\snorm{(f_1(h_1) + f_2(h_2) - f_3(h_3) - f_4(h_4))_j}_{\mb{R}/\mb{Z}}\le\eps_j\]
for all $1\le j\le d$. Then there exists $H_1'\subseteq H_1$ with $|H_1'|\ge\exp(-(d\log(1/\delta))^{O(1)})N$ such that
\[\norm{\bigg(f_1(h) - \sum_{i=1}^{d'}a_i \{\alpha_i h\} - b\bigg)_j}_{\mb{R}/\mb{Z}} \le \eps_j\]
for all $h\in H_1'$, for appropriate choices of $d'\le (d\log(1/\delta))^{O(1)}$, $a_i,b\in\mb{R}^d$, and $\alpha_i\in(1/N')\mb{Z}$ where $N'$ is a prime between $100N$ and $200N$.
\end{lemma}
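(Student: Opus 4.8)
The plan is to pass through the classical Bogolyubov--Ruzsa machinery of Sanders \cite{San12b} and then repair the ``error term'' $\eps_j$ via the rounding trick of \cite[Appendix~C]{GTZ11}, exactly as in the proof sketch in \cite{Len23}. First I would work with the graphs of the $f_i$ inside $[N']\times(\mb{R}/\mb{Z})^d$, where $N'$ is a prime in $[100N,200N]$: one cannot directly feed real-valued phases into the additive-combinatorial results, so one discretizes the torus coordinates at a scale depending polynomially on $\delta$ and $\eps_j$. Concretely, set $A_i=\{(h,\mr{round}(f_i(h)))\colon h\in H_i\}\subseteq\mb{Z}/N'\mb{Z}\times(\mb{Z}/L\mb{Z})^d$ for a suitable $L=L(\delta)$; the $\delta N^3$ near-additive-quadruples hypothesis translates (after pigeonholing the rounding error into $O_d(1)$ residue classes and shrinking $\delta$ by a $\delta$-dependent but $N$-independent factor) into a genuine lower bound $\gtrsim\exp(-(d\log(1/\delta))^{O(1)})$ on the additive energy $E(A_1,A_2,A_3,A_4)$ of genuine quadruples in this finite abelian group.

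Next I would invoke the asymmetric Balog--Szemer\'edi--Gowers theorem followed by the quasipolynomial Bogolyubov--Ruzsa lemma of Sanders \cite{San12b} to extract, from a positive-density piece of $A_1$, a coset structure: there is a subgroup/Bohr-type object such that $f_1$ restricted to a large subset $H_1'$ of $H_1$ agrees (modulo $\mb{Z}$) with an affine-linear function of a bounded number of ``coordinate'' characters on $\mb{Z}/N'\mb{Z}$. Because $\mb{Z}/N'\mb{Z}$ is cyclic of prime order, every such character is of the form $h\mapsto\alpha h$ with $\alpha\in(1/N')\mb{Z}$, which is precisely the form $\{\alpha_i h\}$ demanded in the statement; the bounded-rank Bohr set coming out of Sanders' theorem produces the $d'\le(d\log(1/\delta))^{O(1)}$ frequencies $\alpha_1,\ldots,\alpha_{d'}$ and the coefficient vectors $a_i,b\in\mb{R}^d$ (lifting the finite-group data back to $\mb{R}^d$ by the obvious $L$-discretization). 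This gives $f_1(h)\equiv\sum a_i\{\alpha_i h\}+b$ up to an error controlled by the rounding scale $1/L$.

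Finally I would eliminate the discrepancy between the rounding-scale error just produced and the exact target error $\eps_j$: this is where the rounding trick of \cite[Appendix~C]{GTZ11} enters. The idea is that the relation ``$f_1(h)-\sum a_i\{\alpha_i h\}-b$ is small'' can be promoted to ``it is $\le\eps_j$'' by adding to the affine form a further bounded-complexity correction built from the same characters $\alpha_i$ (and possibly one extra frequency), chosen so that the correction absorbs the bounded multiple-of-$1/L$ discrepancy; quantitatively one takes $L$ large enough in terms of $\eps_j$ at the outset and then rounds the approximating affine function itself to the lattice generated by the $\{\alpha_i h\}$, observing that this rounding changes each coordinate by at most the target tolerance on $H_1'$. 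Throughout, the bookkeeping is to track that every pigeonholing step costs only a $\exp(-(d\log(1/\delta))^{O(1)})$ factor in $|H_1'|$ and that $d'$ stays $\le(d\log(1/\delta))^{O(1)}$, which follows because Sanders' bound on the doubling/rank is quasipolynomial in $1/\delta$. The main obstacle I expect is the careful handling of the torus-valued (rather than group-valued) data: one must set up the discretization and the final rounding so that the two different notions of ``error'' — the coarse scale forced by discretization and the fine scale $\eps_j$ in the conclusion — are reconciled without blowing up $d'$, and this is exactly the technical content of the \cite[Appendix~C]{GTZ11} argument that needs to be reproduced here over $\mb{Z}$ rather than over a vector space over a finite field.
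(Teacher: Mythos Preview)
Your proposal has the right ingredients (rounding trick of \cite{GTZ11}, Balog--Szemer\'edi--Gowers, Sanders' Bogolyubov), but two points are not correctly handled and one of them is a genuine gap.

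First, the order of the rounding is inverted relative to what works cleanly. In the paper the rounding is applied \emph{first}, at the coordinate-dependent scale $\eps_j$ (not at a uniform scale $L(\delta)$): one replaces each $f_i$ by its nearest point $\wt{f_i}$ in the lattice $\prod_j\eps_j\mb{Z}$, notes that $\wt{f_1}+\wt{f_2}-\wt{f_3}-\wt{f_4}$ lies in this lattice and is within $5\eps_j$ of an integer in each coordinate, hence takes one of at most $11^d$ values, pigeonholes, and reduces to the \emph{exact} hypothesis $\wt{f_1}(h_1)+\wt{f_2}(h_2)-\wt{f_3}(h_3)-\wt{f_4}(h_4)\in\mb{Z}^d$ (Lemma~\ref{lem:approximate-no-error}). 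After that lemma gives $\wt{f_1}(h)\equiv\sum a_i\{\alpha_ih\}+b\pmod{\mb{Z}^d}$ exactly, the error $\le\eps_j$ for $f_1$ is automatic from $|f_1-\wt{f_1}|_j\le\eps_j$. Your version discretises at some scale $L$ (alternately described as $L(\delta)$ and as ``large in terms of $\eps_j$''), obtains an approximation with error $O(1/L)$, and then tries to ``promote'' this to error $\le\eps_j$ by a further correction; that last step is not a real mechanism, and the inconsistency over whether $L$ depends on $\delta$ or on $\eps_j$ reflects that the reduction should have been done the other way around.

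Second, and more substantively, you jump directly from ``BSG + Sanders on the graph $A_1\subseteq\mb{Z}/N'\mb{Z}\times(\mb{Z}/L\mb{Z})^d$'' to ``$f_1$ agrees with an affine combination of characters on $\mb{Z}/N'\mb{Z}$''. This does not follow: Bogolyubov on the product group produces a Bohr set whose frequencies are characters on the \emph{product}, i.e.~of the form $(h,v)\mapsto\alpha h+\beta\cdot v$, and restricted to the graph these become $\alpha h+\beta\cdot\wt{f_1}(h)$, which is not of the shape $\{\alpha_ih\}$. The paper fills this gap with an essential intermediate argument: after BSG gives a subgraph $\Gamma'$ of small doubling, one shows the vertical fiber $A=(8\Gamma'-8\Gamma')\cap(\{0\}\times(\mb{R}/\mb{Z})^d)$ has size $\le\delta^{-O(1)}$ by Pl\"unnecke, chooses $O(\log(1/\delta))$ integer vectors $T$ so that $A\cap B(T,1/4)=\{0\}$, and restricts to a cube $\{h:\psi(f_1(h))\in Q\}$ on which $4\wt\Gamma-4\wt\Gamma$ is forced to be a graph, i.e.~$f_1$ is an $8$-Freiman homomorphism on a dense subset $H^\ast$ of the \emph{base}. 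Only then is Bogolyubov applied to $H^\ast\subseteq\mb{Z}/N'\mb{Z}$ alone, so that the resulting frequencies genuinely lie in $(1/N')\mb{Z}$, and a bracket-linear representation (Lemma~\ref{lem:brack-form}) finishes. Your sketch omits this entire ``pass from graph with small doubling to Freiman homomorphism on the base'' step, and without it the frequencies cannot be confined to $(1/N')\mb{Z}$ as the conclusion demands.
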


We deduce the result from the following variant which is the same statement modulo not having an error term.

\begin{lemma}\label{lem:approximate-no-error}
Fix $\delta\in (0,1/2)$. Let $H_1,H_2,H_3,H_4\subseteq [N]$ and $f_i\colon H\to\mb{R}^d$ be such that there are at least $\delta N^3$ additive tuples $h_1 + h_2 = h_3 + h_4$ with
\[f_1(h_1) + f_2(h_2) - f_3(h_3) - f_4(h_4)\in\mb{Z}^{d}.\]
Then there exists $H_1'\subseteq H_1$ with $|H_1'|\ge\exp(-\log(1/\delta)^{O(1)})N$ such that
\[f_1(h) - \sum_{i=1}^{d'}a_i \{\alpha_i h\} - b\in\mb{Z}^d \]
for all $h\in H_1'$, for appropriate choices of $d'\le\log(1/\delta)^{O(1)}$, $a_i,b\in\mb{R}^d$, and $\alpha_i\in(1/N')\mb{Z}$ where $N'$ is a prime between $100N$ and $200N$.
\end{lemma}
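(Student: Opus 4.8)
\textbf{Proof plan for Lemma~\ref{lem:approximate-no-error}.}
The plan is to run the standard Gowers-style argument: pass from the many additive quadruples to a Bogolyubov-type conclusion via Fourier analysis / Sanders' quasipolynomial Bogolyubov lemma, then upgrade the resulting Bohr-set structure into an explicit bracket-linear description of $f_1$ on a large subset of $H_1$. First I would extend each $f_i$ to all of $[N]$ arbitrarily and consider the graph $A_i = \{(h, f_i(h)\bmod 1) : h\in H_i\}\subseteq \mb{Z}/N'\mb{Z}\times(\mb{R}/\mb{Z})^d$, where $N'$ is a prime in $[100N,200N]$; the relation $f_1(h_1)+f_2(h_2)-f_3(h_3)-f_4(h_4)\in\mb{Z}^d$ with $h_1+h_2=h_3+h_4$ says exactly that there are $\gtrsim\delta N^3 = \delta' |A_1||A_2|\cdots$ solutions to $a_1+a_2 = a_3+a_4$ with $a_i\in A_i$ inside the ambient abelian group $G := \mb{Z}/N'\mb{Z}\times(\mb{R}/\mb{Z})^d$. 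Since the torus factor is not finite, the clean way to proceed is to discretize: round each coordinate of $f_i$ to the nearest multiple of $1/K$ for $K$ a large power of $2$ (or prime) of size $\exp(\log(1/\delta)^{O(1)})$, absorbing the rounding error into a further refinement at the very end exactly as in the rounding trick of Green--Tao--Ziegler \cite[Appendix~C]{GTZ11} cited before the statement; this replaces $(\mb{R}/\mb{Z})^d$ by $(\mb{Z}/K\mb{Z})^d$ and makes $G$ a genuine finite abelian group with $A_i\subseteq G$ of density $\gtrsim (K^dN')^{-1}\cdot|A_i|$, i.e. $A_1$ has density $\gtrsim (\text{poly size})^{-1}$ after we note $|A_1|=|H_1|$ could a priori be small — so I would first also Pigeonhole to the case $|H_1|\ge\delta N/4$ or handle small $H_1$ trivially.

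Next I would feed this into the machinery of Appendix~\ref{app:approx-hom}, which is where Sanders' theorem \cite{San12b} is invoked. Concretely, the many-additive-quadruples hypothesis gives (by a routine Cauchy--Schwarz / Fourier computation, or directly by the Balog--Szemer\'edi--Gowers theorem followed by Pl\"unnecke) that a dense subset $A_1'$ of $A_1$ has small doubling relative to the group generated, and then the quasipolynomial Bogolyubov--Ruzsa lemma yields a coset of a Bohr set $B = \mr{Bohr}(\Xi,\rho)$ with $|\Xi|\le\log(1/\delta)^{O(1)}$, $\rho\ge\log(1/\delta)^{-O(1)}$, contained in $2A_1'-2A_1'$, hence a large subset $H_1''\subseteq H_1$ on which $(h,f_1(h)\bmod 1)$ lands in a single coset $x_0 + B'$ of a subgroup/Bohr set $B'$ of $G$ of bounded rank. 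Projecting to the torus coordinates and to the $\mb{Z}/N'\mb{Z}$ coordinate and using that a bounded-rank Bohr set in $\mb{Z}/N'\mb{Z}\times(\mb{Z}/K\mb{Z})^d$ is, up to passing to a large piece, an honest subgroup or a set of the form $\{h : \|\alpha_i h\|_{\mb{R}/\mb{Z}}\le \rho\}$, one reads off that for $h\in H_1''$ each coordinate of $f_1(h)$ agrees modulo $1$ with $b_j + \sum_i a_{i,j}\{\alpha_i h\}$ for suitable $\alpha_i\in(1/N')\mb{Z}$ (the frequencies of the Bohr set, cleared of denominators), $d'\le\log(1/\delta)^{O(1)}$. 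This is essentially the content of the already-cited \cite{Len23} argument, the only twist being the presence of the extra torus coordinates, which are handled identically since they contribute at most $d$ extra "coordinates" to track and these appear in the final bracket-linear form with the same frequencies.

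I would organize the write-up so that the genuinely new content is minimal: state the finite-group, error-free approximate-homomorphism lemma as Lemma~\ref{lem:approximate-no-error} and prove it by (i) discretization to a finite abelian group, (ii) Balog--Szemer\'edi--Gowers plus Pl\"unnecke--Ruzsa plus Sanders' quasipolynomial Bogolyubov, (iii) extraction of a bracket-linear form from the resulting Bohr coset, then derive Lemma~\ref{lem:approximate} from it by the rounding trick — round the values of $f_i$ to a $1/K$-net with $K$ large, apply Lemma~\ref{lem:approximate-no-error} to the rounded functions (whose exact additive relation holds on the same quadruples up to increasing $\eps_j$ negligibly, and in fact one arranges exactness by another Pigeonhole on the rounding errors over the quadruples), and then note the rounding perturbation of each coordinate is at most $1/K \le \eps_j$-admissible after choosing $K$ appropriately, folding it into the $\eps_j$ on the right-hand side. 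The main obstacle I anticipate is purely bookkeeping: ensuring that the Bohr-set-to-bracket-linear-form passage produces frequencies genuinely in $(1/N')\mb{Z}$ with $N'$ prime in $[100N,200N]$ rather than an arbitrary modulus — this is the standard device of working in $\mb{Z}/N'\mb{Z}$ from the start so that all characters are of the form $h\mapsto e(kh/N')$, and then a Bohr set's defining frequencies are automatically in $(1/N')\mb{Z}$ — together with verifying that the density loss at every step (BSG, Sanders, restriction to a single coset, the two Pigeonholes for rounding) compounds to only $\exp(-\log(1/\delta)^{O(1)})$, which it does since each loss is of that shape and there are $O(1)$ steps. No step requires any genuinely new idea beyond what is in \cite{Len23, GTZ11, San12b}.
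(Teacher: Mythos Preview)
Your plan has a real gap. The discretization step is backwards: Lemma~\ref{lem:approximate-no-error} has an \emph{exact} conclusion ($f_1(h)-\sum a_i\{\alpha_i h\}-b\in\mb{Z}^d$), so rounding the $f_i$ to a $1/K$-net introduces errors that cannot be absorbed at the end --- after the finite-group argument you would only know $\tilde f_1(h)$ is bracket-linear modulo $1$, but $f_1(h)=\tilde f_1(h)+r(h)$ with $|r(h)|\le 1/(2K)$ an arbitrary rounding error, and no Pigeonhole recovers exactness. The rounding trick of \cite[Appendix~C]{GTZ11} runs in the other direction, deducing the error-tolerant Lemma~\ref{lem:approximate} from the exact Lemma~\ref{lem:approximate-no-error}, not the reverse.

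More substantively, your step ``read off a bracket-linear form from the Bohr coset in the product group'' hides the main idea. A Bohr set in $\mb{Z}/N'\mb{Z}\times(\mb{Z}/K\mb{Z})^d$ is cut out by characters that mix the $h$- and $f_1(h)$-coordinates, and there is no direct way to solve those constraints for $f_1(h)$ as a bracket-linear function of $h$ alone (if the vertical part of the character system has rank $<d$, it is underdetermined). The paper instead works directly in $\mb{Z}\times(\mb{R}/\mb{Z})^d$ and inserts a rectification step: after Balog--Szemer\'edi--Gowers gives $\Gamma'\subseteq\Gamma_1$ with small doubling, the fact that $\Gamma'$ is a \emph{graph} forces the vertical fiber $A=(8\Gamma'-8\Gamma')\cap(\{0\}\times(\mb{R}/\mb{Z})^d)$ to satisfy $|A|\le\delta^{-O(1)}$ (since $|\Gamma'+A|=|\Gamma'||A|$ while $\Gamma'+A\subseteq 9\Gamma'-8\Gamma'$ has size $\le\delta^{-O(1)}N$ by Pl\"unnecke). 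One then picks $|T|=O(\log(1/\delta))$ random vectors in $\mb{Z}^d$ separating $A\setminus\{0\}$ from $0$, and pigeonholes $\Gamma'$ into a small cube under $\xi\mapsto(t\cdot\xi)_{t\in T}$ to obtain $\wt\Gamma$ with $4\wt\Gamma-4\wt\Gamma$ itself a graph --- i.e., $f_1$ becomes an $8$-Freiman homomorphism on the projection $H^\ast$. Only then does one embed $H^\ast$ in $\mb{Z}/N'\mb{Z}$ and apply Sanders \emph{there} to get $B(S,\rho)\subseteq 2H^\ast-2H^\ast$; the Freiman homomorphism restricted to this Bohr set is automatically bracket-linear by the elementary Lemma~\ref{lem:brack-form}. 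This small-fiber rectification is what lets Sanders be invoked purely in $\mb{Z}/N'\mb{Z}$ and is also what delivers the $d$-independent bound $d'\le\log(1/\delta)^{O(1)}$ in the statement, which your product-group route would not give.
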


We briefly give the deduction, and then in the sequel focus on Lemma~\ref{lem:approximate-no-error}.
\begin{proof}[Proof of Lemma~\ref{lem:approximate} given Lemma~\ref{lem:approximate-no-error}]
Round each value of $f_i$ to the nearest point in the lattice $(\eps_1 \mb{Z},\ldots,\eps_d \mb{Z})$ to form $\wt{f_i}$ (breaking ties arbitrarily). We have that 
\[\snorm{(\wt{f_1}(h_1) + \wt{f_2}(h_2) - \wt{f_3}(h_3) - \wt{f_4}(h_4))_j}_{\mb{R}/\mb{Z}}\le 5\eps_j\]
for at least $\delta N^{4}$ additive tuples. 

Note however that 
\[\wt{f_1}(h_1) + \wt{f_2}(h_2) - \wt{f_3}(h_3) - \wt{f_4}(h_4)\in (\eps_1 \mb{Z},\ldots,\eps_d \mb{Z})\]
and that there are at most $11^d$ lattice points in $(\eps_1 \mb{Z},\ldots,\eps_d \mb{Z})$ which are at most $5\eps_j$ in the $j$-th direction from the origin in all $d$ directions. Thus there is a vector $w\in (\eps_1 \mb{Z},\ldots,\eps_d \mb{Z})$ such that 
\[\wt{f_1}(h_1) + \wt{f_2}(h_2) - \wt{f_3}(h_3) - \wt{f_4}(h_4) + w \in\mb{Z}^d\]
for at least $11^{-d}\delta N^4$ additive tuples. Applying Lemma~\ref{lem:approximate-no-error} with $\wt{f_1}$, $\wt{f_2}$, $\wt{f_3}$, and $\wt{f_4} - w$ immediately gives the desired result. 
\end{proof}

We now require the notion of a Bohr set in an abelian group.
\begin{definition}\label{def:Bohr}
Given an abelian group $G$ and a set $S\subseteq \wh{G}$, we define the \emph{Bohr set} of radius $\rho$ to be
\[B(S,\rho) := \{x\in G\colon\snorm{s\cdot x}_{\mb{R}/\mb{Z}}\le\rho\emph{ for all }s\in S\}.\]
\end{definition}

We first require the fact that the four-fold sumset of a set with small doubling contains a Bohr set of small dimension and large radius. This is an immediate consequence of work of Sanders \cite[Theorem~1.1]{San12b} which produces a large symmetric coset progression and a proposition of Mili\'{c}evi\'{c} \cite[Propositon~27]{Mil21} which produces a Bohr set inside a large symmetric coset progression. This is explicitly \cite[Corollary~28]{Mil21}.
\begin{lemma}\label{lem:Bohr-set}
Let $A\subseteq\mb{Z}/N\mb{Z}$ be such that $|A|\ge N/K$. Then there exists $S\subseteq\wh{\mb{Z}/N\mb{Z}}$ with $|S|\le\log(2K)^{O(1)}$ and $1/\rho\le\log(2K)^{O(1)}$ such that $B(S,\rho)\subseteq 2A-2A$.
\end{lemma}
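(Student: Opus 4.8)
The plan is to combine the quasi-polynomial Bogolyubov--Ruzsa theorem of Sanders with the reduction from symmetric coset progressions back to Bohr sets due to Mili\'cevi\'c; as already indicated in the text, the statement coincides with \cite[Corollary~28]{Mil21}, so we only describe the deduction rather than reprove either input.

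First I would extract a doubling bound from the density hypothesis: since $A+A\subseteq\mb{Z}/N\mb{Z}$ we have $|A+A|\le N\le K|A|$, so $A$ has doubling constant at most $K$. Hence Sanders's theorem \cite[Theorem~1.1]{San12b} applies to $A$ and produces, inside $2A-2A$, a symmetric coset progression $M$ (a subgroup translated by a symmetric proper generalized arithmetic progression; in $\mb{Z}/N\mb{Z}$ the subgroup part is harmless) of rank $r\le\log(2K)^{O(1)}$ and cardinality at least $\exp(-\log(2K)^{O(1)})|A|\ge\exp(-\log(2K)^{O(1)})N$. In particular $M$ has density at least $1/K'$ in $\mb{Z}/N\mb{Z}$ with $\log(2K')\le\log(2K)^{O(1)}$.

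Next I would invoke Mili\'cevi\'c's Proposition~27 \cite{Mil21}, which shows that any symmetric coset progression of rank $r$ and density $1/K'$ in a finite abelian group contains a Bohr set $B(S,\rho)$ with both $|S|$ and $1/\rho$ bounded by a fixed polynomial in $r+\log(2K')$. Feeding in the parameters from the previous step gives $|S|,1/\rho\le\log(2K)^{O(1)}$, and since $B(S,\rho)\subseteq M\subseteq 2A-2A$, this is exactly the desired conclusion. There is no substantive obstacle here — the argument is a concatenation of two black boxes — so the only point requiring care is the bookkeeping of quantitative parameters: one must verify that the rank and inverse density of the coset progression output by Sanders are each $\log(2K)^{O(1)}$ and that Mili\'cevi\'c's extraction is polynomial in those, so that the composition stays quasi-polynomial in $K$, and one should confirm that the notion of "symmetric coset progression" produced by Sanders (symmetry, containment of $0$, properness up to a bounded factor) matches the hypotheses of \cite[Proposition~27]{Mil21}, which is routine.
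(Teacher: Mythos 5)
Your proposal is correct and matches the paper's approach exactly: the paper's proof consists precisely of citing Sanders's Theorem~1.1 to produce a symmetric coset progression inside $2A-2A$ and Mili\'cevi\'c's Proposition~27 to extract a Bohr set from it, noting as you do that the combined statement is \cite[Corollary~28]{Mil21}. Your additional observation that the density hypothesis $|A|\ge N/K$ yields the doubling bound $|A+A|\le N\le K|A|$ needed to invoke Sanders is the right way to bridge the hypothesis to the quoted theorem, and your parameter bookkeeping is sound.
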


We next require the notion of a Freiman homomorphism.
\begin{definition}\label{def:fre-hom}
A function $f\colon A\to B$ (with $A$ and $B$ being subsets of possibly different abelian groups) is a \emph{$k$-Freiman homorphism} if for all $a_i,a_i'\in A$ satisfying
\[a_1 + \cdots + a_k = a_1' + \cdots + a_k'\]
we have
\[f(a_1) + \cdots + f(a_k) = f(a_1') + \cdots + f(a_k').\]
When $k$ is not specified, we will implicitly have $k = 2$.
\end{definition}

We will also require the follow basic lemma which converts the Freiman homomorphism on a Bohr set into a ``bracket'' linear function on a slightly smaller Bohr set; the proof is a simplification of \cite[Proposition~10.8]{GT08b}.
\begin{lemma}\label{lem:brack-form}
Consider $S\subseteq \wh{\mb{Z}/N\mb{Z}}$ and $\rho\in (0,1/4)$ with Freiman homomorphism $f\colon B(S,\rho)\to\mb{R}/\mb{Z}$. Taking $\rho' = \rho \cdot |S|^{-2|S|}$, we have for all $n\in B(S,\rho')$ that
\[f(n) - \Big(\sum_{\alpha_i\in S}a_i\{\alpha_i n\} + \gamma\Big) \in\mb{Z},\]
for appropriate choices of $a_i,\gamma\in\mb{R}$.
\end{lemma}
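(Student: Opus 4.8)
\textbf{Proof proposal for Lemma~\ref{lem:brack-form}.}

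The plan is to extract, one frequency at a time, the ``bracket-linear'' contribution of $f$ along each direction $\alpha_i\in S$, exploiting the Freiman-homomorphism property on nested Bohr sets. First I would set $|S|=k$ and write $S=\{\alpha_1,\ldots,\alpha_k\}$. The key observation is that for any $n,n'\in B(S,\rho/2)$ with $n+n'\in B(S,\rho)$ as well (which holds once we restrict to a Bohr set of radius $\rho/2$), the Freiman-homomorphism property applied to $n+n' = n+n'$ forces $f(n+n') - f(n) - f(n')\in\mb Z$; i.e., $f$ is genuinely additive modulo $1$ on the sub-Bohr set, wherever the relevant sums stay inside $B(S,\rho)$. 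The obstruction to this being literally a homomorphism is that $B(S,\rho)$ is not a group: $n+n'$ may leave the Bohr set. This is exactly why we must shrink $\rho$ to $\rho' = \rho\,k^{-2k}$, giving us enough room to perform $k$ successive one-dimensional extractions without ever exiting $B(S,\rho)$.

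The main step is a single-direction extraction, carried out iteratively. Consider the direction $\alpha_1$. For $n\in B(S,\rho)$, decompose $n$ (thinking of its image under the $\mb{Z}/N\mb{Z}\to(\mb{R}/\mb{Z})^k$ Bohr embedding) into a small integer multiple of a ``short step'' in the $\alpha_1$ direction plus a remainder lying in a Bohr set that is tighter in the $\alpha_1$-coordinate. More concretely, pick $m_1\in B(S,\rho/k)$ with $\{\alpha_1 m_1\}$ small and nonzero but $\{\alpha_j m_1\}$ as small as one likes for $j\ne 1$; then for each $n$ write $n = t\cdot m_1 + r$ where $t=t(n)$ is the nearest integer making $\{\alpha_1 r\}$ fall in a fundamental domain shrunk by a factor $\approx k$, and $t = O(k/\rho')\cdot\{\alpha_1 n\}$ up to an integer. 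Using additivity of $f$ mod $1$ repeatedly (valid since all partial sums stay in $B(S,\rho)$, by the $k^{-2k}$ slack), one gets $f(n) \equiv t\cdot f(m_1) + f(r) \pmod 1$, and since $t$ is an affine-linear function of $\{\alpha_1 n\}$ with bounded integer corrections, the term $t\cdot f(m_1)$ contributes exactly $a_1\{\alpha_1 n\}$ plus something depending only on the remaining coordinates $\{\alpha_2 n\},\ldots,\{\alpha_k n\}$. Iterating over $\alpha_2,\ldots,\alpha_k$, each step consuming a factor $k^{-2}$ (roughly) off the radius and reducing the number of ``live'' directions by one, after $k$ steps we have peeled off $\sum_{i=1}^k a_i\{\alpha_i n\}$ and are left with a function of no free variables, i.e., a constant $\gamma\in\mb R/\mb Z$. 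Lifting the constant to $\mb R$ and the $a_i$ to $\mb R$ gives the stated conclusion on $B(S,\rho')$ with $\rho'=\rho\,k^{-2k}$.

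The hard part will be the bookkeeping in the single-direction extraction: one must choose the step $m_1$ so that it barely moves in the $\alpha_1$-direction yet is negligible in all others, verify that the integer multiple $t(n)$ needed to normalize $n$ is genuinely $O(1/\rho')$ and depends affine-linearly on $\{\alpha_1 n\}$ up to integers, and — most delicately — confirm that every intermediate sum $jm_1 + (\text{partial remainder})$ for $0\le j\le t$ stays inside $B(S,\rho)$ so that the Freiman-homomorphism identity applies at each step. This is where the factor $k^{-2k}$ is spent: each of the $k$ extractions needs its step and all its partial sums to remain within the Bohr set from the previous stage, costing a factor polynomial in $1/\rho_{\text{current}}$ times a geometric factor, which telescopes to $k^{-2k}$ overall (or a comparable explicit bound; the precise exponent is not important as long as it is of the form $\rho\cdot k^{-O(k)}$, which $k^{-2k}$ comfortably accommodates). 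The rest — replacing $\mb{R}/\mb{Z}$-valued $a_i,\gamma$ by real lifts, and checking the final error lies in $\mb Z$ — is routine. I would cite the analogous but more involved argument in \cite[Proposition~10.8]{GT08b} for the template, noting that the present statement is strictly simpler because the target is $\mb R/\mb Z$ rather than a nilmanifold, so no iterated-commutator corrections appear.
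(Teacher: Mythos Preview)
Your iterative one-frequency-at-a-time extraction has a genuine gap at its very first step: the existence of the element $m_1$. You ask for $m_1\in B(S,\rho/k)$ with $\{\alpha_1 m_1\}$ small but nonzero while $\{\alpha_j m_1\}$ is ``as small as one likes'' for $j\neq 1$. In general no such element exists: the frequencies $\alpha_1,\ldots,\alpha_k$ may be $\mb{Z}$-linearly related (e.g.\ $\alpha_2=2\alpha_1$, or worse), in which case the image of $\mb{Z}/N\mb{Z}$ under the Bohr embedding $n\mapsto(\{\alpha_i n\})_i$ lies in a proper subspace of $\mb{R}^S$ and there is no way to move in the $\alpha_1$-direction without moving comparably in other directions. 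The subsequent claim that $t(n)$ is ``affine-linear in $\{\alpha_1 n\}$ up to bounded integer corrections'' then cannot get off the ground, and the assertion that the remainder $f(r)$ depends only on $\{\alpha_2 n\},\ldots,\{\alpha_k n\}$ is unjustified for the same reason.

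The paper's proof avoids this issue entirely by importing a structural result rather than building steps by hand. It invokes \cite[Proposition~10.5]{GT08b} to produce a \emph{proper} generalized arithmetic progression $P=\{\sum_{i=1}^d \ell_i n_i : |n_i|\le N_i\}$ of rank $d\le |S|$ with $B(S,\rho')\subseteq P\subseteq B(S,\rho)$, together with the crucial property that the vectors $(\{\alpha\ell_i\})_{\alpha\in S}$ are linearly independent in $\mb{R}^S$. The generators $\ell_i$ need not align with single coordinate directions, but linear independence lets one solve for the integer coordinates $n_i$ as real linear combinations of the $\{\alpha n\}$ via dual vectors $u_i$. The Freiman homomorphism is then trivially linear in the $n_i$, and substituting gives the bracket-linear form in one stroke. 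This is what makes the $|S|^{-2|S|}$ loss sharp here: it is exactly the cost of \cite[Proposition~10.5]{GT08b}, not of an iterative peeling.
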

\begin{proof}
By \cite[Proposition~10.5]{GT08b}, we have that 
\[B(S,\rho \cdot |S|^{-2|S|}) \subseteq P \subseteq  B(S,\rho)\]
where $P$ is a proper generalized arithmetic progression $\{\sum_{i=1}^{d}\ell_i n_i\colon n_i\in[\pm N_i]\}$ of rank $d\le|S|$. Furthermore $(\{\alpha\cdot\ell_i\})_{\alpha\in S}$ for $1\le i\le d$ are linearly independent as vectors in $\mb{R}^{S}$.

Note that for $|n_i|\le N_i$, we have 
\begin{equation}\label{eq:f-linear}
 f\bigg(\sum_{i=1}^d \ell_i n_i\bigg) - f(0) = \sum_{i=1}^{d}n_i (f(\ell_i)-f(0)).    
\end{equation}
Furthermore letting $\Phi\colon B(S,\rho)\to\mb{R}^{S}$ denote $\Phi(x)=(\{\alpha\cdot x\})_{\alpha\in S}$ we have that 
\[\Phi(x) + \Phi(y) = \Phi(x+y);\]
we have used crucially that $\rho<1/4$ here. Therefore, by a simple inductive argument we see
\[\Phi\bigg(\sum_{i=1}^d \ell_i n_i\bigg) = \sum_{i=1}^{d}n_i \Phi(\ell_i)\]
if $n_i\in[\pm N_i]$ for all $1\\le i\le d$.

By the above linear independence, there exists $u_i\in\mb{R}^{S}$ such that $u_i \cdot\Phi(\ell_i) = 1$ and $u_i \cdot\Phi(\ell_j) = 0$ for $j\neq i$. Therefore if $n\in P$ is such that $n = \sum_{i=1}^{d}\ell_in_i$, we have that 
\[n_i = u_i\cdot\sum_{i=1}^d n_i\Phi(\ell_i) = u_i\cdot\Phi(n) = \sum_{\alpha\in S}(u_i)_{\alpha}\cdot\{\alpha n\}.\]
The lemma then follows by plugging into \eqref{eq:f-linear}. 
\end{proof}

We now recall the definition of additive energy. 
\begin{definition}
Given (finite) subsets $A_1,A_2,A_3,A_4$ of an abelian group $G$, define the \emph{additive energy} $E(A_1,A_2,A_3,A_4)$ to be
\[E(A_1,A_2,A_3,A_4) = \sum_{x_i \in A_i}\mbm{1}[x_1+x_2 = x_3 + x_4]\]
and let $E(A) = E(A,A,A,A)$.
\end{definition}

Note that one has the trivial bound $E(A)\le |A|^3$. Furthermore via a standard Cauchy--Schwarz argument (similar to e.g.~\cite[Corollary~2.10]{TV10}) we have
\[E(A_1,A_2,A_3,A_4) \le \prod_{i=1}^{4}E(A_i)^{1/4}.\]

\begin{proof}[Proof of Lemma~\ref{lem:approximate-no-error}]
Let $\Gamma_i = \{(h_i, f_i(h_i) \imod \mb{Z}^d)\colon h_i \in H_i\} \subseteq \mb{Z} \times (\mb{R}/\mb{Z})^d$, which is a graph (i.e., for every $x\in\mb{Z}$ there is at most one $y\in(\mb{R}/\mb{Z})^d$ with $(x,y)\in\Gamma_i$). By assumption we have 
\[E(\Gamma_1,\Gamma_2,\Gamma_3,\Gamma_4)\ge\delta N^3.\]
We have 
\[E(\Gamma_1,\Gamma_2,\Gamma_3,\Gamma_4)\le\prod_{i=1}^{4}E(\Gamma_i)^{1/4} \le E(\Gamma_1)^{1/4}N^{9/4}\]
and therefore $E(\Gamma_1) \ge\delta^{4}N^3$. By Balog--Szemer\'{e}di--Gowers (see \cite[Theorem~5.2]{GT08b}), there is $\Gamma'\subseteq \Gamma_1$ such that $|\Gamma'|\ge\delta^{O(1)}N$ while $|\Gamma' - \Gamma'|\le\delta^{-O(1)}N$. 

Let $A = (8 \Gamma' - 8\Gamma')\cap(\{0\}\times(\mb{R}/\mb{Z})^d)$. Since $\Gamma'$ is a graph, we have that $|\Gamma' + A| = |\Gamma'| |A|$. However $|\Gamma' + A|\le |9\Gamma' - 8\Gamma'|\le\delta^{-O(1)}N$ by the Pl{\"u}nnecke--Ruzsa inequality (e.g.~\cite[Theorem~5.3]{GT08b}) and thus $|A|\le\delta^{-O(1)}$.

Now, by abuse of notation we may view $A$ as a subset of $(\mb{R}/\mb{Z})^d$. We claim there exists $T\subseteq \mb{Z}^d$ with $|T|\le O(\log(1/\delta))$ such that $A\cap B(T,1/4) = \{0\}$; we give a proof which is essentially identical to that in \cite[Lemma~8.3]{GT08b}. Note that given any $w\in (\mb{R}/\mb{Z})^d\setminus \{0\}$ we have 
\[\limsup_{M\to\infty} \mb{P}_{v\in\{-M,\ldots,M\}^d}[\snorm{v\cdot w}_{\mb{R}/\mb{Z}}<1/4]\le 3/4.\]
This follows immediately from noting that if $w$ has an irrational coordinate the probability tends to $1/2$ by Weyl's equidistribution criterion while if $w$ is rational the limiting probability is at most say $2/3$. Choosing an integer vector $v$ which kills at least $1/4$ of the set iteratively then immediately gives the desired lemma.

Let $\psi\colon(\mb{R}/\mb{Z})^{d}\to(\mb{R}/\mb{Z})^{T}$ be defined as $\psi(\xi) = (t(\xi))_{t\in T}$. Now let $\tau = 2^{-7}$. By averaging there exists a cube $Q = \vec{x} + [0,\tau)^{T}$ such that 
\[\wt{\Gamma} :=\{(h,f_1(h))\in\Gamma'\colon\psi(f_1(h))\in Q\}\]
with $|\wt{\Gamma}|\ge\tau^{|T|} |\Gamma'|$, so $|\wt{\Gamma}|\ge\delta^{O(1)}N$. Fix such a cube $Q$.

We claim that $4\wt{\Gamma} - 4\wt{\Gamma}$ is a graph. For the sake of contradiction suppose not. Then there exist $h_1,\ldots,h_8$ and $h_1',\ldots,h_8'$ such that
\begin{align*}
h_1 + \cdots + h_4 - h_{5} - \cdots - h_{8} &= h_1' + \cdots + h_4' - h_{5}' - \cdots - h_{8}',\\
f_1(h_1) + \cdots + f_1(h_4) - f_1(h_{5}) - \cdots - f_1(h_{8}) &\not\equiv f_1(h_1') + \cdots + f_1(h_4') - f_1(h_{5}') - \cdots - f_1(h_{8}')\imod 1.
\end{align*}
However,
\begin{align*}
&\norm{\psi\big(\big(f_1(h_1) + \cdots + f_1(h_4)- f_1(h_{5}) - \cdots - f_1(h_{8})\big) - \big(f_1(h_1') + \cdots + f_1(h_4')-f_1(h_{5}') - \cdots - f_1(h_{8}')\big)\big)}_{\infty}\\
&\qquad\qquad\qquad\qquad\le 16 \cdot\tau <1/4
\end{align*}
by definition of $\wt{\Gamma}$. Since $A\cap B(T,1/4) = \{0\}$, it follows that
\[\big(f_1(h_1) + \cdots + f_1(h_4)- f_1(h_{5}) - \cdots - f_1(h_{8})\big) - \big(f_1(h_1') + \cdots + f_1(h_4')- f_1(h_{5}') - \cdots - f_1(h_{8}')\big) \in\mb{Z}^d\]
as desired. 

Let $H^\ast$ denote the projection of $\wt{\Gamma}$ onto the first coordinate. Since $f_1$ is an $8$-Freiman homomorphism on $H^\ast$ (because $4\wt{\Gamma}-4\wt{\Gamma}$ is a graph), we have that $f_1$ is a Freiman homorphism on $2H^\ast - 2H^\ast$ (where $f_1$ is extended via linearity). We now view $H^\ast$ (which is a subset of integers) as a subset of $\mb{Z}/N'\mb{Z}$ where $N'$ is a prime in $[100N, 200 N]$. Note here that $H^\ast\subseteq[-4N,4N]$ and thus $4\wt{\Gamma} -4\wt{\Gamma}$ when viewed as a subset of $(\mb{Z}/N'\mb{Z})\times (\mb{R}/\mb{Z})^d$ is still a graph. Note that $|H^\ast|\ge\delta^{O(1)}N$.

By Lemma~\ref{lem:Bohr-set}, we have that $2H^\ast - 2H^\ast$ contains a Bohr set $B(S,\rho)$ with $|S|,\rho^{-1}\le (\log(1/\delta))^{O(1)}$. Then by applying Lemma~\ref{lem:brack-form} to each coordinate of $f_1$ on $B(S,\rho')\subseteq 2H^\ast - 2H^\ast$ with $\rho'^{-1}\le\exp(\log(1/\delta)^{O(1)})$, we have that 
\begin{equation}\label{eq:approximate-no-error-1}
f_1(h_1) = \sum_{\alpha_i\in S}a_i \{\alpha_i h_1\} + \gamma\imod 1
\end{equation}
for all $h_1\in B(S,\rho')$, for appropriate choices of $a_i,\gamma\in\mb{R}^d$. Here $\alpha_i\in(1/N')\mb{Z}$.

We now undo this transformation and we abusively view $B(S,\rho')\subseteq 2H^\ast - 2H^\ast$ as a subset of integers in $[-4N,4N]$ instead of $\mb{Z}/N'\mb{Z}$, noting that the fractional part remains identical in both cases. As a slight technical annoyance, $B(S,\rho')$ might not intersect $H^\ast$. But, by Pigeonhole there exists $x^\ast\in[-5N,5N]$ such that $|(x^\ast + B(S, \rho'/2))\cap H^{\ast}|\ge\exp(-(\log(1/\delta))^{O(1)})N$. (This requires a lower bound on the size of a Bohr set, see \cite[Lemma~4.20]{TV10}.)

Fix $h^\ast\in B(S,\rho'/2)$ such that $x^\ast+h^\ast\in H^\ast$ and consider any $h_1\in B(S,\rho'/2)$ such that $h_1+x^\ast\in H^\ast$ we have that 
\[f_1(h_1-h^\ast)+f_1(x^\ast+h^\ast) = f_1(h_1+x^\ast)+f(0)\imod 1\]
since $4\wt{\Gamma} - 4\wt{\Gamma}$ is a graph (note that $h_1-h^\ast\in B(S,\rho')\subseteq 2H^\ast-2H^\ast$). Thus we have 
\begin{align*}
f_1(h_1+x^\ast) &= f_1(h_1-h^\ast)+f_1(x^\ast+h^\ast)-f(0)\imod 1\\
&= \sum_{\alpha_i\in S}a_i\{\alpha_i((h_1+x^\ast) - (x^\ast+h^\ast))\} + \gamma'\imod 1
\end{align*}
The second line holds since $x^\ast,h^\ast$ are viewed as fixed and $h_1-h^\ast\in B(S,\rho')$ hence we may apply \eqref{eq:approximate-no-error-1}.

So, letting $H'$ be the set of values $h_1+x^\ast\in H_1$ where $h_1\in B(S,\rho'/2)$, this nearly gives the desired result. The only issue is that there are shifts inside the brackets. Note that
\begin{align*}
\{z_1 + z_2\} &=\begin{cases}
\{z_1\} + \{z_2\} -1 \text{ if } \{z_1\} + \{z_2\} > 1/2,\\
\{z_1\} + \{z_2\} + 1\text{ if } \{z_1\} + \{z_2\} \le -1/2,\\
\{z_1\} + \{z_2\} \text{ otherwise.}
\end{cases}
\end{align*}
Given this, we may Pigeonhole possible values $h_1+x^\ast$ into one of $3^{|S|}$ cases based on the corresponding shift for each $\alpha_i\in S$. Applying the above relation with $z_1 = \alpha_i(h_1+x^\ast)$ and $z_2 = -\alpha_i(x^\ast+h^\ast)$ and taking the most common case then gives the desired result.
\end{proof}

\section{Miscellaneous deferred results}\label{app:defer}
We first require the following elementary lemma which will be used in the following deduction. 
\begin{lemma}\label{lem:linear-alg}
Fix an integer $H\ge 2$. Consider vectors $v_1,\ldots, v_\ell\in\mb{Z}^d$ with integer coordinates bounded by $H$ and $w\in\mb{R}^d$ such that $\on{dist}(v_i\cdot w, \mb{Z})\le\delta$ for $1\le i\le\ell$. We may write $w = w_{\mr{small}} + w_{\mr{rat}} + (w-w_{\mr{small}} - w_{\mr{rat}})$ where $w_{\mr{rat}}$ has coordinates which are rationals with denominators bounded by $H^{O(d^{O(1)})}$, $\snorm{w_{\mr{small}}}_{\infty}\le\delta \cdot H^{O(d^{O(1)})}$, and $v_i\cdot(w-w_{\mr{small}} - w_{\mr{rat}}) = 0$ for $1\le i\le\ell$.
\end{lemma}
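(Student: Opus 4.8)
The statement is a purely linear-algebraic ``approximate simultaneous eigenvalue'' splitting: given integer vectors $v_1,\ldots,v_\ell$ of height at most $H$ in $\mb{Z}^d$ and $w\in\mb{R}^d$ with $\on{dist}(v_i\cdot w,\mb{Z})\le\delta$, we want $w = w_{\mr{small}}+w_{\mr{rat}}+w'$ where $v_i\cdot w' = 0$ exactly, $w_{\mr{rat}}$ is rational of bounded denominator, and $w_{\mr{small}}$ is small (scaled by a bounded factor). The plan is to work with the lattice $\Lambda = \mb{Z} v_1 + \cdots + \mb{Z} v_\ell \subseteq \mb{Z}^d$ generated by the $v_i$ and the subspace $V = \on{span}_{\mb{R}}(v_1,\ldots,v_\ell)$. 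First I would pass to a basis: extract from $\{v_1,\ldots,v_\ell\}$ a maximal linearly independent subset, say $v_{i_1},\ldots,v_{i_k}$, which (by the standard Siegel/Bombieri--Vaaler type bounds, or just Cramer's rule on the Gram matrix) still has height $H^{O(d^{O(1)})}$, and note that each remaining $v_j$ is a rational combination $v_j = \sum_t c_{jt} v_{i_t}$ with the $c_{jt}$ of height $H^{O(d^{O(1)})}$; hence $\on{dist}(v_{i_t}\cdot w,\mb{Z})\le\delta$ for all $t$ already implies $\on{dist}(v_j\cdot w, (1/D)\mb{Z})\le \delta\cdot H^{O(d^{O(1)})}$ for a bounded $D$, so nothing is lost by restricting attention to the independent vectors. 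Actually it is cleanest to instead replace $v_1,\ldots,v_\ell$ outright by a basis $u_1,\ldots,u_k$ of the \emph{primitive} lattice $\wt\Lambda = (\mb{Q}\Lambda)\cap\mb{Z}^d$ via the Hermite normal form of the matrix with rows $v_i$: this gives integer vectors $u_1,\ldots,u_k$ of height $H^{O(d^{O(1)})}$ spanning $V$, with each $v_i$ an \emph{integer} combination of the $u_j$, so $\on{dist}(u_j\cdot w,\mb{Z})$ is controlled by $\on{dist}(v_i\cdot w,\mb{Z})$ up to the height factor.

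Second, let $\theta_j \in (-1/2,1/2]$ be the representative of $u_j\cdot w \bmod 1$, so $|\theta_j|\le \delta\cdot H^{O(d^{O(1)})}$. I want a single vector $y\in\mb{R}^d$ lying in the column span of the $u_j$ (equivalently in $V$, since over $\mb{R}$ these coincide) with $u_j\cdot y = \theta_j$ for all $j$, together with a rational vector $y'\in V$ with $u_j\cdot y' = [u_j\cdot w] := u_j\cdot w - \theta_j \in\mb{Z}$ for all $j$; then $w - y - y'$ is orthogonal to every $u_j$, hence to every $v_i$, and we set $w_{\mr{small}} = y$, $w_{\mr{rat}} = y'$, $w' = w - y - y'$. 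To produce $y$: writing $y = \sum_t \alpha_t u_t$ and requiring $u_j\cdot y = \theta_j$ gives the linear system $G\alpha = \theta$ where $G = (u_i\cdot u_j)$ is the (invertible) Gram matrix, with $\det G$ and the entries of $G^{-1}$ bounded by $H^{O(d^{O(1)})}$; so $\alpha = G^{-1}\theta$ has $\snorm{\alpha}_\infty\le \delta\cdot H^{O(d^{O(1)})}$, and since the $u_t$ themselves have bounded height, $\snorm{y}_\infty\le\delta\cdot H^{O(d^{O(1)})}$ as required. To produce $y'$ one runs the identical computation with $\theta$ replaced by the integer vector $([u_j\cdot w])_j$ and observes $\alpha' = G^{-1}([u_j\cdot w])_j$ has rational entries with denominator dividing $\det G$, hence bounded; multiplying by the $u_t$ keeps the denominators bounded, giving $w_{\mr{rat}} = y'$ rational of denominator $H^{O(d^{O(1)})}$.

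Finally one checks $w' := w - w_{\mr{small}} - w_{\mr{rat}}$ satisfies $u_j\cdot w' = (u_j\cdot w) - \theta_j - [u_j\cdot w] = 0$, and then $v_i\cdot w' = 0$ for each $i$ because each $v_i$ is an integer combination of the $u_j$. The only nuance to double-check is that $w_{\mr{small}}$ and $w_{\mr{rat}}$ both lie in $V = \on{span}(u_j)$, which is built in by construction (they are given as $\mb{R}$-combinations of the $u_t$). I do not expect a genuine obstacle here; the one place requiring care — and the ``main step'' — is the bookkeeping on heights: confirming that passing to the Hermite normal form basis, inverting the Gram matrix, and multiplying back out each only inflate heights polynomially in $H$ with exponent polynomial in $d$, so that all three estimates ($H^{O(d^{O(1)})}$ denominators, $\delta H^{O(d^{O(1)})}$ smallness, exact vanishing) come out in the stated form. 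This is routine via Hadamard's inequality and Cramer's rule but must be stated cleanly; all of it is standard and can be cited from, e.g., the geometry-of-numbers preliminaries used elsewhere in the paper.
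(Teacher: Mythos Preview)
Your core approach matches the paper's: reduce to a linearly independent subfamily, build dual vectors, and split $w$ into the pieces carrying the integer and fractional parts of the $v_i\cdot w$. The paper does this in three lines: after passing to linearly independent $v_1,\ldots,v_\ell$, Cramer's rule gives $w_1,\ldots,w_\ell\in\mb{Q}^d$ of height $H^{O(d^{O(1)})}$ with $w_j\cdot v_k=\mbm{1}_{j=k}$, and one sets $w_{\mr{rat}}=\sum_j(v_j\cdot w-\{v_j\cdot w\})w_j$ and $w_{\mr{small}}=\sum_j\{v_j\cdot w\}w_j$. Your Gram-matrix solve $G\alpha=\theta$ is the same construction written in coordinates on $V$.

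One gap to flag: in your ``cleanest'' route via a basis $u_1,\ldots,u_k$ of the primitive lattice $\wt\Lambda=(\mb{Q}\Lambda)\cap\mb{Z}^d$, the implication runs the wrong way. That each $v_i$ is an \emph{integer} combination of the $u_j$ lets you control $\on{dist}(v_i\cdot w,\mb{Z})$ by $\on{dist}(u_j\cdot w,\mb{Z})$, not the reverse; to bound $\on{dist}(u_j\cdot w,\mb{Z})$ you would need each $u_j$ to be a bounded integer combination of the $v_i$, which a primitive-lattice basis need not be (only some bounded multiple $N_ju_j$ lies in $\Lambda$, yielding $u_j\cdot w$ close to $(1/N_j)\mb{Z}$ rather than $\mb{Z}$). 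Your first option, simply extracting a maximal linearly independent subset of the $v_i$ themselves, avoids this entirely and is exactly what the paper does; the Hermite-normal-form detour should be dropped.
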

\begin{proof}
Note that by passing to a subset we may assume that $v_1,\ldots, v_\ell\in\mb{Z}^d$ are linearly independent. By Cramer's rule, there exist $w_1,\ldots,w_\ell\in\mb{R}^d$ which have coordinates which are height $H^{O(d^{O(1)})}$ rationals such that $w_j\cdot v_k = \mbm{1}_{j=k}$. Taking $w_{\mr{rat}} = \sum_{j=1}^\ell(v_j\cdot w - \{v_j\cdot w\})\cdot w_j$ and $w_{\mr{small}} = \sum_{j=1}^\ell \{v_j\cdot w\}\cdot w_j$ we immediately have the desired result. Recall that we have chosen the fractional part $\{\cdot\}$ to live within $(-1/2,1/2]$. 
\end{proof}

We now prove the following elementary lemma which takes a set of horizontal characters (at potentially different levels) and produces a factorization.
\begin{lemma}\label{lem:factor}
Consider a nilmanifold $G/\Gamma$ of degree-rank $(s,r)$ of dimension $d$ and complexity $M$. Consider a polynomial sequence $g$ such that $g(0) = \mr{id}_G$ and consider a set of horizontal characters $\psi_{i,j}$ for $1\le j\le\ell_i$ and where $\psi_{i,\cdot}$ is an $i$-th horizontal character of height at most $H$. Furthermore suppose that for all $i,j$, 
\[\on{dist}(\psi_{i,j}(\on{Taylor}_i(g)), \mb{Z})\le H\cdot N^{-i}.\]
Then one may factor
\[g = \eps \cdot g' \cdot\gamma\]
where:
\begin{itemize}
    \item $\eps(0) = g'(0) = \gamma(0) = \mr{id}_G$;
    \item $\psi_{i,j}(\on{Taylor}_i(g')) = 0$;
    \item $\gamma$ is $(MH)^{O_s(d^{O_s(1)})}$-rational;
    \item $d_G(\eps(n),\eps(n-1))\le (MH)^{O_s(d^{O_s(1)})}\cdot N^{-1}$ for $n\in[N]$.
\end{itemize}
\end{lemma}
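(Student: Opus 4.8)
\textbf{Proof plan for Lemma~\ref{lem:factor}.}

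The plan is to peel off the factorization one level $i$ at a time, starting from the Taylor expansion $g(n)=\prod_{i=0}^{s}g_i^{\binom{n}{i}}$ with $g_i\in G_{(i,0)}$ (available via Lemma~\ref{lem:taylor-relation}), combined with the linear-algebraic decomposition supplied by Lemma~\ref{lem:linear-alg}. Concretely: write each horizontal character $\psi_{i,j}$ in Mal'cev coordinates as an integer vector $k_{i,j}$ of height $\le H$ supported on the coordinates corresponding to $\log(G_{(i,1)})/\log(G_{(i,2)})$. The vector $w_i := \psi_{\exp,\mc X_i}(\on{Taylor}_i(g))\in\mathbb R^{\dim\on{Horiz}_i(G)}$ then satisfies $\on{dist}(k_{i,j}\cdot w_i,\mathbb Z)\le H N^{-i}$ for all $j\le\ell_i$. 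Applying Lemma~\ref{lem:linear-alg} (with the bound $H$ on the coordinates of the $k_{i,j}$, and $\delta = HN^{-i}$), we decompose $w_i = w_i^{\mr{small}}+w_i^{\mr{rat}}+w_i^{\mr{rem}}$, where $w_i^{\mr{rat}}$ is $(MH)^{O_s(d^{O_s(1)})}$-rational, $\|w_i^{\mr{small}}\|_\infty\le (MH)^{O_s(d^{O_s(1)})}N^{-i}$, and $k_{i,j}\cdot w_i^{\mr{rem}}=0$ for all $j$. Lifting these vectors back through $\exp$ to elements of $G_{(i,0)}$ (choosing lifts of $w_i^{\mr{rat}}$ and $w_i^{\mr{rem}}$ modulo $G_{(i,2)}$, and a genuine lift of the small part), we obtain a decomposition $g_i = \eps_i\cdot g_i'\cdot\gamma_i\cdot c_i$ where $c_i\in G_{(i,2)}$ is an error living in the next-lower-rank group, $\psi_{i,j}(g_i' \imod G_{(i,2)})=0$, $\gamma_i$ is appropriately rational, and $\eps_i$ is appropriately close to the identity.

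Having done this coordinate-wise, I would reassemble. Set
\[
\eps(n)=\prod_{i=1}^{s}\eps_i^{\binom{n}{i}},\qquad \gamma(n)=\prod_{i=1}^{s}\gamma_i^{\binom{n}{i}},\qquad g'(n)=\prod_{i=1}^{s}(g_i')^{\binom{n}{i}},
\]
and absorb the errors $c_i$ into $g'$. The key points to verify are: (i) $\eps$ is $((MH)^{O_s(d^{O_s(1)})},N)$-smooth, which follows from $\binom{n}{i}-\binom{n-1}{i}=\binom{n-1}{i-1}$, the bounds on $\psi(\eps_i)$, and the comparability of first- and second-kind metrics (as in \cite[Lemma~2.6]{LSS24} and \cite[Lemmas~B.1,~B.3]{Len23b}); (ii) $\gamma$ is $(MH)^{O_s(d^{O_s(1)})}$-rational, via Lemma~\ref{lem:divis-basis} (or directly by the bound on the rational parts together with the polynomial relating first-kind and second-kind coordinates); (iii) $g'$ has $\psi_{i,j}(\on{Taylor}_i(g'))=0$. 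For (iii) the crucial observation is that $\on{Taylor}_i$ only depends on $g$ modulo $G_{(i,2)}$ (Definition~\ref{def:horizi}), and by Lemma~\ref{lem:coeff-mult} the Taylor coefficient of a product is the product of Taylor coefficients; the error terms $c_k$ for $k<i$, being in $G_{(k,2)}$, do not affect $\on{Taylor}_i$ because they contribute only commutator terms of degree-rank strictly exceeding $(i,1)$ — this is exactly the content of the $[G_{(a,1)},G_{(b,1)}]\subseteq G_{(a+b,2)}$ relation used throughout Section~\ref{sub:def-horizontal-taylor}. Finally one must adjust the factorization so that $\eps(0)=g'(0)=\gamma(0)=\mr{id}_G$; since $g(0)=\mr{id}_G$, each of $\eps_i,g_i',\gamma_i$ enters only with the factor $\binom{n}{i}$ which vanishes at $n=0$ for $i\ge 1$ (and there is no $i=0$ term since $g_0=g(0)=\mr{id}_G$), so this is automatic.

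The main obstacle I anticipate is bookkeeping the error propagation in (iii): one must carefully confirm that re-collecting the coordinate-wise factorizations into global polynomial sequences does not, via Baker--Campbell--Hausdorff corrections, reintroduce a nonzero horizontal component at some level $i$. The cleanest way to organize this is an induction on $i$ from $1$ to $s$: at stage $i$, one has already arranged $\psi_{i',j}(\on{Taylor}_{i'}(g'))=0$ for $i'<i$ and for the partially-factored sequence; one then applies Lemma~\ref{lem:linear-alg} at level $i$, noting that modifications at level $i$ (which take values in $G_{(i,0)}$, so produce commutator corrections only in $G_{(i',2)}$ for $i'>i$ or $i'=i$) do not disturb the already-cleared lower levels, precisely because a commutator involving a level-$i$ element has degree-rank at least $(i+a,2)$ for the relevant terms. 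The remaining details — tracking that all heights and rationalities compound to $(MH)^{O_s(d^{O_s(1)})}$ over the $O_s(1)$ many stages — are routine given Lemma~\ref{lem:linear-alg} and the standard Mal'cev coordinate estimates from \cite{Len23b}.
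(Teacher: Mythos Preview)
Your approach is correct in spirit and uses the right ingredient (Lemma~\ref{lem:linear-alg}), but the paper's argument is considerably simpler and sidesteps exactly the bookkeeping you flag as the main obstacle. Rather than using the product-form Taylor expansion $g(n)=\prod_i g_i^{\binom{n}{i}}$ and reassembling piecewise, the paper works entirely in first-kind (Lie algebra) coordinates: write $g(n)=\exp\big(\sum_{k=1}^s\binom{n}{k}g_k\big)$ with $g_k\in\log(G_{(k,0)})$, apply Lemma~\ref{lem:linear-alg} to each $g_i$ \emph{linearly in the Lie algebra} to obtain $g_i=g_{i,\mr{small}}+g_{i,\mr{rat}}+(g_i-g_{i,\mr{small}}-g_{i,\mr{rat}})$, and then set
\[
\eps(n)=\exp\Big(\sum_k\binom{n}{k}g_{k,\mr{small}}\Big),\qquad \gamma(n)=\exp\Big(\sum_k\binom{n}{k}g_{k,\mr{rat}}\Big),\qquad g':=\eps^{-1}g\gamma^{-1}.
\]
Defining $g'$ implicitly as $\eps^{-1}g\gamma^{-1}$ (rather than building it from pieces $g_i'$) makes $g=\eps g'\gamma$ a tautology, and the vanishing $\psi_{i,j}(\on{Taylor}_i(g'))=0$ follows in one line from the additivity of $\on{Taylor}_i$ on products (Lemma~\ref{lem:coeff-mult}) together with the second part of that same lemma identifying $\on{Taylor}_i$ with $\exp(g_i)\imod G_{(i,2)}$. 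No induction on levels, no tracking of BCH errors $c_i$, and no worry about cross-level contamination is needed. Your version would work after the induction you sketch, but the paper's choice of coordinates and of defining $g'$ as a quotient makes all of that unnecessary.
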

\begin{proof}
By the classification of polynomial sequences in terms of coordinates of the second-kind, we have that 
\[g(n) = \exp\Big(\sum_{k=1}^{s} \binom{n}{k}g_k\Big)\]
for some $g_k\in\log(G_{(k,0)}) = \log(G_{(k,1)})$. Note that \[\on{Taylor}_i(g) = \exp(g_k)\imod G_{(i,2)}\]
and note that each $\psi_{i,j}$ can be descended to a linear map on $\log(G_{(i,1)})$ with the property that $\psi_{i,j}(\log(\Gamma\cap G_{(i,1)}))\in\mb{Z}$ and $\psi_{i,j}(\log(G_{(i,2)})) = 0$. That $\psi_{i,j}$ descends uses the fact that $\log(x) + \log(y) \equiv \log(xy) \imod \log(G_{(i,2)})$ for $x,y\in G_{(i,1)}$, which follows from Baker--Campbell--Hausdorff.

We now apply Lemma~\ref{lem:linear-alg}. As $\on{dist}(\psi_{i,j}(\on{Taylor}_i(g)), \mb{Z})\le H\cdot N^{-i}$ by assumption, we may write $g_i = g_{i,\mr{small}} + g_{i,\mr{rat}} + (g_i - g_{i,\mr{small}} - g_{i,\mr{rat}})$ such that $g_{i,\mr{rat}}$ is an $H^{O_s(d^{O_s(1)})}$-rational combination of elements in $\mc{X}\cap\log(G_{(i,1)})$, such that $\snorm{g_{i,\mr{small}}}_{\infty}\le (MH)^{O_s(d^{O_s(1)})}\cdot N^{-i}$, and such that $\psi_{i,j}(g_i - g_{i,\mr{small}} - g_{i,\mr{rat}}) = 0$. Defining
\[\gamma := \exp\Big(\sum_{k=1}^{s} \binom{n}{k}g_{k,\mr{rat}}\Big),\quad\eps := \exp\Big(\sum_{k=1}^{s} \binom{n}{k}g_{k,\mr{small}}\Big),\]
and $g' := \eps^{-1} g \gamma^{-1}$, we immediately have that $\gamma\Gamma$ is $(MH)^{O_s(d^{O_s(1)})}$-periodic by \cite[Lemma~B.14]{Len23b}. That $\eps$ is sufficiently smooth is an immediate consequence of \cite[Lemmas~B.1,~B.3]{Len23b}.
\end{proof}

We next require the following result regarding the existence of a nilmanifold partition of unity. As a remark, a similar statement (e.g.~with $\sum_j\tau_j = 1$) appears as \cite[Lemma~2.4]{LSS24}. The proof there, strangely, does not adapt in a straightforward manner to here. 
\begin{lemma}\label{lem:nilmanifold-partition-of-unity}
Fix $\eps\in (0,1/2)$ and a nilmanifold $G/\Gamma$ of degree $s$, dimension $d$, and complexity $M$. There exists an index set $I$ and a collection of nonnegative smooth functions $\tau_j\colon G/\Gamma\to\mb{R}^{\ge 0}$ for $j\in I$ such that:
\begin{itemize}
    \item For all $g\in G$, we have $\sum_{j\in I}\tau_j(g\Gamma)^2 = 1$;
    \item $|I|\le (1/\eps)^{O_s(d^{O_s(1)})}$;
    \item For each $j\in I$, there exists $\beta\in[-2,2]^{d}$ so that for any $g\Gamma\in\on{supp}(\tau_j)$ there exists $g'\in g\Gamma$ such that $\psi_G(g')\in\prod_{i=1}^d[\beta_i-\eps,\beta_i+\eps]$;
    \item $\tau_j$ are $(M/\eps)^{O_s(d^{O_s(1)})}$-Lipschitz on $G/\Gamma$;
    \item For any $g\in G$, $g\Gamma$ is contained in the support of at most $2^{O_s(d)}$ terms.
\end{itemize}
\end{lemma}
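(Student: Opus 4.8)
The statement to prove is Lemma~\ref{lem:nilmanifold-partition-of-unity}, the existence of a nilmanifold partition of unity with the listed properties.

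\medskip

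\textbf{Plan of proof.} The plan is to build a partition of unity on the torus $(\mb{R}/\mb{Z})^d$ (in Mal'cev coordinates of the second kind) out of bump functions supported on small cubes, then pull back to $G/\Gamma$ via $\psi_{\mc{X}}$, and finally renormalize so that the squares sum to $1$. First I would fix a smooth, nonnegative, $1$-periodic bump function $\phi\colon\mb{R}\to\mb{R}^{\ge 0}$ supported on an interval of length $O(\eps)$, which is $O(1/\eps)$-Lipschitz, such that the shifts $\{\phi(\cdot - k\eps')\}_{k}$ (for a suitable $\eps'\asymp\eps$ dividing, after adjustment, a convenient quantity) form a partition of unity: $\sum_k \phi(x - k\eps')^2 = 1$ for all $x\in\mb{R}$. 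Such a one-dimensional gadget is standard — one may take $\phi$ to be built from a fixed mollified indicator and renormalize, noting the sum $\sum_k \phi(x-k\eps')^2$ is a smooth positive periodic function bounded above and below by constants depending only on the construction, so dividing by its square root keeps Lipschitz control at scale $O(1/\eps)$. Tensoring over the $d$ coordinates gives functions $\sigma_{\vec k}(\vec x) = \prod_{i=1}^d \phi(x_i - k_i\eps')$ on $(\mb{R}/\mb{Z})^d$ indexed by $\vec k\in(\mb{Z}/\lceil 1/\eps'\rceil\mb{Z})^d$, with $\sum_{\vec k}\sigma_{\vec k}(\vec x)^2 = 1$, each $\sigma_{\vec k}$ supported on a cube of side $O(\eps)$, each point lying in the support of at most $2^{O(d)}$ of the $\sigma_{\vec k}$, and $|I| = |(\mb{Z}/\lceil 1/\eps'\rceil\mb{Z})^d| \le (1/\eps)^{O(d)}$.

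\medskip

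Next I would transfer this to $G/\Gamma$. Since $\mc{X}$ is a Mal'cev basis with $\psi_{\mc{X}}(\Gamma) = \mb{Z}^d$, the map $\psi_{\mc{X}}$ descends to a bijection from $G/\Gamma$ to $(\mb{R}/\mb{Z})^d$ (as sets), and one defines $\tau_{\vec k}(g\Gamma) := \sigma_{\vec k}(\psi_{\mc{X}}(g)\bmod 1)$. The first property ($\sum \tau_{\vec k}^2 = 1$) and the bounds on $|I|$ and on the number of supports containing a given point are then immediate from the corresponding properties of the $\sigma_{\vec k}$, since the relation $\sum_{\vec k}\sigma_{\vec k}^2 = 1$ holds pointwise on all of $(\mb{R}/\mb{Z})^d$. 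For the support property, if $g\Gamma\in\on{supp}(\tau_{\vec k})$ then $\psi_{\mc{X}}(g)\bmod 1$ lies in a cube of side $O(\eps)$; choosing $g'\in g\Gamma$ with $\psi_{\mc{X}}(g')\in[0,1)^d$ (possible since $\psi_{\mc{X}}(\Gamma) = \mb{Z}^d$) and then translating by an integer vector if needed to center near the cube, one gets $\psi_{\mc{X}}(g')$ in a cube $\prod_i[\beta_i - \eps,\beta_i+\eps]$ with $\beta\in[-2,2]^d$ — the factor of $2$ slack absorbing the at-most-$\eps$ ambiguity at the boundary of the fundamental domain. Here one should state the desired property with $\psi_G = \psi_{\mc{X}}$ being the second-kind coordinates used throughout Section~\ref{sub:bases}; a minor point is reconciling which coordinate system ``$\psi_G$'' refers to, but the statement is phrased in terms of $\psi_G$ and we simply take that to be $\psi_{\mc{X}}$.

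\medskip

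\textbf{The main obstacle} is the Lipschitz bound on $\tau_{\vec k}$ with respect to the metric $d_{G,\mc{X}}$ on $G/\Gamma$ (Definition~\ref{def:Lip}), rather than with respect to the Euclidean metric on coordinates. The issue is that $\psi_{\mc{X}}$ is not a Lipschitz map between $(G/\Gamma, d_{G,\mc{X}})$ and $((\mb{R}/\mb{Z})^d, \snorm{\cdot}_\infty)$ with a clean constant — the metric $d_{G,\mc{X}}$ is defined via chains and right-translations. The way to handle this is to recall the standard comparison: for $x,y\in G$ with $d_{G,\mc{X}}(x,y)$ small (say at most a constant), one has $\snorm{\psi_{\mc{X}}(x) - \psi_{\mc{X}}(y)}_\infty \le M^{O_s(d^{O_s(1)})} d_{G,\mc{X}}(x,y)$; this follows from the polynomiality and bounded-height of the change-of-coordinates maps (cf.\ \cite[Lemma~B.1,~B.3]{Len23b}) together with the right-invariance of the metric, and is the kind of estimate invoked repeatedly elsewhere in the paper (e.g.\ in Lemma~\ref{lem:small-Taylor} and the remark after Definition~\ref{def:vert-complex}). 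Composing with the $O(1/\eps)$-Lipschitz bound on $\sigma_{\vec k}$ and using the triangle inequality over the (finitely many, $O_s(1)$-bounded via cocompactness) choices of coset representatives entering the definition of $d_{G/\Gamma}$ yields the claimed $(M/\eps)^{O_s(d^{O_s(1)})}$-Lipschitz bound. I would present this comparison estimate as a cited black box rather than reprove it, since it is entirely standard and the paper already leans on \cite{Len23b} for all such coordinate-geometry facts. The remaining verifications (nonnegativity, smoothness, the exact form of $\sum\tau_{\vec k}^2 = 1$) are routine and I would dispatch them in a sentence each.
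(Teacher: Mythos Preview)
There is a genuine gap in your construction: the function $\tau_{\vec k}(g\Gamma) := \sigma_{\vec k}(\psi_{\mc{X}}(g)\bmod 1)$ is \emph{not well-defined} on $G/\Gamma$. The claim that ``$\psi_{\mc{X}}$ descends to a bijection from $G/\Gamma$ to $(\mb{R}/\mb{Z})^d$'' is false when $G$ is non-abelian. Right-multiplication by $\gamma\in\Gamma$ does not shift the second-kind coordinates by an integer vector; rather, $\psi_{\mc{X}}(g\gamma)$ is a polynomial in the coordinates of $g$ and $\gamma$, and the lower (more central) coordinates of $g\gamma$ depend on the higher coordinates of $g$. Concretely, in the Heisenberg group with $\psi(g)=(x,y,z-xy)$ one computes $\psi(g\gamma)_3 = \psi(g)_3 + c - ay - ab$ for $\gamma$ with integer coordinates $(a,b,c)$, and this is not congruent to $\psi(g)_3$ modulo $1$ unless $ay\in\mb{Z}$. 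So $\sigma_{\vec k}(\psi_{\mc{X}}(\cdot)\bmod 1)$ is not $\Gamma$-invariant, and if one instead defines $\tau_{\vec k}$ via the unique representative in $[0,1)^d$, the resulting function is discontinuous whenever $\sigma_{\vec k}$ is supported near a face of the cube.

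This is precisely why the paper proceeds by induction on the degree $s$ rather than working in all $d$ coordinates at once. One first builds the partition of unity on the quotient $\wt{G}/\wt{\Gamma}$ with $\wt{G}=G/G_s$ (lower degree), and lifts; these lifted functions are $G_s$-invariant hence genuinely $\Gamma$-invariant. Then, \emph{on the support of each lifted} $\tau_j$ --- which by induction lies in a small box $\prod_{i\le\dim\wt G}(\beta_i-\eps,\beta_i+\eps]$ in the base coordinates, well away from the boundary --- the remaining $G_s$-coordinates form an honest torus $\mb{T}^{\dim G_s}$ (since $G_s$ is central and the only residual lattice ambiguity is $\Gamma\cap G_s$), and one refines via a torus partition of unity in those coordinates. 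The Lipschitz check then also works, because one can always choose representatives of nearby cosets that lie in the interior of the chosen fundamental domain. Your proposed shortcut collapses exactly this two-step structure and cannot be repaired without reintroducing it.
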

\begin{proof}
We will prove the statement inductively based on the degree of the nilmanifold. For degree $1$ nilmanifolds $G$, note that $G\simeq \mb{T}^d$.There exists a set of function $\rho_1,\ldots,\rho_{2k}\colon\mb{T}\to\mb{R}^{\ge 0}$ such that:
\begin{itemize}
    \item $\on{supp}(\rho_j)\subseteq [j/(2k), j/(2k) + 1/k] \imod 1$;
    \item $\sum_{j=1}^{2k} \rho_{j}^2 = 1$;
    \item $\rho_{j}$ are $O(1/k)$-Lipschitz.
\end{itemize}
Taking $k = O(\eps^{-1})$, we have that 
\[1 = \sum_{(j_1,\ldots,j_d)\in[2k]^d}\prod_{\ell=1}^d\rho_{j_\ell}((\psi_{G}(g))_\ell)^2\]
where $(\psi_{G})_\ell$ denotes the $\ell$-th coordinate of $\psi_{G}$. For $\vec{j}\in[2k]^d$ we take 
\[\tau_{\vec{j}}(g) = \prod_{\ell=1}^d\rho_{j_\ell}((\psi_{G}(g))_\ell)\]
and note that this function is $\Gamma$-invariant since multiplying by an element in $\Gamma$ shifts all coordinates by an integer. Furthermore, by \cite[Lemma~B.3]{Len23b} we have that the standard $\ell^{\infty}$-metric on $G/\Gamma$ is equivalent to $d_{G/\Gamma}$ up to a factor of $O(M)^{O(d^{O(1)})}$. This completes the proof in this case.

When considering the case of a degree $s\ge 2$ filtration on $G$, suppose that $G_0 = G_1 \geqslant G_2 \geqslant \cdots \geqslant G_s \geqslant \mr{Id}_G$ is the given filtration. Note that if $\mc{X} = \{X_1,\ldots,X_d\}$ is the adapted Mal'cev basis for $G/\Gamma$ then 
\[\wt{\mc{X}} := \{X_1,\ldots,X_{\dim(G)-\dim(G_s)}\}\imod\log(G_s)\] is a valid Mal'cev basis for $\wt{G} := G/G_s$. Furthermore define $\wt{\Gamma} := \Gamma/(\Gamma\cap G_s)$. The complexity of $\wt{\mc{X}}$ is always bounded by $M$ by definition. The filtration on $\wt{G}$ is lower degree.

By induction, we have functions $(\tau_j)_{j\in I}$ with $|I|\le (M/\eps)^{O_s(d^{O_s(1)})}$ such that
\[1 = \sum_{j\in I}\wt{\tau_j}(\wt{g} \wt{\Gamma})^2\]
and satisfying various other appropriate properties. We may lift these functions to $G/\Gamma$ via 
\[\tau_j(g \Gamma) = \wt{\tau_j}((g \imod G_s) \wt{\Gamma}).\]
Note that this is well-defined since $g \Gamma\imod G_s = (g \imod G_s) \cdot (\Gamma\imod G_s) = (g \imod G_s)\wt{\Gamma}$.

We view each $\tau_j$ as a function on $\prod_{i=1}^{\dim(\wt{G})}(\beta_i - 1/2, \beta_i + 1/2] \times\mb{T}^{\dim(G_s)}$ which only depends on the first $\dim(\wt{G})$ coordinates and such that the support is only within some $\prod_{i=1}^{\dim(\wt{G})}[\beta_i - \eps, \beta_i + \eps] \times\mb{T}^{\dim(G_s)}$. This is via identifying the fundamental domain of $G/\Gamma$ via Mal'cev coordinates of the second-kind (see the proof of \cite[Lemma~B.6]{Len23b}). We let $\psi_{\beta}\colon G/\Gamma\to\prod_{i=1}^{\dim(\wt{G})}(\beta_i - 1/2, \beta_i + 1/2] \times\mb{T}^{\dim(G_s)}$ denote this identification. (Note that the choice of $\beta$ depends on $j\in I$, which we will fix through the remainder of the proof.)

We now have 
\[\tau_j(g \Gamma)^2 = \wt{\tau_j}((g \imod G_s) \wt{\Gamma})^2 \cdot\sum_{(t_1,\ldots,t_{\dim(G_s)})\in[2k]^{\dim(G_s)}} \prod_{\ell=1}^{\dim(G_s)}\rho_{t_\ell}((\psi_{\beta}(g\Gamma))_{\ell+\dim(\wt{G})})^2\]
where $k = O(1/\eps)$ and $\rho$ are defined as above.

The fact that each piece 
\[\tau_{j,\vec{t}}(g\Gamma)^2 := \tau_j(g \Gamma)^2 \cdot\prod_{\ell=1}^{\dim(G_s)}\rho_{t_\ell}((\psi_{\beta}(g\Gamma))_{\ell+\dim(\wt{G})})^2\]
is $\Gamma$-invariant on the right is trivial by construction, and the sum of squares property is trivial. 

Identifying $\rho_{j}$ with a function $\mb{R}\to\mb{R}^{\ge 0}$ where $\on{supp}(\rho_{j})\subseteq [j/(2k),j/(2k) + 1/k]$, we may identify $\tau_{j,\vec{t}}$ with a function on the fundamental domain (with respect to second-kind coordinates) of the form
\[\prod_{i=1}^{\dim(\wt{G})}(\beta_i - 1/2, \beta_i + 1/2] \times\prod_{\ell=1}^{\dim(G_s)} ((t_\ell+1)/(2k)-1/2, (t_\ell+1)/(2k) + 1/2].\]
To check that this function is sufficiently Lipschitz, we note that each element $g\Gamma$ has a unique representative in this domain.

Consider $\tau_{j,\vec{t}}(x\Gamma)$ and $\tau_{j,\vec{t}}(y\Gamma)$; by multiplying by the lattice we may assume that $\psi(x),\psi(y)$ are in the specified fundamental domain. Furthermore if $d_{G/\Gamma}(x\Gamma,y\Gamma)\ge\eps'= M^{-O_s(d^O_s(1))}$ we immediately win as $\tau_{j,\vec{t}}$ is $1$-bounded. 
We claim that if $d_{G/\Gamma}(x\Gamma,y\Gamma)\le\eps'$ then $d_{G/\Gamma}(x\Gamma,y\Gamma) = d_{G}(x,y)$. In particular, note that 
\begin{align*}
d_{G/\Gamma}(x\Gamma,y\Gamma) &=\min_{\gamma\in\Gamma}d_{G}(x\gamma,y)
\end{align*}
and that 
\[\min_{\gamma\in\Gamma\setminus\{\mr{id}_G\}}d_{G}(x\gamma,y) \ge M^{-O_s(d^{O_s(1)})}\cdot\min_{\gamma\in\Gamma\setminus\{\mr{id}_G\}}d_{G}(\gamma,x^{-1}y) \ge M^{-O_s(d^O_s(1))}\]
which gives the desired contradiction assuming that various implicit constants defining $\eps'$ are chosen appropriately.

Now we may assume that $x,y$ are such that 
\[\psi(x),\psi(y)\in\prod_{i=1}^{\dim(\wt{G})}[\beta_i - 2\eps, \beta_i + 2\eps) \times\prod_{\ell=1}^{\dim(G_s)} [t_\ell/(2k)-\eps, t_\ell/(2k) + 1/k+\eps),\]
else both function values vanish (again supposing $\eps'$ is sufficiently small). This is because $d_G(x,y)$ is equivalent to $\snorm{\psi(x)-\psi(y)}_{\infty}$ (up to a factor of $M^{O_s(d^{O_s(1)})}$) for bounded elements by \cite[Lemma~B.3]{Len23b}), and due to the condition on the support of $\rho_{t_\ell}$.

In particular, $\psi(x),\psi(y)$ are seen to lie in the interior of the domain. The result then follows immediately noting that $\tau_j$ is appropriately Lipschitz and $\rho_{t_\ell}$ is an appropriately Lipschitz function on $\mb{R}$. The claim that $g\Gamma$ is contained in the support of at most $2^{O_s(d)}$ terms follows trivially by construction. 
\end{proof}

Given this we are now in position to show the existence of nilcharacters on $G/\Gamma$.
\begin{lemma}\label{lem:nil-exist}
Fix $\eps\in (0,1/2)$ and a nilmanifold $G/\Gamma$ of degree $s$, dimension $d$, and complexity $M$. Fix $\eta$ a vertical $G_s$-frequency with height bounded by $M$. There exists a nilcharacter $F$ with frequency $\eta$ such that the output dimension is bounded by $2^{O_s(d^{O_s(1)})}$ and each coordinate is $O_s(M)^{O_s(d^{O_s(1)})}$-Lipschitz.
\end{lemma}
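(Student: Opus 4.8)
The plan is to construct $F$ explicitly using the partition of unity from Lemma~\ref{lem:nilmanifold-partition-of-unity}, modulated by the vertical frequency $\eta$. First I would invoke Lemma~\ref{lem:nilmanifold-partition-of-unity} with parameter $\eps$ to obtain an index set $I$ with $|I|\le(M/\eps)^{O_s(d^{O_s(1)})}$ and smooth nonnegative functions $(\tau_j)_{j\in I}$ on $G/\Gamma$ with $\sum_{j\in I}\tau_j(g\Gamma)^2=1$ pointwise, each $\tau_j$ being $(M/\eps)^{O_s(d^{O_s(1)})}$-Lipschitz and supported near a point $\beta^{(j)}$ in Mal'cev coordinates, with every $g\Gamma$ in the support of at most $2^{O_s(d)}$ of the $\tau_j$. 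The point of the support condition is that on $\on{supp}(\tau_j)$ we may choose a continuous section $g\Gamma\mapsto\sigma_j(g\Gamma)\in G$ landing in a fixed small cube (in Mal'cev coordinates of the second kind), using the identification of the fundamental domain as in the proof of \cite[Lemma~B.6]{Len23b}.

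The construction is then $F(g\Gamma):=\big(\tau_j(g\Gamma)\,e(\eta(\pi_s(\sigma_j(g\Gamma))))\big)_{j\in I}$, where $\pi_s$ extracts the $\log G_s$-component of the Mal'cev coordinates (equivalently, $\sigma_j(g\Gamma)$ is the chosen representative and $\eta$ is applied to its $G_s$-part, viewing $\eta$ as a linear functional on $\log G_s$ via the adapted basis). I would check well-definedness: if $g\Gamma=g'\Gamma$ then the representatives $\sigma_j(g\Gamma)$ and $\sigma_j(g'\Gamma)$ agree since the section is literally the choice of representative in the fundamental domain, so there is nothing to check there; the subtlety is rather the second nilcharacter property. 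For $g_s\in G_s$, multiplying $g$ on the left by $g_s$ shifts the chosen representative by $g_s$ (as $G_s$ is central and the fundamental domain factors through the $G_s$-torus coordinates), so $\sigma_j(g_sg\Gamma)=g_s\sigma_j(g\Gamma)\cdot\gamma$ for a lattice element $\gamma\in\Gamma\cap G_s$, and $\eta(g_s\sigma_j(g\Gamma)\gamma)=\eta(g_s)+\eta(\sigma_j(g\Gamma))+\eta(\gamma)\equiv\eta(g_s)+\eta(\sigma_j(g\Gamma))\pmod 1$ using $\eta(\Gamma\cap G_s)\subseteq\mb{Z}$; hence $F(g_sx)=e(\eta(g_s))F(x)$. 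The norm condition $\snorm{F(g\Gamma)}_2^2=\sum_j\tau_j(g\Gamma)^2=1$ is immediate. The output dimension is $|I|\le(M/\eps)^{O_s(d^{O_s(1)})}$; taking $\eps$ a fixed constant (say $1/2$, or more precisely absorbing it) gives $|I|\le M^{O_s(d^{O_s(1)})}$, which is of the form $2^{O_s(d^{O_s(1)})}$ after noting $M\le 2^{O(\log M)}$ — however, to get the stated bound $2^{O_s(d^{O_s(1)})}$ independent of $M$ one should instead choose $\eps$ as a fixed absolute constant and accept that the partition has $M^{O_s(d^{O_s(1)})}$ pieces; if the genuinely $M$-free bound is needed one covers the torus coordinates with $O(1)$-many pieces and only the non-central coordinates with the Lipschitz bump functions, so the count becomes $2^{O_s(d)}\cdot(\text{bounded})$ — I would follow the former reading as is standard.

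The main obstacle is the Lipschitz estimate on each coordinate $F_j(g\Gamma)=\tau_j(g\Gamma)e(\eta(\pi_s\sigma_j(g\Gamma)))$. The function $\tau_j$ is already appropriately Lipschitz; the phase $e(\eta(\pi_s\sigma_j(\cdot)))$ is genuinely discontinuous across the boundary of the fundamental domain, but it is multiplied by $\tau_j$ which is supported in the interior of that domain (by the third bullet of Lemma~\ref{lem:nilmanifold-partition-of-unity}, the support lies in a small cube around $\beta^{(j)}$, away from the gluing faces). On the interior, $\sigma_j$ is smooth with derivative bounded in terms of the complexity $M$ and the metric comparison constants of \cite[Lemma~B.3]{Len23b}, and $\eta$ has height $\le M$, so $e(\eta(\pi_s\sigma_j(\cdot)))$ is $M^{O_s(d^{O_s(1)})}$-Lipschitz there; the product rule and $\snorm{\tau_j}_\infty\le 1$ then give that $F_j$ is $O_s(M)^{O_s(d^{O_s(1)})}$-Lipschitz globally, since outside $\on{supp}(\tau_j)$ the product vanishes and any pair of points one inside and one outside is handled by the Lipschitz control of $\tau_j$ alone. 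The only care needed is to make $\eps$ small enough (but still $M^{-O_s(d^{O_s(1)})}$, hence harmlessly affecting $|I|$) that the support of $\tau_j$ stays a definite distance from the boundary faces, exactly as in the Lipschitz verification inside the proof of Lemma~\ref{lem:nilmanifold-partition-of-unity}; I would cite that argument rather than reproduce it.
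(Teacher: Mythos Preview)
Your construction has a genuine gap: the coordinates $F_j(g\Gamma)=\tau_j(g\Gamma)\,e(\eta(\pi_s\sigma_j(g\Gamma)))$ do not carry the vertical frequency $\eta$. The phase factor transforms correctly under left multiplication by $g_s\in G_s$, but $\tau_j$ does not. Since you invoked Lemma~\ref{lem:nilmanifold-partition-of-unity} on $G/\Gamma$ itself, each $\tau_j$ is supported on a small cube in \emph{all} Mal'cev coordinates, including the $G_s$ coordinates; translating by a generic $g_s\in G_s$ moves $g\Gamma$ out of $\on{supp}(\tau_j)$, so $\tau_j(g_s g\Gamma)\neq\tau_j(g\Gamma)$ and hence $F_j(g_s g\Gamma)\neq e(\eta(g_s))F_j(g\Gamma)$. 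Already in the abelian toy case $G=\mb{R}$, $\Gamma=\mb{Z}$, $G_s=G$, your $\tau_j$ are bump functions on $\mb{T}$, which are obviously not invariant under rotation. For the same reason, your Lipschitz argument breaks down: the support of $\tau_j$ in the $G_s$ direction sits near a single point of the $G_s$-torus, so the phase cannot be globalised by cutting off with $\tau_j$.

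The fix, which is what the paper does, is to apply Lemma~\ref{lem:nilmanifold-partition-of-unity} on the \emph{quotient} $\wt{G}/\wt{\Gamma}$ with $\wt{G}=G/G_s$ (and $\eps=1/4$, say), obtaining $\wt{\tau_j}$, and then lift to $G/\Gamma$ via $\tau_j(g\Gamma):=\wt{\tau_j}((g\bmod G_s)\wt{\Gamma})$. These lifted $\tau_j$ are $G_s$-invariant by construction, and one then multiplies by the phase $e(\eta\cdot\psi_\beta(g\Gamma))$ where $\psi_\beta$ identifies $G/\Gamma$ locally with $\prod_{i\le\dim(\wt G)}(\beta_i-\tfrac12,\beta_i+\tfrac12]\times\mb{T}^{\dim G_s}$. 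The support of $\tau_j$ is now away from the boundary only in the $\wt G$-coordinates while wrapping around the full $G_s$-torus, which is harmless because $z\mapsto e(\eta\cdot z)$ is globally Lipschitz on that torus; the Lipschitz verification then proceeds essentially as you sketched. A minor separate point: you misquoted the bound on $|I|$ from Lemma~\ref{lem:nilmanifold-partition-of-unity} as $(M/\eps)^{O_s(d^{O_s(1)})}$; it is $(1/\eps)^{O_s(d^{O_s(1)})}$, independent of $M$, so with $\eps$ a fixed constant one gets $|I|\le 2^{O_s(d^{O_s(1)})}$ directly and your digression about eliminating $M$ is unnecessary.
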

\begin{proof}
Let $\wt{G} = G/G_s$ and $\wt{\Gamma} = \Gamma/(\Gamma\cap G_s)$. Apply Lemma~\ref{lem:nilmanifold-partition-of-unity} on $\wt{G}/\wt{\Gamma}$ with $\eps=1/4$ to obtain $\wt{\tau_j}$ for $j\in I$. For $\eta = 0$, we may take the coordinates of $F$ to be 
\[\tau_j(g\Gamma) = \wt{\tau_j}((g\imod G_d) \wt{\Gamma}).\]
In general, for appropriate $\beta$ depending on $j$, we have that $g\Gamma$ is naturally identified with a unique point inside $\prod_{i=1}^{\dim(\wt{G})}(\beta_i-1/2,\beta_i + 1/2] \times\mb{T}^{\dim(G_s)}$ as in the proof of Lemma~\ref{lem:nilmanifold-partition-of-unity} and we let $\psi_{\beta}(g\Gamma)$ denote this map. The key point is to write
\[\tau_j(g\Gamma) = \wt{\tau_j}((g\imod G_s) \wt{\Gamma}) \cdot\exp(\eta\cdot\psi_{\beta}(g\Gamma))\]
and note that $\sum_{j\in I}|\tau_j(g\Gamma)|^2 = 1$ as before. Here we have identified $\eta$ with an integer vector using the last $\dim(G_s)$ elements of the Mal'cev basis and extending by $0$. Note that this is trivially a function on $G/\Gamma$ and by construction it has the $G_s$-vertical frequency $\eta$. The only technical point is verifying that this function is indeed Lipschitz, which we check for each coordinate $\tau_j$.

Consider $x\Gamma$ and $y\Gamma$. If $\tau_j(x\Gamma)=\tau_j(y\Gamma)=0$ the Lipschitz condition is obviously satisfied. Thus at least one value is nonzero, and without loss of generality we may assume $\tau_j(x\Gamma)\neq 0$. Furthermore, noting that $\tau_j$ is $1$-bounded, we may assume that $d_{G/\Gamma}(x\Gamma,y\Gamma)\le M^{-O_s(d^{O_s(1)})}$. As $\tau_j(x\Gamma)\neq 0$, possibly shifting $x$ on the right by an element in the lattice allows us to assume 
\[\psi(x)\in\prod_{i=1}^{\dim(\wt{G})}(\beta_i-1/4,\beta_i + 1/4] \times (0,1]^{\dim(G_s)}.\]
Via an argument analogous to that in the proof of Lemma~\ref{lem:nilmanifold-partition-of-unity}, there exists $y'$ such that $y'\Gamma = y\Gamma$, 
\[\psi(y')\in\prod_{i=1}^{\dim(\wt{G})}(\beta_i-1/3,\beta_i + 1/3] \times (-1/2,3/2]^{\dim(G_s)},\]
and $\snorm{\psi(x)-\psi(y')}_{\infty}\le M^{O_s(d^{O_s(1)})} d_{G/\Gamma}(x\Gamma,y\Gamma)$. Since $\vec{z}\mapsto\exp(\eta\cdot\vec{z})$ is an appropriately Lipschitz function on the torus if $\eta\in\mb{Z}^{\dim(G_s)}$, the desired result follows immediately. 
\end{proof}

We will also require the following converse of the $U^{s+1}$-inverse theorem; this is verbatim in \cite[Appendix~G]{GTZ11} modulo various complexity details being omitted.
\begin{lemma}\label{lem:converse}
Fix $\eps\in(0,1/2)$ and let $G/\Gamma$ be a degree $s$ nilmanifold of dimension $d$ and complexity $M$, and let $g(n)$ be a polynomial sequence with respect to this filtration. Furthermore let $F\colon G/\Gamma\to\mathbf{C}$ satisfy $\snorm{F}_{\mr{Lip}}\le M$. If $f\colon[N]\to\mb{C}$ is a $1$-bounded function such that
\[\big|\mb{E}_{n\in[N]}f(n) \ol{F(g(n)\Gamma)}\big|\ge\eps,\]
then 
\[\snorm{f}_{U^{s+1}[N]}\ge (\eps/M)^{O_s(d^{O_s(1)})}.\]
\end{lemma}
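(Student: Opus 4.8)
The statement to prove is Lemma~\ref{lem:converse}, the converse of the $U^{s+1}$-inverse theorem: if a $1$-bounded $f$ correlates with a degree-$s$ nilsequence $F(g(n)\Gamma)$, then $\snorm{f}_{U^{s+1}[N]}$ is bounded below by a quantity of the form $(\eps/M)^{O_s(d^{O_s(1)})}$.

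\medskip

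\noindent\textbf{Plan of proof.} The approach is the standard one (as in \cite[Appendix~G]{GTZ11}): one shows that a single degree-$s$ nilsequence has $U^{s+1}$-norm bounded below, and then transfers this to $f$ via the hypothesis. First I would reduce to the case where $F$ has a nonzero vertical $G_{(s)}$-frequency (using the lower central series filtration, which by Remark~\ref{rem:height} and Lemma~\ref{lem:commute} is $O_s(M^{O_s(1)})$-rational): by Fourier expansion along the vertical torus (invoking \cite[Lemma~A.6]{Len23b}), write $F = \sum_{\xi} F_\xi + \tau$ with $\snorm{\tau}_\infty$ small, each $F_\xi$ having a vertical frequency $\xi$ of height $(M/\eps)^{O_s(d^{O_s(1)})}$ and Lipschitz constant of the same size, and Pigeonhole to pass to a single $\xi$; the contribution of the $\xi=0$ part will be handled by noting it descends to a lower-step nilmanifold and inducting on step, while $\tau$ is absorbed into the error. (Actually the cleanest route is just to Pigeonhole onto a single $\xi$, including possibly $\xi=0$, and handle $\xi=0$ by descending to $G/G_{(s)}$ which is lower step, inducting on the step of $G$.)

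\medskip

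\noindent The core computation is the following: for a degree-$s$ nilsequence $\Psi(n) = F(g(n)\Gamma)$ with $F$ having vertical $G_{(s)}$-frequency $\xi$, one has $\snorm{\Psi}_{U^{s+1}[N]} \ge (M/\eps)^{-O_s(d^{O_s(1)})}$. Expanding the $U^{s+1}$-norm as an average over $x, h_1,\dots,h_{s+1}$ of $\Delta_{h_1}\cdots\Delta_{h_{s+1}}\Psi(x)$, the key structural fact is that the $(s+1)$-fold multiplicative derivative $\partial_{h_1}\cdots\partial_{h_{s+1}} g$, viewed appropriately, lands in $G_{(s+1)} = \mr{Id}_G$ (for the degree filtration, top-level derivatives vanish), so the derivative expression factors through the vertical torus and, because $\xi$ is a homomorphism, the phase cancels in a controlled way leaving a genuine positive lower bound. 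More concretely, I would follow the Green--Tao--Ziegler approach: use the fact that $\Delta_{h_1}\cdots\Delta_{h_s}\Psi(x)$ is itself (after absorbing lower-order terms) a nilsequence on a product nilmanifold $G^{\square^{s}}$ of controlled complexity, with a top-level frequency, and that the final derivative $\Delta_{h_{s+1}}$ combined with equidistribution considerations forces the average to be bounded below — or more simply, invoke the elementary fact that $\mb{E}_{n}|\Psi(n)|^{2^{s+1}}$-type box norms of nilsequences are bounded below by $|\mb{E}_n \Psi(n)|$-type quantities only when $F$ has nontrivial frequency structure, and here the right statement is just that $\Psi$ has $U^{s+1}$-norm comparable to its own $L^2$ structure. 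The honest version: one shows $\mb{E}_{x,\vec h}\Delta_{h_1}\cdots\Delta_{h_{s+1}}\Psi(x)$ equals (up to controlled error from the boundary effects of extending to $\mb{Z}/\wt N\mb{Z}$) an average that is $\ge c$ for $c = (M/\eps)^{-O_s(d^{O_s(1)})}$, because after $s$ derivatives we are left with a function of the form $c(\vec h) F'(g'(\vec h, x)\Gamma')$ on a degree-$1$ (in $x$) nilmanifold whose $n$-average in the final derivative is a nonnegative combination that does not vanish.

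\medskip

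\noindent\textbf{Transfer to $f$ and the main obstacle.} Once $\snorm{F(g(\cdot)\Gamma)}_{U^{s+1}[N]} \ge (M/\eps)^{O_s(d^{O_s(1)})}$-type bound is established, the passage to $f$ is routine: by the Gowers--Cauchy--Schwarz inequality (the inner-product form, e.g.\ \cite[Lemma~3.8]{Gow01} or the box-norm inequality \eqref{eq:box-norm-inequality}) applied to the $2^{s+1}$ functions obtained by replacing each vertex of the Gowers cube alternately by $f$ or $\ol{F(g(\cdot)\Gamma)}$, the correlation $|\mb{E}_n f(n)\ol{F(g(n)\Gamma)}| \ge \eps$ together with $\snorm{F(g(\cdot)\Gamma)}_{U^{s+1}}$ being bounded below forces $\snorm{f}_{U^{s+1}[N]} \ge \eps \cdot \snorm{F(g(\cdot)\Gamma)}_{U^{s+1}[N]}^{-(2^{s+1}-1)}\cdot(\text{correlation})$ — more precisely one uses that $\snorm{f}_{U^{s+1}} \cdot \snorm{g_2}_{U^{s+1}}\cdots \ge |\langle f, g_2, \dots\rangle|$ and specializes all but one slot. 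I expect the main obstacle to be the careful bookkeeping in the first structural step — verifying that the $(s+1)$-fold derivative of the nilsequence genuinely produces a non-negligible average with explicit $(M/\eps)^{O_s(d^{O_s(1)})}$ dependence, handling the normalization by $\snorm{\mbm{1}_{[N]}}_{U^{s+1}(\mb{Z}/\wt N\mb{Z})}$ in Definition~\ref{def:gowers-norm}, and controlling the boundary error from truncating to $[N]$ inside $\mb{Z}/\wt N\mb{Z}$. These are exactly the "complexity details omitted" in \cite[Appendix~G]{GTZ11}, so the work is in making them quantitative rather than in any new idea; the induction on the step of $G$ to dispose of the $\xi = 0$ case is the one genuinely recursive ingredient and should be organized carefully at the outset.
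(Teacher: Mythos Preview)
Your transfer step is wrong, and this is a genuine gap. You claim that knowing $\snorm{\Psi}_{U^{s+1}}$ is bounded below together with $|\mb{E}_n f(n)\ol{\Psi(n)}|\ge\eps$ forces $\snorm{f}_{U^{s+1}}$ large via Gowers--Cauchy--Schwarz. But Gowers--Cauchy--Schwarz bounds the \emph{Gowers inner product} $\langle f_\omega\rangle_{\omega\in\{0,1\}^{s+1}}$, not the bilinear correlation $\mb{E}_n f\ol{\Psi}$. If you put $f$ in one slot and $\Psi$ in the other $2^{s+1}-1$ slots, the inner product becomes $\mb{E}_n f(n)\ol{D\Psi(n)}$ where $D\Psi$ is the dual function of $\Psi$; for a general nilsequence (even one with vertical frequency) $D\Psi$ is not comparable to $\Psi$, so you cannot recover the hypothesis. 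Concretely: take $f$ random $\pm 1$ and $\Psi=\ol{f}$; then $|\mb{E} f\ol{\Psi}|=1$ and $\snorm{\Psi}_{U^{s+1}}$ is whatever you like, yet $\snorm{f}_{U^{s+1}}$ is tiny. Relatedly, your ``core computation'' that $\snorm{\Psi}_{U^{s+1}}$ is bounded below is also false in general: a degree-$s$ nilsequence $F(g(n)\Gamma)$ can have arbitrarily small $U^{s+1}$-norm (think of $F$ on a torus with many Fourier modes).

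The paper's argument avoids this entirely by inducting on the degree $s$ rather than trying to transfer. After passing via \cite[Lemma~A.6]{Len23b} to $F_\xi$ with a $G_s$-vertical frequency (the degree filtration, not the lower central series), one applies a \emph{single} Cauchy--Schwarz to the hypothesis to obtain
\[\mb{E}_{h}\big|\mb{E}_{n}\Delta_h f(n)\cdot F_\xi(g(n+h)\Gamma)\ol{F_\xi(g(n)\Gamma)}\big|\ge(\eps/M)^{O_s(d^{O_s(1)})}.\]
The key structural point is then that, after writing $(g(n),g(n+h))$ as a polynomial sequence on the group $G^\Box=\{(g,g'):g^{-1}g'\in G_2\}$ with filtration $(G^\Box)_i=\{(g,g'):g,g'\in G_i,\,g^{-1}g'\in G_{i+1}\}$, the function $(x,y)\mapsto F_\xi(x\Gamma)\ol{F_\xi(y\Gamma)}$ is invariant under $(G^\Box)_s$ and so descends to a degree-$(s-1)$ nilmanifold of controlled complexity. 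Now $\Delta_h f$ correlates with a degree-$(s-1)$ nilsequence for many $h$, the inductive hypothesis gives $\mb{E}_h\snorm{\Delta_h f}_{U^s[N]}\ge(\eps/M)^{O_s(d^{O_s(1)})}$, and the inductive definition of the Gowers norm finishes. There is no separate ``nilsequence has large Gowers norm'' step and no transfer inequality.
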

\begin{proof}
In the degenerate case when $s = 0$, we take a degree $s$ nilsequence of complexity $M$ to be a constant function $\psi$ bounded by $M$. This implies that 
\[|\mb{E}_{n\in[N]}f(n)|\ge\eps/M\]
and by Cauchy--Schwarz we have 
\[\mb{E}_{n,n'\in[N]}f(n)\ol{f(n')}\ge (\eps/M)^2.\]
By unwinding definitions this implies the case $s = 0$.

For larger $s$, by applying \cite[Lemma~A.6]{Len23b} we may assume that 
\[\big|\mb{E}_{n\in[N]}f(n) \ol{F_{\xi}(g(n)\Gamma)}\big|\ge (\eps/M)^{O_s(d^{O_s(1)})}\]
where $F_{\xi}$ is a $(M/\eps)^{O_s(d^{O_s(1)})}$-Lipschitz function with $G_s$-vertical frequency $\xi$ bounded in height by $(M/\eps)^{O_s(d^{O_s(1)})}$, after Pigeonhole. Cauchy--Schwarz implies that
\[\mb{E}_{n,n'\in[N]}f(n)\ol{f(n')} F_{\xi}(g(n')\Gamma)\ol{F_{\xi}(g(n)\Gamma)}\ge (\eps/M)^{O_s(d^{O_s(1)})}.\]
Note that we may rewrite this as 
\[\mb{E}_{n\in[N],h\in[\pm N]}f(n)\ol{f(n+h)} F_{\xi}(g(n + h)\Gamma)\ol{F_{\xi}(g(n)\Gamma)}\ge (\eps/M)^{O_s(d^{O_s(1)})},\]
where we extend $f$ by $0$ in the usual manner. We define 
\[G^{\Box} = \{(g,g')\colon g,g'\in G,g^{-1}g'\in G_2\}\]
and note that this has a filtration $(G^{\Box})_i = \{(g,g')\colon g,g'\in G_i,g^{-1}g'\in G_{i+1}\}$ by \cite[Lemma~A.3]{Len23b} (with $G^{\Box} = (G^{\Box})_{1}$). Let $\Gamma^{\Box} = (\Gamma\times\Gamma)\cap G^{\Box}$ and note that 
\[\wt{F}_{\xi}((x,y) (\Gamma\times\Gamma)) := F_{\xi}(x \Gamma) \ol{F_{\xi}(y\Gamma)}\]
is invariant under $G^{\Box}_{s}$. Note that $\wt{F}_{\xi}$ is $(M/\eps)^{O_s(d^{O_s(1)})}$-Lipschitz on $G\times G$ and on $G^{\Box}$, and $G^{\Box}/\Gamma^{\Box}$ is a nilmanifold of appropriate complexity by \cite[Lemma~A.3]{Len23b}.

Let 
\[(g(0),g(h)) = \{(g(0),g(h))\} \cdot [(g(0),g(h))]\]
with $d_{G\times G}(\{(g(0),g(h))\})\le M^{O_s(d^{O_s(1)})}$ and $[(g(0),g(h))]\in\Gamma\times\Gamma$. Define
\[g_h'(n) = \{(g(0),g(h))\}^{-1}(g(n),g(n+h))[(g(0),g(h))]^{-1};\]
this is easily seen to be a polynomial sequence with respect to $G^{\Box}$. Thus
\[\mb{E}_{n\in[N],h\in[\pm N]}f(n)\ol{f(n+h)\wt{F}_{\xi}(\{(g(0),g(h))\}g_h'(n) (\Gamma\times\Gamma))}\ge (\eps/M)^{O_s(d^{O_s(1)})}.\]
Define $\wt{F}_{\xi,h}(x,y) := \wt{F}_{\xi}(\{(g(0),g(h))\}(x,y) (\Gamma\times\Gamma))$ and note that it is $(M/\eps)^{O_s(d^{O_s(1)})}$-Lipschitz on $G\times G$ and on $G^{\Box}$ by \cite[Lemma~B.4]{Len23b}. Applying the triangle inequality and restricting to $G^{\Box}$ we have 
\[\mb{E}_{h\in[\pm N]}\Big|\mb{E}_{n\in[N]} \Delta_h f(n) \cdot\ol{\wt{F}_{\chi,h}(g_h'(n) \Gamma^{\Box}})\Big|\ge (\eps/M)^{O_s(d^{O_s(1)})}.\]
Since $\wt{F}_\xi$ is invariant under $(G^{\Box})_s$, passing to $G^{\Box}/(G^{\Box})_{s}$ gives a nilmanifold of degree $(s-1)$ and complexity $M^{O_s(d^{O_s(1)})}$. Thus we may apply by induction, and deduce that 
\[\mb{E}_{h\in[\pm N]}\snorm{\Delta_h f}_{U^s[N]}\ge (\eps/M)^{O_s(d^{O_s(1)})}.\]
Since
\[\mb{E}_{h\in[\pm N]}\snorm{\Delta_h f}_{U^s[N]}^{2^s}\lesssim_s\snorm{f}_{U^{s+1}[N]}^{2^{s+1}},\]
the desired result follows.
\end{proof}

We now check the deferred Lemma~\ref{lem:multi-complex}.
\begin{proof}[Proof of Lemma~\ref{lem:multi-complex}]
We first construct a weak basis for $G_{\mr{Quot}}\ltimes G_{\mr{Lin}}$. Note that each element in $(g,g')\in G_{\mr{Quot}}\ltimes G_{\mr{Lin}}$ may be written as
\[(g,g') = (g,\mr{id}_{G_{\mr{Lin}}})\cdot (\mr{id}_{G_{\mr{Quot}}},g').\]

Consider $\wt{e}_{i,j}$ and consider $(r-1)$-fold commutators of $\wt{e}_{i_1,j_1},\ldots,\wt{e}_{i_r,j_r}$ with $i_1 + \cdots + i_r\le s-2$ or $i_1 + \cdots + i_r=s-1$, $r\le r^\ast$ and at most one generator has $i_{\ell}>D_{i_{\ell}}^\ast$. We define the type of the commutator to be given by the multiset $\{\wt{e}_{i_1,j_1},\ldots,\wt{e}_{i_r,j_r}\}$ and we say that said type is linear if $i_{\ell}>D_{i_{\ell}}^\ast$ for exactly one index $\ell$. We define the degree of a commutator to be $i_1+\cdots+i_r$. As discussed in Lemmas~\ref{lem:universal-complexity} and \ref{lem:universal-complexity-2}, commutators of all types span $\log(G_{\mr{Quot}})$ and commutators of linear type span $\log(G_{\mr{Lin}})$, and all relations between these elements are spanned by relations between commutators of the same type of height $O_s(1)$.

Given this, for each collection of commutators of a given type choose a subset which ``spans the type'' (similar to in the proof of Lemma~\ref{lem:universal-complexity}). Let $\mc{X}_{1}$ denote the set of selected commutators and $\mc{X}_2$ denote the selected commutators which are of linear type. Our weak basis for $G_{\mr{Quot}}\ltimes G_{\mr{Lin}}$ will be 
\[\mc{X} = \{(X,0)\colon X\in\mc{X}_1\} \cup \{(0,X)\colon X\in\mc{X}_2\};\]
this is seen to be a basis for the Lie algebra of $G_{\mr{Quot}}\ltimes G_{\mr{Lin}}$. That it spans is trivial, and if there were a relation note that there could be no elements of the form $(X,0)$ in the relation since projecting onto the first coordinate we recover multiplication in $G_{\mr{Quot}}$. Given that there are no elements of the form $(X,0)$, within this relation multiplication then acts exactly as in $G_{\mr{Lin}}$ and the result claimed independence follows.

We give $G_{\mr{Quot}}\ltimes G_{\mr{Lin}}$ a multidegree filtration by taking the multidegree filtration of $G_{\mr{Multi}}$ and intersecting with the subgroup of elements of the form $(0,(g,g_1))$. We see that all the subgroups of the filtration are in fact spanned subsets by subsets of $\mc{X}$. This is simply by taking the generators in $\mc{X}$ of the appropriate degree-rank; for instance 
\[(G_{\mr{Quot}}\ltimes G_{\mr{Lin}})_{(0,d)} = \{(g,g_1)\colon g\in (G_{\mr{Quot}})_{(d,0)}, g_1\in(G_{\mr{Quot}})_{(d,0)}\cap G_{\mr{Lin}}\}\]
and we take the subsets of $\{(X,0)\colon X\in\mc{X}_1\}$ and $\{(0,X)\colon X\in\mc{X}_2\}$ where $X$ has degree at least $d$. This is similarly true for $\bigvee_{|\vec{i}| = k}(G_{\mr{Quot}}\ltimes G_{\mr{Lin}})_{(i_1,i_2)}$ which will ultimately form the underlying degree filtration for $G_{\mr{Quot}}\ltimes G_{\mr{Lin}}$. Furthermore ordering the basis according to whether they lie in the degree ordering associated to $G_{\mr{Quot}}\ltimes G_{\mr{Lin}}$ proves that the basis has the degree $O_s(1)$ nesting property. Thus it suffices to check the complexity of various commutators.

Note the identity
\begin{align*}
[V,W] &= \frac{d}{ds}\frac{d}{dt} \exp(sV)\exp(tW)\exp(-sV)\exp(-tW)\bigg|_{s,t=0}
\end{align*}
which holds for any Lie group and the associated Lie bracket. It is therefore immediate that
\[[(X,0),(X',0)] = ([X,X'],0)\text{ and }[(0,X),(0,X')] = (0,[X,X']),\]
and we have
\begin{align*}
&[(X,0),(0,X')] \\
&\qquad= \frac{d}{ds}\frac{d}{dt} (\exp(sX), \mr{id}_{G_{\mr{Lin}}}) \cdot (\mr{id}_{G_{\mr{Quot}}}, \exp(tX')) \cdot (\exp(-sX), \mr{id}_{G_{\mr{Lin}}}) \cdot (\mr{id}_{G_{\mr{Quot}}}, \exp(-tX'))\bigg|_{s,t=0} \\
&\qquad= \frac{d}{ds}\frac{d}{dt} (\mr{id}_{G_{\mr{Quot}}}, \exp(sX)\exp(tX')\exp(-sX)\exp(-tX'))\bigg|_{s,t=0} \\
&\qquad=(0,[X,X']).
\end{align*}
This immediately implies that the structure constants associated to the weak basis $\mc{X}$ are of height $O_s(1)$.

When including the semi-direct action, we will use the weak basis given by taking elements $\log((\vec{e}_{ij},(\mr{id}_{G_{\mr{Quot}}},\mr{id}_{G_{\mr{Lin}}})))$ where $\vec{e}_{i,j}$ denotes the elementary basis vector in the corresponding direction in $R$, placed at the start of $\mc{X}$. This is easily seen to preserve the nesting property.

To compute the associated structure constants, first note that 
\[[(\vec{e}_{ij},(\mr{id}_{G_{\mr{Quot}}},\mr{id}_{G_{\mr{Lin}}})),(0,(g,\mr{id}_{G_{\mr{Lin}}}))] = \mr{id}_{G_{\mr{Multi}}}\]
and thus the all Lie bracket structure constants of the corresponding form vanish. Furthermore note that
\begin{align*}
&[\log((\vec{e}_{ij},(\mr{id}_{G_{\mr{Quot}}},\mr{id}_{G_{\mr{Lin}}}))),(0,(0,X'))] = \frac{d}{ds}\frac{d}{dt} (0,(\exp(tX')^{s \cdot\vec{e}_{i,j}},\mr{id}_{G_{\mr{Lin}}}))\bigg|_{s,t=0}.
\end{align*}
We have that if the type of $X'$ does not contain $\wt{e}_{i,j}$ then $\exp(tX')^{s \cdot\vec{e}_{i,j}} = \mr{id}_{G_{\mr{Quot}}}$ and otherwise $\exp(tX')^{s \cdot\vec{e}_{i,j}} = \exp(stX')$ (recall the definition of exponentiation by elements of $R$ given in Section~\ref{sub:construct}). In either case the structure constant is appropriately rational. Therefore we may construct a Mal'cev basis adapted to $G_{\mr{Multi}}$ with the appropriate complexity by applying \cite[Lemma~B.11]{Len23b} to $\mc{X}$ to construct a Mal'cev basis for $G_{\mr{Quot}}\ltimes G_{\mr{Lin}}$ and adding the semi-direct Mal'cev basis elements described above to the front of the list. We define this basis to be $\mc{X}_{\mr{Multi}}$ and define initial segment corresponding to the semi-direct Mal'cev basis elements to be $\mc{X}_{\mr{Multi}, R}$ and the remaining elements to be  $\mc{X}_{\mr{Multi},G_{\mr{Quot}}\ltimes G_{\mr{Lin}}}$.

We finally check the that $F_{\mr{Multi}}$ is an appropriately Lipschitz function. Let $\delta$ be defined as in Section~\ref{sub:construct}. Fix a pair $x,y\in G_{\mr{Multi}}$. Note that if 
\[d_{G_{\mr{Multi}}/\Gamma_{\mr{Multi}}}(x\Gamma_{\mr{Multi}},y\Gamma_{\mr{Multi}})\ge\delta^{O_s(d^{O_s(1)})}\]
we have that 
\[\frac{F_{\mr{Multi}}(x\Gamma_{\mr{Multi}})-F_{\mr{Multi}}(y\Gamma_{\mr{Multi}})}{d_{G_{\mr{Multi}}/\Gamma_{\mr{Multi}}}(x\Gamma_{\mr{Multi}},y\Gamma_{\mr{Multi}})}\le\delta^{-O_s(d^{O_s(1)})} \cdot 2\snorm{F_{\mr{Multi}}}_{\infty}\]
which is sufficiently bounded. Therefore to check the Lipschitz constant it suffices to consider $x,y$ such that $d_{G_{\mr{Multi}}/\Gamma_{\mr{Multi}}}(x\Gamma_{\mr{Multi}},y\Gamma_{\mr{Multi}})\le\delta^{O_s(d^{O_s(1)})}$ (where the implicit constants are chosen sufficiently large for the remainder of the argument). By multiplying by elements in the lattice, we may assume that $d_{G_{\mr{Multi}}/\Gamma_{\mr{Multi}}}(x\Gamma_{\mr{Multi}},y\Gamma_{\mr{Multi}}) = d_{G_{\mr{Multi}}}(x,y)$, that $F_{\mr{Multi}}(x\Gamma_{\mr{Multi}})\neq 0$, and 
\[\psi_{\mc{X}_{\mr{Multi}}}(x)\in[-1/2,1/2)^{\dim(G_{\mr{Multi}})}.\]
Note that to assume that $F_{\mr{Multi}}(x\Gamma_{\mr{Multi}})\neq 0$ we may need to swap $x$ and $y$ (if both are zero there is nothing to check with respect to the Lipschitz constant).

Since $F_{\mr{Multi}}(x\Gamma_{\mr{Multi}})\neq 0$ we in fact have that the first $\sum_{i=1}^{s-1}D_i^{\mr{Lin}}$ coordinates of $\psi_{\mc{X}_{\mr{Multi}}}(x)$ are in $[-1/2 + \delta, 1/2-\delta]$. This implies, due to the distance bound between $x$ and $y$ and by \cite[Lemma~B.3]{Len23b}, that the first $\sum_{i=1}^{s-1}D_i^{\mr{Lin}}$ coordinates of $\psi_{\mc{X}_{\mr{Multi}}}(y)$ are in $[-1/2 + \delta/2, 1/2-\delta/2]$. Therefore if $x = (t_1,(g_1,g_1'))$ and $y = (t_2,(g_2,g_2'))$ then 
\begin{align*}
F_{\mr{Multi}}(x\Gamma_{\mr{Multi}}) &= F^\ast(g_1'\Gamma_{\mr{Quot}})\cdot\prod_{\substack{1\le i\le s-1\\ D_i^{\ast}<j\le D_i^\ast + D_i^{\mr{Lin}}}} \phi((t_1)_{i,j}),\\
F_{\mr{Multi}}(x\Gamma_{\mr{Multi}}) &= F^\ast(g_2'\Gamma_{\mr{Quot}})\cdot\prod_{\substack{1\le i\le s-1\\ D_i^{\ast}<j\le D_i^\ast + D_i^{\mr{Lin}}}} \phi((t_2)_{i,j}).
\end{align*}
Note that
\begin{align*}
|F_{\mr{Multi}}(x\Gamma_{\mr{Multi}}) - F_{\mr{Multi}}(y\Gamma_{\mr{Multi}})|&\le\snorm{F^\ast}_{\infty} \cdot\bigg|\prod_{\substack{1\le i\le s-1\\ D_i^{\ast}<j\le D_i^\ast + D_i^{\mr{Lin}}}} \phi((t_1)_{i,j}) - \prod_{\substack{1\le i\le s-1\\ D_i^{\ast}<j\le D_i^\ast + D_i^{\mr{Lin}}}} \phi((t_2)_{i,j})\bigg| \\
& + |F^\ast(g_1'\Gamma_{\mr{Quot}}) - F^\ast(g_2'\Gamma_{\mr{Quot}})|,
\end{align*}
where we have used that $\phi$ is $1$-bounded. Next note that distance in $\psi_{\mc{X},\mr{exp}}$ controls the distance in $\psi_{\mc{X}}$ for bounded elements by \cite[Lemma~B.1]{Len23b} and distance in $\psi_{\mc{X}}$ controls distance in $d_{G_{\mr{Multi}}}$ by \cite[Lemma~B.3]{Len23b}. The first term is therefore sufficiently bounded as $\phi$ is $O(1/\delta)$-Lipschitz.

Finally note that 
\begin{align*}
(g_1,g_1') &= \prod_{(X_i,X_i')\in\mc{X}_{\mr{Multi},G_{\mr{Quot}}\ltimes G_{\mr{Lin}}}} \exp((X_i,X_i'))^{x_i},\\
(g_2,g_2') &= \prod_{(X_i,X_i')\in\mc{X}_{\mr{Multi},G_{\mr{Quot}}\ltimes G_{\mr{Lin}}}} \exp((X_i,X_i'))^{y_i},
\end{align*}
where $x_i$ and $y_i$ are the coordinates of $x$ and $y$ in $\psi_{\mc{X}_{\mr{Multi}}}$ in the coordinates corresponding to $\mc{X}_{\mr{Multi},G_{\mr{Quot}\ltimes G_{\mr{Lin}}}}$. This is using that
\[(t,(\mr{id}_{G_{\mr{Quot}}},\mr{id}_{G_{\mr{Lin}}}))\cdot(0,(g,g')) = (t,(g,g')).\]
Therefore we have that 
\begin{align*}
g_1 &= \prod_{(X_i,X_i')\in\mc{X}_{\mr{Multi},G_{\mr{Quot}}\ltimes G_{\mr{Lin}}}} \exp(X_i)^{x_i}\\
g_2 &= \prod_{(X_i,X_i')\in\mc{X}_{\mr{Multi},G_{\mr{Quot}}\ltimes G_{\mr{Lin}}}} \exp(X_i)^{y_i};
\end{align*}
and note that $\exp(X_i)$ are appropriately bounded elements in $G_{\mr{Quot}}$ since $\mc{X}_{\mr{Multi},G_{\mr{Quot}}\ltimes G_{\mr{Lin}}}$ are low height combinations of elements in $\mc{X}$. Via telescoping, and using that the metric $d_{G_{\mr{Quot}}}$ is right-invariant and essentially left-invariant under multiplication by bounded elements (e.g.~\cite[Lemma~B.4]{Len23b}), we have that
\begin{align*}
d_{G_{\mr{Quot}}}(g_1,g_2) &\le\delta^{-O_s(d^{O_s(1)})} \cdot\sum_{(X_i,X_i')\in\mc{X}_{\mr{Multi},G_{\mr{Quot}}}}d_{G_{\mr{Quot}}}(\exp(X_i)^{x_i-y_i},\mr{id}_{G_{\mr{Quot}}})\\
&\le\delta^{-O_s(d^{O_s(1)})} \cdot\sum_{(X_i,X_i')\in\mc{X}_{\mr{Multi},G_{\mr{Quot}}}}|x_i-y_i|\\
&\le\delta^{-O_s(d^{O_s(1)})}\cdot\snorm{\psi_{\mc{X}_{\mr{Multi}}}(x) - \psi_{\mc{X}_{\mr{Multi}}}(y)}\le\delta^{-O_s(d^{O_s(1)})}\cdot d_{G_{\mr{Multi}}}(x,y).
\end{align*}
This completes the proof upon noting that $F^\ast$ is appropriately Lipschitz on $G_{\mr{Quot}}$.
\end{proof}

\section{Nilcharacters}\label{app:nilcharacters}
This section is essentially a straightforward quantification of various statements regarding nilcharacters proven in \cite[Appendix~E]{GTZ12}.

We first require that two nilcharacters being equivalent is a transitive relationship; this is a quantified version of \cite[Lemma~E.7]{GTZ12}. Recall the notion of complexity $(M,d)$ that we carry over from Section~\ref{sec:sym}.
\begin{lemma}\label{lem:trans}
Consider three nilcharacters $\chi_1,\chi_2,\chi_3$ each of complexity $(M,d)$ and such that the pair $\chi_1$ and $\chi_2$ and the pair $\chi_2$ and $\chi_3$ are $(M,D,d)$-equivalent for multidegree $J$. Then $\chi_1$ and $\chi_3$ are $((MD)^{O_{|J|}(1)},(MD)^{O_{|J|}(1)},O(d))$-equivalent for multidegree $J$.
\end{lemma}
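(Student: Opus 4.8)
The plan is to unwind the definition of $(M,D,d)$-equivalence for both given pairs and then compose the resulting nilsequence representations using a tensor-product trick. Recall that $\chi_1$ and $\chi_2$ being $(M,D,d)$-equivalent for multidegree $J$ means every coordinate of $\chi_1 \otimes \overline{\chi_2}$ is a sum of at most $M$ multidegree-$J$ nilsequences, each with an $M$-Lipschitz underlying function on a nilmanifold of complexity at most $M$ and dimension at most $d$; similarly for $\chi_2$ and $\chi_3$. We want to control the coordinates of $\chi_1 \otimes \overline{\chi_3}$.

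First I would use the fact that since $\chi_2$ is a nilcharacter, $\overline{\chi_2} \otimes \chi_2$ has trace equal to the constant function $1$ (this is the pointwise normalization $\|\chi_2(x)\|_2 = 1$), and the trace is a sum of at most $D$ of its coordinates. Hence each coordinate of $\chi_1 \otimes \overline{\chi_3}$ equals a sum of at most $D$ terms of the form (coordinate of $\chi_1 \otimes \overline{\chi_2}$) $\cdot$ (coordinate of $\chi_2 \otimes \overline{\chi_3}$), by inserting $\overline{\chi_2} \otimes \chi_2$ and regrouping the tensor factors. Now I substitute the nilsequence representations of the two factors: a coordinate of $\chi_1 \otimes \overline{\chi_2}$ is a sum of at most $M$ multidegree-$J$ nilsequences, and likewise a coordinate of $\chi_2 \otimes \overline{\chi_3}$. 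Multiplying these out, each coordinate of $\chi_1 \otimes \overline{\chi_3}$ becomes a sum of at most $D \cdot M \cdot M = DM^2$ products of pairs of multidegree-$J$ nilsequences. Each such product is itself a multidegree-$J$ nilsequence on the direct product nilmanifold (one applies Fact~\ref{fct:product-rat} for the multidegree setting): if $\psi = F(g(\cdot)\Gamma)$ and $\psi' = F'(g'(\cdot)\Gamma')$ are multidegree-$J$ nilsequences of complexity at most $M$ and dimension at most $d$, then $\psi \cdot \psi'$ is realized on $(G\times G')/(\Gamma\times\Gamma')$ with the product multidegree filtration, with underlying function $\widetilde{F}((x,x')) = F(x\Gamma)F'(x'\Gamma')$ which is $3M^2$-Lipschitz, complexity $O(M)$, and dimension at most $2d$.

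Collecting the bookkeeping: the number of nilsequences in each representation is at most $DM^2 \le (MD)^{O(1)}$, the complexity of each underlying nilmanifold is at most $O(M) \le (MD)^{O(1)}$ (one should double-check that forming the product multidegree filtration and an adapted Mal'cev basis via Fact~\ref{fct:product-rat} only costs a bounded factor, which is where the $O_{|J|}(1)$ dependence on the number of coordinates $|J|$ of the multidegree enters, since the number of subgroups $G_{\vec v}$ in the filtration is $O_{|J|}(1)$), the Lipschitz constants are at most $3M^2 \le (MD)^{O(1)}$, and the dimension is at most $2d = O(d)$. These are exactly the bounds claimed in the statement, so the proof is complete.

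The main obstacle I anticipate is not conceptual but organizational: carefully verifying that the product nilmanifold $(G\times G')/(\Gamma\times\Gamma')$, equipped with the product multidegree filtration $(G\times G')_{\vec v} = G_{\vec v}\times G'_{\vec v}$, genuinely inherits complexity $O(M)$ in the sense of Definition~\ref{def:complex-2} — in particular that the third condition there (each $G_{\vec v}$ being $M$-rational with respect to the Mal'cev basis) is preserved under the product construction. This is precisely the content of the "analogous statements hold for multidegree filtrations" clause of Fact~\ref{fct:product-rat}, so it can be invoked as a black box, but one must make sure the number of distinct filtration groups stays bounded in terms of $|J|$ alone, which accounts for the $O_{|J|}(1)$ exponents in the conclusion rather than absolute constants.
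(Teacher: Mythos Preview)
Your proposal is correct and follows essentially the same approach as the paper: insert $\overline{\chi_2}\otimes\chi_2$ via its unit trace, regroup as $(\chi_1\otimes\overline{\chi_2})\otimes(\chi_2\otimes\overline{\chi_3})$, and apply the two given equivalences. The paper's proof is more terse and leaves the product-nilmanifold bookkeeping (your invocation of Fact~\ref{fct:product-rat}) implicit, but the argument is the same.
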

\begin{proof}
Notice that each coordinate of $\chi_1\otimes\ol{\chi_3}$ may be expressed as the sum of at most $D$ coordinates of the nilcharacter 
\[\chi_1\otimes (\ol{\chi_2} \otimes \chi_2)\otimes \ol{\chi_3};\]
this follows since the trace of $\ol{\chi_2} \otimes \chi_2$ is $1$. The result then follows by rewriting
\[\chi_1\otimes (\ol{\chi_2} \otimes \chi_2)\otimes \ol{\chi_3} = (\chi_1\otimes \ol{\chi_2}) \otimes (\chi_2\otimes \ol{\chi_3})\]
and applying the assumption.
\end{proof}

We will generally require the following specialization lemmas; these are rather straightforward consequences of the definitions modulo the need to handle slight filtration issues.
\begin{lemma}\label{lem:specialization}
We have the following:
\begin{itemize}
    \item Consider a nilsequence $\chi(h_1,\ldots,h_k)$ of multidegree $(s_1,\ldots,s_k)$ and complexity $(M,d)$. Given $h^\ast\in\mb{Z}$, the function $\chi(h^{\ast},h_2,\ldots,h_k)$, treating $h^\ast$ as fixed, is a multidegree $(s_2,\ldots,s_k)$ nilsequence of complexity $(M^{O_{|\vec{s}|}(d^{O_{|\vec{s}|}(1)})},d)$.
    \item Consider homomorphisms $L_i\colon\mb{Z}^{\ell}\to\mb{Z}$. If $\chi(h_1,\ldots,h_k)$ is a nilsequence of degree $s$ of complexity $(M,d)$ then $\chi(L_1(t_1,\ldots,t_{\ell}),\ldots,L_k(t_1,\ldots,t_{\ell}))$ is a degree $s$ nilsequence in variables $t_1,\ldots, t_{\ell}$ of complexity $(M,d)$.
    \item If $\chi(h_1,\ldots,h_k)$ is a nilsequence of multidegree $(s_1,\ldots,s_k)$ of complexity $(M,d)$ then it is also a nilsequence of degree $s_1 + \cdots + s_k$ of complexity $(M,d)$. 
\end{itemize}
\end{lemma}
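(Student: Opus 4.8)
\textbf{Plan of proof for Lemma~\ref{lem:specialization}.}
All three items are essentially unwinding the definitions of multidegree versus degree filtrations (Definitions~\ref{def:arb-filtration}, \ref{def:filt-prec}, \ref{def:nilmanifold}), together with the definition of complexity (Definitions~\ref{def:complex-1}, \ref{def:complex-2}) and the basic behaviour of polynomial sequences (Definition~\ref{def:I-polynomial}). The plan is to treat each item in turn, producing in each case an explicit nilmanifold, an adapted Mal'cev basis, a Lipschitz function, and a polynomial sequence witnessing the claimed structure, and then bound all the associated complexity parameters.

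For the \emph{third item}, let $\chi(h_1,\dots,h_k)=F(g(\vec h)\Gamma)$ live on a multidegree $(s_1,\dots,s_k)$ nilmanifold $G/\Gamma$ with multidegree filtration $(G_{\vec v})_{\vec v\in\mb N^k}$. I would take the associated degree filtration $G_i=\bigvee_{|\vec v|=i}G_{\vec v}$, which is part of the data of Definition~\ref{def:complex-2} (it is there recorded that the adapted Mal'cev basis $\mc X$ is adapted to $(G_i)_{i\in\mb N}$); so $G/\Gamma$ with that degree filtration and the \emph{same} basis $\mc X$ is a degree $s_1+\dots+s_k$ nilmanifold of complexity at most $M$, the function $F$ is unchanged hence still $M$-Lipschitz, and the dimension is unchanged. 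The one thing to verify is that $g$, which is a polynomial sequence with respect to the multidegree filtration on $\mb Z^k$ (Definition~\ref{def:Z-filtration}), is also polynomial with respect to the degree filtration on $\mb Z^k$; this follows since the degree filtration on $\mb Z^k$ has $H_i\subseteq\bigvee_{|\vec v|=i}H_{\vec v}$ (indeed $H_1=\mb Z^k=\bigvee_i \mb Z\vec e_i$), so iterated derivatives $\partial_{h_1}\cdots\partial_{h_m}g$ with $h_j\in H_{i_j}$ (degree filtration on $\mb Z^k$) are covered by the multidegree polynomiality hypothesis after decomposing each $h_j$ into its coordinates, and land in $G_{i_1+\cdots+i_m}$. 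This gives item three with complexity $(M,d)$ unchanged.

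For the \emph{first item}, given $h^\ast\in\mb Z$ and $\chi(h_1,\dots,h_k)=F(g(h_1,\dots,h_k)\Gamma)$ of multidegree $(s_1,\dots,s_k)$, consider the sequence $\wt g(h_2,\dots,h_k):=g(h^\ast,h_2,\dots,h_k)$. Using Taylor expansion (e.g.\ \cite[Lemma~B.9]{GTZ12}) one writes $g(\vec h)=\prod_{\vec v}g_{\vec v}^{\binom{\vec h}{\vec v}}$ with $g_{\vec v}\in G_{\vec v}$; specializing the first coordinate to $h^\ast$ expands $\binom{h^\ast}{v_1}$ as a fixed integer, so $\wt g$ is a polynomial sequence in $(h_2,\dots,h_k)$ with respect to the restricted multidegree filtration $(G_{(0,v_2,\dots,v_k)})_{v_2,\dots,v_k}$, hence $\wt\chi:=F(\wt g\,\cdot\,\Gamma)$ is a multidegree $(s_2,\dots,s_k)$ nilsequence on the \emph{same} $G/\Gamma$ with the restricted filtration. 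The group, the basis, and the function $F$ are unchanged; the only loss is that $\wt g$ need not satisfy $\wt g(\vec 0)=\mr{id}_G$, but the standard move $\wt g(\vec 0)=\{\wt g(\vec 0)\}[\wt g(\vec 0)]$ (as used repeatedly, e.g.\ in the proof of Lemma~\ref{lem:base-case}) lets us replace $F$ by $F(\{\wt g(\vec 0)\}\cdot)$ at the cost of an $M^{O_{|\vec s|}(d^{O_{|\vec s|}(1)})}$-factor in the Lipschitz constant (via \cite[Lemmas~B.2,~B.4]{Len23b}, since $\{\wt g(\vec 0)\}$ has bounded coordinates because $\wt g$ is a bounded-complexity polynomial sequence evaluated at $\vec 0$). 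Here one must also re-check that the restricted filtration $(G_{(0,\vec v')})$ satisfies the equalities in Definition~\ref{def:complex-2}, which is immediate. This yields complexity $(M^{O_{|\vec s|}(d^{O_{|\vec s|}(1)})},d)$.

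For the \emph{second item}, given $\chi(h_1,\dots,h_k)=F(g(\vec h)\Gamma)$ of degree $s$ and homomorphisms $L_i\colon\mb Z^\ell\to\mb Z$, set $g'(t_1,\dots,t_\ell):=g(L_1(\vec t),\dots,L_k(\vec t))$. Since each $L_i$ is linear, the composition $\vec t\mapsto (L_1(\vec t),\dots,L_k(\vec t))$ is a polynomial (indeed linear) map $\mb Z^\ell\to\mb Z^k$, and a degree-$s$ polynomial sequence precomposed with a linear map is again a degree-$s$ polynomial sequence --- this is the content of the composition lemmas for polynomial sequences in \cite[Appendix~B]{GTZ12}, applied to the degree filtration on the domain. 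So $F(g'(\vec t)\Gamma)$ is a degree $s$ nilsequence on the same nilmanifold with the same basis and same function $F$; all complexity data is literally unchanged, giving $(M,d)$. The main obstacle in the whole lemma is bookkeeping rather than mathematics: one has to be careful in the first item that the restricted multidegree filtration really is a valid multidegree filtration in the sense of Definition~\ref{def:filt-prec} (the generated-by-$\bigvee G_{\vec e_i}$ condition must be checked for the $k-1$ remaining directions) and that the normalization $\wt g(\vec 0)=\mr{id}_G$ can be restored without spoiling the ``adapted'' property of the Mal'cev basis; both are routine but must be stated.
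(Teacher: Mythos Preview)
Your treatments of the second and third items match the paper's and are fine. The gap is in the first item.

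You claim that the specialized sequence $\wt g(h_2,\dots,h_k)=g(h^\ast,h_2,\dots,h_k)$ is a multidegree $(s_2,\dots,s_k)$ nilsequence ``on the \emph{same} $G/\Gamma$ with the restricted filtration'' $(G_{(0,v_2,\dots,v_k)})$, and that checking this is a valid multidegree filtration is ``routine'' and ``immediate''. It is not: the condition in Definition~\ref{def:filt-prec} requires $G^{new}_{\vec 0}=\bigvee_{\ell=2}^k G^{new}_{\vec e_{\ell-1}}$, which here reads $G=\bigvee_{\ell=2}^k G_{\vec e_\ell}$. The original hypothesis only gives $G=\bigvee_{\ell=1}^k G_{\vec e_\ell}$, so after dropping the first direction $\bigvee_{\ell=2}^k G_{\vec e_\ell}$ is typically a \emph{proper} subgroup of $G$. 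Similarly, the Mal'cev basis $\mc X$ is adapted to the degree filtration $G_i=\bigvee_{|\vec v|=i}G_{\vec v}$, not to the smaller groups $\bigvee_{|\vec v'|=i}G_{(0,\vec v')}$, so even the complexity definition (Definition~\ref{def:complex-2}) fails for your proposed data.

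The paper's fix is precisely to pass to the subgroup $G^\ast:=\bigvee_{\ell=2}^k G_{\vec e_\ell}$, give it the multidegree filtration $G^\ast_{(i_2,\dots,i_k)}=G_{(0,i_2,\dots,i_k)}$ for $(i_2,\dots,i_k)\neq\vec 0$ and $G^\ast_{\vec 0}=G^\ast$, and construct a new adapted Mal'cev basis $\mc X^\ast$ for $G^\ast$ via \cite[Lemma~B.11]{Len23b}. The normalization step $g(h^\ast,0,\dots,0)=\mr{id}_G$ (which you do perform) is exactly what is needed to ensure that the Taylor expansion of the specialized sequence has no $(i_2,\dots,i_k)=\vec 0$ term, so that $\wt g$ actually lives in $G^\ast$. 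The complexity loss $M\mapsto M^{O_{|\vec s|}(d^{O_{|\vec s|}(1)})}$ comes not only from shifting $F$ (as you say) but also from building $\mc X^\ast$ and from \cite[Lemma~B.9]{Len23b} to control the Lipschitz constant of $F$ restricted to $G^\ast/\Gamma^\ast$.
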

\begin{remark*}
This result allows us to interpret expressions such as $\chi(h_1+h_1',h_2,\ldots,h_k)$ as an appropriate degree nilcharacter in $k+1$ variables, if $\chi$ is a nilcharacter in $k$ variables with multidegree $(s_1,\ldots,s_k)$.
\end{remark*}
\begin{proof}
We handle these items in reverse order (as this is also the difficulty of these claims). Let
\[\chi(h_1,\ldots,h_k) = F(g(h_1,\ldots,h_k)\Gamma)\]
with the underlying nilmanifold being $G/\Gamma$ and the specified Mal'cev basis being $\mc{X}$.

For the last claim, note that $\mc{X}$ (by the definition of complexity for multidegree nilmanifolds) is adapted to the degree filtration $G_t = \bigvee_{|\vec{i}|=t}G_{\vec{i}}$. Furthermore by the inclusion given on \cite[p.~1264]{GTZ12} or direct inspection given the Taylor expansion in \cite[Lemma~B.9]{GTZ12}, we have that $g(h_1,\ldots,h_k)$ is a polynomial sequence with respect to the degree filtration $G_0 = G_1\geqslant G_2 \geqslant \cdots \geqslant G_{s_1 + \cdots + s_k} \geqslant \mr{Id}_{G}$. The desired result follows immediately.

For the second item, notice that if a polynomial $P(x_1,\ldots,x_k)$ has total degree $s$, then for any linear maps $L_i\colon\mb{R}^{\ell} \to\mb{R}$ we have that  $P(L_1(y_1,\ldots,y_\ell),\ldots,L_k(y_1,\ldots,y_\ell))$ has total degree $s$. This coupled with Taylor expansion \cite[Lemma~B.9]{GTZ12} and the fact that the set of polynomial sequences with respect to a given $I$-filtration is a group (by \cite[Corollary~B.4]{GTZ12}) implies the result.

We now handle the first item; this is the only nontrivial part. Write $g(h^{\ast},0,\ldots,0) = \{g(h^{\ast},0,\ldots,0)\}[g(h^{\ast},0,\ldots,0)]$ with $\psi_{G,\mc{X}}(\{g(h^{\ast},0,\ldots,0)\})\in[0,1)^{\dim(G)}$ and $[g(h^{\ast},0,\ldots,0)]\in\Gamma$. We replace the polynomial sequence $g$ by $g' = \{g(h^{\ast},0,\ldots,0)\}^{-1}g[g(h^{\ast},0,\ldots,0)]^{-1}$ and $F$ by the function $F'(\cdot) = F(\{g((h^{\ast},0,\ldots,0))\} \cdot)$. We may thus assume, at the cost of replacing $M$ by $M^{O_{|\vec{s}|}(d^{O_{|\vec{s}|}(1)})}$, that $g(h^{\ast},0,\ldots,0) = \mr{id}_G$.

We now apply \cite[Lemma~B.9]{GTZ12} to see
\[g(h_1,\ldots,h_k) = \prod_{i_1,\ldots,i_k}g_{(i_1,\ldots,i_k)}^{\binom{h_1}{i_1}\cdots \binom{h_k}{i_k}}\]
where we order $(i_1,\ldots,i_k)$ lexicographically with indices considered in reverse order in the product (in particular, the first few terms are $(0,\ldots,0)$, $(1,\ldots,0)$, $(2,\ldots,0)$, and so on) and $g_{(i_1,\ldots,i_k)}\in G_{(i_1,\ldots,i_k)}$. As $g(h^{\ast},0,\ldots,0) = \mr{id}_G$, we have that 
\[g(h^{\ast},h_2,\ldots,h_k) = \prod_{\substack{i_2,\ldots,i_k\\ i_2 + \cdots + i_k>0}}g_{(i_1,\ldots,i_k)}^{\binom{h^\ast}{i_1}\cdot\binom{h_2}{i_2}\cdots \binom{h_k}{i_k}}.\]
It then follows that $g(h^{\ast},h_2,\ldots,h_k)$ is a polynomial with respect to
\[G^\ast = \bigvee_{\ell=2}^k G_{\vec{e}_{\ell}}\]
which we give a multidegree filtration 
$G^\ast_{(i_2,\ldots,i_k)} = G_{(0,i_2,\ldots,i_k)}$
for $i_2 + \cdots + i_k>0$ and $G^\ast_{(0,\ldots,0)} = G^\ast$. Note that all subgroups in this filtration are $M$-rational with respect to $\mc{X}$ and that $(G^\ast)_t = \bigvee_{|\vec{i}| = t}G^\ast_{\vec{i}}$ is a degree $|\vec{s}|-s_1$ filtration. Therefore applying \cite[Lemma~B.11]{Len23b} guarantees that we may find a Mal'cev basis $\mc{X}^\ast$ for $G^\ast$ (which is $M^{O_{|\vec{s}|}(d^{O_{|\vec{s}|}(1)})}$-rational with respect to $\mc{X}$). Descending $F$ to $G^\ast$ gives the desired result with the necessary Lipschitz bound following from \cite[Lemma~B.9]{Len23b}.
\end{proof}

We now state a quantified version of \cite[Lemma~E.8]{GTZ12}. Recall the notion of equivalence (Definition~\ref{def:equiv}).
\begin{lemma}\label{lem:nil-basic-prop}
Consider a nilcharacter $\chi$ with complexity $(M,d)$ of multidegree $\vec{s} = (s_1,\ldots,s_k)$ with $|\vec{s}| = s_1 + \cdots + s_k$. We have that:
\begin{itemize}
    \item The nilcharacters
    \[\chi(\cdot)\emph{ and } \chi(\cdot)\]
    are $(M^{O_{|\vec{s}|}(1)},M^{O_{|\vec{s}|}(1)},O(d))$-equivalent for multidegree $<(s_1,\ldots,s_k)$.\footnote{This means we take the down-set generated by $(s_1,\ldots,s_k)$ and then remove $(s_1,\ldots,s_k)$.}
    \item Fix $h^\ast\in\mb{Z}$. The nilcharacters
    \[\chi(\cdot + h^\ast\vec{e}_j) \emph{ and } \chi(\cdot)\]
    are $(M^{O_{|\vec{s}|}(d^{O_{|\vec{s}|}(1)})},M^{O_{|\vec{s}|}(d^{O_{|\vec{s}|}(1)})},O(d))$-equivalent for multidegree $<(s_1,\ldots,s_k)$.
    \item Fix $q\in\mb{Z}$. Then
    \[\chi^{\otimes q^{|\vec{s}|}}(\cdot) \emph{ and }\chi(q\cdot)\]
    are $(M^{O_{|\vec{s}|,q}(d^{O_{|\vec{s}|,q}(1)})},M^{O_{|\vec{s}|,q}(d^{O_{|\vec{s}|,q}(1)})}, d^{O_q(1)})$-equivalent for multidegree $<(s_1,\ldots,s_k)$.
    \item Fix $q\in\mb{Z}^{>0}$. There exists a nilcharacter $\wt{\chi}$ of complexity $(M^{O_{|\vec{s}|,q}(d^{O_{|\vec{s}|,q}(1)})},d^{O_q(1)})$ such that
    \[\chi(\cdot) \emph{ and } \wt{\chi}^{\otimes q}(\cdot)\]
    are $(M^{O_{|\vec{s}|,q}(d^{O_{|\vec{s}|,q}(1)})},M^{O_{|\vec{s}|,q}(d^{O_{|\vec{s}|,q}(1)})}, d^{O_q(1)})$-equivalent for multidegree $<(s_1,\ldots,s_k)$.
\end{itemize}  
\end{lemma}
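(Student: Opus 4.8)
\textbf{Proof plan for Lemma~\ref{lem:nil-basic-prop}.} The plan is to verify each of the four items by exhibiting an explicit nilmanifold, polynomial sequence, and function whose values equal the relevant coordinates of $\chi\otimes\ol{\chi'}$ (for the appropriate $\chi'$), and then checking that the underlying nilmanifold in fact carries a multidegree filtration supported on the down-set strictly below $(s_1,\ldots,s_k)$. Throughout, write $\chi = F(g(\vec{h})\Gamma)$ on a multidegree $\vec{s}$ nilmanifold $G/\Gamma$ with vertical frequency $\eta$ on $G_{\vec{s}}$; the point common to all four items is that the relevant ``difference'' nilcharacter has vertical frequency $0$ on $G_{\vec{s}}$, hence descends to the quotient $G/G_{\vec{s}}$, which has multidegree filtration supported strictly below $(s_1,\ldots,s_k)$. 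So the crux is always: (a) realize the product as a nilsequence on a product group $G\times G\times\cdots$ with the diagonalized polynomial sequence, using Lemma~\ref{lem:specialization} and Fact~\ref{fct:product-rat} to track complexity; (b) identify a central (or at least $\vec{s}$-graded) subgroup on which the combined vertical frequency vanishes; (c) quotient by it using Lemma~\ref{lem:quotient-rat} to land in a lower multidegree nilmanifold, and bound the number of nilsequences needed (here just $O(1)$, or $D=1$ since we never need to expand a trace except in later applications).

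For item 1, $\chi\otimes\ol{\chi}$ lives on $(G\times G)/(\Gamma\times\Gamma)$ via $(F\otimes\ol{F})((g(\vec{h}),g(\vec{h}))(\Gamma\times\Gamma))$, and its vertical frequency on $(G\times G)_{\vec{s}}$ restricted to the anti-diagonal $\{(z,z^{-1})\}$ is $\eta(z)-\eta(z)=0$; more carefully, $F\otimes\ol{F}$ is invariant under the diagonal copy of $G_{\vec{s}}$, so it descends to $(G\times G)/(\text{diagonal }G_{\vec{s}})$, which one checks is a multidegree nilmanifold with filtration supported below $(s_1,\ldots,s_k)$ (the diagonal $G_{\vec{s}}$ being exactly the top multidegree piece of $G\times G$, since $(G\times G)_{\vec{v}} = G_{\vec{v}}\times G_{\vec{v}}$). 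The diagonal polynomial sequence $(g(\vec{h}),g(\vec{h}))$ is polynomial with respect to this filtration by Lemma~\ref{lem:specialization}'s argument (using that the set of polynomial sequences is a group and the diagonal embedding preserves multidegree). Complexity is controlled by Fact~\ref{fct:product-rat} and Lemma~\ref{lem:quotient-rat}. Item 2 is essentially item 1 combined with the first bullet of Lemma~\ref{lem:specialization}: $\chi(\vec{h}+h^\ast\vec{e}_j)$ is again a multidegree $\vec{s}$ nilcharacter (with complexity $M^{O(d^{O(1)})}$ after the standard maneuver to move $g(h^\ast\vec e_j)$'s value at $\vec 0$ back to the identity), and it has the \emph{same} vertical frequency $\eta$ as $\chi$ because translating the argument does not change the top Taylor coefficient modulo $G_{\vec s}$-commutator corrections — the shift only affects lower-multidegree data. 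Then $\chi(\vec h+h^\ast\vec e_j)\otimes\ol{\chi(\vec h)}$ again has trivial top vertical frequency and descends.

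For items 3 and 4, the mechanism is the $q$-th power map. For item 3, $\chi(q\vec h) = F(g(q\vec h)\Gamma)$, and by Taylor expansion $g(q\vec h)$ has top multidegree Taylor coefficient equal to $q^{|\vec s|}$ times that of $g(\vec h)$ (as the degree-$\vec s$ part of $\binom{qh_i}{j}$ picks up a factor $q^j$, multiplying out to $q^{|\vec s|}$ on the multidegree $\vec s$ piece). Hence $\chi^{\otimes q^{|\vec s|}}(\vec h)$ — whose vertical frequency is $q^{|\vec s|}\eta$ — and $\chi(q\vec h)$ have the \emph{same} vertical frequency, so their product-with-conjugate descends to a lower multidegree nilmanifold; this is realized on $(G\times G)/(\text{diagonal }G_{\vec s})$ with polynomial sequence $(g(\vec h)^{\otimes q^{|\vec s|}}\text{-lift},\ g(q\vec h))$, the lift handled exactly as in the product construction in Appendix~E of \cite{GTZ12}. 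Item 4 is the ``division'' converse: one must \emph{construct} a $q$-th root nilcharacter $\wt\chi$. The natural construction, following \cite[Lemma~E.8(iv)]{GTZ12}, is to take $\wt G = G$ but with the polynomial sequence $g$ reparametrized so that $\wt\chi(\vec h)^{\otimes q}$ recovers $\chi$ up to lower order; concretely one defines $\wt\chi$ on a suitable cover — replace $G_{\vec s}$ by a rational subgroup $q^{-1}$-dilated relative to the lattice so that $\eta$ becomes divisible by $q$ — and the lattice is enlarged accordingly, which only costs $d^{O_q(1)}$ in dimension and the stated complexity by Lemma~\ref{lem:quotient-rat} and Fact~\ref{fact:rat-subgroup}. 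I expect item 4, the explicit construction of the $q$-th root nilcharacter with controlled dimension growth $d^{O_q(1)}$, to be the main obstacle: one has to be careful that dilating the top-central torus by $1/q$ and adjusting the lattice keeps everything a genuine (rational, bounded-complexity) nilmanifold and that $\wt\chi^{\otimes q}$ and $\chi$ genuinely agree modulo a multidegree $<\vec s$ term rather than on the nose. All remaining verifications (polynomiality of the diagonal/reparametrized sequences, Lipschitz and rationality bookkeeping) are mechanical applications of Lemma~\ref{lem:specialization}, Fact~\ref{fct:product-rat}, Lemma~\ref{lem:quotient-rat}, and \cite[Lemmas~B.9,~B.11]{GTZ12},~\cite[Lemmas~B.9,~B.11]{Len23b}, and I would relegate them to brief remarks rather than full detail, since they parallel \cite[Appendix~E]{GTZ12} verbatim.
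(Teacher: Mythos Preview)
Your plan correctly identifies the central mechanism (the tensor $\chi\otimes\ol{\chi'}$ has vanishing vertical frequency on the diagonal of $G_{\vec s}$, so one quotients), but there is a genuine gap in items 2 and 3 that the paper has to work harder to close, and your approach to item 4 is unnecessarily complicated compared to the paper's.

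\textbf{The gap in items 2 and 3.} You write that the relevant product ``descends to $(G\times G)/(\text{diagonal }G_{\vec s})$'' and that ``the diagonal $G_{\vec s}$ [is] exactly the top multidegree piece of $G\times G$''. This is false: $(G\times G)_{\vec s}=G_{\vec s}\times G_{\vec s}$, and $F\otimes\ol F$ is invariant only under the diagonal copy, so quotienting $G\times G$ by the diagonal $G_{\vec s}$ does \emph{not} lower the multidegree. For item 1 this is harmless because the sequence $(g(\vec h),g(\vec h))$ already lives in the diagonal subgroup $\{(g,g)\}$, whose top piece is exactly $\mathrm{diag}(G_{\vec s})$ --- this is what the paper does. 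But for items 2 and 3 the sequences $(g(\vec h+h^\ast\vec e_j),g(\vec h))$ and $(g(q\vec h),g(\vec h))$ do \emph{not} lie in the diagonal, so you cannot simply restrict there; and on the full $G\times G$ the argument fails as above. The paper's fix is to construct, in each case, a specific intermediate subgroup with a tailored $I$-filtration whose top piece is precisely $\mathrm{diag}(G_{\vec s})$ (item 2) or the twisted diagonal $\{(g^{q^{|\vec s|}},g)\}$ (item 3), and such that the paired sequence \emph{is} polynomial for that filtration. For item 2 this is
\[G'_{\vec i}=\bigvee_{\ell}(G_{\vec i+\vec e_\ell}\times\mathrm{Id}_G)\vee\{(g,g):g\in G_{\vec i}\},\]
and for item 3 it is
\[G'_{\vec i}=\bigvee_{\vec j>\vec i}(G_{\vec j}\times G_{\vec j})\vee\{(g^{q^{|\vec i|}},g):g\in G_{\vec i}\}.\]
Verifying that these are filtrations and that the paired sequences are polynomial for them is the real content; there is also a secondary technicality that these are only $I$-filtrations (not multidegree filtrations in the sense of Definition~\ref{def:filt-prec}), so one must pass to the subgroup $\wt G=\bigvee_\ell G'_{\vec e_\ell}$ after normalising $g(0)=\mathrm{id}$. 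Your ``same vertical frequency, so it descends'' skips all of this.

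\textbf{Item 4.} The paper does not build a cover or dilate the torus. Instead it simply sets $g'(\vec n)=g(\vec n/q)$ (well-defined via Mal'cev coordinates), applies item 3 to $g'$ to see that $F(g(\vec n)\Gamma)=F(g'(q\vec n)\Gamma)$ is equivalent to $F(g'(\vec n)\Gamma)^{\otimes q^{|\vec s|}}$, and outputs $\wt\chi=F(g'(\cdot)\Gamma)^{\otimes q^{|\vec s|-1}}$. This is a two-line reduction and avoids entirely the construction you flagged as the main obstacle.
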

\begin{remark*}
$\chi^{-\otimes q}$ for $q\in\mb{Z}^{>0}$ is interpreted as $\ol{\chi}^{\otimes q}$.
\end{remark*}
\begin{proof}
Throughout the proof, we let
\[\chi(\vec{n}) = F(g(\vec{n})\Gamma)\]
where the underlying nilmanifold is $G/\Gamma$ and the underlying Mal'cev basis is $\mc{X}$. When going from item to item, we may reuse variables (e.g., $G'$ will be defined in multiple different manners throughout the proof). Additionally, the following analysis implicitly uses that $|\vec{s}|\ge 1$; in the remaining case $\vec{s}=0$ all nilsequences become fixed constants and the result is obvious.

For the first item, note that coordinates of $\chi \otimes \ol{\chi}$ are multidegree $(s_1,\ldots,s_k)$ polynomial sequences with respect to group $G' = \{(g,g)\colon g\in G\}$ given the filtration 
\[G'_{\vec{i}} = \{(g,g)\colon g\in G_{\vec{i}}\}.\]
As all coordinates of $\chi$ have the same vertical frequency, we have that the coordinates of $\chi \otimes \ol{\chi}$ are invariant under $G'_{(s_1,\ldots,s_k)}$. This immediately gives the desired result upon taking a quotient and using Lemma~\ref{lem:quotient-rat}.

For the second item, note that $G^{+\vec{e}_j} = (G_{\vec{i}+\vec{e}_j})_{\vec{i}\in I}$ is a shifted filtration. Note that this is an $I$-filtration with respect to the multidegree ordering. We define the group
\[G' = \bigvee_{\ell=1}^k(G_{\vec{e}_\ell}\times\mr{Id}_{G})\vee \{(g,g)\colon g\in G\},\]
let $\Gamma' = G'\cap (\Gamma\times\Gamma)$, and define the following $I$-filtration with respect to the multidegree ordering:
\[G'_{\vec{i}} = \bigvee_{\ell=1}^k(G_{\vec{i}+\vec{e}_\ell}\times\mr{Id}_{G})\vee\{(g,g)\colon g\in G_{\vec{i}}\}.\]
By using Lemma~\ref{lem:com-check} we may see that this is a valid. We define the cocompact groups similarly. Now the proof of \cite[Lemma~E.8]{GTZ12} shows that  
\[(g(\vec{n} + h\vec{e}_j),g(\vec{n}))\]
is a polynomial sequence with respect to this filtration and that 
\[\wt{F}((x,y)(\Gamma\times\Gamma)) = F(x\Gamma)\otimes \ol{F(y\Gamma)}\]
is invariant under the action of $G_{\vec{s}}' = \{(g,g)\colon g\in G_{(s_1,\ldots,s_k)}\}$.

We first construct a Mal'cev basis $\mc{X}'$ on $G'$. Define
$G_{t}^\ast = \bigvee_{|\vec{i}| = t}G_{\vec{i} + \vec{e}_j}$ and note that $G^\ast = G_{0}^\ast$ has a degree filtration
\[G_{0}^\ast = G_{0}^\ast\geqslant G_{1}^\ast\geqslant G_{2}^\ast \geqslant \cdots\]
and all these subgroups are $M^{O_{|\vec{s}|}(1)}$-rational with respect to $\mc{X}$. Therefore $G^\ast$ has a Mal'cev basis $\mc{X}^\ast$ which is adapted to this filtration and all elements are height at most $M^{O_{|\vec{s}|}(d^{O_{|\vec{s}|}(1)})}$ combinations of elements in $\mc{X}$ by \cite[Lemma~B.11]{Len23b}. Note that
\[\{(X,X)\colon X\in\mc{X}\}\cup \{(X^{\ast},0)\colon X^\ast\in\mc{X}^\ast\}\]
is easily shown to be a weak basis of rationality $M^{O_{|\vec{s}|}(d^{O_{|\vec{s}|}(1)})}$ for $G'/\Gamma'$ and has the degree $O_{|\vec{s}|}(1)$ nesting property. Letting $G_{t}' = \bigvee_{|\vec{i}| = t}G_{\vec{i}}'$ we see that 
\[G' = G'\geqslant G_1'\geqslant G_2'\geqslant \cdots\]
form a sequence of subgroups such that $[G',G_i']\leqslant G_{i+1}'$ for $i\ge 0$ (with $G_0' = G'$). Thus by \cite[Lemma~B.11]{Len23b} we can find a Mal'cev basis $\mc{X}'$ adapted to this sequence such that each element is a height $M^{O_{|\vec{s}|}(d^{O_{|\vec{s}|}(1)})}$ linear combination of
\[\{(X,X)\colon X\in\mc{X}\}\cup \{(X^{\ast},0)\colon X^\ast\in\mc{X}^\ast\}.\] 

At present, however, we see that $G'$ has not been given a multidegree filtration (only an $I$-filtration with respect to the multidegree ordering; recall Definition~\ref{def:filt-prec}). We replace $G'$ by 
\[\wt{G} = \bigvee_{\ell=1}^k G'_{\vec{e}_\ell} = G_1'\]
and note that $\wt{G}$ is appropriately rational with respect to $\mc{X}'$ and $G'_{\vec{i}} \leqslant \wt{G}$ for $\vec{i}\neq 0$. Note that $\wt{G}$ is easily seen to have a multidegree $(s_1,\ldots,s_k)$ filtration. Furthermore, removing the initial $\dim(G')-\dim(\wt{G})$ elements, we see that the truncation of $\mc{X}'$ is valid Mal'cev basis for $\wt{G}$ of complexity $M^{O_{|\vec{s}|}(d^{O_{|\vec{s}|}(1)})}$ and all subgroups in the multidegree filtration are $M^{O_{|\vec{s}|}(d^{O_{|\vec{s}|}(1)})}$-rational.

We now write $(g(h^\ast\vec{e}_j),g(0)) = \{(g(h^\ast\vec{e}_j),g(0))\}[(g(h^\ast\vec{e}_j),g(0))]$ where
\[\snorm{\psi_{\mc{X}'}(\{(g(h^\ast\vec{e}_j),g(0))\})}_{\infty}\le M^{O_{|\vec{s}|}(d^{O_{|\vec{s}|}(1)})}\]
and $[(g(h^\ast\vec{e}_jj),g(0))]\in\Gamma'$. We consider the modified polynomial sequence
\[g'(\vec{n}) = \{(g(h^\ast\vec{e}_j),g(0))\}^{-1}(g(\vec{n} + h^\ast\vec{e}_j),g(\vec{n}))[(g(h^\ast\vec{e}_j),g(0))]^{-1};\]
evaluating at $\vec{n}=0$ this is now seen to be a polynomial sequence in $\wt{G}$. Defining
\[F'((x,y)(\Gamma\times\Gamma)) = \wt{F}(\{(g(h^\ast\vec{e}_j),g(0))\}(x,y)(\Gamma\times\Gamma)),\]
we have that $F'$ is invariant under $(\wt{G})_{(s_1,\ldots,s_k)}$ and 
\[F'(g'(\vec{n})) = F(g(\vec{n} + h^\ast\vec{e}_j))\ol{F}(g(\vec{n})).\]
We may pass to the quotient group $\wt{G}/\wt{G}_{(s_1,\ldots,s_k)}$ and the desired result is essentially an immediate consequence of Lemma~\ref{lem:quotient-rat}.

We now come to the third item; we only maintain the notation from the first sentence of the proof. Note that via writing $g(0) = \{g(0)\}[g(0)]$ with $[g(0)]\in\Gamma$ and $\psi_{\mc{X}}(\{g(0)\})\in[0,1)^{\dim(G)}$, replacing $g(\vec{n})$ by $\{g(0)\}^{-1}g(\vec{n})[g(0)]^{-1}$ and $F$ by $F(\{g(0)\} \cdot)$, up to replacing $M$ by $M^{O_{|\vec{s}|}(1)}$ we may assume that $g(0) = \mr{id}_G$.

Define
\[G'_{\vec{i}} = \bigvee_{\vec{j}>\vec{i}}(G_{\vec{j}}\times G_{\vec{j}})\vee\bigvee \{(g^{q^{|\vec{i}|}},g)\colon g\in G_{\vec{i}}\};\]
here $\vec{j}>\vec{i}$ means $\vec{j}$ is coordinate-wise at least as large as $\vec{i}$ and not identical. Furthermore $\Gamma' = G'\cap (\Gamma\times\Gamma)$; note that $G'$ is isomorphic to $G\times G$ however we have given the group an alternate filtration. This is verified to be an $I$-filtration with respect to the multidegree ordering in \cite[p.~1356]{GTZ12}. Furthermore the proof of \cite[Lemma~E.8]{GTZ12} shows that  
\[(g(q\vec{n}),g(\vec{n}))\]
is a polynomial sequence with respect to this filtration and that 
\[\wt{F}((x,y)(\Gamma\times\Gamma)) = F(x\Gamma)\otimes\ol{F(y\Gamma)}^{\otimes q^{|\vec{s}|}}\]
is invariant under the action of $G_{\vec{s}} = \{(g^{q^{|\vec{s}|}},g)\colon g\in G_{(s_1,\ldots,s_k)}\}$. The primary technical issue, as before, is that while this is an $I$--filtration with respect to the multidegree ordering this is not a multidegree filtration (Definition~\ref{def:filt-prec}).

We first give $G'$ a Mal'cev basis. Note that $\mc{X}$ is adapted to the degree filtration on $G$ given by $G_t = \bigvee_{|\vec{i}| = t}G_{\vec{i}}$ (Definition~\ref{def:complex-2}). It is immediate to see that 
\[\mc{X}^\ast = \{(X,0)\colon X\in\mc{X}\cap G_1\} \cup \{(0,X)\colon X\in\mc{X}\cap G_1\}\]
is a Mal'cev basis for the product filtration on $G$. Then using \cite[Lemma~B.11]{Len23b} on
\[G_0'= G_0'\geqslant G_1'\geqslant\cdots\]
where $G_t'=\bigvee_{|\vec{i}| = t}G_{\vec{i}}'$, which is seen to satisfy $[G_0',G_t']\leqslant G_{t+1}'$ for $t\ge 0$, we easily construct a Mal'cev basis $\mc{X}'$ for $G'/\Gamma'$ coming from combinations of $\mc{X}^\ast$. (We implicitly use that $G=\bigvee_j G_{\vec{e}_j}$.) As $G$ is has complexity $M$, it is trivial to see that all subgroups in the filtration of $G'$ are $M^{O_{|\vec{s}|,q}(d^{O_{|\vec{s}|,q}(1)})}$-rational. The Mal'cev basis $\mc{X}'$ clearly has the nesting property of order $|\vec{s}|$ since $\mc{X}$ does.

We define $\wt{G}$ as
\[\wt{G} = \bigvee_{\ell=1}^k G'_{\vec{e}_\ell}\]
and this group is seen to be appropriately rational with respect to $\mc{X}'$ and is given the multidegree filtration $\wt{G}_{\vec{i}} = G'_{\vec{i}}$ for $\vec{i}\neq 0$. Noting that the constant term of the Taylor expansion of $(g(q\vec{n}),g(\vec{n}))$ is $(\mr{id}_G,\mr{id}_G)$, we have that this is in fact a polynomial sequence with respect to the multidegree filtration given to $\wt{G}$. (This is where we use that we reduced to $g(0) = \mr{id}_{G}$.) Furthermore, letting $\wt{G}_{t} = \bigvee_{|\vec{i}| = t} \wt{G}_{\vec{i}}$, we see that a truncation of $\mc{X}'$ is an adapted Mal'cev basis to $\wt{G}_0 = \wt{G}_1\geqslant \wt{G}_2 \geqslant \cdots $ where each element is an $M^{O_{|\vec{s}|,q}(d^{O_{|\vec{s}|,q}(1)})}$-rational combination of $\mc{X}^\ast$. As $\wt{F}$ is invariant under $\wt{G}_{(s_1,\ldots,s_k)}$, by passing to the quotient $\wt{G}/\wt{G}_{(s_1,\ldots,s_k)}$ and applying Lemma~\ref{lem:quotient-rat} we immediately finishe the proof.

We finally deduce the fourth item from the third item. Let $g'(n) = g(n/q)$; note that $g$ may be extended to take on rational input via using Mal'cev coordinates and we may treat $g'$ as a valid polynomial sequence. By applying the third item, we have that 
\[F(g(n))\text{ and } F(g'(n))^{\otimes q^{|\vec{s}|}}\]
are $(M^{O_{|\vec{s}|,q}(d^{O_{|\vec{s}|,q}(1)})},M^{O_{|\vec{s}|,q}(d^{O_{|\vec{s}|,q}(1)})}, d^{O_q(1)})$-equivalent for multidegree $<(s_1,\ldots,s_k)$. Outputting $F(g'(n))^{\otimes q^{|\vec{s}|-1}}$ then gives the desired result.
\end{proof}

The next lemma is a quantified version of \cite[Lemma~13.2]{GTZ12}. The proof is once again essentially identical modulo noting slight changes in the filtration notions.
\begin{lemma}\label{lem:multi}
Consider $\chi\colon\mb{Z}^{k}\to\mb{C}$ which is a multidegree $(1,\ldots,1)$ nilcharacter of complexity $(M,d)$. Then 
\[\chi(h_1 + h_1', h_2, \ldots, h_k)\emph{ and }\chi(h_1, h_2, \ldots, h_k) \otimes \chi(h_1', h_2, \ldots, h_k)\]
are $(M^{O_k(d^{O_k(1)})},d^{O_k(1)})$-equivalent for degree $(k-1)$.
\end{lemma}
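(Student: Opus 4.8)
The plan is to realize, coordinatewise, the nilcharacter $\chi(h_1+h_1',h_2,\ldots,h_k)\otimes\ol{\chi(h_1,h_2,\ldots,h_k)}\otimes\ol{\chi(h_1',h_2,\ldots,h_k)}$ as a single degree $(k-1)$ nilsequence in the $k+1$ variables $(h_1,h_1',h_2,\ldots,h_k)$, which is exactly the asserted equivalence. Write $\chi(h_1,\ldots,h_k)=F(g(h_1,\ldots,h_k)\Gamma)$ with $G/\Gamma$ of complexity $M$, $g$ a multidegree-$(1,\ldots,1)$ polynomial sequence on $G$, and $F$ carrying a $G_{(1,\ldots,1)}$-vertical frequency $\eta$ (which exists since, $G$ having degree $k$ with $[G,G_k]=\mr{Id}_G$, the group $G_{(1,\ldots,1)}$ is the central ``bottom'' group). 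After replacing $g$ by $\{g(\vec 0)\}^{-1}g[g(\vec 0)]^{-1}$ and $F$ by $F(\{g(\vec 0)\}\,\cdot\,)$ we may assume $g(\vec 0)=\mr{id}_G$, at the price of worsening $M$ to $M^{O_k(d^{O_k(1)})}$ via \cite[Lemma~B.9]{Len23b}; Taylor expansion then gives $g(h_1,\ldots,h_k)=\prod_{\vec i\in\{0,1\}^k}g_{\vec i}^{h^{\vec i}}$ with $g_{\vec i}\in G_{\vec i}$, $h^{\vec i}=\prod_{j:i_j=1}h_j$, and $g_{\vec 0}=\mr{id}_G$.

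\textbf{The group.} Let $G'$ be the subgroup of $G\times G\times G$ generated by $(g,g,g)$ for $g\in G_{\vec i}$ with $i_1=0$ and by $(g,g,\mr{id}_G)$ and $(g,\mr{id}_G,g)$ for $g\in G_{\vec i}$ with $i_1=1$, equipped with the multidegree filtration over the downset $J\subseteq\mb{N}^{k+1}$ generated by $(1,0,1,\ldots,1)$ and $(0,1,1,\ldots,1)$: place $(g,g,g)$ ($g\in G_{\vec i}$, $i_1=0$) at multidegree $(0,0,i_2,\ldots,i_k)$, place $(g,g,\mr{id}_G)$ resp.\ $(g,\mr{id}_G,g)$ ($g\in G_{\vec i}$, $i_1=1$) at $(1,0,i_2,\ldots,i_k)$ resp.\ $(0,1,i_2,\ldots,i_k)$, and for the remaining $\vec j$ let $G'_{\vec j}$ be generated by all commutators of generators whose placements sum to $\vec j$. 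I would check this is a valid multidegree filtration by verifying the commutator conditions on generators via Lemma~\ref{lem:com-check}, using the multidegree $(1,\ldots,1)$ property of $G$ to annihilate the offending brackets (for instance $[(g,g,\mr{id}_G),(h,\mr{id}_G,h)]=([g,h],\mr{id}_G,\mr{id}_G)$ with $[g,h]\in G_{\vec i+\vec i'}$, which is trivial when $i_1=i_1'=1$ since then the first coordinate of $\vec i+\vec i'$ is $2$). Writing $\vec i'$ for $\vec i$ with its first coordinate set to $0$, powers of a single element commute, so $(g_{\vec i}^{(h_1+h_1')h^{\vec i'}},g_{\vec i}^{h_1 h^{\vec i'}},g_{\vec i}^{h_1' h^{\vec i'}})=(g_{\vec i},g_{\vec i},\mr{id}_G)^{h_1 h^{\vec i'}}\,(g_{\vec i},\mr{id}_G,g_{\vec i})^{h_1' h^{\vec i'}}$; multiplying the per-$\vec i$ contributions in the fixed Taylor order then exhibits $(g(h_1+h_1',h_2,\ldots,h_k),g(h_1,h_2,\ldots,h_k),g(h_1',h_2,\ldots,h_k))$ as a product of polynomial sequences valued in the filtration subgroups, hence as a polynomial sequence on $G'$ of multidegree $\subseteq J$ — the normalization $g_{\vec 0}=\mr{id}_G$ makes its value at $\vec 0$ trivial.

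\textbf{The crux: invariance at the top degree.} The top-degree subgroup $(G')_k:=\bigvee_{|\vec j|=k}G'_{\vec j}=G'_{(1,0,1,\ldots,1)}\vee G'_{(0,1,1,\ldots,1)}$ lies inside $G_{(1,\ldots,1)}^{\times 3}$, which is a central abelian vector space in each coordinate, and one checks — on the degree-$k$ generators $(g,g,\mr{id}_G),(g,\mr{id}_G,g)$ with $g\in G_{(1,\ldots,1)}$, and on the relevant degree-$k$ commutators, which by the bracket identities above reduce to triples of the form $([g,h],[g,h],\mr{id}_G)$, $([g,h],\mr{id}_G,[g,h])$, or $(\mr{id}_G,\mr{id}_G,\mr{id}_G)$ — that every $w\in(G')_k$ satisfies $\pi_1(w)=\pi_2(w)\pi_3(w)$, where $\pi_i$ are the coordinate projections; this relation is preserved by the abelian group law, so it holds throughout $(G')_k$. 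Hence, setting $\wt{F}\big((x,y,z)(\Gamma^{\times 3})\big):=F(x\Gamma)\otimes\ol{F(y\Gamma)}\otimes\ol{F(z\Gamma)}$, which carries the $G_{(1,\ldots,1)}^{\times 3}$-vertical frequency $(\eta,-\eta,-\eta)$ because all coordinates of $\chi$ share $\eta$, we get for $w\in(G')_k$ that $\wt{F}\big((x,y,z)w\,(\Gamma^{\times 3})\big)=e\big(\eta(\pi_1 w)-\eta(\pi_2 w)-\eta(\pi_3 w)\big)\wt{F}\big((x,y,z)(\Gamma^{\times 3})\big)=\wt{F}\big((x,y,z)(\Gamma^{\times 3})\big)$, since $\eta$ is linear and $\eta(\pi_1 w-\pi_2 w-\pi_3 w)=0$.

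\textbf{Conclusion and complexity.} I would then build a Mal'cev basis for $G'$ adapted to the associated degree filtration $(G')_t=\bigvee_{|\vec j|=t}G'_{\vec j}$ out of triples of height-$O_k(1)$ combinations of the Mal'cev basis of $G$ — the structure constants being the bracket identities above — via \cite[Lemma~B.11]{Len23b}, giving complexity $M^{O_k(d^{O_k(1)})}$ and $\dim G'\le 3d$. Since $(G')_k$ is $M^{O_k(d^{O_k(1)})}$-rational (Lemma~\ref{lem:commute}, Fact~\ref{fact:rat-subgroup}) and $\wt{F}$ is $(G')_k$-invariant, Lemma~\ref{lem:quotient-rat} produces a degree $(k-1)$ nilmanifold $\big(G'/(G')_k\big)/\big(\Gamma'/(\Gamma'\cap(G')_k)\big)$ of complexity $M^{O_k(d^{O_k(1)})}$ on which $\wt{F}$ descends to an $M^{O_k(d^{O_k(1)})}$-Lipschitz function and the polynomial sequence $(g(h_1+h_1',\ldots),g(h_1,\ldots),g(h_1',\ldots))$ descends to one of degree $\le k-1$; each coordinate of $\chi(h_1+h_1',\ldots)\otimes\ol{\chi(h_1,\ldots)}\otimes\ol{\chi(h_1',\ldots)}$ is thereby a degree $(k-1)$ nilsequence of the required complexity. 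I expect the main obstacle to be organizational rather than conceptual: pinning down the filtration on $G'$ exactly and carrying the commutator checks, the polynomial-sequence verification, and the rationality and Lipschitz bounds through the quotient; the heart of the matter — the vanishing of $\eta(\pi_1 w)-\eta(\pi_2 w)-\eta(\pi_3 w)$ on the top degree — is immediate from the multilinearity of $\chi$ together with the linearity of $\eta$.
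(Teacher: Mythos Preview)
Your proposal is correct and follows essentially the same route as the paper's proof: build a subgroup $G'\leqslant G^{\times 3}$ on which the tripled sequence $(g(h_1+h_1',\ldots),g(h_1,\ldots),g(h_1',\ldots))$ is polynomial, observe that the top-degree piece of $G'$ consists of triples $(g_1g_2,g_1,g_2)$ with $g_i\in G_{(1,\ldots,1)}$ (so $\wt F$ is invariant there since $\eta(g_1g_2)-\eta(g_1)-\eta(g_2)=0$), and quotient to drop to degree $k-1$. The only organizational difference is that the paper works directly with a degree filtration on $G'$ by setting $G'_t=\bigvee_{|\vec i|>t}G_{\vec i}^{\times 3}\vee\bigvee_{|\vec i|=t}\{(g_1g_2,g_1,g_2):g_i\in G_{\vec i}\}$, whereas you set up a multidegree filtration in $k+1$ variables and then pass to its associated degree filtration; the paper's filtration is a priori larger at intermediate levels (it contains the full cubes $G_{\vec i}^{\times 3}$ above $t$), but this makes no difference once one quotients by the top group.
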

\begin{proof}
Let $\chi(h_1,\ldots,h_k) = F(g(h_1,\ldots,h_k)\Gamma)$ where the underlying nilmanifold is $G/\Gamma$ and the underlying Mal'cev basis is $\mc{X}$. Let $g(0,\ldots,0) = \{g(0,\ldots,0)\}[g(0,\ldots,0)]$ with $[g(0,\ldots,0)]\in\Gamma$ and $\psi_{G,\mc{X}}(\{g(0,\ldots,0)\})\in[0,1)^{\dim(G)}$. We have that 
\[F(g(h_1,\ldots,h_k)\Gamma) = F(\{g(0,\ldots,0)\} \cdot (\{g(0,\ldots,0)\}^{-1} g(h_1,\ldots,h_k)[g(0,\ldots,0)]^{-1}\Gamma)).\]
We let $F'(\cdot\Gamma) = F(\{g(0,\ldots,0)\} \cdot\Gamma)$ and $g'(h_1,\ldots,h_k) = (\{g(0,\ldots,0)\}^{-1} g(h_1,\ldots,h_k)[g(0,\ldots,0)]^{-1}$. Thus may assume replace $F$ by $F'$ and $g$ by $g'$ (at the cost of replacing $M$ by $M^{O_k(d^{O_k(1)})}$) and assume that $g(0) = \mr{id}_{G}$. 

Consider, for $t\ge 1$,
\[G'_t = \bigvee_{|\vec{i}|>t}(G_{\vec{i}}\times G_{\vec{i}}\times G_{\vec{i}}) \vee \bigvee_{|\vec{i}| = t}\{(g_1g_2,g_1,g_2):g_i\in G_{\vec{i}}\}\]
and take $G' = G'_1$. Via Baker--Campbell--Hausdorff this gives a valid degree $k$ filtration
\[G' = G'_1\geqslant G'_2\geqslant \cdots \geqslant G'_k\geqslant \mr{Id}_{G'}.\]
We define $\Gamma' = G'\cap (\Gamma\times\Gamma\times\Gamma)$. We now verify that 
\[(g(h_1 + h_1', h_2, \ldots, h_k), g(h_1, h_2, \ldots, h_k),g(h_1', h_2, \ldots, h_k))\]
is a polynomial sequence with respect to this degree filtration. This is immediate nothing that by Taylor expansion \cite[Lemma~B.9]{GTZ12} and the condition at $0$, we have 
\[g(h_1,\ldots,h_k) = \prod_{(i_1,\ldots,i_k)\in\{0,1\}^k\setminus \{\vec{0}\}} g_{i_1,\ldots,i_k}^{\binom{h_1}{i_1} \cdots \binom{h_k}{i_k}}\]
with $g_{i_1,\ldots,i_k}\in G_{(i_1,\ldots,i_k)}$. The desired polynomiality of the tripled sequence then follows easily from Baker--Campbell--Hausdorff and the fact that the degree of the exponents in the Taylor expansion for the $h_1$ term is at most $1$.

We now construct a Mal'cev basis for $G'$. Let $G_t = \bigvee_{|\vec{i}|=t}G_{\vec{i}}$ and note that by definition $\mc{X}$ is adapted to $G_t$. We may prove that
\begin{align*}
&\mc{X}^{'} = \{(X,0,0)\colon X\in\mc{X}\cap\log(G_2) \} \cup \{(0,X,0)\colon X\in\mc{X}\cap\log(G_2)\} \cup \{(0,0,X):X\in\mc{X}\cap\log(G_2)\} \\
&\cup\{(X,X,0)\colon X\in \mc{X}\cap\log(G_1)\setminus\mc{X}\cap\log(G_2)\}\cup \{(X,0,X)\colon X\in \mc{X}\cap\log(G_1)\setminus\mc{X}\cap\log(G_2)\}
\end{align*}
is a weak basis for $G'$. Furthermore this basis is easily seen to have the nesting property of order $k$ and that all subgroups $G_t'$ are $M^{O_k(d^{O_k(1)})}$-rational. Thus applying \cite[Lemma~B.11]{Len23b} we may find a Mal'cev basis $\wt{\mc{X}}$ for $G'$ adapted to the given filtration of complexity $M^{O_k(d^{O_k(1)})}$ and such that all basis elements are $M^{O_k(d^{O_k(1)})}$-rational combinations of $\mc{X}'$.

The function $\wt{F}$ we will consider is 
\[\wt{F}((x,y,z)\Gamma^{\otimes 3}) = F(x\Gamma) \otimes \ol{F(y\Gamma)} \otimes \ol{F(z\Gamma)}.\]
This is easily seen to be Lipschitz on $G\times G\times G$ when given the Mal'cev basis $\mc{X}^\ast = \{(X,0,0)\colon X\in\mc{X}\} \cup \{(0,X,0)\colon X\in\mc{X}\} \cup \{(0,0,X)\colon X\in\mc{X}\}$. As $\wt{\mc{X}}$ has basis elements which are $M^{O_k(d^{O_k(1)})}$ height rational combination of $\mc{X}^\ast$, we find that $\wt{F}$ is $M^{O_k(d^{O_k(1)})}$-Lipschitz on $G'/\Gamma'$.

Note that $\wt{F}$ is invariant under the group $G_k^{3}$ and therefore taking the output quotient group $G^{3}/G_k^{3}$ with lattice $\Gamma^{3}/(\Gamma^{3}\cap G_k^{3})$ and using Lemma~\ref{lem:quotient-rat} completes the proof.
\end{proof}

We now come to the most technical of the complexity justifications we will need to perform, multilinearization. We will give a rather barebones analysis (citing much from \cite[Proposition~E.9,~E.10]{GTZ12}); the reader may find the discussion in \cite[pp.~1360-1363]{GTZ12} where an extended example is discussed useful. (We prove a slightly weaker statement which is all that is used in the analysis to ease checking extra complexity details.)

\begin{lemma}\label{lem:multilinear}
Consider nilcharacter $\chi(h_1,\ldots,h_k)$ of multidegree $(s_1,\ldots,s_k)$ and complexity $(M,d)$. There exists a multidegree $(1,\ldots,1)$ nilcharacter
\[\chi'(h_{1,1},\ldots,h_{1,s_1},h_{2,1},\ldots,h_{2,s_2},\ldots,h_{k,1},\ldots,h_{k,s_k})\]
of complexity $(M^{O_{|\vec{s}|}(d^{O_{|\vec{s}|}(1)})},d^{O_{|\vec{s}|}(1)})$ such that
\[\chi(h_1,\ldots,h_k) \emph{ and } \chi'(h_1,\ldots,h_1,h_2,\ldots,h_2,\ldots,h_k,\ldots,h_k)\]
are $(M^{O_{|\vec{s}|}(d^{O_{|\vec{s}|}(1)})}, M^{O_{|\vec{s}|}(d^{O_{|\vec{s}|}(1)})},d^{O_{|\vec{s}|}(1)})$-equivalent for degree $|\vec{s}| - 1$ and furthermore, for each $1\le i\le k$, $\chi'$ is symmetric in the variables $h_{i,1},\ldots,h_{i,s_i}$.
\end{lemma}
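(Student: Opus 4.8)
The plan is to quantify the multilinearization of Green, Tao, and Ziegler (\cite[Propositions~E.9 and~E.10]{GTZ12}), reusing the complexity bookkeeping from Lemmas~\ref{lem:specialization}, \ref{lem:nil-basic-prop}, \ref{lem:multi}, and~\ref{lem:trans} together with Lemma~\ref{lem:quotient-rat} and \cite[Lemma~B.11]{Len23b}. Write $\chi(h_1,\ldots,h_k)=F(g(h_1,\ldots,h_k)\Gamma)$ on $G/\Gamma$ with adapted Mal'cev basis $\mc{X}$. After absorbing $\{g(\vec 0)\}$ into $F$ (costing $M\to M^{O_{|\vec s|}(d^{O_{|\vec s|}(1)})}$) we may assume $g(\vec 0)=\mr{id}_G$, and we Taylor-expand $g$ via \cite[Lemma~B.9]{GTZ12}.

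\textbf{Peeling one block at a time.} It suffices to show that a nilcharacter of multidegree $(\ldots,s_i,\ldots)$ is equivalent, for total degree $|\vec s|-1$, to one in which the $i$-th block has been polarized into $s_i$ degree-$1$ variables that are symmetric up to lower order; iterating over $i$ and composing the equivalences with Lemma~\ref{lem:trans} (the diagonal substitutions and reindexings being legal by Lemma~\ref{lem:specialization}) then yields a multidegree-$(1,\ldots,1)$ nilcharacter $\chi''$ whose restriction to the diagonal is equivalent to $\chi$. Fix a block with $s_1\ge 2$ (if all $s_i\le 1$ there is nothing to do). Mimicking the cube construction in the proof of Lemma~\ref{lem:multi}, build a subgroup of a power of $G$ carrying a multidegree-$\mb{N}^{s_1+k-1}$ filtration that ``splits'' the first coordinate of $\mb{N}^k$ into $s_1$ coordinates, so that the tuple $\bigl(g(\textstyle\sum_{i\in S}h_1^{(i)},h_2,\ldots,h_k)\bigr)_{\emptyset\ne S\subseteq[s_1]}$ is a polynomial sequence for this filtration; validity is checked on generators via Lemma~\ref{lem:com-check}. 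The arithmetic input is the polarization identity $\sum_{\emptyset\ne S\subseteq[s_1]}(-1)^{s_1-|S|}\binom{\sum_{i\in S}h_1^{(i)}}{s_1}=h_1^{(1)}\cdots h_1^{(s_1)}$, which is genuinely multilinear and symmetric in $h_1^{(1)},\ldots,h_1^{(s_1)}$. Tensoring over $S$ with the signs realized by complex conjugation, noting that the tensored function is invariant under the top group since all coordinates of $\chi$ share one vertical frequency, quotienting by the top group, and constructing an adapted Mal'cev basis via \cite[Lemma~B.11]{Len23b} and Lemma~\ref{lem:quotient-rat}, one obtains a nilcharacter $\chi^\flat$ of multidegree $(1,\ldots,1,s_2,\ldots,s_k)$ of controlled complexity. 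The one subtlety is a combinatorial factor: restricting $h_1^{(i)}\equiv h$ and using Lemma~\ref{lem:nil-basic-prop}(third item) to compare $\chi(|S|h,\ldots)$ with $\chi(h,\ldots)^{\otimes|S|^{s_1}}$ produces the total exponent $\sum_{\emptyset\ne S}(-1)^{s_1-|S|}|S|^{s_1}=s_1!$ on the diagonal, i.e. a spurious tensor power $\chi^{\otimes s_1!}$; this is removed by rescaling the top Taylor coefficient of the polynomial sequence underlying $\chi^\flat$ by $1/s_1!$ (vertical frequencies are real-valued, so only lattice-integrality is rigid and there is exactly the scaling freedom needed), after which $\chi^\flat$ restricted to the diagonal is equivalent mod total degree $|\vec s|-1$ to $\chi$.

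\textbf{Symmetrization.} After peeling every block we have a multidegree-$(1,\ldots,1)$ nilcharacter $\chi''$ of complexity $(M^{O_{|\vec s|}(d^{O_{|\vec s|}(1)})},d^{O_{|\vec s|}(1)})$ (the constants retaining this shape since there are at most $|\vec s|$ blocks and the composition has $O_{|\vec s|}(1)$ steps), with the variables of each block symmetric up to lower order. To upgrade this to an exactly block-symmetric $\chi'$, one passes to the ``universal'' multidegree-$(1,\ldots,1)$ picture as in Lemma~\ref{lem:universal-complexity}: the top symbol of $\chi''$ is a multilinear form that is already block-symmetric, and there is a canonical nilcharacter $\chi'$ on a universal nilmanifold whose polynomial sequence realizes this symmetric symbol by a genuinely symmetric polynomial sequence; by comparing symbols, $\chi'$ is equivalent to $\chi''$ mod total degree $|\vec s|-1$, with complexity controlled exactly as for $G_{\mr{Univ}}$. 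Composing all equivalences with Lemma~\ref{lem:trans} completes the proof.

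\textbf{Main difficulty.} The genuine obstacle is not any single identity but the uniform complexity control through the iterated cube-and-quotient constructions: each peeling enlarges the ambient group by an $O_{|\vec s|}(1)$-fold power and then passes to a quotient, and one must verify — exactly as in the proofs of Lemmas~\ref{lem:multi} and~\ref{lem:nil-basic-prop} — that at each step a weak Mal'cev basis with the degree-nesting property and structure constants of height $M^{O_{|\vec s|}(d^{O_{|\vec s|}(1)})}$ is produced, so that after the $O_{|\vec s|}(1)$ iterations the final nilmanifold still has dimension $d^{O_{|\vec s|}(1)}$ and complexity $M^{O_{|\vec s|}(d^{O_{|\vec s|}(1)})}$. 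The secondary delicate point is the bookkeeping of which Taylor coefficients must be rescaled, and by which rational factors, so that the diagonal symbol is matched exactly and no spurious tensor power survives.
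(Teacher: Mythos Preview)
Your iterative peel-one-block-at-a-time approach is genuinely different from the paper's. The paper does a single-shot construction: after first absorbing the factorial by passing to a $\prod_i s_i!$-th root $\chi^\ast$ of $\chi$ via the fourth item of Lemma~\ref{lem:nil-basic-prop}, it builds in one step a Lie algebra $\log(\wt{G}) = \bigoplus_{\emptyset\ne J\subseteq[|\vec{s}|]} \log(G_{\|J\|})$ (where $\|J\|\in\mb{N}^k$ records how many indices of $J$ fall in each of the $k$ blocks), with cross-brackets $[\iota_J(x),\iota_K(y)]=\iota_{J\cup K}([x,y])$ for disjoint $J,K$ and zero otherwise. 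This is given the obvious multidegree-$(1,\ldots,1)$ filtration, an explicit block-symmetric polynomial sequence $\wt{g}$ is written down (each Taylor coefficient $g_{\vec i}^{\mr{Tay}}$ spread with weight $i_1!\cdots i_k!$ over all $J$ with $\|J\|=\vec i$), and one takes $\chi'=\wt F(\wt g(\cdot)\wt\Gamma)$ for any nilcharacter $\wt F$ with the pushed-forward vertical frequency. Equivalence is then established by exhibiting $(g,\wt{g}|_{\text{diagonal}})$ as a polynomial sequence on a subgroup $G'$ of $G\times\wt{G}$ with a degree-$|\vec s|$ filtration whose top group is a diagonal copy of $G_{\vec{s}}$, on which $F^{\otimes\prod s_i!}\otimes\ol{\wt{F}}$ is invariant.

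Your route can be made to work, but ``mimicking Lemma~\ref{lem:multi}'' understates what is needed: that lemma splits a degree-$1$ variable into two, whereas you split a degree-$s_1$ variable into $s_1$ degree-$1$ ones. The tuple $(g(\sum_{i\in S}h^{(i)}_1,\ldots))_S$ lives on a cube-type subgroup of $G^{2^{s_1}-1}$ whose natural multidegree filtration is not $(1,\ldots,1,s_2,\ldots,s_k)$ --- it carries nontrivial subgroups in degrees such as $(2,0,\ldots,0,s_2,\ldots,s_k)$, and your signed tensor is only \emph{invariant} under those, so you must quotient by all of them, not merely ``the top group''. The paper's single direct-sum Lie algebra is already multidegree $(1,\ldots,1)$ by construction, block-symmetry is baked into $\wt g$ from the start (so no separate symmetrization step), and the factorial is disposed of upfront rather than by rescaling Taylor coefficients mid-construction. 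This buys a single pass through \cite[Lemma~B.11]{Len23b} rather than $O(|\vec s|)$ iterated ones, which is what keeps the complexity bookkeeping clean.
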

\begin{remark*}
We will only require the above lemma for multidegree $(1,s-1)$ nilsequences. 
\end{remark*}
\begin{proof}
By Lemma~\ref{lem:nil-basic-prop}, there exists $\chi^\ast$ such that 
\[\chi(h_1,\ldots,h_k) \text{ and }\chi^\ast(h_1,\ldots,h_k)^{\otimes \prod_{i=1}^{k}s_i!}\]
are $(M^{O_{|\vec{s}|}(d^{O_{|\vec{s}|}(1)})}, M^{O_{|\vec{s}|}(d^{O_{|\vec{s}|}(1)})},d^{O_{|\vec{s}|}(1)})$-equivalent for degree $|\vec{s}| - 1$. Therefore by Lemma~\ref{lem:equiv}, it suffices to produce $\chi'$ such that 
\[\chi^\ast(h_1,\ldots,h_k)^{\otimes \prod_{i=1}^{k}s_i!} \text{ and } \chi'(h_1,\ldots,h_1,h_2,\ldots,h_2,\ldots,h_k,\ldots,h_k)\]
are $(M^{O_{|\vec{s}|}(d^{O_{|\vec{s}|}(1)})}, M^{O_{|\vec{s}|}(d^{O_{|\vec{s}|}(1)})},d^{O_{|\vec{s}|}(1)})$-equivalent for degree $|\vec{s}|-1$.

Let
\[\chi'(h_1,\ldots,h_k) = F(g(h_1,\ldots,h_k)\Gamma)\]
with the underlying nilmanifold being $G/\Gamma$ and the associated Mal'cev basis being $\mc{X}$. Via a standard manipulation which has been perform several times already, we may assume that $g(0) = \mr{id}_G$ (at the cost of an insignificant change in parameters). Furthermore assume that $\eta$ is the vertical character, so
\[F(g_{(s_1,\ldots,s_k)} x \Gamma) = e(\eta(g_{(s_1,\ldots,s_k)})) \cdot F(x\Gamma).\]

Given $J\subseteq [|\vec{s}|]$, we denote 
\[\|J\| := (|J \cap\{s_1 + \cdots + s_{i - 1} + 1, \ldots, s_1 + \cdots  + s_{i - 1} + s_i\}|)_{1 \le i \le k}.\]
The group $\wt{G}$ we will ultimately use to construct our nilsequence will be given by constructing the associated nilpotent Lie algebra. We take 
\[\log(\wt{G}) = \bigoplus_{\emptyset \neq J\subseteq [|\vec{s}|]} \log(G_{\|J\|})\]
and for each $\emptyset \neq J\subseteq [|\vec{s}|]$ let $\iota_J\colon\log(G_{\|J\|})\hookrightarrow\log(\wt{G})$ denote the embedding into the direct sum. We endow $\wt{G}$ with a Lie bracket such that if $J\cap K \neq \emptyset$ then
\[[\iota_J(x_J),\iota_K(y_K)] = 0\]
and if $J\cap K = \emptyset$ then 
\[[\iota_J(x_J),\iota_K(y_K)] = \iota_{J\cup K}([x_J,y_K]),\]
where the bracket between $x_J,x_K$ is taken in the ambient space $\log(G)$ and is seen to lie in $\log(G_{J\cup K})$ by the commutator property of the original filtration on $G$.

To verify that this gives a valid Lie algebra it suffices to verify this operation is antisymmetric and satisfies the Jacobi relations. Furthermore to verify it suffices to verify these relations on the generators. For antisymmetry for $\iota_J(x_J),\iota_K(y_K)$, if $J\cap K \neq \emptyset$ it is trivial and otherwise 
\[[\iota_J(x_J),\iota_K(y_K)] = \iota_{J\cup K}([x_J,y_K]) = -\iota_{J\cup K}([y_J,x_K]) = -[\iota_K(y_K),\iota_J(x_J)]\]
as desired. For the Jacobi identity, when checked on generators $\iota_J(x_J),\iota_K(y_K),\iota_L(z_L)$, if $(J\cap K) \cup (K\cap L) \cup (L\cap K)\neq \emptyset$ the result is trivial. Otherwise we have 
\begin{align*}
&[\iota_J(x_J),[\iota_K(y_K),\iota_L(z_L)]] + [\iota_K(y_K),[\iota_L(z_L),\iota_J(x_J)]] + [\iota_L(z_L),[\iota_J(x_J),\iota_K(y_K)]]\\
& = \iota_{J\cup K \cup L}([x_J,[y_K,z_L]] + [y_K,[z_L,x_J]] + [z_L,[x_J,y_K]])  = 0
\end{align*}
as desired. 

The associated $I$-filtration with respect to the multidegree ordering is given as follows. For any $(a_1, \ldots, a_{|\vec{s}|}) \in\mb{N}^{|\vec{s}|}$, let $\log(\wt{G}_{(a_1, \ldots, a_{|\vec{s}|})})$ be the Lie subalgebra of $\log(\wt{G})$ generated by $\iota_J(x_J)$ for which $1_J(j) \ge a_j$ for each $j = 1, \ldots, |\vec{s}|$, and $x_J \in G_{\|J\|}$. It follows this is an $I$-filtration with respect to the multidegree ordering because for vectors $a,b\in\{0,1\}^{|\vec{s}|}$, if $1_J(j) \ge a_j$ and $1_K(j)\ge b_j$ we either have $J\cap K\neq\emptyset$ in which case the commutator is trivial or $1_{K\cup J}(j)\ge a_j+b_j$ in which case the result also follows easily. Noting that by construction $\wt{G} = \bigvee_{\ell=1}^{|\vec{s}|}\wt{G}_{\vec{e}_{\ell}}$, the above immediately implies that we have a multidegree $(1,\ldots,1)$ filtration on $\wt{G}$. 

We now construct a weak basis for $\wt{G}$. Recall we have a Mal'cev basis $\mc{X}$ for $G$. Given $\|J\|$, we define the filtration 
\[G_{\|J\|}^{t} = \bigvee_{|\vec{i}| = t}G_{\|J\| + \vec{i}}.\]
Note that $G_{\|J\|} = G_{\|J\|}^{0}$ and that $G_{\|J\|}^{0} = G_{\|J\|}^{0} \geqslant G_{\|J\|}^{1} \geqslant G_{\|J\|}^{2} \geqslant \cdots$ is a valid degree filtration when $\|J\|\neq\vec{0}$. Thus by \cite[Lemma~B.11]{GTZ12}, we may find a Mal'cev basis $\mc{X}^{\|J\|}$ for each $G_{\|J\|} $ which is an $M^{O_{|\vec{s}|}(d^{O_{|\vec{s}|}(1)})}$-rational combination of $\mc{X}$.

Define
\[\wt{\mc{X}} = \bigcup_{\emptyset \neq J\subseteq [|\vec{s}|]}\iota_J(\mc{X}^{\|J\|}).\]
Furthermore define $\wt{\Gamma}$ to be the group generated by $\exp(L! \cdot\iota_J(\Gamma\cap G_{\|J\|}))$ where $L$ is a sufficiently large constant depending only on $|\vec{s}|$ (and in particular not on $M$ or $d$). Direct computation with Baker--Campbell--Hausdorff implies that $\wt{\Gamma}\cap\wt{G}_{(1,\ldots,1)}$ is contained in $\iota_{[|\vec{s}|]}(\Gamma\cap G_{(s_1,\ldots,s_k)})$. Furthermore we see that $\wt{G}/\wt{\Gamma}$ is compact, $\wt{\mc{X}}$ is a weak basis of rationality $M^{O_{|\vec{s}|}(d^{O_{|\vec{s}|}(1)})}$ for $\wt{G}$, and $\wt{\mc{X}}$ has the degree $O_{|\vec{s}|}(1)$ nesting property. As all groups within the multidegree filtration are $M^{O_{|\vec{s}|}(d^{O_{|\vec{s}|}(1)})}$-rational with respect to $\wt{\mc{X}}$, by applying \cite[Lemma~B.11]{Len23b} we may construct a basis with respect to the canonical associated degree filtration of $\wt{G}$ which certifies that $\wt{G}$ with the given multidegree filtration has complexity bounded by $M^{O_{|\vec{s}|}(d^{O_{|\vec{s}|}(1)})}$. Furthermore the adapted Mal'cev basis $\wt{\mc{X}}^\ast$ is an $M^{O_{|\vec{s}|}(d^{O_{|\vec{s}|}(1)})}$-rational combination of $\wt{\mc{X}}$ (lifted to $\log(\wt{G})$ appropriately).

We define the $\wt{G}_{(1, \ldots, 1)}$-vertical frequency as 
\[\wt{\eta}(\exp(\iota_{(1, \ldots, 1)}(\log(g_{(s_1,\ldots,s_k)})))) := \eta(g_{(s_1,\ldots,s_k)})\]
and it is trivial to use the construction of $\wt{\mc{X}}^\ast$ to certify that $\wt{\eta}$ has height bounded by $M^{O_{|\vec{s}|}(d^{O_{|\vec{s}|}(1)})}$. We take $\wt{F}$ to be a nilcharacter with frequency $\wt{\eta}$ produced by Lemma~\ref{lem:nil-exist} (which is applied to the canonical degree filtration of $\wt{G}$) and this construction gives output dimension $M^{O_{|\vec{s}|}(d)}$ and Lipschitz constant $M^{O_{|\vec{s}|}(d^{O_{|\vec{s}|}(1)})}$ with respect to $\wt{\mc{X}}^\ast$.

We now define $\wt{g}$. Note that
\[g(h_1,\ldots,h_k) = \prod_{\vec{0}\neq (i_1,\ldots,i_k)\le (s_1,\ldots,s_k)} (g_{(i_1,\ldots,i_k)}^{\mr{Tay}})^{h_1^{i_1}\cdots h_k^{i_k}}\]
via \cite[Lemma~B.9]{GTZ12} and the condition at $0$ to rule out need a coefficient where $(i_1,\ldots,i_k)=\vec{0}$. (We are using monomials instead of binomials, which is a minor but easy alteration.) The product here is taken in increasing lexicographic order. We define 
\[\wt{g}(h_1, \ldots, h_{|\vec{s}|}) := \prod_{\vec{0}\neq (i_1,\ldots,i_k)\le (s_1,\ldots,s_k)}  \exp\bigg(i_1!\cdots i_k! \sum_{\substack{J \subseteq \{1, \ldots, |\vec{s}|\}\\ \|J\| = (i_1,\ldots,i_k)}} \big(\prod_{i \in J} h_i\big)\cdot\iota_J(\log(g_{(i_1,\ldots,i_k)}^{\mr{Tay}}))\bigg).\]

Let $G^\ast$ denote the subgroup of $G\times\wt{G}$ generated by 
\[G^{\ast}:= \{(g_{(s_1,\ldots,s_k)}, \exp(s_1!\cdots s_k!\iota_{(1, \ldots, 1)}(\log(g_{(s_1,\ldots,s_k)}))))\colon g_{(s_1,\ldots,s_k)} \in G_{(s_1,\ldots,s_k)}\}.\]
Note that the function
\[(g,\wt{g})\mapsto F(g\Gamma)^{\otimes s_1!\cdots s_k!} \otimes \ol{\wt{F}(\wt{g}\wt{\Gamma})}\]
is invariant under the action of $G^\ast$.

We will construct $G'$ which is a subgroup of $G\times\wt{G}$ with a degree $|\vec{s}|$ filtration such that the final group is $G^\ast$ and such that 
\[(g(h_1,\ldots,h_k),\wt{g}(h_1,\ldots,h_1,h_2,\ldots,h_2,\ldots,h_k,\ldots,h_k))\]
is a polynomial sequence with respect to this filtration. Let $G_j'$ (for $j\ge 1$) be generated by elements of the form
\begin{equation}\label{eq:splitting-1}
\bigg(g_{(i_1,\ldots,i_k)}, \exp\bigg(i_1!\cdots i_k! \sum_{\substack{J \subseteq \{1, \ldots, |\vec{s}|\}\\ \|J\| = (i_1,\ldots,i_k)}} \iota_J(\log(g_{(i_1,\ldots,i_k)}))\bigg)\bigg)
\end{equation}
where $|\vec{i}| = j$, as well as
\[\big(g_{(i_1,\ldots,i_k)}, \mr{id}_{\wt{G}}\big) ,\text{ and }\big(\mr{id}_{G}, \wt{G}_{J}\big) \]
where $|\vec{i}|\ge j+1$ in the first case, and $|J|\ge j+1$ in the second case. Furthermore set $G'=G_0':= G_1'$; we trivially see that $G_{|\vec{s}|} = G^\ast$. That this is a filtration follows from liberal application of Baker--Campbell--Hausdorff; we use crucially that the number of ways to break a set of size $(i+j)$ into two labeled sets of size $i$ and $j$ which are disjoint is $(i+j)!/(i! \cdot j!)$, which modifies the factorial prefactors in \eqref{eq:splitting-1} appropriately.

Furthermore it is trivial to see that the $G_j'$ are $M^{O_{|\vec{s}|}(d^{O_{|\vec{s}|}(1)})}$-rational with respect to the Mal'cev basis for $G\times\wt{G}$ given by
\[\{(X,0)\colon X\in\mc{X}\} \cup \{(0,X)\colon X\in\wt{\mc{X}}^\ast\},\]
and therefore applying \cite[Lemma~B.11]{Len23b} we may construct a Mal'cev basis $\mc{X}'$ of complexity $M^{O_{|\vec{s}|}(d^{O_{|\vec{s}|}(1)})}$ for $G'/(G' \cap (\Gamma\times\wt{\Gamma}))$. Furthermore $F^{\otimes s_1!\cdots s_k!} \otimes \ol{\wt{F}}$ is appropriately Lipschitz with respect to $\mc{X}'$. Finally, since 
\[\bigg(g_{(i_1,\ldots,i_k)}, \exp\big(i_1!\cdots i_k! \sum_{\substack{J\subset \{1,\ldots, |\vec{s}|\\\|J\| = (i_1,\ldots,i_k)}} \iota_J(\log(g_{(i_1,\ldots,i_k)}))\big)\bigg)\]
is in $G_{i_1+\cdots+i_k}'$ by definition, we see that
\[(g(h_1,\ldots,h_k),\wt{g}(h_1,\ldots,h_1,h_2,\ldots,h_2,\ldots,h_k,\ldots,h_k))\]
is a polynomial sequence with respect to the filtration. Quotienting out by $G^\ast = G_{|\vec{s}|}'$ (using that $F^{\otimes s_1!\cdots s_k!} \otimes \ol{\wt{F}}$ is invariant under $G^\ast$) and using Lemma~\ref{lem:quotient-rat}, we finally complete the proof.
\end{proof}

We now reach the final technical lemma of the paper which states that a nilsequence of multidegree $J\cup J'$ can be approximated by a sum of products of nilsequences in $J$ and $J'$. This ``splitting'' lemma is a quantified version of \cite[Lemma~E.4]{GTZ12}; the proof here is ever so slightly different as we are forced to not use the Stone--Weierstrass theorem. 
\begin{lemma}\label{lem:split}
Let $J$ and $J'$ be finite downsets in $\mb{N}^k$ and fix $\eps \in (0,1/2)$. Suppose that $\beta(h_1,\ldots,h_k)$ is a nilsequence of multidegree $J\cup J'$ with complexity $(M,d)$. Then there exists $1\le L\le (M/\eps)^{O_{J,J'}(d^{O_{J,J'}(1)})}$ such that 
\[\norm{\beta(h_1,\ldots,h_k) - \sum_{j=1}^{L}\beta_j(h_1,\ldots,h_k)\beta_j'(h_1,\ldots,h_k)}_{L^{\infty}(\mb{Z}^k)}\le\eps\]
with the $\beta_j$ being nilsequences of multidegree $J$, the $\beta_j'$ being  nilsequences of multidegree $J'$, and $\beta_j,\beta_j'$ having complexity $((M/\eps)^{O_{J,J'}(d^{O_{J,J'}(1)})},d^{O_{J,J'}(1)})$.
\end{lemma}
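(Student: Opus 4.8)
\textbf{Proof proposal for Lemma~\ref{lem:split}.}

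The plan is to write $\beta(h_1,\ldots,h_k) = F(g(h_1,\ldots,h_k)\Gamma)$ where $G/\Gamma$ has a multidegree $J\cup J'$ filtration, $F$ is $M$-Lipschitz, and $\dim(G)\le d$. I would first decompose the group as a product of two subgroups, one carrying the ``$J$-part'' and one the ``$J'$-part''. Concretely, let $G_J := \bigvee_{\vec{i}\in J\setminus\{\vec{0}\}} G_{\vec{i}}$ and $G_{J'} := \bigvee_{\vec{i}\in J'\setminus\{\vec{0}\}} G_{\vec{i}}$; these are normal (by the commutator condition, since $J,J'$ are downsets) and $M^{O_{J,J'}(d^{O_{J,J'}(1)})}$-rational by Fact~\ref{fact:rat-subgroup}. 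Since $g$ is a polynomial sequence, Taylor expansion (\cite[Lemma~B.9]{GTZ12}) lets us write $g(\vec h) = g_J(\vec h)\cdot g_{J'}(\vec h)$ (after possibly conjugating/rearranging the product and absorbing a constant so that $g(\vec 0)=\mr{id}_G$), where $g_J$ is a polynomial sequence valued in $G_J$ depending only on the $J$-monomials and $g_{J'}$ valued in $G_{J'}$ depending only on the $J'$-monomials. The key point is that $G = G_J \cdot G_{J'}$ as a product of sets with $G_J\cap G_{J'}$ small, so that $F$ becomes a function of a pair $(x,y)\in G_J\times G_{J'}$.

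Next I would approximate $F$ on the product. The cleanest route avoiding Stone--Weierstrass: use the partition-of-unity construction of Lemma~\ref{lem:nilmanifold-partition-of-unity} applied separately to $G_J/(G_J\cap\Gamma)$ and $G_{J'}/(G_{J'}\cap\Gamma)$ at scale $\eps' = (\eps/M)^{O_{J,J'}(d^{O_{J,J'}(1)})}$, yielding bump functions $\tau_a$ on $G_J/\Gamma_J$ and $\sigma_b$ on $G_{J'}/\Gamma_{J'}$ with $\sum_a\tau_a^2 = \sum_b\sigma_b^2 = 1$, each supported on a width-$\eps'$ cube in Mal'cev coordinates, and only $(1/\eps')^{O(d^{O(1)})}$ of each. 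On the support of $\tau_a^2\sigma_b^2$ the argument $(x,y)$ ranges over a set of diameter $O(\eps')$, so $F$ is within $M\eps' = \eps/2$ of its value $c_{a,b}$ at a fixed reference point. Thus $\big|F(xy) - \sum_{a,b} c_{a,b}\,\tau_a(x)^2\sigma_b(y)^2\big| \le \eps/2$ pointwise (the number of nonzero terms per point is bounded, controlling the triangle-inequality loss), and composing with the polynomial sequences $g_J, g_{J'}$ gives $\beta(\vec h)$ as a sum of $L \le (M/\eps)^{O_{J,J'}(d^{O_{J,J'}(1)})}$ products $\beta_{a,b}(\vec h)\beta_{a,b}'(\vec h)$ with $\beta_{a,b}(\vec h) = \sqrt{c_{a,b}}\,\tau_a(g_J(\vec h)\Gamma_J)^2$ a multidegree $J$ nilsequence and $\beta_{a,b}'(\vec h) = \sqrt{c_{a,b}}\,\sigma_b(g_{J'}(\vec h)\Gamma_{J'})^2$ a multidegree $J'$ nilsequence, of complexity $((M/\eps)^{O_{J,J'}(d^{O_{J,J'}(1)})},d^{O_{J,J'}(1)})$ by Lemmas~\ref{lem:nilmanifold-partition-of-unity} and~\ref{lem:quotient-rat}.

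The main obstacle I anticipate is making the decomposition $g(\vec h) = g_J(\vec h)g_{J'}(\vec h)$ rigorous while tracking complexity: one needs that $G_J/(G_J\cap\Gamma)$ and $G_{J'}/(G_{J'}\cap\Gamma)$ are genuine sub-nilmanifolds with adapted Mal'cev bases of controlled rationality (via \cite[Lemma~B.11]{Len23b}), that the product map $G_J/\Gamma_J\times G_{J'}/\Gamma_{J'}\to G/\Gamma$ is Lipschitz with bi-Lipschitz control on the relevant cubes, and that the polynomial sequences $g_J,g_{J'}$ indeed inherit the correct multidegrees (which follows because a monomial $\vec h^{\vec i}$ with $\vec i\in J\cup J'$ can be assigned to $J$ or $J'$ depending on whether $\vec i\in J$, using that these are downsets and invoking the Taylor-expansion normal form, reordering the product via Baker--Campbell--Hausdorff and noting corrections land in higher multidegree groups already accounted for). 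Once this structural splitting is in place, the analytic approximation step is routine and mirrors the degree-$1$ torus case, just carried out on each factor.
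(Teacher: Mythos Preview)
Your proposal has a genuine structural gap. The subgroups $G_J=\bigvee_{\vec{i}\in J\setminus\{0\}}G_{\vec{i}}$ and $G_{J'}$ do \emph{not} carry multidegree $J$ (resp.\ $J'$) filtrations: for $\vec{i},\vec{j}\in J$ with $\vec{i}+\vec{j}\notin J$ one has $[G_{\vec{i}},G_{\vec{j}}]\leqslant G_{\vec{i}+\vec{j}}$, which is typically nontrivial, so truncating the filtration at $J$ fails the commutator condition. Worse, since $J,J'$ are downsets they usually contain all the $\vec{e}_\ell$, so $G_J=G_{J'}=G$ and your ``product decomposition'' is vacuous. Even granting some factorization $g=g_Jg_{J'}$, the map $(x\Gamma_J,y\Gamma_{J'})\mapsto F(xy\Gamma)$ is not well-defined: replacing $x$ by $x\gamma_J$ with $\gamma_J\in\Gamma_J$ gives $x\gamma_J y\Gamma = xy(y^{-1}\gamma_J y)\Gamma$, and the conjugate, while in $G_J$, need not lie in $\Gamma$. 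So your partition-of-unity step, which assumes $F$ is approximately constant once $x\Gamma_J,y\Gamma_{J'}$ are localized, breaks down.

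The paper's proof fixes exactly this by first lifting to a \emph{universal} nilmanifold $\wt{G}$ of multidegree $J\cup J'$ (formal generators $e_{\vec{j},i}$ with only the forced commutator relations). On $\wt{G}$ one defines normal subgroups $\wt{G}_{>J}$ (generated by $\wt{G}_{\vec{i}}$ for $\vec{i}\in J'\setminus J$) and $\wt{G}_{>J'}$; the quotients $\wt{G}/\wt{G}_{>J}$ and $\wt{G}/\wt{G}_{>J'}$ are genuine multidegree $J'$ and $J$ nilmanifolds. The universal (free) structure is used precisely to guarantee $\wt{G}_{>J}\cap\wt{G}_{>J'}=\mr{Id}_{\wt{G}}$, so the pair of quotient maps is injective on first-kind coordinates. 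The approximation is then done by Fourier expansion in these coordinates (localized by a bump function supported near a reference point, after first reducing via Lemma~\ref{lem:nilmanifold-partition-of-unity}), with each Fourier character splitting as a product of a function on $\wt{G}/\wt{G}_{>J}$ and one on $\wt{G}/\wt{G}_{>J'}$. Your intuition that some ``splitting of the group'' is needed is right, but it must be done via quotients on a universal lift rather than subgroups of $G$.
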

\begin{proof}
We let 
\[\beta(h_1,\ldots,h_k) = F(g(h_1,\ldots,h_k)\Gamma)\]
where the underlying nilmanifold is $G/\Gamma$. As is standard, we may assume that $g(0,\ldots,0)=\mr{id}_G$ up to the insignificant change of adjusting $M$ to $M^{O_{J,J'}(d^{O_{J,J'}(1)})}$. Furthermore let the adapted Mal'cev basis for $G$ be $\mc{X}$. 

We have for each $\vec{j}\neq\vec{0}$ that the groups
\[G_{t}^{\vec{j}} = \bigvee_{|\vec{i}| = t}G_{\vec{j}+\vec{i}}\]
form a degree filtration $G_{0}^{\vec{j}} = G_{0}^{\vec{j}} \geqslant G_{1}^{\vec{j}}\geqslant G_{2}^{\vec{j}} \geqslant \cdots$, where the length of the filtration is $O_{J,J'}(1)$. As these subgroups are all $M$-rational with respect to $\mc{X}$, there exists a Mal'cev basis $\mc{X}^{\vec{j}}$ adapted to this filtration of complexity $M^{O_{J,J'}(d^{O_{J,J'}(1)})}$ where each element is an $M^{O_{J,J'}(d^{O_{J,J'}(1)})}$-rational combination of elements in $\mc{X}$ by \cite[Lemma~B.11]{Len23b}.

Using a variant of Lemma~\ref{lem:Taylor-mod}, adapted to multidegree filtrations, we may write
\[g(h_1,\ldots,h_k) = \prod_{\vec{j}\neq \vec{0}}\prod_{X_{\vec{j},i}\in\mc{X}^{\vec{j}}}\exp(X_{\vec{j},i})^{\alpha_{\vec{j},i}\prod_{\ell=1}^{k}(h_{\ell}^{j_{\ell}}/j_{\ell}!)}.\]
The product here is taken in $\vec{j}$ is increasing $|\vec{j}|$ and then lexicographic order and $X_{\vec{j},i}$ taken in increasing order of $i$. The modified proof of such a representation involves iteratively handling terms in increasing order of $|\vec{j}|$ (and handling these terms in an arbitrary order); we omit a careful proof. 

The first key part of the proof is lifting to the universal nilmanifold. We define the universal nilmanifold $\wt{G}$ to be generated by generators $\exp(e_{\vec{j},i})^{t_{\vec{j},i}}$ for $\vec{j}\neq \vec{0}$, $1\le i\le\dim(G_{\vec{j}})$, and $t_{\vec{j},i}\in\mb{R}$. The only relations these generators satisfy is that any $(r-1)$-fold commutator for $r\ge 1$ between $\exp(e_{\vec{j_1},i_1}),\ldots,\exp(e_{\vec{j_r},i_r})$ vanishes if $\vec{j_1} + \cdots + \vec{j_r}$ is not in $J\cup J'$. We give $\wt{G}$ the structure of a multidegree $J\cup J'$ nilmanifold by letting $(\wt{G})_{\vec{j}^\ast}$ be generated by the set of $(r-1)$-fold commutators (for any $r\ge 1$) of $\exp(e_{\vec{j_1},i_1}),\ldots,\exp(e_{\vec{j_r},i_r})$ where $\vec{j_1} + \cdots + \vec{j_r}\geq \vec{j}^\ast$ (here $\geq$ means that each coordinate is larger). This is easily proven to be an $I$-filtration with respect to the multidegree ordering and note that since we have no generators with $\vec{j} = \vec{0}$, this is in fact a multidegree filtration. Finally we let $\wt{\Gamma}$ be the lattice generated by $\exp(e_{\vec{j},i})$.

The analysis in Lemma~\ref{lem:universal-complexity} can easily be extended to prove that $\wt{G}$ has a filtered Mal'cev basis $\wt{X}$ of complexity $M^{O_{J,J'}(d^{O_{J,J'}(1)})}$ where the basis elements are height $M^{O_{J,J'}(d^{O_{J,J'}(1)})}$ linear combinations of $(r-1)$-fold commutators of $e_{\vec{j_1},i_1},\ldots,e_{\vec{j_r},i_r}$. Furthermore note that the dimension of $\wt{G}$ is $d^{O_{J,J'}(1)}$. 

We now lift $\beta$ to $\wt{G}/\wt{\Gamma}$. Define the homomorphism $\phi\colon\wt{G}\to G$ via 
\[\phi(\exp(e_{\vec{j},i})) = \exp(X_{\vec{j},i});\]
here we are writing $\mc{X}^{\vec{j}} = \{X_{\vec{j},1},\ldots,X_{\vec{j},\dim(G_{\vec{j}})}\}$. That this is a homormorphism follows from noting that all relations in $\wt{G}$ are present in $G$ because $G$ has multidegree $J\cup J'$. We next lift the polynomial sequence $g$ to
\[\wt{g}(h_1,\ldots,h_k) = \prod_{\vec{j}\neq \vec{0}}\prod_{i=1}^{\dim(G_{\vec{j}})}\exp(e_{\vec{j},i})^{\alpha_{i,\vec{j}}\prod_{\ell=1}^{k}(h_{\ell}^{j_{\ell}}/j_{\ell}!)}\]
and $F$ to $\wt{F}$ via
\[\wt{F}(\wt{g}\wt{\Gamma}) = F(\phi(\wt{g})\Gamma).\]
Note that since $\phi(\wt{\Gamma})\leqslant\Gamma$, this is a well-defined function on $\wt{G}/\wt{\Gamma}$. Furthermore, noting various properties of $\wt{X}$ and that elements of $\mc{X}^{\vec{j}}$ are appropriately bounded and rational linear combinations of $\mc{X}$, we have that $\wt{F}$ is $M^{O_{J,J'}(d^{O_{J,J'}(1)})}$-Lipschitz with respect to the Mal'cev basis specified by $\wt{\mc{X}}$. Therefore for the remainder of the proof we operate with the nilsequence
\[\wt{F}(\wt{g}(h_1,\ldots,h_k)\wt{\Gamma}).\] 

For the remainder of the analysis we furthermore assume that there exists $g^\ast\in\wt{G}$ with $d_{\wt{G},\wt{\mc{X}}}(g^\ast)\le M^{O_{J,J'}(d^{O_{J,J'}(1)})}$ such that if $\psi_{\mr{exp},\wt{G}}(g^\ast) + (-1/2,1/2]^{\dim(G)}$ is identified with $\wt{G}/\wt{\Gamma}$ then $\on{supp}(\wt{F})$ lies in 
$\psi_{\mr{exp},\wt{G}}(g^\ast) + (-\delta,\delta]^{\dim(G)}$. We will ultimately take $\delta = M^{-O_{J,J'}(d^{O_{J,J'}(1)})}$ sufficiently small. If we prove the proposition with $\eps' = \eps \cdot\delta^{O_{J,J'}(d^{O_{J,J'}(1)})}$ for functions with such restricted support then the result in generality follows by Lemma~\ref{lem:nilmanifold-partition-of-unity}.

The use of the universal nilmanifold comes precisely when defining the following two nilmanifolds for the split terms. Let $\wt{G}_{>J}$ be the group generated by $\wt{G}_{\vec{i}}$ with $\vec{i}\in J'\setminus J$ and $\wt{G}_{>J'}$ be the group generated by $\wt{G}_{\vec{i}}$ with $\vec{i}\in J\setminus J'$. It is trivial to see that $\wt{G}_{>J},\wt{G}_{>J'}$ are normal and $M^{O_{J,J'}(d^{O_{J,J'}(1)})}$-rational with respect to $\wt{\mc{X}}$. Let $\wt{\mc{X}}^{J}$ and $\wt{\mc{X}}^{J'}$ be bases for the Lie algebras of $\log(\wt{G}_{>J})$ and $\log(\wt{G}_{>J'})$ which are $M^{O_{J,J'}(d^{O_{J,J'}(1)})}$-rational bounded combinations of $\wt{\mc{X}}$.

We consider the nilmanifolds $\wt{G}/(\wt{G}_{>J}\wt{\Gamma})$ and $\wt{G}/(\wt{G}_{>J'}\wt{\Gamma})$. The first is clearly a multidegree $J'$ nilmanifold while the second is a multidegree $J$ nilmanifold, each of complexity $M^{O_{J,J'}(d^{O_{J,J'}(1)})}$. Furthermore we can choose underlying Mal'cev bases which are $M^{O_{J,J'}(d^{O_{J,J'}(1)})}$-rational combinations of $\wt{\mc{X}} \imod \wt{\mc{X}}^{J}$ and $\wt{\mc{X}} \imod \wt{\mc{X}}^{J'}$, respectively. (See, e.g., the arguments regarding $G_{\mr{Quot}}$ in Section~\ref{sub:quot-def}.) Note here that $\wt{\Gamma}_{>J} = \wt{\Gamma}/(\wt{\Gamma}\cap\wt{G}_{>J})$ and analogously for $\wt{\Gamma}_{>J'}$.

The key point is that by construction, $\wt{G}_{>J}\cap\wt{G}_{>J'}=\mr{Id}_{\wt{G}}$. This implies that there exist linear maps $A$ and $B$ such that
\begin{equation}\label{eq:splitting-2}
A\circ \psi_{\exp,\wt{G}/\wt{G}_{>J}}(z \imod \wt{G}_{>J}) + B\circ \psi_{\exp,\wt{G}/\wt{G}_{>J'}}(z \imod \wt{G}_{>J'}) = \psi_{\exp,\wt{G}}(z)
\end{equation}
for all $z\in\wt{G}$. Furthermore one can take $A$ and $B$ bounded in the sense that 
\[d_{\wt{G}}(A\circ \psi_{\exp,\wt{G}/\wt{G}_{>J}}(\exp(\wt{X}_i) \imod \wt{G}_{>J}),\mr{id}_{\wt{G}})\le M^{O_{J,J'}(d^{O_{J,J'}(1)})}\]
for all $\wt{X}_i\in\wt{\mc{X}}$ and analogously for $B$ and $J'$.

We now identify $\wt{G}/\wt{\Gamma}$ via $\psi_{\wt{G}}$ with the domain $\psi_{\mr{exp},\wt{G}}(g^\ast) + (-1/2,1/2]^{\dim(\wt{G})}$ and we only have support of $\wt{F}$ in $\psi_{\mr{exp},\wt{G}}(g^\ast) + (-\delta,\delta]^{\dim(\wt{G})}$. Given $x\in\wt{G}$ such that $\psi_{\mr{exp},\wt{G}}(x)\in\psi_{\mr{exp},\wt{G}}(g^\ast) + (-\delta,\delta]^{\dim(\wt{G})}$, we have that 
\begin{align*}
\psi_{\mr{exp},\wt{G}/\wt{G}_{>J}}(x \imod \wt{G}_{>J}) \in\psi_{\mr{exp},\wt{G}/\wt{G}_{>J}}(g^\ast \imod \wt{G}_{>J}) + (-\delta,\delta]^{\dim(\wt{G}/\wt{G}_{>J})} \cdot M^{O_{J,J'}(d^{O_{J,J'}(1)})},\\
\psi_{\mr{exp},\wt{G}/\wt{G}_{>J'}}(x \imod \wt{G}_{>J'}) \in\psi_{\mr{exp},\wt{G}/\wt{G}_{>J'}}(g^\ast \imod \wt{G}_{>J'}) + (-\delta,\delta]^{\dim(\wt{G}/\wt{G}_{>J'})} \cdot M^{O_{J,J'}(d^{O_{J,J'}(1)})}.
\end{align*}
Given that $\delta = M^{-O_{J,J'}(d^{O_{J,J'}(1)})}$ is sufficiently small, these are contained
\begin{align*}
\psi_{\mr{exp},\wt{G}/\wt{G}_{>J}}(g^\ast \imod \wt{G}_{>J}) + (-1/4,1/4]^{\dim(\wt{G}/\wt{G}_{>J})},\\
\psi_{\mr{exp},\wt{G}/\wt{G}_{>J'}}(g^\ast \imod \wt{G}_{>J'}) + (-1/4,1/4]^{\dim(\wt{G}/\wt{G}_{>J'})},
\end{align*}
respectively.

Identify $\psi_{\mr{exp},\wt{G}}(g^\ast) + (-1/2,1/2]^{\dim(G)}$ with the torus (note that the boundaries are glued differently than in $\wt{G}/\wt{\Gamma}$, but we are near the center so it is not an issue). We have that $\wt{F}$ is an $(M/\delta)^{O_{J,J'}(d^{O_{J,J'}(1)})}$-Lipschitz function with respect to the standard torus metric (see e.g.~\cite[Lemma~2.3]{LSS24} and \cite[Lemma~B.3]{Len23b}). Thus for $x\in\wt{G}$ such that $\psi_{\mr{exp},\wt{G}}(x)\in\psi_{\mr{exp},\wt{G}}(g^\ast) + (-1/2,1/2]^{\dim(G)}$, via standard Fourier approximation (see e.g.~\cite[Lemma~A.8]{PSS23}), for $\xi\in\mb{Z}^{\dim(\wt{G})}$ there exist $c_\xi$ with $|c_{\xi}|\le (M/(\delta\eps'))^{O(\dim(\wt{G}))}$ such that 
\[\norm{\wt{F}(x\wt{\Gamma}) - \sum_{\snorm{\xi}_{\infty}\le (M/(\delta\eps))^{O(\dim(\wt{G}))}} c_{\xi}e(\xi \cdot\psi_{\exp}(x))}_{\infty}\le\eps'\]
where the sum is over $\xi\in\mb{Z}^{\dim(\wt{G})}$. Using \eqref{eq:splitting-2} we may write this equivalently as
\[\norm{\wt{F}(x\wt{\Gamma}) - \sum_{\xi} c_{\xi}e(\xi \cdot (A \circ \psi_{\exp,\wt{G}_{>J}}(x \imod \wt{G}_{>J})))e(\xi \cdot (B \circ \psi_{\exp,\wt{G}_{>J'}}(x \imod \wt{G}_{>J'}))) }_{\infty}\le\eps',\]
where again the sum is over $\snorm{\xi}_\infty\le(M/(\delta\eps'))^{O(\dim(\wt{G}))}$.

For $z$ such that $\psi_{\mr{exp},\wt{G}/\wt{G}_{>J}}(z)-\psi_{\mr{exp},\wt{G}/\wt{G}_{>J}}(g^\ast \imod \wt{G}_{>J})\in(-1/2,1/2]^{\dim(\wt{G}/\wt{G}_{>J})}$, we let
\[\tau_{>J,\xi}(z) = \rho(\snorm{\psi_{\mr{exp},\wt{G}/\wt{G}_{>J}}(z)-\psi_{\mr{exp},\wt{G}/\wt{G}_{>J}}(g^\ast \imod \wt{G}_{>J})}) \cdot e(\xi \cdot (A \circ \psi_{\exp,\wt{G}_{>J}}(z)))\]
with $\rho(x) = 1$ for $|x|\le 1/4$, $\rho(x) = 0$ for $|x|\ge 1/3$, and $\rho$ is $O(1)$-Lipschitz and extends to $\wt{G}/(\wt{G}_{>J}\wt{\Gamma})$ via periodicity. $\tau_{>J}$ is seen to be an $(M/(\delta\eps'))^{O_{J,J'}(d^{O_{J,J'}(1)})}$-Lipschitz function on $\wt{G}/(\wt{G}_{>J}\wt{\Gamma})$. This follows via the size of $\xi$ and that distance in $d_{\wt{G}}$ controls distance in first-kind coordinates (see e.g.~\cite[Lemmas~B.1,~B.3]{Len23b}). Define $\tau_{>J',\xi}$ in the same manner. We have that
\[\norm{\wt{F}(x\wt{\Gamma}) - \sum_{\snorm{\xi}_{\infty}\le (M/(\delta\eps'))^{O(\dim(\wt{G}))}} c_{\xi}\tau_{>J,\xi}((x \imod \wt{G}_{>J}) \wt{\Gamma}_{>J})\tau_{>J,\xi}((x \imod \wt{G}_{>J'}) \wt{\Gamma}_{>J'})}_{\infty}\le\eps'.\]
As this holds for all $x$ such that $\psi_{\mr{exp},\wt{G}}(x)\in\psi_{\mr{exp},\wt{G}}(g^\ast) + (-1/2,1/2]^{\dim(G)}$ and the approximating function is invariant under $\wt{\Gamma}$, this holds for all $x\in\wt{G}$. This completes the proof, plugging in $x = \wt{g}(h_1,\ldots,h_k)$ and noting that $\wt{g}\imod\wt{G}_{>J}$ and $\wt{g}\imod\wt{G}_{>J'}$ are multidegree $J'$ and $J$ polynomial sequences on $\wt{G}/(\wt{G}_{>J}\wt{\Gamma})$ and $\wt{G}/(\wt{G}_{>J'}\wt{\Gamma})$ respectively.
\end{proof}

\bibliographystyle{amsplain1.bst}
\bibliography{main.bib}

\end{document}